\let\oldwidetilde\widetilde
\protected\def\widetilde{\oldwidetilde}
\newtheorem{theorem}{Theorem}
\tikzset{
	altstackar/.style={decorate, decoration={show path construction,
			lineto code={
				\path (\tikzinputsegmentfirst); \pgfgetlastxy{\xstart}{\ystart}
				\path (\tikzinputsegmentlast); \pgfgetlastxy{\xend}{\yend}
				\path ($(0,0)!1.5pt!(\ystart-\yend,\xend-\xstart)$); \pgfgetlastxy{\xperp}{\yperp}
				\foreach \n[evaluate=\n as \k using .5*#1-\n+.5] in {1,...,#1}{
					\ifodd\n{\draw[->, shorten <=2pt, shift={($\k*(\xperp,\yperp)$)}](\xstart,\ystart)--(\xend,\yend);}
					\else{\draw[<-, shorten >=2pt, shift={($\k*(\xperp,\yperp)$)}](\xstart,\ystart)--(\xend,\yend);}\fi
				}
			}
	}}, altstackar/.default={1}
}
\let\oldtocsection=\tocsection
\let\oldtocsubsection=\tocsubsection
\let\oldtocsubsubsection=\tocsubsubsection
\renewcommand{\tocsection}[2]{\hspace{0em}\oldtocsection{#1}{#2}}
\renewcommand{\tocsubsection}[2]{\hspace{1em}\oldtocsubsection{#1}{#2}}
\renewcommand{\tocsubsubsection}[2]{\hspace{2em}\oldtocsubsubsection{#1}{#2}}
\def\subsection{\@startsection{subsection}{2}%
  \z@{.5\linespacing\@plus.7\linespacing}{.1\linespacing}%
  {\normalfont\bfseries}}
\def\subsubsection{\@startsection{subsubsection}{3}%
  \z@{.5\linespacing\@plus.7\linespacing}{.1\linespacing}%
  {\normalfont\itshape}}
\newcommand{\leqnomode}{\tagsleft@true}
\newcommand{\reqnomode}{\tagsleft@false}
\theoremstyle{definition}
\newtheorem{nul}{}[section]
\newtheorem{dfn}[nul]{Definition}
\newtheorem{rmk}[nul]{Remark}
\newtheorem{cnstr}[nul]{Construction}
\newtheorem{cnv}[nul]{Convention}
\newtheorem{ntn}[nul]{Notation}
\newtheorem{exm}[nul]{Example}
\newtheorem{rec}[nul]{Recollection}
\newtheorem{wrn}[nul]{Warning}
\newtheorem*{warn*}{Warning}
\newtheorem*{dfn*}{Definition}
\newtheorem*{axm*}{Axiom}
\newtheorem*{ntn*}{Notation}
\newtheorem*{exm*}{Example}
\newtheorem*{exr*}{Exercise}
\newtheorem*{int*}{Intuition}
\newtheorem*{qst*}{Question}
\newtheorem*{rmk*}{Remark}
\DeclareMathOperator{\hex}{\pentagon}
\theoremstyle{plain}
\newtheorem{thm}[nul]{Theorem}
\newtheorem{prop}[nul]{Proposition}
\newtheorem{lem}[nul]{Lemma}
\newtheorem{cor}[nul]{Corollary}
\newtheorem*{thm*}{Theorem}
\newtheorem*{prop*}{Proposition}
\newtheorem*{cor*}{Corollary}
\newtheorem*{lem*}{Lemma}
\newtheorem*{cnj*}{Conjecture}
\DeclareMathOperator*{\colim}{colim}
\DeclareMathOperator{\cof}{cof}
\DeclareMathOperator{\fib}{fib}
\DeclareMathOperator{\Hom}{Hom}
\DeclareMathOperator{\Map}{Map}
\DeclareMathOperator{\TC}{TC}
\DeclareMathOperator{\End}{End} 
\DeclareMathOperator{\Aut}{Aut}
\DeclareMathOperator{\Fun}{Fun}
\DeclareMathOperator{\SP}{\mathbb{S}}
\DeclareMathOperator{\Pic}{Pic}
\DeclareMathOperator{\THH}{\mathrm{THH}}
\DeclareMathOperator{\TR}{\mathrm{TR}}
\DeclareMathOperator{\TP}{\mathrm{TP}}
\DeclareMathOperator{\BU}{\mathrm{BU}}
\DeclareMathOperator{\rB}{\mathrm{B}}
\DeclareMathOperator{\triv}{\mathrm{triv}}
\DeclareMathOperator{\Mod}{Mod}
\DeclareMathOperator{\Bimod}{Bimod}
\DeclareMathOperator{\CycSp}{CycSp}
\newcommand{\BPn}{\BP\langle n\rangle}
\def\PrL{\mathrm{Pr}^{\mathrm{L}}}
\def\Gal{\mathrm{Gal}}
\newcommand{\EQ}{\mathrm{EQ}}
\newcommand{\BZgg}{B\ZZ^\rhd}
\newcommand\noloc{%
  \nobreak
  \mspace{6mu plus 1mu}
  {:}
  \nonscript\mkern-\thinmuskip
  \mathpunct{}
  \mspace{2mu}
}
\def\Nm{\mathrm{Nm}}
\def\can{\mathrm{can}}
\newcommand{\crTR}{L_{\langle p^\infty\rangle}\Ss}
\newcommand{\pgnsp}[1]{\mathrm{PgcSp}_{#1}}
\newcommand{\UAlg}{\mathrm{UAlg}}
\newcommand{\lpr}{\langle p \rangle}
\def\A{\mathbb{A}}
\def\C{\mathbb{C}}
\def\E{\mathbb{E}}
\def\F{\mathbb{F}}
\def\G{\mathbb{G}}
\def\H{\mathbb{H}}
\def\Q{\mathbb{Q}}
\def\Ss{\mathbb{S}}
\def\T{\mathbb{T}}
\def\W{\mathbb{W}}
\def\Z{\mathbb{Z}}
\def\cC{\mathcal{C}}
\def\CC{\mathcal{C}}
\def\DD{\mathcal{D}}
\def\EE{\mathbb{E}}
\def\FF{\mathbb{F}}
\def\cL{\mathcal{L}}
\def\cO{\mathcal{O}}
\DeclareMathOperator{\CAlg}{CAlg}
\def\one{\mathbbm{1}}
\def\Fp{\mathbb{F}_p}
\def\Fpbar{\overline{\mathbb{F}}_p}
\def\KU{\mathrm{KU}}
\def\MU{\mathrm{MU}} 
\def\BP{\mathrm{BP}}
\def\BPn{\mathrm{BP}\langle n \rangle}
\def\map{\mathrm{map}}
\def\bu{\mathrm{bu}}
\def\mm{/\!\!/}
\def\bm{\mathrm{bm}}
\def\cn{\mathrm{cn}}
\newcommand{\SPp}{\SP}
\def\res{\mathrm{res}}
\def\triv{\mathrm{triv}}
\DeclareFontFamily{U}{rcjhbltx}{}
\DeclareFontShape{U}{rcjhbltx}{m}{n}{<->s*[1.2]rcjhbltx}{}
\DeclareSymbolFont{hebrewletters}{U}{rcjhbltx}{m}{n}
\DeclareMathSymbol{\pretsadi}{\mathord}{hebrewletters}{118}
\newcommand{\Perf}{\mathrm{Perf}_{\F_p}}
\newcommand{\TCm}{\TC^{-}}
\newcommand{\pflat}{\pi_0^{\flat}}
\def\Alg{\mathrm{Alg}}
\def\CAlg{\mathrm{CAlg}}
\def\Mod{\mathrm{Mod}}
\def\heart{\heartsuit}
\def\Sp{\mathrm{Sp}}
\def\Spaces{\mathrm{Spc}}
\def\Spc{\mathrm{Spc}}
\def\op{\mathrm{op}}
\def\Gr{\mathrm{Gr}}
\newcommand{\pushout}{\arrow[ul, phantom, "\ulcorner", very near start]}
\newcommand{\pullback}{\arrow[dr, phantom, "\lrcorner", very near start]}
\newcommand{\QQ}{\mathbb{Q}}
\newcommand{\ZZ}{\mathbb{Z}}
\newcommand{\cycl}{^{\wedge}_{\mathrm{cyc}}}
\newcommand{\dualz}{\diamondsuit}
\definecolor{DefColor}{rgb}{0.6,0.15,0.25}
\newcommand{\mdef}[1]{\textcolor{DefColor}{#1}} 
\newcommand{\tdef}[1]{\textit{\mdef{#1}}}
\def\id{\mathrm{id}}
\def\Id{\mathrm{Id}}
\def\Perf{\mathrm{Perf}}
\newcommand{\WcanV}[1]{\mathrm{WCV}(\leq #1)}
\newcommand{\ScanVn}[1]{{\mathrm{SCV}}(\leq #1)}
\newcommand{\Seg}[1]{\mathrm{Segal}(\leq #1)}
\newcommand{\LCF}[1]{C^0(#1)}
\newcommand{\on}{\mathbbm{1}}
\def\repTR{L\Ss_{\langle p^\infty \rangle}}
\newcommand{\NB}[1]{\todo[color=gray!40]{#1}}
\newcommand{\TODO}[1]{\todo[color=red]{#1}}
\newcommand{\NB}[1]{}
\newcommand{\TODO}[1]{}
\renewcommand{\todo}[1]{}
\renewcommand{\todo}[1]{}
\title{$K$-theoretic counterexamples to Ravenel's telescope conjecture}
\date{\today}
\author{Robert Burklund}
\address{Department of Mathematical Sciences, University of Copenhagen, Denmark}
\email{rb@math.ku.dk}
\author{Jeremy  Hahn}
\address{Department of Mathematics, MIT}
\email{jhahn01@mit.edu}
\author{Ishan Levy}
\address{Department of Mathematics, MIT}
\email{ishanl@mit.edu}
\author{Tomer M. Schlank}
\address{Einstein Institute of Mathematics, The Hebrew University of Jerusalem}
\email{tomer.schlank@gmail.com}
\begin{document}
\maketitle
\begin{abstract}
At each prime $p$ and height $n+1 \ge 2$, we prove that the telescopic and chromatic localizations of spectra differ. Specifically, for $\mathbb{Z}$ acting by Adams operations on $\mathrm{BP}\langle n \rangle$, we prove that the $T(n+1)$-localized algebraic $K$-theory of $\mathrm{BP}\langle n \rangle^{h\mathbb{Z}}$ is not $K(n+1)$-local. We also show that Galois hyperdescent, $\A^1$-invariance, and nil-invariance fail for the $K(n+1)$-localized algebraic $K$-theory of $K(n)$-local $\E_{\infty}$-rings. In the case $n=1$ and $p \ge 7$ we make complete computations of $T(2)_*\mathrm{K}(R)$, for $R$ certain finite Galois extensions of the $K(1)$-local sphere. We show for $p\geq 5$ that the algebraic $K$-theory of the $K(1)$-local sphere is asymptotically $L_2^{f}$-local.

\end{abstract}

\begin{figure}[H]
  \centering{}
  \setlength{\fboxsep}{-5pt}
  \setlength{\fboxsep}{5pt}
  \frame{\includegraphics[scale=0.460]{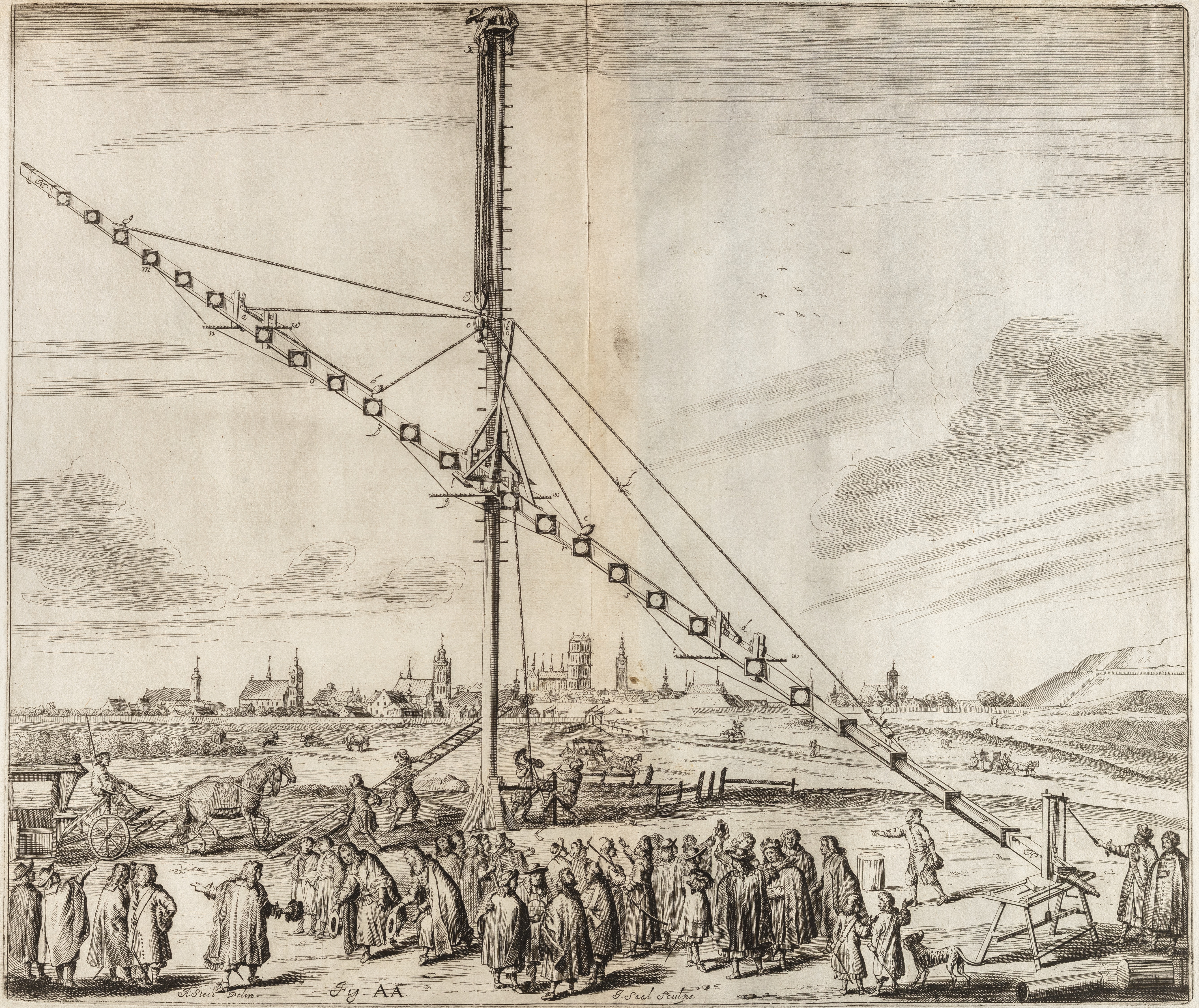}}
  \caption{\footnotesize Johannes Hevelius telescope by Isaac Saal after Andreas Stech, 1673 [The Royal Society \textcopyright]}
\end{figure}
\newpage

\tableofcontents
\vbadness 5000

\addtocontents{toc}{\protect\setcounter{tocdepth}{1}}
\section{Introduction}
\label{sec:intro}
Chromatic homotopy theory can be described as a surprising and intimate relationship between stable homotopy theory and the theory of $1$-dimensional commutative formal groups.
Morava and Ravenel laid out a vision for this relationship which can be summarized as giving a natural bijection between the moduli stack of formal groups and the ``primes'' in stable homotopy theory.

The key ideas of this point of view were summarized by the Ravenel conjectures in \cite{ravenel1984localization}, most of which were proven by Devinatz--Hopkins--Smith in \cite{DHS} and \cite{NilpII}. The main conjecture from \cite{ravenel1984localization} that remained unresolved was the telescope conjecture, which suggests that two natural competing definitions for monochromatic spectra agree. 

More precisely, let $K(n+1)$\footnote{The reader might question why we use $n+1$ rather than $n$.
Indeed, when stating the telescope conjecture, this may seem somewhat unorthodox. However, as suggested by the paper's title, we employ algebraic $K$-theory to develop counterexamples to the telescope conjecture.
Taking into account the redshift phenomena of \cite{rognes2014chromatic}, a significant portion of our analysis pertains to objects with a height that is one less than that for which we are disproving the telescope conjecture.
Thus, the inclusion of the "$+1$."} 
denote height $n+1$ Morava $K$-theory, and let $U$ be a finite $p$-local spectrum of type $n+1$. 
By \cite{NilpII}, there is a self map $v\colon \Sigma^dU\to U$ for $d>0$ inducing an isomorphism after tensoring  with $K(n+1)$. We define $T(n+1)\coloneqq U[v^{-1}]$, and there is a fundamental inclusion of localized categories
\[\Sp_{K(n+1)} \subseteq \Sp_{T(n+1)}.\]

The category $\Sp_{T(n+1)}$ is of interest because it detects the $v_{n+1}$-periodic part of the stable homotopy groups of spheres. On the other hand, computations in $\Sp_{K(n+1)}$ are a priori much more tractable, being closely connected to the algebraic cohomology of the moduli of formal groups.

The telescope conjecture postulates that the inclusion between these two categories is in fact an equality, and as such was originally favored by Occam's razor. For $n+1=0$, the conjecture is trivial. For $n+1=1$, the telescope conjecture was proved by Mahowald at $p=2$ \cite{Bok}, using $bo$-resolutions, and by Miller \cite{Miller1} for $p>2$, using a localized Adams spectral sequence. Both proofs proceed by explicit computation of the homotopy groups on the telescopic side. 

Parts of the analogous computations for heights $n+1\ge2$ led Ravenel to believe that the telescope conjecture is in fact \emph{false} in these cases \cite{Ravenel2}. Subsequent work, based on a certain family of Thom spectra $y(n+1)$, outlined a general strategy for a disproof, suggesting a concrete description for the gap between the telescopic and $K(n+1)$-local sides \cite{Ravenel3}. Variations on this strategy have been considered by
experts in the field \cite{Mark, MIT}, but have not resulted in a disproof.
The main result of this paper is a disproof of the telescope conjecture, at all primes $p$ and at all heights $n+1 \ge 2$.

To do this, we construct for each height $n$ and prime $p$ a family of associative ring spectra $\BP\langle n \rangle^{hp^k\ZZ}$, indexed by integers $k\geq0$, obtained by taking fixed points of Adams operations on the truncated Brown--Peterson spectrum $\BP\langle n \rangle$. 
We then show:


\begin{theorem}\label{thm:main}
Let $p$ be any prime and $n+1\geq 2$. Then, for all $k \ge 0$,
\[L_{T(n+1)}\mathrm{K}\left(\BP\langle n \rangle^{hp^k\Z}\right)\] 
is not $K(n+1)$-local. 
In particular,
\[\Sp_{K(n+1)} \neq \Sp_{T(n+1)}.\]
\end{theorem}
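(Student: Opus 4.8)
Since $\Sp_{K(n+1)}\neq\Sp_{T(n+1)}$ is exactly the negation of the telescope conjecture at height $n+1$, it suffices to exhibit one spectrum whose $T(n+1)$-localization fails to be $K(n+1)$-local, and the candidate is $\mathrm{K}(\BP\langle n\rangle^{hp^{k}\mathbb{Z}})$. First I would reduce to $k=0$: for each $k$ the inclusion $p^{k}\mathbb{Z}\subseteq\mathbb{Z}$ exhibits $\BP\langle n\rangle^{hp^{k}\mathbb{Z}}$ as a faithful $\mathbb{Z}/p^{k}$-Galois extension of $\BP\langle n\rangle^{h\mathbb{Z}}$, so finite Galois descent for $T(n+1)$-local algebraic $\mathrm{K}$-theory gives $L_{T(n+1)}\mathrm{K}(\BP\langle n\rangle^{h\mathbb{Z}})\simeq\bigl(L_{T(n+1)}\mathrm{K}(\BP\langle n\rangle^{hp^{k}\mathbb{Z}})\bigr)^{h\mathbb{Z}/p^{k}}$; since $K(n+1)$-local spectra are closed under finite limits, if the $k$-th term were $K(n+1)$-local so would be the $k=0$ term. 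So fix $k=0$ and write $A=\BP\langle n\rangle$ with its $\mathbb{Z}$-action by Adams operations (the generator acting by $\psi^{g}$ for a topological generator $g$ of $1+p\mathbb{Z}_{p}$).

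\textbf{Passage to $\TC$.} After $p$-completion $\pi_{0}(A^{h\mathbb{Z}})$ is an ordinary commutative ring, essentially $\mathbb{Z}_{p}$, and both $\mathrm{K}(\mathbb{Z}_{p})$ and $\TC(\mathbb{Z}_{p})$ have chromatic height $\le 1<n+1$; applying Dundas--Goodwillie--McCarthy to $A^{h\mathbb{Z}}\to\tau_{\le 0}A^{h\mathbb{Z}}$, the fibre of the cyclotomic trace $\mathrm{K}\to\TC$ on $A^{h\mathbb{Z}}$ is $T(n+1)$-acyclic, whence $L_{T(n+1)}\mathrm{K}(A^{h\mathbb{Z}})\simeq L_{T(n+1)}\TC(A^{h\mathbb{Z}})$, and likewise for $A$ itself. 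I would then bring to bear the known structure of $\THH(\BP\langle n\rangle)$ — the Segal conjecture and the Lichtenbaum--Quillen property of $\BP\langle n\rangle$ (Hahn--Wilson, Hahn--Raksit--Wilson) — to control $L_{T(n+1)}\TC(A)$ together with the operator induced by $\psi^{g}$.

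\textbf{The descent comparison.} The fibre sequence $A^{h\mathbb{Z}}\to A\xrightarrow{\psi^{g}-1}A$ of $A^{h\mathbb{Z}}$-modules induces on $\THH$, hence on $\TC$, a comparison map $c\colon \TC(A^{h\mathbb{Z}})\to\TC(A)^{h\mathbb{Z}}=\fib\bigl(\psi^{g}_{\ast}-1\colon \TC(A)\to\TC(A)\bigr)$. The plan is to analyse $\fib(c)$ — the ``$\mathbb{Z}$-descent obstruction for $\TC$'' — explicitly via the Nikolaus--Scholze formula, the cyclotomic Frobenius on $\THH(\BP\langle n\rangle)$ governed by the Segal conjecture, and the circle $B\mathbb{Z}=S^{1}$, and then to localize. $K(n+1)$-locally the $\mathbb{Z}$-action extends to a pro-$p$ (i.e.\ $\mathbb{Z}_{p}$-) action and one has hyperdescent, so $L_{K(n+1)}\TC(A^{h\mathbb{Z}})\simeq\bigl(L_{K(n+1)}\TC(A)\bigr)^{h\mathbb{Z}_{p}}$ and $c$ is a $K(n+1)$-local equivalence; but $T(n+1)$-locally this continuity fails and $\fib(c)$ carries a $v_{n+1}$-periodic family — morally a $\varprojlim^{1}$-type discrepancy between the $\mathbb{Z}$- and $\mathbb{Z}_{p}$-homotopy fixed points — that is nonzero in infinitely many degrees yet $K(n+1)$-acyclic. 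Feeding this into the fibre sequence $L_{T(n+1)}\fib(c)\to L_{T(n+1)}\mathrm{K}(A^{h\mathbb{Z}})\to\bigl(L_{T(n+1)}\TC(A)\bigr)^{h\mathbb{Z}}$ and checking that the extra family survives into $\fib\bigl(L_{T(n+1)}\mathrm{K}(A^{h\mathbb{Z}})\to L_{K(n+1)}\mathrm{K}(A^{h\mathbb{Z}})\bigr)$ shows this fibre is nonzero, i.e.\ $L_{T(n+1)}\mathrm{K}(A^{h\mathbb{Z}})$ is not $K(n+1)$-local.

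\textbf{Main obstacle.} Everything hinges on the last point: showing the descent obstruction is genuinely $T(n+1)$-locally nontrivial — detecting an honest $v_{n+1}$-self-map's worth of classes, not a $K(n+1)$-locally trivial phantom. Equivalently, one must show $\psi^{g}_{\ast}-1$ fails to act invertibly on the telescopic complement of $L_{K(n+1)}\mathrm{K}(\BP\langle n\rangle)$, which requires genuine control of $T(n+1)_{\ast}\TC(\BP\langle n\rangle^{h\mathbb{Z}})$ and not merely its $K(n+1)$-localization. This is where the hard computations live: for $n=1$ and $p\ge 7$ one computes $T(2)_{\ast}\mathrm{K}(R)$ completely for $R$ running over the finite Galois extensions of a form of the $K(1)$-local sphere and reads off the offending periodic family, and the general $(n,p)$ case is leveraged from this via the tower $\{\BP\langle n\rangle^{hp^{k}\mathbb{Z}}\}_{k}$, base change, and multiplicativity of $L_{T(n+1)}$. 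A secondary difficulty is making the $\THH$-level obstruction precise: the cyclotomic structure and the $S^{1}$-direction must be tracked simultaneously, which is exactly why the Nikolaus--Scholze formalism and the Segal conjecture input are indispensable.
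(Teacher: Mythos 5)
Your skeleton — pass from $K$ to $\TC$, study the coassembly map $\TC(\BPn^{hp^k\Z})\to\TC(\BPn)^{hp^k\Z}$, and show it is a $K(n+1)$-local equivalence but not a $T(n+1)$-local one — is the right one, but the two load-bearing steps are not justified, and one of them rests on a false principle.

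First, you claim the coassembly map is a $K(n+1)$-local equivalence ``because the $\Z$-action extends to a $\Z_p$-action and one has hyperdescent $K(n+1)$-locally.'' That general principle is false: the paper proves (\Cref{thm:failurekn}) that $\Z_p$-Galois hyperdescent fails for $K(n+1)$-local $K$-theory of $T(n)$-local rings even for the \emph{trivial} $\Z_p$-action, whenever $L_{K(n+1)}K(R)\neq 0$. So the fact that the action is pro-$p$ cannot by itself give you the $K(n+1)$-local equivalence. What makes the Adams extension special is cyclotomic redshift: after base change to the maximal abelian extension of $\Ss_{T(n)}$ the extension $L_{T(n)}\BPn^{hp^k\Z}\to L_{T(n)}\BPn$ splits off from the chromatic cyclotomic extensions, and the theorem of Ben-Moshe--Carmeli--Schlank--Yanovski identifies the coassembly map with the cyclotomic completion map; $K(n+1)$-locality then fails because $\Sp_{K(n+1)}$ sits inside the cyclotomically complete $T(n+1)$-local category. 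This arithmetic input has no counterpart in your argument, and without it you cannot rule out that the discrepancy you find in the fiber is itself $K(n+1)$-locally visible.

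Second, the nontriviality of the fiber — which you rightly flag as the main obstacle — cannot be ``leveraged from the $n=1$, $p\ge 7$ computation via base change'': there is no base change carrying a height-$2$ telescopic computation to height $n+1$, and in the paper that computation is a separate corollary, not an input. The actual mechanism is structural: (i) for the \emph{trivial} action the fiber of $\THH(\Ss^{B\Z})\to\THH(\Ss)^{B\Z}$ is $\W(\LCF{\Z_p^\times})\otimes\Sigma^{-1}\crTR$, where $\crTR$ corepresents $\TR$, so $p$-complete $\TC(R)$ lies in the thick subcategory generated by the fiber of the coassembly map; (ii) cyclotomic asymptotic constancy (using the height-$n$ LQ property of $\BPn$ and local unipotence of the Adams action) identifies the Adams-action coassembly map with the trivial-action one, but only for $k\gg 0$; (iii) Hahn--Wilson redshift gives $T(n+1)\otimes\TC(\BPn)\neq 0$. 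Because (ii) requires $k\gg 0$, your reduction to $k=0$ by finite Galois descent, while logically valid, points in the unhelpful direction: it leaves you with exactly the case the available tools cannot reach, whereas the paper proves the statement for $k\gg 0$ and propagates non-locality \emph{down} the tower (cyclotomic completion is smashing and the transition maps are ring maps). Two further gaps: $\BPn^{hp^k\Z}$ is only $(-1)$-connective, so the ordinary Dundas--Goodwillie--McCarthy theorem does not identify $L_{T(n+1)}K$ with $L_{T(n+1)}\TC$ for it (one needs the extension to $(-1)$-connective rings together with Mitchell's vanishing theorem); and the existence of $\Psi$ on $\BPn$ as a sufficiently structured ($\E_1\otimes\A_2$) automorphism compatible with $E_n$ is itself a substantial construction that you take for granted.
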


If \Cref{thm:main} is proved for a particular $k=k_1$, it automatically holds for all $0 \le k \le k_1$.  Thus, it will suffice to prove \Cref{thm:main} for $k \gg 0$, and we will often assume $k \gg 0$ below.


\subsection{Cyclotomic hyperdescent}
Before explaining how we prove \Cref{thm:main}, we indicate what our proof says about \textit{how} the telescope conjecture fails.

A key tool in studying $\Sp_{K(n+1)}$ is the fact, due to Devinatz--Hopkins \cite{DevinatzHopkins}, that there is an equivalence  $\SP_{K(n+1)} \cong E_{n+1}({\overline{\FF}_p})^{h\mathbb{G}_{n+1}}$. Here $E_{n+1}(\overline{\FF}_p)$ is a Lubin--Tate spectrum attached to $\Fpbar$, and $\mathbb{G}_{n+1}$ is the extended Morava stabilizer group \cite{GoerssHopkins}. Since $E_{n+1}(\overline{\FF}_p)$ is well understood at the level of homotopy rings, this gives an approach to studying the $K(n+1)$-local sphere via a homotopy fixed point spectral sequence.

Using the Galois theory of Rognes \cite{rognesgalois}, $E_{n+1}(\overline{\FF}_p)$ can be interpreted as the algebraic closure (see \cite{BakerRichter}) of $L_{K(n+1)}\SP$, with Galois group $\mathbb{G}_{n+1}$, so that the Devinatz--Hopkins result allows the $K(n+1)$-local category to be studied via Galois descent. From this perspective, two reasons we have lacked computational control over the $T(n+1)$-local category are:
\begin{enumerate}
	\item We know few explicit Galois extensions of $\SP_{T(n+1)}$.
	\item We do not know that descent is valid for the infinite Galois extensions that exist.
\end{enumerate}

In \cite{carmeli2021chromatic}, Carmeli, Yanovski and the fourth author show that all of the \textit{abelian}  Galois extensions of $\SP_{K(n+1)}$ may be lifted to $\SP_{T(n+1)}$, by constructing them as ``chromatic cyclotomic extensions.'' The most interesting of these extensions are the $p$-cyclotomic extensions \[\SP_{T(n+1)} \to \SP_{T(n+1)}[\omega_{p^{k}}^{(n+1)}],\] which have Galois group $(\ZZ/p^k)^\times$ and are concretely realized as summands of $T(n+1)$-localized suspended Eilenberg--MacLane spaces $L_{T(n+1)} \Sigma^{\infty}_+ \mathrm{K}(\mathbb{Z}/p^k,n+1).$  These telescopic $p$-cyclotomic extensions generalize the classical $p$-cyclotomic extensions $\QQ \to \QQ[\zeta_{p^k}]$ from the case $n+1=0$.  The filtered colimit of the $p$-cyclotomic extensions, denoted $\mathbb{S}_{T(n+1)}[\omega^{(n+1)}_{p^{\infty}}]$, is a $\mathbb{Z}_p^{\times}$-pro-Galois extension, and, in the case $n+1=1$, is the extension $L_{T(1)}\SP \to \KU_p$.


Though the maps \[\SP_{T(n+1)} \to \SP_{T(n+1)}[\omega_{p^{k}}^{(n+1)}]^{h(\ZZ/p^k)^\times}\] 
are equivalences for each $k\geq0$, this \emph{does not} guarantee that the map \[\SP_{T(n+1)} \to \SP_{T(n+1)}[\omega_{p^\infty}^{(n+1)}]^{h\ZZ_p^{\times}}\]
is an equivalence.\footnote{Asking that this map be an equivalence is equivalent to asking that $\SP_{T(n+1)}[\omega_{p^\infty}^{(n+1)}]$ be a faithful module over $\SP_{T(n+1)}$. It is also equivalent to asking that the sheaf of finite $\ZZ_p^{\times}$-sets defined by the finite Galois sub-extensions be a hypersheaf \cite[6.2]{cycloshift}.}
Put differently, localization with respect to $\SP_{T(n+1)}[\omega^{(n+1)}_{p^{\infty}}]$ yields a third category of \emph{cyclotomically complete $T(n+1)$-local spectra}:
\[\Sp_{K(n+1)} \subseteq (\Sp_{T(n+1)})\cycl\subseteq \Sp_{T(n+1)},\]
where both inclusions are potentially strict (see \cite[Question 7.36]{fourier}).

Our proof of \Cref{thm:main} actually shows that the second inclusion is strict: i.e., we prove that $L_{T(n+1)}K(\BP\langle n \rangle^{hp^k\Z})$ is not cyclotomically complete for $n\geq1$ (see \Cref{thm:maincyc}).

\subsection{The proof}

The first ingredient in our proof is the cyclotomic redshift result \cite{cycloshift} of Ben-Moshe, Carmeli, Yanovski and the fourth author. Cyclotomic redshift states that chromatic cyclotomic extensions are compatible with algebraic $K$-theory in the sense that, for any $T(n)$-local $\EE_1$-ring $R$, there are natural $(\ZZ/p^i)^{\times}$-equivariant equivalences:
\[L_{T(n+1)}K(R[\omega_{p^{i}}^{(n)}]) \cong L_{T(n+1)}K(R)[\omega_{p^{i}}^{(n+1)}].\]
The $n=0$ case of this theorem is due to Bhatt--Clausen--Mathew \cite{BhattClausenMathew}.

Using cyclotomic redshift, and the fact that the $(p^k\ZZ_p)$-pro-Galois extension 
\[L_{T(n)}\BP\langle n \rangle^{hp^k\ZZ} \to L_{T(n)}\BP\langle n \rangle\] is closely related to a cyclotomic extension, we deduce that there is an equivalence


\[ L_{T(n+1)}K(\BP\langle n \rangle)^{hp^k\ZZ}\cong L_{T(n+1)}K(\BP\langle n \rangle^{hp^k\ZZ})\cycl\quad \]
for each $k\geq0$. Thus, in order to prove \Cref{thm:main}, it suffices to show that the map 
\[L_{T(n+1)}K(\BP\langle n \rangle^{hp^k\ZZ}) \to L_{T(n+1)}K(\BP\langle n \rangle)^{hp^k\ZZ}\]
is not an equivalence for $k\gg0$. We refer to this map as the $K$-theoretic coassembly map, as it measures the failure of $T(n+1)$-local $K$-theory to commute with the limit $(-)^{hp^k\ZZ}$.

Our way of accessing these $T(n+1)$-local $K$-theory spectra is via trace methods. It follows from the landmark works \cite{CMNN,LMMT,DGM,MitchellVanishing} that, for any $\EE_1$-ring $R$ and $n+1\geq2$, there is a natural equivalence
\[L_{T(n+1)}K(R) \cong L_{T(n+1)}\TC(\tau_{\geq0}R).\] 
In principle, we may therefore replace the $K$-theoretic coassembly map with the map
\[L_{T(n+1)}\TC(\tau_{\geq0}(\BP\langle n \rangle^{hp^k\ZZ})) \to L_{T(n+1)}\TC(\BP\langle n \rangle)^{hp^k\ZZ}.\]

However, this is not the replacement for the $K$-theoretic coassembly map that we use, because the rings $\tau_{\geq0}(\BP\langle n \rangle^{hp^k\ZZ})$ lack regularity properties that make $\TC$ easy to access.\footnote{Using language introduced later in this introduction, they do not satisfy the height $n$ Lichtenbaum--Quillen property. 
	See \cite{lee2023topological} for a discussion and proof of this in the case $n=1$.} Instead we use a variant of the Dundas--Goodwillie--McCarthy theorem, due to the third author \cite{levy2022algebraic}, that applies to $(-1)$-connective rings.  This allows us to replace the $K$-theoretic coassembly map with the $\TC$ coassembly map
\[L_{T(n+1)}\TC(\BP\langle n \rangle^{hp^k\ZZ}) \to L_{T(n+1)}\TC(\BP\langle n \rangle)^{hp^k\ZZ},\]
reducing \Cref{thm:main} to the claim that this $\TC$ coassembly map is not an isomorphism when $k\gg0$.

The key to analyzing the $\TC$ coassembly map is the following result, which allows us to replace the $\ZZ$-action by Adams operations on $\BP\langle n \rangle$ with the \textit{trivial} action:

\begin{theorem}[Asymptotic constancy for $\BP\langle n \rangle$]\label{thm:assconbpn}
	Fix a telescope $T(n+1)$ of a type $n+1$ $p$-local finite spectrum. Then for all $k \gg 0$  there is a commuting square
	
\[\begin{tikzcd}
	{T(n+1)_*\TC(\BP\langle n\rangle^{hp^k\ZZ})} & {T(n+1)_*\TC(\BP\langle n\rangle)^{hp^k\ZZ}} \\
	{T(n+1)_*\TC(\BP\langle n\rangle^{B\ZZ})} & {T(n+1)_*\TC(\BP\langle n\rangle)^{B\ZZ}},
	\arrow["\cong", from=1-1, to=2-1]
	\arrow["\cong", from=1-2, to=2-2]
	\arrow[from=1-1, to=1-2]
	\arrow[from=2-1, to=2-2]
\end{tikzcd}\]
	where the horizontal maps are  $\TC$ coassembly maps.
\end{theorem}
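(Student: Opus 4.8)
The plan is to show that, after $T(n+1)$-localizing $\TC$, the $\ZZ$-action by Adams operations on $\BPn$ becomes equivalent to the trivial action once $k$ is large, so that both rows of the square collapse onto the corresponding row for the trivial ($B\ZZ$) action. Two structural facts drive this. First, $T(n+1)$ is of \emph{uniformly bounded $p$-power torsion}: if $F$ is a finite type $n+1$ complex with $T(n+1)=F[v^{-1}]$, then $\mathrm{id}_F$ is a torsion element of the finite $p$-group $[F,F]$, so $p^{N}\cdot\mathrm{id}_{T(n+1)}=0$ for some $N$; hence every $T(n+1)$-local spectrum, and every mapping spectrum between $T(n+1)$-local spectra, is annihilated by $p^{N}$. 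Second, the Adams operations $\psi^{\ell}$ on $\BPn$ ($\ell\in\Zp^{\times}$) form a \emph{continuous} action, and the chosen generator of the $\ZZ$-action acts by $\psi^{g}$ for some topological generator $g$ of $1+p\Zp$, so that the generator of $p^{k}\ZZ$ acts by $\psi^{g^{p^{k}}}$ with $g^{p^{k}}\to 1$ $p$-adically.

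It is convenient to package the coassembly maps into one family. For $\ell\in\Zp^{\times}$ let $\BPn^{h\ZZ_{\ell}}:=\mathrm{eq}(\mathrm{id},\psi^{\ell}\colon\BPn\rightrightarrows\BPn)$ be the $(-1)$-connective $E_{1}$-ring of homotopy fixed points for the $\ZZ$-action through $\psi^{\ell}$, and let
\[\mathrm{Coass}(\ell)\colon\ L_{T(n+1)}\TC\bigl(\BPn^{h\ZZ_{\ell}}\bigr)\ \longrightarrow\ L_{T(n+1)}\TC(\BPn)^{h\ZZ_{\ell}}\]
be the resulting $\TC$ coassembly map (a morphism in $\Sp_{T(n+1)}$). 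Then $\mathrm{Coass}(1)$ is the bottom row of the square (as $\psi^{1}=\mathrm{id}$ is the trivial action and $B\ZZ\simeq S^{1}$), while $\mathrm{Coass}(g^{p^{k}})$ is the top row. Because the rings $\BPn^{h\ZZ_{\ell}}$ are only $(-1)$-connective and do \emph{not} enjoy the height $n$ Lichtenbaum--Quillen property \cite{lee2023topological}, I would compute $L_{T(n+1)}\TC$ of them via the $(-1)$-connective Dundas--Goodwillie--McCarthy theorem \cite{levy2022algebraic} and the identification $L_{T(n+1)}K=L_{T(n+1)}\TC$ on $(-1)$-connective rings \cite{CMNN,LMMT}, not via regularity.

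The two remaining points are: (a) $\ell\mapsto\mathrm{Coass}(\ell)$ is a $p$-adically continuous family of morphisms in $\Sp_{T(n+1)}$; and (b) any $p$-adically continuous family of morphisms of $T(n+1)$-local spectra is locally constant --- here the uniform bounded torsion is essential, since the target consists of spectra killed by $p^{N}$, on which the $p$-adic topology is discrete, so the family factors through a finite quotient $(\ZZ/p^{k_{0}})^{\times}$ of the profinite group $\Zp^{\times}$. Granting these, $\mathrm{Coass}(\ell)\simeq\mathrm{Coass}(1)$ whenever $\ell\equiv1\bmod p^{k_{0}}$; taking $\ell=g^{p^{k}}$ with $k\ge k_{0}$ yields an equivalence of arrows between the top and bottom rows of the claimed square, which is exactly a commuting square with the two displayed isomorphisms after applying $T(n+1)_{*}$. (Concretely, (b) says $\psi^{\ell}-\mathrm{id}$ becomes $p^{N}$-divisible --- hence null --- on all of the $T(n+1)$-local spectra in sight once $\ell\equiv1\bmod p^{k_{0}}$.)

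The step I expect to require the most care is (a), namely $p$-adic continuity of $L_{T(n+1)}\TC$ of the \emph{twisted} fixed-point rings $\BPn^{h\ZZ_{\ell}}$, which have no regularity to exploit. I would obtain it by combining continuity of the Adams operations with continuity of $\THH$, $\TC$, and the finite limit $(-)^{h\ZZ}$ in the $p$-adic parameter. In more detail: for $\ell\equiv\ell'\bmod p^{cm}$ (with $c$ a fixed constant), the operator $\psi^{\ell}-\psi^{\ell'}$ is $p^{cm}$-divisible on $(\BPn)^{\wedge}_{p}$ --- this holds on homotopy, where $\psi^{\ell}$ depends integrally (indeed polynomially) on $\ell$ and restricts to the identity at $\ell=1$, and propagates $p$-completely to a statement about maps of spectra via the analytic structure of the Adams operations --- so $\mathrm{fib}(\psi^{\ell}-1)$ and $\mathrm{fib}(\psi^{\ell'}-1)$ have equivalent reductions mod $p^{m}$; pushing this through known $p$-adic continuity of $\THH$/$\TC$ and through \cite{levy2022algebraic} shows $L_{T(n+1)}\TC(\BPn^{h\ZZ_{\ell}})$ and $L_{T(n+1)}\TC(\BPn^{h\ZZ_{\ell'}})$ agree mod $p^{m}$ compatibly with coassembly, which is the continuity in (a). (That the Adams operations on $\BPn$ genuinely assemble into a continuous $\Zp^{\times}$-action, rather than a mere pointwise collection, is classical for $\BP$ and descends to $\BPn$.)
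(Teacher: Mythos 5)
Your high-level strategy — view the coassembly map as one member of a $p$-adically continuous family $\mathrm{Coass}(\ell)$ indexed by $\ell\in\Z_p^\times$, and use the fact that $T(n+1)$-local spectra are killed by a fixed power of $p$ to force local constancy near $\ell=1$ — is genuinely different from the paper's route, which instead proves a trivialization $V\otimes\THH(\BPn^{hp^k\Z})\cong V\otimes\THH(\Ss^{Bp^k\Z})\otimes\THH(\BPn)$ of $\THH(\Ss^{Bp^k\Z})$-modules in cyclotomic spectra (via the ``Dehn twist'' trivialization at the level of spectra-with-Frobenius, then a bootstrap to cyclotomic spectra using B\"okstedt classes, canonical vanishing, and almost compactness), and only then passes to $\TC$ and inverts $v_{n+1}$ using the height $n$ LQ property. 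Your observation that $p^N\cdot\mathrm{id}_{T(n+1)}=0$ is correct and is indeed the same $p$-nilpotence the paper exploits.

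The gap is in your step (a), and it is not a technical loose end but the entire content of the theorem. First, there is no continuous $\Z_p^\times$-family of $\E_1$ (or even $\E_1\otimes\A_2$) Adams operations on $\BPn$ available: the paper's Section 5 works hard to produce a \emph{single} operation $\Psi^\ell$ as an $\E_1\otimes\A_2$-automorphism, and upgrading this to a continuous action of $\Z_p^\times$ by multiplicative maps is an unproved (and nontrivial) strengthening; ``classical for $\BP$'' refers to operations on homotopy or on the homotopy category, which is not enough to form the rings $\BPn^{h\Z_\ell}$ coherently in $\ell$. Second, even granting the family of rings, the assertion that $L_{T(n+1)}\TC(\BPn^{h\Z_\ell})$ and $L_{T(n+1)}\TC(\BPn^{h\Z_{\ell'}})$ ``agree mod $p^m$'' when $\ell\equiv\ell'\bmod p^{cm}$ has no justification: there is no ring map between the two fixed-point rings to compare along, an abstract equivalence of fibers $\mathrm{fib}(\psi^\ell-1)/p^m\simeq\mathrm{fib}(\psi^{\ell'}-1)/p^m$ is not an equivalence of rings, and $\TC$ modulo $p^m$ is \emph{not} a functor of the ring modulo $p^{m'}$ for any $m'$ (no ``known $p$-adic continuity of $\TC$'' of this form exists; the continuity theorems of Dundas and of Clausen--Mathew--Morrow concern pro-systems $R/I^n$ of a fixed ring, which is a different situation). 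Making (a) precise would require exactly the kind of uniform boundedness and rigidity that the paper extracts from the height $n$ LQ property and the cyclotomic $t$-structure — inputs you explicitly set aside — together with an identification compatible with the coassembly maps, which your argument never addresses (an object-by-object equivalence of sources and targets does not yield the commuting square). As written, the proposal reduces the theorem to an unproven continuity principle that is at least as hard as the theorem itself.
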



Because of \Cref{thm:assconbpn}, it suffices to show that the $\TC$ coassembly map is not an isomorphism for the trivial $\ZZ$-action on $\BP\langle n\rangle$ (whose homotopy fixed points we write as $\BP\langle n \rangle^{B\Z}$ to emphasize the triviality of the action). To do this we use the following general fact:

\begin{prop}\label{prop:thicksubcat}
	For any $p$-complete $\EE_1$-ring $R$, the $p$-completion of $\TC(R)$ is in the thick subcategory generated by the $p$-completion of the fiber of the coassembly map $\TC(R^{B\ZZ}) \to \TC(R)^{B\ZZ}$.
\end{prop}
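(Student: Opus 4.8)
The plan is to reinterpret the coassembly map as the map obtained by applying $\TC$ to a homotopy pullback square of $\EE_1$-rings, and then to invoke the analysis of the failure of $\TC$ to preserve pullbacks, in the style of Land--Tamme.

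First I would unwind the two sides. Since $\Sigma^{\infty}_+ B\ZZ \simeq \SP \oplus \Sigma\SP$, the ring $R^{B\ZZ}$ is the split trivial square-zero extension $R \ltimes \Sigma^{-1}R$, and the two-cell structure $S^1 = \Delta^1 \cup_{\partial\Delta^1}\Delta^1$ on $B\ZZ$ exhibits $R^{B\ZZ}$ as the homotopy pullback of $R \xrightarrow{\Delta} R\times R \xleftarrow{\Delta} R$. Because $M \mapsto F(\Sigma^{\infty}_+ B\ZZ, M)$ is exact, $\TC(R)^{B\ZZ}$ is the homotopy pullback of $\TC(R) \xrightarrow{\Delta} \TC(R)\times\TC(R) \xleftarrow{\Delta} \TC(R)$, so the coassembly map $c_R$ is precisely the canonical comparison between $\TC$ of the pullback square and the pullback of the $\TC$'s. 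By the standard computation of the fibre of a map of pullback squares, $\fib(c_R) \simeq \Sigma^{-1}\fib\big(\TC(\mathcal{C}) \to \TC(R)\times\TC(R)\big)$, where $\mathcal{C} = R\odot_{R^{B\ZZ}}R$ is the associated Land--Tamme ring: its underlying spectrum is the bar construction $R\otimes_{R^{B\ZZ}}R \simeq \bigoplus_{n\ge 0}R$ (the differential vanishes because the square-zero extension is trivial), it is an $R$-algebra of polynomial type on a single degree-$0$ generator, and it carries two $R$-algebra maps $\mathcal{C} \rightrightarrows R$ -- the components of the canonical $\mathcal{C}\to R\times R$ -- attached to the two $0$-cells of $S^1$, interchanged by the weight-reversing automorphism of $\mathcal{C}$ induced by an orientation reversal of $S^1$.

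One caveat must be dealt with: $\TC$ is not a localizing invariant, so the Land--Tamme pullback formula does not apply verbatim. I would handle this via the Dundas--Goodwillie--McCarthy theorem, specifically the $(-1)$-connective refinement of \cite{levy2022algebraic} -- which is genuinely needed here, as $\Sigma^{-1}R$ is not connective -- to reduce the $\TC$-statement, after $p$-completion, to the analogous statement for a localizing invariant such as $K$ or $K^{\mathrm{inv}}$.

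It then remains to show $\TC(R)^{\wedge}_p \in \mathrm{thick}\big(\fib(c_R)^{\wedge}_p\big)$, equivalently $\TC(R)^{\wedge}_p \in \mathrm{thick}\big(\fib(\TC(\mathcal{C}) \to \TC(R)^2)^{\wedge}_p\big)$. Since $R \xrightarrow{\mathrm{unit}} \mathcal{C} \to R\times R$ is the split-monic diagonal, one splits off a contractible summand and is reduced to understanding $\fib\big(\psi\colon \widetilde{\TC}(\mathcal{C})^{\wedge}_p \to \TC(R)^{\wedge}_p\big)$, where $\widetilde{\TC}(\mathcal{C}) := \cofib(\TC(R)\to\TC(\mathcal{C}))$ and $\psi$ records the difference of the two augmentations. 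The crux, and the step I expect to be the main obstacle, is to show that after $p$-completion $\psi$ is degenerate enough -- using that the two augmentations are interchanged by the weight-reversing automorphism of $\mathcal{C}$, together with a Bass--Heller--Swan-type decomposition of $\TC(\mathcal{C})^{\wedge}_p$ -- that a copy of $\TC(R)^{\wedge}_p$ (up to suspension) survives into this fibre as a retract. If a one-step splitting is not available, the fallback is to iterate the whole construction with $\mathcal{C}$ in place of $R$, using that the relevant weight-graded pieces become increasingly connected so that the resulting tower of fibres converges after $p$-completion, recovering $\TC(R)^{\wedge}_p$ in the limit while staying inside $\mathrm{thick}(\fib(c_R)^{\wedge}_p)$ (which is closed under fibres, cofibres, retracts, and suspensions).
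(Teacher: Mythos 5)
Your route---presenting $R^{B\ZZ}$ as a pullback of $\EE_1$-rings and feeding it to the Land--Tamme $\odot$-construction---is genuinely different from the paper's, which never leaves the world of cyclotomic spectra: the paper computes the fiber of the $\THH$-coassembly map for $R=\SP$ as $\W(\LCF{\Z_p^\times})\otimes\Sigma^{-1}\crTR$, where $\crTR$ corepresents $\TR$, and then uses the cofiber sequence $\crTR\to\crTR\to\SP$ (corepresenting $\TC\to\TR\xrightarrow{1-F}\TR$) before tensoring with $\THH(R)$ and applying $\TC$. Your setup is not unreasonable---the paper itself runs a close cousin of your argument in the proof of its Theorem on the failure of $\A^1$-invariance, where the square-zero presentation of $R^{B\ZZ}$ reduces everything to $N\TC$. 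Two smaller issues first: $\TC$ \emph{is} a localizing invariant (it is just not finitary), so the detour through the Dundas--Goodwillie--McCarthy theorem is unnecessary; and the reduction to $K$-theory you propose is not available in the generality of the statement (arbitrary $p$-complete $\EE_1$-rings, no chromatic localization), so it is fortunate that it is not needed.

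The genuine gap is exactly the step you flag as ``the main obstacle,'' and the mechanism you hope for there is wrong. After the Land--Tamme reduction, the fiber of the coassembly map is (a shift of) the reduced $\TC$ of the associated tensor-algebra-type ring, i.e.\ essentially $\Sigma^{-1}N\TC(R)$. But $\TC(R)^{\wedge}_p$ is \emph{not} a retract of $N\TC(R)^{\wedge}_p$, and no symmetry of $\mathcal{C}$ or Bass--Heller--Swan decomposition will make it one: the correct relationship is that $\operatorname{cofib}(\SP_p\to\THH(\SP[t])_p)$ corepresents $\TR_p$ on cyclotomic spectra (McCandless), so the fiber is governed by $\TR(R)$, and $\TC(R)$ is then recovered only as the fiber of $1-F$ acting on $\TR(R)$---a two-step extension inside the thick subcategory, not a retract. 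This cyclotomic input (Frobenius permuting the components of the free loop space of $B\Z_p$, equivalently the shells $p^k\Z_p^\times$, which is what turns the fiber into copies of the corepresenting object of $\TR$) is the entire content of the proposition and is absent from your argument. Finally, the fallback of iterating the construction and ``converging in the limit while staying inside $\mathrm{thick}(\fib(c_R)^{\wedge}_p)$'' is not valid: thick subcategories are closed under finite limits and colimits and retracts, not under limits of infinite towers, so no convergence argument of this shape can place $\TC(R)^{\wedge}_p$ in the thick subcategory.
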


This proposition is proven by analyzing the universal case $R = \SP$, and showing after $p$-completion that $\THH(\SP)$ is in the thick subcategory generated by the fiber of $\THH(\SP^{B\Z}) \to \THH(\SP)^{B\Z}$ in the category $\CycSp$ of cyclotomic spectra.\footnote{We remind the reader that the use of the word 'cyclotomic' differs when referring to cyclotomic extensions and cyclotomic spectra.} The general case is obtained by tensoring with $\THH(R)$ in $\CycSp$ and applying the $p$-completed $\TC$ functor. The phenomenon in \Cref{prop:thicksubcat} is closely related to the failure of hyperdescent, nil-invariance, and $\A^1$-invariance in $K(n+1)$-local $K$-theory of $T(n)$-local rings for $n\geq 1$ (see \Cref{thm:failurekn}).

Applying \Cref{prop:thicksubcat}, and tensoring with $T(n+1)$, we learn that the $\TC$ coassembly map is not an equivalence so long as $T(n+1)\otimes \TC(\BP\langle n \rangle) \neq 0$. This follows from the redshift result of the second author and Wilson \cite{hahn2020redshift}, allowing us to finish the proof of \Cref{thm:main}.

\begin{rmk}
Let $U$ denote a type $n+1$ finite complex. To build intuition for \Cref{thm:assconbpn}, the reader might first contemplate the simpler statement that, when $k \gg 0$, the $p^k\ZZ$-action on $U \otimes \BPn$ is trivial. This follows from the fact that $U \otimes \BPn$ is $\pi$-finite and has a unipotent $\Z$-action on its homotopy groups.

In \cite{lee2023topological}, Lee and the third author computed that, for all $k \ge 0$, the tensor product $\mathbb{S}/(p,v_1) \otimes \THH(\BP\langle 1 \rangle^{hp^k\Z})$ 
has the same homotopy ring as the tensor product $\mathbb{S}/(p,v_1) \otimes \THH(\BP\langle 1 \rangle^{\mathrm{B}\Z})$. This is another simpler analog of \Cref{thm:assconbpn}, and at its core rests on the triviality of the $\Z$-action on $\BP\langle 1 \rangle/(p,v_1)$.
\end{rmk}

\subsection{The Lichtenbaum--Quillen property}
The key finiteness property of $\BP\langle n \rangle$ used to prove \Cref{thm:assconbpn} is the \emph{height $n$ Lichtenbaum--Quillen property}:

\begin{dfn*}
We say that an $\mathbb{E}_1$-ring spectrum $R$ satisfies the height $n$ LQ property if $\THH(R)$ is bounded below, and, for any $p$-local finite type $n+2$ complex $V$, $V \otimes \TR(R)$ is bounded.
\end{dfn*}
Equivalently, $R$ satisfies the height $n$ LQ property if $V \otimes \THH(R)$ is bounded in the $t$-structure on cyclotomic spectra of Antieau--Nikolaus \cite{antieau-nikolaus}.

The height $0$ LQ property has substantial history, being closely related to the Lichtenbaum--Quillen conjectures describing $K$-theory of discrete rings in terms of \'etale cohomology \cite{HMII,HMIII,mathew2021k}.  Ausoni and Rognes proved the height $1$ LQ property for $\mathrm{BP}\langle 1 \rangle$ at primes $p \ge 5$  \cite{ausoni2002algebraic}, and had the further vision to highlight height $n$ LQ properties as central to their redshift philosophy.

Work of the second author and Wilson \cite{hahn2020redshift} gave additional techniques for proving LQ properties, which with Raksit were connected to the Bhatt--Morrow--Scholze interpretation of prismatic cohomology \cite{hahn2022motivic,BMS,2023pstragowski}. Using these techniques, the height $n$ LQ property for $\BPn$ was proved for all $n$ and $p$ \cite{hahn2020redshift}.

Here, we establish a tool for descending LQ properties through fixed points by unipotent $\ZZ$-actions. Our proof of \Cref{thm:assconbpn} comes from setting $R=\BP\langle n \rangle$ in the following theorem:



\begin{theorem}[Cyclotomic asymptotic constancy]\label{thm:cohconstgen}
	Suppose that $R$ is a connective $p$-complete $\EE_1\otimes \A_2$\footnote{An $\EE_1\otimes\A_2$-ring is a unital algebra in the category of $\E_1$-rings.}-ring of fp-type $n$
	,\footnote{A connective $p$-complete spectrum $X$ is fp-type $n$ if, for any type $n+1$ finite complex $U$, $U\otimes X$ is $\pi$-finite \cite{mahowald1999brown}.} equipped with a locally unipotent $\ZZ$-action,\footnote{This is the same as asking that the $\ZZ$-action on $\pi_*R/p$ be locally unipotent.} and let $V$ be a finite $p$-local spectrum of type $n+2$.
	
	If $R$ satisfies the height $n$ LQ property, then 
    $R^{hp^k\ZZ}$ satisfies the height $n$ LQ property for all $k\gg0$. 
    Furthermore, for $k \gg 0$ there is a commutative diagram of cyclotomic spectra
	
	\[\begin{tikzcd}
		{V\otimes \THH(R^{hp^k\ZZ})} & {V\otimes \THH(R)^{hp^k\ZZ}} \\
		{V\otimes \THH(R^{B\ZZ})} & {V\otimes \THH(R)^{B\ZZ}},
		\arrow["\cong", from=1-1, to=2-1]
		\arrow["\cong", from=1-2, to=2-2]
		\arrow[from=1-1, to=1-2]
		\arrow[from=2-1, to=2-2]
	\end{tikzcd}\]
	where the horizontal maps are the coassembly maps.
\end{theorem}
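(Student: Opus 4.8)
The plan is to reduce the statement to a convergence/vanishing argument for a tower of cyclotomic spectra built from the filtration of $\ZZ$ by the subgroups $p^k\ZZ$. First, I would recall that for a locally unipotent $\ZZ$-action on a connective $p$-complete ring $R$ of fp-type $n$, the homotopy fixed points $R^{hp^k\ZZ}$ sit in a fiber sequence $R^{hp^k\ZZ}\to R \xrightarrow{\psi^{p^k}-1} R$, where $\psi$ is the generator of the action; this is just the two-term cobar complex for $B\ZZ$. Passing to $\THH$, which preserves this kind of limit up to a shift, and smashing with the type $n+2$ complex $V$, the whole problem becomes one about the behavior of the operator $\psi^{p^k}-1$ acting on $V\otimes\THH(R)$ as $k\to\infty$, together with its interaction with the cyclotomic Frobenius. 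The height $n$ LQ property enters precisely to guarantee that $V\otimes\THH(R)$ (and $V\otimes\TR(R)$) is \emph{bounded} in the Antieau--Nikolaus $t$-structure on $\CycSp$, so that the relevant mapping spectra and limits live in a bounded range and the $\lim^1$-type obstructions are controlled.

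The heart of the argument is then: because the $\ZZ$-action on $\pi_*(R/p)$ is locally unipotent and $V\otimes\THH(R)$ is $\pi$-finite-ish (bounded, with finite homotopy groups in each degree after smashing down to type $n+2$), the induced action of $\psi$ on $\pi_*\bigl(V\otimes\THH(R)\bigr)$ is locally unipotent as well — each homotopy group is a finite $p$-group with a unipotent automorphism, hence $(\psi-1)$ is nilpotent on it, and more to the point $(\psi^{p^k}-1) = (\psi-1)\cdot(\text{unit-ish factor})$ together with the identity $\psi^{p^k}-1 \equiv (\psi-1)^{p^k}$ mod $p$ forces $\psi^{p^k}-1$ to act by $0$ on a fixed bounded range once $k$ is large. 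This is the rigorous form of the heuristic in the remark that ``for $k\gg 0$ the $p^k\ZZ$-action on $U\otimes\BPn$ is trivial.'' Once $\psi^{p^k}-1$ acts as $0$ on all of $V\otimes\THH(R)$ (using boundedness to make ``for $k\gg 0$'' uniform across degrees), the fiber sequence computing $V\otimes\THH(R^{hp^k\ZZ})$ splits in exactly the same way as the one computing $V\otimes\THH(R^{B\ZZ})$ (the trivial action), compatibly with the cyclotomic structure, which produces the right-hand vertical equivalence; the left-hand one and the commuting square follow by naturality of the coassembly map. The LQ property for $R^{hp^k\ZZ}$ then falls out because $V'\otimes\TR(R^{hp^k\ZZ})$, for $V'$ of type $n+2$, is built as a finite limit (the two-term $\psi^{p^k}-1$ complex, which after smashing with $V'$ becomes split) from the bounded object $V'\otimes\TR(R)$, hence is itself bounded.

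I expect the main obstacle to be making the phrase ``$\psi^{p^k}-1$ acts as zero for $k\gg 0$'' work \emph{at the spectrum level} and \emph{uniformly}, not just on individual homotopy groups. On homotopy groups it is elementary finite-group theory, but to conclude that $\psi^{p^k}-1$ is null as a self-map of the cyclotomic spectrum $V\otimes\THH(R)$ one needs: (i) boundedness in the Antieau--Nikolaus $t$-structure (this is exactly the LQ hypothesis, suitably propagated through $\THH$ and $\TR$) so that the map is detected on a bounded range of homotopy; (ii) control of the extension problems, i.e. that $\psi^{p^k}-1$ inducing $0$ on $\pi_*$ actually implies it factors through something contractible — here one uses that the source, being $\pi$-finite in each degree and bounded, has a finite Postnikov tower, so nullity on associated graded plus a large enough power of $p$ annihilating everything (and $\psi^{p^k}-1$ becoming highly $p$-divisible as an operator, since it is congruent to $(\psi-1)^{p^k}$ modulo $p$ and $(\psi-1)$ is nilpotent) forces actual nullity. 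Assembling these into a clean inductive argument on the Postnikov tower of $V\otimes\THH(R)$, while carrying the cyclotomic Frobenius along, is the technical core; everything else — the fiber-sequence bookkeeping, naturality of coassembly, and the deduction of the LQ property for the fixed points — is formal.
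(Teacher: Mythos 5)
There is a fatal gap at the very first reduction. You write that ``$\THH$ preserves this kind of limit up to a shift'' and thereby reduce everything to the behaviour of $\psi^{p^k}-1$ on $V\otimes\THH(R)$. But $\THH$ is a colimit (a $\T$-indexed tensor) and does \emph{not} commute with the homotopy fixed points $(-)^{hp^k\ZZ}$: the comparison map $\THH(R^{hp^k\ZZ})\to\THH(R)^{hp^k\ZZ}$ is exactly the coassembly map appearing as the horizontal arrow of the square, and the entire content of the theorem (and of the paper's disproof of the telescope conjecture) is that this map is \emph{not} an equivalence. If your reduction were valid, the horizontal maps in the square would be isomorphisms, which contradicts \Cref{prop:thicksubcat}/\Cref{cor:nilthicksub}: already for $R=\SP$ with trivial action the fiber of $\THH(\SP^{B\ZZ})\to\THH(\SP)^{B\ZZ}$ is $\W(\LCF{\Z_p^\times})\otimes\Sigma^{-1}\crTR$, a large cyclotomic spectrum that corepresents $\TR$. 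So the problem cannot be recast as ``when does $\psi^{p^k}-1$ act by zero on $V\otimes\THH(R)$''; even after the action is literally trivial, the two sides of the square differ by this $\crTR$-shaped fiber, and the theorem asserts only that the $p^k\ZZ$-coassembly map \emph{agrees with} the trivial-action coassembly map, not that either is an equivalence.

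Your second paragraph does contain a genuine ingredient of the paper's proof: the observation that a locally unipotent action on a bounded, almost compact, $p$-nilpotent object trivializes after restricting to $p^k\ZZ$ for $k\gg0$ is exactly \Cref{lem:trivializeacalg}, applied to $V\otimes R$ and to $V\otimes\THH(R)$ in steps (1d)--(1e) of the paper's argument. But this only identifies the \emph{actions}; it says nothing about $\THH$ of the \emph{fixed points}. What is actually needed, and what your proposal omits entirely, is: (i) the identification $\THH(R^{hp^k\ZZ})\cong W_k\otimes_W\THH(R^{h\ZZ})$ with $W=\THH(\SP^{B\ZZ})$ and the analysis of $W$ via the free loop space of $B\ZZ_p$; (ii) the Dehn twist trivialization in polygonic spectra (\Cref{thm:assconst_main1}), a geometric argument giving a splitting $V\otimes\mathrm{res}_\varphi\THH(R^{h\ZZ})\cong\W(\LCF{\Z_p})\otimes V\otimes\mathrm{res}_\varphi(\THH(R)^{h\ZZ})$ at the level of spectra with Frobenius; and (iii) the bootstrapping of this splitting to an isomorphism of cyclotomic spectra via B\"okstedt classes and the boundedness criteria of \Cref{subsec:implications}. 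None of these is formal, and the first two cannot be recovered from any statement about $\psi^{p^k}-1$ acting nilpotently on homotopy groups.
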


Since the above is a diagram of cyclotomic spectra, we also obtain a corresponding square after replacing $\THH$ with $\TC$. 

The height $n$ LQ property for an $\mathbb{E}_1$-ring $R$ implies, for any type $n+1$ spectrum $U$ with $v_{n+1}$-self map $v$, that the map
\[U\otimes \TC(R) \to U[v^{-1}]\otimes \TC(R) = T(n+1)\otimes \TC(R)\]
has bounded above fiber. 
In short, knowledge of $U_*\TC(R)$ through a finite range of degrees is enough to completely determine the periodic homotopy groups $T(n+1)_*\TC(R).$
This property is exactly the opposite of what makes the homotopy groups of $T(n+1)$ so inaccessible:  
for $n+1\geq 2$, there is no bounded range of degrees in which all classes in $\pi_*T(n+1)$ lift to $\pi_*U$.
\footnote{This follows from (2) in the forthcoming work section below, along with Serre's finiteness theorem} 

To exemplify the approachability of telescopic homotopy in the presence of LQ properties, Ausoni and Rognes  were able to completely calculate  $T(2)_*\TC(\BP \langle 1 \rangle)$ for primes $p\ge 5$ and $T(2)=v_2^{-1}\mathbb{S}/(p,v_1)$ \cite{ausoni2002algebraic}. However, it was only recently in \cite{hahn2022motivic} that $T(2) \otimes \TC(\BP \langle 1 \rangle)$ was proved to be $K(2)$-local.


\subsection{Height 2 and the K(1)-local sphere}
Though we need not calculate much about $\TC(\BP\langle n \rangle^{hp^k\ZZ})$ to prove \Cref{thm:main}, it is sometimes possible to make complete calculations. We do so for $n=1,p\geq7$, and $k\gg0$ in \Cref{sec:ht2}.  By restricting to primes $p \ge 7$, we are able to fix as a preferred type $2$ spectrum a homotopy commutative and associative Smith--Toda complex $V(1)=\mathbb{S}/(p,v_1)$, with corresponding telescope $T(2)=v_2^{-1} V(1)$.

We study the connective Adams summand $\ell$ of $\mathrm{ku}_{(p)}$, which is a form of $\BP \langle 1 \rangle$. There is a $\Z$ action on $\ell$ by classical Adams operations, such that the $p$-completion of $\ell^{h\ZZ}$ is the $(-1)$-connective cover of the $K(1)$-local sphere.

Trace theorems of the third author \cite[Theorem B]{levy2022algebraic}, along with devissage results of the first and third author \cite{burklund2023k},
were used in \cite{levy2022algebraic} to show that $\TC(\ell^{h\Z})$ is closely related to the algebraic $K$-theory of the $K(1)$-local sphere.\footnote{This is true for $p>2$, and when $p=2$ there is an analogous statement with $\mathrm{ko}_{(2)}$ replacing $\ell$.} 
In particular, trace methods were used to completely compute $\pi_*K(L_{K(1)}\mathbb{S})[\frac 1 p]$, 
and to make computations of the integral homotopy groups $\pi_*K(L_{K(1)} \Ss)$ in low degrees \cite{levy2022algebraic}.  Furthermore, $V(1)_*\THH(\ell^{hp^k\ZZ})$ was studied extensively by Lee and the third author, for all $k \ge 0$  \cite{lee2023topological}.


 As \Cref{thm:height2comptc} here, we present for $k \gg 0$ a complete computation of $V(1)_*\TC(\ell^{hp^k\ZZ})$, as well as of the $\TC$ coassembly map 
\[V(1)_*\TC(\ell^{hp^k\ZZ}) \to V(1)_*\TC(\ell)^{hp^k\ZZ}.\]
In particular, we deduce the following corollary:

\begin{theorem} \label{thm:introheight2}
	Let $p\geq 7$ be a prime, and let $\ZZ$ act on the Adams summand $L$ of $\mathrm{KU}_{(p)}$ via the $\mathbb{E}_{\infty}$ Adams operation $\Psi^{1+p}$.  Then, for all $k \gg 0$, 
	the $\mathbb{F}_p[v_2^{\pm 1}]$-module map 
 \[T(2)_*\mathrm{K}(L^{hp^k\ZZ}) \to T(2)_*\left(L_{K(2)}\mathrm{K}(L^{hp^k\ZZ})\right)\]
	    may be identified with the direct sum of the maps enumerated below. The degrees of classes are determined from their names via the facts that $|t| = -2, |\lambda_i| = 2p^i-1, |\partial|=-1, |\zeta| = -1$, and the degree of any continuous function (see Notation (\ref{item:firsttop})) is $0$.
	\begin{enumerate}
		\item The projection $\FF_p\{1,\partial\} \oplus \overline{\LCF{\Z_p^{\times}}}\{\partial\zeta\} \to \FF_p\{1,\partial\}$ onto the first factor, tensored over $\FF_p$ with the inclusion $\mathbb{F}_p[v_2^{\pm 1}]\langle\lambda_1,\lambda_2 \rangle \to \mathbb{F}_p[v_2^{\pm 1}] \langle \lambda_1,\lambda_2,\zeta\rangle$.
		\item The map $\FF_p[v_2^{\pm 1}]\langle \zeta \rangle \otimes \LCF{p\ZZ_p} \to \FF_p[v_2^{\pm 1}]\langle \zeta \rangle$ evaluating a continuous function at $0$, tensored over $\FF_p$ with the graded $\FF_p$-vector space on basis elements enumerated below:
		\begin{enumerate}
			\item $t^{d} \lambda_1$, for each $0<d<p$, in degree $2p-1-2d$.
			\item $t^{pd} \lambda_2$, for each  each $0<d<p$, in degree $2p^2-1-2pd$
			\item $t^{d}\lambda_1\lambda_2$, for each $0<d<p$, in degree $2p^2+2p-2-2d$.
			\item $t^{pd} \lambda_1 \lambda_2$, for each $0<d<p$, in degree $2p^2+2p-2-2pd$.
		\end{enumerate}
	\end{enumerate}
	
\end{theorem}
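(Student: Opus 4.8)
The strategy is to bootstrap from the $\THH$-level computation of $V(1)_*\THH(\ell^{hp^k\ZZ})$ of Lee and the third author \cite{lee2023topological}, together with \Cref{thm:cohconstgen}, and then run the cyclotomic machinery to pass to $\TC$. First I would invoke \Cref{thm:cohconstgen} with $R = \ell$, $n=1$, and $V = V(1)$ (a valid type $2$ spectrum for $p\geq 7$): since $\ell$ is connective, $p$-complete, $\E_\infty$ hence $\E_1\otimes\A_2$, of fp-type $1$, carries a locally unipotent $\Z$-action by $\Psi^{1+p}$, and satisfies the height $1$ LQ property by Ausoni--Rognes \cite{ausoni2002algebraic}, the theorem applies. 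It yields, for $k \gg 0$, a commuting square of cyclotomic spectra identifying $V(1)\otimes\THH(\ell^{hp^k\ZZ})$ with $V(1)\otimes\THH(\ell^{B\ZZ})$ compatibly with the coassembly maps, and it tells us $\ell^{hp^k\ZZ}$ itself satisfies the height $1$ LQ property. Consequently the coassembly square for $\TC$ (obtained by applying the $p$-completed $\TC$ functor to the square of cyclotomic spectra) is identified with the corresponding square for the trivial action, and the LQ property guarantees that $V(1)_*\TC(\ell^{hp^k\ZZ}) \to T(2)_*\TC(\ell^{hp^k\ZZ})$ has bounded-above fiber, so that a finite-range computation of $V(1)_*\TC$ determines $T(2)_*$ completely.

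Second, I would compute $V(1)_*\TC(\ell^{B\ZZ})$. Since the $\ZZ$-action is trivial, $\THH(\ell^{B\ZZ})$ is obtained from $\THH(\ell)$ by a colimit over $B\ZZ$, i.e. it is the cofiber of a circle action; concretely $\ell^{B\ZZ} \simeq \ell \otimes \SP^{B\ZZ}$ and $\THH(\ell^{B\ZZ})$ decomposes via the cyclotomic structure on $\THH(\SP^{B\ZZ}) = \THH(\Sigma^\infty_+ S^1)$. The homotopy $V(1)_*\THH(\ell)$ was computed by McClure--Staffeldt \cite{ausoni2002algebraic}; combining with the $S^1$ (or rather $B\ZZ$) bookkeeping yields $V(1)_*\THH(\ell^{B\ZZ})$ with its cyclotomic Frobenius. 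Then $\TC$ is computed by the Nikolaus--Scholze equalizer $\TC(-) = \fib(\TC^-(-) \xrightarrow{\varphi - \can} \TP(-))$; running the $\TC^-/\TP$ spectral sequences (here $v_2$-periodic, hence collapsing to a small checkable pattern after $V(1)$) produces the list of classes. The coassembly map $V(1)_*\TC(\ell^{B\ZZ}) \to V(1)_*\TC(\ell)^{B\ZZ}$ is computed by comparing these equalizers; its $K(2)$-localization (equivalently tensoring down to $T(2)$ and using that $V(1)_*\TC(\ell)$ is $K(2)$-local by \cite{hahn2022motivic}) identifies the target as $V(1)_*\TC(\ell)^{B\ZZ}$, and one reads off that the map is a projection killing the ``continuous function'' directions coming from the $B\ZZ$ direction — this is exactly the split form (1)$\oplus$(2) in the statement. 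Finally I would translate back from $\TC$ to $\mathrm{K}$ via the Dundas--Goodwillie--McCarthy variant of \cite{levy2022algebraic} applicable to $(-1)$-connective rings (noting $\ell^{hp^k\ZZ}$ is $(-1)$-connective), together with $L_{T(2)}\mathrm{K}(R) \simeq L_{T(2)}\TC(\tau_{\geq 0}R)$ from \cite{CMNN,LMMT}, so that the $\TC$ computation and the $\TC$ coassembly map transport verbatim to the asserted description of $T(2)_*\mathrm{K}(L^{hp^k\ZZ}) \to T(2)_*L_{K(2)}\mathrm{K}(L^{hp^k\ZZ})$.

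\textbf{Main obstacle.} The genuine work — and the place where the argument could go wrong — is the explicit $\TC$ computation: assembling $V(1)_*\THH(\ell^{B\ZZ})$ with its cyclotomic Frobenius and then evaluating the Nikolaus--Scholze equalizer to pin down every class $t^d\lambda_i$, $t^{pd}\lambda_i$, $t^d\lambda_1\lambda_2$, $t^{pd}\lambda_1\lambda_2$ in the listed ranges $0<d<p$, and to verify there are no hidden extensions or differentials merging the ``$\{1,\partial\}$ plus $\overline{\LCF{\Z_p^\times}}\{\partial\zeta\}$'' summand with the ``$\LCF{p\ZZ_p}$-evaluated-at-$0$'' summand. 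The $B\ZZ$ (rather than $S^1$) direction introduces the continuous-function modules $\LCF{\Z_p^\times}$ and $\LCF{p\ZZ_p}$ as the $T(2)$-homology of suspended classifying spaces of $p$-adic groups (cf. the cyclotomic extension story), and getting the precise splitting of the coassembly map as ``projection onto the non-continuous-function part'' $\oplus$ ``evaluation at $0$'' requires carefully matching the cyclotomic Frobenius on these pieces. I would expect to lean on the known $V(1)_*\TC(\ell)$ of Ausoni--Rognes as a fixed target to constrain the possibilities, reducing the problem to a finite, if intricate, bookkeeping exercise with the restriction and Frobenius maps; that bookkeeping, carried out in \Cref{sec:ht2}, is the heart of the matter.
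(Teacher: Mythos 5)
Your overall architecture matches the paper's: asymptotic constancy reduces the $k\gg 0$ coassembly map to the trivial action, the free-loop-space description of $\THH(\SP^{B\ZZ})$ produces the continuous-function modules, the Nikolaus--Scholze equalizer is evaluated, and the trace theorems of \cite{levy2022algebraic}, \cite{CMNN}, \cite{LMMT} transport the answer to $K$-theory. You also correctly locate the heart of the matter in the explicit equalizer bookkeeping. However, there is a genuine gap in your first step.

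You invoke \Cref{thm:cohconstgen} with $V=V(1)$, calling it ``a valid type $2$ spectrum.'' But the hypothesis of that theorem requires $V$ to be of type $n+2$, i.e.\ type $3$ when $n=1$; and this is not a removable formality. The height $1$ LQ property says that $V(2)\otimes\THH(\ell)$ is cyclotomically bounded; $V(1)\otimes\THH(\ell)$ is \emph{not} (its $\TC$ contains a free $\F_p[v_2]$-module, so $\TR$ is unbounded), and the entire boundedness/B\"okstedt-class machinery behind the constancy theorem fails at the level of $V(1)$. The correct route, which the paper takes, is to apply cyclotomic constancy to type $3$ complexes $V(1)/v_2^a$ for $a\gg 0$ and then recover the $V(1)_*$ and $T(2)_*$ statements via the $v_2$-self map (\Cref{lem:unmodout}, packaged as \Cref{cor:telasscon}). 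Relatedly, the paper carries out all of the $\T$-Tate and homotopy fixed point spectral sequence computations with $V(2)$ coefficients --- where the spectral sequences are small and forced --- and only at the very end runs a degenerate $v_2$-Bockstein to obtain $V(1)_*\TC$; computing $V(1)_*\TP(\ell^{B\ZZ})$ directly, as you propose, would require redoing the much harder Ausoni--Rognes differentials. Two smaller points: $\SP^{B\ZZ}$ is the cochain algebra $F(B\ZZ_+,\SP)$, not $\Sigma^\infty_+ S^1$, and it is precisely this (p-adic, Eilenberg--Moore) dualization that produces $\W\LCF{\ZZ_p}$ rather than a Laurent polynomial ring; and to identify the coassembly map with the $K(2)$-localization map you need not only that the target is $K(2)$-local (\cite{hahn2022motivic}) but also that the map is a $K(2)$-local equivalence, which requires the cyclotomic redshift/descent input of \Cref{thm:ht2disproof}, not just the equalizer computation.
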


At height $1$, \Cref{thm:main} is closely related to the failure of part of Ausoni--Rognes' original vision of chromatic redshift. Namely, they conjectured \cite[pg 4]{ausoni2002algebraic} that the map
\[V(1) \otimes K(L_{K(1)}\SP) \to V(1) \otimes K(\KU)^{h\ZZ_p^\times}\]
should have bounded above fiber, and so in particular be a $T(2)$-local equivalence. Our results imply that this map is not an equivalence $T(2)$-locally, but rather is the cyclotomic completion map. 

Nevertheless, using \cite{lee2023topological} we prove for $p\geq 5$ and \emph{all} $k \ge 0$ that $\ell^{hp^k\Z}$ satisfies the height $1$ LQ property. When combined with \cite{levy2022algebraic} this implies the following result, which can be considered a replacement for the Ausoni--Rognes conjecture:

\begin{theorem}\label{thm:kk1intro}
	For $p\geq 5$, the map $$V(1) \otimes K(L_{K(1)}\SP) \to V(1)[v_2^{-1}]\otimes K(L_{K(1)}\SP)$$
	has bounded above fiber.
\end{theorem}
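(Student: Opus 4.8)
The plan is to combine the trace-theoretic identification of $K(L_{K(1)}\SP)$ from \cite{levy2022algebraic} with a height $1$ Lichtenbaum--Quillen statement for $\ell^{hp^k\ZZ}$, reducing the claim to a boundedness assertion for $V(1) \otimes \TC$. First I would recall that, for $p \ge 5$ and $p$ odd, the $p$-completion of $\ell^{h\ZZ}$ (with $\ZZ$ acting by the relevant Adams operations) is the $(-1)$-connective cover of $L_{K(1)}\SP$, and that by the trace theorems of \cite{levy2022algebraic} together with the d\'evissage results of \cite{burklund2023k}, there is an identification relating $K(L_{K(1)}\SP)$ to $\TC(\ell^{h\ZZ})$ up to bounded-above and suitably controlled error terms; in particular $V(1) \otimes K(L_{K(1)}\SP)$ differs from $V(1) \otimes \TC(\ell^{h\ZZ})$ by a spectrum that is bounded above, hence on which inverting $v_2$ is the zero map. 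Thus it suffices to prove that the $v_2$-localization map $V(1) \otimes \TC(\ell^{h\ZZ}) \to V(1)[v_2^{-1}] \otimes \TC(\ell^{h\ZZ})$ has bounded-above fiber.

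The crux is then to show that $\ell^{h\ZZ}$ — and more generally $\ell^{hp^k\ZZ}$ for all $k \ge 0$ — satisfies the height $1$ LQ property. The input here is the computation of Lee and the third author \cite{lee2023topological} that $V(1) \otimes \THH(\ell^{hp^k\ZZ})$ has the same homotopy ring as $V(1) \otimes \THH(\ell^{\mathrm{B}\ZZ})$, which at its core reflects the triviality of the $\ZZ$-action on $\ell/(p,v_1)$. Knowing $\ell$ itself satisfies the height $1$ LQ property (Ausoni--Rognes \cite{ausoni2002algebraic}, or \cite{hahn2020redshift}), and that $\ell^{h\ZZ}$ is connective, $p$-complete, and of fp-type $1$ with a locally unipotent $\ZZ$-action (the action being unipotent on $\pi_* \ell / p$ since Adams operations act unipotently), I would invoke \Cref{thm:cohconstgen} with $R = \ell$: the theorem gives that $\ell^{hp^k\ZZ}$ satisfies the height $1$ LQ property for $k \gg 0$. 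To upgrade ``$k \gg 0$'' to ``all $k \ge 0$'' one notes that LQ for a larger $p^k\ZZ$-fixed-point ring can be propagated downward: $\ell^{hp^j\ZZ} = (\ell^{hp^k\ZZ})^{h(p^j\ZZ / p^k\ZZ)}$ is a finite homotopy fixed point of an LQ ring, and a finite limit of ($V \otimes \TR$)-bounded cyclotomic spectra is again bounded — alternatively, one re-runs the Lee--Levy computation directly at each $k$, as \cite{lee2023topological} does. In particular $\ell^{h\ZZ}$ (the $k=0$ case) has the height $1$ LQ property.

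Finally, unwinding the definition of the height $1$ LQ property: it says $V \otimes \TR(\ell^{h\ZZ})$ is bounded for $V$ a finite type $3$ complex, equivalently that $V(1) \otimes \THH(\ell^{h\ZZ})$ is bounded in the Antieau--Nikolaus $t$-structure on $\CycSp$, which as recorded in the excerpt implies that for a type $2$ complex $U = V(1)$ with $v_2$-self map $v$, the map $U \otimes \TC(\ell^{h\ZZ}) \to U[v^{-1}] \otimes \TC(\ell^{h\ZZ}) = T(2) \otimes \TC(\ell^{h\ZZ})$ has bounded-above fiber. Combining with the first paragraph, $V(1) \otimes K(L_{K(1)}\SP) \to V(1)[v_2^{-1}] \otimes K(L_{K(1)}\SP)$ has bounded-above fiber, which is the assertion. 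The main obstacle is the middle step — establishing the height $1$ LQ property for the fixed-point rings $\ell^{hp^k\ZZ}$ at all $k$, i.e. making sure the asymptotic-constancy machinery of \Cref{thm:cohconstgen} (or the explicit $\THH$ computation of \cite{lee2023topological}) genuinely applies and that the descent from $k \gg 0$ to $k = 0$ is legitimate; the trace-method reduction to $\TC(\ell^{h\ZZ})$ and the purely formal passage from LQ to the bounded-fiber statement are comparatively routine given the cited results.
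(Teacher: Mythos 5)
Your overall architecture (trace methods to reduce to $\TC(\ell^{h\ZZ})$, then a height $1$ LQ statement for $\ell^{hp^k\ZZ}$, then the formal passage from LQ to bounded fiber) matches the paper's, but your proposed route to the LQ property at \emph{all} $k$ contains a genuine error. You suggest propagating LQ downward from $k\gg 0$ by writing $\ell^{hp^j\ZZ} = (\ell^{hp^k\ZZ})^{h(p^j\ZZ/p^k\ZZ)}$ and appealing to the claim that a finite limit of bounded cyclotomic spectra is bounded. But $V\otimes\THH(\ell^{hp^j\ZZ})$ is \emph{not} a finite limit of copies of $V\otimes\THH(\ell^{hp^k\ZZ})$: $\THH$ does not commute with homotopy fixed points, and the discrepancy is measured by exactly the coassembly maps whose failure to be equivalences is the central theme of this paper (cf.\ \Cref{cor:nilthicksub}). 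So this step does not go through. Your fallback — run the computation at each $k$ — is the right idea and is what the paper actually does, but note that \cite{lee2023topological} does not itself prove LQ; it proves the Segal condition and computes $V(1)_*\THH(\ell^{hp^k\ZZ})$, and the paper feeds these as hypotheses (2) and (3) into \Cref{prop:Rhz-cyc-bounded}, with hypothesis (1) supplied by Ausoni--Rognes for $\ell$ via \Cref{lem:coassemblygenTHH}. This yields cyclotomic boundedness uniformly in all $k\geq 0$ and entirely bypasses the asymptotic constancy theorem \Cref{thm:cohconstgen}, which would only give $k\gg 0$.

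Separately, your reduction from $K(L_{K(1)}\SP)$ to $\TC(\ell^{h\ZZ})$ is not as routine as you claim. The results of \cite{levy2022algebraic} give a cofiber sequence $K(\F_p)\to K(\ell^{h\ZZ})\to K(L_{K(1)}\SP)$ together with a pullback square comparing $K(\ell^{h\ZZ})$ with $\TC(\ell^{h\ZZ})$ over $K(\Z_p^{h\ZZ})\to\TC(\Z_p^{h\ZZ})$. The term $K(\F_p)$ is harmless, but the fiber of $K(\Z_p^{h\ZZ})\to\TC(\Z_p^{h\ZZ})$ is a genuine error term whose bounded-aboveness after tensoring with $V(1)$ is not formal: the paper reduces it via \cite{land2023k} to the corresponding statement for $\Z_p[x]$ (essentially $NK(\Z_p)$) and then invokes the hyperdescent/Lichtenbaum--Quillen results of \cite{clausen2021hyperdescent}. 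This input is absent from your proposal, so the first paragraph of your argument as written has an unjustified step. The paper organizes all of this by observing that asymptotically $L_2^f$-local spectra form a thick subcategory and checking each of the three pieces separately.
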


\begin{rmk}
Many of our results about $V(1)_*\TC(\ell^{hp^k\ZZ})$ suggest approachable lines of further investigation. For example, one might study $\TC(\ell^{hp^k\Z})$ modulo larger powers of $p$ and $v_1$, for small $k$, or at small primes.
\end{rmk}

\subsection{Forthcoming work}

Although this paper disproves the telescope conjecture at all heights at least $2$ and all primes $p$, many parts of the argument simplify in the case $n=1,p\geq7$. For example, the full strength of \Cref{thm:cohconstgen} is not needed, and can be replaced by a more elementary $\pi_*$-level statement for a Smith--Toda complex $V(2)$. Moreover, the application of cyclotomic redshift is more direct in this case, and the construction of Adams operations is classical. We intend to write a shorter paper expositing a more efficient approach to this simplest case of our theorem.

In forthcoming work of Shachar Carmeli, Lior Yanovski, and the four authors, we aim to explore  additional consequences that our disproof of the telescope conjecture has for stable homotopy theory. In particular, we intend to prove the following three statements for $n \ge 1$:

	\begin{enumerate}
		\item The kernel of $\Pic(\Sp_{T(n+1)}) \to \Pic(\Sp_{K(n+1)})$ is infinite.
		\item For any nonzero telescope $T(n+1)$, there exists some integer $k$ such that $\pi_kT(n+1)$ is not a finitely generated abelian group.
		\item For every non-zero finite $p$-local spectrum $X$ (e.g., 
$X  = \SP_{(p)}$), there exists some $C >0$ such that 
 \[\lim_{M\to \infty}\sum\limits_{i=M}^{M+C} \dim_{\Fp}(\pi_i(X)\otimes \Fp) = \infty.\]
		\end{enumerate}

\subsection{Contents}

We now briefly describe the contents of the paper. In \Cref{sec:bounded}, we begin by reviewing the modern approach to the theory of cyclotomic and polygonic spectra, as developed in the foundational papers \cite{NS}, \cite{antieau-nikolaus}, and \cite{krause2023polygonic}. We then study finiteness and boundedness properties of cyclotomic spectra relevant to understanding the Lichtenbaum--Quillen property. In \Cref{sec:cochaincircle}, we prove \Cref{prop:thicksubcat} and study the cyclotomic spectrum $\THH(\SP^{B\ZZ})$, a key universal ring over which the cyclotomic spectra $\THH(\BP\langle n\rangle^{hp^k\ZZ})$ are modules. We also prove \Cref{prop:thicksubcat} and discuss how it is related to the failure of $\A^1$-invariance, nilinvariance, and hyperdescent in $K(n+1)$-local $K$-theory for $n+1\geq2$. In \Cref{sec:tame}, we prove \Cref{thm:E1A2algtame}, which in particular implies \Cref{thm:cohconstgen}. We do this by first proving a version of \Cref{thm:cohconstgen} at the level of spectra with Frobenius (rather than cyclotomic spectra). We then use the results of the previous sections to bootstrap this to the level of cyclotomic spectra. In \Cref{sec:adamsop}, we construct Adams operations on $\BP\langle n \rangle$ as $\EE_1\otimes \A_2$-algebra automorphisms. The key ingredients here are the $\EE_3$-$\MU$-algebra structure on $\BP\langle n \rangle$ of \cite{hahn2020redshift} and the stable Adams conjecture \cite{Friedlander,clausen2012padic,bhattacharya2022stable}.  In Section \ref{sec:mainthm}, we combine the main theorems of the previous sections, cyclotomic redshift, and other results to disprove the telescope conjecture. 
In \Cref{sec:ht2}, we completely compute the $\TC$ coassembly map for the $p^k\ZZ$-action on $\ell$, mod $(p,v_1)$ and for $p\geq7$, $k\gg0$, in particular proving \Cref{thm:introheight2}. We also prove the height $1$ LQ property for $\ell^{hp^k\ZZ}$ when $p\geq 5$, for all $k\geq0$, and prove \Cref{thm:kk1intro}. Finally, in \Cref{sec:app}, we include material much of which is well known but not treated to the extent needed in the literature.

\subsection*{Acknowledgements}

The authors would like to thank \"Ozg\"ur Bayindir, Shachar Carmeli, Dustin Clausen, Sanath Devalapurkar, Mike Hopkins, Branko Juran, Achim Krause, David Jongwon Lee, Akhil Mathew, Arpon Raksit, Maxime Ramzi, Doug Ravenel, Noah Riggenbach, John Rognes, Andrew Senger, Dylan Wilson, Tristan Yang, Lior Yanovski, and Allen Yuan for discussions related to this paper. We would also like to thank Shachar Carmeli, Shai Keidar, Shay Ben-Moshe, Shaul Ragimov, Doug Ravenel, and Lior Yanovski for comments on an earlier draft of the paper. The second author was supported by the Sloan Foundation and by a grant from the Institute for Advanced Study School of Mathematics. The third author was supported by the NSF Graduate Research Fellowship under Grant No. 1745302. The fourth author was supported by ISF1588/18, BSF 2018389 and the ERC under the European Union's Horizon 2020 research and innovation program (grant agreement No. 101125896).

The first author was supported by NSF grant DMS-2202992. The second author was supported by the Sloan Foundation and by a grant from the Institute for Advanced Study School of Mathematics. The third author was supported by the NSF Graduate Research Fellowship under Grant No. 1745302. The fourth author was supported by ISF1588/18, BSF 2018389 and the ERC under the European Union's Horizon 2020 research and innovation program (grant agreement No. 101125896).

\subsection*{Notations and Conventions}\label{subsec:conv}

\begin{enumerate}
\item We fix a prime $p$, and use $n$ to refer to a chromatic height.
\item We use $\Map_\CC$ to denote mapping spaces in a category $\CC$, and $\map_\CC$ to denote mapping spectra. We drop the $\CC$ when it is clear from context.
\item We use $\SP_{T(n)}$ to refer to the $T(n)$ local sphere.
\item We define constants $m_p^{\A_2}$, $m_p^{hc}$ and $m_p^{\E_1}$ by
\[ m_p^{\A_2} \coloneqq \begin{cases} 2 & p=2 \\ 1 & p \geq 3 \end{cases} \quad m_p^{hc} \coloneqq \begin{cases} 3 & p=2 \\ 2 & p=3 \\ 1 & p \geq 5 \end{cases} \quad m_p^{\E_1} \coloneqq \begin{cases} 3 & p=2 \\ 2 & p \geq 3 \end{cases}. \]
The significance of $m_p^{\A_2}$ (resp. $m_p^{hc}$, $m_p^{\E_1}$) is that for $k \geq m_p^{\A_2}$ ($m_p^{hc}$, $m_p^{\E_1}$) the Moore spectrum $\Ss/p^k$ admits an $\A_2$-algebra (hcring, $\E_1$-algebra) structure \cite{OkaMult} \cite{burklund2022multiplicative}.
\item If $R$ is a commutative ring, we write $R[x]$ for the polynomial algebra over $R$ with generator $x$.
\item If $R$ is a commutative ring, we write $R\langle \epsilon \rangle$ for the exterior algebra over $R$ with generator $\epsilon$.
\item Unless stated otherwise, spectra we consider are implicitly $p$-completed. We denote the category of $p$-complete spectra by $\Sp$.
We denote the $p$-complete sphere by $\SP\in \Sp$. 
\item We use $\Sp^{\omega}$ and $\Sp^{\diamondsuit}$ to denote the categories of compact and dualizable objects in $\Sp$, respectively.
\item If $G$ is an $\E_1$-monoid in spaces, then we use the term $G$-spectra for the category $\Fun(BG, \Sp)$ of functors from the classifying category of $G$ to the category of $p$-complete spectra.
\item We write $\T$ for the compact Lie group $U(1)$.
\item We write $(w) : \T \to \C^\times$ for the weight $w$ character of $\T$,
  $\Ss^{(w)}$ for the associated representation sphere, and
  $a_{(w)} : \Ss^0 \to \Ss^{(w)}$ for the Euler class.
\item We denote by 
\[\CycSp \coloneqq \mathrm{LEq}\left(\Sp^{B\T}\rightrightarrows \Sp^{B\T}\right)\]
the $p$-completion of the category of $p$-typical cyclotomic spectra of \cite{NS}.
\item Similarly the functors
  \[\THH\colon \Alg(\Sp) \to \CycSp\]\[\TC,\TCm,\TP,\mathrm{TR}\colon \CycSp \to \Sp\]
  are all implicitly $p$-completed, as is the functor $K:\Alg(\Sp) \to \Sp$.
\item For $\CC$ a presentably symmetric monoidal category, we use $\THH_\CC:\Alg(\CC) \to \CC$ to denote the $\THH$ relative to $\CC$ functor. For $R \in \CAlg(\CC)$ and $A \in \Alg(\Mod_{\CC}(R)))$, we use $\THH(A/R)$ to mean $\THH_{\Mod(\CC;R)}(A)$.
\item We write $\Perf$ for the category of perfect commutative $\F_p$-algebras.
\item Following \cite[Sec. 5.2]{ECII} and \cite[Prop. 2.2, Cons. 2.33]{chromaticNSTZ},
  there is an adjunction
  \[ \W(-) \colon \Perf \rightleftarrows \CAlg(\Sp) \noloc {\pflat}(-) \]
  where the right adjoint $\pflat$ can be computed as the inverse limit along Frobenius on the commutative ring $\pi_0(-)/p$.
\item \label{item:firsttop} Given a topological space $X$, we write $\LCF{X}$ for the ring of continuous (that is, locally constant) functions from $X$ to $\FF_p$.
\item We write $\overline{\LCF{X}}$ for the quotient $\LCF{X}/\F_p$, where we view $\F_p$ as a subset of $\LCF{X}$ via the inclusion of constant functions. 
\item Given a continuous map $f:Y \to X$ of topological spaces, restriction of functions gives a corresponding ring homomorphism $\LCF{X} \to \LCF{Y}$. We denote this by $\res_f$,  or just $\res$ if $f$ is clear, and call it the restriction map. 
\item \label{item:lasttop} In the case where $f : Y \to X$ is the inclusion of a subspace we write $(-)_{|Y}$ for base change along the map $\W(\res_f)$.
\item For $n \ge 1$, $\BPn$ refers to an $\E_3$-$\MU_{(p)}$-algebra form of the truncated Brown--Peterson spectrum as constructed in \cite[\S 2]{hahn2020redshift}.\label{item:BPn}
\item $E_n$ refers to the height $n$ Lubin--Tate theory constructed by Goerss--Hopkins--Miller \cite[Theorem 5.0.2]{ECII}, associated to the (unique up to isomorphism) height $n$ formal group over $\Fpbar$. \label{item:En} 
\item $\G_n$ refers to the height $n$ extended Morava stabilizer group, which acts on $E_n$ and fits into a short exact sequence
$$1 \to \cO_D^{\times} \to \G_n \to \Gal(\FF_p) \to 1.$$ Here, $\cO_D^{\times}$ is the units in the ring of integers of the division algebra over $\QQ_p$ of Hasse invariant $\frac 1 n$, and $\Gal(\FF_p) \cong \hat{\ZZ}$ is the absolute Galois group of $\FF_p$.\label{item:Gn}
\end{enumerate}

\addtocontents{toc}{\protect\setcounter{tocdepth}{2}}

\section{Cyclotomic spectra}
\label{sec:bounded}


Throughout this paper we use cyclotomic spectra heavily.
We follow the modern approach to cyclotomic spectra developed in \cite{NS}, and, in \Cref{subsec:dehn}, we also use the theory of $p$-polygonic spectra of  \cite{krause2023polygonic}.
In this section, we begin by recalling the basics of these theories.
The bulk of the section is then dedicated to relating boundedness in the cyclotomic $t$-structure of \cite{antieau-nikolaus} to other ``boundedness properties'' for cyclotomic spectra. In particular, we introduce quantitative versions of the properties studied by the second author and Wilson in \cite{hahn2020redshift} in order to prove the Lichtenbaum--Quillen property for $\BPn$. The main result of \Cref{subsec:almost perfect} is that being almost compact (with respect to the cyclotomic $t$-structure) is a relatively mild condition on a cyclotomic spectrum: for $p$-nilpotent cyclotomic spectra, it is equivalent to the underlying spectrum being almost compact.



\subsection{Preliminaries}

\subsubsection{Equivariant spectra}

Expanding on the conventions section above, we set out notation for working in the category $\Sp^{B\T}$ of $p$-complete spectra with $\T$-action that we will use throughout the paper.

For $w$ a non-negative integer, we denote by \mdef{$\T(w)$} the circle with action given by the quotient $\T/C_w$, where $C_w \to T$ is the cyclic subgroup of order $w$. 
The projection $\T(w) \to *$ 
gives rise to a cofiber sequence
$ \SP[\T/C_w]=\Sigma^{\infty}_+\T(w) \to \SP \to \mdef{\SP^{(w)}} $.
The underlying spectrum of $\SP^{(w)}$
is $\SP^2$, and $\SP^{(w)} \in \Sp^{B\T}$ is therefore tensor invertible. 
For $k\in \ZZ$, we denote by \mdef{$\Sigma^{k(w)}(-)$} the functor of tensoring with $\mdef{\SP^{k(w)}} \coloneqq (\SP^{(w)})^{\otimes k}$.  
Applying $\Sigma^{-(w)}$ to the final map in the above cofiber sequence yields an Euler class
\[\mdef{a_{(w)}}\colon\SP^{-(w)} \to \SP.\]

We next consider the following filtered $\T$-spectrum:
\[ 
\cdots \rightarrow \SP^{-m(1)} \xrightarrow{a_{(1)}} \cdots \xrightarrow{a_{(1)}} \SP^{-2(1)} \xrightarrow{a_{(1)}} 
\SP^{-(1)} \xrightarrow{a_{(1)}} 
\SP. 
\]
For any $X \in \Sp^{\mathrm{B}\T}$, the tensor product of $X$ with the above filtered spectrum gives the \tdef{$a_{(1)}$-Bockstein filtration} on $X$.  Taking
$\T$-fixed points then gives rise to the homotopy fixed point spectral sequence computing $\pi_*X^{h\T}$.\footnote{To compare this construction of the $\T$-homotopy fixed point spectral sequence with another the reader prefers it may help to note that the cofiber of $a_{(1)}^m : \Ss^{-m(1)} \to \Ss$ is isomorphic to $\SP^{S^{2m-1}}$, where the action on $S^{2m-1}$ is as the unit sphere in $\mathbb{C}^{m}$.} Similarly, the usual $\T$-Tate spectral sequence may be identified with the $a_{(1)}$-inverted $a_{(1)}$-Bockstein spectral sequence. 

 Additionally, for $k \geq 0$ we will often use the category $\Sp^{BC_{p^k}}$ of $p$-complete $C_{p^k}$-spectra. We view $C_{p^k} \subset \T$ as the cyclic subgroup of order $p^k$ and will often study $X^{hC_{p^k}},$ $X_{hC_{p^k}}$, and $X^{tC_{p^k}}$ when $X$ is a $\T$-equivariant spectrum. When we use these notations with $k=\infty$, we mean $\Sp^{B\T}$, $X^{h\T},$ $X_{h\T}$, and $X^{t\T}$, respectively. When working in $\Sp^{BC_{p^k}}$ for $k<\infty$, we write $\SP^{(w)},$ $\Sigma^{k(w)}$, and $a_{(w)}$ for the images of the corresponding objects under the restriction map $\Sp^{B\T} \to \Sp^{BC_{p^k}}$.





Given $1 \leq k\leq \infty,$ the $C_p$-Tate construction is a functor 
\[(-)^{tC_p}\colon \Sp^{BC_{p^k}} \to \Sp^{B(C_{p^k}/C_p)} =\Sp^{BC_{p^{k-1}}}  .\]
In the case $k=\infty$, we identify $\T/C_p$ with $\T$ by rescaling, and so consider $(\--)^{tC_p}$ as an endofunctor of $\Sp^{B\T}$.

We record an elementary lemma:

\begin{lem}\label{lem:a1tcp}
    For any $X \in \Sp^{B\T}$, $1 \leq j \leq \infty$, we have $(\Ss^{\T}\otimes X)^{tC_{p^j}} \cong \Sigma^{\infty}_+\T^{tC_{p^j}}\otimes X \cong 0$, so the functors $(-)^{tC_{p^j}}$ take $a_{(1)}\colon X \to \Sigma^{(1)}X$ to an isomorphism.
\end{lem}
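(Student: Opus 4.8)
The key input is that $\mathbb{T}$, regarded as a $\mathbb{T}$-space via left translation, is a free $\mathbb{T}$-space, so $\Sigma^\infty_+ \mathbb{T}$ is an induced (hence ``free'') $\mathbb{T}$-spectrum; concretely $\Sigma^\infty_+\mathbb{T} \simeq \mathbb{T}_+ \wedge \mathbb{S}$ with $\mathbb{T}$ permuting the first factor. The plan is to exploit the fact that the Tate construction $(-)^{tC_{p^j}}$ vanishes on induced spectra. First I would record the identification $\mathbb{S}^{\mathbb{T}} \otimes X \coloneqq \Sigma^\infty_+\mathbb{T} \otimes X$, where the tensor is formed in $\Sp^{B\mathbb{T}}$ with the diagonal action; by the projection formula (untwisting the diagonal action on an induced object) this is equivalent to $\Sigma^\infty_+\mathbb{T} \otimes \mathrm{res}_e X$ with $\mathbb{T}$ acting only through the $\mathbb{T}$-factor, i.e. it is induced from the trivial group. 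Then for any $1 \le j \le \infty$ the restriction to $C_{p^j} \subset \mathbb{T}$ is still induced from the trivial subgroup, so $(\Sigma^\infty_+\mathbb{T}\otimes X)^{tC_{p^j}} \simeq (\Sigma^\infty_+\mathbb{T}\otimes X)_{hC_{p^j}}[\,\cdot\,] \to (\cdots)^{hC_{p^j}}$ has vanishing cofiber — more directly, the Tate construction of any spectrum induced from the trivial subgroup of a finite group $C_{p^j}$ is zero, and for $j = \infty$ one reduces to the finite case since $(-)^{t\mathbb{T}}$ is computed as a limit (or colimit) over the $(-)^{tC_{p^j}}$ together with the fact that $\Sigma^\infty_+\mathbb{T}$ is already $C_p$-Tate acyclic so the $\mathbb{T}$-Tate construction vanishes as well. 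This gives the first displayed equivalence $(\mathbb{S}^{\mathbb{T}}\otimes X)^{tC_{p^j}} \simeq \Sigma^\infty_+\mathbb{T}^{tC_{p^j}} \otimes X$ and the vanishing $\simeq 0$.

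For the final clause, recall from the conventions that $a_{(1)}\colon \mathbb{S}^{-(1)} \to \mathbb{S}$ sits in the cofiber sequence $\Sigma^\infty_+\mathbb{T} \to \mathbb{S} \to \mathbb{S}^{(1)}$ (after twisting by $\Sigma^{-(1)}$); tensoring with $X$ yields the cofiber sequence $\mathbb{S}^{\mathbb{T}}\otimes X \to X \to \Sigma^{(1)}X \otimes \Sigma^{-(1)}\mathbb{S}^{(1)}$, i.e. the cofiber of $a_{(1)}\colon \Sigma^{-(1)}X \to X$ is $\Sigma^\infty_+\mathbb{T}\otimes \Sigma^{-(1)}X$ up to the evident twist, hence equivalent to $\mathbb{S}^{\mathbb{T}} \otimes (\text{a shift of } X)$. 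Applying the exact functor $(-)^{tC_{p^j}}$ to this cofiber sequence and using the vanishing just established shows that $(-)^{tC_{p^j}}$ sends $a_{(1)}$ to an equivalence.

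I do not anticipate a serious obstacle here — the statement is genuinely elementary, as the authors note. The only point requiring a little care is the $j = \infty$ case: one must make sure that the identification $\mathbb{T}/C_p \cong \mathbb{T}$ by rescaling, used to view $(-)^{tC_p}$ and ultimately $(-)^{t\mathbb{T}}$ as endofunctors of $\Sp^{B\mathbb{T}}$, does not interfere with the acyclicity argument; but since $\Sigma^\infty_+\mathbb{T}$ restricted to $C_p$ is already induced from the trivial group, $(\Sigma^\infty_+\mathbb{T}\otimes X)^{tC_p} \simeq 0$ in $\Sp^{B\mathbb{T}}$, and then $(-)^{t\mathbb{T}}$ of anything built from such a Tate construction vanishes as well; alternatively one invokes that $\Sigma^\infty_+\mathbb{T}\otimes X$ is a free $\mathbb{T}$-spectrum and free $\mathbb{T}$-spectra are $(-)^{t\mathbb{T}}$-acyclic. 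The rest is formal manipulation of cofiber sequences.
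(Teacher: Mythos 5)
Your proof is correct and is essentially the paper's argument: the paper's entire proof is the one-line observation that $\T$ admits a finite equivariant cell decomposition by free $C_{p^j}$-cells, which is exactly your "induced, hence Tate-acyclic" reasoning (your second option for $j=\infty$, acyclicity of $(-)^{t\T}$ on free $\T$-spectra, is the right one to use, since the limit description of $(-)^{t\T}$ requires boundedness hypotheses). The only nitpick is that the restriction of $\Sigma^\infty_+\T$ to $C_{p^j}$ is built from finitely many induced cells rather than being literally induced from the trivial subgroup, but this makes no difference to the vanishing of the Tate construction.
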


\begin{proof}
  This follows because $\T$ has a finite equivariant cell decomposition by free $C_{p^j}$ cells.
\end{proof}

\begin{dfn}\label{dfn:sigmaoperator}
  We recall that any $\T$-spectrum $X$ has a \tdef{Connes operator},
  which is a degree $1$ self-map of non-equivariant spectra 
  \[\sigma \colon \Sigma X \to X.\]
  Viewing $X$ as a module over the group ring $\Ss[\T]$ the Connes operator is given by
  multiplication by the class $\sigma \in \pi_1(\Ss[\T])$ coming from the (pointed)
  identity map $S^1 \to \T$.  
  We also use $\sigma$ to refer to induced operator
  $\sigma\colon\pi_iX \to \pi_{i+1}X$.
\end{dfn}







\subsubsection{Cyclotomic spectra}
We work with the $p$-complete $p$-typical variants of cyclotomic spectra, defined in \cite{NS}. 

\begin{dfn} The category of cyclotomic spectra we use is defined as the lax equalizer
\[\mdef{\CycSp} \coloneqq \mathrm{LEq}\left( \xymatrix{\Sp^{B\T} \ar[r]<2pt> \ar[r]<-2pt> & \Sp^{B\T}} \right)\] 
of the identity functor and the functor $(-)^{tC_p}$.
We use $\mdef{\CycSp_{+}}$ to denote the full subcategory of bounded below objects. In other words, to give a cyclotomic spectrum is to give an $X \in \Sp^{B\T}$ with a map $\varphi_X:X\to X^{tC_p}$ in $\Sp^{B\T}$.
\end{dfn}

\begin{dfn}
	Given $X\in \CycSp$, we let
	$\mdef{\TC^{-}(X)} \coloneqq X^{h\T}$,
	$\mdef{\TP(X)} \coloneqq X^{t\T}$ and
	\[\mdef{\TC(X)}\coloneqq \map_{\CycSp}(\SP,X).\]
	
	We moreover have (see \cite[Example 3.4, Construction 3.18]{antieau-nikolaus}):
	\[\mdef{\TR^{k+1}(X)} := X^{hC_{p^k}} \times_{(X^{tC_p})^{hC_{p^{k-1}}}} \dots \times_{(X^{tC_p})^{hC_{p^2}}} X^{hC_{p^2}} \times_{(X^{tC_p})^{hC_p}} X^{hC_{p}}\times_{X^{tC_p}}X.\]
	and further define $\mdef{\TR(X)}\coloneqq \lim_k\TR^{k}(X)$.
	
	For $X\in \CycSp_+$, we recall that the natural map $ X^{tC_{p^{k+1}}}\to (X^{tC_p})^{hC_{p^k}}$ is an equivalence \cite[Lemma II.4.1]{NS}, simplifying the formula above.
\end{dfn}

$\TC(X)$ is computed for $X \in \CycSp$ by the equalizer
\[ \begin{tikzcd}
	{\TC(X)} & {\TC^{-}(X)} & {(X^{tC_p})^{h\T}}
	\arrow["{\mathrm{can}}"', shift right, from=1-2, to=1-3]
	\arrow["{\mathrm{\varphi}}", shift left, from=1-2, to=1-3]
	\arrow[from=1-1, to=1-2]
\end{tikzcd} \]

where the maps are obtained by applying $(-)^{h\T}$ to the maps $\varphi:X \to X^{tC_p}$ and the map $\mathrm{can}:X^{hC_p} \to X^{tC_p}$. When $X\in \CycSp_+$, the map $\TP(X) \to (X^{tC_p})^{h\T}$ is an equivalence \cite[Lemma II.4.2]{NS}, and $\mathrm{can}$ can be identified with the natural map $\TC^{-} \to \TP$.

There is a natural Frobenius endomorphism \cite[Construction 3.18]{antieau-nikolaus} $F:\TR \to \TR$ such that for $X \in \CycSp$, $\TC(X)$ is also computed as the equalizer

\[ \begin{tikzcd}
	{\TC(X)} & {\TR(X)} & \TR(X)
	\arrow["{1}"', shift right, from=1-2, to=1-3]
	\arrow["{F}", shift left, from=1-2, to=1-3]
	\arrow[from=1-1, to=1-2]
\end{tikzcd} \]

\begin{exm}\label{lem:describe_crtr}
  We define a cyclotomic spectrum $\crTR$\footnote{Our notation is meant to suggest it comes from applying the functor $L_{\langle p^{\infty}\rangle}$ between $p^{\infty}$-polygonic and cyclotomic spectra of \cite{krause2023polygonic} to $\SP$. However we do not use this fact.}. Its underlying spectrum is 
  \[\bigoplus_{j \geq 0} \Ss[\T/C_{p^j}],\] and the Frobenius is the isomorphism (by the Segal conjecture) given by the sum of the composites\footnote{Note that the coassembly map pulling the sum inside the $(-)^{tC_p}$ is an isomorphism by \cite[Corollary 6.7]{yuan2023integral}.}
  \[ \Ss[\T/C_{p^j}] \rightarrow (\Ss[\T/C_{p^{j+1}}])^{hC_p} \to (\Ss[\T/C_{p^{j+1}}])^{tC_p}.\qedhere\]
\end{exm}

%
%

The following can be compared to \cite[Theorem 6.5]{blumberg2016homotopy}:
\begin{lem}\label{lem:trcorep}
  The cyclotomic spectrum $\crTR$ co-represents $\TR(-)$ in $\CycSp_+$
\end{lem}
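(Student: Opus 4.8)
The goal is to show that for any bounded below cyclotomic spectrum $Y$, there is a natural equivalence $\map_{\CycSp}(\crTR, Y) \simeq \TR(Y)$. The plan is to unpack both sides in terms of the $\Sp^{B\T}$-level data and match them. First I would recall that a cyclotomic spectrum is an object $X \in \Sp^{B\T}$ together with a map $\varphi_X \colon X \to X^{tC_p}$, and that mapping spectra in $\CycSp = \mathrm{LEq}(\Sp^{B\T} \rightrightarrows \Sp^{B\T})$ are themselves computed as lax equalizers: $\map_{\CycSp}(X,Y)$ is the fiber of
\[
\map_{\Sp^{B\T}}(X,Y) \xrightarrow{\ (\varphi_Y)_* - (\varphi_X \text{-twist})\ } \map_{\Sp^{B\T}}(X, Y^{tC_p}).
\]
So the computation reduces to understanding $\map_{\Sp^{B\T}}(\crTR, Y)$ and $\map_{\Sp^{B\T}}(\crTR, Y^{tC_p})$ together with the relevant maps between them.

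Since the underlying $\T$-spectrum of $\crTR$ is $\bigoplus_{j \geq 0} \Ss[\T/C_{p^j}]$, and mapping out of $\Ss[\T/C_{p^j}] = \Sigma^\infty_+ \T(p^j)$ in $\Sp^{B\T}$ computes $C_{p^j}$-homotopy fixed points (via the adjunction $\Sigma^\infty_+ \T(p^j) \otimes (-) \dashv (-)^{hC_{p^j}}$, using $\T/C_{p^j} \simeq \T$), we get
\[
\map_{\Sp^{B\T}}(\crTR, Y) \simeq \prod_{j \geq 0} Y^{hC_{p^j}}, \qquad \map_{\Sp^{B\T}}(\crTR, Y^{tC_p}) \simeq \prod_{j \geq 0} (Y^{tC_p})^{hC_{p^j}}.
\]
Here I use that $Y$ is bounded below, so the coproduct over $j$ that defines $\crTR$ turns into a product after applying $\map(-,Y)$ compatibly — but in fact it's a product regardless since $\map$ out of a coproduct is a product. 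Next, one identifies the two maps in the lax-equalizer presentation: the ``canonical'' map $\prod_j Y^{hC_{p^j}} \to \prod_j (Y^{tC_p})^{hC_{p^j}}$ induced by $\varphi_Y$ composed with the $\T$-equivariant structure maps of $\crTR$, versus the Frobenius of $\crTR$, which by construction is the Segal-conjecture isomorphism built from the composites $\Ss[\T/C_{p^j}] \to (\Ss[\T/C_{p^{j+1}}])^{hC_p} \to (\Ss[\T/C_{p^{j+1}}])^{tC_p}$. Mapping $Y$ against the latter yields exactly the restriction maps $Y^{hC_{p^{j+1}}} \to (Y^{tC_p})^{hC_{p^j}}$ that appear in the iterated pullback defining $\TR^{k+1}(Y)$, after using the equivalence $Y^{tC_{p^{k+1}}} \simeq (Y^{tC_p})^{hC_{p^k}}$ for $Y \in \CycSp_+$ cited in the excerpt.

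Assembling: the fiber of $(\text{can}) - (\text{Frob-induced})$ on $\prod_j Y^{hC_{p^j}} \to \prod_j (Y^{tC_p})^{hC_{p^j}}$ is precisely the limit of the tower whose finite stages are the $\TR^{k+1}(Y)$, i.e. $\TR(Y) = \lim_k \TR^k(Y)$; this uses the standard rewriting of an iterated fiber product as the equalizer of a shift map on a product, exactly as in the definition of $\TR$ recalled before the lemma. Naturality in $Y$ is automatic since every identification used is natural. The main obstacle, and the step requiring care, is the second one: precisely matching the Frobenius map of $\crTR$ — which is packaged via the Segal conjecture isomorphism and the coassembly compatibility of \cite[Corollary 6.7]{yuan2023integral} — against the structural restriction maps in the pullback formula for $\TR$, making sure the $\T$-equivariance and the rescaling identifications $\T/C_p \simeq \T$ are tracked consistently across all $j$ simultaneously. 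Everything else is a formal manipulation of (co)representability and lax equalizers.
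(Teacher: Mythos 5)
Your proposal is correct and follows essentially the same route as the paper's proof: unpack $\map_{\CycSp}(\crTR,-)$ as the lax equalizer of $\map_{\Sp^{B\T}}(\crTR,Y)\rightrightarrows\map_{\Sp^{B\T}}(\crTR,Y^{tC_p})$, use $\map_{\Sp^{B\T}}(\Ss[\T/C_{p^j}],Y)\cong Y^{hC_{p^j}}$ to rewrite both sides as products, and identify the map coming from $\varphi_{\crTR}$ with the canonical maps so that the equalizer becomes $\lim_k \TR^k(Y)$. (Only a cosmetic remark: the map you put in scare quotes as ``canonical'' is the one induced by $\varphi_Y$, i.e.\ the Frobenius leg; the genuinely canonical maps are the ones you correctly extract from $\varphi_{\crTR}$ in the next sentence.)
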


\begin{proof}
	The formula for mapping out of $\crTR$ is given as the equalizer
	
\[\begin{tikzcd}
	{\map_{\Sp^{B\T}}(\crTR,X)} & {\map_{\Sp^{B\T}}(\crTR,X^{tC_p})}
	\arrow["{\varphi\circ (-)}"', shift right, from=1-1, to=1-2]
	\arrow["{(-)^{tC_p}\circ\varphi}", shift left, from=1-1, to=1-2]
\end{tikzcd}\]
	
	Since for any $Y \in \Sp^{B\T}$ there is a natural isomorphism  $$\map_{\Sp^{B\T}}(\SP[\T/C_{p^k}],Y) \cong Y^{hC_{p^k}}$$, we can identify the above equalizer with the equalizer
	
\[\begin{tikzcd}
	\prod_0^{\infty}X^{hC_{p^i}} & \prod_0^{\infty}(X^{tC_p})^{hC_{p^i}}
	\arrow["{\varphi\circ (-)}"', shift right, from=1-1, to=1-2]
	\arrow["{(-)^{tC_p}\circ\varphi}", shift left, from=1-1, to=1-2]
\end{tikzcd}\]

The map $(-)^{tC_p}\circ \varphi$ can be identified with the canonical maps because of the description of the Frobenius map on $\crTR$. Then this equalizer agrees with $\TR(X)$.
\end{proof}

We will also use the following recognition criterion for $\crTR$:

\begin{lem} \label{lem:identify-crTR}
	Let $R$ be bounded below commutative algebra for which the canonical map
	$R \to R^{tC_p}$ is an isomorphism.
	Suppose we are given an $R$-module in cyclotomic spectra $X$ together with
	\begin{enumerate}
		\item a splitting of $X$ as $X \cong \oplus_{k \geq 0} X_k$ in $\Mod(R)^{B\T}$,
		\item a collection of isomorphisms
		$q_k : X_k \cong R \otimes \Ss[\T/C_{p^k}]$ in $\Mod(R)^{B\T}$ and
		\item a collection of isomorphisms
		$\varphi_k : X_k \to X_{k+1}^{tC_p}$ in $\Mod(R)^{B\T}$
	\end{enumerate}
	such that the cyclotomic Frobenius on $X$ is given by the sum of the map $\varphi_k$
	as a map of $\T$-equivariant $R$-modules.
	Then, there is an isomorphism of $R$-modules in cyclotomic spectra
	\[ X \cong R \otimes \crTR. \]
\end{lem}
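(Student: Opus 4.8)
The plan is to upgrade the given $\T$-equivariant data to a map of cyclotomic spectra and then check it is an equivalence. First I would observe that by \Cref{lem:describe_crtr} the object $R \otimes \crTR$ has underlying $\T$-equivariant $R$-module $\bigoplus_{k\geq 0} R\otimes \Ss[\T/C_{p^k}]$, with cyclotomic Frobenius the sum of the composites $R\otimes\Ss[\T/C_{p^k}] \to (R\otimes\Ss[\T/C_{p^{k+1}}])^{hC_p}\to (R\otimes\Ss[\T/C_{p^{k+1}}])^{tC_p}$ (using that coassembly commutes with the relevant sum, as in the footnote to \Cref{lem:describe_crtr}, and that $R\to R^{tC_p}$ is an isomorphism so that tensoring with $R$ is unproblematic). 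The isomorphisms $q_k$ assemble to an isomorphism of underlying $\T$-equivariant $R$-modules $u\colon X \iso R\otimes\crTR$; the content of the lemma is to promote $u$ to a morphism in $\CycSp$, i.e. to check it intertwines the two cyclotomic Frobenii.

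The key step is the following rigidity observation. The cyclotomic Frobenius of $R\otimes\crTR$, written in the splitting, is a sum of maps $\psi_k\colon R\otimes\Ss[\T/C_{p^k}] \to (R\otimes\Ss[\T/C_{p^{k+1}}])^{tC_p}$, and each $\psi_k$ is an isomorphism. By hypothesis the cyclotomic Frobenius of $X$ is likewise a sum of isomorphisms $\varphi_k\colon X_k \to X_{k+1}^{tC_p}$. Thus both $\varphi_k$ and $\psi_k$, after transporting along $q_k$ and $q_{k+1}^{tC_p}$, become self-equivalences of $R\otimes\Ss[\T/C_{p^k}] \to (R\otimes\Ss[\T/C_{p^{k+1}}])^{tC_p}$; concretely, $q_{k+1}^{tC_p}\circ\varphi_k\circ q_k^{-1}$ and $\psi_k$ differ by precomposition with some unit $\theta_k \in \pi_0\End_{\Mod(R)^{B\T}}(R\otimes\Ss[\T/C_{p^k}])^{\times}$. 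I would then modify the identification $q_k$ by $\theta_0 \theta_1 \cdots \theta_{k-1}$ (a telescoping adjustment, starting from $q_0$ unchanged), producing new underlying isomorphisms $q_k'$ for which $q_{k+1}'{}^{tC_p}\circ\varphi_k\circ (q_k')^{-1} = \psi_k$ on the nose at the level of $\pi_0$. A small amount of care is needed here since one wants the intertwining as a homotopy-coherent statement, not just on $\pi_0$; I would phrase the adjustment as choosing, for each $k$, a path in the space of equivalences, using that the relevant mapping space components are those of units and hence grouplike, so the sequence of choices can be made compatibly.

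With the adjusted splitting, $u' = \bigoplus q_k'\colon X \to R\otimes\crTR$ is an isomorphism of underlying $\T$-equivariant $R$-modules commuting with the cyclotomic Frobenii, hence lifts to an isomorphism in $\CycSp$ (the forgetful functor $\CycSp \to \Sp^{B\T}$ is conservative, so an underlying equivalence respecting Frobenius is an equivalence of cyclotomic spectra). This gives $X \cong R\otimes\crTR$ as $R$-modules in cyclotomic spectra, as desired. The main obstacle I anticipate is the homotopy-coherence bookkeeping in the telescoping modification of the $q_k$: one must check that the successive unit-adjustments $\theta_k$ can be chosen so that all the intertwining homotopies fit together into a single map in $\CycSp$, rather than merely matching the Frobenii up to an unspecified homotopy at each stage. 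This is where I would be most careful, likely by working in the appropriate mapping space and using that the obstruction at each stage lives in the homotopy of a space of equivalences between invertible $\T$-equivariant $R$-modules, which is accessible because these are (shifts of) free rank-one modules.
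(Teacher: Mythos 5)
Your proposal is correct and follows essentially the same route as the paper: both proceed by inductively adjusting the given trivializations (starting from $q_0$) so that each Frobenius square commutes, using that the components of both Frobenii are isomorphisms between the free rank-one $\T$-equivariant $R$-modules $R\otimes\Ss[\T/C_{p^k}]$. The coherence worry you flag at the end is dissolved in the paper's formulation by defining $f_k := q_k^{-1}\circ p^*\bigl(c^{-1}\circ q_k^{tC_p}\circ\varphi_{k-1}\circ f_{k-1}\bigr)$, where $c\colon Y\to (p^*Y)^{tC_p}$ is invertible on these objects (this is precisely where the hypothesis $R\cong R^{tC_p}$ enters), so that each square commutes with an explicit homotopy; since maps and homotopies out of $\oplus_k X_k$ are assembled summand-wise, these squares sum directly to a single isomorphism of $R$-modules in cyclotomic spectra with no further compatibility to check.
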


\begin{proof}
	Let $\Sp_{sc}^{B\T} \subseteq \Sp^{B\T}$ be the full subcategory
	on those $Y$ for which the natural transformation
	$c : Y \to (p^*Y)^{tC_p}$ is an isomorphism at $Y$.
	Note that this property depends only on the underlying spectrum
	and by exactness of $(-)^{tC_p}$ all of the objects we will consider are in this subcategory.
	
	Let $f_0 \coloneqq q_0^{-1}$.
	By induction on $k \geq 1$ we will construct $\T$-equivariant isomorphisms of $R$-modules
	$f_k: R \otimes \Ss[T/C_{p^k}] \to X_k $ and homotopies making
	the following diagram of $\T$-equivariant $R$-modules commute  
	\[ \begin{tikzcd}
		R \otimes \Ss[\T/C_{p^{k-1}}] 
		\arrow[d, "c"']&
		X_{k-1} \arrow[d, "\varphi"] \\
		R \otimes \SP[\T/C_{p^k}])^{tC_p} &
		X_k^{tC_p}.     
		\arrow["f_{k-1}", from=1-1, to=1-2]
		\arrow["f_k^{tC_p}"', from=2-1, to=2-2]
	\end{tikzcd} \]
	Let $g_k \coloneqq q_k^{tC_p} \circ \varphi_{k-1} \circ f_{k-1} $ and let
	$f_k \coloneqq q_k^{-1} \circ p^*( c^{-1} \circ g_k) $. Then we have 
	\begin{align*}
		f_k^{tC_p} \circ c
		&= \left( q_k^{-1} \circ p^*( c^{-1} \circ g_k) \right)^{tC_p} \circ c
		= (q_k^{-1})^{tC_p} \circ (p^*( c^{-1} \circ g_k))^{tC_p} \circ c \\
		&= (q_k^{-1})^{tC_p} \circ c \circ c^{-1} \circ g_k
		= (q_k^{-1})^{tC_p} \circ (q_k^{tC_p} \circ \varphi_{k-1} \circ f_{k-1}) \\
		&= \varphi_{k-1} \circ f_{k-1}
	\end{align*}
	
	The sum of the maps $f_k$ is the desired isomorphism in $\Mod(R)^{B\T}$
	and sum of the commuting squares lifts it to an isomorphism of
	$R$-modules in cyclotomic spectra.
\end{proof}


\subsubsection{$p$-polygonic spectra}
\label{subsubsec:polygonic}

In \Cref{subsec:dehn}, we make use of the $p$-polygonic spectra of \cite{krause2023polygonic}, which we review here:

\begin{dfn}
	The category of $p$-polygonic spectra is defined as the oplax limit of the functor $(-)^{tC_p}:\Sp^{BC_p} \to \Sp$, i.e
	
	\[\mdef{\pgnsp{\langle p\rangle}} \coloneqq \Sp \vec{\times}_{(-)^{tC_p}}\Sp^{BC_p}\qedhere\]
\end{dfn}
In particular, we see that $\pgnsp{\lpr}$ can be identified with the category of $p$-complete genuine $C_p$-equivariant spectra.
Given $X = (X_0,X_1, X_0\to (X_1)^{t{C_p}})\in \pgnsp{\lpr}$, we sometimes use the following notations from genuine equivariant homotopy theory:
\[X^{\mdef{\Phi C_p}} \coloneqq X_0, X^{\mdef{\Phi e}} \coloneqq X_1, X^{\mdef{hC_p}}\coloneqq X_1^{hC_p}, X^{\mdef{tC_p}}\coloneqq X_1^{tC_p},\] 
and $X^{\mdef{C_p}}$ for the pullback 
\[ \begin{tikzcd}
  {X^{C_p}} \pullback & {X^{\Phi C_p}} \\
  {X^{hC_p}} & {X^{tC_p}}.
  \arrow["{\mathrm{ev}_1}", from=1-2, to=2-2]
  \arrow["{{(-)}^{tC_p}}"', from=2-1, to=2-2]
  \arrow[from=1-1, to=1-2]
  \arrow[from=1-1, to=2-1]
\end{tikzcd} \]

\begin{dfn}
  We denote by $\mdef{\mathrm{res}_{\varphi}} \colon \pgnsp{\lpr} \to \Sp^{\Delta^1}$ the lax symmetric monoidal functor sending $(X, Y, X \to Y^{tC_p})$ to $X \to Y^{tC_p}$.
We also define $\mdef{\mathrm{res}_{\hex}} \colon \CycSp \to \pgnsp{\lpr}$ to be the functor sending a cyclotomic spectrum $X$ to the triple $(X,X,X\xrightarrow{\varphi}X^{tC_p})$.
We abuse notation by writing
$ \mdef{\mathrm{res}_{\varphi}}  \coloneqq{\mathrm{res}_{\varphi}}\circ \mathrm{res}_{\hex} \colon   \CycSp \to \Sp^{\Delta^1} $.
\end{dfn}

\subsubsection{$t$-structures }

The category of spectra admits a $t$-structure such that $\Sp^{\heart}$ is the category of (derived) $p$-complete abelian groups. For every $1\leq k\leq \infty$, this induces a pointwise $t$-structure on $\Sp^{BC_{p^k}}$, where connectivity and coconnectivity are detected on the underlying non-equivariant spectrum.
We shall denote truncation functors with respect to this $t$-structure by $\mdef{\tau_{>b}}$ and $\mdef{\tau_{\leq b}}$.

In \cite{antieau-nikolaus}, Antieau and Nikolaus define the cyclotomic $t$-structure on $\CycSp$. In this $t$-structure an $X\in \CycSp$ is connective if and only if it is connective as a spectrum\footnote{We note that since our categories are $p$-completed, our definitions don't exactly agree.}. We shall denote truncation functors with respect the cyclotomic $t$-structure by $\mdef{\tau^{\mathrm{cyc}}_{>b}}$ and $\mdef{\tau^{\mathrm{cyc}}_{\leq b}}$. 

\begin{ntn}
  Given any of the categories 
  $\cC =\Sp^{BC_{p^k}},\CycSp$,
  we denote by  $\mdef{\cC_{\leq b}, \cC_{>b}, \cC_{[c,b]}}$
  the collection of appropriately bounded objects
  with respect to the aforementioned $t$-structures.
\end{ntn}

\begin{lem}[{\cite[Theorem 9]{antieau-nikolaus}}]
  The functor $\TR\colon\CycSp_+ \to \Sp$ is conservative and reflects connectivity and coconnectivity.
\end{lem}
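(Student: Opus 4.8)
The plan is to reduce the statement to two ingredients: the standard fundamental cofiber sequence for $\TR$ of a bounded below cyclotomic spectrum, and the Antieau--Nikolaus identification of the heart of the cyclotomic $t$-structure (with a category of $V$-complete Cartier modules), on which $\TR$ restricts to the conservative, $t$-exact forgetful functor to derived $p$-complete abelian groups.

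For \emph{conservativity} and \emph{reflection of connectivity}, I would work with the restriction tower $\TR(X)=\lim_k\TR^k(X)$ together with the natural cofiber sequences $X_{hC_{p^k}}\to\TR^{k+1}(X)\xrightarrow{R}\TR^k(X)$, which for bounded below $X$ follow from the pullback formula for $\TR^{k+1}(X)$, the identification $(X^{tC_p})^{hC_{p^{k-1}}}\simeq X^{tC_{p^k}}$, and the norm cofiber sequence $X_{hC_{p^k}}\to X^{hC_{p^k}}\to X^{tC_{p^k}}$. The elementary point is that if $X$ is $b$-connective then so is $\TR(X)$, and in fact so is $\fib(\TR(X)\to\TR^1(X)=X)$: each $X_{hC_{p^k}}$ is $b$-connective because homotopy orbits preserve connectivity, hence inductively so is each $\TR^k(X)$ and each $\fib(\TR^{k+1}(X)\to X)$, and a tower of $b$-connective spectra whose fibers are $b$-connective has $b$-connective limit (the transition maps are surjective on $\pi_b$, so the relevant $\lim^1$ vanishes). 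Granting this, reflection of connectivity is formal: if $\TR(X)$ is connective and $b$ is the connectivity of $X$, the cofiber sequence $\fib(\TR(X)\to X)\to\TR(X)\to X$ exhibits $\pi_bX$ as a quotient of $\pi_b\TR(X)$ (since $\pi_{b-1}\fib(\TR(X)\to X)=0$), forcing $b\geq 0$. Conservativity is the same computation: if $\TR(X)=0$ then $\fib(\TR(X)\to X)\simeq\Sigma^{-1}X$ is $b$-connective, i.e.\ $\pi_bX=0$, so $X=0$. The shifted versions follow since $\TR$ is exact (it is corepresentable, by \Cref{lem:trcorep}).

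For \emph{reflection of coconnectivity} the soft connectivity bookkeeping breaks down, since homotopy orbits do not preserve coconnectivity, and one must genuinely use the cyclotomic $t$-structure; here I would invoke that $\TR(M[m])$ is concentrated in degree $m$ and nonzero whenever $0\neq M\in\CycSp^{\heart}$. From this I would prove by induction on $b-a$ that for $Y\in\CycSp_{[a,b]}$ one has $\TR(Y)\in\Sp_{[a,b]}$ with $\pi_j\TR(Y)\cong\lvert\pi^{\mathrm{cyc}}_j(Y)\rvert$ (where $\lvert-\rvert$ is the underlying abelian group): applying $\TR$ to the extension $\pi^{\mathrm{cyc}}_b(Y)[b]\to Y\to\tau^{\mathrm{cyc}}_{\leq b-1}Y$, the degree-$b$ concentration of $\TR(\pi^{\mathrm{cyc}}_b(Y)[b])$ together with the inductive hypothesis $\TR(\tau^{\mathrm{cyc}}_{\leq b-1}Y)\in\Sp_{[a,b-1]}$ forces every connecting map into and out of degree $b$ in the long exact sequence to vanish, while connectivity preservation from the previous step pins down the behaviour below degree $a$. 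Finally, for a general $X\in\CycSp_+$: if $\TR(X)$ is $c$-coconnective but $\pi^{\mathrm{cyc}}_m(X)\neq 0$ for some $m>c$, set $Y=\tau^{\mathrm{cyc}}_{\leq m}X$, which lies in $\CycSp_{[a,m]}$ for some $a$ since $X$ is bounded below; then $X\to Y$ has fibre $\tau^{\mathrm{cyc}}_{\geq m+1}X$, which is $(m+1)$-connective as a spectrum, so $\TR$ of it is $(m+1)$-connective and $\TR(X)\to\TR(Y)$ is surjective on $\pi_m$; hence $\pi_m\TR(X)$ surjects onto $\pi_m\TR(Y)\cong\lvert\pi^{\mathrm{cyc}}_m(X)\rvert\neq 0$, contradicting $c$-coconnectivity of $\TR(X)$. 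Thus $X\in\CycSp_{\leq c}$, and the shifted statement again follows by exactness.

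The step I expect to be the main obstacle is exactly the coconnectivity assertion, and within it the identification of $\CycSp^{\heart}$ and the resulting fact that $\TR$ creates no homotopy above the cyclotomic degree; this is a reflection of the $V$-completeness built into the heart (so that the $V$-adic limits computing $\TR$ on the heart introduce no $\lim^1$) and of the vanishing of the higher attaching maps after applying $\TR$, and cannot be obtained from general nonsense about limits of homotopy orbits. Everything else is formal manipulation of the restriction tower and long exact sequences.
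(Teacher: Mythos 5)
Your argument is essentially correct, but it is a genuinely different route from the paper's, because the paper does not prove this lemma at all: its entire proof consists of citing \cite[Theorem 9]{antieau-nikolaus} and observing that the forgetful functor from the $p$-complete categories to the uncompleted ones is conservative and reflects (co)connectivity, so the $p$-complete statement follows from the uncompleted one. What you have done instead is reconstruct the statement from two inputs: (i) the fundamental cofiber sequences $X_{hC_{p^k}}\to\TR^{k+1}(X)\to\TR^k(X)$ and a Mittag--Leffler argument, which correctly yield preservation of connectivity, conservativity, and reflection of connectivity; and (ii) the Antieau--Nikolaus identification of $\CycSp^{\heartsuit}$, used to see that $\TR$ of a nonzero heart object is discrete and nonzero, from which your induction over Postnikov towers in the cyclotomic $t$-structure correctly extracts reflection of coconnectivity. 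The bookkeeping in both halves checks out (in particular the surjectivity of $\pi_m\TR(X)\to\pi_m\TR(\tau^{\mathrm{cyc}}_{\leq m}X)$ via the $(m{+}1)$-connectivity of $\TR(\tau^{\mathrm{cyc}}_{\geq m+1}X)$ is exactly what is needed). What each approach buys: the paper's citation is shorter and pushes all content into the reference, at the cost of only having to verify the $p$-completion reduction; your argument isolates precisely which part of the Antieau--Nikolaus theorem carries the weight, namely the heart identification and the $V$-completeness that kills $\lim^1$ there. You should be aware, though, that this heart input is not an independent lemma but the core of \cite[Theorem 9]{antieau-nikolaus} itself, so your proof is best read as a reduction of the general statement to its heart case rather than as a proof from first principles; and since the paper works $p$-completely, you would still need the small reconciliation the paper performs (the heart of the $p$-complete cyclotomic $t$-structure versus the integral one), although your tower and long-exact-sequence arguments do go through verbatim in $p$-complete spectra.
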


\begin{proof}
	To reconcile the statement with \cite[Theorem 9]{antieau-nikolaus}, which isn't for the $p$-complete category, we observe that the forgetful functors from the $p$-completed categories to the uncompleted categories are conservative and reflect connectivity and coconnectivity, reducing the statement to the uncompleted case.
\end{proof}

\begin{lem} \label{lem:fil-colim-bounded}
  A filtered colimit of cyclotomic spectra bounded in the range $[a,b]$
  is itself bounded in the range $[a,b+3]$.
\end{lem}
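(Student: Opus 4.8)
The plan splits into the (immediate) lower bound and the (substantive) upper bound. For the lower bound, recall that colimits in $\CycSp$ are computed on underlying $\T$-spectra: the forgetful functor $\CycSp\to\Sp^{B\T}$ preserves colimits — from the lax-equalizer presentation, the cyclotomic Frobenius on $\colim_i X_i$ is $\colim_i\varphi_{X_i}$ followed by the assembly map $\colim_i X_i^{tC_p}\to(\colim_i X_i)^{tC_p}$. Since cyclotomic connectivity is by definition detected on the underlying spectrum and a filtered colimit of $(\ge a)$-connective spectra is again $(\ge a)$-connective, we obtain $\colim_i X_i\in\CycSp_{\ge a}$. All of the content is thus in the claim that the cyclotomic $t$-structure fails to be compatible with filtered colimits on the coconnective side only by $3$.

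For the upper bound I would pass through $\TR$. By \cite[Theorem 9]{antieau-nikolaus} (of which the lemma displayed just above is the reflection half), $\TR\colon\CycSp_+\to\Sp$ is conservative and $t$-exact, so it suffices to prove $\TR(\colim_i X_i)\in\Sp_{\le b+3}$, knowing $\TR(X_i)\in\Sp_{\le b}$ for every $i$. Here I would use the corepresentability of $\TR$ by $\crTR$ from \Cref{lem:trcorep}: its underlying $\T$-spectrum is the countable coproduct $\bigoplus_{j\ge0}\SP[\T/C_{p^j}]$ of $\T$-spectra dualizable over $\SP$, and mapping out of it exactly as in the proof of \Cref{lem:trcorep} presents $\TR(Y)$, naturally in $Y\in\CycSp_+$, as the fiber of a map $\varphi-\can\colon\textstyle\prod_j Y^{hC_{p^j}}\to\prod_j(Y^{tC_p})^{hC_{p^j}}$. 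Since fibers commute with filtered colimits and a filtered colimit of coconnective spectra is coconnective, applying this termwise to the $X_i$ and passing to the colimit produces a natural comparison map $\colim_i\TR(X_i)\to\TR(\colim_i X_i)$ whose source lies in $\Sp_{\le b}$; what remains is to bound the cofiber of this comparison above by $b+3$.

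That cofiber measures how a filtered colimit interacts with the countable products of fixed points $\prod_j(-)^{hC_{p^j}}$ appearing in the formula for $\TR$ — equivalently, it is controlled by the failure of $\crTR$ to be compact (the infinitude of the tower $\{C_{p^j}\}_j$). The step I expect to be the main obstacle is that the individual spectra $X_i^{hC_{p^j}}$ are typically unbounded above — only their assembled fibers $\TR(X_i)$ are bounded — so the relevant assembly maps $\colim_i X_i^{hC_{p^j}}\to(\colim_i X_i)^{hC_{p^j}}$ (and their Tate analogues) cannot be estimated level by level, but only after being threaded back through the defining fiber sequence of $\TR$. The input that makes this work is a uniform-in-$j$, quantitative version of the coassembly/Segal-conjecture equivalence already used in \Cref{lem:describe_crtr} — asserting that, once one tensors against a cyclotomic spectrum bounded in $[a,b]$, the coassembly maps $\bigoplus_j(-)^{tC_p}\to(\bigoplus_j-)^{tC_p}$ and the Frobenii in question are equivalences through a range depending only on $a$ and $b$. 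Tracking this estimate across the single $\fib$ in the formula for $\TR$, together with the (bounded) $\lim^1$-contribution of the countable products and the effect of $p$-completing the underlying colimit, is what yields the explicit constant $3$.

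Finally, if one does not insist on the precise constant, there is a more transparent route that reduces the lemma to coproducts: prove the statement first for finite coproducts, then for arbitrary coproducts — again via the $\crTR$-formula for $\TR$, this time mapping out of a coproduct — and then invoke the standard presentation of a filtered colimit as the cofiber of a map between two coproducts of the $X_i$, so that everything comes down to ``a coproduct of cyclotomic spectra bounded in $[a,b]$ is bounded in $[a,b+c]$'' for an absolute constant $c$.
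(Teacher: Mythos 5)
Your lower bound is fine, and your overall strategy (reduce to $\TR$ via conservativity and $t$-exactness, then compare $\colim_i\TR(X_i)$ with $\TR(\colim_i X_i)$ using the $\crTR$/product formula) is a genuinely different route from the paper's. But the proposal has a real gap exactly where you flag "the main obstacle": the entire content of the lemma is the bound on the cofiber of the comparison map $\colim_i\TR(X_i)\to\TR(\colim_i X_i)$, and you never establish the "uniform-in-$j$, quantitative coassembly/Segal-conjecture input" that you say makes this work. The reference you point to (\Cref{lem:describe_crtr}, via Yuan) is a statement about coproducts of the specific dualizable objects $\Ss[\T/C_{p^j}]$, not about filtered colimits of arbitrary cyclotomic spectra bounded in $[a,b]$; it does not give control of the assembly maps $\colim_i X_i^{hC_{p^j}}\to(\colim_i X_i)^{hC_{p^j}}$ or their Tate analogues uniformly in $j$. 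Worse, the statement in the paper that *would* give you the needed control — \Cref{lem:tcp-commute}, that $(-)^{tC_p}$ commutes with uniformly bounded filtered colimits of cyclotomic spectra — is itself proved *using* \Cref{lem:fil-colim-bounded}, so invoking anything of that shape here is circular unless you prove it from scratch. Your fallback reduction to coproducts has the same problem (the claim "a coproduct of cyclotomic spectra bounded in $[a,b]$ is bounded in $[a,b+c]$" is again the whole difficulty, and the cofiber-of-coproducts presentation is only valid for sequential, not general filtered, colimits). There is also a minor slip: in $p$-complete spectra a filtered colimit of $b$-truncated objects is only $(b+1)$-truncated after re-completion, so even your "source lies in $\Sp_{\le b}$" needs a $+1$.

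For contrast, the paper sidesteps all of this by transporting the problem across the Antieau--Nikolaus $t$-exact equivalence between bounded below cyclotomic spectra and $V$-complete bounded below topological Cartier modules. In topological Cartier modules the $t$-structure *is* compatible with filtered colimits, so the colimit stays in $[a,b]$ there; the only loss is re-applying the ($p$-complete) $V$-completion functor, which is shown to have $t$-amplitude $[0,3]$ from the explicit formula $\lim_{n,k}(M/p^k)/V^n$ (citing \cite[Prop.~6.5]{mathew2021k}). That is where the constant $3$ actually comes from, and it is the ingredient your outline is missing. If you want to salvage your route, you would need to prove the quantitative commutation statement directly — essentially redoing the Cartier-module analysis in disguise — rather than asserting it.
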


\begin{proof}
  Recall from \cite[Thm. 9]{antieau-nikolaus} that there is a $t$-exact isomorphism between the category of bounded below cyclotomic spectra and the category of $V$-complete bounded below topological Cartier modules.
  The $t$-structure on topological Cartier modules is compatible with filtered colimits, so to prove the lemma it will suffice to show that the $p$-complete $V$-completion functor on bounded
  topological Cartier modules has $t$-amplitude $[0,3]$.

  Given an $M \in \mathrm{TCart}_p^\heartsuit$ we have a formula for the $V$-completion as
  $\lim_n M/V^n$ \cite[Prop. 3.22]{antieau-nikolaus}. Doing this in the $p$-complete category gives $\lim_{n,k} (M/p^k)/V^n$. 
  From this formula we can easily read off that the functor has $t$-amplitude $[0,\infty]$.
  A closer analysis of this formula in fact yields the desired conclusion, that $V$-completion has $t$-amplitude $[0,3]$ (see \cite[Prop. 6.5]{mathew2021k} and use the fact that $M/p^k$ is $p$-complete).
  \qedhere

\end{proof}

\subsubsection{Topological Hochschild homology}
\label{subsubsec:thh}

Many objects in $\CycSp$ and $\pgnsp{\langle p \rangle}$ are constructed via topological Hochschild homology.

\begin{dfn}
  We denote by 
  \[\mdef{\THH}\colon \Alg(\Sp) \to \CycSp\]
  the $p$-completed version of topological Hochschild homology, as defined in \cite{NS}. This functor is symmetric monoidal \cite[IV.2]{NS}.
\end{dfn}

We will often similarly consider $\TC^{-}, \TP,\TC,$ and $\TR^k$ as functors from $\Alg(\Sp)$ by precomposing with $\THH$.

\begin{dfn}
  We let
  \[ \mdef{\THH_{\hex}(-;-)} \colon \mathrm{BiMod} \to \pgnsp{\langle p \rangle} \]
  be the $p$-completed version of the functor 
  of \cite[Theorem 6.31]{krause2023polygonic}.

    This is a lax symmetric monoidal functor because every step of the construction of \cite[Theorem 6.31]{krause2023polygonic} is lax symmetric monoidal as a functor from $\mathrm{BiMod}$.
\end{dfn}


The following is a restatement of \cite[Remark III.1.5.]{NS}, where $\mathrm{Nm}_e^{C_p}$ refers to the Hill--Hopkins--Ravenel norm:

\begin{lem}\label{lem:norm}
  For $V \in \Sp$ there is a natural isomorphism
  $ \THH_{\hex}(\Ss; V) \cong \mathrm{Nm}_e^{C_p}(V) $
  in $\pgnsp{\lpr}$.
\end{lem}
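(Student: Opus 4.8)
The plan is to unwind the construction of $\THH_{\hex}(-;-)$ from \cite[Theorem 6.31]{krause2023polygonic} on the pair $(\Ss,V)$ and match the output, piece by piece, with the Hill--Hopkins--Ravenel norm; this is exactly \cite[Remark III.1.5]{NS} translated into the language of $p$-polygonic spectra. Recall first what the target looks like: under the identification of $\pgnsp{\lpr}$ with $p$-complete genuine $C_p$-spectra, the norm $\Nm_e^{C_p}(V)$ corresponds to the triple $\big(V,\ V^{\otimes p},\ \Delta_p\big)$, where $V^{\otimes p}$ carries the cyclic permutation action of $C_p$ and $\Delta_p\colon V \to (V^{\otimes p})^{tC_p}$ is the Tate diagonal of \cite[Section III.1]{NS}. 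Indeed, $\Nm_e^{C_p}(V)^{\Phi e}\simeq V^{\otimes p}$ with the permutation action, $\Nm_e^{C_p}(V)^{\Phi C_p}\simeq V$ is the ``diagonal'', and the isotropy-separation gluing map of a norm is the Tate diagonal. So it suffices to produce a natural isomorphism $\THH_{\hex}(\Ss;V)\cong\big(V,\ V^{\otimes p},\ \Delta_p\big)$ in $\pgnsp{\lpr}$.

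First I would identify the underlying pieces. The polygonic cyclic bar construction of \cite[Theorem 6.31]{krause2023polygonic} yields $\THH_{\hex}(R;M)^{\Phi e}\simeq \THH(R;M^{\otimes_R p})$ with its cyclic $C_p$-action and $\THH_{\hex}(R;M)^{\Phi C_p}\simeq \THH(R;M)$ (for $M=R$ this recovers the cyclotomic spectrum $\THH(R)$ via $\mathrm{res}_{\hex}$). Taking $R=\Ss$ to be the monoidal unit, the $p$-fold cyclic tensor power collapses, $M^{\otimes_\Ss p}\simeq V^{\otimes p}$ with the permutation action, and $\THH(\Ss;-)\simeq \mathrm{id}$; hence $\THH_{\hex}(\Ss;V)^{\Phi e}\simeq V^{\otimes p}$ and $\THH_{\hex}(\Ss;V)^{\Phi C_p}\simeq V$, matching the norm on the nose. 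It then remains only to identify the gluing map $\varphi\colon V\to (V^{\otimes p})^{tC_p}$ coming from the polygonic Frobenius with $\Delta_p$.

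For the gluing map I would invoke the universal property of the Tate diagonal from \cite[Section III.1]{NS}: it is the unique exact lax symmetric monoidal natural transformation $\mathrm{id}_{\Sp}\Rightarrow ((-)^{\otimes p})^{tC_p}$. The assignment $V\mapsto\varphi$ defines such a transformation: it is a transformation between exact functors, and it is lax symmetric monoidal because $\THH_{\hex}(-;-)$ is lax symmetric monoidal (as recorded in the excerpt), $\mathrm{res}_{\varphi}\colon\pgnsp{\lpr}\to\Sp^{\Delta^1}$ is lax symmetric monoidal, and the inclusion $\Sp\hookrightarrow\mathrm{BiMod}$, $V\mapsto(\Ss,V)$, is symmetric monoidal (it sends $\Ss$ to the unit $(\Ss,\Ss)$ and is compatible with tensor products). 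Hence $\varphi=\Delta_p$, which completes the identification $\THH_{\hex}(\Ss;V)\cong\Nm_e^{C_p}(V)$, naturally in $V$. The main obstacle is the bookkeeping in the middle step: correctly reading off from \cite{krause2023polygonic} that the \emph{underlying} spectrum of $\THH_{\hex}(\Ss;V)$ is $V^{\otimes p}$ (not $V$) with the cyclic action, and confirming that the transformation $V\mapsto\varphi$ is honestly lax symmetric monoidal and exact so that the uniqueness in \cite{NS} applies; once these are in hand the remainder is formal.
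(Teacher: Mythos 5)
Your proof is correct, and it is essentially the argument the paper is implicitly relying on: the paper offers no proof of this lemma beyond declaring it a restatement of \cite[Remark III.1.5]{NS}, which is exactly your identification of $\mathrm{Nm}_e^{C_p}(V)$ with the triple $(V,\,V^{\otimes p},\,\Delta_p)$, combined with the formulas $\THH_{\hex}(\Ss;V)^{\Phi C_p}\simeq V$, $\THH_{\hex}(\Ss;V)^{\Phi e}\simeq V^{\otimes p}$ (which the paper itself uses later, e.g.\ in the proof of \Cref{lem:yea-nilp}) and the uniqueness of exact lax symmetric monoidal transformations out of the identity to pin down the gluing map as the Tate diagonal. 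The bookkeeping you flag at the end is exactly where the content lies, but your reduction of it to the lax symmetric monoidality of $\THH_{\hex}(-;-)$, of $\mathrm{res}_{\varphi}$, and of $V\mapsto(\Ss,V)$ is the right and standard way to discharge it.
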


We will denote by $\mdef{\Alg(\Sp)^{\mathrm{EQ}}}$ the category of functors to $\Alg(\Sp)$ from the category 
\[\mathrm{EQ}\coloneqq \cdot \rightrightarrows \cdot\]
in $\Alg(\Sp)$.  Such a diagram $R \rightrightarrows S$ gives $S$ the structure of an $R$-bimodule, and so there is a functor
\[\THH_{\hex}(\--;\--)\colon\Alg(\Sp)^{\EQ} \to \pgnsp.\]


There is the following basic compatibility:

\begin{lem} \label{lem:polygonic-compatibility}
  There is a commuting diagram of lax symmetric monoidal functors
  \[ \begin{tikzcd}
    \Alg(\Sp) \ar[r, "\THH"] \ar[d] & 
    \CycSp \ar[r, "\mathrm{res}_{\varphi}"] \ar[d, "\mathrm{res}_{\hex}"] &
    \Sp^{\Delta^1} \\
    \Alg(\Sp)^{\EQ} \ar[r, "\THH_{\hex}"] &   
    \pgnsp{\lpr} \ar[ur, swap,"\mathrm{res}_{\varphi}"] &     
  \end{tikzcd} \]
  where the left vertical map sends an $\E_1$-algebra $R$ to the constant diagram $R\rightrightarrows R$.
\end{lem}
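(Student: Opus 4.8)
The statement is a compatibility between two constructions of the same invariant, so the strategy is to reduce to a universal case where the comparison is essentially tautological. The key observation is that $\mathrm{res}_\varphi \colon \CycSp \to \Sp^{\Delta^1}$ factors through $\mathrm{res}_{\hex} \colon \CycSp \to \pgnsp{\lpr}$ followed by the forgetful $\mathrm{res}_\varphi \colon \pgnsp{\lpr} \to \Sp^{\Delta^1}$ \emph{by definition} (this is precisely the abuse of notation set up just before the lemma), so the only genuine content is the commutativity of the square on the left: for an $\E_1$-algebra $R$, the polygonic object $\THH_{\hex}(R \rightrightarrows R)$ agrees, after applying $\mathrm{res}_{\hex}$ composed with... wait --- more precisely, we must produce an equivalence $\mathrm{res}_{\hex}(\THH(R)) \simeq \THH_{\hex}(R \rightrightarrows R)$ in $\pgnsp{\lpr}$, natural and lax symmetric monoidal in $R$, where $R \rightrightarrows R$ denotes the constant diagram.

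\textbf{Main steps.} First I would recall that both $\THH$ (from \cite{NS}) and $\THH_{\hex}$ (from \cite[Theorem 6.31]{krause2023polygonic}) are constructed, compatibly, as cyclic/dihedral bar-type colimits: the polygonic structure on $\THH_{\hex}(R;M)$ for a bimodule $M$ has $\Phi C_p$-part built from the geometric fixed points of the dihedral bar construction, and when $M = R$ with its tautological bimodule structure this geometric-fixed-point term reassembles, via the Segal-conjecture-style identification used in \cite{krause2023polygonic}, to $\THH(R)$ itself with its cyclotomic Frobenius $\varphi$. Concretely, by \Cref{lem:norm} the base case $R = \Ss$ reads $\THH_{\hex}(\Ss; \Ss) \cong \mathrm{Nm}_e^{C_p}(\Ss)$, whose image in $\pgnsp{\lpr}$ under the genuine-equivariant identification matches $\mathrm{res}_{\hex}(\THH(\Ss)) = \mathrm{res}_{\hex}(\Ss)$ with its cyclotomic structure; this is exactly \cite[Remark III.1.5]{NS}. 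Then, since every functor in sight is symmetric monoidal (for $\THH$, $\mathrm{res}_{\hex}$) or lax symmetric monoidal (for $\THH_{\hex}$), and $\Alg(\Sp)$ is generated under colimits and extensions of scalars from $\Ss$ in the appropriate sense, I would bootstrap from the base case: an arbitrary $R$ is an algebra over $\Ss$, so $\THH(R) \simeq \THH(\Ss) \otimes_{\mathrm{Nm}} (\cdots)$ --- more cleanly, one uses that $\THH_{\hex}(R \rightrightarrows R)$ is, by construction in \cite{krause2023polygonic}, obtained from the dihedral bar construction on $R$, and $\mathrm{res}_{\hex} \circ \THH$ is obtained from the same simplicial object, so the two agree levelwise and hence after geometric realization. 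The lax symmetric monoidal compatibility of the whole square then follows because each arrow is built from the same (lax) symmetric monoidal data, as already noted in the excerpt for $\THH_{\hex}$.

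\textbf{The main obstacle.} The subtle point is not any calculation but matching up \emph{conventions}: one must check that the $\Phi C_p$-component of $\THH_{\hex}(R \rightrightarrows R)$, which in \cite{krause2023polygonic} is defined via genuine geometric fixed points of the norm/dihedral construction, is identified with the \emph{underlying spectrum of $\THH(R)$} and not, say, with $\THH(R)^{tC_p}$ or some twist --- and that under this identification the structure map $X^{\Phi C_p} \to (X^{\Phi e})^{tC_p}$ of the polygonic spectrum becomes precisely the cyclotomic Frobenius $\varphi \colon \THH(R) \to \THH(R)^{tC_p}$ of \cite{NS}. This is exactly the content of the compatibility between the $\pgnsp{\lpr}$ and $\CycSp$ formalisms established in \cite[\S 6]{krause2023polygonic}, so I would cite that directly rather than reprove it; the remaining work is bookkeeping to see that the diagram as drawn (with the constant diagram $R \rightrightarrows R$ on the bottom left and the already-tautological right triangle) commutes coherently and compatibly with the lax symmetric monoidal structures, which is immediate once the identification of $\THH_{\hex}$-on-constant-diagrams with $\mathrm{res}_{\hex} \circ \THH$ is in hand.
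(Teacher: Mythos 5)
Your final argument is essentially the paper's: the triangle commutes by the definition of $\mathrm{res}_{\hex}$, and the square commutes because the cyclotomic and polygonic Frobenius maps are built from the same cyclic bar construction, with the remaining content being the convention-matching established in \cite{krause2023polygonic}. The paper's proof makes your phrase ``the same simplicial object'' precise by naming the exact inputs: the cofinality of $\Delta^{\op} \to \Lambda_\infty^{\op}$ (\cite[Prop.\ B.5]{NS}) and the fact that the composite $\Delta^{\op} \to \Lambda_\infty^{\op} \to \Lambda_p^{\op} \to \mathrm{Free}(C_p)\times_{\mathrm{Fin}}(\Sp^{\otimes})_{\mathrm{act}}$ from \cite[III.2]{NS} agrees with the functor of \cite[Cons.\ 6.30]{krause2023polygonic}; this is the bookkeeping you defer to ``\S 6 of the polygonic paper,'' and it is the entire non-tautological content of the lemma.

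One part of your write-up should be discarded rather than cleaned up: the attempted reduction to the universal case $R = \Ss$ via \Cref{lem:norm} followed by a ``bootstrap'' using (lax) symmetric monoidality. Commutativity of a diagram of functors cannot be checked on the unit and then propagated by monoidality: agreement of two lax symmetric monoidal functors on $\Ss$ gives no natural transformation between them on all of $\Alg(\Sp)$, and $\Alg(\Sp)$ is not generated from $\Ss$ under operations that either functor is known to preserve. You implicitly recognize this and pivot mid-paragraph to the correct argument, but as written the first route is a non-proof and the second route does not use \Cref{lem:norm} at all (that lemma is invoked later, in \Cref{lem:tenspr_with_finite}, not here).
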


\begin{proof}\todo{check this}
  The commutation of the square is immediate from the definitions of the cyclotomic \cite[III.2, Corollary III.3.8]{NS} and polygonic \cite[Proposition 6.30]{krause2023polygonic}
  Frobenius maps, upon noting that the inclusion $\Delta^{op} \to \Lambda_{\infty}^{op}$ is cofinal \cite[Proposition B.5]{NS}, and that the composite $$\Delta^{op} \to \Lambda_{\infty}^{op} \to \Lambda_p^{op} \to \mathrm{Free}(C_p)\times_{\mathrm{Fin}}(\mathrm{\Sp}^{\otimes})_{\mathrm{act}}$$ where the third map is as in \cite[III.2]{NS}, agrees with the map of \cite[Construction 6.30]{krause2023polygonic}.
 
  The triangle commutes by the definition of $\res_{\hex}$.
\end{proof}

\begin{lem} \label{lem:tenspr_with_finite}
  Suppose we are given $(A,B) \in \Alg(\Sp)^{\EQ}$ and
  $V \in \Alg(\Sp)$ such that the underlying spectrum of $V$
  is a dualizable  $\Ss$-module.
  The natural map
  \[ \mathrm{res}_{\varphi}\THH_{\hex}(\Ss; V) \otimes \mathrm{res}_{\varphi}\THH_{\hex}(A; B) \to \mathrm{res}_{\varphi}\THH_{\hex}(A; V \otimes B),\]
  coming from the lax symmetric monoidal structure on $\mathrm{res}_{\varphi}\THH_{\hex}(-;-)$,
  is an isomorphism.
\end{lem}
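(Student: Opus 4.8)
The plan is to promote $\THH_{\hex}(\Ss;V)$ to a dualizable object, deduce the corresponding isomorphism inside $\pgnsp{\lpr}$ from the projection formula for lax symmetric monoidal functors, and then descend along $\mathrm{res}_{\varphi}$. First I would use \Cref{lem:norm} to identify $\THH_{\hex}(\Ss;V)\cong\mathrm{Nm}_e^{C_p}(V)$. On the full symmetric monoidal subcategory of $\mathrm{BiMod}$ spanned by pairs $(\Ss;W)$ — which is equivalent to $\Sp$ via $(\Ss;W)\mapsto W$ — the lax symmetric monoidal functor $\THH_{\hex}(-;-)$ agrees, compatibly with the symmetric monoidal structures, with the norm functor $\mathrm{Nm}_e^{C_p}\colon\Sp\to\pgnsp{\lpr}$ (the natural isomorphism of \Cref{lem:norm} is one of symmetric monoidal functors; cf.\ \cite[Remark III.1.5]{NS}), and the norm is \emph{strong} symmetric monoidal. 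Since $V$ is a dualizable $\Ss$-module, the pair $(\Ss;V)$ is dualizable in $\mathrm{BiMod}$, with dual $(\Ss;V^{\dual})$ and with evaluation and coevaluation induced by those of $V$ in $\Sp$; applying the strong symmetric monoidal functor $\THH_{\hex}(-;-)|_{(\Ss;-)}$, we conclude that $\THH_{\hex}(\Ss;V)$ is dualizable in $\pgnsp{\lpr}$ with dual $\THH_{\hex}(\Ss;V^{\dual})$, its duality data being the image under $\THH_{\hex}(-;-)$ of that of $(\Ss;V)$.

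Next I would apply the projection formula: if $G\colon\mathcal{C}\to\mathcal{D}$ is lax symmetric monoidal and $d\in\mathcal{C}$ is dualizable in such a way that $G(d)$ is dualizable with dual $G(d^{\dual})$ and duality data $G$ of that of $d$, then for every $x\in\mathcal{C}$ the lax structure map $G(d)\otimes G(x)\to G(d\otimes x)$ is an isomorphism, with inverse
\[ G(d\otimes x)\xrightarrow{\mathrm{coev}\otimes\id}G(d)\otimes G(d^{\dual})\otimes G(d\otimes x)\longrightarrow G(d)\otimes G(d^{\dual}\otimes d\otimes x)\xrightarrow{\id\otimes G(\mathrm{ev}\otimes\id)}G(d)\otimes G(x), \]
as one checks using the triangle identities for $d$ together with the coherences of the lax structure maps of $G$. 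Taking $G=\THH_{\hex}(-;-)$, $d=(\Ss;V)$ and $x=(A;B)$, and using the previous paragraph, this yields an isomorphism
\[ \THH_{\hex}(\Ss;V)\otimes\THH_{\hex}(A;B)\;\xrightarrow{\ \sim\ }\;\THH_{\hex}(A;V\otimes B) \]
in $\pgnsp{\lpr}$, namely the lax structure map of $\THH_{\hex}(-;-)$.

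Finally, the map in the statement is the composite of the lax structure map
\[ \mu\colon\mathrm{res}_{\varphi}\THH_{\hex}(\Ss;V)\otimes\mathrm{res}_{\varphi}\THH_{\hex}(A;B)\longrightarrow\mathrm{res}_{\varphi}\bigl(\THH_{\hex}(\Ss;V)\otimes\THH_{\hex}(A;B)\bigr) \]
of $\mathrm{res}_{\varphi}$ with $\mathrm{res}_{\varphi}$ of the isomorphism just produced, so it remains to show $\mu$ is an isomorphism. Isomorphisms in $\Sp^{\Delta^1}$ are detected vertexwise; since geometric $C_p$-fixed points $\pgnsp{\lpr}\to\Sp$ is strong symmetric monoidal, $\mu$ is an isomorphism at the source vertex, while at the target vertex it is the lax-monoidality comparison map
\[ \bigl(V^{\otimes p}\bigr)^{tC_p}\otimes\bigl(\THH_{\hex}(A;B)^{\Phi e}\bigr)^{tC_p}\longrightarrow\bigl(V^{\otimes p}\otimes\THH_{\hex}(A;B)^{\Phi e}\bigr)^{tC_p} \]
for $(-)^{tC_p}\colon\Sp^{BC_p}\to\Sp$, where $V^{\otimes p}$ denotes the underlying Borel $C_p$-spectrum of $\mathrm{Nm}_e^{C_p}(V)$. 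Since $V$ is dualizable, $V^{\otimes p}$ is a dualizable object of $\Sp^{BC_p}$, and it is a standard property of the Tate construction that its lax structure maps are isomorphisms on dualizable inputs — this reduces, using the Segal conjecture (so $\Ss^{tC_p}\simeq\Ss$ after $p$-completion) and the vanishing of $(-)^{tC_p}$ on induced $C_p$-spectra (cf.\ \Cref{lem:a1tcp}), to the cases $V^{\otimes p}\simeq\Ss$ and $V^{\otimes p}\simeq\Sigma^{\infty}_+C_p$. Hence $\mu$ is an isomorphism, and the statement follows.

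The step requiring real care is the first one — checking that the identification of \Cref{lem:norm} is symmetric monoidal, so that $\THH_{\hex}(\Ss;V)$ inherits honest duality data from the norm — together with the coherence bookkeeping in the projection formula; once these are in hand, everything else is formal.
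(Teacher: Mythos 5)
Your overall architecture is sound and genuinely different from the paper's in its first half: you promote $\THH_{\hex}(\Ss;V)\cong\Nm(V)$ to a dualizable object of $\pgnsp{\lpr}$ and invoke a projection formula there, whereas the paper skips this entirely and reduces at once, via the strong monoidality of the two vertex projections of $\mathrm{res}_{\varphi}$, to the single assertion that the Tate comparison map
\[ (V^{\otimes p})^{tC_p}\otimes X^{tC_p}\longrightarrow (V^{\otimes p}\otimes X)^{tC_p},\qquad X\coloneqq\THH_{\hex}(A;B)\in\Sp^{BC_p}, \]
is an isomorphism. Your steps (1)--(3) are correct, but both routes funnel into exactly this last statement, so the extra structure you build does not help with the genuinely nontrivial part.

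The gap is in your justification of that final statement. You assert that "the lax structure maps of $(-)^{tC_p}$ are isomorphisms on dualizable inputs" and that this reduces matters to the cases $V^{\otimes p}\simeq\Ss$ and $V^{\otimes p}\simeq\Sigma^{\infty}_+C_p$. Neither half of this is right as stated. The class of $W\in\Sp^{BC_p}$ for which $W^{tC_p}\otimes X^{tC_p}\to(W\otimes X)^{tC_p}$ is an isomorphism for all $X$ is a thick subcategory containing the trivial spheres and the induced objects, but dualizability of $W$ (i.e.\ of its underlying spectrum) does not place $W$ in that thick subcategory, and this is not a standard property of the Tate construction; moreover, even granting it, dualizability of $V^{\otimes p}$ would not produce the claimed decomposition into copies of $\Ss$ and $\Ss[C_p]$ --- the functor $V\mapsto V^{\otimes p}$ is not exact, so a cell structure on $V$ cannot be pushed through it naively. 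The correct argument, and the one the paper gives, is to fix $X$ and regard both sides as functors of $V\in\Sp^{\dualz}$: the source is exact in $V$ by \cite[Prop.\ III.1.1]{NS} (exactness of the Tate diagonal), and the target is exact by the \emph{proof} of that proposition --- a cofiber sequence in $V$ induces an orbit filtration on $V^{\otimes p}$ whose off-diagonal graded pieces are induced from the trivial subgroup, remain induced after tensoring with $X$, and are killed by $(-)^{tC_p}$. Since $\Sp^{\dualz}$ is the thick subcategory of $\Sp$ generated by $\Ss$, this reduces you to $V=\Ss$, where the map is an isomorphism by the $p$-complete Segal conjecture. Replacing your last paragraph's justification with this closes the proof.
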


\begin{proof}
  Using the fact that $\THH(-;-)$ is symmetric monoidal we can reduce to showing that the natural map
  \[ \THH_{\hex}(\Ss; V)^{tC_p} \otimes \THH_{\hex}(A; B)^{tC_p} \to (\THH_{\hex}(\Ss; V) \otimes \THH_{\hex}(A; B))^{tC_p} \]
  is an isomorphism.
  Applying the identification $\THH_{\hex}(\Ss; V) \cong V^{\otimes p} \in \Sp^{BC_p}$ (\Cref{lem:norm}) and denoting $X\coloneqq \THH_{\hex}(A; B) \in \Sp^{BC_p}$
   we can a rewrite the above map as 

  \[(V^{\otimes p})^{tC_p}\otimes X^{tC_p} \to (V^{\otimes p}\otimes X)^{tC_p}.\]

  Fixing $X$, we can consider the map as a natural transformation of functors
  $\Sp^{\dualz}\to \Sp$.
  We claim that both the source and target of this natural transformation are exact. Indeed for the source it follows from \cite[Proposition III.1.1]{NS}, and for the target we may use the proof of \cite[Proposition III.1.1]{NS}.

  Since $\Sp^{\dualz}$ is the thick subcategory generated by $\SP$, this reduces the statement to the case $V = \SP$, where
  the claim follows from the Segal conjecture.   
\end{proof}

\subsection{Quantitative forms of boundedness}
\label{subsec:implications}

In this subsection we explore a collection of ``boundedness'' conditions that a cyclotomic spectrum $X$ might satisfy. The results proved here are primarily used in \Cref{sec:tame} to prove that certain cyclotomic spectra are bounded in the cyclotomic $t$-structure.
This subsection can be viewed as a quantitative version of \cite[Section 3.5]{hahn2020redshift}.

\subsubsection{Canonical vanishing}

\begin{dfn}
  Suppose we are given an $X \in \Sp^{B\T}$ which is bouned below.
  We say that: 
  \begin{enumerate}
  \item  $X$ satisfies \mdef{weak canonical vanishing} with parameter $b$
    if for each $1\leq j\leq \infty$ and $m > b$ the canonical maps
    \[ \pi_mX^{hC_{p^j}} \to \pi_mX^{tC_{p^j}} \]
    are zero. We abbreviate this by saying that $X$ satisfies $\WcanV{b}$.
  \item $X$ satisfies \mdef{strong canonical vanishing} with parameter $b$
    if there exists some $d\geq 0$ for which the composition
    \[ \tau_{> b}X \to X \xrightarrow{a_{(1)}^d} \Sigma^{d(1)} X \]
    is $\T$-equivariantly null.
    We abbreviate this by saying that $X$ satisfies $\ScanVn{b}$.\qedhere
  \end{enumerate}
\end{dfn}
Note that $\WcanV{b}\implies \WcanV{b+1}$ and similarly $\ScanVn{b}\implies\ScanVn{b+1}$.

\begin{lem}\label{lem:srw_canv}
  Let $X \in \Sp^{B\T}$ be bounded below.
  For all $b \in \ZZ$,
  if $X$ satisfies ${\ScanVn{b}}$ then $X$ satisfies ${\WcanV{b}}$.
\end{lem}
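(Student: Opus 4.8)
First I would extract the two facts to be pitted against each other. From $\ScanVn{b}$ we obtain a $d\geq 0$ such that the composite $\tau_{>b}X\to X\xrightarrow{a_{(1)}^d}\Sigma^{d(1)}X$ is $\T$-equivariantly null; since $\tau_{\leq b}X$ is the cofiber of $\tau_{>b}X\to X$ in the pointwise $t$-structure on $\Sp^{B\T}$, this nullhomotopy furnishes a $\T$-equivariant factorization $a_{(1)}^d\colon X\to\tau_{\leq b}X\xrightarrow{g}\Sigma^{d(1)}X$. On the other hand, \Cref{lem:a1tcp} tells us that for every $1\leq j\leq\infty$ the functor $(-)^{tC_{p^j}}$ sends $a_{(1)}$, and hence $a_{(1)}^d$, to an isomorphism.

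Next, fix $1\leq j\leq\infty$ and $m>b$, apply $(-)^{hC_{p^j}}$ and $(-)^{tC_{p^j}}$ to the map $a_{(1)}^d\colon X\to\Sigma^{d(1)}X$, and assemble the naturality square for the canonical transformation $\can\colon(-)^{hC_{p^j}}\Rightarrow(-)^{tC_{p^j}}$:
\[\begin{tikzcd}
X^{hC_{p^j}} \ar[r, "{(a_{(1)}^d)^{hC_{p^j}}}"] \ar[d, "\can"'] & (\Sigma^{d(1)}X)^{hC_{p^j}} \ar[d, "\can"] \\
X^{tC_{p^j}} \ar[r, "{(a_{(1)}^d)^{tC_{p^j}}}"', "\cong"] & (\Sigma^{d(1)}X)^{tC_{p^j}}.
\end{tikzcd}\]
The bottom arrow is an isomorphism by \Cref{lem:a1tcp}. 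The top arrow factors as $X^{hC_{p^j}}\to(\tau_{\leq b}X)^{hC_{p^j}}\xrightarrow{g^{hC_{p^j}}}(\Sigma^{d(1)}X)^{hC_{p^j}}$, and since $\tau_{\leq b}X$ is coconnective (homotopy concentrated in degrees $\leq b$) and $(-)^{hC_{p^j}}$ is a limit, hence preserves coconnectivity, the spectrum $(\tau_{\leq b}X)^{hC_{p^j}}$ has no homotopy in degrees $>b$; therefore $\pi_m$ of the top arrow vanishes.

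Finally I would chase the square: for $\alpha\in\pi_m X^{hC_{p^j}}$ we get $(a_{(1)}^d)^{tC_{p^j}}(\can\alpha)=\can\big((a_{(1)}^d)^{hC_{p^j}}\alpha\big)=0$, and as $(a_{(1)}^d)^{tC_{p^j}}$ is an isomorphism this forces $\can\alpha=0$. Since $j$ and $m>b$ were arbitrary, this is exactly $\WcanV{b}$. I do not expect a genuine obstacle here; the only two points deserving a sentence of care are the passage from ``$\T$-equivariantly null on $\tau_{>b}X$'' to a factorization through $\tau_{\leq b}X$ (a routine cofiber-sequence argument in $\Sp^{B\T}$ with its pointwise $t$-structure), and checking that the case $j=\infty$ — where $C_{p^j}$ is replaced throughout by $\T$ — is genuinely covered, which it is, since \Cref{lem:a1tcp}, the naturality of $\can$, and the coconnectivity-preservation of limits all apply verbatim.
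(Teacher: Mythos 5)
Your proof is correct and is essentially the paper's argument: both rest on the nullhomotopy of $\tau_{>b}X \to X \xrightarrow{a_{(1)}^d} \Sigma^{d(1)}X$ together with \Cref{lem:a1tcp}, and differ only in which half of the cofiber sequence $\tau_{>b}X \to X \to \tau_{\leq b}X$ is used — the paper routes elements of $\pi_m X^{hC_{p^j}}$ through $(\tau_{>b}X)^{hC_{p^j}}$, while you factor $a_{(1)}^d$ through $\tau_{\leq b}X$ and invoke coconnectivity of $(\tau_{\leq b}X)^{hC_{p^j}}$. These are interchangeable, and your diagram chase is sound.
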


\begin{proof}
  As $X$ satisfies $\ScanVn{b}$ we may find a $d$ such that the composite
  \[ \tau_{> b}X \to X \xrightarrow{a_{(1)}^d} \Sigma^{d(1)} X \]
  is $\T$-equivariantly null.
  We learn that $X$ satisfies $\WcanV{b}$ from the following diagram:  
  \[ \begin{tikzcd}
    \tau_{> b}( X^{hC_{p^j}} ) \ar[r]  &
    (\tau_{> b} X)^{hC_{p^j}} \ar[r] \ar[d] &
    (\tau_{> b} X)^{tC_{p^j}} \ar[d] \ar[dr, "0"] & \\
    & X^{hC_{p^j}} \ar[r, "\mathrm{can}"] &
    X^{tC_{p^j}} \ar[r, "\cong", "(a_{(1)}^d)^{tC_{p^j}}"'] &
    (\Sigma^{d(1)}X)^{tC_{p^j}}, 
  \end{tikzcd} \]
  where the bottom left horizontal  map is an isomorphism by \Cref{lem:a1tcp}.
\end{proof}

\subsubsection{Exponent of Nilpotence}

We will also need control over exponents of nilpotence in the sense of \cite{AkhilDescent}.

\begin{rec}\label{rec:exponents1}
  In \cite{AkhilDescent}, in the process of analyzing descent,
  Mathew sets out the basic properties of the
  \emph{exponent of nilpotence} of an object with respect to a ring.
  We briefly recall some of these properties.
  
  Let $\CC$ be a stable, exactly\footnote{i.e the tensor product is exact in each variable} symmetric monoidal category. Let $B$ be an $\E_1$-algebra in $\CC$, and
  let $I_B \to \one \to B$ be the associated fiber sequence.  
  Given an $X \in \CC$, the exponent of nilpotence of $X$ with respect to $B$, denoted $\mdef{\exp_B(X)}$, is the smallest $m$ such that either of following (equivalent) conditions hold:
  \begin{enumerate}
  \item $X$ is a retract of an object with an $m$-step resolution by $B$-modules, or 
  \item The map $I_B^{\otimes m} \to \one$ becomes null upon tensoring with $X$.
  \end{enumerate}
  By \cite[Proposition 2.3]{AkhilDescent} the function $\exp_B(-)$ is sub-additive in cofiber sequences, e.g. given a cofiber sequence $X \to Y \to Z$ we have
  \[ \exp_B(Y) \leq \exp_B(X) + \exp_B(Z). \qedhere\]
\end{rec}

\begin{rmk} \label{rmk:exp-as-a1}
  In the category of $\T$-spectra we have a cofiber sequence
  \[ \Ss^{-(1)} \xrightarrow{a_{(1)}} \Ss \to \Ss^{\T},\]
  and so we find for any $\T$-spectrum $X$
  that $\exp_{\Ss^{\T}}(X) \leq t$ if and only if $a_{(1)}^t \otimes X$ is null.
\end{rmk}

\begin{lem} \label{lem:quant-fp-to-tate1}\hfill
  \begin{enumerate}
  \item Let $\CC$ be a stable, exactly symmetric monoidal category, and let
 $R$ and $B$ be $\E_1$-algebras in $\CC$.
    Then, for every $R$-module $M$, 
    \[ \exp_B(M) \leq \exp_B(R). \]    
  \item Let $F : \CC \to \DD$ be an exact lax symmetric monoidal functor
    between stable categories.
    Let $A \in \Alg(\CC)$ and $B \in \Alg(\DD)$ be $\E_1$-algebras.
    Then, for each $M \in \CC$, 
    \[ \exp_{B}(F(M)) \leq \exp_A(M) \cdot \exp_B(F(A)). \]
  \item Let $\CC$ be a stable presentably symmetric monoidal category with compatible $t$-structure, and $ \iota \colon \Sp \to \cC$ the unique functor in $\CAlg(\mathrm{Pr}^L)$.
    If $X \in \CC_{[c,b]}$  and $p^m$ acts by zero on $X$, then
    \[ \exp_{\iota \F_p}(X) \leq (b-c+1)m. \]
  \end{enumerate}
\end{lem}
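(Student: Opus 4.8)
The plan is to establish the three parts in order, bootstrapping each from its predecessors, using only the properties of $\exp_B(-)$ recorded in \Cref{rec:exponents1}: the two characterizations of $\exp_B(X)\le m$ (nullity of $I_B^{\otimes m}\otimes X\to X$ with $I_B=\fib(\one\to B)$, and being a retract of an object with an $m$-step resolution by $B$-modules) and subadditivity of $\exp_B(-)$ in cofiber sequences. I also use that an $R$-module $N$ satisfies $\exp_R(N)\le1$, and that $\exp_B(-)$ is suspension-invariant and non-increasing under retracts. For part (1): set $m\coloneqq\exp_B(R)$, so $I_B^{\otimes m}\otimes R\to R$ is null. Naturality of the transformation $I_B^{\otimes m}\otimes(-)\Rightarrow(-)$ against the action map $a\colon R\otimes M\to M$ gives that the composite $I_B^{\otimes m}\otimes R\otimes M\xrightarrow{\mathrm{id}\otimes a}I_B^{\otimes m}\otimes M\to M$ equals $I_B^{\otimes m}\otimes R\otimes M\to R\otimes M\xrightarrow{a}M$, which is null since $I_B^{\otimes m}\otimes R\to R$ is. As $a$ is split epi (via the unit $M\to R\otimes M$), so is $\mathrm{id}\otimes a$, and hence $I_B^{\otimes m}\otimes M\to M$ is null; thus $\exp_B(M)\le m=\exp_B(R)$.

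For part (2) I would argue in two steps. First, since $F$ is exact and lax symmetric monoidal it carries $A$-modules to $F(A)$-modules and preserves finite filtrations; applying it to the length-$\exp_A(M)$ resolution of a retract of $M$ by $A$-modules — namely the finite filtration of $\mathrm{cofib}(I_A^{\otimes\exp_A(M)}\otimes M\to M)$, whose graded pieces are of the form $I_A^{\otimes\bullet}\otimes A\otimes M$ — produces a length-$\exp_A(M)$ resolution of a retract of $F(M)$ by $F(A)$-modules, so subadditivity gives $\exp_{F(A)}(F(M))\le\exp_A(M)$. Second, for any $\E_1$-algebras $C,B$ in $\DD$ and any $Y$, resolving $Y$ by $C$-modules in $\exp_C(Y)$ steps, bounding $\exp_B$ of each $C$-module by part (1), and applying subadditivity yields $\exp_B(Y)\le\exp_C(Y)\cdot\exp_B(C)$. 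Combining with $C=F(A)$, $Y=F(M)$ gives $\exp_B(F(M))\le\exp_{F(A)}(F(M))\cdot\exp_B(F(A))\le\exp_A(M)\cdot\exp_B(F(A))$.

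For part (3) the idea is to refine a Postnikov filtration. Since $X\in\CC_{[c,b]}$, its Postnikov tower exhibits $X$ as a finite extension of the $b-c+1$ shifted hearts $(\pi_kX)[k]$, and $p^m=0$ on $X$ forces $p^m=0$ on each $H\coloneqq\pi_kX\in\CC^{\heartsuit}$; the filtration $H\supseteq pH\supseteq\cdots\supseteq p^mH=0$ by images in the abelian category $\CC^{\heartsuit}$ then has all $m$ graded pieces killed by $p$. So, up to suspension, $X$ is a finite extension of at most $(b-c+1)m$ objects $H'[k]$ with $H'\in\CC^{\heartsuit}$ and $pH'=0$, and by subadditivity it suffices to prove $\exp_{\iota\F_p}(H')\le1$ for each. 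Here $\iota$ is right $t$-exact (the unit $\one_\CC=\iota\SP$ is connective and $\CC_{\geq0}$ is closed under colimits), so $\iota\F_p$ is connective with $\pi_0(\iota\F_p)\cong\pi_0(\one_\CC)/p$. The map $\fib(\one_\CC\to\iota\F_p)\otimes H'\to H'$, whose nullity is exactly $\exp_{\iota\F_p}(H')\le1$, goes from a connective object to a $0$-truncated one and is detected on $\pi_0$; the cofiber sequence $\fib(\one_\CC\to\iota\F_p)\otimes H'\to H'\to\iota\F_p\otimes H'$, together with the computation $\pi_0(\iota\F_p\otimes H')\cong(\pi_0(\one_\CC)/p)\otimes_{\pi_0(\one_\CC)}\pi_0(H')=\pi_0(H')$ with the structure map from $\pi_0(H')$ an isomorphism (as $p$ acts by zero), forces the connecting map on $\pi_0$ to vanish, so the map is null. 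Alternatively, $\CC^{\heartsuit}$ is symmetric monoidal with unit $\pi_0(\one_\CC)$, so $H'$ with $pH'=0$ is a $\pi_0(\iota\F_p)$-module, hence an $\iota\F_p$-module, and part (1) gives $\exp_{\iota\F_p}(H')\le\exp_{\iota\F_p}(\iota\F_p)=1$.

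I expect the only genuinely delicate point to be this last step of part (3) — identifying $\pi_0(\iota\F_p)$ correctly and running the $\pi_0$-level Tor computation in $\CC$ (or, in the alternative route, checking that $H'$ killed by $p$ is a $\pi_0(\iota\F_p)$-module); everything else is formal bookkeeping inside the exponent-of-nilpotence formalism of \Cref{rec:exponents1}.
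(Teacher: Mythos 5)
Your argument is correct. Parts (1) and (2) are essentially the paper's proof: for (1) the paper phrases it as "$M$ is a retract of $M\otimes R$, and $M\otimes R\otimes(I_B^{\otimes m}\to\one)$ is null," which is the same observation as your naturality-plus-split-epi argument; for (2) the paper likewise resolves $M$ by $A$-modules, uses exactness of $F$ and subadditivity to reduce to $M$ an $A$-module, and then applies (1) to the $F(A)$-module $F(M)$ — your factorization into the two inequalities $\exp_{F(A)}(F(M))\le\exp_A(M)$ and $\exp_B(Y)\le\exp_C(Y)\cdot\exp_B(C)$ is just a modular repackaging of that.

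Part (3) is where you genuinely diverge. The paper reduces to the heart, uses the $1$-categorical nature of $\CC^{\heartsuit}$ to make each $\pi_kX$ a $\iota(\Z/p^m)$-module, reduces via (1) to $X=\iota(\Z/p^m)$, and then invokes part (2) applied to the functor $\iota$ itself together with $\exp_{\F_p}(\Z/p^m)\le m$ in spectra. You instead perform the $p$-adic filtration $H\supseteq pH\supseteq\cdots\supseteq p^mH=0$ directly inside $\CC^{\heartsuit}$ and prove $\exp_{\iota\F_p}(H')\le 1$ for each $p$-torsion graded piece, either by the $\pi_0$-level Tor computation or by observing that $H'$ is a $\pi_0(\iota\F_p)$-module and citing (1). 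The two routes give the same bound $(b-c+1)m$; yours is more self-contained in that it does not route through part (2) or through the exponent computation in $\Sp$, at the cost of having to justify the right-exactness of $\otimes$ on $\CC^{\heartsuit}$ (for the quotient module structure) and the identification $\pi_0(\iota\F_p)\cong\pi_0(\one_\CC)/p$ — both of which follow from compatibility of the $t$-structure with the monoidal structure, as you note, so there is no gap.
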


\begin{proof}
  Part (1).
  Using the $R$-module structure we write $M$ as a retract of $M \otimes R$.
  Now we observe that, if $R \otimes \left( I_B^{\otimes m} \to \one \right)$ is null,
  then $M \otimes R \otimes \left( I_B^{\otimes m} \to \one \right)$ is null as well.
  
  Part (2).
  Resolving $M$ by $A$-modules, and
  using exactness of $F$ and subadditivity of $\exp_{B}(-)$,
  we reduce to the case where $M$ is an $A$-module.
  Using the lax symmetric monoidal structure on $F$
  we can give $F(M)$ an $F(A)$-module structure.
  Using the conclusion of (1), it now suffices to prove (2) in the case where $M=A$,
  but in this case the lemma reduces to the equality
  $\exp_B(F(A)) = \exp_B(F(A))$.

  Part (3).
  Resolving $X$ by its $t$-structure homotopy groups
  we reduce to the case $X$ is in the heart. 
  Using the fact that $\CC^\heartsuit$ is a $1$-category,
  we can give $X$ the structure of a $\iota(\Z/p^m)$-module.
  By (1) we are then reduced to the case $X=\iota(\Z/p^m)$.
  By (2) we learn that 
  \[\exp_{\iota \Fp}(\iota(\Z/p^m))\leq \exp_{\Fp}(\Z/p^m)\cdot \exp_{\iota \Fp}(\iota \Fp) \leq m \cdot 1. \qedhere\]
\end{proof}

\begin{dfn}\label{dfn:conditions-3}
 Suppose we are given an $X\in \Sp^{B\T}$ which is bounded below.
  \begin{itemize}
  \item $X$ satisfies \mdef{Tate nilpotence} of exponent $d$ if
    $\exp_{\Ss^{\T}}(X^{tC_p}) \leq d$,
    where $X^{tC_p}$ is given its residual $\T/C_p$-action.
  \item $X$ satisfies \mdef{$\Fp$ nilpotence} of exponent $d$ if
    $\exp_{\F_p}(X) \leq d$. \qedhere
  \end{itemize}  
\end{dfn}

\begin{lem}\label{lem:fn-to-tn1}
  Given a bounded below $X \in \Sp^{B\T}$ we have
  \[ \exp_{\Ss^{\T}}(X^{tC_p}) \leq \exp_{\F_p}(X). \]
\end{lem}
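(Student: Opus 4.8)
The plan is to reduce the statement to the single computation that the cyclotomic spectrum $\F_p^{tC_p}$ has $\Ss^\T$-exponent at most $1$, via part (2) of \Cref{lem:quant-fp-to-tate1}. Concretely, apply \Cref{lem:quant-fp-to-tate1}(2) to the exact lax symmetric monoidal functor $(-)^{tC_p}\colon\Sp^{B\T}\to\Sp^{B\T}$ (with the target circle $\T/C_p$ identified with $\T$ by rescaling), the algebra $A=\F_p$ with trivial $\T$-action, and $B=\Ss^\T$. Since here $F(A)=\F_p^{tC_p}$, this gives
\[ \exp_{\Ss^\T}\!\big(X^{tC_p}\big)\ \le\ \exp_{\F_p}(X)\cdot\exp_{\Ss^\T}\!\big(\F_p^{tC_p}\big), \]
so it suffices to show $\exp_{\Ss^\T}(\F_p^{tC_p})\le 1$.

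By \Cref{rmk:exp-as-a1} this is equivalent to the vanishing of $a_{(1)}\otimes\F_p^{tC_p}$ in $\Sp^{B\T}$, where $a_{(1)}$ is the Euler class of the weight-one character of the residual circle $\T/C_p$. The key observation is a change of weight: under the identification $\T/C_p\cong\T$, the weight-one character of $\T/C_p$ pulls back along the quotient $\T\to\T/C_p$ to the weight-$p$ character of $\T$, so $\Ss^{-(1)}$ for $\T/C_p$ inflates to $\Ss^{-(p)}$ on $\Sp^{B\T}$. As this object is invertible, hence dualizable, the standard compatibility of $(-)^{tC_p}$ with tensoring against inflated dualizable objects identifies $a_{(1)}\otimes\F_p^{tC_p}$ with $\big(a_{(p)}\otimes\F_p\big)^{tC_p}$, where $a_{(p)}\colon\Ss^{-(p)}\to\Ss$ is the weight-$p$ Euler class on $\Sp^{B\T}$ and $\F_p$ again carries the trivial action.

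Thus it remains to prove that $a_{(p)}\otimes\F_p$ is nullhomotopic in $\Sp^{B\T}$. Complex orientability of $\F_p$ gives a $\T$-equivariant Thom isomorphism $\Ss^{(p)}\otimes\F_p\cong\Sigma^2\F_p$ with trivial action on the target, under which $a_{(p)}\otimes\F_p$ is classified by the first Chern class of the weight-$p$ line bundle over $B\T$; since $\F_p$-cohomology carries the additive formal group law, this Chern class is $p\cdot c_1=0$. Because $\pi_0\Map_{\Sp^{B\T}}(\F_p,\Sigma^{(p)}\F_p)\cong H^2(B\T;\F_p)\cong\F_p$ detects such maps faithfully, vanishing of the classifying element forces $a_{(p)}\otimes\F_p=0$, and the proof is complete.

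I expect the subtle point to be the weight-$p$ bookkeeping in the middle paragraph: the whole argument hinges on the residual-circle Euler class corresponding to the weight-$p$ (rather than weight-one) Euler class upstairs, which is exactly what lets the additive formal group law annihilate the relevant Chern class. Granting that identification, the rest is a formal consequence of \Cref{lem:quant-fp-to-tate1} and complex orientability of $\F_p$.
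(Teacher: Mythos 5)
Your proof is correct and follows essentially the same route as the paper's: reduce via \Cref{lem:quant-fp-to-tate1}(2) to $\exp_{\Ss^\T}(\F_p^{tC_p})\le 1$, identify $a_{(1)}$ on $\F_p^{tC_p}$ with $(a_{(p)}\otimes\F_p)^{tC_p}$ via the weight-$p$ inflation, and kill $a_{(p)}\otimes\F_p$ using the additive formal group law (the paper phrases this last step as the factorization $a_{(p)}=\underline{p}\circ a_{(1)}$ together with orientations, which is the same computation). The only cosmetic imprecision is that the vanishing is most cleanly detected in $\F_p$-module maps rather than in $\pi_0\Map_{\Sp^{B\T}}(\F_p,\Sigma^{(p)}\F_p)$, but this does not affect the argument.
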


\begin{proof}
  Applying \Cref{lem:quant-fp-to-tate1}(2)
  to the functor \[(-)^{tC_p}\colon \Sp_p^{B\T} \to \Sp_p^{B\T},\] it will suffice to show that
  \[ \exp_{\Ss^\T}(\F_p^{tC_p}) = 1. \]  
  Per \Cref{rmk:exp-as-a1} (and using that $\F_p^{tC_p}$ is an algebra)
  it will suffice for us to show that
  the image of $a_{(1)}$ in $\pi^{\T}_{-(1)}(\F_p^{tC_p})$ is zero. But the map $a_{(1)}:\FF_p^{tC_p} \to \Sigma^{(1)}\FF_p^{tC_p}$ is obtained by applying $(-)^{tC_p}$ to the map $a_{(p)}:\FF_p \to \Sigma^{(p)}\FF_p$, so it will suffice 
  to argue that $a_{(p)} \otimes \F_p$ is zero.
  The map $a_{(p)}$ can be written as the composite
  \[ \Ss \xrightarrow{a_{(1)}} \Ss^{(1)} \xrightarrow{\underline{p}} \Ss^{(p)},\]
  where $\underline{p}$ denotes the pointed suspension of
  the $p^{\mathrm{th}}$ power map on $\C^\times$ (with the weight $1$ $\T$-action on the source and the weight $p$ $\T$-action on the target).
  There are orientations
  $\F_p \otimes \Ss^2 \cong \F_p \otimes \Ss^{(1)}$ and $\F_p \otimes \Ss^2 \cong \F_p \otimes \Ss^{(p)}$, under which the map $\underline{p}$ is identified with $p$ (i.e zero).
\end{proof}

\subsubsection{Boundedness of cyclotomic spectra}

We now consider the above properties in the context of cyclotomic spectra, and use the notations 
${\WcanV{b}}$ and $\ScanVn{b}$, and refer to Tate and $\F_p$ nilpotence exponent 
to indicate properties of the underlying $\T$-spectrum.

\begin{dfn}\label{dfn:sc}
  For $X \in \CycSp$, and $b\in \ZZ$.
  We say that $X$  satisfies the \mdef{Segal condition}\footnote{This is because this condition is related to the Segal conjecture.} with parameter $b$ 
  if the fiber of the map $X \xrightarrow{\varphi} X^{tC_p}$ is $b$-truncated.
  We abbreviate this by saying that $X$ satisfies $\mdef{\Seg{b}}$.
\end{dfn}
  
\begin{lem} \label{lem:sc-tn-to-scv1}
  Let $X$ be a bounded below cyclotomic spectrum.
  If $X$ satisfies $\Seg{b}$ and $\exp_{\Ss^{\T}}(X^{tC_p}) \leq d$
  then $X$ satisfies $\ScanVn{b+2d}$.
\end{lem}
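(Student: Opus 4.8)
The plan is to produce the witnessing power in $\ScanVn{b+2d}$ directly, taking $d'=d$. First I would unwind the two hypotheses. By \Cref{rmk:exp-as-a1}, the assumption $\exp_{\Ss^{\T}}(X^{tC_p}) \le d$ says precisely that the Euler-class map $a_{(1)}^d \colon X^{tC_p} \to \Sigma^{d(1)} X^{tC_p}$ — formed with respect to the residual (rescaled) $\T$-action on $X^{tC_p}$, which is the same one implicit in the definition of $\exp_{\Ss^{\T}}$ — is $\T$-equivariantly null. And $\Seg{b}$ says that $F \coloneqq \mathrm{fib}\bigl(\varphi \colon X \to X^{tC_p}\bigr)$ is $b$-truncated in $\Sp^{B\T}$.

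The key step is naturality of the Euler class. Since $\varphi$ is a map in $\Sp^{B\T}$ and $a_{(1)} \colon \id \Rightarrow \Sigma^{(1)}$ is a natural transformation of endofunctors of $\Sp^{B\T}$, the evident square commutes, so the composite $X \xrightarrow{a_{(1)}^d} \Sigma^{d(1)}X \xrightarrow{\Sigma^{d(1)}\varphi} \Sigma^{d(1)}X^{tC_p}$ agrees with $X \xrightarrow{\varphi} X^{tC_p} \xrightarrow{a_{(1)}^d} \Sigma^{d(1)}X^{tC_p}$, which is null by the first hypothesis. Hence $a_{(1)}^d \colon X \to \Sigma^{d(1)}X$ lifts along the fiber sequence $\Sigma^{d(1)}F \to \Sigma^{d(1)}X \xrightarrow{\Sigma^{d(1)}\varphi} \Sigma^{d(1)}X^{tC_p}$, producing a map $h \colon X \to \Sigma^{d(1)}F$ in $\Sp^{B\T}$ with $a_{(1)}^d \simeq \bigl(\Sigma^{d(1)}F \to \Sigma^{d(1)}X\bigr)\circ h$.

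To finish, I would observe that the underlying spectrum of $\Ss^{(1)}$ is $\Ss^2$, so $\Sigma^{d(1)}F$ is $(b+2d)$-truncated; therefore the restriction of $h$ along $\tau_{>b+2d}X \to X$ is null, being a map from an object of $\Sp^{B\T}_{>b+2d}$ to an object of $\Sp^{B\T}_{\le b+2d}$. Consequently the composite $\tau_{>b+2d}X \to X \xrightarrow{a_{(1)}^d} \Sigma^{d(1)}X$ factors through this null map and is itself null, which is exactly the statement that $X$ satisfies $\ScanVn{b+2d}$.

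I do not expect a genuine obstacle here; the proof is short once the hypotheses are translated into statements about $a_{(1)}$. The only point requiring a moment of care is confirming that the incarnation of the Euler class on $X^{tC_p}$ used in $\exp_{\Ss^{\T}}(X^{tC_p})$ coincides with the one appearing in the naturality square for $\varphi$ — this holds because of the blanket rescaling identification $\T/C_p \cong \T$ under which $\CycSp$ is defined as the lax equalizer of $\id$ and $(-)^{tC_p}$ on $\Sp^{B\T}$, so that $\varphi$ and the Euler classes all live coherently in $\Sp^{B\T}$.
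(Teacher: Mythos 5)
Your proof is correct and is essentially the paper's argument: both rest on the naturality square for $a_{(1)}^d$ with respect to $\varphi$, the nullity of $a_{(1)}^d$ on $X^{tC_p}$ from \Cref{rmk:exp-as-a1}, and the fact that $\Sigma^{d(1)}\fib(\varphi)$ is $(b+2d)$-truncated. The only (immaterial) difference is that you lift $a_{(1)}^d$ on all of $X$ and then restrict to $\tau_{>b+2d}X$, whereas the paper restricts first and then produces the dashed lift.
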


\begin{proof}
  Consider the following diagram of $\T$-spectra
  \[ \begin{tikzcd}[sep=large]
    & & \Sigma^{d(1)} \fib(\varphi) \ar[d] \\
    \tau_{> b+2d}X \ar[r] \ar[rru, dashed] & 
    X \ar[r, "a_{(1)}^d"] \ar[d, "\varphi"] &
    \Sigma^{d(1)} X \ar[d, "\varphi"] \\
    & X^{tC_p} \ar[r, "a_{(1)}^d"] &
    \Sigma^{d(1)} X^{tC_p}.
  \end{tikzcd} \]
  Using \Cref{rmk:exp-as-a1} and the hypothesis that $ \exp_{\Ss^{\T}}(X^{tC_p})\leq d$
  we know the bottom horizontal arrow is null
  and therefore that the dashed lift exists.
  On the other hand $\Sigma^{d(1)} \fib(\varphi)$ is $(b+2d)$-truncated 
  since $X$ satisfies $\Seg{b}$.
  The dashed lift is therefore null.
\end{proof}

\begin{lem} \label{lem:wcv-sc-to-cb}
  Let $X\in \CycSp_+$.
  If $X$ satisfies $\WcanV{b}$ and $\Seg{b}$,
  then $X \in \CycSp_{\leq b}$.
\end{lem}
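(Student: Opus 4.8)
The plan is to compute $\TR(X)$ via the corepresentability statement \Cref{lem:trcorep} and to show that it is $b$-truncated; since $\TR\colon\CycSp_+\to\Sp$ reflects coconnectivity \cite[Theorem 9]{antieau-nikolaus}, this yields $X\in\CycSp_{\leq b}$.

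First I would unwind \Cref{lem:trcorep}: as $X\in\CycSp_+$, the spectrum $\TR(X)$ is the fiber of the difference $\psi$ of two maps
\[ \prod_{i\geq 0} X^{hC_{p^i}}\ \rightrightarrows\ \prod_{i\geq 0}\bigl(X^{tC_p}\bigr)^{hC_{p^i}}, \]
whose $i$-th components are, respectively, $\varphi^{hC_{p^i}}$ and the composite $X^{hC_{p^{i+1}}}=(X^{hC_p})^{hC_{p^i}}\xrightarrow{\mathrm{can}^{hC_{p^i}}}(X^{tC_p})^{hC_{p^i}}$ (note the index shift in the second map); under the equivalence $(X^{tC_p})^{hC_{p^i}}\simeq X^{tC_{p^{i+1}}}$ of \cite[Lemma II.4.1]{NS}, valid since $X$ is bounded below, the second map is the usual canonical map $X^{hC_{p^{i+1}}}\to X^{tC_{p^{i+1}}}$. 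Products of spectra are exact and compute homotopy groups factorwise, so the long exact sequence of $\TR(X)=\fib(\psi)$ reduces the vanishing $\pi_m\TR(X)=0$ for $m>b$ to two assertions: $\psi$ is injective on $\pi_m$ for every $m>b$, and surjective on $\pi_m$ for every $m>b+1$.

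Now fix $m>b$. By the definition of $\WcanV{b}$, the canonical maps $\pi_m X^{hC_{p^j}}\to\pi_m X^{tC_{p^j}}$ vanish for all $j\geq 1$, so the second of the two maps above is zero on $\pi_m$ and $\psi$ restricts there to the product (over $i$) of the $\varphi^{hC_{p^i}}_*$. On the other hand $\Seg{b}$ says $\fib(\varphi)$ is $b$-truncated; since $(-)^{hC_{p^i}}$ is a limit and $b$-truncated spectra are closed under limits in $\Sp$, $\fib\bigl(\varphi^{hC_{p^i}}\bigr)=\fib(\varphi)^{hC_{p^i}}$ is $b$-truncated, so each $\varphi^{hC_{p^i}}_*$ is injective on $\pi_m$ for $m>b$ and bijective for $m>b+1$. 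Taking products, $\psi$ is injective on $\pi_m$ for $m>b$ and surjective on $\pi_m$ for $m>b+1$. Hence $\pi_m\TR(X)=0$ for all $m>b$, so $\TR(X)$ is $b$-truncated and the lemma follows.

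The one point needing care is matching the second leg of the equalizer in \Cref{lem:trcorep} — which involves an index shift — with precisely the canonical maps that $\WcanV{b}$ controls; this is the content of the identification made in the proof of \Cref{lem:trcorep} together with \cite[Lemma II.4.1]{NS}, and once those maps vanish on $\pi_m$ for $m>b$ the shift plays no further role. I do not expect any obstacle beyond this bookkeeping (one could alternatively run the same argument with the truncated towers $\TR^k(X)$ and a $\varprojlim^1$ argument).
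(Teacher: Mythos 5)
Your proof is correct. The inputs $\Seg{b}$ and $\WcanV{b}$ are deployed in exactly the roles the paper uses them for — the Segal condition makes the Frobenius legs injective on $\pi_m$ for $m>b$ (and surjective for $m>b+1$), and weak canonical vanishing kills the canonical legs — and your reduction of $\pi_m\fib(\psi)=0$ to injectivity on $\pi_m$ plus surjectivity on $\pi_{m+1}$ is the right bookkeeping. The packaging, however, differs from the paper's. The paper works with the finite stages $\TR^{k}(X)$ and their defining pullback squares: by induction the fiber of $\TR^{k+1}(X)\to X^{hC_{p^k}}$ is $b$-truncated, both horizontal maps in the square are injective on $\pi_m$ for $m>b$, the canonical map is zero there, and hence each transition map $\TR^{k+1}(X)\to\TR^{k}(X)$ is zero on $\pi_m$ for $m>b$, so the inverse limit is $b$-truncated. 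You instead invoke the corepresentability of $\TR$ by $\crTR$ (\Cref{lem:trcorep}) to collapse the whole tower into one equalizer of maps between products and run a single long-exact-sequence argument. The two are computing the limit of the same diagram, but your route trades the induction and the pro-zero/$\varprojlim$ argument for a reliance on \Cref{lem:trcorep} and the index-shift identification of its second leg with the canonical maps — an identification the paper's proof also implicitly uses when it labels the right vertical map of the pullback square as $\mathrm{can}$, so you are on the same footing there. Since \Cref{lem:trcorep} is proved earlier and independently, there is no circularity, and the argument is complete.
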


\begin{proof}
  This lemma is a quantitative version of \cite[Thm. 3.3.2(f)]{hahn2020redshift}
  and we follow the argument there closely.
  A similar argument can be found in \cite{mathew2021k}.

  We have $\TR(X) \cong \lim_{k}\TR^k(X)$,
  where for every $k$ there is a pullback square
  \[ \begin{tikzcd}[sep=huge]
    \TR^{k+1}(X) \pullback \ar[rr] \ar[d] & & X^{hC_{p^k}}  \ar[d,"\can"] \\      
    \TR^{k}(X) \ar[r] & X^{hC_{p^{k-1}}} \ar[r,"\varphi^{hC_{p^{k-1}}}"] & X^{tC_{p^k}}. 
  \end{tikzcd} \]

  Thus, by induction on $k$ we learn that the fiber of the map $\TR^{k+1}(X) \to X^{hC_{p^k}}$ has a finite filtration
  with associated graded given by
  \[ \left\{ \fib \left( \varphi^{hC_{p^j}} : X^{hC_{p^j}} \to X^{tC_{p^{j+1}}} \right) \right\}_{j<k}.\]
  Since $X$ satisfies $\Seg{b}$,
  these are all $b$-truncated and we learn that
  the maps $\TR^{k+1}(X) \to X^{hC_{p^k}}$ induce an injection on $\pi_{m}(-)$
  for $m > b$.\footnote{See also \cite[Cor. II.4.9]{NS} for a weaker bound.}
  Additionally, as $X$ satisfies $\Seg{b}$
  the Frobenius map $X^{hC_{p^{k-1}}} \to (X^{tC_p})^{hC_{p^{k-1}}} \cong X^{tC_{p^k}}$
  induces an injection on $\pi_{m}(-)$  for $m > b$.
  
  Thus both horizontal maps in the square are injective on $\pi_{m}(-)$  for $m > b$. Now, since $X$ satisfies $\WcanV{b}$, the right vertical map in zero on $\pi_{m}(-)$  for $m > b$.
  We deduce that the map $\TR^{k+1}(X) \to \TR^{k}(X)$
  induces the zero map on $\pi_m(-)$ for $m > b$.
  Thus, the limit is $b$-truncated, proving the result.
\end{proof}

\begin{lem} \label{lem:pn-cb-to-fn}
  Suppose we are given an $X \in \CycSp_{[c,b]}$ on which $p^m$ acts by zero.
  Then $\exp_{\F_p}(X) \leq (b-c+1)m$.
\end{lem}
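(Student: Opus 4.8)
The plan is to obtain this lemma as a direct instantiation of \Cref{lem:quant-fp-to-tate1}, but carried out \emph{inside} the category $\CycSp$ rather than in $\Sp^{B\T}$. The key point to be careful about is that, although $X$ is bounded in the cyclotomic $t$-structure, its underlying $\T$-spectrum need not be bounded, so \Cref{lem:quant-fp-to-tate1}(3) is not directly available in $\Sp^{B\T}$. Instead I would apply it internally to $\CycSp$ and then transport the resulting bound along the forgetful functor to $\Sp^{B\T}$ using \Cref{lem:quant-fp-to-tate1}(2).

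First I would check that $\CycSp$, equipped with the cyclotomic $t$-structure of \cite{antieau-nikolaus}, satisfies the hypotheses of \Cref{lem:quant-fp-to-tate1}(3): it is a stable presentably symmetric monoidal category \cite{NS}, and the cyclotomic $t$-structure is compatible with the symmetric monoidal structure. The latter holds because connectivity is detected on underlying spectra, tensor products of connective spectra are connective (the forgetful functor $\CycSp \to \Sp$ being symmetric monoidal), and the monoidal unit $\THH(\Ss)$ has underlying spectrum $\Ss$, which is connective. Granting this, \Cref{lem:quant-fp-to-tate1}(3) applied with $\CC = \CycSp$ gives $\exp_{\iota\F_p}(X) \leq (b-c+1)m$, where $\iota\colon \Sp \to \CycSp$ is the canonical colimit-preserving symmetric monoidal functor.

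Finally I would transport this bound along the exact, strong symmetric monoidal forgetful functor $U\colon \CycSp \to \Sp^{B\T}$. Since $U\circ\iota\colon \Sp \to \Sp^{B\T}$ is colimit-preserving and symmetric monoidal it is the canonical functor, so $U(\iota\F_p)\simeq\F_p$ with trivial $\T$-action. Applying \Cref{lem:quant-fp-to-tate1}(2) to $U$ with $A = \iota\F_p$ and $B = \F_p$ then yields
\[ \exp_{\F_p}(U(X)) \;\leq\; \exp_{\iota\F_p}(X)\cdot\exp_{\F_p}(U(\iota\F_p)) \;=\; \exp_{\iota\F_p}(X)\cdot\exp_{\F_p}(\F_p) \;\leq\; (b-c+1)m, \]
where we use the standard fact $\exp_B(B)\leq 1$ for any $\E_1$-algebra $B$ (the cofiber sequence $I_{\F_p}\otimes\F_p \to \F_p \to \F_p\otimes\F_p$ splits, as the unit $\F_p \to \F_p\otimes\F_p$ is split by the multiplication). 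Since, by the convention of this subsection, the $\F_p$-nilpotence exponent of a cyclotomic spectrum refers to its underlying $\T$-spectrum $U(X)$, this is precisely the asserted inequality. The only input beyond \Cref{lem:quant-fp-to-tate1} itself is the observation that one must work inside $\CycSp$ (cyclotomic boundedness not implying boundedness of the underlying $\T$-spectrum); the monoidal compatibility of the cyclotomic $t$-structure is immediate.
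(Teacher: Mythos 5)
Your proof is correct and follows exactly the paper's argument: apply \Cref{lem:quant-fp-to-tate1}(3) in $\CC=\CycSp$ with the cyclotomic $t$-structure, then push the bound down to the underlying $\T$-spectrum via the symmetric monoidal forgetful functor using \Cref{lem:quant-fp-to-tate1}(2). The extra verifications you supply (compatibility of the cyclotomic $t$-structure with the tensor product, $U(\iota\F_p)\simeq\F_p$, and $\exp_{\F_p}(\F_p)\leq 1$) are all correct and merely make explicit what the paper leaves implicit.
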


\begin{proof}  
  Applying \Cref{lem:quant-fp-to-tate1}(3) in the case $\CC={\CycSp}$ and $X=X$
  we learn that $X$ is $\F_p$-nilpotent of exponent $(b-c+1)m$
  as a cyclotomic spectrum.
  Using that the forgetful functor down to $\T$-spectra is symmetric monoidal we can apply \Cref{lem:quant-fp-to-tate1}(2) to conclude that
  the underlying $\T$-spectrum of $X$ is $\F_p$-nilpotent with the same bound.
\end{proof}

We also recall the following result:

\begin{prop}[{\cite[Prop 2.25]{antieau-nikolaus}}]\label{prop:AN_bound_to_segal}
  Any $X \in \CycSp_{\leq b}$ satisfies $\Seg{b}$.
\end{prop}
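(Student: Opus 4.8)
The plan is to reduce to the heart of the cyclotomic $t$-structure. First, the functor $\Phi\colon\CycSp\to\Sp$ defined by $\Phi(X)=\fib(\varphi_X\colon X\to X^{tC_p})$ is exact: it is the fibre of a natural transformation between the forgetful functor $\CycSp\to\Sp^{B\T}$ and its composite with the exact endofunctor $(-)^{tC_p}$. Second, $\Sp_{\leq b}\subseteq\Sp$ is closed under extensions and under the shifts $\Sigma^{k}$ for $k\leq 0$. Now let $X\in\CycSp_{\leq b}$. We may assume $X\in\CycSp_+$: in general $X$ is the limit of its bounded-below cyclotomic truncations, the cyclotomic $t$-structure is left complete, and $\Sp_{\leq b}$ is closed under limits. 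For $X\in\CycSp_+$, the underlying spectrum of $X$ is bounded below, so $\TR(X)$ — which is assembled by iterated pullbacks and a sequential limit from the bounded-below spectra $X^{hC_{p^{j}}}$ and their Tate constructions — is bounded below; since $\TR$ reflects connectivity, $X$ is bounded below in the cyclotomic $t$-structure, hence cyclotomically bounded. Therefore $X$ is a finite iterated extension of shifts $\Sigma^{k}\pi^{\mathrm{cyc}}_{k}X$ with $k\leq b$ of objects of $\CycSp^{\heartsuit}$, and by exactness of $\Phi$ it suffices to prove $\Phi(M)\in\Sp_{\leq 0}$ for every $M\in\CycSp^{\heartsuit}$.

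For the heart we use the description coming from the $t$-exact equivalence of \cite[Theorem 9]{antieau-nikolaus} between $\CycSp_+$ and $V$-complete bounded-below topological Cartier modules: an object of $\CycSp^{\heartsuit}$ is a $V$-complete topological Cartier module $M$ with connective underlying spectrum, whose Verschiebung $V\colon M_{hC_{p}}\to M$ is compatible with the cyclotomic Frobenius $\varphi_{M}\colon M\to M^{tC_{p}}$ through the norm cofibre sequence $M_{hC_{p}}\xrightarrow{\Nm}M^{hC_{p}}\to M^{tC_{p}}$, \emph{and} which moreover lies in the heart of the $t$-structure on topological Cartier modules. This last condition is precisely the assertion that $V$-completeness, $M\simeq\lim_{n}M/V^{n}$, forces the fibre $\Phi(M)=\fib(\varphi_{M})$ to be coconnective, i.e.\ to lie in $\Sp_{\leq 0}$; so this case holds essentially by the construction of the cyclotomic $t$-structure.

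Thus the main obstacle is internal to \cite{antieau-nikolaus}: verifying at the level of the heart that $V$-completeness of a topological Cartier module makes the fibre of its Frobenius coconnective. Granting this, the reduction above yields $\Seg{b}$ for all $X\in\CycSp_{\leq b}$. An alternative route, which sidesteps the heart, would use $t$-exactness of $\TR\colon\CycSp_{+}\to\Sp$ to get $\TR(X)\in\Sp_{\leq b}$ and then attempt to bound $\fib(\varphi_{X})$ directly from the pullback squares $\TR^{k+1}(X)\simeq X^{hC_{p^{k}}}\times_{X^{tC_{p^{k}}}}\TR^{k}(X)$ and the resulting identification $\fib(\varphi_{X})\simeq\fib\bigl(\TR^{2}(X)\to X^{hC_{p}}\bigr)$; the difficulty there is to control the coconnectivity of $\TR^{2}(X)$ and $X^{hC_{p}}$ separately, which again brings one back to the structure of the cyclotomic $t$-structure.
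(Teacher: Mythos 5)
First, note that the paper offers no proof of this proposition at all: it is stated as a direct citation of \cite[Prop.\ 2.25]{antieau-nikolaus}, so the only "official" argument is the external one. Your proposal is therefore an attempt to reprove the cited result, and it has a genuine gap exactly where the real content lies. Your reduction chain (exactness of $\Phi(X)=\fib(\varphi_X)$, passage to a bounded-below object, cyclotomic Postnikov decomposition, reduction to the heart) is a reasonable skeleton, but the final step --- that $\Phi(M)$ is coconnective for $M\in\CycSp^{\heartsuit}$ --- is asserted to hold ``essentially by the construction of the cyclotomic $t$-structure,'' and you yourself flag it as ``the main obstacle.'' That step is not a formality: the heart of the topological Cartier module $t$-structure is defined by discreteness of the \emph{underlying spectrum}, not by any condition on the Frobenius fibre, and extracting coconnectivity of $\fib(\varphi_M)$ from $V$-completeness is precisely the nontrivial argument of Antieau--Nikolaus (who produce explicit connective cyclotomic spectra corepresenting the homotopy of $\fib(\varphi)$ and use the orthogonality characterization of $\CycSp_{\leq 0}$). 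As written, your proof reduces the proposition to itself in the heart and then stops.

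There is also a problem earlier in the reduction. You write that a general $X\in\CycSp_{\leq b}$ ``is the limit of its bounded-below cyclotomic truncations'' and invoke left completeness plus closure of $\Sp_{\leq b}$ under limits. The bounded-below truncations $\tau^{\mathrm{cyc}}_{\geq -n}X$ assemble into a \emph{colimit} diagram mapping to $X$, not a limit; left completeness concerns the bounded-\emph{above} truncations, which give you nothing new here since $X$ is already coconnective. If you instead try to control $\Phi(X)$ degreewise via the cofibre sequences $\tau^{\mathrm{cyc}}_{\geq -n}X\to X\to\tau^{\mathrm{cyc}}_{\leq -n-1}X$, you need $\Phi(\tau^{\mathrm{cyc}}_{\leq -n-1}X)$ to become highly coconnective as $n\to\infty$ --- but that is again an instance of the statement being proved, since the underlying spectrum of a cyclotomically coconnective object is not known a priori to be coconnective. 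So the unbounded case cannot be dispatched this way without already knowing the proposition. In short: the bounded-below, cyclotomically bounded case correctly reduces to the heart, but both the heart case and the passage from bounded to general coconnective objects are left unproven, and these together constitute the entire content of \cite[Prop.\ 2.25]{antieau-nikolaus}.
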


We collect the previous results to conveniently refer to later:

\begin{prop} \label{prop:omnibus}
  Let $X \in \CycSp$ be a bounded below cyclotomic spectrum.
  \begin{enumerate}
  \item If $X$ satisfies $\ScanVn{b}$, then $X$ satisfies $\WcanV{b}$.
  \item If $X$ satisfies $\WcanV{b}$ and $\Seg{b}$, then $X \in  \CycSp_{\leq b}$.
  \item If $X \in \CycSp_{\leq b}$, then $X$ satisfies $\Seg{b}$.
  \item If $X \in \CycSp_{[c,b]}$ and $p^m$ acts by zero on $X$,
    then $X$ satisfies $\ScanVn{b+2(b-c+1)m}$.
  \end{enumerate}
\end{prop}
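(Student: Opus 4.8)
The plan is simply to assemble the four items from the lemmas proved earlier in this subsection, matching parameters carefully. First, item (1) is exactly \Cref{lem:srw_canv}: if $X$ satisfies $\ScanVn{b}$ then $X$ satisfies $\WcanV{b}$, and since this is a condition on the underlying $\T$-spectrum, it transfers verbatim to cyclotomic spectra. Second, item (2) is \Cref{lem:wcv-sc-to-cb}: for $X \in \CycSp_+$, the conditions $\WcanV{b}$ and $\Seg{b}$ together force $X \in \CycSp_{\leq b}$. Third, item (3) is the restatement of \cite[Prop.~2.25]{antieau-nikolaus} recorded as \Cref{prop:AN_bound_to_segal}: any $X \in \CycSp_{\leq b}$ satisfies $\Seg{b}$.

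The only item requiring a short argument is (4), which chains three of the preceding lemmas. Starting from $X \in \CycSp_{[c,b]}$ with $p^m$ acting by zero, first apply \Cref{lem:pn-cb-to-fn} to get $\exp_{\F_p}(X) \leq (b-c+1)m =: d$. Then \Cref{lem:fn-to-tn1} gives $\exp_{\Ss^{\T}}(X^{tC_p}) \leq \exp_{\F_p}(X) \leq d$. Meanwhile, since $X \in \CycSp_{\leq b}$, \Cref{prop:AN_bound_to_segal} (= item (3)) tells us $X$ satisfies $\Seg{b}$. Now feed both facts into \Cref{lem:sc-tn-to-scv1}, which says $\Seg{b}$ together with $\exp_{\Ss^{\T}}(X^{tC_p}) \leq d$ implies $\ScanVn{b+2d}$, i.e.\ $\ScanVn{b+2(b-c+1)m}$, as claimed. (One should also note $X$ is bounded below, which is part of the hypothesis $X \in \CycSp_{[c,b]}$, so all the cited lemmas apply.)

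I do not anticipate any real obstacle here — the proposition is purely a bookkeeping collation of \Cref{lem:srw_canv}, \Cref{lem:wcv-sc-to-cb}, \Cref{prop:AN_bound_to_segal}, \Cref{lem:pn-cb-to-fn}, \Cref{lem:fn-to-tn1}, and \Cref{lem:sc-tn-to-scv1}. The one point demanding care is parameter tracking in (4): making sure the exponent $d = (b-c+1)m$ produced by \Cref{lem:pn-cb-to-fn} is the same $d$ that propagates through \Cref{lem:fn-to-tn1} into the hypothesis of \Cref{lem:sc-tn-to-scv1}, so that the output parameter is $b + 2d = b + 2(b-c+1)m$ and not something off by a factor or an additive constant. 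Accordingly the proof can be written in just a few lines, with the bulk of it spent spelling out the composition of lemmas in item (4).
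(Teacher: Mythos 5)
Your proposal is correct and follows exactly the paper's own proof: items (1)--(3) are restatements of \Cref{lem:srw_canv}, \Cref{lem:wcv-sc-to-cb}, and \Cref{prop:AN_bound_to_segal}, and item (4) is obtained by chaining \Cref{lem:pn-cb-to-fn}, \Cref{lem:fn-to-tn1}, \Cref{prop:AN_bound_to_segal}, and \Cref{lem:sc-tn-to-scv1} with the same parameter $d=(b-c+1)m$. Nothing is missing.
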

\begin{proof}
  (1) is a restatement of \Cref{lem:srw_canv}.
  (2) is a restatement of \Cref{lem:wcv-sc-to-cb}.
  (3) is a restatement of \Cref{prop:AN_bound_to_segal}.
  (4) is obtained by combining
  Lemmas \ref{lem:pn-cb-to-fn}, \ref{lem:fn-to-tn1}, \ref{prop:AN_bound_to_segal} and \ref{lem:sc-tn-to-scv1}.
\end{proof}

\subsection{The B\"okstedt class}
\label{subsec:bokstedt1}

In this subsection we explore another boundedness condition, specific to rings in cyclotomic spectra: having a B\"okstedt class. As it turns out, a bounded below $R$ has a B\"okstedt class exactly when it is cyclotomically bounded and we use this as a hook for proving boundedness in later sections.\footnote{As a warning, the notion of B\"okstedt class we use in this paper is quite strong: for example, $\THH(\Z_p)/p$ does not have a B\"okstedt class because $\mathrm{TR}(\Z_p)/p$ is not bounded.}

\begin{dfn} \label{dfn:weak-bokstedt-class}
  Let $R$ be an $h\A_2$-ring (\Cref{dfn:ha2}) in cyclotomic spectra satisfying $\Seg{b}$.
  We say that an element $\mu \in \pi_{2p^k}R$ for $k\geq 0$ is a \mdef{B\"okstedt class} if
  \begin{enumerate}
  \item[(a)] both $\mu$ and $\varphi(\mu)$ are hcentral (\Cref{dfn:hcentral}),
  \item[(b)] $\mu$ is in the image of the  $\T$-transfer map $\Sigma R_{h\T} \to R$
  \item[(c)] $\varphi(u) \in \pi_{2p^k}(R^{tC_p})$ is a unit.\qedhere
  \end{enumerate}
\end{dfn}

\todo{added segal condition here. Check this doesn't break anything.}
\begin{lem}\label{lem:move-bok11}
  Let $i \colon R \to S$ be an hcentral map of $h\A_2$-rings in cyclotomic spectra satisfying $\Seg{b}$. If $\mu \in \pi_{2p^k}R$ is a B\"okstedt class, then the class $i(\mu) \in \pi_{2p^k}R$ is a B\"okstedt class.
\end{lem}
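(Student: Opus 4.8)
The plan is to verify the three conditions (a)--(c) of \Cref{dfn:weak-bokstedt-class} for the class $i(\mu)\in\pi_{2p^k}S$ directly, transporting each property of $\mu$ along $i$ by functoriality. Two preliminary observations organize the argument. First, since $i$ is a morphism of $h\A_2$-rings in $\CycSp$, it is compatible with the cyclotomic Frobenius, so the square
\[\begin{tikzcd}
R \arrow[r,"i"]\arrow[d,"\varphi_R"'] & S\arrow[d,"\varphi_S"] \\
R^{tC_p}\arrow[r,"i^{tC_p}"'] & S^{tC_p}
\end{tikzcd}\]
commutes compatibly with the multiplications; in particular $\varphi_S(i(\mu))=i^{tC_p}(\varphi_R(\mu))$ in $\pi_{2p^k}(S^{tC_p})$. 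Second, $(-)^{tC_p}$ is exact and lax symmetric monoidal, so it sends the hcentral map $i$ of $h\A_2$-rings to an hcentral map $i^{tC_p}$ of $h\A_2$-rings.

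Conditions (b) and (c) then follow immediately. For (b): the $\T$-transfer $\Sigma(-)_{h\T}\to(-)$ is a natural transformation of endofunctors of $\Sp^{B\T}$, so if $\mu$ is the image under $\Sigma R_{h\T}\to R$ of a class $\nu\in\pi_{2p^k}(\Sigma R_{h\T})$, then $i(\mu)$ is the image of $\Sigma i_{h\T}(\nu)$ under $\Sigma S_{h\T}\to S$; hence $i(\mu)$ lies in the image of the $\T$-transfer for $S$. For (c): $i^{tC_p}$ is multiplicative on homotopy, so it carries the unit $\varphi_R(\mu)\in\pi_{2p^k}(R^{tC_p})$ to a unit $\varphi_S(i(\mu))\in\pi_{2p^k}(S^{tC_p})$.

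For (a): because $i$ is an hcentral map it sends the hcentral element $\mu$ to an hcentral element $i(\mu)$, and because $i^{tC_p}$ is an hcentral map (second observation above) it sends the hcentral element $\varphi_R(\mu)$ to the hcentral element $i^{tC_p}(\varphi_R(\mu))=\varphi_S(i(\mu))$. The only point needing care is the formal bookkeeping that hcentral maps compose and are preserved by the exact lax symmetric monoidal functor $(-)^{tC_p}$; both are immediate from the definitions around \Cref{dfn:hcentral}. No boundedness hypothesis is used in the argument: the assumption that $S$ satisfies $\Seg{b}$ serves only to make the assertion that ``$i(\mu)$ is a B\"okstedt class'' meaningful in the sense of \Cref{dfn:weak-bokstedt-class}.
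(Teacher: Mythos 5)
Your proof is correct and follows essentially the same route as the paper's: each of conditions (a)--(c) is transported along $i$ by functoriality, with (a) handled via the fact that composites into hcentral maps are hcentral (the paper's \Cref{rmk:restrict-hcentral}), (c) via multiplicativity of $i^{tC_p}$ together with $\varphi_S\circ i = i^{tC_p}\circ\varphi_R$, and (b) via naturality of the transfer. Your explicit observation that $(-)^{tC_p}$ preserves hcentral maps is a small detail the paper leaves implicit, but the substance is identical.
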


\begin{proof}
  First, as $i$ is hcentral by hypothesis, by \Cref{rmk:restrict-hcentral}, both $i(\mu)$ and $\varphi(i(\mu))$ are hcentral. Since $\varphi(\mu)$ is a unit, $\varphi(i(\mu)) = i^{tC_p}(\varphi(\mu))$ is a unit as well. Condition (b), that $\mu$ be in the image of the transfer, follows from naturality of the transfer map.
\end{proof}

\begin{lem} \label{lem:sc-bo-to-bok1}
  Let $R$ be an $h\A_2$-ring in cyclotomic spectra satisfying $\Seg{b}$
  and let $\mu \in \pi_{2p^k}R$ be a B\"okstedt class. 
  \begin{enumerate}
  \item[(1)] $R/\mu$ is $(b+1+2p^k)$-truncated.
  \item[(2)] The induced map $\varphi : R[\mu^{-1}] \to R^{tC_p}$ is an isomorphism 
  \end{enumerate}
\end{lem}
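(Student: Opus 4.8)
The plan is to prove statement (2) first and then deduce (1) from it.

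For (2): since $\mu$ and $\varphi(\mu)$ are hcentral (condition (a) of \Cref{dfn:weak-bokstedt-class}), the cyclotomic Frobenius $\varphi\colon R\to R^{tC_p}$ extends to a map $\varphi\colon R[\mu^{-1}]\to R^{tC_p}[\varphi(\mu)^{-1}]$, and because $\varphi(\mu)$ is already a unit by condition (c), this target is simply $R^{tC_p}$. The hypothesis $\Seg{b}$ says $\fib(\varphi\colon R\to R^{tC_p})$ is $b$-truncated, and since $(-)[\mu^{-1}]$ is exact, the fiber of $\varphi\colon R[\mu^{-1}]\to R^{tC_p}$ is $\fib(\varphi)[\mu^{-1}]$. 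As $\lvert\mu\rvert=2p^k>0$, inverting $\mu$ annihilates any bounded-above $R$-module (each homotopy group becomes a colimit of groups that vanish in sufficiently high degree), so this fiber vanishes and $\varphi\colon R[\mu^{-1}]\to R^{tC_p}$ is an isomorphism.

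For (1): I would set $C\coloneqq \mathrm{cofib}(R\to R[\mu^{-1}])$, the cofiber of the localization map. Part (2) identifies this localization map with $\varphi$, so $C\simeq \Sigma\,\fib(\varphi)$, which is $(b+1)$-truncated by $\Seg{b}$. Since $\mu$ acts invertibly on $R[\mu^{-1}]$, comparing the defining cofiber sequence $\Sigma^{2p^k}R\xrightarrow{\mu}R\to R/\mu$ with the sequence $\Sigma^{2p^k}R[\mu^{-1}]\xrightarrow{\mu,\,\simeq}R[\mu^{-1}]\to 0$ and passing to cofibers of the vertical maps produces a cofiber sequence $\Sigma^{2p^k}C\xrightarrow{\mu}C\to \Sigma(R/\mu)$, i.e. $R/\mu\simeq \fib(\mu\colon \Sigma^{2p^k}C\to C)$. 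A one-line long-exact-sequence computation then gives the bound: $\Sigma^{2p^k}C$ is $(b+1+2p^k)$-truncated and $C$ is $(b+1)$-truncated, so $R/\mu$ is $(b+1+2p^k)$-truncated, as claimed.

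This argument is soft; the only real care needed is the structural bookkeeping, which I expect to be the main (if mild) obstacle: one must check that $\mu$-localization is compatible with the $\T$-equivariant $h\A_2$-algebra structures so that $R[\mu^{-1}]$, $R/\mu$, and $\fib(\varphi)$ are genuine $R$-modules on which $\mu$ acts compatibly — this is exactly what hcentrality of $\mu$ and $\varphi(\mu)$ buys — and that the localization map $R\to R[\mu^{-1}]$ is identified with $\varphi$ under the equivalence of (2). Condition (b), that $\mu$ lie in the image of the $\T$-transfer, is not needed for this particular lemma.
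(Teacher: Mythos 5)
Your proof is correct and follows essentially the same route as the paper's: both arguments come down to the two observations that $\fib(\varphi)$ is $b$-truncated (so $\mu$ acts locally nilpotently on it and $\fib(\varphi)[\mu^{-1}]=0$, giving (2)) and that invertibility of $\varphi(\mu)$ identifies $R/\mu$ with $\fib(\varphi)/\mu$, giving the truncation bound in (1). The paper merely runs the steps in the opposite order, proving (1) directly from the cofiber sequence of $\varphi$ modulo $\mu$ rather than routing through the localization cofiber $C\simeq\Sigma\fib(\varphi)$ as you do.
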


\begin{proof}
  Using that $\varphi(\mu)$ is a unit we have an isomorphism
  $ \fib(\varphi_R)/\mu \cong R/\mu $
  and we can then use our hypothesis that $R$ satisfies $\Seg{b}$ to conclude that
  $R/\mu \cong  \fib(\varphi_R)/\mu$ is $(b+1+2p^m)$-truncated.
  This verifies (1).
  As $\fib(\varphi_R)$ is $b$-truncated
  the action of $\mu$ on this fiber
  is locally nilpotent.
  It follows that $\fib(\varphi_R)[\mu^{-1}] = 0$ and
  we obtain (2).  
\end{proof}




\todo{Why was there an assumption that b geq 1?}
\begin{lem} \label{lem:cb-to-bok-uni1}
  Let $b \geq 0$ and let $m \geq 2m_p^{\A_2}$\todo{could be mphc. rwb: It cannot.}.
  The cyclotomic spectrum $R=\SP/p^{m} \otimes \tau_{\leq b}^{\mathrm{cyc}}\SP$
  admits an hcring structure in $\CycSp$, satisfies $\Seg{b+1}$
  and has a B\"okstedt class $\mu \in \pi_{2p^{k}}R$ for $k= (\lfloor b/2 \rfloor + 1)m$.
\end{lem}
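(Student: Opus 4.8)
The three assertions are handled in turn; the first two are formal and the B\"okstedt class is the crux.

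\emph{Ring structure and the Segal condition.} Since $2m_p^{\A_2}\ge m_p^{hc}$ for every prime $p$, our hypothesis gives $m\ge m_p^{hc}$, so $\SP/p^m$ carries an hcring structure by Notation (4); transporting it along the unique colimit-preserving symmetric monoidal functor $\Sp\to\CycSp$ makes $\one_{\CycSp}/p^m\simeq\SP/p^m$ an hcring in $\CycSp$. Because the cyclotomic $t$-structure of \cite{antieau-nikolaus} is compatible with the symmetric monoidal structure (connective objects are closed under $\otimes$, and the unit is connective), for $b\ge0$ the truncation $\tau^{\mathrm{cyc}}_{\le b}$ is a symmetric monoidal localization on $\CycSp_{\ge0}$, so $\tau^{\mathrm{cyc}}_{\le b}\SP$ is an $\E_\infty$-ring in $\CycSp$; hence $R=\SP/p^m\otimes\tau^{\mathrm{cyc}}_{\le b}\SP$, being the base change of the hcring $\SP/p^m$ along the $\E_\infty$-ring map $\one_{\CycSp}\to\tau^{\mathrm{cyc}}_{\le b}\SP$, is an hcring. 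For $\Seg{b+1}$: since $\tau^{\mathrm{cyc}}_{\le b}\SP\in\CycSp_{\le b}$, \Cref{prop:AN_bound_to_segal} says $\fib(\varphi_{\tau^{\mathrm{cyc}}_{\le b}\SP})$ is $b$-truncated; the underlying $\T$-spectrum of $\SP/p^m$ is dualizable with trivial action, so $(-)^{tC_p}$ commutes with $\SP/p^m\otimes(-)$, giving $R^{tC_p}\simeq\SP/p^m\otimes(\tau^{\mathrm{cyc}}_{\le b}\SP)^{tC_p}$, and since the Frobenius of $\one_{\CycSp}/p^m$ is inherited from that of the unit we get $\varphi_R\simeq\id_{\SP/p^m}\otimes\varphi_{\tau^{\mathrm{cyc}}_{\le b}\SP}$ under this identification. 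Thus $\fib(\varphi_R)\simeq\SP/p^m\otimes\fib(\varphi_{\tau^{\mathrm{cyc}}_{\le b}\SP})$, and since $\SP/p^m$ is the cofiber of a degree-$0$ self-map of $\SP$, tensoring with it raises truncatedness by at most one; hence $\fib(\varphi_R)$ is $(b+1)$-truncated.

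\emph{The B\"okstedt class.} It remains to produce $\mu\in\pi_{2p^k}R$, $k=(\lfloor b/2\rfloor+1)m$, satisfying (a)--(c) of \Cref{dfn:weak-bokstedt-class}. The plan is as follows. First, the cofiber sequence $\Sigma R_{h\T}\xrightarrow{\mathrm{tr}}R\to R_{h\T}$ identifies the image of the $\T$-transfer with the ideal $\ker(\pi_*R\to\pi_*R_{h\T})$, so (b) asks for $\mu$ that dies in $R_{h\T}$. Second, since $\fib(\varphi_R)$ is $(b+1)$-truncated, the Frobenius $\varphi_R\colon\pi_iR\to\pi_iR^{tC_p}$ is an isomorphism for $i>b+1$; together with $2p^k>b+1$ this reduces (c) to finding a unit of $\pi_{2p^k}R^{tC_p}$ whose preimage lies in the transfer ideal. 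To locate such a unit I would analyze $(\tau^{\mathrm{cyc}}_{\le b}\SP)^{tC_p}$ using the Segal conjecture for $\SP$ (which makes $\varphi_{\one}\colon\SP\to\SP^{tC_p}$ an equivalence, so $\tau^{\mathrm{cyc}}_{\le b}\SP$ has ``$\varphi$ nearly invertible'') together with the fact that the heart of the cyclotomic $t$-structure sees B\"okstedt periodicity — e.g.\ via the canonical map to $\THH(\F_p)$, which lies in $\CycSp_{[0,0]}$ because $\TR(\F_p)\simeq\Z_p$, and which has a genuine B\"okstedt class $\mu_{\F_p}\in\pi_2$ in the image of the transfer. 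The periodicity unit lives in degree $2$ mod $p$; passing to mod $p^m$ coefficients and simultaneously into the transfer ideal forces one to replace it by a $p^{\,(\lfloor b/2\rfloor+1)(m-1)}$-th power adjusted by a correction that is invisible below degree $b$, which is exactly what produces degree $2p^{(\lfloor b/2\rfloor+1)m}$; the hypothesis $m\ge 2m_p^{\A_2}$ enters here, to keep the requisite Moore-spectrum multiplications available at each stage. Finally (a) is checked from the hcring structure: an element of the image of the ($R$-bilinear) transfer is hcentral, and $\varphi$ is multiplicative, so $\varphi(\mu)$ is hcentral as well.

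\emph{Main obstacle.} The routine parts are the ring structure and $\Seg{b+1}$. The real work is the explicit construction of $\mu$: one must track how the B\"okstedt periodicity unit, the $p^m$-torsion, the $(b+1)$-truncation of $\fib(\varphi_R)$, and the transfer ideal interact, and it is this bookkeeping — rather than any single conceptual step — that pins down the exponent $k=(\lfloor b/2\rfloor+1)m$.
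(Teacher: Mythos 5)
The first half of your write-up (the hcring structure on $R$ and the verification of $\Seg{b+1}$) is fine and matches the paper's argument in substance; the paper gets $\Seg{b+1}$ slightly more indirectly, by noting $R \in \CycSp_{[0,b+1]}$ and invoking \Cref{prop:omnibus}(3), but your direct computation of $\fib(\varphi_R)$ is an acceptable alternative.

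The construction of the B\"okstedt class, however, is a genuine gap: you have correctly identified it as "the real work" and then not done it. Your sketch points at B\"okstedt periodicity of $\THH(\F_p)$ as the source of the unit, but a unit of $\pi_*\THH(\F_p)^{tC_p}$ gives you nothing in $R$ (there is no map $\THH(\F_p)\to R$ to pull it back along), and the arithmetic you propose does not even land in the right degree: a $p^{(\lfloor b/2\rfloor+1)(m-1)}$-th power of a degree-$2$ class sits in degree $2p^{(\lfloor b/2\rfloor+1)(m-1)}$, not $2p^{(\lfloor b/2\rfloor+1)m}$, and the "correction invisible below degree $b$" is not a construction. The mechanism the paper actually uses is different and is worth internalizing: since $R\in\CycSp_{[0,b+1]}$ and $p^m=0$ on $R$, the spectrum $\TC(R)$ is bounded in $[-1,b+1]$; give it the \emph{trivial} $\T$-action and run the $\T$-Tate spectral sequence $\pi_*\TC(R)[t^{\pm1}]\Rightarrow\pi_*\TC(R)^{t\T}$. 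Boundedness forces degeneration at $E_{b+4}$, so only the even differentials $d_2,\dots,d_{b+2}$ can be nonzero on powers of $t$, and the Leibniz rule together with $p^m$-torsion gives $d_{2r}\bigl((t^{ip^{(r-1)m}})^{p^m}\bigr)=p^m\cdot(\cdots)=0$; iterating this once per possible differential — i.e. $\lfloor b/2\rfloor+1$ times — shows $t^{-p^{(\lfloor b/2\rfloor+1)m}}$ is a permanent cycle, hence a unit $u\in\pi_{2p^k}\TC(R)^{t\T}$. This is where the exponent $k$ comes from. One then pushes $u$ to $v\in\pi_{2p^k}R^{t\T}$ along the hcring map $\TC(R)^{t\T}\to R^{t\T}$, lifts $v$ along $\varphi^{h\T}$ (an isomorphism in degrees $\ge b+3$ by $\Seg{b+1}$) and along $\Nm\colon\Sigma R_{h\T}\to R^{h\T}$ (surjective in degrees $>b+1+2(b+2)m$ by $\WcanV{b+1+2(b+2)m}$, which is the other output of \Cref{prop:omnibus}), and transfers down to get $\mu\in\pi_{2p^k}R$ with $\varphi(\mu)$ a unit. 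Note also that your verification plan for condition (b) starts from a false statement: $\Sigma R_{h\T}\xrightarrow{\tr}R\to R_{h\T}$ is not a cofiber sequence of $\T$-spectra, so membership in the image of the transfer cannot be tested by vanishing in $R_{h\T}$; in the paper's argument condition (b) is automatic because $\mu$ is \emph{defined} as a transfer.
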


\begin{proof}
  Let $R \coloneqq \SP/p^m \otimes \tau_{\leq b}^{\mathrm{cyc}}\SP$.
  The truncation of the unit, $\tau_{\leq b}^{\mathrm{cyc}}\SP$, is a commutative algebra
  and, since $m \geq m_p^{hc}$, $\SP/p^m$ admits an hcring structure,
  therefore $R$ admits an hcring structure.
  Using that $R \in \CycSp_{[0,b+1]}$ and $p^m$ acts by zero on $R$ we learn from
  \Cref{prop:omnibus} that $R$ satisfies $\Seg{b+1}$ and $\WcanV{b+1+2(b+2)m}$.
  Moreover, the spectrum $\TC(R)$ is bounded in the range $[-1, b+1]$.
  
  Give $\TC(R)$ the trivial $\T$-action and consider the $\T$-tate sseq
  which has signature
  \[ \pi_*\TC(R)[t^{\pm 1}] \cong E_2^{s,t} \Longrightarrow \pi_s\TC(R)^{t\T} \]
  where $|t| = (-2,2)$. 
  Note that this spectral sequence degenerates at the $E_{b+4}$-page for degree reasons
  and the powers of $t$ can only (possibly) support a
  $d_r$-differential for $r \leq b+2$ and even.
  Using that $\TC(R)$ is an hcring,
  the Leibniz rule for differentials implies (inductively) that for every $i \in \Z$
  \[ d_{2r}( (t^{i \cdot p^{(r-1)m}})^{p^m} ) = p^m \cdot d_{2r}(t^{i \cdot p^{(r-1)m}}) = 0 \]
  Put together, we obtain a unit
  $u \in \pi_{2p^{b'm}}\TC(R)^{t\T}$
  detected by the permanent cycle $t^{-p^{b'm}}$
  where $b' \coloneqq \lfloor b/2 \rfloor+1$. 
    
  
  
  The natural $\T$-equivariant hcring map $\TC(R) \to R$
  gives us an hcring map
  $ \TC(R)^{t\T} \to R^{t\T} $
  and we let $v$ denote the image of $u$ under this map.
  From the fact that $R$ satisfies $\Seg{b+1}$,
  we learn that $\varphi^{h\T} \colon R^{h\T} \to R^{t\T}$
  is an isomorphism on homotopy groups starting in degree $b+3$.
  Similarly, examining the cofiber sequence 
  \[ \Sigma R_{h\T} \xrightarrow{\mathrm{Nm}} R^{h\T} \xrightarrow{\mathrm{can}} R^{t\T} \]
  the fact that $R$ satisfies $\WcanV{b+1+2(b+2)m}$ implies that
  the map $\mathrm{Nm}$ is surjective on homotopy groups in degrees larger than $b+1+2(b+2)m$.
  Since $ 2p^k = 2p^{b'm} > b+1+2(b+2)m \geq b+3 $
  we may lift $v$ along $\varphi^{h\T}$ and $\mathrm{Nm}$ to a class
  $\hat{\mu} \in \pi_{2p^{k}}\Sigma R_{h\T}$.  

  Let $\mu \coloneqq \mathrm{tr}(\hat{\mu}) \in \pi_{2p^{k}}R$.
  We will show that $\mu$ is a B\"okstedt class.
  Condition (a) is automatic since $R$ is an hcring.
  Condition (b) follows from the fact that $\mu= \mathrm{tr}(\hat{\mu})$.
  Finally we wish to prove that $\varphi(\mu)$ is a unit.
  From the commutative diagram 
  \[ \begin{tikzcd}
    {} & {\Sigma R_{h\T}} & {R^{h\T}} & {R^{t\T}} \\
    && R & {R^{tC_p}}
    \arrow["{\mathrm{Nm}}", from=1-2, to=1-3]
    \arrow["{\varphi^{h\T}}", from=1-3, to=1-4]
    \arrow["i", from=1-3, to=2-3]
    \arrow["{i'}", from=1-4, to=2-4]
    \arrow["\varphi", from=2-3, to=2-4]
    \arrow["{\mathrm{tr}}", from=1-2, to=2-3]
  \end{tikzcd} \]
  we can read off that $ \varphi(\mu) = i'(v) $
  and, since $v$ is a unit and $i'$ an hcring map,
  that $\phi(\mu)$ is a unit as well.
\end{proof}

  

\todo{rwb: Theres a stupid zero ring thing here. If zero is not between c and b then R is zero and therefore has a bokstedt class via the unit 0. This comment is just so we have a record of why this isn't an issue.}

\begin{cor} \label{lem:cb-to-bok-hc}
  Let $c \leq b$ and $m \geq m_p^{\A_2}$.
  Let $R$ be an $h\A_2$ ring in cyclotomic spectra with $p^m=0$ such that
  $ R \in \CycSp_{[c,b]}$.
  Then $R$ admits a B\"okstedt class $\mu \in \pi_{2p^{k}}R$ for $k = (b-2c+2)m$.
\end{cor}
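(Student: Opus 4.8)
The plan is to derive the corollary from \Cref{lem:cb-to-bok-uni1} and \Cref{lem:move-bok11}: construct an hcentral map to $R$ from (a ring closely related to) the universal ring of \Cref{lem:cb-to-bok-uni1}, transport its B\"okstedt class along that map, and then raise the result to a $p$-power to land in the prescribed degree. Write $k \coloneqq (b-2c+2)m$. If $R = 0$ the statement is degenerate: $0 \in \pi_{2p^{k}}(0)$ is a B\"okstedt class, being central, equal to $\mathrm{tr}(0)$, and a unit in the zero ring. So assume $R \neq 0$. Then $\pi_0 R$ contains the nonzero unit, so $R \notin \CycSp_{\geq 1}$ and hence $c \leq 0$; and $\tau^{\mathrm{cyc}}_{\geq 0}R$ shares the underlying connective cover of $R$, so it is nonzero and $R \notin \CycSp_{\leq -1}$, whence $b \geq 0$. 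In particular \Cref{lem:cb-to-bok-uni1} is applicable with this value of $b$.

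Put $m' \coloneqq 2m$; since $m \geq m_p^{\A_2}$ we have $m' \geq 2m_p^{\A_2} \geq m_p^{hc}$, so $\SP/p^{m'}$ is an hcring and \Cref{lem:cb-to-bok-uni1} applies to it. It furnishes $U \coloneqq \SP/p^{m'} \otimes \tau^{\mathrm{cyc}}_{\leq b}\SP$ with an hcring structure in $\CycSp$, the property $\Seg{b+1}$, and a B\"okstedt class $\nu \in \pi_{2p^{k_1}}U$ where
\[ k_1 \;=\; (\lfloor b/2 \rfloor + 1)\,m' \;=\; 2(\lfloor b/2 \rfloor + 1)\,m \;\leq\; (b+2)m \;\leq\; (b-2c+2)m \;=\; k, \]
the final inequality because $c \leq 0$.

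The heart of the argument is to construct an hcentral map $j \colon U \to R$ of $h\A_2$-rings in $\CycSp$. Since $R \in \CycSp_{\leq b}$, I would first observe that $\CycSp_{\geq b+1}$ is a $\otimes$-ideal inside the monoidal subcategory $\CycSp_{\geq 0}$ (by \cite{antieau-nikolaus}), so that $\tau^{\mathrm{cyc}}_{\leq b}$ restricts to a smashing localization of $\CycSp_{\geq 0}$; this lifts to $h\A_2$-rings, and since $\tau^{\mathrm{cyc}}_{\geq 0}R$ is a connective $h\A_2$-ring lying in $\CycSp_{\leq b}$ it receives a canonical $h\A_2$-map from the localized unit $\tau^{\mathrm{cyc}}_{\leq b}\SP$, which we compose with the connective-cover map to $R$. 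As $p^{m'}$ acts by zero on $R$, this extends along $\tau^{\mathrm{cyc}}_{\leq b}\SP \to \SP/p^{m'} \otimes \tau^{\mathrm{cyc}}_{\leq b}\SP = U$ to give $j$. The hcentrality needed at each stage — of the map from an $\E_\infty$-ring, of the hcring map $\SP \to \SP/p^{m'}$, and of the tensor product of these — is exactly the kind of statement supplied by the multiplicative-structures framework of \cite{OkaMult, burklund2022multiplicative}. I expect assembling this map, and keeping the hcentrality bookkeeping straight, to be the main technical obstacle; the remaining steps only recombine results already in hand.

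Granting $j$, I would conclude as follows. Both $U$ and $R$ satisfy $\Seg{b+1}$ — the former by the preceding paragraph, the latter since $R \in \CycSp_{\leq b} \subseteq \CycSp_{\leq b+1}$ and \Cref{prop:omnibus}(3). Hence \Cref{lem:move-bok11} applies to $j$ and shows $j(\nu) \in \pi_{2p^{k_1}}R$ is a B\"okstedt class. Finally, setting $N \coloneqq p^{k-k_1} \geq 1$, the power $\mu \coloneqq j(\nu)^{N} \in \pi_{2p^{k_1}N}R = \pi_{2p^{k}}R$ is again a B\"okstedt class: powers of an hcentral element are hcentral, and since $\varphi$ is multiplicative $\varphi(\mu) = \varphi(j(\nu))^{N}$ is hcentral as well; $j(\nu)$ lies in the image of the $\T$-transfer $\Sigma R_{h\T} \to R$, whose image is an ideal of $\pi_* R$ by the projection formula, so the multiple $\mu$ lies there too; and $\varphi(\mu) = \varphi(j(\nu))^{N}$ is a unit in $\pi_{2p^{k}}(R^{tC_p})$ because $\varphi(j(\nu))$ is. This $\mu$ is the B\"okstedt class we sought.
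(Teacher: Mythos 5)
Your overall strategy is the paper's: produce an hcentral map into $R$ from the universal bounded ring of \Cref{lem:cb-to-bok-uni1}, transport its B\"okstedt class via \Cref{lem:move-bok11}, and adjust the degree by taking $p$-th powers (your handling of the zero ring and the deduction $c\leq 0\leq b$ is fine, and powers of B\"okstedt classes are indeed B\"okstedt classes by the projection formula). The gap is in the construction of $j$. First, $\tau^{\mathrm{cyc}}_{\leq b}$ is \emph{not} a smashing localization of $\CycSp_{\geq 0}$: truncation essentially never is (already in spectra, $\tau_{\leq 0}\Ss\otimes X\cong \Z\otimes X$ is not $0$-truncated for general connective $X$), so there is no free "canonical $h\A_2$-map from the localized unit." Second, and more seriously, even if you produce an hcentral map $U\to\tau^{\mathrm{cyc}}_{\geq 0}R$, composing with $\tau^{\mathrm{cyc}}_{\geq 0}R\to R$ does not yield a class that is hcentral \emph{in $R$}: hcentrality is not preserved under postcomposition along a ring map (an element central in a subring need not be central in the ambient ring), and here $R$ genuinely has homotopy in degrees below $0$ that your class must be checked to commute with. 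A connectivity argument to bridge this (the difference of left and right multiplication factors through $\Sigma^{2p^{k_1}}\tau_{<0}R$) fails when $c\ll 0$, because your degree $2p^{k_1}$ with $k_1=2(\lfloor b/2\rfloor+1)m$ does not grow with $|c|$, while the range $[c,b]$ does. Since the definition of a B\"okstedt class, and its downstream uses in \Cref{lem:sc-bo-to-bok1} and \Cref{prop:Rhz-cyc-bounded}, require hcentrality in $R$ itself, this is a real hole.

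The paper's fix is exactly the parameter you dropped: it sets $e=b-2c$ and refines $R$ itself (not its connective cover) to an $h\A_2$-ring in $\Mod(\CycSp;\tau^{\mathrm{cyc}}_{\leq e}\Ss)$. The point of $e=b-2c$ is that $R$ has width $b-c\leq e$ (using $c\leq 0$), so the action map $\tau_{\leq e}\Ss\otimes R\to R$ exists and is unique, and the source of the multiplication, $\tau_{\leq b}(R\otimes R)$, has width $b-2c=e$, so the multiplication descends to $R\otimes_{\tau_{\leq e}\Ss}R$. Then \Cref{lem:moore-hcentral}, applied in that module category, produces a map $\Ss/p^{2m}\otimes\tau^{\mathrm{cyc}}_{\leq e}\Ss\to R$ that is hcentral in $R$ on the nose, and \Cref{lem:cb-to-bok-uni1} applied with parameter $e$ gives a class in degree $2p^{k'}$ with $k'=(\lfloor e/2\rfloor+1)2m$, which after at most one $p$-th power lands in degree $2p^{(e+2)m}=2p^{(b-2c+2)m}$. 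To repair your argument, replace $\tau^{\mathrm{cyc}}_{\leq b}\Ss$ by $\tau^{\mathrm{cyc}}_{\leq b-2c}\Ss$ and target $R$ directly rather than $\tau^{\mathrm{cyc}}_{\geq 0}R$.
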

\todo{The old version of this contained an error in the explicit number. This means any number relying on it needs to be changed.}

\begin{proof}
    Because $R$ is cyclotomically $\leq b$, it satisfies $\Seg{b}$ by \Cref{prop:AN_bound_to_segal}.

  Let $e \coloneqq b-2c$.
  We claim that the boundedness hypothesis on $R$ implies it can naturally be refined to an $h\A_2$-ring in
  $\Mod(\CycSp; \tau_{\leq e}^{\mathrm{cyc}}\Ss)$. 
  Indeed, the underlying cyclotomic spectrum uniquely lifts to $\Mod(\CycSp; \tau_{\leq e}^{\mathrm{cyc}}\Ss)$ because it is bounded in the range $[c,b]$, 
  and the multiplication map $R\otimes R \to R$ uniquely factors through $\tau_{\leq b}(R\otimes R)$, 
  which is bounded in the range $[2c, b]$ and agrees with $\tau_{\leq b}(R\otimes_{\tau_{\leq e}^{\mathrm{cyc}}\SP} R)$.

  Using \Cref{lem:moore-hcentral} we now obtain
  an hcentral ring map
  \[ i : \Ss/p^{2m} \otimes \tau_{\leq e}^{\mathrm{cyc}}\Ss \to R. \]
  Applying \Cref{lem:cb-to-bok-uni1} to
  $\Ss/p^{2m} \otimes \tau_{\leq e}^{\mathrm{cyc}}\Ss$
  we learn that it has a B\"okstedt class $\mu$ in degree $2p^{k'}$
  where $k' = (\lfloor e/2 \rfloor + 1)2m$.
  Replacing $\mu$ by its $p^{\mathrm{th}}$ power (if necessary)
  we may replace $k'$ by $k = (e+2)m$.
  Applying \Cref{lem:move-bok11} to $i$
  we learn that $i(\mu)\in \pi_{2p^{k}}R$ is a B\"okstedt class. 
\end{proof}

\begin{lem} \label{lem:wbock-to-scv}
  Let $R$ be $p$-nilpotent $h\A_2$-algebra in cyclotomic spectra and let $\mu \in \pi_{k}R$
  be a class such that
  \begin{enumerate}
      \item $R/\mu$ is $b$-truncated and
      \item $\mu$ is in the image of the transfer map 
      $\Sigma R_{h\T} \to R$. 
  \end{enumerate}
  Then $R$ satisfies $\ScanVn{b}$.
\end{lem}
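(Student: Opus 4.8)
The plan is to split the conclusion into two pieces, one drawn from each hypothesis on $\mu$, and then combine them by a formal manipulation. Write $k=|\mu|$. From hypothesis (1): since $R/\mu$ is $b$-truncated and $\tau_{>b}R$ is $(b+1)$-connective, there are no nonzero maps $\tau_{>b}R\to R/\mu$, so the composite $\tau_{>b}R\to R\to R/\mu$ vanishes. Hence the canonical map $\tau_{>b}R\to R$ lifts through $\cdot\mu\colon\Sigma^{k}R\to R$ to a map $\ell\colon\tau_{>b}R\to\Sigma^{k}R$ with $(\cdot\mu)\circ\ell\simeq(\tau_{>b}R\to R)$.

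From hypothesis (2): the transfer furnishes a lift $\overline{\mu}\in\pi_{k}\TCm(R)$ of $\mu$ lying in the image of the norm $\mathrm{Nm}\colon\Sigma R_{h\T}\to\TCm(R)$, and (after adjusting the choice of $\cdot\mu$ if necessary) we may write $\cdot\mu=\mathrm{mult}_R\circ(\overline{\mu}\otimes R)$, regarding $\overline{\mu}$ as a morphism out of the $k$-fold shift of the unit in $\Sp^{B\T}$. Since $\overline{\mu}$ lies in the image of $\mathrm{Nm}$, it is carried to $0$ by $\mathrm{can}\colon\TCm(R)\to\TP(R)$; and since $\mathrm{can}$ exhibits $\TP(R)$ as the localization of $\TCm(R)$ at the Euler class $a_{(1)}$ ($R$ being bounded below; cf.\ \Cref{lem:a1tcp}, which makes $a_{(1)}$ invertible after one Tate construction), it follows that $\overline{\mu}$ is annihilated by some power $a_{(1)}^d$. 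Unwound, this says the composite $a_{(1)}^d\circ\overline{\mu}$ is null in $\Sp^{B\T}$.

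To finish, observe that $a_{(1)}^d\colon R\to\Sigma^{d(1)}R$ is obtained by tensoring with the fixed morphism $\SP\to\SP^{d(1)}$, so it commutes with the multiplication on $R$, giving
\[ a_{(1)}^d\circ(\cdot\mu)=\mathrm{mult}_R\circ\big((a_{(1)}^d\circ\overline{\mu})\otimes R\big)=0. \]
Precomposing with $\ell$ shows that $\tau_{>b}R\to R\xrightarrow{a_{(1)}^d}\Sigma^{d(1)}R$ is null, which is precisely the assertion that $R$ satisfies $\ScanVn{b}$.

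The only step with genuine content is the second one: the recognition that condition (2) forces the transferred class $\mu$ to become $a_{(1)}$-power-torsion once pushed up to $\TCm(R)$; everything else is a diagram chase. The point that requires care is coherence between the two uses of $\mu$ — the multiplication map $\cdot\mu$ appearing in $R/\mu$ must be taken to be $\mathrm{mult}_R\circ(\overline{\mu}\otimes R)$ for the transfer lift $\overline{\mu}$ — so that the divisibility produced in step one and the $a_{(1)}$-nilpotence produced in step two are statements about one and the same morphism $\Sigma^{k}R\to R$.
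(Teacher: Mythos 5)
Your argument is, up to packaging, the same as the paper's. The paper writes $\Sigma R_{h\T}$ as $\colim_d (\Sigma^{-1+d(1)}\Ss/a_{(1)}^{d}\otimes R)^{h\T}$, picks a lift $\hat{\mu}$ of $\mu$ along the transfer, finds $d$ with $a_{(1)}^{d}\cdot\hat{\mu}=0$, and then composes the resulting nullhomotopy of $a_{(1)}^d\circ(\Nm(\hat{\mu})\cdot-)$ with a lift of $\tau_{>b}R\to R$ through $R/\Nm(\hat{\mu})$. Your $\overline{\mu}$ is $\Nm(\hat{\mu})$, your two steps are the two triangles of the paper's final diagram, and your closing remark about coherence — that the $\T$-equivariant map $\cdot\mu$ must be multiplication by the homotopy-fixed-point lift $\overline{\mu}$ — is exactly the point of the paper's use of $\Nm(\hat{\mu})\cdot-$.

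The one step you have not justified is the inference from ``$\mathrm{can}(\overline{\mu})=0$ in the $a_{(1)}$-localization'' to ``$a_{(1)}^{d}\overline{\mu}=0$ for some finite $d$.'' This requires $\pi_k$ of the filtered colimit $\colim_d(\Sigma^{d(1)}R)^{h\T}$ (equivalently, of $\colim_d(\Sigma^{-1+d(1)}\Ss/a_{(1)}^d\otimes R)^{h\T}$) to be the colimit of homotopy groups, which fails in general because the ambient category is $p$-complete spectra and the colimit is implicitly $p$-completed. This is precisely where the hypothesis that $R$ is $p$-nilpotent enters — it is the only place it is needed, and the paper flags it with a footnote: $p$-nilpotence makes the terms uniformly $p$-power torsion, so the uncompleted colimit is already $p$-complete and commutes with $\pi_*$. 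Your proof never invokes $p$-nilpotence, so as written this step is unjustified; adding that one sentence closes the gap.
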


\begin{proof}
  Recall that the norm and transfer fit into a diagram
  \[ \begin{tikzcd}
    \colim_d (\Sigma^{-1+d(1)} \Ss/a_{(1)}^{d} \otimes R)^{h\T} \ar[d, equal] \ar[r] &
    R^{h\T} \ar[d, equal] \ar[r] &
    (\Ss^{\T} \otimes R)^{h\T} \ar[d, equal] \\
    \Sigma R_{h\T} \ar[r, "\mathrm{Nm}"] &
    R^{h\T} \ar[r] &
    R.
  \end{tikzcd} \]
  
  Let $\hat{\mu}$ be a lift of $\mu$ along the transfer.
  Since $R$ is $p$-nilpotent the colimit in the top left commutes with $\pi_*(-)$\footnote{Recall we are working with $p$-complete spectra, so homotopy groups in general do not compute with filtered colimits.}
  and we learn that there exists some $d \gg 0$ such that
  $a_{(1)}^{d} \cdot \hat{\mu} = 0$.
  Using this we construct a diagram of $\T$-spectra
  witnessing the desired nullhomotopy.  
  \[ \begin{tikzcd}[sep=large]
    & \tau_{> b}R \ar[d] \ar[dl, dashed] \ar[dr, "0"] & \\
    \Sigma^{2p^k} R \ar[r, "\Nm(\hat{\mu}) \cdot -"] \ar[dr, "0"] &
    R \ar[d, "a_{(1)}^{d}"] \ar[r] &
    R/\Nm(\hat{\mu}) \\    
    & \Sigma^{d(1)} R &
  \end{tikzcd}\]  
\end{proof}

\subsection{Almost compact cyclotomic spectra}
\label{subsec:almost perfect}

Although the category of cyclotomic spectra has very few compact objects,
when equipped with the cyclotomic $t$-structure it has a rich collection of
almost compact objects (in the sense of \Cref{subsubsec:almost-compact}).
The main result of this subsection, \Cref{prop:key-finiteness},
shows that a cyclotomic spectrum is almost compact whenever its underlying spectrum is almost compact.
Before we can prove this proposition we will need some preparatory lemmas.

\begin{lem} \label{lem:easy-finiteness-boost}
  Give $\Sp^{B\T}$ the pointwise $t$-structure.
  If the underlying spectrum of $X \in \Sp^{B\T}$ is almost compact,
  then $X$ is almost compact as an object of $\Sp^{B\T}$.
\end{lem}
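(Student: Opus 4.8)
The plan is to identify $\Sp^{B\T}$ with a module category and then apply the standard description of almost compact modules over a Noetherian ring spectrum. First I would use the equivalence $\Sp^{B\T}\simeq \LMod_{\Ss[\T]}$, where $\Ss[\T]\coloneqq \Sigma^{\infty}_+\T$ is the ($p$-complete) spherical group ring: the underlying-spectrum functor $\mathrm{ev}\colon \Sp^{B\T}\to\Sp$ is conservative and preserves all colimits (these are computed pointwise in $\Fun(B\T,\Sp)$), hence is monadic with associated monad $\Ss[\T]\otimes(-)$, and under the resulting equivalence $\mathrm{ev}$ becomes restriction of scalars along the unit map $\Ss\to\Ss[\T]$. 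The relevant features of $\Ss[\T]$ are that it is connective, that its unit induces an isomorphism $\pi_0\Ss\cong\pi_0\Ss[\T]=\Zp$ (as $\T$ is connected), and that each $\pi_i\Ss[\T]\cong\pi_i\Ss\oplus\pi_{i-1}\Ss$ is a finitely generated $\Zp$-module --- so $\Ss[\T]$ is (left) Noetherian in the sense that $\pi_0$ is Noetherian and the higher homotopy groups are finitely generated over it.

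With this in hand, I would invoke the standard characterization of almost perfect (equivalently, almost compact) left modules over such a ring spectrum: a module $M$ is almost compact if and only if it is bounded below and each $\pi_iM$ is finitely generated over $\pi_0$. Now if the underlying spectrum $\mathrm{ev}(X)$ is almost compact, then $X$ is bounded below (boundedness in the pointwise $t$-structure is detected on $\mathrm{ev}(X)$), and $\pi_iX=\pi_i\,\mathrm{ev}(X)$ is finitely generated over $\pi_0\Ss=\Zp=\pi_0\Ss[\T]$ for every $i$; hence $X$ is almost compact in $\Sp^{B\T}\simeq\LMod_{\Ss[\T]}$, as desired.

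If one prefers to avoid quoting the classification, the same conclusion can be reached directly: after a shift we may assume $X$ connective, and then we build a tower of maps $P_n\to X$ with each $P_n$ a finite $\Ss[\T]$-cell complex (hence compact) and $\mathrm{cofib}(P_n\to X)$ $n$-connective, attaching only finitely many $\Ss[\T]$-cells at each stage. The finiteness of the attaching data at each stage holds because $\pi_i$ of each successive fiber is finitely generated over $\Zp=\pi_0\Ss[\T]$, and the successive fibers remain almost compact on underlying spectra since a fiber of a map between almost compact spectra is again almost compact (bounded below, with finitely generated homotopy groups by the long exact sequence and Noetherianity of $\Zp$). The only point needing a little care is matching the ambient notion of almost compactness (from \Cref{subsubsec:almost-compact}) with almost perfectness over $\Ss[\T]$; granting that identification, the argument is essentially formal, which is why the lemma is labelled ``easy.''
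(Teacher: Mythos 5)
There is a genuine gap, and it sits exactly at the point you flag as "needing a little care." The identification $\Sp^{B\T}\simeq\LMod_{\Ss[\T]}$ is fine, but the "standard characterization" you invoke --- almost compact iff bounded below with $\pi_*$ finitely generated over $\pi_0$ --- is Lurie's theorem for \emph{his} notion of almost perfect, in a setting where filtered colimits preserve coconnectivity. In this paper $\Sp$ means $p$-complete spectra, and \Cref{rmk:Lurie-is-wrong} warns precisely that the two notions of almost compactness diverge there: by \Cref{exm:ac-zp-modules} the module $\Zp$ is bounded below with finitely generated homotopy yet is \emph{not} almost compact in the sense of \Cref{dfn:almost-compact} (test against the uniformly discrete diagram $\Z/p\xrightarrow{p}\Z/p^2\xrightarrow{p}\cdots$, whose $p$-completed colimit is $\Sigma\Zp$). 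The correct criterion in the $p$-complete setting requires each homotopy group to be $p$-nilpotent as well (\Cref{exm:ac}), and that refined statement is not something you can cite as standard --- it is essentially what you would have to prove. Your fallback cell-tower argument inherits the same problem: you attach free cells $\Ss[\T]$, whose underlying spectrum $\Ss\oplus\Sigma\Ss$ has $\pi_0=\Zp$, hence is not almost compact in this sense; so the claim that "the successive fibers remain almost compact on underlying spectra" is being applied to a cofiber sequence one of whose terms is not almost compact. This is repairable --- attach cells of the form $\Sigma^j\Ss[\T]/p^{m}$, which are compact and have finitely generated $p$-nilpotent homotopy, so the induction on the fibers does close --- but as written the induction does not go through.

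For comparison, the paper's proof avoids all of this: it writes $\map_{\T}(X,Y)\cong\map(X,Y)^{h\T}$, observes that for a uniformly bounded diagram $F$ the spectra $\map(X,F(k))$ are uniformly bounded above (since $X$ is bounded below), uses the underlying almost compactness of $X$ to move the colimit inside $\map(X,-)$, and then uses that $(-)^{h\T}$ commutes with filtered colimits of uniformly bounded above spectra. No module structure, no Noetherianity, and no classification of almost compact objects is needed.
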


\begin{proof}
  Let $F : K \to \Sp_{[c,b]}^{B\T}$ be a filtered diagram.
  Then we have isomorphisms
  \[ \begin{tikzcd}
    \colim_{k \in K} \map_{\T}(X, F(k)) \ar[r] \ar[d, "\cong"] &
    \map_{\T}(X, \colim_{k \in K} F(k)) \ar[d, "\cong"] \\
    \colim_{k \in K} \map(X, F(k))^{h\T} \ar[r] \ar[d, "\cong"] &
    \map(X, \colim_{k \in K} F(k))^{h\T} \ar[d, "\cong"] \\    
    \left( \colim_{k \in K} \map(X, F(k)) \right)^{h\T} \ar[r, "\cong"] &
    \left( \colim_{k \in K} \map(X,  F(k)) \right)^{h\T}.
  \end{tikzcd} \]  
  The key observation here is that the objects $\map(X, F(k))$
  are uniformly bounded above and 
  $(-)^{h\T}$ commutes with filtered colimits of spectra that are uniformly bounded above.
\end{proof}

\begin{cnstr} \label{cnstr:retract-r}
  Let $c \leq b$, $m \geq m_p^{\E_1}$ and
  $e \geq b+2(b-c+1)m$ be integers.  
  Let $j$ be the truncation natural transformation
  \[ j\colon  (-) \Rightarrow \tau_{\leq e}(-) \colon  \Mod(\CycSp_{[c,b]}; \Ss/p^m) \to \Sp^{B\T}. \]
  whiskering along 
  \[ (-)^{tC_p}\colon \Sp^{B\T} \to \Sp^{B\T}\]
  We get a natural transformation 
  \[j^{tC_p}\colon  (-)^{tC_p} \Rightarrow (\tau_{\leq e}(-))^{tc_p.}\]
  We will construct a natural $\T$-equivariant retraction $r$ of $j^{tC_p}$.\todo{restate this as a proposition that the fiber is null as a functor.}
\end{cnstr}

\begin{proof}[Details.]
  Let $i$ be the corresponding covering natural transformation
  $ i \colon  \tau_{>e}(-) \Rightarrow (-) $ obtained as the fiber of $j$.
  We will construct $r$ by providing a nullhomotopy of $i^{tC_p}$.

  Let $R \coloneqq \tau_{\leq b-c}^{\mathrm{cyc}}(\Ss/p^m)$
  and let $d \coloneqq (b-c+1)m$.
  Using Lemmas \ref{lem:pn-cb-to-fn}, \ref{lem:fn-to-tn1} and \Cref{rmk:exp-as-a1}
  we pick a nullhomotopy $\epsilon$ of the
  $\T$-equivariant $R^{tC_p}$-linear map
  $a_{(1)}^d \colon \Sigma^{-d(1)}R^{tC_p} \to R^{tC_p}$.
  As any object of $\Mod(\CycSp_{[c,b]}; \Ss/p^m)$ is naturally
  an $R$-module in cyclotomic spectra
  we may construct the following diagram of $\T$-spectra
  which is natural in $X \in \Mod(\CycSp_{[c,b]}; \Ss/p^m)$.
  \[ \begin{tikzcd}[column sep=large, row sep=huge]
    \tau_{>e}X
    \ar[r, "i"] &
    X
    \ar[r, "\varphi"]
    \ar[d, "a_{(1)}^d"] &
    X^{tC_p}
    \ar[r] 
    \ar[d, "a_{(1)}^d"] &
    R^{tC_p} \otimes X^{tC_p}
    \ar[d, "a_{(1)}^d \otimes X^{tC_p}"', ""{name=L, above}]
    \ar[d, bend left=90, "0", ""{name=R, below}] \\
    & \Sigma^{d(1)} X
    \ar[r, "\varphi"] & 
    \Sigma^{d(1)} X^{tC_p} &
    \Sigma^{d(1)} R^{tC_p} \otimes X^{tC_p}
    \ar[l, "\mu"]
    \ar[from=L, to=R, "\epsilon"', Leftrightarrow]
  \end{tikzcd} \]
  From this diagram we obtain a nullhomotopy of the
  natural transformation $\varphi \circ a_{(1)}^d \circ i$.
  
  On the other hand, since every $X \in \Mod(\CycSp_{[c,b]}; \Ss/p^m)$
  satisfies $\Seg{b}$ (by \Cref{prop:AN_bound_to_segal})
  and the source of $i$ is pointwise $(e+1)$-connective
  it follows that the nullhomotopy of $\varphi \circ a_{(1)}^d \circ i$ can be uniquely lifted
  to a nullhomotopy of $a_{(1)}^d \circ i$. 
  Apply $(-)^{tC_p}$ to this nullhomotopy,
  and using the fact that $a_{(1)}^{tC_p}$ is an isomorphism,
  we obtain the desired nullhomotopy of $i^{tC_p}$.
\end{proof}

\begin{lem} \label{lem:tcp-commute}
  The functor
  \[ (-)^{tC_p} : \CycSp \to \Sp^{B\T} \]
  commutes with colimits of uniformly bounded filtered diagrams of cyclotomic spectra.
\end{lem}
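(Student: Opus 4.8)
The plan is to leverage \Cref{cnstr:retract-r}, after reducing to a bounded, $p$-power torsion situation where it applies; once one is there the argument is formal.

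First, note that $(-)^{tC_p}\colon\CycSp\to\Sp^{B\T}$ is the composite of the forgetful functor $U\colon\CycSp\to\Sp^{B\T}$ with the Tate construction $(-)^{tC_p}\colon\Sp^{B\T}\to\Sp^{B\T}$. Colimits in $\CycSp$ are computed on underlying $\T$-spectra: the colimit of a diagram $\{(X_k,\varphi_k)\}$ is $\colim_k X_k$ with Frobenius $\colim_k X_k\xrightarrow{\colim_k\varphi_k}\colim_k X_k^{tC_p}\to(\colim_k X_k)^{tC_p}$, and the universal property follows readily from $\map_{\Sp^{B\T}}(\colim_k X_k,-)\simeq\lim_k\map_{\Sp^{B\T}}(X_k,-)$. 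Hence $U$ preserves all colimits, and we are reduced to showing: for a filtered $F\colon K\to\CycSp$ with $F(k)\in\CycSp_{[c,b]}$ for all $k$, the assembly map $\colim_k(UF(k))^{tC_p}\to(\colim_k UF(k))^{tC_p}$ is an equivalence in $\Sp^{B\T}$.

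Since all objects are $p$-complete, it suffices to check this after $-\otimes\SP/p^m$ for $m\coloneqq m_p^{\E_1}$. As $\SP/p^m\in\Sp^{\diamondsuit}$ carries the trivial $C_p$-action, $-\otimes\SP/p^m$ commutes with $(-)^{tC_p}$ on $\Sp^{B\T}$ and with all colimits, so the base-changed map is the assembly map for the diagram $F(-)/p^m\coloneqq F(-)\otimes\SP/p^m$. Using that $\SP/p^m$ is an $\E_1$-ring, this diagram takes values in $\Mod(\CycSp;\SP/p^m)$; and since $F(k)/p^m=\cofiber(p^m\colon F(k)\to F(k))$, the long exact sequence for the cyclotomic $t$-structure gives $F(k)/p^m\in\CycSp_{[c,b+1]}$, while \Cref{lem:fil-colim-bounded} places $\colim_k F(k)/p^m$ in $\CycSp_{[c,b+4]}$. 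Thus the diagram and its colimit all lie in $\Mod(\CycSp_{[c,b+4]};\SP/p^m)$.

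Now invoke \Cref{cnstr:retract-r} with the range $[c,b+4]$ and a suitable $e$: it exhibits $X\mapsto X^{tC_p}$ as a retract of $X\mapsto(\tau_{\leq e}X)^{tC_p}$ as functors $\Mod(\CycSp_{[c,b+4]};\SP/p^m)\to\Sp^{B\T}$. The functor $X\mapsto(\tau_{\leq e}X)^{tC_p}$ commutes with our filtered colimit: $\tau_{\leq e}(-)$ (applied after forgetting to $\Sp^{B\T}$) commutes with filtered colimits and lands uniformly in $\Sp^{B\T}_{\leq e}$, and $(-)^{tC_p}\colon\Sp^{B\T}\to\Sp^{B\T}$ commutes with filtered colimits of uniformly bounded-above diagrams. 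For the last point one uses the norm cofiber sequence $Y_{hC_p}\to Y^{hC_p}\to Y^{tC_p}$: $(-)_{hC_p}$ preserves all colimits, and $(-)^{hC_p}$ preserves filtered colimits of uniformly bounded-above $C_p$-spectra, since in each degree only finitely many terms $H^s(C_p;\pi_\ast(-))$ of the homotopy fixed point spectral sequence contribute and $H^\ast(C_p;-)$ preserves filtered colimits. By naturality of the assembly construction, the assembly map for $X\mapsto X^{tC_p}$ is then a retract, in $\Fun(\Delta^1,\Sp^{B\T})$, of the assembly map for $X\mapsto(\tau_{\leq e}X)^{tC_p}$; the latter being an equivalence, so is the former, which is the assertion. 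The only genuine subtlety is that $(-)^{tC_p}$ really fails to commute with filtered colimits of unbounded cyclotomic spectra, so both the $\SP/p^m$-torsion reduction and the Segal-conjecture input packaged in \Cref{cnstr:retract-r} are essential; with those in place everything is formal.
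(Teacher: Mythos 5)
Your proof is correct and follows essentially the same route as the paper's: reduce mod $p^m$ for $m\geq m_p^{\E_1}$, bound the colimit via \Cref{lem:fil-colim-bounded}, and use the retraction of \Cref{cnstr:retract-r} to exhibit the assembly map as a retract of the assembly map for the $e$-truncations, which is an equivalence because $(-)^{tC_p}$ commutes with uniformly bounded-above filtered colimits. The extra details you supply (colimits in $\CycSp$ being computed on underlying $\T$-spectra, the norm cofiber sequence argument for $(-)^{hC_p}$) are points the paper leaves implicit, and your bookkeeping of the $t$-structure bounds is fine.
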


\begin{proof}
  Pick an $m \geq m_p^{\E_1}$,
  As forgetting to $\Sp$ and tensoring with $\Ss/p^m$
  is conservative and commutes with colimits it will suffice to instead show that
  \[ X \mapsto (\Ss/p^m \otimes X)^{tC_p} \]
  from $\CycSp$ to $\Sp$ commutes with uniformly bounded filtered colimits.
  
  Let $F\colon K \to \CycSp_{[c,b]}$ be a filtered diagram with colimit\footnote{Note that the colimit is taken in $\CycSp$.} $Y$.
  As the $F(k)$ are all cyclotomically bounded in the
  range $[c,b]$,
  using \Cref{lem:fil-colim-bounded} we see that $Y$ is bounded in the range $[c,b+3]$.
  Using the retraction from \Cref{cnstr:retract-r} with $e \gg 0$
  we learn that the coassembly map
  \[ \colim_{k \in K} (\Ss/p^m \otimes F(k))^{tC_p} \to (\Ss/p^m \otimes Y)^{tC_p} \]
in $\Sp$ is a retract of
  \[ \colim_{k \in K} (\tau_{\leq e}(\Ss/p^m \otimes F(k)))^{tC_p} \to (\tau_{\leq e}(\Ss/p^m \otimes Y))^{tC_p}. \]
  The latter map is an isomorphism since $(-)^{tC_p}$ commutes with uniformly bounded filtered colimits of spectra (for $(-)^{hC_p}$ this is clear and $(-)_{hC_p}$ is a colimit).
  As a retract of an isomorphism is an isomorphism we may conclude.
\end{proof}


\begin{cnstr} \label{cnstr:JY}
  Let $c \leq b$, $m \geq m_p^{\E_1}$ and
  $e \geq b+2(b-c+1)m$ be integers.      
  Using the retraction $r$ from \Cref{cnstr:retract-r}
  we construct the following diagram
  of functors $\CycSp^{\op} \times\ \Mod(\CycSp_{[c,b]}; \Ss/p^m) \to \Sp $\todo{change this to spectra rather than spaces. make sure that doesn't create issues.}
  \[ \begin{tikzcd}[sep=huge] 
    \map_{\T}(-, -)
    \ar[r, "g \mapsto g^{tC_p} \circ \varphi"]
    \ar[d, "g \mapsto j \circ g"] &
    \map_{\T}(-, (-)^{tC_p})
    \ar[dr, equal]
    \ar[d, "g \mapsto j^{tC_p} \circ g"] & \\
    \map_{\T}(-, \tau_{\leq e} (-))
    \ar[r, "g \mapsto g^{tC_p} \circ \varphi"]  &
    \map_{\T}(-, (\tau_{\leq e} (-))^{tC_p})
    \ar[r, "g \mapsto r \circ g"]  &
    \map_{\T}(-, (-)^{tC_p}). 
  \end{tikzcd} \] 

  Let $J$ denote the composite map along the bottom row
  and let $\overline{J}$ denote the further composite
  \[ \map_{\T}(-,\tau_{\leq e} (-)) \xrightarrow{J} \map_{\T}(-, (-)^{tC_p}) \to \map_{\T}(-, \tau_{\leq e} ((-)^{tC_p})).\qedhere \]
\end{cnstr}


\begin{lem} \label{lem:cycsp-map-formula}
  Let $c \leq b$, $m \geq m_p^{\E_1}$ and
  $e \geq b+2(b-c+1)m$ be integers.
  There is an isomorphism of functors
  \[ \CycSp^{\op} \times\ \Mod(\CycSp_{[c,b]}; \Ss/p^m) \to \Sp \]\todo{this can be phrased in terms of right adjoint}
  between the functor sending $(X,Y)$ to $\map_{\CycSp}(X, Y)$ and the functor sending $(X,Y)$ to the equalizer
  \[ \begin{tikzcd}[sep=huge]
    \mathrm{Eq}\Bigg( \map_{\T}(X,\tau_{\leq e} Y) \arrow[r,yshift=-1ex,"\overline{J}"'] \arrow[r,yshift=1ex, "g \mapsto (\tau_{\leq e}\varphi) \circ g"] & \map_{\T}(X, \tau_{\leq e}( Y^{tC_p} ) ) \Bigg)
  \end{tikzcd} \]
  where $\overline{J}$ is the natural transformation from \Cref{cnstr:JY}.  
\end{lem}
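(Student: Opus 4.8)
The plan is to deduce the formula from the standard description of mapping spectra in the lax equalizer $\CycSp$, comparing it term by term with the claimed equalizer using the machinery already set up in \Cref{cnstr:retract-r} and \Cref{cnstr:JY}. Recall that for cyclotomic spectra $X,Y$ with cyclotomic structure maps $\varphi_X,\varphi_Y$ one has a natural equivalence $\map_{\CycSp}(X,Y)\simeq\mathrm{Eq}\bigl(a,c\colon \map_{\T}(X,Y)\rightrightarrows\map_{\T}(X,Y^{tC_p})\bigr)$ with $a(g)=g^{tC_p}\circ\varphi_X$ and $c(g)=\varphi_Y\circ g$. I would compare this with the claimed equalizer of the pair $(\overline J,d)$, where $d(g)=(\tau_{\leq e}\varphi_Y)\circ g$ and $\overline J$ is as in \Cref{cnstr:JY}. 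Postcomposition with the truncation maps $j_Y\colon Y\to\tau_{\leq e}Y$ and $j_{Y^{tC_p}}\colon Y^{tC_p}\to\tau_{\leq e}(Y^{tC_p})$ assembles into a morphism from the parallel pair $(a,c)$ to the parallel pair $(\overline J,d)$: the relation $\overline J\circ(j_Y)_\ast\simeq(j_{Y^{tC_p}})_\ast\circ a$ is exactly what the construction of $\overline J$ encodes, given the retraction identity $r\circ j^{tC_p}\simeq\id$ from \Cref{cnstr:retract-r}, while $\;d\circ(j_Y)_\ast\simeq(j_{Y^{tC_p}})_\ast\circ c\;$ is naturality of truncation applied to $\varphi_Y$. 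This produces a natural comparison map from $\map_{\CycSp}(X,Y)$ to the claimed equalizer, and what remains is to show it is an equivalence.

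For that, since an equalizer of a parallel pair in a stable category is the fiber of the difference of the two maps, it suffices to show that the square with horizontal maps $a-c$ and $\overline J-d$ and vertical maps $(j_Y)_\ast,(j_{Y^{tC_p}})_\ast$ is cartesian; equivalently, that the induced map on vertical fibers $\map_{\T}(X,\tau_{>e}Y)\to\map_{\T}(X,\tau_{>e}(Y^{tC_p}))$ is an equivalence. This map is $\map_{\T}(X,-)$ applied to the restriction of $a-c$ to fibers. Unwinding: the restriction of $a$ factors as postcomposition with the natural map $i_Y^{tC_p}\colon(\tau_{>e}Y)^{tC_p}\to Y^{tC_p}$, which is naturally nullhomotopic — this is precisely the content supplied by \Cref{cnstr:retract-r} — so the $a$-term drops out; and the restriction of $c$ is, by naturality of $\varphi_Y$ with respect to truncation, induced by $\tau_{>e}(\varphi_Y)\colon\tau_{>e}Y\to\tau_{>e}(Y^{tC_p})$. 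So the map on vertical fibers is, up to sign, $\tau_{>e}(\varphi_Y)_\ast$, and it remains only to see that $\tau_{>e}(\varphi_Y)$ is itself an equivalence.

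This last point is a boundedness estimate: since $Y\in\CycSp_{[c,b]}\subseteq\CycSp_{\leq b}$, \Cref{prop:AN_bound_to_segal} gives that $Y$ satisfies $\Seg{b}$, i.e. $\fib(\varphi_Y)$ is $b$-truncated, and the hypothesis $e\geq b+2(b-c+1)m$ together with $c\leq b$ and $m\geq m_p^{\E_1}\geq 2$ forces $e\geq b+1$; the long exact sequence then shows $\varphi_Y$ is an isomorphism on homotopy groups in all degrees $>e$, so $\tau_{>e}(\varphi_Y)$ is an equivalence. Overall I regard this lemma as mostly bookkeeping on top of \Cref{cnstr:retract-r} and \Cref{cnstr:JY}; the one place where I expect to need care is the vertical-fiber computation of the middle paragraph — in particular checking, coherently and naturally in $(X,Y)$, that the $a$-term vanishes and that the $c$-term is exactly $\tau_{>e}(\varphi_Y)$ — since that is where the hypotheses that $Y$ is bounded and $p$-power torsion (inherited by every object of $\Mod(\CycSp_{[c,b]};\Ss/p^m)$) actually get used.
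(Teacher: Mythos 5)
Your proposal is correct and follows essentially the same route as the paper: both construct the comparison map from the lax-equalizer description of $\map_{\CycSp}$ via the truncation squares of \Cref{cnstr:JY}, reduce to the induced map on the fibers $\map_{\T}(X,\tau_{>e}Y)\to\map_{\T}(X,\tau_{>e}(Y^{tC_p}))$, and identify that map as (iso) $-$ (null), with the $\varphi\circ(-)$ contribution being $\tau_{>e}(\varphi_Y)_\ast$ (an equivalence since $\fib(\varphi_Y)$ is $b$-truncated and $e>b$) and the $(-)^{tC_p}\circ\varphi_X$ contribution vanishing via the nullhomotopy of $i^{tC_p}$ from \Cref{cnstr:retract-r}. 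The only cosmetic difference is that the paper kills the second contribution by exhibiting the diagonal $J$ in its square rather than by factoring the fiber restriction through $i_Y^{tC_p}$ directly, but both reduce to the same nullhomotopy.
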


\begin{proof}
    
  Let $Z(-,-)$ denote the functor that we wish to compare with
  $\map_{\CycSp}(-,-)$.
  We have a commuting square 
  \[ \begin{tikzcd}
    \map_{\T}(-, -) \ar[d, "\varphi \circ -"] \ar[r] &
    \map_{\T}(-, \tau_{\leq e} (-)) \ar[d, "(\tau_{\leq e}\varphi) \circ -"] \\
    \map_{\T}(-, (-)^{tC_p}) \ar[r] &
    \map_{\T}(-, \tau_{\leq e} ((-)^{tC_p}))
  \end{tikzcd} \]
  and 
  \Cref{cnstr:JY} provides us with a commuting square
  \[ \begin{tikzcd}
    \map_{\T}(-, -) \ar[d, "g \mapsto g^{tC_p} \circ \varphi"] \ar[r] &
    \map_{\T}(-, \tau_{\leq e} (-)) \ar[d, "\overline{J}"] \\
    \map_{\T}(-, (-)^{tC_p}) \ar[r] &
    \map_{\T}(-, \tau_{\leq e} ((-)^{tC_p})).
  \end{tikzcd} \]
  
  Note that the horizontal arrows in the two squares agree.
  Taking differences of the vertical maps we construct a 3x3 diagram of fiber sequences
  which includes the desired comparison map
  \[ \begin{tikzcd}
    D(-,-) \ar[d] \ar[r] &
    \map_{\CycSp}(-,-) \ar[r] \ar[d] &
    Z(-,-) \ar[d] \\
    \map_{\T}(-, \tau_{> e}(-)) \ar[r] \ar[d, "H"] &
    \map_{\T}(-, -)
    \ar[d, "g \mapsto (\varphi \circ g) - (g^{tC_p} \circ \varphi)"] \ar[r] &
    \map_{\T}(-, \tau_{\leq e} (-))
    \ar[d, "((\tau_{\leq e}\varphi) \circ -) - \overline{J}"] \\
    \map_{\T}(-, \tau_{> e}((-)^{tC_p})) \ar[r]  &
    \map_{\T}(-, (-)^{tC_p}) \ar[r] &
    \map_{\T}(-, \tau_{\leq e} ((-)^{tC_p}))
  \end{tikzcd} \]
  We show the comparison map is an isomorphism by showing that
  the map $H$ is an isomorphism.

  Taking fibers in the two squares above we can read off that
  $H$ is the difference of two maps $H_1$ and $H_2$ coming from the respective squares.
  We analyze $H_1$ and $H_2$ separately.
  First we show $H_1$ is an isomorphism.
  Examining the first square we see that $H_1 \cong ((\tau_{ > e}\varphi) \circ -)$.
  From \Cref{prop:AN_bound_to_segal} we know that the target satisfies $\Seg{b}$
  and therefore $(\tau_{ > e}\varphi \circ -)$ is an isomorphism since $e>b$. 
  
  Next we show $H_2$ is null.
  From the diagram constructing $J$ in \Cref{cnstr:JY}
  and the definition of $\overline{J}$ we obtain a refinement of
  the second square to a diagram
  \[ \begin{tikzcd}
    \map_{\T}(-, -) \ar[d, "g \mapsto g^{tC_p} \circ \varphi"'] \ar[r] &
    \map_{\T}(-, \tau_{\leq e} (-)) \ar[d, "\overline{J}"] \ar[dl, "J"] \\
    \map_{\T}(-, (-)^{tC_p}) \ar[r] &
    \map_{\T}(-, \tau_{\leq e} ((-)^{tC_p})).
  \end{tikzcd} \]
  It follows that the induced map on horizontal fibers is nullhomotopic.
  The difference $H = H_1 - H_2$ is thus an isomorphism and we may conclude that the term $D(-,-)$ is zero.
\end{proof}

\begin{prop} \label{prop:key-finiteness}
  Let $X$ be a bounded below, $p$-nilpotent cyclotomic spectrum.
  If the underlying spectrum of $X$ is almost compact,
  then $X$ is almost compact as a cyclotomic spectrum.
\end{prop}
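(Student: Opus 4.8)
The plan is to deduce \Cref{prop:key-finiteness} from the mapping-spectrum formula of \Cref{lem:cycsp-map-formula} together with the boundedness results already assembled. Recall (see \Cref{subsubsec:almost-compact}) that a bounded-below cyclotomic spectrum $X$ is almost compact exactly when $\map_{\CycSp}(X,-)$ carries filtered colimits of \emph{uniformly bounded} diagrams of cyclotomic spectra to colimits; since $X$ is bounded below it is enough to test this on filtered diagrams $F\colon K\to\CycSp_{[c,b]}$, whose colimit in $\CycSp$ lies in $\CycSp_{[c,b+3]}$ by \Cref{lem:fil-colim-bounded}. So the goal is to show $\colim_{k\in K}\map_{\CycSp}(X,F(k))\to\map_{\CycSp}(X,\colim_k F(k))$ is an equivalence.

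First I would reduce to the case where the targets are $\Ss/p^m$-modules, which is the shape required by \Cref{lem:cycsp-map-formula}. Since $X$ is $p$-nilpotent, some power $p^m$ acts by zero on $X$; enlarging $m$ we may assume $m\ge m_p^{\E_1}$, so that $\Ss/p^m$ is an $\E_1$-ring. Then $p^m$ annihilates $\map_{\CycSp}(X,Y)$ for every $Y$, so $\map_{\CycSp}(X,Y)$ is a retract of $\map_{\CycSp}(X,Y)\otimes\Ss/p^m$, and because $-\otimes\Ss/p^m$ is a finite colimit and $\map_{\CycSp}(X,-)$ is exact, this retract is naturally identified with $\map_{\CycSp}(X,Y\otimes\Ss/p^m)$. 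Applying this functorially to $F$ and to $\colim_k F(k)$, and using that $-\otimes\Ss/p^m$ commutes with colimits (so $\colim_k(F(k)\otimes\Ss/p^m)\simeq(\colim_k F(k))\otimes\Ss/p^m$) together with the fact that a retract of an equivalence is an equivalence, we reduce to diagrams valued in $\Mod(\CycSp_{[c',b']};\Ss/p^m)$ for suitable relabelled bounds $c'\le b'$.

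Next, choosing $e\gg 0$ (say $e\ge (b'+3)+2((b'+3)-c'+1)m$), \Cref{lem:cycsp-map-formula} identifies $\map_{\CycSp}(X,Y)$, naturally in $Y\in\Mod(\CycSp_{[c',b']};\Ss/p^m)$, with the equalizer of the two maps $\map_\T(X,\tau_{\le e}Y)\rightrightarrows\map_\T(X,\tau_{\le e}(Y^{tC_p}))$. An equalizer is a finite limit, and filtered colimits of spectra are exact, hence commute with finite limits; so it suffices to check that the two functors $Y\mapsto\map_\T(X,\tau_{\le e}Y)$ and $Y\mapsto\map_\T(X,\tau_{\le e}(Y^{tC_p}))$ each commute with the colimit $\colim_k F(k)$. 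Here the key observation is that all the relevant $\T$-spectra are \emph{uniformly bounded above}: the $F(k)$ and $\colim_k F(k)$ are cyclotomically bounded, hence have underlying $\T$-spectra in a fixed range, and by the Segal condition $\Seg{b'+3}$ (\Cref{prop:AN_bound_to_segal}) the same is then true of $F(k)^{tC_p}$ and $(\colim_k F(k))^{tC_p}$, since $\varphi$ has uniformly truncated fibre. On uniformly bounded-above diagrams $\tau_{\le e}$ commutes with filtered colimits; $(-)^{tC_p}$ commutes with the colimit by \Cref{lem:tcp-commute}; and $\map_\T(X,-)$ commutes with filtered colimits of uniformly bounded-above $\T$-spectra because the underlying $\T$-spectrum of $X$ is almost compact in $\Sp^{B\T}$ by \Cref{lem:easy-finiteness-boost}. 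Composing these three facts, both functors commute with $\colim_k F(k)$, which finishes the argument.

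The main obstacle I anticipate is not any single conceptual step but the bookkeeping of uniform bounds: one must verify that passing through $-\otimes\Ss/p^m$, then $(-)^{tC_p}$, then $\tau_{\le e}$, then $\map_\T(X,-)$ never leaves the range in which each operation commutes with the filtered colimit. This is slightly delicate because every category here is $p$-complete, so $\pi_*$ does not commute with filtered colimits and one genuinely needs the boundedness-above statements (in particular the fact, via $\Seg{b}$, that cyclotomically bounded objects have bounded-above Tate constructions) rather than naïve homotopy-group arguments.
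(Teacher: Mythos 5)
Your argument is essentially the paper's proof: the same reduction to $\Ss/p^m$-module targets, the same appeal to \Cref{lem:cycsp-map-formula}, and the same strategy of commuting the filtered colimit successively through $(-)^{tC_p}$ (via \Cref{lem:tcp-commute}), $\tau_{\le e}$, and $\Map_\T(X,-)$. Two of your stated justifications, however, do not quite match the hypotheses of the facts you invoke, and both need the (easy) repairs the paper supplies. First, ``$\tau_{\le e}$ commutes with filtered colimits of uniformly bounded-above diagrams'' is false in $p$-complete spectra (e.g.\ $\colim(\Z_p\xrightarrow{p}\Z_p\to\cdots)$); what makes it work is that after your reduction the diagram consists of $\Ss/p^m$-modules, so the naive colimit is already $p$-complete. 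Second, almost compactness of $X$ in $\Sp^{B\T}$ only gives commutation against diagrams that are uniformly bounded \emph{on both sides}, whereas $\tau_{\le e}(F(k)^{tC_p})$ need not be uniformly bounded below (Tate constructions of bounded-below spectra are generally unbounded below); the fix is to first arrange $X$ connective by a shift and then replace $\tau_{\le e}$ by $\tau_{[0,e]}$ throughout, since mapping out of a connective $X$ only sees the connective cover. With those two adjustments your argument is complete and coincides with the paper's.
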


\begin{proof}
  Without loss of generality we may assume that $X$ is connective.
  In order to prove the proposition we must show that
  for every filtered diagram $F : K \to \CycSp_{[c,b]}$ with colimit\footnote{Note that the colimit is taken in $\CycSp$, not $\CycSp_{[c,b]}$.} $Y$ the natural comparison map
  \[ \colim_{k \in K} \Map_{\CycSp}(X, F(k)) \to \Map_{\CycSp}(X, Y) \]
  is an isomorphism.
  As the $F(k)$ are all cyclotomically bounded in the range $[c,b]$,
  using \Cref{lem:fil-colim-bounded} we see that $Y$ is bounded in the range $[c,b+3]$.
    
  Pick an $m \geq m_p^{\E_1}$ such that $p^m$ acts by zero on $X$.
  Then we have $(\Ss/p^m)^{\vee} \otimes X \cong X \oplus \Sigma X$
  and as a consequence it suffices to prove the proposition for $(\Ss/p^m)^{\vee} \otimes X$.
  Dualizing the copy of $(\Ss/p^m)^{\vee}$ over to the other side we return to considering $X$,
  but are now free to assume that $F^{\rhd}$ takes values in $\Mod(\CycSp_{[c,b+4]}; \Ss/p^m)$.

  Using \Cref{lem:easy-finiteness-boost}
  and the hypothesis that the underlying spectrum of $X$ is almost compact
  we learn that the underlying $\T$-spectrum of $X$ is almost compact
  (with respect to the pointwise $t$-structure on $\T$-spectra).


  We analyze the comparison map using the formulas for maps of cyclotomic spectra from
  \Cref{lem:cycsp-map-formula}.
  Taking $e = b+4 + 2(b-c+5)m$ we obtain the following diagram
  where each column is a fiber sequence:
  \[ \begin{tikzcd}
    \colim_{k \in K} \Map_{\CycSp}(X, F(k)) \ar[r] \ar[d] &
    \Map_{\CycSp}(X, Y) \ar[d] \\
    \colim_{k \in K} \Map_{\T}(X, \tau_{\leq e} F(k)) \ar[r] \ar[d] &
    \Map_{\T}(X, \tau_{\leq e} Y) \ar[d] \\
    \colim_{k \in K} \Map_{\T}(X,  \tau_{\leq e}( F(k)^{tC_p} )) \ar[r] &
    \Map_{\T}(X, \tau_{\leq e}( Y^{tC_p} )).
  \end{tikzcd} \] 
  We will prove the proposition by showing that
  the lower two horizontal maps are isomorphisms.
  
  Using the facts that
  (i) $X$ is connective,
  (ii) $X$ is almost compact as a $\T$-spectrum and
  (iii) the truncation functors on $\Sp^{B\T}$ commute with filtered colimits
  of $\Ss/p^m$-modules,\footnote{The restriction to torsion $\T$-spectra may seem odd, but recall our convention that $\Sp^{B\T}$ consists of $p$-complete spectra with $\T$-action.} we have isomorphisms
  \begin{align*}
    \colim_{k \in K} &\Map_{\T}(X, \tau_{\leq e} F(k)) 
    \cong \colim_{k \in K} \Map_{\T}(X, \tau_{[0,e]} F(k))
    \cong \Map_{\T}(X, \colim_{k \in K} \tau_{[0,e]} F(k)) \\
    &\cong \Map_{\T}(X,  \tau_{[0,e]} \colim_{k \in K} F(k)) \cong  \Map_{\T}(X,  \tau_{[0,e]}Y)
    \cong \Map_{\T}(X, \tau_{\leq e} Y).
  \end{align*}
  Similarly, we have isomorphisms
  \begin{align*}
    \colim_{k \in K} &\Map_{\T}(X,  \tau_{\leq e}( F(k)^{tC_p} ))
    \cong \colim_{k \in K} \Map_{\T}(X,  \tau_{[0,e]}( F(k)^{tC_p} )) \\
    &\cong  \Map_{\T}(X, \colim_{k \in K} \tau_{[0,e]}( F(k)^{tC_p} )) 
    \cong  \Map_{\T}(X,  \tau_{[0,e]}( \colim_{k \in K} F(k)^{tC_p} )) \\
    &\cong  \Map_{\T}(X,  \tau_{[0,e]}( (\colim_{k \in K} F(k) )^{tC_p} ))
    \cong  \Map_{\T}(X,  \tau_{\leq e}( Y^{tC_p} ))
  \end{align*}
  where the key step is using \Cref{lem:tcp-commute} to move the filtered colimit past the $(-)^{tC_p}$.
\end{proof}

The converse to \Cref{prop:key-finiteness} is more straightforward to prove.

\begin{lem} \label{lem:coh-implies-coh}
  Let $X$ be a cyclotomic spectrum.
  If $X$ is almost compact as a cyclotomic spectrum,
  then its underlying spectrum is almost compact as well.
\end{lem}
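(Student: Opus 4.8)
The plan is to factor the underlying--spectrum functor as $\CycSp \xrightarrow{U_1} \Sp^{B\T} \xrightarrow{U_2} \Sp$ and to check that each factor preserves almost compact objects; we may assume $X$ is connective. The functor $U_2$ is soft: it is the left adjoint of the coinduction functor $\mathrm{cf}\colon \Sp \to \Sp^{B\T}$, whose underlying spectrum is $Z \mapsto Z \oplus \Omega Z$. Thus $\mathrm{cf}$ carries $\Sp_{\leq b}$ into $\Sp^{B\T}_{\leq b}$ and commutes with filtered colimits, so if $W \in \Sp^{B\T}$ is almost compact and $\{Z_k\}$ is a uniformly bounded--above filtered diagram of spectra, then $\{\mathrm{cf}(Z_k)\}$ is uniformly bounded above, $\mathrm{cf}(\colim_k Z_k) \simeq \colim_k \mathrm{cf}(Z_k)$, and
\[ \colim_k \map_{\Sp}(U_2 W, Z_k) \simeq \colim_k \map_{\Sp^{B\T}}(W, \mathrm{cf}(Z_k)) \simeq \map_{\Sp^{B\T}}\!\big(W, \mathrm{cf}(\colim_k Z_k)\big) \simeq \map_{\Sp}(U_2 W, \colim_k Z_k), \]
so $U_2 W$ is almost compact. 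It therefore suffices to show that $U_1 X$ is almost compact in $\Sp^{B\T}$ for the pointwise $t$-structure.

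For $U_1$ we run the analogous argument using the right adjoint $R_1\colon \Sp^{B\T} \to \CycSp$ of the forgetful functor, which exists because $U_1$ preserves colimits (finite colimits since $(-)^{tC_p}$ is exact; filtered colimits since those are computed on underlying $\T$-spectra with Frobenius induced through the coassembly map). The two facts we would need are: (i) $R_1$ carries $\Sp^{B\T}_{\leq b}$ into $\CycSp_{\leq \beta(b)}$ for some function $\beta$; and (ii) for a uniformly bounded--above filtered diagram $\{W_k\}$ in $\Sp^{B\T}$ the natural map $\colim_k R_1 W_k \to R_1(\colim_k W_k)$ becomes an isomorphism after applying $\tau^{\mathrm{cyc}}_{\leq \beta(b)}$. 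Granting these, $\{R_1 W_k\}$ is uniformly cyclotomically bounded above, so $\colim_k R_1 W_k$ is cyclotomically bounded above by \Cref{lem:fil-colim-bounded}; since $\tau^{\mathrm{cyc}}_{\leq \beta(b)}X$ is compact in $\CycSp_{\leq \beta(b)}$ and $\tau^{\mathrm{cyc}}_{\leq \beta(b)}$ preserves colimits, we obtain
\[ \colim_k \map_{\Sp^{B\T}}(U_1 X, W_k) \simeq \colim_k \map_{\CycSp}(X, R_1 W_k) \simeq \map_{\CycSp}\!\big(X, R_1(\colim_k W_k)\big) \simeq \map_{\Sp^{B\T}}(U_1 X, \colim_k W_k), \]
which shows $U_1 X$ is almost compact and completes the argument.

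The main obstacle is establishing (i) and (ii), i.e. understanding the cofree cyclotomic spectrum functor $R_1$ — which is assembled from the iterated Tate constructions $(-)^{tC_{p^j}}$ — well enough to see that it is cyclotomically bounded on bounded--above inputs and is compatible, after truncation, with uniformly bounded--above filtered colimits. Property (i) should follow by identifying $\tau^{\mathrm{cyc}}_{\leq N} R_1 W$ through the Segal--condition bookkeeping of \Cref{prop:omnibus} and \Cref{prop:AN_bound_to_segal} together with the Antieau--Nikolaus description of the cyclotomic $t$-structure. Property (ii) should follow from \Cref{lem:tcp-commute} (and the retraction of \Cref{cnstr:retract-r} underlying it): after applying $\tau^{\mathrm{cyc}}_{\leq N}$ only finitely much of the construction of $R_1$ is relevant, and the one non--formal ingredient, the Tate functor, is precisely what those results show commutes with uniformly bounded--above filtered colimits. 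In effect this is the proof of \Cref{prop:key-finiteness} run in reverse, and one could alternatively phrase the whole argument directly in terms of the mapping--spectrum formula of \Cref{lem:cycsp-map-formula}.
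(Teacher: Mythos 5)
Your treatment of $U_2$ is fine, and your general principle (a functor whose right adjoint preserves colimits and has bounded $t$-amplitude preserves almost compact objects) is exactly the one the paper uses. But the factorization through $\Sp^{B\T}$ creates a gap you do not close: the right adjoint $R_1$ of $\CycSp \to \Sp^{B\T}$ is the cofree cyclotomic spectrum functor, assembled from iterated Tate constructions, and Tate constructions destroy connectivity. Already $\F_p^{tC_p}$ is unbounded below, so for a uniformly bounded diagram $\{W_k\}$ in $\Sp^{B\T}_{[c,b]}$ the diagram $\{R_1 W_k\}$ is \emph{not} uniformly bounded below. The definition of almost compactness in this paper (\Cref{dfn:almost-compact}) only constrains mapping into diagrams uniformly bounded in a range $[c,b]$, so almost compactness of $X$ gives you nothing about $\colim_k \map_{\CycSp}(X, R_1 W_k)$. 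Your proposed repair — truncate and use that ``$\tau^{\mathrm{cyc}}_{\leq \beta(b)}X$ is compact in $\CycSp_{\leq \beta(b)}$'' — is precisely the conflation of this paper's notion of almost compact with Lurie's that \Cref{rmk:Lurie-is-wrong} warns against; an almost compact object in the sense of \Cref{dfn:almost-compact} need not have compact truncations (cf.\ $\oplus_i \Sigma^i \Z/p$ in \Cref{exm:ac-zp-modules}). Also note \Cref{lem:fil-colim-bounded} is only proved for diagrams bounded on both sides, via bounded-below topological Cartier modules, so it cannot be invoked for merely bounded-above diagrams.

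The paper's proof avoids the intermediate stop at $\Sp^{B\T}$ for exactly this reason. It computes the right adjoint of the \emph{composite} forgetful functor $\CycSp \to \Sp$ directly: it is $Y \mapsto \Ss^{\T}\otimes Y$ equipped with the zero Frobenius, which is a legitimate cyclotomic spectrum because $(\Ss^{\T}\otimes Y)^{tC_p}=0$ (\Cref{lem:a1tcp}). This vanishing is what kills the Tate-construction contribution that derails your $R_1$: one gets $\Map_{\CycSp}(X,\Ss^{\T}\otimes Y)\cong \Map_{\T}(X,\Ss^{\T}\otimes Y)\cong \Map(X,Y)$ with no equalizer left over, and the functor $Y\mapsto \Ss^{\T}\otimes Y$ visibly preserves colimits and has $t$-amplitude $[-1,0]$ since $\Ss^{\T}$ is $(-1)$-connective with underlying spectrum $\Ss\oplus\Sigma^{-1}\Ss$. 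Your general principle then applies in one step. If you want to keep a two-step factorization, the step that needs a new idea is $U_1$, and the new idea is precisely this vanishing.
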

  
\begin{proof}
  Let $\Ss^{\T} \otimes -$ denote the functor
  sending a spectrum $Y\in \Sp$ to the cyclotomic spectrum
  \[ \varphi: \Ss^\T \otimes Y \to 0  \]
  using that $(\Ss^{\T}\otimes Y)^{tC_p} = 0$ (\Cref{lem:a1tcp}). 
  Computing maps out to such objects we find
  \[ \Map_{\CycSp}(X, \Ss^{\T} \otimes Y) \cong \Map_\T(X, \Ss^\T \otimes Y) \cong \Map(X,Y) \]
  that $\Ss^{\T} \otimes -$ is right adjoint to the forgetful functor
  $ \CycSp \to \Sp $.
  Using the formula for mapping out to $\Ss^\T \otimes Y$ and the fact that
  $\Ss^{\T}$ is $(-1)$-connective we can read off that
  the functor $\Ss^\T \otimes -$ has $t$-amplitude $[-1,0]$.  

  
  If the right adjoint to a functor $F$ commutes with colimits
  and has bounded $t$-amplitude, then $F$ sends almost compact objects to almost compact objects.
  Applying this to the forgetful functor $\CycSp \to \Sp$ we may now conclude.
\end{proof}

We end the section by using the theory we have developed to prove that $\crTR$, the object corepresenting $\TR$, has bounded cyclotomic $t$-amplitude.

\begin{lem} \label{lem:t-amplitude-repTR}
  For any set $S$ the functor
  \[ (\oplus_{S}\crTR) \otimes - : \CycSp_{+} \to \CycSp_{+} \]
  has $t$-amplitude bounded in the range $[0,4]$.
\end{lem}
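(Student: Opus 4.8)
The plan is to exploit exactness to split the bound into a connectivity statement and a coconnectivity statement. Since the tensor product on $\CycSp$ is exact in each variable, the functor $(\oplus_S\crTR)\otimes-$ is exact, so it has $t$-amplitude contained in $[0,4]$ as soon as it carries $\CycSp_+\cap\CycSp_{\geq0}$ into $\CycSp_{\geq0}$ and $\CycSp_+\cap\CycSp_{\leq0}$ into $\CycSp_{\leq4}$. The connectivity half is immediate: connectivity in the cyclotomic $t$-structure is detected on underlying spectra, the tensor product in $\CycSp$ is computed on underlying $\T$-spectra, and the underlying spectrum $\bigoplus_S\bigoplus_{j\geq0}\Ss[\T/C_{p^j}]$ of $\oplus_S\crTR$ is connective (a coproduct of suspension spectra of spaces, which stays connective after $p$-completion since the relevant Milnor $\lim^1$-terms vanish). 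So $(\oplus_S\crTR)\otimes X$ is connective whenever $X$ is.

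For the coconnective half I would first eliminate the parameter $S$. Writing $\oplus_S\crTR=\colim_{S_0\subseteq S\ \mathrm{finite}}\oplus_{S_0}\crTR$ as a filtered colimit and using that $\otimes$ commutes with colimits, $(\oplus_S\crTR)\otimes X=\colim_{S_0}\big(\bigoplus_{S_0}(\crTR\otimes X)\big)$. A finite direct sum of cyclotomic spectra bounded in a range $[a,b]$ is again bounded in $[a,b]$, so it suffices to prove that the single functor $\crTR\otimes-\colon\CycSp_+\to\CycSp_+$ has $t$-amplitude $[0,1]$: granting this, each $\bigoplus_{S_0}(\crTR\otimes X)$ lies in $\CycSp_{[a,b+1]}$ for $X\in\CycSp_{[a,b]}$, and applying \Cref{lem:fil-colim-bounded} to the filtered colimit over $S_0$ puts $(\oplus_S\crTR)\otimes X$ in $\CycSp_{[a,b+4]}$, as desired. (This is where the ``$+3$'' in the amplitude $[0,4]$ enters, on top of the ``$+1$'' from the single-$\crTR$ case.)

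So the real content is: for $X\in\CycSp_+\cap\CycSp_{\leq b}$, show $\crTR\otimes X\in\CycSp_{\leq b+1}$. Note that $\crTR\otimes X$ is generally \emph{not} bounded above as a plain spectrum (already $\crTR$ is not), so one cannot argue on underlying spectra; instead, since $\crTR\otimes X$ is bounded below and $\TR$ reflects coconnectivity on $\CycSp_+$ \cite{antieau-nikolaus}, it is enough to show $\TR(\crTR\otimes X)$ is $(b+1)$-coconnective. I would compute this by unwinding the explicit model of \Cref{lem:describe_crtr}: the underlying $\T$-spectrum of $\crTR\otimes X$ is $\bigoplus_{j\geq0}\Ss[\T/C_{p^j}]\otimes X\simeq\bigoplus_{j\geq0}\mathrm{Ind}_{C_{p^j}}^{\T}\mathrm{Res}_{C_{p^j}}^{\T}X$ (projection formula), and its cyclotomic Frobenius is the ``staircase'' obtained by tensoring the structure maps $\Ss[\T/C_{p^j}]\to(\Ss[\T/C_{p^{j+1}}])^{tC_p}$ of \Cref{lem:describe_crtr} with $\varphi_X$. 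Feeding this into the equalizer description of $\TR$ from the proof of \Cref{lem:trcorep}, using that each $\Ss[\T/C_{p^j}]$ is dualizable so that the relevant instances of the lax symmetric monoidal structure on $(-)^{tC_p}$ are isomorphisms (\Cref{lem:tenspr_with_finite}, \Cref{lem:a1tcp}), and using \Cref{lem:tcp-commute} to commute the sum over $j$ past the Tate constructions, one should be able to telescope the equalizer down to a bounded model whose coconnectivity is governed by that of $X$ together with the truncatedness of $\fib(\varphi_X)$ — the latter being $b$-truncated by \Cref{prop:AN_bound_to_segal}. An equivalent route is to verify $\Seg{b+1}$ and $\WcanV{b+1}$ for $\crTR\otimes X$ directly and invoke \Cref{prop:omnibus}(2).

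The hard part is exactly this last identification. The naive guess $\fib(\varphi_{\crTR\otimes X})\simeq\crTR\otimes\fib(\varphi_X)$ is \emph{false} — and would contradict the lemma, since $\crTR\otimes\fib(\varphi_X)$ is unbounded above — and what rescues the bound is that the structure maps $\Ss[\T/C_{p^j}]\to(\Ss[\T/C_{p^{j+1}}])^{tC_p}$ are equivalences (a form of the Segal conjecture), so that in the assembled staircase the a priori unbounded contributions to $\TR$ cancel against one another. Making this cancellation precise is the crux; I expect it is cleanest to organize by first treating $X$ with invertible Frobenius, where \Cref{lem:identify-crTR} lets one rewrite $\crTR\otimes X$ as $(\text{something})\otimes\crTR$ and read off $\TR$ levelwise, and then bootstrapping to general $X\in\CycSp_+\cap\CycSp_{\leq b}$ by comparing $X$ to its Frobenius-inversion using a Bökstedt class (\Cref{lem:cb-to-bok-hc}, \Cref{lem:sc-bo-to-bok1}), whose existence costs only finitely many degrees and so is absorbed into the $[0,1]$ amplitude.
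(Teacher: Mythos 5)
There is a genuine gap, and it sits exactly where you place it: the ``crux'' you defer is the entire content of the lemma, and the route you sketch for it does not work. Your pivotal claim that $\fib(\varphi_{\crTR\otimes X})\simeq\crTR\otimes\fib(\varphi_X)$ is \emph{false} is itself mistaken: for cyclotomically bounded $X$ this identification is true, and it is the backbone of the paper's proof. The Frobenius of $\crTR\otimes X$ factors as $\varphi_{\crTR}\otimes\varphi_X$ (with $\varphi_{\crTR}$ an isomorphism by the Segal conjecture, cf.\ \Cref{lem:describe_crtr}) followed by the assembly map $\crTR^{tC_p}\otimes X^{tC_p}\to(\crTR\otimes X)^{tC_p}$, and the latter is an isomorphism for cyclotomically bounded $X$ by \Cref{lem:tcp-commute}. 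Nor does this contradict boundedness: the underlying spectrum of $\crTR\otimes\fib(\varphi_X)$ is an infinite direct sum of (de)suspended copies of the $b$-truncated spectrum $\fib(\varphi_X)$, and once one reduces to $X$ killed by a fixed power of $p$, such a sum is itself $p^m$-torsion, hence already $p$-complete with homotopy the levelwise direct sum, hence still truncated. That observation is how the paper tames \emph{all} the infinite sums at once — which matters, because your reduction from $\oplus_S\crTR$ to a single $\crTR$ buys essentially nothing: the problematic infinite direct sum is the one over $j\geq0$ \emph{inside} $\crTR$, which survives your filtered-colimit reduction intact. Your claimed $t$-amplitude $[0,1]$ for $\crTR\otimes-$ is therefore both unproven and stronger than what the paper establishes (it only gets $[0,3]$ on $p$-torsion heart objects and $[0,4]$ in general).

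Your proposed repair of the crux also fails on its own terms. \Cref{lem:identify-crTR} applies to modules over a bounded below \emph{commutative algebra} $R$ with $R\simeq R^{tC_p}$ equipped with specified splittings, not to an arbitrary $X$ with invertible Frobenius; and Bökstedt classes (\Cref{lem:cb-to-bok-hc}) live in $h\A_2$-\emph{rings} in cyclotomic spectra, so ``bootstrapping a general $X\in\CycSp_+\cap\CycSp_{\leq b}$ by comparing to its Frobenius-inversion via a Bökstedt class'' is not a move available for a non-ring object. The correct continuation of your own ``equivalent route'' is: reduce by dévissage ($Y\otimes X\simeq\lim_k Y\otimes X/p^k$, then split $X/p$ along the heart) to $X\in\CycSp^\heartsuit$ with $pX=0$; show $Y\otimes X$ satisfies the Segal condition with a small parameter via the fiber identification above together with the levelwise bound on the underlying infinite sum; show it satisfies strong canonical vanishing by propagating the $a_{(1)}$-nilpotence of $\tau_{>2}X$ (from \Cref{prop:omnibus}) through the tensor with $Y$; and conclude with \Cref{prop:omnibus}(1,2). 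The extra truncation steps in this dévissage are exactly where the constant $4$ comes from.
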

\todo{comment: tamp [0,3] if you restrict to p-nil.}

\begin{proof}
  Let $Y = \oplus_{S}\crTR$.
  The lower bound in the lemma follows from the fact that
  the underlying spectrum of $Y$ is $\geq 0$.
  We prove the upper bound in several stages
  beginning with the following subclaim:
  \begin{itemize}
  \item[(A)] Given an $X \in (\Sp^{B\T})_{\leq b}$ on which $p^m$ acts by zero,
    $Y \otimes X \in (\Sp^{B\T})_{\leq b+1} $.
  \end{itemize}
  As the forgetful functor $\Sp^{B\T}\to \Sp$ is $t$-exact, conservative, and symmetric monoidal, it suffices to prove this $t$-amplitude bound in $\Sp$. As an object of $\Sp$, we have (see \Cref{lem:describe_crtr})
  \[ Y = \bigoplus_{S}\Big((\bigoplus_{i=0}^{\infty}\SP) \bigoplus \Sigma (\bigoplus_{i=0}^{\infty}\SP)\Big).\]
  It will thus suffice to show that $(\oplus_I \Ss) \otimes X$ is $b$-truncated.
  This follows from the fact that a sum of $p^m$-torsion objects is
  $p^m$-torsion (and therefore $p$-complete).
  \begin{itemize}
  \item[(B)] If $X$ is cyclotomically bounded, then
    $\fib(\varphi_{Y \otimes X}) \cong Y \otimes \fib(\varphi_X)$.
  \end{itemize}
  The cyclotomic Frobenius of $Y \otimes X$
  is given by the composite of maps in $\Sp^{B\T}$
  \[ Y \otimes X \xrightarrow{Y\otimes \varphi_X} Y\otimes X^{tC_p} \xrightarrow{\varphi_Y\otimes X^{tC_p}} Y^{tC_p} \otimes X^{tC_p} \to (Y \otimes X)^{tC_p}. \]
  From \Cref{lem:describe_crtr}, we know that the second map is an equivalence and the underlying $C_p$-spectrum of $Y$ is isomorphic to $\bigoplus_S\big(\Ss\otimes {\T} \oplus \bigoplus_{i=1}^\infty (\Ss^0 \oplus \Ss^{1})\big)$. Using \Cref{lem:a1tcp} and \cite[Corollary 6.7]{yuan2023integral}, we may rewrite the third map above as 
  \[ \left(\bigoplus_{S \times \mathbb{N}} (\Ss^0 \oplus \Ss^{1})  \right) \otimes X^{tC_p} \to \left( \left(\bigoplus_{S \times \mathbb{N}} (\Ss^0 \oplus \Ss^{1}) \right)\otimes X\right)^{tC_p}. \]
  Since $X$ is cyclotomically bounded, after writing the infinite sum as a filtered colimit over the finite sub-sums it follows from \Cref{lem:tcp-commute} that this map is an isomorphism. We now obtain the desired isomorphism.
  \begin{itemize}
  \item[(C)] Suppose we are given an $X \in {\CycSp}^\heartsuit$ on which $p$ acts by zero,
    then $Y \otimes X$ is cyclotomically $3$-truncated.
  \end{itemize}
  From \Cref{prop:omnibus}(3,4) we learn that $X$ satisfies $\ScanVn{2}$ and $\Seg{0}$.
  In particular there exist some $d$ such that the composite 
  \[\tau_{>2}X \to X\xrightarrow{a_{(1)}^{d}} \Sigma^{d(1)}X\]
  is null. 
  Using (A) we learn that
  $Y \otimes \tau_{\leq 2}X \in (\Sp^{B\T})_{\leq 3}$.  
  The diagram below now witnesses that $Y \otimes X$ satisfies $\ScanVn{3}$.
  \[ \begin{tikzcd}
    & {Y\otimes \tau_{>2}X} \\
    {\tau_{>3}(Y\otimes X)} & {Y\otimes X} & {\Sigma^{d(1)}Y\otimes X} \\
    & {Y \otimes \tau_{\leq 2}X}
    \arrow[from=2-2, to=3-2]
    \arrow[from=1-2, to=2-2]
    \arrow[from=2-1, to=2-2]
    \arrow["0"', from=2-1, to=3-2]
    \arrow[dashed, from=2-1, to=1-2]
    \arrow[from=2-2, to=2-3]
    \arrow["0"', from=1-2, to=2-3]
  \end{tikzcd}\]

  Using (A) again we find that
  $Y \otimes \fib(\varphi_X) \in (\Sp^{B\T})_{\leq 1}$.
  Rewriting this with the isomorphism from (B) we conclude that $Y \otimes X$
  satisfies satisfies $\Seg{1}$.
  Applying \Cref{prop:omnibus}(1,2) we learn that $Y \otimes X$
  is cyclotomically $3$-truncated as desired.
  
  We are now ready to complete the proof.
  Suppose we are given a cyclotomic spectrum $X \in [c,b]$,
  we would like to show that $Y \otimes X$ is cyclotomically $(b + 4)$-truncated.
  Without loss of generality we may reduce to the case where $X$ is in the heart.
  As we work in $p$-complete cyclotomic spectra we have an isomorphism
  $ Y \otimes X \cong \lim (Y \otimes X/p^k) $.
  Using this, and the fact that $X/p^k$ is a $k$-fold extension of copies of $X/p$,
  we may reduce to showing that $Y \otimes X/p$ is cyclotomically $4$-truncated.  
  Finally, splitting $X/p$ up using the $t$-structure we reduce to claim (C).  
\end{proof}

\section{\texorpdfstring{$\THH$}{THH} of cochains on the circle}
\label{section:sphere}
\label{sec:cochaincircle}

Given a $p$-complete ring spectrum with a $\Z$-action  
$R \in \Alg(\Sp)^{B\Z}$, 
the first half of this paper is devoted to studying the system of cyclotomic spectra 
\[ \begin{tikzcd}
  {\THH(R^{h\ZZ})} \ar[r] &
  {\THH(R^{hp\ZZ})} \ar[r] &
  {\THH(R^{hp^{2}\ZZ})} \ar[r] &
  {\cdots} \ar[r] &
  {\THH(R)}.
\end{tikzcd} \]
In this section specifically, we analyze the initial example, where $R=\Ss$ with the trivial $\ZZ$-action. In other words, we look at the system of commutative algebras in cyclotomic spectra $\THH(\SP^{Bp^{k}\Z})$. The key idea governing our analysis is that, since $\Ss^{B\Z}$ is the $\Ss$-cochains on $B\Z_p$, $\THH(\Ss^{B\Z})$ is controlled by the geometry the free loop space of $B\ZZ_p$.
We highlight that the underlying cyclotomic spectrum of $\THH(\Ss^{B\Z})$ was studied in \cite{malkiewich2017topological}, and the underlying commutative algebra was studied in \cite[Lemma 4.6]{lee2023topological}.

In the final subsection we prove \Cref{prop:thicksubcat} from the introduction,
by analyzing the fiber of the coassembly map $\THH(\Ss^{B\Z}) \to \THH(\Ss)^{B\Z}$.
This proposition is the key statement we use to show that various
$\TC$ coassembly maps are not an isomorphisms.

We make significant use of the spherical Witt vectors functor, which is a way of producing canonical lifts of perfect $\F_p$-algebras to formally \'etale $p$-complete commutative algebras. We refer the reader to Conventions (\ref{item:firsttop})-(\ref{item:lasttop}) for relevant notation and recall the main properties of this construction:

\begin{prop}[{\cite[Sec. 5.2]{ECII}, \cite[Prop. 2.2, Cons. 2.33]{chromaticNSTZ}}] \label{prop:witt-and-tilt}
  There is an adjunction
  \[ \W(-) \colon \Perf \rightleftarrows \CAlg(\Sp) \noloc {\pflat}(-) \]
  where the right adjoint $\pflat$ can be computed as
  the inverse limit along Frobenius on the commutative ring $\pi_0(-)/p$.

  This adjunction witnesses $\Perf$ as a colocalization of $\CAlg(\Sp)$.
  The essential image of the (fully faithful) functor $\W$
  consists of those connective $R \in \CAlg(\Sp)$ such that
  $\F_p \otimes R $ is a discrete perfect $\F_p$-algebra.
  In this situation, we have $R \cong \W(R\otimes \FF_p)$.  
\end{prop}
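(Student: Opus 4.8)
The plan is to build $\W$ first on free perfect $\F_p$-algebras, extend it to all of $\Perf$ as a colimit-preserving functor, and then obtain both the adjunction and the full faithfulness by reduction to these free generators; the description of the essential image will come out of the full-faithfulness argument. This is essentially the strategy of \cite[Sec. 5.2]{ECII}. Concretely, on a free perfect algebra $P_S = \F_p[x_s^{1/p^\infty}:s\in S]$ I would set $\W(P_S) := \SP[\bigoplus_S\NN[1/p]]$, the spherical monoid algebra (formed in $p$-complete spectra) on the monoid $\bigoplus_S\NN[1/p] = \colim(\bigoplus_S\NN \xrightarrow{p}\bigoplus_S\NN\xrightarrow{p}\cdots)$. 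This is flat over $\SP$, so $\pi_0\W(P_S) = \ZZ_p\langle x_s^{1/p^\infty}\rangle$ is $p$-torsion-free and $p$-complete with mod-$p$ reduction $P_S$, hence equals the Witt ring $W(P_S)$ by uniqueness of flat lifts of perfect rings; moreover $\W(P_S)\otimes\F_p \simeq H\F_p[\bigoplus_S\NN[1/p]] = HP_S$ is discrete and perfect. Since the perfection functor presents $\Perf$ as an accessible localization of $\CRing_{\F_p}$, which is generated under colimits by polynomial algebras, one extends $\W$ by left Kan extension along $\{P_S\}\hookrightarrow\Perf$; the point to verify is that this extension inverts every perfection map, equivalently that $\W(-)\otimes\F_p$ computes the perfection (so in particular $\W(A)\otimes\F_p$ is discrete perfect for all $A$).

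For the adjunction, first note that $\pflat$ really does land in $\Perf$, since Frobenius acts invertibly on the inverse limit $\lim_{\mathrm{Frob}}(\pi_0(-)/p)$. Both $R\mapsto\Map_{\CAlg(\Sp)}(\W(A),R)$ and $R\mapsto\Map_\Perf(A,\pflat R)$ send colimits in $A$ to limits, so it suffices to produce the natural equivalence when $A = P_S$. Using the universal property of spherical monoid algebras and the presentation of $\bigoplus_S\NN[1/p]$ as a colimit, one gets
\[\Map_{\CAlg(\Sp)}(\W(P_S),R)\ \simeq\ \prod_S\lim\Bigl(\cdots\xrightarrow{x\mapsto x^p}\Omega^{\infty}R\xrightarrow{x\mapsto x^p}\Omega^{\infty}R\Bigr),\]
with $\Omega^\infty R$ carrying its multiplicative $\E_\infty$-structure. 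The heart of the matter is to show this inverse limit of $p$-th power maps is discrete and equal to $(\pi_0(R)/p)^\flat = \pflat R$: the higher-homotopy contribution is a tower of multiplications by $p$ on a $p$-complete spectrum, whose limit and $\lim^1$ vanish (equivalently $\map(\ZZ[1/p],-)$ kills $p$-complete spectra), leaving $\lim_{x\mapsto x^p}\pi_0(R)$, which one identifies with $(\pi_0(R)/p)^\flat$ by the standard comparison $S^\flat\xrightarrow{\sim}(S/p)^\flat$ for $p$-complete rings $S$. This matches $\Map_\Perf(P_S,\pflat R) = \prod_S\pflat R$, naturally in $R$.

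For full faithfulness and the essential image: granting that $\W(A)\otimes\F_p$ is discrete perfect for every $A\in\Perf$, one has $\pflat\W(A) = (\W(A)\otimes\F_p)^\flat = A$, since Frobenius is invertible on the perfect ring $A$; so the unit of the adjunction is an equivalence, $\W$ is fully faithful, and $\Perf$ sits inside $\CAlg(\Sp)$ as a colocalization, with $\W$ landing among connective $R$ for which $R\otimes\F_p$ is discrete perfect. Conversely, given such an $R$, one has $\pflat R = (R\otimes\F_p)^\flat = R\otimes\F_p$, and the triangle identity together with the unit equivalence forces the counit $\W(\pflat R) = \W(R\otimes\F_p)\to R$ to become an equivalence after $-\otimes\F_p$; as both sides are connective and $p$-complete, the counit is itself an equivalence, so $R\simeq\W(R\otimes\F_p)$ lies in the essential image.

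The two steps I expect to be the real obstacles are the ones flagged above. The first is confirming that the colimit-extension of $\W$ descends to $\Perf$: this requires controlling the (reflexive, and more generally sifted) colimits that present a general perfect $\F_p$-algebra and checking that $\W(-)\otimes\F_p$ genuinely recovers the perfection rather than something with extra homotopy — the place where $\E_\infty$- and simplicial-commutative colimits differ, and which \cite{ECII} navigates via vanishing of the relevant cotangent complexes. The second is the discreteness of $\lim_{x\mapsto x^p}\Omega^\infty R$ and its identification with $\pflat R$, which is exactly where $p$-completeness of $R$ and the Fontaine-type lemma $S^\flat\simeq(S/p)^\flat$ are essential.
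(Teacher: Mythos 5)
The paper does not prove this proposition; it is a recollection, cited verbatim from \cite[Sec.\ 5.2]{ECII} and \cite[Prop.\ 2.2, Cons.\ 2.33]{chromaticNSTZ}. So there is no internal argument to compare against, and the right benchmark is the cited literature. Measured against that, your reconstruction is sound and follows essentially the standard route: generate $\Perf$ by free perfect algebras, realize $\W$ there as the ($p$-completed) spherical monoid algebra on $\bigoplus_S\NN[1/p]$, and reduce both the adjunction and full faithfulness to the two genuine inputs, which you correctly isolate — vanishing of the cotangent complex of a perfect $\F_p$-algebra (so that the sifted colimits presenting a general $A\in\Perf$ do not create higher homotopy and $\W(A)\otimes\F_p$ is the expected discrete perfect ring), and discreteness of $\lim_{x\mapsto x^p}\Omega^\infty R$ for $p$-complete $R$ together with Fontaine's comparison $S^\flat\cong(S/p)^\flat$. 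Lurie's own construction proceeds by deformation theory (unobstructed flat lifts of perfect rings) rather than by Kan extension from monoid algebras, but the universal property in \cite{ECII} is proved from exactly the same two lemmas, so your route buys nothing and loses nothing; the \cite{chromaticNSTZ} packaging as an adjunction with colocalization is then formal, as in your last paragraph.

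Two small imprecisions worth tightening. First, the phrase ``a tower of multiplications by $p$'' is not literally what occurs: the $p$-th power map on $\pi_n(\Omega^\infty R;x_0)$ for $n\geq 1$ is multiplication by the derivative $p\,x_0^{p-1}$, i.e.\ $p$ times a possibly non-invertible element; the pro-vanishing of $\lim$ and $\lim^1$ still holds because the transition maps are divisible by $p$ and the homotopy groups of a $p$-complete spectrum are derived $p$-complete, but that is the statement you should actually prove (it is \cite[Sec.\ 5.2]{ECII}'s key lemma). Second, you conflate two set-ups for the extension step — left Kan extension along $\{P_S\}\hookrightarrow\Perf$ preserving sifted colimits, versus descending a functor on $\CRing_{\F_p}$ through the localization at perfection maps. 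Either works, but pick one; in the first there is nothing to ``invert,'' only the compatibility with reflexive coequalizers to check, which is again the cotangent-complex vanishing you cite.
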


\subsection{As a commutative algebra}

We begin by analyzing the system of commutative algebras $\THH(\SP^{Bp^k\Z})$, saving discussion of $\T$-equivariance and cyclotomic structure for later subsections.
In fact, it will be simpler to analyze $\THH(\Ss^{BM})$ for any (discrete) finite projective $\Z_p$-module $M$, together with its functoriality in $M$.
Our primary example of interest is then obtained by specializing to the system
$0 \to \cdots \to p^2\Z_p \to p\Z_p \to \Z_p$.

\begin{rmk}
  We remind the reader that, for any free finite rank $\ZZ$-module $M$, the map $\SP^{BM_p} \to \SP^{BM}$ is an equivalence. Indeed, the map $BM \to BM_p$ induces an equivalence on $p$-complete suspension spectra since it is an equivalence on $\F_p$-homology, so this follows from taking duals.
\end{rmk}

\def\latt{\mathrm{Latt}_{\Z_p}}
\begin{dfn}
  Let $\latt$ be the category of discrete, finite, projective $\Z_p$-modules.
\end{dfn}

\begin{cnstr} \label{cnstr:shear}
  Let $\DD$ be a category with all finite limits.
  Given a $G \in \mathrm{Mon}(\DD)^{\mathrm{gp}}$,
  we construct the following diagram in $\DD$ 
  \[ \begin{tikzcd}[column sep=huge]
    G \ar[r, "\triangle"] \ar[d, "\id"] &
    G \times G \ar[d, "{(a,b) \mapsto (a,b \cdot a^{-1})}"] &
    G \ar[l, "\triangle"'] \ar[d, "\id"] \\
    G \ar[r, "{a \mapsto (a, 1)}"] &
    G \times G &
    G, \ar[l, "{a \mapsto (a, 1)}"']
  \end{tikzcd} \] 
  natural in $G$.
  Taking the pullback of these spans 
  and remembering the maps out to the left column,
  we obtain a diagram
  \[ \begin{tikzcd}
    \cL G \ar[rr, "\cong"', "\mathrm{sh}"] \ar[dr] & &
    G \times \Omega_e G \ar[dl] \\
    & G. & 
  \end{tikzcd} \]
  Herel
  the left vertical map is restriction along the inclusion of the basepoint $* \to \T$,
  and the right vertical map is the projection to the first factor.
\end{cnstr}

\begin{lem}\label{lem:assembly}
  The counit of the $\W \vdash \pflat$ adjunction,
  applied to the natural assembly map,
  yields an identification of commutative algebras
  \[ \begin{tikzcd}
    \W\LCF{A} \ar[r, "i"] \ar[d] & \W\LCF{A^\delta} \ar[d] \\
    \Ss \otimes_{\Ss^{BA}} \Ss \ar[r] & \Ss^{\Omega_eBA} 
  \end{tikzcd} \]
  natural in $A \in \latt$.
  Here, $A^\delta$ denotes $A$ with the discrete topology,
  and the top horizontal map is the inclusion of
  continuous functions into all functions.
\end{lem}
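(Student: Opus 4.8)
The plan is to extract the square from the universal property of the colocalization $\W \dashv \pflat$: once both corners of the bottom row are seen to lie in the essential image of $\W$ and their tilts are computed, the square (with vertical maps the counit equivalences) is obtained by applying $\W\pflat(-)$ to a single natural comparison map and invoking naturality of the counit. The geometric input is the shear equivalence of \Cref{cnstr:shear}, the reduction of $\Ss^{BA}$ to cochains on a torus furnished by the Remark preceding the statement, and the characterization of the image of $\W$ from \Cref{prop:witt-and-tilt} as the connective $\E_\infty$-rings whose mod $p$ reduction is discrete and perfect.

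I would first produce the bottom row. For $A \in \latt$ the space $BA$ is, naturally in $A$, a group-like $\E_\infty$-space, so \Cref{cnstr:shear} applies with $G = BA$ and identifies $\Omega_e BA$ with $A^\delta$ compatibly with $\cL BA \simeq BA \times \Omega_e BA$. Cochains carry the pullback $* \times_{BA} * = \Omega_e BA$ to an $\E_\infty$-ring receiving a canonical ``assembly'' map from the pushout of cochains along the two augmentations $\Ss^{BA}\to\Ss$, giving a natural map
\[ \alpha_A \colon \Ss \otimes_{\Ss^{BA}} \Ss \longrightarrow \Ss^{\Omega_e BA}. \]
For the right-hand corner, $A^\delta$ is discrete, so $\Ss^{\Omega_e BA} = \prod_A \Ss$ is connective with $\F_p \otimes \prod_A \Ss = \prod_A \F_p = \LCF{A^\delta}$, which is discrete and perfect with Frobenius the identity; hence it lies in the image of $\W$ and $\pflat(\Ss^{\Omega_e BA}) \cong \LCF{A^\delta}$. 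For the left-hand corner, since the cochain functor, $\W$, and $A \mapsto \LCF A$ are all symmetric monoidal and $BA$ and $\LCF{A}$ are accordingly multiplicative in $A$, it suffices to treat $A = \Z_p$ while retaining functoriality; using $\Ss^{B\Z_p}\simeq \Ss^{S^1}$ and the cobar/Eilenberg--Moore description of $\Ss \otimes_{\Ss^{BA}} \Ss$ one shows that $\F_p \otimes (\Ss \otimes_{\Ss^{BA}} \Ss)$ is discrete and perfect with tilt naturally $\LCF A$, the locally constant (that is, continuous) functions on the profinite group $A$. By \Cref{prop:witt-and-tilt} the counits are then equivalences at both corners, and applying $\W\pflat(-)$ to $\alpha_A$ produces the desired commuting square, with vertical maps the counit equivalences $\W\LCF A \simeq \Ss\otimes_{\Ss^{BA}}\Ss$ and $\W\LCF{A^\delta} \simeq \Ss^{\Omega_e BA}$ and top row $\W(\pflat\alpha_A)$.

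Finally I would identify $\pflat\alpha_A$ with the inclusion $i \colon \LCF A \hookrightarrow \LCF{A^\delta}$ of continuous functions into all functions: unwinding $\alpha_A$ through the shear equivalence, on tilts it is restriction of functions along the continuous bijection $A^\delta \to A$, i.e.\ exactly $i$; naturality in $A \in \latt$ holds throughout because every construction used (\Cref{cnstr:shear}, cochains, the adjunction, and $\LCF{-}$) is functorial. I expect the main obstacle to be the left-corner computation that $\F_p \otimes (\Ss \otimes_{\Ss^{BA}} \Ss)$ is discrete and perfect and equal to $\LCF A$ — morally the statement that the (non-convergent) Eilenberg--Moore spectral sequence for $\Ss^{BA}$ records the profinite topology on $\Omega_e BA$ and recovers continuous rather than arbitrary functions — and this is the point where the identification of $BA$ with cochains on a torus, together with the boundedness/finiteness machinery developed earlier, does the real work.
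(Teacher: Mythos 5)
Your overall strategy is the same as the paper's: show both corners of the bottom row are connective with discrete perfect mod-$p$ reduction, so that \Cref{prop:witt-and-tilt} identifies them with spherical Witt vectors of their tilts, then identify the map on tilts with $i$. The right-hand corner and the identification of the map are handled essentially as in the paper. But the proposal has a genuine gap exactly where you flag one: the assertion that $\F_p \otimes (\Ss \otimes_{\Ss^{BA}} \Ss)$ is discrete with $\pi_0 \cong \LCF{A}$ is the entire content of the lemma, and "the cobar/Eilenberg--Moore description" does not deliver it. The Eilenberg--Moore spectral sequence for the pullback $\Omega_e BA = * \times_{BA} *$ does \emph{not} converge here (if it did, the answer would be $\F_p^{A^\delta}$, all functions, and the lemma would be false as stated); the fact that one gets \emph{continuous} functions is precisely the failure of convergence, so one cannot invoke Eilenberg--Moore for $BA$ directly. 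A second, smaller omission: applying \Cref{prop:witt-and-tilt} to the left corner requires knowing $\Ss \otimes_{\Ss^{BA}} \Ss$ is connective, which you never establish (the paper reduces to $A = \Z_p$ by a K\"unneth splitting and cites an external connectivity result for $\Ss \otimes_{\Ss^{B\Z_p}} \Ss$).

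The paper fills the main gap by interposing the finite quotients $A/p^k$: one first checks $\F_p \otimes (\Ss \otimes_{\Ss^{BA}} \Ss) \cong \F_p \otimes_{\F_p^{BA}} \F_p$, then shows the assembly map
\[ \colim_k\, \F_p \otimes_{\F_p^{B(A/p^k)}} \F_p \;\longrightarrow\; \F_p \otimes_{\F_p^{BA}} \F_p \]
is an equivalence because $H^*(BA;\F_p) \cong H^*_c(A;\F_p) \cong \colim_k H^*(B(A/p^k);\F_p)$, and finally applies Eilenberg--Moore convergence at each \emph{finite} stage (where it does converge, by \cite[Corollary 1.1.10]{DAGXIII}) to identify the $k$-th term with $\F_p^{A/p^k}$; the colimit is then $\LCF{A}$ by definition of locally constant functions on the profinite set $A$. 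Your proposed reduction to $A = \Z_p$ via symmetric monoidality is fine as a preliminary simplification, but it does not substitute for this colimit argument, which is where the profinite topology on $A$ — and hence the distinction between $\LCF{A}$ and $\LCF{A^\delta}$ — actually enters.
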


\begin{proof}
  First, we claim that $\SP\otimes_{\SP^{BA}}\SP$ and $\Ss^{\Omega_eBA}$ are bounded below.
  For the former this follows by writing $A$ as $\Z_p^j$ for some $j$ and
 expanding $\Ss \otimes_{\SP^{BA}}\Ss$ as $(\SP\otimes_{\SP^{B\Z_p}}\SP)^{\otimes j}$,
  which is connective by \cite[Lemma 3.1]{levy2022algebraic}.
  For the latter it is clear.
    
  By \Cref{prop:witt-and-tilt} it will now suffice to prove that the indicated arrows in the diagram below are isomorphisms.
  \[ \begin{tikzcd}
    \colim_k \Fp \otimes_{\Fp^{BA/p^k}} \Fp \ar[r, "\cong", "(1)"'] \ar[d, "\cong"', "(3)"] &
    \Fp \otimes_{\Fp^{BA}} \Fp \ar[d] &
    (\SP \otimes_{\SP^{BA}}\SP) \otimes \Fp \ar[l, "\cong"', "(2)"] \ar[d] \\
    \colim_k \Fp^{\Omega_eBA/p^k} \ar[r] \ar[d, "\cong"', "(5)"] &
    \Fp^{\Omega_eBA} \ar[d, "\cong"', "(6)"] &
    \Ss^{\Omega_eBA} \otimes \F_p \ar[l, "\cong"', "(4)"] \\
    \LCF{A} \ar[r, "i"] &
    \LCF{A^\delta} &
  \end{tikzcd} \]
  
  The top left square is constructed from
  the assembly map for the colimit over $k$ and
  the assembly map for the suspension of augmented commutative algebras.
  The map labeled (3) is an isomorphism by convergence of the
  Eilenberg--Moore spectral sequence (see \cite[Corollary 1.1.10]{DAGXIII}).
  The map labeled (1) is an isomorphism since $H_c^*(A; \F_p) \cong H^*(A; \F_p)$.
  \todo{Too oblique? cite serre good}
  
  The top right square is constructed using
  the unit map $\Ss \to \F_p$, $\F_p$-linearization
  and the assembly map for the suspension of augmented commutative algebras.
  The maps (2) and (4) are isomorphisms since
  $\F_p \otimes \Ss^{BA} \cong \Fp^{BA}$ and
  $\F_p \otimes \Ss^{\Omega_eBA} \cong \Fp^{\Omega_eBA}$.
  This follow from the fact that $\F_p \otimes -$ commutes with
  finite limits and arbitrary products that are uniformly bounded below.
  
  The bottom square is contructed from the natural identifications
  and the fact that continuous functions on an $A \in \latt$ with values in $\F_p$
  are locally constant.
\end{proof}

\begin{lem}\label{lem:thh-s1}
  The counit of the $\W \vdash \pflat$ adjunction,
  the natural map $\Ss^{BA} \to \THH(\Ss^{B\A})$,
  and the assembly map for the $\T$-shaped colimit $\THH$
  along the cochains functor $\Ss^{(-)}$
  together induce the following identification of commutative algebras  
  \[ \begin{tikzcd}
    \Ss^{BA} \otimes \W\LCF{A} \ar[r] \ar[d, "\cong"] &
    \Ss^{BA} \otimes \W\LCF{A^{\delta}} \ar[d, "\cong"] \\
    \THH(\Ss^{BA}) \ar[r] &
    \Ss^{\mathcal{L}BA}
  \end{tikzcd} \]
  natural in $A \in \latt$.
\end{lem}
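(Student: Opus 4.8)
The plan is to deduce the identification by base changing the square of \Cref{lem:assembly} along $\Ss^{BA}$, using the shear equivalence of \Cref{cnstr:shear} to recognize both $\THH(\Ss^{BA})$ and $\Ss^{\mathcal{L}BA}$ as such base changes. First, a preliminary observation: $\Sigma^\infty_+ BA$ is dualizable in ($p$-complete) spectra, since $\Sigma^\infty_+ BA \otimes \F_p$ has homotopy $H_*(BA;\F_p)$, which is finite-dimensional ($BA$ being a finite product of copies of $B\Z_p$ and $H_*(B\Z_p;\F_p) \cong H_*(S^1;\F_p)$). Hence $\Ss^{BA} = (\Sigma^\infty_+ BA)^\vee$ is dualizable, so for every space $Y$ the assembly map $\Ss^{BA} \otimes \Ss^{Y} \to \Ss^{BA \times Y}$ is an equivalence; in particular, the multiplication of the commutative algebra $\Ss^{BA}$ is carried under $\Ss^{BA} \otimes \Ss^{BA} \simeq \Ss^{BA \times BA}$ to the map $\Ss^{\triangle}$ induced by the diagonal of $BA$.

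Now apply \Cref{cnstr:shear} to $G = BA \in \mathrm{Mon}(\Spc)^{\mathrm{gp}}$, which is grouplike because $A$ is abelian. The shear identifies the pullback $\mathcal{L}BA = BA \times_{BA \times BA,\,\triangle,\,\triangle} BA$ with $BA \times \Omega_e BA$, compatibly with the basepoint-restriction $\mathcal{L}BA \to BA$; taking cochains and using dualizability this gives $\Ss^{\mathcal{L}BA} \simeq \Ss^{BA \times \Omega_e BA} \simeq \Ss^{BA} \otimes \Ss^{\Omega_e BA}$. Moreover, the shear automorphism $\mathrm{sh}\colon BA \times BA \xrightarrow{\sim} BA \times BA$ induces an automorphism of commutative algebras $\Ss^{\mathrm{sh}}$ of $\Ss^{BA \times BA} \simeq \Ss^{BA} \otimes \Ss^{BA}$ carrying the pair $(\Ss^{\triangle},\Ss^{\triangle})$ to $(\Ss^{j_1},\Ss^{j_1})$, where $j_1 \colon a \mapsto (a,e)$; under the above identification $\Ss^{j_1}$ is the map $\mathrm{id}_{\Ss^{BA}} \otimes \epsilon$ with $\epsilon \colon \Ss^{BA} \to \Ss$ the augmentation. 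Consequently, as commutative algebras,
\[ \THH(\Ss^{BA}) \;=\; \Ss^{BA} \otimes_{\Ss^{BA} \otimes \Ss^{BA}} \Ss^{BA} \;\simeq\; \Ss^{BA} \otimes_{\Ss^{BA} \otimes \Ss^{BA},\; \mathrm{id}\otimes\epsilon,\; \mathrm{id}\otimes\epsilon} \Ss^{BA} \;\simeq\; \Ss^{BA} \otimes \bigl( \Ss \otimes_{\Ss^{BA}} \Ss \bigr), \]
where the last step distributes the relative tensor product over the exterior factor, both $\Ss^{BA}$-module structures being pulled back along $\epsilon$.

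Combining these computations with \Cref{lem:assembly}, which provides natural equivalences $\Ss \otimes_{\Ss^{BA}} \Ss \simeq \W\LCF{A}$ and $\Ss^{\Omega_e BA} \simeq \W\LCF{A^{\delta}}$ intertwining the assembly map $\Ss \otimes_{\Ss^{BA}} \Ss \to \Ss^{\Omega_e BA}$ with the inclusion $i$, yields the two vertical equivalences of the asserted square. Under them, the $\THH$-assembly map $\THH(\Ss^{BA}) \to \Ss^{\mathcal{L}BA}$ is identified with $\mathrm{id}_{\Ss^{BA}} \otimes$ (the assembly map of \Cref{lem:assembly}), so the entire square is obtained by applying $\Ss^{BA} \otimes (-)$ to the square of \Cref{lem:assembly}; in particular it commutes, and since the shear, the dualizability identifications, and \Cref{lem:assembly} are all natural in $A \in \latt$, so is the square. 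The step requiring the most care is the compatibility of the shear equivalence with the $\THH$-assembly map --- equivalently, with the cyclic bar and free loop presentations of $\THH(\Ss^{BA})$ and $\Ss^{\mathcal{L}BA}$ --- which follows from the naturality of \Cref{cnstr:shear} in $\Spc$ and the compatibility of the cochains assembly maps with finite products, but whose verification is where the bookkeeping concentrates.
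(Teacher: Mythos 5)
Your proof is correct and follows essentially the same route as the paper's: both apply \Cref{cnstr:shear} to split $\THH(\Ss^{BA})$ and $\Ss^{\cL BA}$ into $\Ss^{BA}$ tensored with $\Ss\otimes_{\Ss^{BA}}\Ss$ and $\Ss^{\Omega_e BA}$ respectively, and then conclude by \Cref{lem:assembly}. The only (cosmetic) difference is that you transport the shear from spaces using dualizability of $\Ss^{BA}$, whereas the paper runs \Cref{cnstr:shear} directly on the grouplike monoid $\Ss^{BA}$ in $\CAlg(\Sp)^{\op}$, using only that $\Ss^{(-)}$ preserves products, and handles the remaining identifications via the computation $\pflat(\Ss^{BA}\otimes R)\cong\pflat(R)$.
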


\begin{proof}
  The functor $\Ss^{(-)} \colon \Spc \to \CAlg(\Sp)^{\op}$
  preserves products, and therefore takes grouplike commutative monoids to grouplike commutative monoids.
  Using naturality along the functor $\Ss^{(-)} : \Spc \to \CAlg(\Sp)^{\op}$,
  \Cref{cnstr:shear} provides us with identifications of $\Ss^{BA}$-algebras
  \[ \begin{tikzcd}
    \Ss^{BA} \otimes \left( \Ss \otimes_{\Ss^{BA}} \Ss \right) \ar[r] \ar[d, "\cong"] &
    \Ss^{BA \times \Omega_eBA} \ar[d, "\cong"] \\
    \THH(\Ss^{BA}) \ar[r] &
    \Ss^{\cL BA},
  \end{tikzcd} \]
  natural in $A \in \latt$.
  The lemma now follows from \Cref{lem:assembly} and 
  the fact that $ \pflat( \Ss^{BA} \otimes R) \cong \pflat(R)$
  for any $R \in \CAlg(\Sp)$ ($\pflat$ is a right adjoint with values in a $1$-category).
\end{proof}

\begin{lem} \label{lem:coassembly-po}
  The coassembly map for the limit over $BA$
  and the counit of the $\W \vdash \pflat$ adjunction
  fit into a pushout square of commutative algebras
  \[ \begin{tikzcd}
    \W(\LCF{A}) \ar[d] \ar[r, "{(-)_{|0}}"] &
    \W(\F_p) \ar[d] \\
    \THH(\Ss^{BA}) \ar[r] &
    \Ss^{BA} \pushout
  \end{tikzcd} \]
  natural in $A \in \latt$. Moreover, the lower horizontal map coincides with the map $\colim_{\T}\SP^{B\A} \to \colim_*\SP^{B\A}$ induced by the map $\T \to *$, where the colimit is taken in $\CAlg(\Sp)$.
\end{lem}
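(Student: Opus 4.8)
The plan is to derive this from \Cref{lem:thh-s1} together with \Cref{lem:assembly}, by base-changing along the augmentation of cochains. Recall from \Cref{lem:thh-s1} that we have a natural identification of commutative algebras
\[ \THH(\Ss^{BA}) \cong \Ss^{BA} \otimes \W\LCF{A}, \]
where the $\W\LCF{A}$-algebra structure comes from the counit applied to the assembly map for the $\T$-shaped colimit computing $\THH$. The coassembly map $\THH(\Ss^{BA}) = \colim_{\T}\Ss^{BA} \to \colim_{*}\Ss^{BA} = \Ss^{BA}$ is induced by $\T \to *$; under the identification of \Cref{lem:thh-s1} it corresponds to the map $\Ss^{BA}\otimes \W\LCF{A} \to \Ss^{BA}$ obtained by tensoring $\id_{\Ss^{BA}}$ with a map $\W\LCF{A} \to \Ss^{BA}$. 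To pin down that latter map, note that by naturality of \Cref{cnstr:shear} in $G = BA$ the $\T$-colimit $\cL BA$ is $BA \times \Omega_e BA$, and the map $\T\to *$ corresponds to the inclusion $BA = BA \times \{e\} \hookrightarrow BA\times\Omega_e BA = \cL BA$ of the constant loops; applying $\Ss^{(-)}$ and using \Cref{lem:assembly} this is exactly evaluation of a locally constant function at $0 \in A$, i.e. the map $\W(\res)$ for the inclusion $\{0\}\hookrightarrow A$, which is $(-)_{|0}\colon \W(\LCF{A}) \to \W(\F_p)$.

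So the coassembly map is identified with $\Ss^{BA} \otimes_{\W\LCF{A}} \big((-)_{|0}\big)\colon \Ss^{BA}\otimes \W\LCF{A} \to \Ss^{BA}\otimes_{\W\LCF{A}}\W(\F_p)$, and it remains to observe that $\Ss^{BA}\otimes_{\W\LCF{A}}\W(\F_p) \cong \Ss^{BA}$ as $\Ss^{BA}$-algebras. This is a formal consequence of the fact that the $\W\LCF{A}$-algebra structure on $\Ss^{BA}$ factors through $(-)_{|0}$: indeed, $\Ss^{BA}\otimes \W\LCF{A} \to \Ss^{BA}$ is a map of $\W\LCF{A}$-algebras where the target is regarded as a $\W\LCF{A}$-algebra via $(-)_{|0}$ followed by the unit $\W(\F_p)\to\Ss^{BA}$ (this is just the statement that the coassembly map is a ring map and that the loop-constant inclusion is compatible with the constant-loop structure), so tensoring the source with $\W(\F_p)$ over $\W\LCF{A}$ gives the claimed pushout square
\[ \begin{tikzcd}
  \W(\LCF{A}) \ar[d] \ar[r, "{(-)_{|0}}"] & \W(\F_p) \ar[d] \\
  \THH(\Ss^{BA}) \ar[r] & \Ss^{BA}. \pushout
\end{tikzcd} \]
Naturality in $A\in\latt$ is inherited from the naturality in \Cref{cnstr:shear}, \Cref{lem:assembly}, and \Cref{lem:thh-s1}, all of which were established to be natural in $A$.

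I expect the only genuinely delicate step to be the bookkeeping that identifies the coassembly map $\T\to *$ on the $\THH$ side with the constant-loops inclusion on the $\cL BA$ side, and hence with $(-)_{|0}$: one must trace through \Cref{cnstr:shear} to see that the map $\cL G \xrightarrow{\mathrm{sh}} G\times\Omega_e G$ carries the colimit cone for the point $*\to\T$ to the inclusion $G = G\times\{e\}\hookrightarrow G\times\Omega_e G$, and then match this with the counit computation in \Cref{lem:assembly} showing $\W(\res_{\{0\}\hookrightarrow A})$ models $\Ss\otimes_{\Ss^{BA}}\Ss \to \Ss^{\Omega_e BA}$ pulled back along $* \to \Omega_e BA$. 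Everything else is a formal manipulation of base change along an augmentation, using that $\pflat$ lands in a $1$-category so that $\pflat(\Ss^{BA}\otimes R)\cong\pflat(R)$. The final sentence about the lower horizontal map coinciding with $\colim_\T \Ss^{BA}\to\colim_*\Ss^{BA}$ is then immediate from how we identified the coassembly map at the outset.
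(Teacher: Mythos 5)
Your argument for the pushout square itself follows the same route as the paper's: identify $\THH(\Ss^{BA})\cong \Ss^{BA}\otimes \W\LCF{A}$ via \Cref{lem:thh-s1}, recognize the bottom map as $\id_{\Ss^{BA}}$ tensored with evaluation at $0$, and base change along $(-)_{|0}$ using $\W(\F_p)\cong\Ss$. Your tracing of the constant-loops inclusion through \Cref{cnstr:shear} and \Cref{lem:assembly}, to see that the induced map $\W\LCF{A}\to\Ss^{BA}$ really is evaluation at $0$ rather than at some other point of $A$, is in fact spelled out in more detail than in the paper.

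There is, however, one circular step. You open by declaring that the coassembly map $\THH(\Ss^{BA})\to\Ss^{BA}$ ``is induced by $\T\to *$,'' and you close by saying the ``Moreover'' clause is immediate from how you identified the coassembly map at the outset. But that identification is precisely what the ``Moreover'' clause asserts, and it is not a definition: the coassembly map is the canonical exchange map $\THH(\lim_{BA}\Ss)\to\lim_{BA}\THH(\Ss)\cong\Ss^{BA}$, whereas the other map is the augmentation $\colim_{\T}\Ss^{BA}\to\colim_{*}\Ss^{BA}$. Everything you actually prove concerns the second map; nothing in the write-up connects it to the first. The paper closes this gap with a short naturality argument: the augmentation $\THH(-)\to(-)$ coming from $\T\to *$ is a natural transformation, hence intertwines the $BA$-coassembly maps for $\THH$ and for the identity functor, and since $\THH(\Ss)\to\Ss$ is an isomorphism this identifies the coassembly map for $\THH$ with the augmentation $\THH(\Ss^{BA})\to\Ss^{BA}$. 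With that commuting square added, your argument is complete.
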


\begin{proof}
  As $\pflat $ is a right adjoint with values in a $1$-category,
  we have $\pflat \Ss^{BA} \cong \F_p$.
  It now follows from \Cref{lem:thh-s1} and the fact that the composite
  \[ \Ss^{BA} \to \THH(\Ss^{BA}) \to \Ss^{BA} \]
  is the identity that the square is a pushout. 
  
    To see the claim about the map induced by $\T \to *$, it suffices to consider the square
  
  \begin{center}
  	\begin{tikzcd}
  		{\THH(\SP^{BA})} \ar[r]\ar[d] &\THH(\SP)^{B\A} \ar[d,"\cong"]\\
  		\SP^{B\A}\ar[r,"\Id"] & \SP^{B\A},
  	\end{tikzcd}
  \end{center}
  whose horizontal maps are the $BA$ coassembly maps for the identity and $\THH$ functors, and whose vertical maps are induced by the natural transformation $\THH(\--) \to \--$ arising from the projection $\T \to *$. 
\end{proof}

The following notation will be useful in the sequel:

\begin{cnstr} \label{cnstr:zeta}
  Let $\epsilon \in \pi_{-1}\Ss^{B\Z}$ be
  the class corresponding to the element $1$ in $H^1(B\Z; \Z)$,
  and let $\zeta \in \pi_{-1}\THH(\Ss^{B\Z})$ be
  the image of $\epsilon$ under the natural map
  $\Ss^{B\Z} \to \THH(\Ss^{B\Z})$.
\end{cnstr}

\subsection{As a \texorpdfstring{$\T$}{T}-equivariant commutative algebra}

Having understood the commutative algebra structure of $\THH(\Ss^{B\Z})$,
we turn to describing the circle action.
Under the assembly map to $\Ss^{\cL B\Z_p}$, the circle action is compatible with
the circle action on the free loop space $\cL B\ZZ_p$,
and it is through this that we gain control over the situation.

\begin{dfn}
  For $w \in \Z_p$, we let $B\Z_p(w)$ denote $B\Z_p$ with the action of  $\T = B\Z$ 
  coming from left multiplication via the homomorphism $w \colon B\Z \to B\Z_p$.

  Let $R \in \CAlg(\Sp)$. Taking $R$-valued cochains on $B\Z_p(w)$
  we obtain a $\T$-action on the commutative $R$-algebra $R^{B\Z_p}$,
  which we denote $R^{B\Z_p(w)} \in \CAlg(\Sp)_{R/-}^{B\T}$.
  Note that when $w \in \Z \subseteq \Z_p$, we have $R^{B\Z_p(w)} \cong R^{\T/C_{w}}$.
\end{dfn}

\begin{exm} \label{exm:loop-action}
  Consider the $\T$-action on $\cL B\ZZ_p$, the free loop space of the $p$-adic circle.
  On the connected component of the degree $w$ map $w \colon B\Z \to B\Z_p$,
  the rotation action on the source circle becomes the `$w$-speed' rotation action on the target. Altogether, we obtain a $\T$-equivariant isomorphism
  \[ \cL B\Z_p \cong \coprod_{w \in \Z_p} B\Z_p(w).\hfill\qedhere \]
\end{exm}

\begin{lem} \label{lem:sigma-zeta}
  In $\pi_0\THH(\Ss^{B\Z})$ we have $\sigma(\zeta) = (1 + \eta\zeta) \cdot \Id_{\Z_p}$.\footnote{Note that $\pi_0\LCF{\mathbb{Z}_p}$ is the ring of cont. functions from 
  $
  \ZZ_p$ to itself. By $\Id_{\ZZ_p} \in \pi_0\W\LCF{
  \ZZ_p
  }$ we mean the identity function.}
\end{lem}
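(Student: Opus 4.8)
\emph{Plan.} I would check the identity after pushing forward along the $\T$-equivariant assembly map $\THH(\Ss^{B\Z}) \to \Ss^{\mathcal{L}B\Z_p}$ and using \Cref{exm:loop-action} to write $\Ss^{\mathcal{L}B\Z_p} = \prod_{w \in \Z_p}\Ss^{B\Z_p(w)}$. By \Cref{lem:thh-s1} with $A = \Z_p$, on $\pi_0$ the composite into the $w$-th factor is ``evaluation at $w$'' of the relevant continuous function; since a continuous $\Z_p$- or $\F_p$-valued function on $\Z_p$ is determined by its values, the resulting map $\pi_0\THH(\Ss^{B\Z}) \to \prod_w \pi_0\Ss^{B\Z_p(w)}$ is injective. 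So it suffices to compute $\sigma(\zeta)$ componentwise.

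Next I would unwind the geometry on the $w$-component. Under the shear equivalence of \Cref{cnstr:shear}, restriction along the basepoint $* \to \T$ is the projection to $B\Z_p$, so $\zeta$ — the image of $\epsilon$ under $\Ss^{B\Z_p} \to \THH(\Ss^{B\Z_p}) \to \Ss^{\mathcal{L}B\Z_p}$ — has $w$-component $\epsilon \in \pi_{-1}\Ss^{B\Z_p}$, while by \Cref{exm:loop-action} the $\T$-action on that component is the $w$-speed rotation $a_w = m \circ (w \times \id)\colon \T \times B\Z_p \to B\Z_p \times B\Z_p \to B\Z_p$, with $m$ the group multiplication and $w\colon \T = B\Z \to B\Z_p$ the degree-$w$ map. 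Writing $\Ss^{\T} \simeq \Ss \oplus \Sigma^{-1}\Ss$ with second summand generated by the fundamental class $\epsilon_{\T}$, the Connes operator of \Cref{dfn:sigmaoperator} on $\Ss^{B\Z_p(w)}$ is the $\epsilon_{\T}$-coefficient of $a_w^*\colon \pi_{-1}\Ss^{B\Z_p} \to \pi_{-1}(\Ss^{\T} \otimes \Ss^{B\Z_p})$. Since $w^*(\epsilon) = w\,\epsilon_{\T}$, the whole computation reduces to identifying $m^*(\epsilon) \in \pi_{-1}\Ss^{B\Z_p \times B\Z_p}$.

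The content is the claim $m^*(\epsilon) = \epsilon_1 + \epsilon_2 + \eta\,\epsilon_1\epsilon_2$ (with $\epsilon_i$ the two pullbacks and $\eta \in \pi_1\Ss$), equivalently: the multiplication on $\Sigma^\infty_+ S^1 = \Ss \oplus \Sigma\Ss$ carries $\sigma \otimes \sigma$ to $\eta\sigma$. Restricting to the two axes and using that $\epsilon$ is primitive for the group structure pins $m^*(\epsilon)$ down up to a correction term in $\pi_1\Ss \cdot \epsilon_1\epsilon_2$, and this term is nonzero: were it zero, $\Sigma^\infty_+ S^1$ would be the split square-zero $\E_1$-ring $\Ss \oplus \Sigma\Ss$, hence $\Ss \otimes_{\Sigma^\infty_+ S^1}\Ss \simeq \bigoplus_{n \ge 0}\Sigma^{2n}\Ss$; but this bar construction computes $\Sigma^\infty_+ BS^1 = \Sigma^\infty_+\CP^\infty$, which is not a wedge of spheres since the top cell of $\CP^2$ is attached by $\eta$. (At odd $p$ one has $\eta = 0$ and no correction term appears.) Substituting back, $a_w^*(\epsilon) = w\,\epsilon_{\T} \otimes 1 + 1 \otimes \epsilon + w\eta\,\epsilon_{\T} \otimes \epsilon$, so on the $w$-component $\sigma(\epsilon) = w(1 + \eta\epsilon)$.

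To conclude, I would observe that $(1 + \eta\zeta)\cdot\Id_{\Z_p}$ has $w$-component $(1 + \eta\epsilon)\cdot w$: the function $\Id_{\Z_p} \in \pi_0\W\LCF{\Z_p}$ (the identity function $\Z_p \to \Z_p$) evaluates to $w$, $\eta\zeta$ evaluates to $\eta\epsilon$, and the two summands of $\pi_0\Ss^{B\Z_p(w)}$ multiply factorwise. This agrees with $\sigma(\zeta)$ in every component, so by the injectivity from the first step we get $\sigma(\zeta) = (1 + \eta\zeta)\Id_{\Z_p}$. The main obstacle is locating the $\eta$ in $m^*(\epsilon)$; the $\Sigma^\infty_+\CP^\infty$ comparison is the cleanest route I see, and one should also double-check the sign convention in \Cref{dfn:sigmaoperator} so that the leading coefficient comes out $+1$ rather than $-1$.
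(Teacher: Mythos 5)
Your reduction steps coincide with the paper's: both proofs push forward along the assembly map $\THH(\Ss^{B\Z})\to\Ss^{\cL B\Z_p}$ (injective on homotopy by \Cref{lem:thh-s1}), split over the components of \Cref{exm:loop-action}, and reduce to a single universal identity about the circle. Where you diverge is in how that identity is established. The paper checks the leading coefficient rationally and then pins down the $\eta$-term by citing the relation $\sigma\circ\sigma=\eta\sigma$ from Antieau--Nikolaus; you instead re-derive the equivalent statement $\sigma\cdot\sigma=\eta\sigma$ in $\pi_*\Ss[\T]$ from the identification $\Ss\otimes_{\Ss[\T]}\Ss\simeq\Sigma^\infty_+\CP^\infty$ and the $\eta$-attaching map of the top cell of $\CP^2$. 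Your route is more self-contained (it does not outsource the key torsion input to a citation) at the cost of a longer detour through the bar construction; the paper's is shorter but leans on an external relation. A minor side difference: the paper reduces to integer $w$ by density of $\Z\subset\Z_p$, whereas you work with all $w\in\Z_p$ directly, which is also fine in the $p$-complete setting.

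One step is stated more strongly than you can justify and more strongly than you need: the implication ``if $\sigma^2=0$ then $\Sigma^\infty_+S^1$ is the split square-zero $\E_1$-ring'' is not automatic, since a unital $\A_\infty$-structure on $\Ss\oplus\Sigma\Ss$ with trivial $\pi_*$-level product could still carry nontrivial higher operations (e.g.\ $m_3$ valued in $\pi_3\Ss$), and those would obstruct the full splitting of the bar construction. But you do not need the full splitting: the first differential of the bar (equivalently skeletal) filtration on $\Ss\otimes_{\Ss[\T]}\Ss$, from the cell in degree $4$ to the cell in degree $2$, is determined by the product $\overline{\Ss[\T]}\otimes\overline{\Ss[\T]}\to\overline{\Ss[\T]}$ alone, i.e.\ by $\sigma\cdot\sigma$, and it is identified with the stable attaching map of the $4$-cell of $\CP^2$, which is $\eta$. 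Phrasing the argument as ``the $d_1$ of the bar spectral sequence equals $\sigma^2$ and equals $\eta$'' closes the gap without changing the idea. You are also right to flag the sign convention in \Cref{dfn:sigmaoperator}; since the correction term is $2$-torsion only the leading coefficient is at stake, and it is fixed by restriction to the axes.
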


\begin{proof}
  From \Cref{lem:thh-s1} we can read off that the assembly map
  $ \THH(\Ss^{B\Z_p}) \to \Ss^{\cL B\Z_p}$ is injective on homotopy groups, and
  it will therefore suffice to compute $\sigma(\zeta)$ in the target.
  Breaking things up accross the coproduct in \Cref{exm:loop-action}
  and using the fact that $\Z \subset \Z_p$ is dense 
  we reduce to computing $\sigma(\epsilon)$ in $\pi_0(\Ss^{B\Z(w)})$ where $w \in \Z$.
  Writing $\Ss^{B\Z(w)}$ as $w^*\Ss^{\T}$ and using that the degree $w$ map $\T \to \T$ sends $\sigma$ to $w \sigma$ we find that it will suffice to compute that
  $\sigma(\epsilon) = 1 + \eta\epsilon$ in $\Ss^{\T}$.
  After rationalization this is straightforward,
  the general case follows from the fact that $\sigma \circ \sigma = \eta \sigma$ (see \cite[Sec. 3.5]{antieau-nikolaus}).
\end{proof}

\begin{cnstr}\label{cnstr:psi-p}
  Let $R \in \CAlg(\Sp)$.
  Tensoring the $p$-fold covering map $p \colon \T \to \T/C_p$
  with the commutative algebra $R$ we obtain a 
  map of $\T$-equivariant commutative algebras
  \[ \psi_p \colon \THH(R) \to p^*\THH(R) \]
  natural in $R$
  where $p^*$ denotes the restriction map $\Sp^{B\T/C_p} \to \Sp^{B\T}$.

  As the map $p\colon \T \to \T/C_p$ preserves the base-point
  $\psi_p$ also carries the structure of a commutative $R$-algebra map
  (but not compatibly with the circle action).
\end{cnstr}

\begin{lem} \label{lem:rescale}
  The map $\psi_p$ refines to a $\T$-equivariant isomorphism
  \[ \psi_p \colon \THH(\Ss^{B\Z})_{|p\Z_p} \xrightarrow{\cong} p^*\THH(\Ss^{B\Z}) \]
  which induces the map $\res_{p} \colon C^0(p\Z_p) \to C^0(\Z_p)$ on $\pflat$.
  In particular, the $C_p$ action on $\THH(\Ss^{B\Z})_{|p\Z_p}$ is trivializable.
\end{lem}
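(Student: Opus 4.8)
The plan is to reduce the statement to a computation on $\pflat(-)$, leveraging the description of $\THH(\Ss^{B\Z})$ as a commutative algebra from \Cref{lem:thh-s1}. First I would record that $\psi_p \colon \THH(\Ss^{B\Z}) \to p^*\THH(\Ss^{B\Z})$ is a map of commutative $\Ss^{B\Z}$-algebras (\Cref{cnstr:psi-p}), and that by \Cref{lem:thh-s1} both source and target are of the form $\Ss^{B\Z} \otimes \W(-)$ with $\pflat(-) = \LCF{\Z_p}$, the $\Ss^{B\Z}$-algebra structure being induced by the counit of $\W \dashv \pflat$. Since any $\Ss^{B\Z}$-linear map out of $\Ss^{B\Z} \otimes \W\LCF{\Z_p}$ is determined by its restriction along $\{1\} \otimes \W\LCF{\Z_p}$, and the latter is a ring map $\W\LCF{\Z_p} \to p^*\THH(\Ss^{B\Z})$ adjoint (via $\W \dashv \pflat$) to $\pflat\psi_p \colon \LCF{\Z_p} \to \LCF{\Z_p}$, the map $\psi_p$ is identified with $\Ss^{B\Z} \otimes \W(\pflat\psi_p)$. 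So everything is controlled by $\pflat\psi_p$.

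The heart of the matter is thus computing $\pflat\psi_p$, and here I would use naturality of $\psi_p$ in the ring variable applied to the cochains functor $\Ss^{(-)} \colon \Spc^{\op} \to \CAlg(\Sp)$. The assembly maps place $\psi_p$ into a commuting square over the map $\Ss^{\cL B\Z_p} \to p^*\Ss^{\cL B\Z_p}$ induced by precomposing loops with the degree-$p$ self-map $\rho_p$ of $\T$ (which is exactly $p \colon \T \to \T/C_p$ after rescaling $\T/C_p \cong \T$). By \Cref{lem:thh-s1} the assembly map $\THH(\Ss^{B\Z}) \to \Ss^{\cL B\Z_p}$ is, on $\pflat$, the inclusion $\LCF{\Z_p} \hookrightarrow \LCF{\Z_p^\delta}$ of continuous functions into all functions, and in particular is injective; hence $\pflat\psi_p$ is the restriction of $\pflat$ of the bottom map. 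Using the $\T$-equivariant decomposition $\cL B\Z_p \cong \coprod_{w \in \Z_p} B\Z_p(w)$ of \Cref{exm:loop-action}, the set $\pi_0\cL B\Z_p$ is the set $\Z_p$ of degrees, and precomposing a degree-$w$ loop with the degree-$p$ map $\rho_p$ produces a degree-$pw$ loop; therefore $\pflat$ of the bottom map is restriction along multiplication by $p$, and $\pflat\psi_p = \res_{\cdot p} \colon \LCF{\Z_p} \to \LCF{\Z_p}$. I expect this step — keeping the rescaling of the $\T$-actions straight and confirming that $\rho_p$ acts as $\cdot p$ on $\pi_0\cL B\Z_p$ — to be the only genuinely non-formal point, hence the main (if mild) obstacle.

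To conclude, I would use that $p\Z_p \subseteq \Z_p$ is clopen, so $\cdot p \colon \Z_p \to \Z_p$ factors as the homeomorphism $\Z_p \xrightarrow{\sim} p\Z_p$ followed by the clopen inclusion, and correspondingly $\res_{\cdot p}$ factors as the projection $\LCF{\Z_p} \twoheadrightarrow \LCF{p\Z_p}$ (quotient by the idempotent cutting out $\Z_p \setminus p\Z_p$) followed by the isomorphism $\res_p \colon \LCF{p\Z_p} \xrightarrow{\sim} \LCF{\Z_p}$. Applying $\Ss^{B\Z} \otimes \W(-)$ and using the identification $\psi_p = \Ss^{B\Z} \otimes \W(\pflat\psi_p)$ from the first paragraph: the first factor exhibits its target as $\THH(\Ss^{B\Z}) \otimes_{\W\LCF{\Z_p}} \W\LCF{p\Z_p} = \THH(\Ss^{B\Z})_{|p\Z_p}$, and the second factor is an isomorphism. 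Hence $\psi_p$ refines to an isomorphism $\psi_p \colon \THH(\Ss^{B\Z})_{|p\Z_p} \xrightarrow{\cong} p^*\THH(\Ss^{B\Z})$ inducing $\res_p$ on $\pflat$. This refinement is $\T$-equivariant because the idempotent defining the summand $\THH(\Ss^{B\Z})_{|p\Z_p}$ lies in $\pi_0\THH(\Ss^{B\Z})$, on which the connected group $\T$ acts trivially, so the summand decomposition is $\T$-equivariant and $\psi_p$ was $\T$-equivariant to begin with. Finally, the $C_p$-action on any object of the form $p^*(-)$ is trivial since $C_p = \ker(\T \to \T/C_p)$, so via the isomorphism just produced the $C_p$-action on $\THH(\Ss^{B\Z})_{|p\Z_p}$ is trivializable.
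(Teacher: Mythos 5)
Your proposal is correct and follows essentially the same route as the paper: the paper's proof likewise uses naturality of $\psi_p$ together with the injectivity on homotopy groups of the assembly map to $\Ss^{\cL B\Z_p}$ (from the lemma identifying $\THH(\Ss^{BA})$), and then observes that precomposition with the degree-$p$ map of the circle sends the component of $\cL B\Z_p$ at $a$ isomorphically onto the component at $pa$. Your extra bookkeeping via the $\W \dashv \pflat$ adjunction and the clopen splitting of $\res_{\cdot p}$ just makes explicit what the paper leaves implicit.
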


\begin{proof}
  As the assembly maps to $\Ss^{\cL B\Z_p}$ are injective on homotopy groups by \Cref{lem:thh-s1} and the construction of $\psi_p$ is natural it will suffice to prove the corresponding claim for $\Ss^{\cL B\Z_p}$. Here we observe that precomposition with the degree $p$ map of $S^1$ produces the map $\cL B\Z_p \to \cL B\Z_p$ which sends the circle at component $a \in \Z_p$ to the circle at component $pa$ isomorphically.
\end{proof}

\begin{lem} \label{lem:shells}
  For each $k \geq 0$
  there is an isomorphism of $\W\LCF{p^k\ZZ_p^{\times}}$-modules in $\Sp^{B\T}$
  \[ \THH(\SP^{B\Z_p})_{|p^k\ZZ_p^{\times}} \cong \W\LCF{p^k\ZZ_p^{\times}} \otimes \Sigma^{-1}\Ss[\T/C_{p^k}]. \]
  In particular,
  if $M$ is a $\THH(\SP^{B\Z_p})_{|\ZZ_p^{\times}}$-module,
  then $M^{tC_{p^j}} = 0$ for all $1 \leq j \leq \infty$.
\end{lem}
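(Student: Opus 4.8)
The plan is to analyze $\THH(\SP^{B\Z_p})_{|p^k\Z_p^\times}$ directly, exploiting the action of $\Z_p^\times$ on $B\Z_p=(B\Z)^\wedge_p$ coming from its $\Z_p$-module structure (``multiplication by $w$''). This induces a $\Z_p^\times$-action on $\THH(\SP^{B\Z_p})$ commuting with the $\T$-action; on the commutative ring $\W\LCF{\Z_p}$, over which $\THH(\SP^{B\Z_p})$ is naturally an algebra by \Cref{lem:thh-s1}, this action is the image under $\W$ of the translation action of $\Z_p^\times$ on the continuous functions $C^0(\Z_p,\F_p)$ on the $p$-adic space $\Z_p$. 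In particular the clopen subset $p^k\Z_p^\times\subseteq\Z_p$ is a single \emph{free} $\Z_p^\times$-orbit, namely that of the point $p^k$. (One could alternatively reduce to $k=0$ using the rescaling isomorphisms $\psi_p$ of \Cref{lem:rescale}; I prefer to treat all $k$ uniformly.)

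For a clopen free $\Z_p^\times$-orbit $O\subseteq\Z_p$ with a chosen point $x_0$, descent along the $\Z_p^\times$-torsor $O\to\ast$ shows that restriction along $\W\LCF{O}\xrightarrow{\mathrm{ev}_{x_0}}\SP$ identifies $\W\LCF{O}$-modules in $\Sp^{B\T}$ carrying a compatible continuous $\Z_p^\times$-action with objects of $\Sp^{B\T}$, the inverse sending a dualizable $N$ to $\W\LCF{O}\otimes N$. Applying this with $O=p^k\Z_p^\times$ and $x_0=p^k$, the module $\THH(\SP^{B\Z_p})_{|p^k\Z_p^\times}$ (with its residual $\Z_p^\times$-action) is recovered as $\W\LCF{p^k\Z_p^\times}\otimes\bigl(\THH(\SP^{B\Z_p})_{|\{p^k\}}\bigr)$. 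The fiber $\THH(\SP^{B\Z_p})_{|\{p^k\}}$ is $\SP^{B\Z_p(p^k)}$: evaluate the component decomposition $\cL B\Z_p\cong\coprod_{w\in\Z_p}B\Z_p(w)$ of \Cref{exm:loop-action} at $p^k$, using \Cref{lem:thh-s1} to pass from $\SP^{\cL B\Z_p}$ back to $\THH(\SP^{B\Z_p})$ along $\mathrm{ev}_{p^k}$.

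It remains to identify $\SP^{B\Z_p(p^k)}$ as a $\T$-spectrum. The $p^k$-th power map $\T\to\T$ identifies $\T/C_{p^k}$ with left translation with $\T$ under the action $\theta\cdot x=\theta^{p^k}x$, i.e.\ with $B\Z(p^k)$; since $\SP$-valued cochains send $\F_p$-homology equivalences of $\T$-spaces to equivalences of $\T$-spectra (as $\SP$ is $p$-complete), $p$-completion gives a $\T$-equivariant equivalence $\SP^{B\Z_p(p^k)}\cong\SP^{\T/C_{p^k}}$, and Atiyah duality for the compact $1$-manifold $\T/C_{p^k}$ (whose framing is $\T$-invariant) gives a further $\T$-equivariant equivalence $\SP^{\T/C_{p^k}}=F(\Sigma^\infty_+(\T/C_{p^k}),\SP)\cong\Sigma^{-1}\Sigma^\infty_+(\T/C_{p^k})=\Sigma^{-1}\SP[\T/C_{p^k}]$. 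Assembling the last three displays yields the first claim, $\THH(\SP^{B\Z_p})_{|p^k\Z_p^\times}\cong\W\LCF{p^k\Z_p^\times}\otimes\Sigma^{-1}\SP[\T/C_{p^k}]$. For the ``in particular'' ($k=0$): $R\coloneqq\THH(\SP^{B\Z_p})_{|\Z_p^\times}\cong\W\LCF{\Z_p^\times}\otimes\Sigma^{-1}\SP[\T]$ has induced underlying $\T$-spectrum $\Ind_e^\T(\Sigma^{-1}\W\LCF{\Z_p^\times})$ (since $\Sigma^{-1}\SP[\T]=\Ind_e^\T(\Sigma^{-1}\SP)$ and $\W\LCF{\Z_p^\times}$ has trivial action), so $R^{tC_{p^j}}=0$ for all $1\le j\le\infty$ by the argument in the proof of \Cref{lem:a1tcp} (a finite free-cell decomposition of $\T$); as $(-)^{tC_{p^j}}$ is lax symmetric monoidal, $M^{tC_{p^j}}$ is a module over the zero ring $R^{tC_{p^j}}$ for any $R$-module $M$, hence vanishes.

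I expect the main obstacle to be the descent/free-orbit step of the second paragraph: it is what reconciles the \emph{continuous} module $\THH(\SP^{B\Z_p})_{|p^k\Z_p^\times}$ (built from continuous functions on the $p$-adic space $p^k\Z_p^\times$) with the merely pointwise data $\{\SP^{B\Z_p(w)}\}_w$, and it relies both on the precise form of \Cref{lem:thh-s1} — which exhibits $\THH(\SP^{B\Z_p})$ as the continuous refinement of $\prod_{w\in\Z_p}\SP^{B\Z_p(w)}=\SP^{\cL B\Z_p}$ — and on the continuity of the $\Z_p$-module structure on $B\Z_p$.
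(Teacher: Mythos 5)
Your route is genuinely different from the paper's, and its central step is not justified. The paper proves the lemma by reducing to $k=0$ via \Cref{lem:rescale}, using the class $\zeta\in\pi_{-1}\THH(\SP^{B\Z})$ to write down an explicit $\T$-equivariant $\W\LCF{\Z_p^\times}$-module map $z\colon \W\LCF{\Z_p^\times}\otimes\Sigma^{-1}\Ss[\T]\to\THH(\SP^{B\Z_p})_{|\Z_p^\times}$, and then checking on homotopy groups that $z$ is an isomorphism: the Connes operator computation $\sigma(\zeta)=\Id_{\Z_p}(1+\eta\zeta)$ from \Cref{lem:sigma-zeta} shows $z([\T])=\Id_{\Z_p}(1+\eta\zeta)$, and the whole point is that $\Id_{\Z_p}$ restricted to $\Z_p^\times$ is a unit. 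Your argument never touches $\sigma$ or $\zeta$; instead it tries to produce the trivialization abstractly, by descent along the ``torsor'' $p^k\Z_p^\times\to *$ for the profinite group $\Z_p^\times$ acting by multiplication (note: you write ``translation,'' but translation by $\Z_p^\times$ has orbits $x+\Z_p^\times$, not $p^k\Z_p^\times$; what you mean and use is the scaling action).

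That descent step is the gap. To run it you need (i) the $\Z_p^\times$-action on $\THH(\SP^{B\Z_p})_{|O}$ to be \emph{continuous} in the precise sense required to furnish a descent datum along $\SP\to\W\LCF{O}$ — i.e.\ a coherent coaction of $\W\LCF{O\times\Z_p^\times}$, not merely an action of the abstract group $\Z_p^\times$ — and (ii) effectivity of descent for this pro-(Galois-type) extension on the object at hand; your own parenthetical restricting the inverse functor to dualizable $N$ signals that the equivalence is not unconditional, and you cannot assume $\THH(\SP^{B\Z_p})_{|O}$ lies in the image without circularity. Both points are exactly where the difficulty of the lemma lives (compare \Cref{prop:unipotent_SW}, where the paper proves the analogous statement for the translation action of $\Z_p$ on itself and has to impose and verify a local-unipotence condition to make descent along a profinite torsor work). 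As stated, your second paragraph asserts the hard part rather than proving it. The remaining steps — identifying the fiber at $p^k$ with $\SP^{B\Z_p(p^k)}$ via \Cref{lem:thh-s1} and \Cref{exm:loop-action}, the equivalence $\SP^{B\Z_p(p^k)}\cong\SP^{\T/C_{p^k}}$, equivariant Atiyah duality giving $\Sigma^{-1}\Ss[\T/C_{p^k}]$, and the ``in particular'' via \Cref{lem:a1tcp} — are all fine and the last of these matches the paper. If you want to salvage the approach, you would need to either prove a $\Z_p^\times$-analogue of \Cref{prop:unipotent_SW} with the requisite continuity/unipotence verified for $\THH(\SP^{B\Z_p})$, or abandon the abstract descent and do what the paper does: exhibit the generator and compute with $\sigma(\zeta)$.
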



\begin{proof}
  Using \Cref{lem:rescale} it will suffice to handle the case where $k=0$.

  Using the restriction of $\zeta$ to $\pi_{-1}\THH(\SP^{B\Z_p})_{|\ZZ_p^{\times}}$
  we construct an induced $\T$-equivariant map of $\W\LCF{\Z_p^\times}$-modules
  \[ z : \W\LCF{\ZZ_p^{\times}} \otimes \Sigma^{-1}\Ss[\T] \to \THH(\SP^{B\Z_p})_{|\ZZ_p^{\times}}. \]
  On homotopy groups this gives a map of
  $\pi_*\W\LCF{\ZZ_p^{\times}}$-modules
  \[ z : (\pi_*\W\LCF{\Z_p^\times})\{ [*], [\T] \} \to \LCF{\Z_p^\times}\{ 1, \zeta \} \]
  with $z([*]) = \zeta$.
  The Connes operator and \Cref{lem:sigma-zeta}
  now let us compute that
  \[ z([\T]) = z(\sigma([*])) = \sigma(z([*])) = \sigma(\zeta) = \Id_{\Z_p}(1+\eta\zeta)\]
  The first claim follows from that fact that,
  when restricted to $\Z_p^{\times}$, $\Id_{\Z_p}$ is a unit.

  The second claim follows from the first since
  we now know $a_{(1)} = 0$ as a $\T$-equivariant self-map of $\THH(\SP^{B\Z_p})_{|\ZZ_p^{\times}}$
  and therefore that $a_{(1)}$ acts by zero on any $\T$-equivariant
  $\THH(\SP^{B\Z_p})_{|\ZZ_p^{\times}}$-module $M$.
\end{proof}

\begin{lem} \label{lem:pflat-tcp}
  Let $R \in \CAlg(\Sp)^{BC_p}$ be bounded below.
  In the associated spain 
  \[ \pflat R \leftarrow \pflat R^{hC_p} \to \pflat R^{tC_p} \]
  the left arrow is an isomorphism if the action on $\pi_0R$ is trivial
  (e.g. when the $C_p$ actions extends to a $\T$-action)
  and the right arrow is an isomorphism if the $C_p$-action is trivial.
\end{lem}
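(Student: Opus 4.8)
The plan is to reduce the statement to the behaviour of $\pi_0$ modulo $p$ under the Frobenius, exploiting that $\pflat$ does not see ``torsion noise.'' Recall from \Cref{prop:witt-and-tilt} that $\pflat(X)$ is the inverse-limit perfection $\lim_\varphi\bigl(\pi_0(X)/p\bigr)$ of the $\F_p$-algebra $\pi_0(X)/p$. I will use the elementary fact that a map $f\colon A\to B$ of commutative $\F_p$-algebras induces an isomorphism $\lim_\varphi A\cong\lim_\varphi B$ whenever, on each side, $f$ becomes an isomorphism after dividing out an ideal of topologically nilpotent elements in a ring complete for the corresponding topology, and after discarding a submodule killed by $p$. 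This reduces to the insensitivity of inverse-limit perfection to nilpotent thickenings -- which holds because the perfect ring $\F_p[x^{1/p^\infty}]$ (through which $\lim_\varphi$ is corepresented) has vanishing cotangent complex -- followed by a passage to the limit over the defining filtration, together with the fact that $\lim_\varphi$ depends on a ring only through its reduction.

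For the left arrow: since $(\tau_{\leq-1}R)^{hC_p}$ is $(\leq-1)$-coconnective, the map $(\tau_{\geq 0}R)^{hC_p}\to R^{hC_p}$ is an isomorphism on $\pi_0$, so we may assume $R$ is connective. In the homotopy fixed point spectral sequence $E_2^{s,t}=H^s(C_p;\pi_tR)\Rightarrow\pi_{t-s}R^{hC_p}$, the terms contributing to $\pi_0$ are $H^s(C_p;\pi_sR)$ for $s\geq 0$; the $s=0$ term equals $(\pi_0R)^{C_p}=\pi_0R$ since the action on $\pi_0R$ is trivial, and the terms with $s\geq 1$ are killed by $p$. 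Thus the restriction map $\pi_0R^{hC_p}\to\pi_0R$ has cokernel killed by $p$ and kernel the positive-filtration ideal, which is topologically nilpotent in the complete ring $\pi_0R^{hC_p}$; by the fact above it becomes an isomorphism after $\lim_\varphi\bigl((-)/p\bigr)$, so $\pflat R^{hC_p}\cong\pflat R$. For the right arrow, assume the $C_p$-action on $R$ (hence on $\pi_0R$) is trivial. After again reducing to $R$ connective -- the contribution of $\tau_{\leq-1}R$ to $\pi_0R^{tC_p}$ is $p$-power torsion and hence invisible to $\pflat$ -- run the Tate spectral sequence $\hat{E}_2^{s,t}=\hat H^s(C_p;\pi_tR)\Rightarrow\pi_{t-s}R^{tC_p}$, which converges with complete filtration since $R$ is bounded below. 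Its $\pi_0$-contributions $\hat H^s(C_p;\pi_sR)$, $s\geq 0$, are all killed by $p$ except $\hat H^0(C_p;\pi_0R)=(\pi_0R)^{C_p}/\Nm(\pi_0R)$, and for the trivial action $\Nm$ on $\pi_0R$ is multiplication by $p$, so this term is $\pi_0R/p$. As before $\pi_0R^{tC_p}$ surjects onto a subgroup of $\pi_0R/p$ with topologically nilpotent kernel, whence $\pflat R^{tC_p}\cong\lim_\varphi(\pi_0R/p)=\pflat R$. Finally, the composite $R\to R^{hC_p}\xrightarrow{\can}R^{tC_p}$ induces, under these identifications, the identity on $\pflat R$; since the restriction $R^{hC_p}\to R$ splits the unit $R\to R^{hC_p}$, the map $\pflat(R\to R^{hC_p})$ is an isomorphism, and therefore so is $\pflat(\can)\colon\pflat R^{hC_p}\to\pflat R^{tC_p}$ by two-out-of-three.

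The real work lies in making the insensitivity fact of the first paragraph precise when the spectral-sequence filtration is infinite: one must check that the positive-filtration ideal of $\pi_0R^{hC_p}$ (resp.\ $\pi_0R^{tC_p}$) is topologically nilpotent and that the ambient ring is complete for the topology it generates (not merely for the spectral-sequence filtration), that the degree-zero quotient differs from $\pi_0R$ (resp.\ $\pi_0R/p$) only by a $p$-torsion subgroup, and that $\lim_\varphi$ commutes with the relevant completion; this also subsumes checking convergence of the Tate spectral sequence where $R^{tC_p}$ is unbounded above.
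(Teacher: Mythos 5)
Your argument rests on an ``insensitivity'' principle that is false as stated, and this is a genuine gap rather than a deferrable technicality. You assert that $\lim_\varphi((-)/p)$ is unchanged by ``discarding a submodule killed by $p$,'' and you invoke this both to ignore the cokernel of the edge map $\pi_0R^{hC_p}\to\pi_0R$ and to discard the contribution of $\tau_{\leq-1}R$ to $\pi_0R^{tC_p}$. But inverse-limit perfection is not blind to $p$-torsion: the diagonal inclusion $\Z_p\hookrightarrow\Z_p\times\F_p$ has cokernel killed by $p$, yet $\lim_\varphi(\Z_p/p)=\F_p$ while $\lim_\varphi\bigl((\Z_p\times\F_p)/p\bigr)=\F_p\times\F_p$. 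What perfection kills is (topologically) nilpotent, or more generally Frobenius-nilpotent, elements; you have not shown that the relevant kernels and cokernels are of that kind. Moreover, even the input to your principle is wrong for the left arrow: an element $x\in E_2^{0,0}=\pi_0R$ supporting a $d_r$ into the $p$-torsion group $E_r^{r,r-1}$ only gives that $px$ survives to $E_{r+1}$, where it may support a $d_{r+1}$, and so on; hence the cokernel of the edge map is merely locally $p^\infty$-torsion, not killed by $p$. (The salvageable observation is that $x^p$ survives $d_r$ whenever $x$ does, by the Leibniz rule into a $p$-torsion target, but promoting that to a statement about $\lim_\varphi$ across infinitely many pages is precisely the ``real work'' you set aside, and it is not routine.)

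Both halves admit much shorter arguments, which is the route the paper takes. For the left arrow no spectral sequence is needed: $\pflat$ is a right adjoint (\Cref{prop:witt-and-tilt}) valued in the $1$-category $\Perf$ and factoring through $\pi_0$, so it carries the limit $R^{hC_p}$ to the $C_p$-invariants of $\pflat R$ computed in $\Perf$; since the action on $\pi_0R$ is trivial, the action on $\pflat R$ is trivial and the invariants are all of $\pflat R$. For the right arrow the paper uses that the Postnikov tower refines $\Ss\to\Z_p$ to a tower of square-zero extensions, that $(-)^{hC_p}$ and $(-)^{tC_p}$ are exact and commute with uniformly bounded below limits, and nil-invariance of $\pflat$, to reduce to $\Z_p\otimes R$; there, for the trivial action, the map $\pi_*(\Z_p\otimes R)^{hC_p}\to\pi_*(\Z_p\otimes R)^{tC_p}$ is the inclusion $\pi_*(\F_p\otimes R)[\![t]\!]\to\pi_*(\F_p\otimes R)(\!(t)\!)$ with $|t|=-2$, which is a nil-extension in degree $0$ because $R$ is bounded below. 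Your computation of $\hat H^0(C_p;\pi_0R)\cong\pi_0R/p$ is the shadow of this, but without the reduction to $\Z_p\otimes R$ you are forced back onto the false torsion-insensitivity claim.
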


\begin{proof}
  The first claim follows from 
  $\pflat(-)$ being a right adjoint 
  that factors through the functor $\pi_0(-)$
  and has target a $1$-category.

  We now prove the second claim.
  Using the fact that
  (i) the Postnikov tower refines the map
  $\Ss \to \Z_p$
  to an $\omega$-indexed tower of square-zero extensions,
  (ii) the fact that $(-)^{hC_p}$ and $(-)^{tC_p}$
  are exact and commute with uniformly bounded below limits and
  (iii) nil-invariance of $\pflat$ \cite[Lem. 2.16]{chromaticNSTZ}
  we reduce to proving the lemma for $\Z_p \otimes R$.
  In this case we may identify the map
  $ \pi_*(\Z_p \otimes R)^{hC_p} \to \pi_*(\Z_p \otimes R)^{tC_p}$
  with the map 
  $ \pi_*(\F_p \otimes R)[\![t]\!] \to \pi_*(\F_p \otimes R)(\!(t)\!) $
  where $|t| = -2$.
  As $R$ is bounded below and $t$ is in a negative degree
  this is a nil-extension in degree $0$
  and we may conclude. 
\end{proof}

\begin{rmk} \label{rmk:things-agree}
  In the setting of \Cref{lem:pflat-tcp},
  a trivialization of the $C_p$-action on $R$
  gives a section $R \to R^{hC_p}$ of the map $R^{hC_p} \to R$
  and through this we see that the map
  \[ \pflat R \to \pflat R^{tC_p} \]
  associated to a trivialization of the $C_p$-action on $R$
  will agree with the isomorphism from \Cref{lem:pflat-tcp}.
\end{rmk}

\begin{cnstr} \label{cor:tateTHHSZ}
  Using Lemmas \ref{lem:shells}, \ref{lem:rescale} and \ref{lem:pflat-tcp} 
  we fix an identifcation
  \[ \begin{tikzcd}
    \pflat \THH(\SPp^{B\ZZ}) \ar[d, "\cong"] &
    \pflat \THH(\SPp^{B\ZZ})^{hC_p} \ar[l] \ar[r] \ar[d, "\cong"] &
    \pflat \THH(\SPp^{B\ZZ})^{tC_p} \ar[d, "\cong"] \\
    \LCF{\Z_p} &
    \LCF{\ZZ_p} \ar[l, "\Id"] \ar[r, "{(-)_{|p\Z_p}}"] &
    \LCF{p\ZZ_p}
  \end{tikzcd} \]
  which on the left column agrees with the one from \Cref{lem:thh-s1}.
\end{cnstr}

\subsection{As a cyclotomic spectrum}

The cyclotomic Frobenius on $\THH(\SP^{B\Z})$ is an isomorphism and it turns out that this cyclotomic spectrum is closely related to the one corepresenting $\TR$.

\begin{prop}\label{prop:cycfrobuniv}
  The cyclotomic Frobenius map
  $\varphi \colon \THH(\Ss^{B\Z_p}) \to \THH(\Ss^{B\Z_p})^{tC_p}$
  is an isomorphism
  and under the identifications from \Cref{cor:tateTHHSZ}
  $\pflat(\varphi)$ is given by $\res_{1/p}$
  where $1/p$ is the map $1/p \colon p\Z_p \to \Z_p$.  
\end{prop}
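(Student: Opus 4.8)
The plan is to deduce both assertions from the shell decomposition of $\THH(\Ss^{B\Z_p})$ recorded in \Cref{lem:shells} and \Cref{lem:rescale}, together with the Segal conjecture in the form already used for $\crTR$ in \Cref{lem:describe_crtr}. For each $N\geq 0$ the clopen decomposition $\Z_p=\coprod_{0\le j<N}p^j\Z_p^{\times}\ \sqcup\ p^N\Z_p$ gives a $\T$-equivariant splitting of $\THH(\Ss^{B\Z_p})$ whose $j$-th summand is, by \Cref{lem:shells}, isomorphic to $\W\LCF{p^j\Z_p^{\times}}\otimes\Sigma^{-1}\Ss[\T/C_{p^j}]$, and whose last summand is $(p^N)^{*}\THH(\Ss^{B\Z_p})$ by iterating \Cref{lem:rescale}. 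The first claim then follows once one checks that $\varphi$ is "upper triangular" for this exhaustive filtration: it carries the summand supported on $p^j\Z_p$ into the summand of $\THH(\Ss^{B\Z_p})^{tC_p}$ supported on $p^{j+1}\Z_p$, and on the shell at $p^j\Z_p^{\times}$ it is the composite of the Segal-conjecture isomorphism $\Sigma^{-1}\Ss[\T/C_{p^j}]\xrightarrow{\ \sim\ }(\Sigma^{-1}\Ss[\T/C_{p^{j+1}}])^{tC_p}$ (exactly as for $\crTR$ in \Cref{lem:describe_crtr}, after using \Cref{lem:tcp-commute} to commute $(-)^{tC_p}$ past the coefficient ring $\W\LCF{p^{j+1}\Z_p^{\times}}$) with the ring isomorphism $\W\LCF{p^j\Z_p^{\times}}\xrightarrow{\ \sim\ }\W\LCF{p^{j+1}\Z_p^{\times}}$ induced by the homeomorphism $x\mapsto px$. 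Since each graded piece is then an isomorphism, $\varphi$ is an isomorphism; this is consistent with $(\THH(\Ss^{B\Z_p})_{|\Z_p^{\times}})^{tC_p}=0$ from \Cref{lem:shells}, as $\varphi$ sends the $j=0$ shell to the $j=1$ shell of the target. (Alternatively one can arrange the same data so as to invoke \Cref{lem:identify-crTR}.)

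For the formula for $\pflat(\varphi)$, apply $\pflat$ and use the identifications of \Cref{cor:tateTHHSZ}, so that $\pflat(\varphi)$ becomes a ring homomorphism $\LCF{\Z_p}\to\LCF{p\Z_p}$. The description of $\varphi$ on shells identifies, for every $j$, the coefficient ring $\W\LCF{p^j\Z_p^{\times}}$ of the $j$-th source shell with the coefficient ring $\W\LCF{p^{j+1}\Z_p^{\times}}$ of the $(j{+}1)$-st target shell via $\W$ of the homeomorphism $x\mapsto px$; equivalently, on $\pflat$ this is restriction along $p^{j+1}\Z_p^{\times}\xrightarrow{\,1/p\,}p^j\Z_p^{\times}$. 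Assembling over all $j$ yields precisely $f\mapsto f\circ(1/p)$ on locally constant $\Fp$-valued functions, i.e.\ $\pflat(\varphi)=\res_{1/p}$. The one point requiring care is that the identification of $\pflat\THH(\Ss^{B\Z_p})^{tC_p}$ with $\LCF{p\Z_p}$ produced by this shell analysis agrees with the one fixed in \Cref{cor:tateTHHSZ}: this follows from \Cref{lem:rescale} (which says $\psi_p$ induces $\res_p$ on $\pflat$), from \Cref{rmk:things-agree} (guaranteeing that the trivialization of the $C_p$-action used to compute $\pflat$ of the Tate construction agrees with the one coming from $\psi_p$), and from the built-in compatibility of the left-hand column of \Cref{cor:tateTHHSZ} with \Cref{lem:thh-s1}.

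The main obstacle is keeping the equivariant Segal-conjecture bookkeeping honest. One must genuinely verify that the cyclotomic Frobenius shifts the shell grading downward by exactly one — this is where the Tate diagonal for $\THH$ interacts with the geometry of $\cL B\Z_p\cong\coprod_{w\in\Z_p}B\Z_p(w)$ and the $p$-power operation $w\mapsto pw$ of \Cref{exm:loop-action} — and that, after tracking all the rescalings $\T/C_p\cong\T$ and $x\mapsto px$, the resulting ring map on $\pflat$ is precisely $\res_{1/p}$ and not, say, $\res_{\cdot p}$, or (under a different parametrization of the target) the identity.
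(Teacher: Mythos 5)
Your proposal reduces both claims to the assertion that the cyclotomic Frobenius is ``upper triangular'' for the shell decomposition, acting on the shell over $p^j\Z_p^{\times}$ by the Segal-conjecture equivalence $\Sigma^{-1}\Ss[\T/C_{p^j}]\xrightarrow{\ \sim\ }(\Sigma^{-1}\Ss[\T/C_{p^{j+1}}])^{tC_p}$ tensored with $\W$ of the rescaling $x\mapsto px$. But that assertion \emph{is} the proposition: it is equivalent to $\pflat(\varphi)=\res_{1/p}$ together with the isomorphism claim, and you flag its verification as ``the main obstacle'' without carrying it out. Nothing in \Cref{lem:shells}, \Cref{lem:rescale} or \Cref{lem:describe_crtr} constrains $\varphi$; those results describe the underlying $\T$-spectrum and its Tate constructions but are silent on the Frobenius (for $\crTR$ the Frobenius is \emph{defined} to be the Segal-conjecture map, whereas here it must be identified). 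Knowing only that $\varphi$ is a map of commutative algebras, $\pflat(\varphi)\colon\LCF{\Z_p}\to\LCF{p\Z_p}$ is restriction along \emph{some} continuous map $p\Z_p\to\Z_p$ --- it could a priori be the inclusion, which is exactly what $\pflat(\mathrm{can})$ is by \Cref{cor:tateTHHSZ} --- and distinguishing these possibilities is the entire content. Even the preliminary claim that $\varphi$ shifts the support from $p^j\Z_p$ to $p^{j+1}\Z_p$ already presupposes knowledge of $\pi_0(\varphi)$ on idempotents. Invoking \Cref{lem:identify-crTR} instead does not help, since its hypotheses require the Frobenius data as input.

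The missing ingredient, which the paper supplies, is an actual computation of $\varphi$ via the Tate diagonal. By the universal property of $\THH$ of a commutative algebra ($\THH(R)\cong\colim_{\T}R$ in $\CAlg$), the map $\psi_p^{tC_p}\circ\varphi$ is determined by its restriction along $\Ss^{B\Z_p}\to\THH(\Ss^{B\Z_p})$; by \cite[Cor.~IV.2.3]{NS} that restriction is the Tate-valued Frobenius $m\circ\Delta_p$ of $\Ss^{B\Z_p}$, which composed with $\mathrm{can}^{-1}$ is the identity for $\Ss$ and hence, by naturality in the limit over $B\Z_p$, for $\Ss^{B\Z_p}$ as well. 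Since $\psi_p^{tC_p}$ is an isomorphism by \Cref{lem:rescale} and \Cref{lem:shells}, it follows that $\varphi$ is an isomorphism, and $\pflat(\varphi)$ is then read off from \Cref{lem:rescale}, \Cref{lem:pflat-tcp} and \Cref{cor:tateTHHSZ}. Some input of this kind (engaging with $\Delta_p$) is unavoidable; as written, your outline assumes the conclusion.
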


\begin{proof}
  We will construct the following diagram
  \[ \begin{tikzcd}
    \Ss^{B\ZZ_p} \ar[r,"\Delta_p"] \ar[d] &
    ((\Ss^{B\Z_p})^{\otimes p})^{tC_p} \ar[d,"(\mu_p^*)^{tC_p}"] \ar[r, "m"] &
    (\Ss^{B\ZZ_p})^{tC_p} \ar[d] &
    \Ss^{B\Z_p} \ar[l, "{\mathrm{can}}"', "\cong"] \ar[d] \\
    \THH(\SPp^{B\ZZ_p}) \ar[r,"\varphi"] &
    \THH(\SPp^{B\ZZ_p})^{tC_p} \ar[r, "\psi_p^{tC_p}"] &
    (p^*\THH(\SPp^{B\ZZ_p}))^{tC_p} &
    \THH(\SPp^{B\ZZ_p}) \ar[l, "\cong"] 
  \end{tikzcd} \]
  of shape $\Delta^3 \times ( * \to B\T )$.
  The left square is from \cite[Cor. IV.2.3]{NS}.
  The middle square is obtained by tensoring the commutative algebra
  $\Ss^{B\Z_p}$ with the square associated to the short exact sequence
  $C_p \to \T \to \T/C_p$ and applying $(-)^{tC_p}$.
  The right square is constructed using a 
  trivialization of the $C_p$ action
  on $* \to \T/C_p$.
  The indicated isomorphisms are from \cite[corollary 6.7]{yuan2023integral}.

  The map $\psi_p^{tC_p}$ is an isomorphism by
  Lemmas \ref{lem:rescale} and \ref{lem:shells}.
  Using the universal property of $\THH$ as a commutative algebra
  we will show that the composite along the bottom row is the identity
  by showing that the composite along the top row is the identity.
  The composite $m \circ \Delta_p$ is the Tate-valued Frobenius.
  As the Tate-valued Frobenius composed with $\mathrm{can}^{-1}$
  is the identity for $\Ss$,
  naturality in the limit over $B\Z_p$ implies it is the identity for
  $\Ss^{B\Z_p}$ as well.\todo{is this last step valid.}

  The identification of $\pflat \varphi$ now follows from
  \Cref{lem:rescale}, \Cref{lem:pflat-tcp} and \Cref{cor:tateTHHSZ}.  
\end{proof}

\begin{prop}\label{prop:coassembly}
  Let $\crTR \in \CycSp$ denote the object corepresenting $\TR(-)$.
  There is a fiber sequence of cyclotomic spectra
  \[ \W (\LCF{\Z_p^\times}) \otimes \Sigma^{-1}\crTR \to \THH(\Ss^{B\Z}) \to \THH(\Ss)^{B\Z} \]
  where the second map is the natural coassembly map. 
\end{prop}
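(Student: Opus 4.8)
The plan is to prove that $F:=\fib(q)$, where $q\colon\THH(\Ss^{B\Z})\to\THH(\Ss)^{B\Z}$ is the coassembly map, is isomorphic in $\CycSp$ to $\W\LCF{\Z_p^{\times}}\otimes\Sigma^{-1}\crTR$; the displayed sequence is then just the defining cofiber sequence of $F$. The strategy is to endow $\Sigma F$ with precisely the data demanded by the recognition criterion \Cref{lem:identify-crTR}, so as to conclude $\Sigma F\cong R\otimes\crTR$ with $R:=\W\LCF{\Z_p^{\times}}$.

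First I would describe the underlying $\T$-equivariant $\W\LCF{\Z_p}$-module. By \Cref{lem:coassembly-po} and \Cref{lem:thh-s1}, $q$ is identified with the map $\Ss^{B\Z}\otimes\W\LCF{\Z_p}\to\Ss^{B\Z}\otimes\W(\F_p)$ induced by evaluation at $0\in\Z_p$; in particular $q$ restricts to an isomorphism over the basepoint, so $F_{|0}=0$ and $F\cong\Ss^{B\Z}\otimes\fib(\W\LCF{\Z_p}\xrightarrow{(-)_{|0}}\W(\F_p))$. Since $\W$ commutes with filtered colimits and, on each $\LCF{\Z/p^N}$, evaluation at $0$ is split with fiber $\bigoplus_{k<N}\LCF{p^k(\Z/p^{N-k})^{\times}}$, one computes $\fib(\W\LCF{\Z_p}\xrightarrow{(-)_{|0}}\W(\F_p))\cong\bigoplus_{k\ge0}\W\LCF{p^k\Z_p^{\times}}$, so that $F\cong\bigoplus_{k\ge0}F_{|p^k\Z_p^{\times}}$ as a $\W\LCF{\Z_p}$-module in $\T$-spectra, and \Cref{lem:shells} identifies $F_{|p^k\Z_p^{\times}}\cong\W\LCF{p^k\Z_p^{\times}}\otimes\Sigma^{-1}\Ss[\T/C_{p^k}]$. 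The rescaling homeomorphisms $\Z_p^{\times}\xrightarrow{\cdot p^k}p^k\Z_p^{\times}$ (\Cref{lem:rescale}) identify each $\W\LCF{p^k\Z_p^{\times}}$ with $R=\W\LCF{\Z_p^{\times}}$, so writing $X:=\Sigma F$ and $X_k:=\Sigma F_{|p^k\Z_p^{\times}}$ we obtain $X\cong\bigoplus_{k\ge0}X_k$ with $X_k\cong R\otimes\Ss[\T/C_{p^k}]$ in $\Mod(R)^{B\T}$; this is data (1) and (2) of \Cref{lem:identify-crTR}.

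Next I would analyze the cyclotomic Frobenius $\varphi$ of $F$. It is an isomorphism, since $\varphi$ is an isomorphism for $\THH(\Ss^{B\Z})$ by \Cref{prop:cycfrobuniv} and for $\THH(\Ss)^{B\Z}$ by the Segal conjecture applied over the finite complex $B\Z$ (where $(-)^{tC_p}$ commutes with the finite limit $(-)^{h\Z}$). By naturality of the $\W\dashv\pflat$ counit together with the identifications of \Cref{cor:tateTHHSZ} and the computation $\pflat\varphi=\res_{1/p}$ of \Cref{prop:cycfrobuniv}, the Frobenius is compatible with the $\W\LCF{\Z_p}$-module structures up to the ring isomorphism $\res_{1/p}$, which sends the $k$-th shell idempotent to the $(k{+}1)$-st (because $v(x/p)=k\iff v(x)=k+1$); hence $\varphi$ carries $F_{|p^k\Z_p^{\times}}$ into $(F_{|p^{k+1}\Z_p^{\times}})^{tC_p}$, with no off-shell components, and, being a restriction of the isomorphism $\varphi$, does so isomorphically. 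Passing through the rescaling maps — concretely via $\psi_p$ of \Cref{cnstr:psi-p} and \Cref{lem:rescale}, which realizes the $\res_{1/p}$-twist as plain rescaling since $\Z_p^{\times}\xrightarrow{\cdot p^{k+1}}p^{k+1}\Z_p^{\times}\xrightarrow{1/p}p^k\Z_p^{\times}$ equals $\cdot p^k$ — these components become, after applying $\Sigma$, isomorphisms $\varphi_k\colon X_k\to X_{k+1}^{tC_p}$ in $\Mod(R)^{B\T}$ whose sum is the cyclotomic Frobenius of $X$. This is data (3).

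Finally I would supply the base ring: take $R=\W\LCF{\Z_p^{\times}}$ with the cyclotomic structure of $\THH(\W\LCF{\Z_p^{\times}})$. Since $\LCF{\Z_p^{\times}}$ is perfect, $\W\LCF{\Z_p^{\times}}$ is a filtered colimit of finite products of copies of $\Ss_p$, and so the Segal conjecture gives $\THH(\W\LCF{\Z_p^{\times}})\simeq\W\LCF{\Z_p^{\times}}$ with trivial $\T$-action and invertible cyclotomic Frobenius; in particular the canonical map $R\to R^{tC_p}$ is an isomorphism. \Cref{lem:identify-crTR} then yields $\Sigma F\cong R\otimes\crTR$, i.e. $F\cong\W\LCF{\Z_p^{\times}}\otimes\Sigma^{-1}\crTR$ in $\CycSp$, and the defining cofiber sequence of $F$ is the asserted one. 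I expect the main obstacle to lie in the third paragraph: verifying both that $\varphi$ is exactly the sum of the $\varphi_k$ (no off-shell terms) and that, after transport through the rescaling isomorphisms, each $\varphi_k$ is an $R$-linear isomorphism $X_k\to X_{k+1}^{tC_p}$; this requires carefully reconciling the $\res_{1/p}$-semilinearity of the cyclotomic Frobenius with the $R$-module structure, and tracking the Segal-conjecture identification $F^{tC_p}\cong\bigoplus_k (F_{|p^k\Z_p^{\times}})^{tC_p}$ through the rescaling.
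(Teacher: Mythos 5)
Your proposal is correct and follows essentially the same route as the paper: split the fiber of the coassembly map into shells $F_{|p^k\Z_p^\times}$ via \Cref{lem:coassembly-po}, identify each shell as $\W\LCF{\Z_p^\times}\otimes\Sigma^{-1}\Ss[\T/C_{p^k}]$ via \Cref{lem:shells}, read off the Frobenius as a sum of shell-shifting isomorphisms from \Cref{prop:cycfrobuniv} (the paper handles your worry about the $\res_{1/p}$-semilinearity by the same free rescaling you describe), and conclude by the recognition criterion \Cref{lem:identify-crTR}. Your explicit verification that $R\to R^{tC_p}$ is an isomorphism for $R=\W\LCF{\Z_p^\times}$ is a correct filling-in of a hypothesis the paper leaves implicit.
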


\begin{proof}	
  Let $F$ denote the fiber of the coassembly map
  $ \THH(\Ss^{B\Z}) \to \THH(\Ss)^{B\Z} $.
  As it is the fiber of a map of commutative algebras,
  $F$ is naturally a non-unital commutative algebra in $\CycSp$.

  Using \Cref{lem:coassembly-po} we can read off that
  as a $\T$-equivariant nonunital commutative algebra
  $F$ is isomorphic to the direct sum
  $ \oplus_{k \geq 0} F_{|p^k\Z_p^\times} $.
  By \Cref{lem:shells}
  we have isomorphisms of $\W\LCF{\Z_p^\times}$-modules in $\Sp^{B\T}$
  \[ F_{|p^k\Z_p^\times} \cong \W\LCF{\Z_p^\times} \otimes \Sigma^{-1}\Ss[\T/C_{p^k}] \]
  and by \Cref{prop:cycfrobuniv} the cyclotomic Frobenius on $F$
  breaks up as a sum\footnote{By this we mean the sum of the maps composed with the assembly map for $tC_p$ for the infinite sum.} of isomorphisms
  $ F_{|p^k\Z_p^\times} \cong F_{|p^{k+1}\Z_p^\times}^{tC_p} $
  of $\W\LCF{\Z_p^\times}$-modules in $\Sp^{B\T}$.\footnote{Note that here we have freely rescaled $p^k\Z_p^\times$ to make the same algebra act on all objects.}
  The proposition now follows from \Cref{lem:identify-crTR}.
\end{proof}

\begin{cor} \label{cor:t-amplitude-W}
  The cyclotomic spectrum $\THH(\Ss^{B\Z})$ has $t$-amplitude in the range $[-1,3]$ as an object of $\CycSp_{+}$.
\end{cor}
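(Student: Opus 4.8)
The plan is to extract both bounds from the fiber sequence of \Cref{prop:coassembly},
\[ \W(\LCF{\Z_p^\times})\otimes\Sigma^{-1}\crTR \xrightarrow{\ f\ } \THH(\Ss^{B\Z}) \xrightarrow{\ g\ } \THH(\Ss)^{B\Z}, \]
combined with the uniform $t$-amplitude estimate of \Cref{lem:t-amplitude-repTR}. The lower bound is immediate: by \Cref{lem:thh-s1} the underlying spectrum of $\THH(\Ss^{B\Z})$ is $\Ss^{B\Z}\otimes\W\LCF{\Z_p}$, which is $(-1)$-connective because $\Ss^{B\Z}\simeq\Ss\oplus\Sigma^{-1}\Ss$ is $(-1)$-connective and $\W\LCF{\Z_p}$ is connective; since connectivity in the cyclotomic $t$-structure is detected on underlying spectra, $\THH(\Ss^{B\Z})\in\CycSp_{\geq -1}$.

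For the upper bound I would pass to $\TR$: by \cite[Theorem 9]{antieau-nikolaus} the functor $\TR$ is conservative and reflects coconnectivity on $\CycSp_+$, so it suffices to show $\TR(\THH(\Ss^{B\Z}))\in\Sp_{\leq 3}$. Rotating the fiber sequence to $\Sigma^{-1}\THH(\Ss)^{B\Z}\xrightarrow{\ \delta\ }\W(\LCF{\Z_p^\times})\otimes\Sigma^{-1}\crTR\to\THH(\Ss^{B\Z})$ exhibits $\THH(\Ss^{B\Z})$ as $\cofib(\delta)$, so it is enough to prove that $\TR(\delta)$ is an isomorphism on homotopy groups in all degrees $>3$. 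On the source, $\TR$ commutes with the limit $(-)^{B\Z}$ and with $\Sigma^{-1}$, so $\TR(\Sigma^{-1}\THH(\Ss)^{B\Z})\simeq\Sigma^{-1}\big(\TR(\THH(\Ss))\big)^{B\Z}$, and $\TR(\THH(\Ss))$ is the $p$-complete sphere; the target is analyzed using that $\crTR$ corepresents $\TR$ (\Cref{lem:trcorep}), where \Cref{lem:t-amplitude-repTR} applied to the cyclotomic truncation $\tau^{\mathrm{cyc}}_{\leq 0}\Ss$ controls it through a finite range. The map $\delta$ itself is identified in that range from the explicit cyclotomic Frobenius on $\crTR$ (\Cref{lem:describe_crtr}) and on $\THH(\Ss^{B\Z})$ (\Cref{prop:cycfrobuniv}, where $\pflat\varphi=\res_{1/p}$ shifts the ``shells'' $p^k\Z_p^\times\subseteq\Z_p$): away from $0\in\Z_p$ it matches the index-shifting structure of $\crTR$.

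The crux, and the main obstacle, is exactly this comparison. Because $\TR(\THH(\Ss))$ is the $p$-complete sphere --- hence unbounded above --- \emph{neither} end of the fiber sequence is cyclotomically bounded above, so the bound on $\THH(\Ss^{B\Z})$ cannot be read off by separately truncating the two ends: it genuinely comes from the fact that the boundary map $\delta$ identifies the unbounded ``tail'' of $\THH(\Ss)^{B\Z}$ with that of $\W(\LCF{\Z_p^\times})\otimes\Sigma^{-1}\crTR$ in all degrees above $3$. Making this precise requires combining the shell-shifting structure of the Frobenius with the uniform amplitude bound of \Cref{lem:t-amplitude-repTR} (the latter being what guarantees that the discrepancy is concentrated in a range of size controlled by the constant $4$, shifted down by the $\Sigma^{-1}$). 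Once $\delta$ is shown to be an equivalence above degree $3$, its cofiber $\THH(\Ss^{B\Z})$ lies in $\CycSp_{\leq 3}$, which together with the lower bound yields the claimed $t$-amplitude $[-1,3]$.
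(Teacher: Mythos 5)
There is a genuine gap, and it begins with the reading of the statement. The only notion of $t$-amplitude defined in the paper (\Cref{dfn:t-amplitude}) is for an exact \emph{functor}, and the way the corollary is used later (to bound $W_k \otimes X_\infty$ in \Cref{prop:bdd-to-triv}, and in the last line of the proof of \Cref{thm:E1A2algtame}) makes clear that the assertion is that the functor $\THH(\Ss^{B\Z})\otimes(-)\colon \CycSp_+\to\CycSp_+$ has $t$-amplitude $[-1,3]$. You instead set out to prove that the \emph{object} $\THH(\Ss^{B\Z})$ lies in $\CycSp_{[-1,3]}$, i.e.\ that $\TR(\Ss^{B\Z})$ is $3$-truncated. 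That statement is false: the unit and augmentation of $\Ss^{B\Z}$ exhibit $\TR(\Ss)$ as a retract of $\TR(\Ss^{B\Z})$, and $\TR(\Ss)$ contains the $p$-complete sphere as a retract, so it is unbounded above (equivalently, $\TC(\Ss)$ would otherwise be bounded above, which it is not). Consequently the step you yourself single out as the crux --- that $\TR(\delta)$ is an isomorphism in all degrees $>3$, so that the cofiber is $3$-truncated --- cannot be established; hitting the rotated fiber sequence with the retraction onto $\TR(\Ss)$ shows the two unbounded ``tails'' do not cancel.

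Once the statement is read as a claim about the tensoring functor, the argument is short and requires no analysis of the boundary map. Tensoring the fiber sequence of \Cref{prop:coassembly} with a bounded $X\in\CycSp_+$ gives a fiber sequence whose outer terms are $\W(\LCF{\Z_p^\times})\otimes\Sigma^{-1}\crTR\otimes X$ and $\THH(\Ss)^{B\Z}\otimes X\cong(\Ss\oplus\Sigma^{-1}\Ss)\otimes X$, using that $\THH(\Ss)$ is the unit of $\CycSp$. The second functor visibly has $t$-amplitude $[-1,0]$, and the first has $t$-amplitude $[-1,3]$ after writing $\W(\LCF{\Z_p^\times})$ as a ($p$-adic) sum of spheres and applying \Cref{lem:t-amplitude-repTR}; the middle term then inherits the amplitude $[-1,3]$. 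Your lower-bound observation (the underlying spectrum of $\THH(\Ss^{B\Z})$ is $(-1)$-connective and connectivity in the cyclotomic $t$-structure is detected on underlying spectra) is correct and does carry over to the functor statement, but the upper bound must likewise be proved for the functor, not for the object.
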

\begin{proof}
  Using the cofiber sequence from \Cref{prop:coassembly} and
  the fact that $\THH(\Ss)$ is the unit of cyclotomic spectra
  we can read off that it will suffice to show that
  $ \W (\LCF{\Z_p^\times}) \otimes \Sigma^{-1}\repTR$
  has $t$-amplitude in the range $[-1,3]$.
  Writing $\W (\LCF{\Z_p^\times})$ as a sum of spheres,
  we conclude by applying \Cref{lem:t-amplitude-repTR}.
\end{proof}

\subsection{The failure of hyperdescent and \texorpdfstring{$\A^1$}{A1}-invariance}
\label{sec:knlocal}

Having studied $\THH(\SP^{B\Z})$ as a cyclotomic spectrum,
we briefly pause to extract some consequences. The following implies \Cref{prop:thicksubcat} of the introduction.

\begin{cor}\label{cor:nilthicksub}
  Let $R \in \Alg(\Sp)$ be connective.
  Then $\W\LCF{\Z_p^\times} \otimes \TC(R)$ is in
  the thick subcategory generated by the fiber of the coassembly map
  \[ \TC(R^{B\Z}) \to \TC(R)^{B\Z}. \]
\end{cor}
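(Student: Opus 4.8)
The plan is to reduce to the universal case $R = \SP$ and then tensor up. First I would invoke \Cref{prop:coassembly}, which identifies the fiber of the coassembly map $\THH(\SP^{B\Z}) \to \THH(\SP)^{B\Z}$, as a cyclotomic spectrum, with $\W(\LCF{\Z_p^\times}) \otimes \Sigma^{-1}\crTR$. Applying $\TC$ and using that $\crTR$ corepresents $\TR$ (\Cref{lem:trcorep}), the fiber of the $\TC$ coassembly map $\TC(\SP^{B\Z}) \to \TC(\SP)^{B\Z}$ becomes $\TC$ of $\W(\LCF{\Z_p^\times}) \otimes \Sigma^{-1}\crTR$. Since $\LCF{\Z_p^\times}$ is a nonzero ring of continuous $\F_p$-valued functions (so $\W(\LCF{\Z_p^\times})$ is a nonzero $p$-complete commutative ring), the spectrum $\SP = \W(\F_p)$ is a retract of $\W(\LCF{\Z_p^\times})$; hence $\crTR$ is a retract of $\W(\LCF{\Z_p^\times})\otimes \crTR$ in $\CycSp$, and consequently $\crTR$ lies in the thick subcategory of $\CycSp$ generated by $\W(\LCF{\Z_p^\times})\otimes\Sigma^{-1}\crTR$. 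Since $\THH(\SP) = \SP$ (the unit of $\CycSp$) and $\crTR$ receives a unit map $\crTR \to \SP$ — or more directly, one extracts $\SP$ from $\crTR$ via the $j=0$ summand $\SP[\T/C_{p^0}] = \SP$ of the underlying spectrum, which by \Cref{lem:describe_crtr} is a retract in $\CycSp$ — we conclude that the unit $\SP \in \CycSp$ is in the thick subcategory generated by $\W(\LCF{\Z_p^\times})\otimes\Sigma^{-1}\crTR$, i.e. by the fiber of the coassembly map for $R = \SP$.

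Now for general connective $R \in \Alg(\Sp)$: the functor $\THH(R) \otimes_{\CycSp} (-)$ is exact and symmetric monoidal on $\CycSp$, and it sends the unit $\SP$ to $\THH(R)$. The formation of the coassembly map is compatible with this tensoring: one has $\THH(R^{B\Z}) \cong \THH(R) \otimes_{\CycSp} \THH(\SP^{B\Z})$ and $\THH(R)^{B\Z} \cong \THH(R)\otimes_{\CycSp}\THH(\SP)^{B\Z}$ (using that the $B\Z$-limit of the dualizable object $\THH(\SP)^{B\Z}$ commutes with tensoring, and that $\THH$ is symmetric monoidal so $\THH(R^{B\Z}) = \THH(\SP^{B\Z}\otimes R)$), and the coassembly maps correspond. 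Therefore the fiber of the $\THH$ coassembly map for $R$ is $\THH(R)\otimes_{\CycSp}\bigl(\W(\LCF{\Z_p^\times})\otimes\Sigma^{-1}\crTR\bigr)$. Tensoring the previous paragraph's thick-subcategory containment with $\THH(R)$ shows $\THH(R)$ lies in the thick subcategory of $\CycSp$ generated by this fiber.

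Finally I would apply the exact functor $\TC \colon \CycSp \to \Sp$ (which preserves thick subcategories) to this containment. This shows $\TC(\THH(R)) = \TC(R)$ lies in the thick subcategory of $\Sp$ generated by $\TC$ of the fiber of the $\THH$ coassembly map, i.e. by the fiber of the $\TC$ coassembly map $\TC(R^{B\Z}) \to \TC(R)^{B\Z}$. To get the statement with the extra factor $\W\LCF{\Z_p^\times}$, note $\W\LCF{\Z_p^\times}\otimes\TC(R)$ is simply $\TC$ applied to $\W\LCF{\Z_p^\times}\otimes\THH(R)$, and the same argument (tensoring with $\W\LCF{\Z_p^\times}\otimes\THH(R)$ instead of $\THH(R)$) applies verbatim; alternatively it follows formally since $\W\LCF{\Z_p^\times}\otimes\TC(R)$ is a module over $\TC(R)$, hence in the thick subcategory generated by $\TC(R)$. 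The main obstacle is bookkeeping the compatibility of the coassembly map with the symmetric monoidal structure on $\CycSp$ — specifically checking that $\THH(\SP^{B\Z}\otimes R) \to \THH(R)^{B\Z}$ is obtained from $\THH(\SP^{B\Z})\to\THH(\SP)^{B\Z}$ by tensoring with $\THH(R)$, which uses that the relevant $B\Z$-limits commute with the relevant tensor products (true because $\THH(\SP)^{B\Z}$ has dualizable underlying spectrum, or because $B\Z$ is a finite-dimensional, though one should phrase this carefully since $B\Z$ has infinite cohomological dimension — the key point being $\THH(\SP)^{B\Z} = \SP^{B\Z}$ which as a cyclotomic spectrum is still dualizable-enough, or one simply invokes that $\SP^{B\Z} \simeq \SP \oplus \Sigma^{-1}\SP$ on underlying spectra).
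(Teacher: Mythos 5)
Your overall architecture — identify the fiber of the universal coassembly map via \Cref{prop:coassembly}, tensor with $\THH(R)$ using monoidality of $\THH$ and dualizability of $\Ss^{B\Z}$, then apply the exact functor $\TC$ — is the same as the paper's. But two of your key steps are justified incorrectly, and one of them is the heart of the argument.

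First, your passage from $\crTR$ to the unit $\Ss$ of $\CycSp$ is broken. The $j=0$ summand of $\crTR$ is $\Ss[\T/C_{p^0}] = \Ss[\T] = \Sigma^\infty_+\T$, the free $\T$-orbit, not the unit $\Ss$ with trivial action; and in any case the direct sum decomposition in \Cref{lem:describe_crtr} is only a decomposition of the underlying $\T$-spectrum — the cyclotomic Frobenius carries the $j$-th summand into the $(j+1)$-st, so no individual summand is a retract of $\crTR$ \emph{as a cyclotomic spectrum}. Likewise, the mere existence of a map $\crTR \to \Ss$ gives no thick-subcategory containment. The ingredient you are missing is the one the paper uses: the fiber sequence $\TC \to \TR \xrightarrow{1-F}\TR$ corepresents to a cofiber sequence $\crTR \xrightarrow{1-F} \crTR \to \Ss$ in $\CycSp$, exhibiting $\Ss$ (and, after tensoring, $\W\LCF{\Z_p^\times}$) as a cofiber of a self-map of $\crTR$ (resp. of the fiber of the coassembly map). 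With that cofiber sequence in hand, your observation that $\Ss = \W(\F_p)$ is a retract of $\W\LCF{\Z_p^\times}$ (constants followed by evaluation at a point) is correct and would in fact yield the stronger conclusion that $\TC(R)$ itself lies in the relevant thick subcategory; but as written you never supply a valid argument that gets you out of $\crTR$.

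Second, your endgame for reinstating the $\W\LCF{\Z_p^\times}$ factor fails. The claim that $\W\LCF{\Z_p^\times}\otimes\TC(R)$ lies in the thick subcategory generated by $\TC(R)$ "because it is a module" is false: $\W\LCF{\Z_p^\times}\otimes\TC(R)$ is a $p$-completed infinite sum of copies of $\TC(R)$, and thick subcategories are not closed under infinite sums (nor under arbitrary module formation). Your "verbatim" alternative also does not work: tensoring the universal containment with $\W\LCF{\Z_p^\times}\otimes\THH(R)$ puts $\W\LCF{\Z_p^\times}\otimes\TC(R)$ in the thick subcategory generated by $\W\LCF{\Z_p^\times}\otimes\mathrm{fib}$, not by $\mathrm{fib}$ itself. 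The paper's route avoids both problems: it shows $\W\LCF{\Z_p^\times}$ is in the thick subcategory generated by the fiber already in $\CycSp$, tensors with $\THH(R)$, and then commutes $\TC$ past the $p$-adic sum of spheres using that $\TC$ preserves colimits of connective cyclotomic spectra — a point your appeal to bare exactness of $\TC$ does not cover.
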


\begin{proof}
  From the fiber sequence
  $\TC \to \TR \xrightarrow{1-F} \TR$,
  we deduce that there is a corresponding cofiber sequence of corepresenting objects
  $ \crTR \to \crTR \to \Ss$
  in $\CycSp$.
  Combining this with \Cref{prop:coassembly} we learn that
  $\W\LCF{\Z_p^\times}$ is in the thick subcategory of $\CycSp$ generated by
  the fiber of the coassembly map $\THH(\SP^{B\ZZ}) \to \THH(\SP)^{B\ZZ}$.
  
  Using the fact that $\THH$ is symmetric monoidal,
  the fact that $\W(\LCF{\ZZ_p^\times})$ is a $p$-adic sum of spheres and
  the fact that $\TC:\CycSp_{\geq0} \to \Sp$ preserves colimits
  \cite[Theorem 2.7]{clausen2021}, we conclude.
\end{proof}

The remainder of this section is dedicated to a discussion of the consequences of \Cref{cor:nilthicksub} for our understanding of $K(n+1)$-local $K$-theory of $T(n)$-local rings. 
Note that the material in in the remainder of the section is not cited in later sections, though the argument of \Cref{thm:failurekn} is used in the proof of \Cref{thm:maincyc}.


Recall that $K(1)$-local $K$-theory of connective commutative $\Q$-algebras is an incredibly well behaved invariant.

\begin{enumerate}
\item It satisfies \'etale hyperdescent under mild finiteness conditions \cite{thomason1985algebraic} (see \cite{clausen2021hyperdescent}). 
\item It satisfies nil-invariance \cite[Corollary 4.23]{LMMT}.
\item It satisfies $\mathbb{A}^1$-invariance: $L_{K(1)}K(R) \cong L_{K(1)}K(R[t])$ \cite[Corollary 4.24]{LMMT}.
\end{enumerate}

Unfortunately, as a consequence of \Cref{cor:nilthicksub},
$K(n+1)$-local $K$-theory of $T(n)$-local algebras does not
share any of these properties when $n \geq 1$. This follows from the following result, setting $X = L_{K(n+1)}\SP$.

\begin{thm}\label{thm:failurekn}
  Let $R$ be a $T(n)$-local $\E_{1}$-ring for $n\geq 1$ and let $X$ be a spectrum.
  If $F(R)\coloneqq L_{T(n+1)}(X\otimes K(R)) \neq 0$, then none of the maps
  \begin{align*}
    F(R) &\to  F(R[t]) \\
    F(R) &\to F(R\langle \epsilon_{-1} \rangle) \\
    F(R^{B\ZZ}) &\to F(R)^{B\ZZ}
    \end{align*}
  are isomorphisms.
\end{thm}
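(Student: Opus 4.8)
The plan is to deduce all three failures simultaneously from \Cref{cor:nilthicksub}. The key point is that $K$-theory of $T(n)$-local rings, after $T(n+1)$-localization, is computed by $\TC$ of connective covers via the trace methods recalled in the introduction (\cite{CMNN,LMMT,DGM,levy2022algebraic}), so that each of the three maps in question is, up to $T(n+1)$-localization and tensoring with $X$, controlled by the coassembly map $\TC(\tau_{\geq 0}R^{B\ZZ}) \to \TC(\tau_{\geq 0}R)^{B\ZZ}$ (or an analogue for $R[t]$ and $R\langle \epsilon_{-1}\rangle$). First I would reduce to the $\ZZ$-action statement: both $R[t]$ and $R\langle \epsilon_{-1}\rangle$ receive natural maps from $R^{B\ZZ}$-type constructions — more precisely, $R\langle \epsilon_{-1}\rangle \simeq R \otimes_{\Ss} \Ss\langle \epsilon_{-1}\rangle$ and $\Ss\langle \epsilon_{-1}\rangle$ is a retract built from $\Ss^{B\ZZ}$ (as $\Ss^{B\ZZ} \simeq \Ss \oplus \Sigma^{-1}\Ss$ additively, with the exterior-algebra multiplication), while $R[t]$ relates to $R^{B\ZZ}$ through the cofiber sequence $\Ss[t] \to \Ss[t^{\pm 1}] \to \Sigma \Ss[t]$ and $\Ss[t^{\pm 1}] \simeq \Ss[B\ZZ_{+}]$... wait, more carefully: the natural approach is that failure of $\A^1$-invariance and nil-invariance both follow from the same mechanism as failure of the $B\ZZ$-coassembly map, because $K(R[t])/K(R)$ and $K(R\langle\epsilon_{-1}\rangle)/K(R)$ are governed by $\THH$ of $\Ss[t]$, resp. $\Ss\langle\epsilon_{-1}\rangle$, in cyclotomic spectra, and these sit in the thick subcategory generated by $\THH(\Ss^{B\ZZ})$ relative to $\THH(\Ss)$.

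Concretely, the main step is: \emph{if any one of the three maps is an isomorphism, then so is the $B\ZZ$-coassembly map} $F(R^{B\ZZ}) \to F(R)^{B\ZZ}$. For the third map this is tautological. For nil-invariance, use that $R\langle\epsilon_{-1}\rangle = \tau_{\geq 0}$ of $R^{B\ZZ}$ localized appropriately — or rather, observe $R^{B\ZZ}$ is obtained from $R\langle\epsilon_{-1}\rangle$-type square-zero data, so nil-invariance of $F$ on the relevant connective rings forces the coassembly fiber to vanish after $T(n+1)$-localization. For $\A^1$-invariance, combine $\A^1$-invariance with the localization sequence relating $R[t]$, $R[t^{\pm1}]$, and the fact that $K(R[t^{\pm 1}])$ splits off $K(R)^{B\ZZ}$-type terms (fundamental theorem / Bass delooping), so $\A^1$-invariance of $F$ again collapses the coassembly fiber. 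Then, \emph{granting the $B\ZZ$-coassembly map is an isomorphism}, \Cref{cor:nilthicksub} tells us $\W\LCF{\Z_p^{\times}} \otimes \TC(\tau_{\geq 0}R)$ lies in the thick subcategory generated by the fiber of that coassembly map, which would then be $T(n+1)$-acyclic after tensoring with $X$; hence $F(R) = L_{T(n+1)}(X \otimes K(R)) = L_{T(n+1)}(X \otimes \TC(\tau_{\geq 0}R))$ is a retract of $L_{T(n+1)}(X \otimes \W\LCF{\Z_p^\times} \otimes \TC(\tau_{\geq 0}R)) = 0$, since $\W\LCF{\Z_p^\times}$ is a ($p$-complete) sum of spheres and $L_{T(n+1)}$ commutes with such sums and with the thick subcategory operations. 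This contradicts the hypothesis $F(R) \neq 0$.

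\textbf{Expected main obstacle.} The routine-but-delicate part is the translation between the three maps at the level of $K$-theory and the single $B\ZZ$-coassembly map at the level of $\TC$: one must pass from $K(n+1)$-local $K$-theory of $T(n)$-local $\E_1$-rings to $T(n+1)$-local $\TC$ of their connective covers (this requires the non-connective trace theorem of \cite{levy2022algebraic}, since $\tau_{\geq 0}R$ need not be nice), and one must correctly identify $\THH(\Ss[t])$ and $\THH(\Ss\langle\epsilon_{-1}\rangle)$ as cyclotomic spectra built out of $\THH(\Ss^{B\ZZ})$ — for $\Ss\langle \epsilon_{-1}\rangle$ note it is \emph{not} itself of the form $\Ss^{BM}$, so one instead argues via square-zero extensions and the fact that $\THH$ of a trivial square-zero extension $\Ss \oplus \Sigma^{-1}\Ss$ is a summand of a shift of $\THH(\Ss^{B\ZZ}) = \Ss^{\cL B\ZZ_p}$. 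Once these identifications are in hand, the thick-subcategory argument via \Cref{cor:nilthicksub} and the fact that $T(n+1) \otimes \TC(\tau_{\geq 0}R)$ — equivalently $F(R)$ — is nonzero does the rest. The only genuinely structural input beyond \Cref{cor:nilthicksub} is the standard package of trace methods plus the fundamental theorem of $K$-theory; no new homotopy-theoretic difficulty arises.
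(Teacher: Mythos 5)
Your handling of the third map is correct and is exactly the paper's argument: combine \Cref{thm:TCequalK} with \Cref{cor:nilthicksub}, note that $\Ss$ is a retract of $\W\LCF{\Z_p^\times}$, and conclude that if the coassembly fiber vanished $T(n+1)$-locally then $F(R)$ would too. The gap is in your treatment of the first two maps, where you try to reduce everything to the coassembly map. That reduction does not work. First, the implication ``(2) is an isomorphism $\Rightarrow$ (3) is an isomorphism'' is actually false: since $B\ZZ=S^1$ is a finite complex, $R^{B\ZZ}\simeq R\otimes \Ss^{S^1}\simeq R\langle\epsilon_{-1}\rangle$, so if $F(R)\to F(R\langle\epsilon_{-1}\rangle)$ were an isomorphism the coassembly map would become $F(R)\to F(R)^{B\ZZ}\simeq F(R)\oplus\Sigma^{-1}F(R)$, which is visibly \emph{not} an isomorphism when $F(R)\neq 0$. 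Second, the thick-subcategory containment you assert points the wrong way: to conclude ``$NK(R)=0$ forces the coassembly fiber to vanish'' you need the coassembly fiber to be built from $\widetilde{\THH}(\Ss[t])\otimes\THH(R)$, whereas you claim $\THH(\Ss[t])$ sits in the subcategory generated by $\THH(\Ss^{B\ZZ})$. Third, the ``fundamental theorem / Bass delooping'' concerns the group ring $R[t^{\pm1}]=R[\ZZ]$ and the \emph{assembly} map into $K(R)_{B\ZZ}$-type terms; it says nothing about the Koszul-dual cochain algebra $R^{B\ZZ}$ and the coassembly map, which is the whole subtlety here.

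The paper instead treats (1) and (2) directly, without routing through (3). By \cite[Theorem 4.1]{land2023k} one has $K(R[t])\cong K(R)\oplus NK(R)$ and $K(R\langle\epsilon_{-1}\rangle)\cong K(R)\oplus\Sigma^{-1}NK(R)$, so both claims are equivalent to $L_{T(n+1)}(X\otimes NK(R))\neq 0$, which by \Cref{thm:purity} is equivalent to $L_{T(n+1)}(X\otimes N\TC(\tau_{\geq0}R))\neq 0$. The key input is then McCandless's theorem that $\cofiber(\Ss_p\to\THH(\Ss[t])_p)$ corepresents $\TR_p$ in ($p$-completed integral) cyclotomic spectra; since $\TC$ is the equalizer of two maps on $\TR$, the unit $\Ss_p$ lies in the thick subcategory generated by this cofiber, and tensoring with $\THH(R)$ and applying $L_{T(n+1)}(X\otimes\TC(-))$ places $F(R)$ in the thick subcategory generated by $L_{T(n+1)}(X\otimes N\TC(\tau_{\geq0}R))$, which is therefore nonzero. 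If you want to salvage a reduction to the coassembly picture, the viable direction is the opposite of the one you wrote: using \Cref{prop:coassembly} and the identification of both $\crTR$ and $\cofiber(\Ss_p\to\THH(\Ss[t])_p)$ as corepresenting objects for $\TR$, the coassembly fiber is an infinite sum of shifted copies of $\widetilde{\THH}(\Ss[t])_p\otimes\THH(R)$, so vanishing of $N\TC$ kills the coassembly fiber and \Cref{cor:nilthicksub} then gives the contradiction --- but that still requires the McCandless input you were hoping to avoid.
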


\begin{rmk}
  Recall that $R\langle \epsilon_{-1} \rangle$ denotes
  a trivial square-zero extension by a class in degree $-1$.
  Note also that $L_{T(n+1)}K(R[t]) \cong L_{T(n+1)}K(L_{T(n)}(R[t]))$
  by the purity results of \cite{LMMT}.
\end{rmk}
  
\begin{proof}
  We first observe that the first two claims are equivalent to each other. Indeed, it follows from \cite[Theorem 4.1]{land2023k}\footnote{see also the discussion in \cite[Example 4.9]{burklund2023k}} that $K(R[\epsilon_{-1}]) \cong K(R)\oplus \Sigma^{-1}NK(R)$ and $K(R[t])\cong K(R)\oplus NK(R)$, so that both claims are equivalent to the nonvanishing of $L_{T(n+1)}(X\otimes NK(R))$. By \Cref{thm:purity}, this is equivalent to asking that $L_{T(n+1)}(X\otimes N\TC(\tau_{\geq0}R)) \neq 0$.
 
 By \cite{mccandless2021curves}, the cofiber of $\SP_p \to \THH(\Ss[t])_p$ corepresents $\TR_p$ in $(\CycSp)_p$, where $(\CycSp)_p$ is the $p$-completion of the category of integral cyclotomic spectra as opposed to the category of $p$-typical $p$-complete cyclotomic spectra. Thus $\Ss_p$ is in the thick subcategory it generates in $(\CycSp)_p$. Tensoring this with $\THH(R)$ and applying $L_{T(n+1)}(X\otimes \TC)$, we learn that $L_{T(n+1)}(X\otimes \TC(R)) \neq 0$ is in the thick subcategory generated by $L_{T(n+1)}(X\otimes N\TC(\tau_{\geq0}R) )$, allowing us to conclude that the latter is nonzero.
	
 The last statement about $$F(R^{B\ZZ}) \to F(R)^{B\ZZ}$$ not being an equivalence follows similarly, one only need apply \Cref{cor:nilthicksub} and \Cref{thm:TCequalK} instead of \cite{mccandless2021curves} and \Cref{thm:purity}.
\end{proof}

\begin{rmk}
  The first map in \Cref{thm:failurekn} tests $\mathbb{A}^1$-invariance.
  The second map tests nil-invariance.\footnote{Note that in the $T(n)$-local setting many examples are periodic and there can be no particular distinction between square-zero extension on classes in positive degree and square-zero extensions on classes in negative degree.}
  The third map tests $\ZZ_p$-Galois hyperdescent along the $p$-adic $\ZZ_p$-Galois extension $R^{B\Z} \to R$.
\end{rmk}

The results of \cite{chromaticNSTZ} imply that \Cref{thm:failurekn} applies to any $T(n)$-local commutative algebra for $X = \SP$ or $X= L_{K(n+1)}\SP$.

\begin{rmk}
  The failure of $\Z_p$-Galois hyperdescent for $T(n+1)$-local $K$-theory for $n \geq 1$
  in the case of a trivial action leads to an expectation that hyperdescent should also fail for $\Z_p$-Galois extensions that sufficiently ``close'' to the trivial action. The next section is devoted to studying one instance where this expectation is correct.
  
  Indeed, we disprove the telescope conjecture
  by showing that hyperdescent fails $T(n+1)$-locally for the
  $\ZZ_p$-Galois extension
  $L_{T(n)}\BP\langle n \rangle^{h\ZZ} \to L_{T(n)}\BP\langle n \rangle$
  (it behaves enough like a trivial action)
  then constrasting this with the results of \cite{cycloshift}
  which imply that this extension \textit{does}
  satisfy hyperdescent $K(n+1)$-locally.
\end{rmk}



\section{Locally unipotent \texorpdfstring{$\Z$}{Z}-actions and Lichtenbaum--Quillen}
\label{sec:tame}





In this section we prove our cyclotomic asymptotic constancy theorem (\Cref{thm:cohconstgen} of the introduction). This theorem says that under strong finiteness hypotheses a ring spectrum $R$ with a $\Z$-action will behave as though the $p^k\Z$-action obtained by restriction is trivial when computing $\THH$ and $\TC$. We begin by introducing the precise finiteness hypotheses we will need.

\begin{dfn}
  Let $n \geq -1$. Following \cite{mahowald1999brown} we say that
  a $p$-complete bounded below spectrum $X$ is of \mdef{fp-type $n$}
  if, for each finite spectrum $U$ of type $n+1$, $U \otimes X$ is $\pi$-finite.\footnote{Note that by the thick subcategory theorem, it suffice to check this property for a single finite, type $n+1$ spectrum $U$ with $K(n+1) \otimes U \neq 0$.}
\end{dfn}

We will also need a cyclotomic version of being of fp-type $n$.

\begin{dfn} \label{dfn:LQ}
  Let $n\geq -1$. We say that an $\E_1$-ring $R$ satisfies the \mdef{height $n$ Lichtenbaum--Quillen property} if $\THH(R)$ is bounded below and, for each finite spectrum $V$ of type $n+2$, the cyclotomic spectrum $V \otimes \THH(R)$ is bounded.\footnote{As above.}
\end{dfn}

\begin{rmk}
  If $R$ satisfies the height $n$ LQ property,
  then for any finite spectrum $U$ of type $n+1$ the localization map
  \[ U \otimes \TC(R) \to L_{T(n)}(U \otimes \TC(R)) \]
  induces an isomorphism on $\pi_*(-)$ for $*\gg 0$.
  which can be viewed as a $\TC$ version of the Lichtenbaum--Quillen conjecture for $R$.
\end{rmk}

The main theorem of the section is then the following:

\begin{thm} \label{thm:E1A2algtamebegin}
  Suppose we are given an $R \in \Alg_{\E_1\otimes \A_2}(\Sp^{B\Z,u})$ that is
  connective, of fp-type $n \geq -1$, and satisfies the height $n$ LQ property.
  Then, for all $k \gg 0$ sufficiently large,
  $R^{hp^k\Z}$ satisfies the height $n$ LQ property as well.

  Furthermore, if $V$ is a finite spectrum of type $n+2$,
  then for $k \gg 0$ sufficiently large there is an isomorphism
  \[ V \otimes \THH(R^{hp^k\Z}) \cong V \otimes \THH(R^{Bp^k\Z}) \]
  of $\THH(\SP^{Bp^k\Z})$-modules in cyclotomic spectra.
\end{thm}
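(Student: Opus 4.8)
The plan is to first prove the statement at the level of ``spectra with Frobenius'' — i.e. after applying the functor $\res_\varphi$, or working in the category $\pgnsp{\lpr}$ of $p$-polygonic spectra — and then bootstrap up to the level of cyclotomic spectra using the boundedness machinery of \Cref{subsec:implications} and \Cref{subsec:bokstedt1}. The reason for this two-step strategy is that the polygonic-level statement only involves $\THH_{\hex}(R;R)$ and a single Tate construction $(-)^{tC_p}$, which we have good control over: by \Cref{lem:tenspr_with_finite}, $\res_\varphi\THH_{\hex}(R;V\otimes B)$ decomposes as $\res_\varphi\THH_{\hex}(\SP;V)\otimes \res_\varphi\THH_{\hex}(R;B) \cong \mathrm{Nm}_e^{C_p}(V)\otimes(-)$, so tensoring with a type $n+2$ finite $V$ kills the ``Tate-invisible'' part and leaves something controlled by $V\otimes\THH(R)$, which is bounded by the height $n$ LQ hypothesis. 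The locally unipotent hypothesis on the $\ZZ$-action on $\pi_* R/p$ means that, modulo $p$ (hence modulo any type $\geq 1$ finite complex), the $p^k\ZZ$-action becomes increasingly close to trivial in a quantitative sense; combined with $\pi$-finiteness of $U\otimes R$ for type $n+1$ $U$ (the fp-type $n$ hypothesis), one shows for $k\gg0$ that the natural coassembly/comparison maps between $R^{hp^k\ZZ}$ and $R^{Bp^k\ZZ}$ become equivalences after smashing with $V$ of type $n+2$, at the polygonic level.

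The key intermediate device is the universal cyclotomic ring $\THH(\SP^{Bp^k\ZZ})$, whose structure was completely worked out in \Cref{sec:cochaincircle}: by \Cref{prop:coassembly} and \Cref{cor:t-amplitude-W} it is cyclotomically bounded in the range $[-1,3]$, and by \Cref{prop:cycfrobuniv} its cyclotomic Frobenius is an isomorphism. Both $V\otimes\THH(R^{hp^k\ZZ})$ and $V\otimes\THH(R^{Bp^k\ZZ})$ are modules over $V\otimes\THH(\SP^{Bp^k\ZZ})$ in cyclotomic spectra. First I would establish the LQ property for $R^{hp^k\ZZ}$: using \Cref{cor:lem:cb-to-bok-hc}-style B\"okstedt class arguments together with \Cref{prop:omnibus}, it suffices to produce, for $k\gg0$, a B\"okstedt class in $V\otimes\THH(R^{hp^k\ZZ})$; this is pulled along the central ring map from $\SP/p^m\otimes\tau_{\leq e}^{\mathrm{cyc}}\SP$ (\Cref{lem:cb-to-bok-uni1}) after one knows the relevant truncation is cyclotomically bounded, which in turn follows from the polygonic-level comparison and the Segal condition via \Cref{lem:wcv-sc-to-cb}. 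Then, knowing both sides are cyclotomically bounded (in a uniform range independent of $k\gg0$, thanks to the $[-1,3]$ bound on $\THH(\SP^{Bp^k\ZZ})$ and \Cref{lem:t-amplitude-repTR}), a bounded cyclotomic spectrum is detected by finitely many homotopy groups, so the polygonic-level (i.e. $\res_\varphi$-level) equivalence $V\otimes\THH(R^{hp^k\ZZ})\cong V\otimes\THH(R^{Bp^k\ZZ})$ upgrades to an equivalence of cyclotomic spectra — here one uses that the cyclotomic structure on a bounded cyclotomic spectrum is recovered from the polygonic data together with boundedness, essentially via the Antieau--Nikolaus identification with topological Cartier modules and the fact that $\TR$ of a bounded object is controlled by finitely many $(-)^{hC_{p^j}}$'s, each of which is in turn controlled by $\res_\varphi$ on truncations.

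The main obstacle I anticipate is the quantitative control needed to make ``$k\gg0$'' uniform across all the steps: one must show that a \emph{single} threshold $k_0$ works simultaneously for (a) the polygonic-level comparison map being an equivalence after $\otimes V$, (b) the resulting cyclotomic boundedness with a $k$-independent bound, and (c) the B\"okstedt class living in a $k$-independent degree so the LQ bound does not degrade. This requires carefully tracking the interplay of the unipotency exponent of the $\ZZ$-action, the $\pi$-finiteness/boundedness constants coming from fp-type $n$ and the LQ property for $R$ itself, and the universal constants ($[-1,3]$, the $[0,4]$ $t$-amplitude of $\oplus\crTR\otimes(-)$, $m_p^{\A_2}$, etc.). The cleanest route is probably to prove a quantitative version first — ``if $R$ satisfies height $n$ LQ with bound $b$ and the action has unipotency bound $u$, then for $k\geq f(b,u,p,n)$ the displayed isomorphism holds and $R^{hp^k\ZZ}$ satisfies height $n$ LQ with bound $b'=g(b,u,p,n)$'' — which is presumably what \Cref{thm:E1A2algtame} in \Cref{sec:tame} actually does, with \Cref{thm:E1A2algtamebegin} as the clean corollary.
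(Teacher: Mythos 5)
Your overall architecture --- prove the statement for $\mathrm{res}_{\varphi}$ (equivalently, in $p$-polygonic spectra) first, then bootstrap to $\CycSp$ using uniform cyclotomic boundedness, B\"okstedt classes and almost compactness --- is exactly the paper's (\Cref{subsec:dehn} and \Cref{subsec:bootstrap}), and you correctly identify most of the supporting machinery. But there is a genuine gap at the heart of the argument: the polygonic-level comparison itself. You assert that local unipotence plus fp-type $n$ imply that ``the natural coassembly/comparison maps between $R^{hp^k\Z}$ and $R^{Bp^k\Z}$ become equivalences after smashing with $V$,'' but no such natural map exists: $R^{Bp^k\Z}$ carries the trivial action, and trivializing the $p^k\Z$-action on $V\otimes R$ (\Cref{lem:trivializeacalg}) does not identify $V\otimes\THH(R^{hp^k\Z})$ with $V\otimes\THH(R^{Bp^k\Z})$, because $V$ cannot be moved inside $\THH$ of a fixed-point ring --- $\THH(V\otimes R)$ involves $\THH(V)$, not $V$. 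This is why the paper works with the bimodule/polygonic object $\THH_{\hex}^{\EQ}(\pi_*\rho^*\mathrm{cn}(R,V))\cong\Nm(V)\otimes\mathrm{res}_{\hex}\THH(R^{h\Z})$ (\Cref{lem:unroll}), and the mechanism that trivializes it over $\Z_p$ is not the unipotent action on $R$ at all but the \emph{Dehn twist}: an auxiliary $\Z$-action coming from the torus $\EQ\times B\Z$ (\Cref{cnstr:dehn}), whose local unipotence (\Cref{lem:yea-nilp}) and identification on $\pflat$ with translation by $1$ on $\Z_p$ (\Cref{prop:dehn-action}) yield, via \Cref{prop:unipotent_SW}, the splitting $V\otimes\mathrm{res}_{\varphi}\THH(R^{h\Z})\cong\W(\LCF{\Z_p})\otimes V\otimes\mathrm{res}_{\varphi}(\THH(R)^{h\Z})$ of \Cref{thm:assconst_main1}. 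This identifies not just the two fixed-point rings but the entire (nontrivial, $\crTR$-shaped) fiber of the coassembly map, which your argument never addresses. Without the Dehn twist or a substitute, I do not see how your polygonic-level claim is established.

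A second, smaller gap is the bootstrapping step. As you state it --- ``a bounded cyclotomic spectrum is detected by finitely many homotopy groups, so the $\mathrm{res}_{\varphi}$-level equivalence upgrades'' --- this is not a proof: two bounded cyclotomic spectra with abstractly equivalent underlying spectra-with-Frobenius need not be equivalent in $\CycSp$, and one must construct an actual comparison map. The paper does this in \Cref{prop:bdd-to-triv} by using almost compactness of $V\otimes\THH(R)$ \emph{as a cyclotomic spectrum} (\Cref{prop:key-finiteness}) to lift the identity of $X_\infty$ through the uniformly bounded filtered system of the $X_k$, then $W_k$-linearizing and verifying the result on homotopy groups via \Cref{lem:map-formula2}. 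This part is fillable with the tools you cite, but it is a construction, not a formality.
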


\subsection{\texorpdfstring{$\THH$}{THH} of algebras with locally unipotent \texorpdfstring{$\Z$}{Z}-actions}

In this subsection we begin our investigation of the $\THH$ of algebras with unipotent $\ZZ$-actions. Here we focus on some elementary observations which follow from the work in \Cref{sec:cochaincircle} in a straightforward way.

\begin{cnv}
  Throughout this section we let
  $\mdef{W_k} \coloneqq \THH(\Ss^{Bp^k\Z})$ and $\mdef{W} \coloneqq W_0$.
  The system of natural maps
  \[ \Ss^{B\Z} \to \Ss^{Bp\Z} \to \Ss^{Bp^2\Z} \to \cdots \to \Ss \]
  induces a corresponding sequence of maps of commutative algebras in cyclotomic spectra
  \[ W \to W_1 \to W_2 \to \cdots W_\infty \cong \Ss. \]

  Given an $R \in \Alg(\Sp^{B\Z,u})$ we will use the natural
  $W_k$-modules structure on the cyclotomic spectrum $\THH(R^{hp^k\ZZ})$
  throughout the section.

  For a $W_k$-module $X$, we sometimes use $X_{|0}$ to refer to the cyclotomic spectrum $X\otimes_{W_k}\SP^{Bp^k\Z}$, where the map $W_k \to \SP^{Bp^k\Z}$ is the coassembly map, which on underlying is given by restriction to $0$ by \Cref{lem:coassembly-po}.
\end{cnv}

\begin{rmk}
  Note that given $j<k$ in fact
  $\Ss^{Bp^j\Z} \cong \Ss^{Bp^{k}\Z}$ and therefore $W_j \cong W_{k}$.
  However, the natural map $W_j \to W_{k}$ is not an isomorphism.
\end{rmk}

\begin{lem}\label{lem:unipotent_THH}
  Let $R \in \Alg(\Sp^{{B\ZZ,u}})$.
  The induced $\Z$ action on $\THH(R) \in \CycSp$ is locally unipotent
  (as is the action on the underlying spectrum of $\THH(R)$).
  
  If $R$ is connective, then the action on $\TC(R)$ is locally unipotent as well.
\end{lem}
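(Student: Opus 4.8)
Write $\Sp^{B\Z,u}\subseteq\Sp^{B\Z}$ for the full subcategory of locally unipotent objects, i.e.\ those $X$ on which $\sigma-1$ acts locally nilpotently on $\pi_*(X)/p$. The plan begins by recording the closure properties of this subcategory: it contains every object with trivial $\Z$-action; it is closed under colimits, fibers and retracts (immediate from the long exact sequences mod $p$, using that locally unipotent $\F_p[\Z]$-modules are closed under subobjects, quotients, extensions and filtered colimits); and --- the one point requiring an argument --- it is closed under the tensor product of two of its objects. For the last point one reduces, via a Künneth/universal-coefficient filtration of $\pi_*((X\otimes Y)/p)$, to the corresponding statement for $\F_p[\Z]$-modules, which follows from the binomial identity: if $(\sigma-1)^a$ annihilates $m$ and $(\sigma-1)^b$ annihilates $n$, then $(\sigma\otimes\sigma-1)^{a+b-1}$ annihilates $m\otimes n$ (expand $\sigma\otimes\sigma-1=(\sigma-1)\otimes\sigma+1\otimes(\sigma-1)$ and note the two summands commute).

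For the first assertion, recall that the underlying spectrum-with-$\Z$-action of $\THH(R)$ is the geometric realization of the cyclic bar construction $[q]\mapsto R^{\otimes(q+1)}$, with $\Z$ acting diagonally in each simplicial degree. Each $R^{\otimes(q+1)}$ lies in $\Sp^{B\Z,u}$ by the tensor-closure above, hence so does the colimit $\THH(R)$. This is the asserted statement for the underlying spectrum; since a locally unipotent cyclotomic $\Z$-spectrum is one whose underlying $\Z$-spectrum is locally unipotent, the statement for $\THH(R)\in\CycSp$ follows as well.

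For $\TC$, assume $R$ connective, so $\THH(R)\in\CycSp_{\geq0}$. As $\TC$ is exact one has $\TC(X)/p^j\cong\TC(X/p^j)$, and since local unipotence is a condition mod $p$ it suffices to prove that $\TC(T)$ is locally unipotent for $T\coloneqq\THH(R)/p^j$ with $j\geq m_p^{\E_1}$ --- a connective cyclotomic $\Z$-spectrum annihilated by $p^{2j}$, on which (being $p$-power torsion and locally unipotent) $\sigma-1$ acts locally nilpotently on all of $\pi_*T$. The plan is to present $T$ as a filtered colimit, in $\CycSp_{\geq0}^{B\Z}$, of cyclotomic spectra whose $\Z$-action factors through a finite quotient $\Z/p^k$. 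Granting this, one concludes as follows: $\TC\colon\CycSp_{\geq0}\to\Sp$ preserves colimits \cite[Theorem 2.7]{clausen2021}, hence so does the levelwise functor $\CycSp_{\geq0}^{B\Z}\to\Sp^{B\Z}$, so $\TC(T)$ is the corresponding filtered colimit of the $\TC$ of the finite-action pieces; for each such piece $\TC$ (being exact) preserves $p^{2j}$-torsion, yielding a $\Z/p^k$-action on a spectrum with $p^{2j}$-torsion homotopy, which is unipotent of exponent $\leq 2jp^k$ by a binomial-coefficient computation, in particular locally unipotent; and $\Sp^{B\Z,u}$ is closed under filtered colimits. Reversing the reduction mod $p^j$ (using that $\pi_*(\TC(\THH(R)))/p$ is a subquotient of $\pi_*\TC(T)$) then shows $\TC(\THH(R))$ is locally unipotent.

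The hard part is the filtered-colimit presentation of $T$. At the level of homotopy groups this is exactly the statement that a $p^{2j}$-torsion $\Z[\Z]$-module on which $\sigma-1$ is locally nilpotent is the filtered union of its finitely generated submodules, each of which is finite-exponent unipotent and hence carries a $\Z/p^k$-action. To lift this to $\CycSp_{\geq0}^{B\Z}$ I would use that $T$ is connective together with the good properties of the Antieau--Nikolaus cyclotomic $t$-structure --- in particular its compatibility with filtered colimits (\Cref{lem:fil-colim-bounded}) and the identification of its heart --- building the finite-action pieces of $T$ out of finite-action objects of $\CycSp^{\heartsuit}$ through their cyclotomic Postnikov towers, and checking that these pieces are cofinal so that their colimit really is $T$. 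Connectivity is essential here: for an unbounded cyclotomic $\Z$-spectrum the limits $X^{h\T}$ and $X^{t\T}$ entering the definition of $\TC$ destroy local unipotence, so no such statement can hold without a boundedness hypothesis.
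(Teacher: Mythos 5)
Your treatment of $\THH$ is essentially the paper's argument: write $\THH(R)$ as a colimit of tensor powers of $R$, use closure of $\Sp^{B\Z,u}$ under tensor products (\Cref{lem:tensor-unip}) and colimits, and descend the statement from $\CycSp$ to underlying spectra. The one thing you assert rather than prove is that local unipotence in $\CycSp^{B\Z}$ (a localizing subcategory generated by trivial actions) is \emph{detected} on the underlying spectrum; this needs the observation that the forgetful functor $\CycSp\to\Sp$ is a conservative, colimit-preserving exact functor, so that it detects the vanishing of $X[(\psi-1)^{-1}]$ (\Cref{lem:check-unip}).

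For $\TC$ your route has a genuine, and self-acknowledged, gap: the entire argument hinges on presenting $\THH(R)/p^j$ as a filtered colimit in cyclotomic spectra of pieces whose $\Z$-action factors through $\Z/p^k$, and you do not construct this presentation — the sketch via cyclotomic Postnikov towers and a cofinality check is exactly the part that would require real work, and it is not clear it goes through as stated. More importantly, the whole finite-approximation strategy is unnecessary. Because you work throughout with the homotopy-group-level description of local unipotence, you miss the structural characterization that makes the claim formal: $Y\in\Sp^{B\Z}$ is locally unipotent if and only if the ($p$-completed) telescope $Y[(\psi-1)^{-1}]$ vanishes, equivalently $Y\cong\colim_k Y^{hp^k\Z}$ (\Cref{prop:gluing-seq}, \Cref{lem:colim-unip}). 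Any exact functor that preserves filtered colimits commutes with the formation of $(-)[(\psi-1)^{-1}]$ and hence preserves local unipotence; this is \Cref{lem:check-unip}. Since $\TC\colon\CycSp_{\geq 0}\to\Sp$ preserves filtered colimits by \cite[Theorem 2.7]{clausen2021} — which is precisely where the connectivity hypothesis enters, as you correctly note — the second claim follows in one line, with no reduction mod $p^j$, no decomposition into finite-action pieces, and no appeal to the cyclotomic $t$-structure.
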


\begin{proof}  
  The forgetful functor $\CycSp \to \Sp$ is a conservative left adjoint,
  therefore by \Cref{lem:check-unip} it suffices to show
  that the action on the spectrum $\THH(R)$ is locally unipotent.
  For this we observe that by \Cref{lem:tensor-unip} $\Sp^{{B\ZZ,u}}$ is closed under tensor products and colimits and use the tensor product formula for $\THH$.

  The final claim follows by applying \Cref{lem:check-unip} to
  $\TC\colon\Sp \otimes \CycSp_{\geq 0} \to \Sp $
  using \cite[Theorem 2.7]{clausen2021}.
  \todo{do not change, it breaks it.}
\end{proof}  

\begin{lem}\label{lem:thhbcres}  
  Let $R \in \Alg(\Sp^{B\ZZ,u})$.
  The natural map 
  \[ W_k \otimes_W \THH(R^{h\ZZ}) \xrightarrow{\cong} \THH(R^{hp^k\ZZ}) \]
  is an isomorphism of $W_k$-modules in $\CycSp$.
\end{lem}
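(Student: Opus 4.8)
The proof rests on a base-change identity for homotopy fixed points together with the symmetric monoidality of $\THH$.

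\emph{The map.} Since $\Ss \to R$ is a map of $\E_1$-algebras in $\Sp^{B\Z,u}$ (with $\Ss$ carrying the trivial action), applying $(-)^{h\Z}$ and using $\Ss^{h\Z}=\Ss^{B\Z}$ exhibits $R^{h\Z}$ as an $\E_1$-algebra over the commutative ring $\Ss^{B\Z}$, and likewise $R^{hp^k\Z}$ as an $\E_1$-algebra over $\Ss^{Bp^k\Z}$. Base-changing along the commutative map $\Ss^{B\Z}\to\Ss^{Bp^k\Z}$, the $\Ss^{B\Z}$-linear restriction map $R^{h\Z}\to R^{hp^k\Z}$ induces a map of $\E_1$-$\Ss^{Bp^k\Z}$-algebras
\[ \alpha\colon \Ss^{Bp^k\Z}\otimes_{\Ss^{B\Z}}R^{h\Z}\longrightarrow R^{hp^k\Z}. \]
Granting that $\alpha$ is an equivalence, we apply $\THH$: being symmetric monoidal it carries the $\E_\infty$-ring $\Ss^{B\Z}$ to $W$, the $\E_\infty$-ring $\Ss^{Bp^k\Z}$ to $W_k$, the $\Ss^{B\Z}$-algebra $R^{h\Z}$ to the $W$-algebra $\THH(R^{h\Z})$, and relative tensor products of algebras over a commutative base to relative tensor products. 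Hence
\[ \THH(R^{hp^k\Z})\;\simeq\;\THH\!\bigl(\Ss^{Bp^k\Z}\otimes_{\Ss^{B\Z}}R^{h\Z}\bigr)\;\simeq\;W_k\otimes_{W}\THH(R^{h\Z}) \]
as $W_k$-modules in $\CycSp$, and unwinding the construction identifies this equivalence with the natural map in the statement.

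\emph{The equivalence $\alpha$.} This is the only place where local unipotence enters. Write $\Ss[\Z/p^k]$ for the permutation $\Z$-spectrum of the translation action of $\Z$ on $\Z/p^k$; as $\Z/p^k$ is a $p$-group this action is locally unipotent, so $\Ss[\Z/p^k]\in\Sp^{B\Z,u}$, and by \Cref{lem:tensor-unip} so is $\Ss[\Z/p^k]\otimes R$. The projection formula for induction gives $\Ss[\Z/p^k]\otimes R\cong\Ind^{\Z}_{p^k\Z}\Res^{\Z}_{p^k\Z}R$, so Shapiro's lemma identifies $(\Ss[\Z/p^k]\otimes R)^{h\Z}\simeq R^{hp^k\Z}$. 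Next, I claim the lax symmetric monoidal functor $(-)^{h\Z}\colon\Sp^{B\Z}\to\Mod_{\Ss^{B\Z}}$ becomes strong monoidal on $\Sp^{B\Z,u}$, i.e.\ $X^{h\Z}\otimes_{\Ss^{B\Z}}Y^{h\Z}\to(X\otimes Y)^{h\Z}$ is an equivalence for locally unipotent $X,Y$. Fixing $Y$, both sides are exact and colimit-preserving in $X$ — using that $(-)^{h\Z}$ restricts to an equivalence $\Sp^{B\Z,u}\simeq\Mod_{\Ss^{B\Z}}$, since the trivial sphere is a compact generator of $\Sp^{B\Z,u}$ with endomorphism ring $\Ss^{h\Z}=\Ss^{B\Z}$ — and $\Sp^{B\Z,u}$ is generated under colimits by the trivial sphere; for $X=\Ss$ with the trivial action both sides equal $Y^{h\Z}$ and the comparison map is the identity, so the claim holds. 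Taking $X=\Ss[\Z/p^k]$ and $Y=R$ yields $(\Ss[\Z/p^k]\otimes R)^{h\Z}\simeq(\Ss[\Z/p^k])^{h\Z}\otimes_{\Ss^{B\Z}}R^{h\Z}\simeq\Ss^{Bp^k\Z}\otimes_{\Ss^{B\Z}}R^{h\Z}$. Combining with Shapiro's lemma shows $\alpha$ is an equivalence; naturality of Shapiro's lemma and of the monoidal structure map promotes this to an equivalence of $\E_1$-$\Ss^{Bp^k\Z}$-algebras which one checks agrees with $\alpha$.

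\emph{The obstacle.} The crux is the strong monoidality of $(-)^{h\Z}$ on locally unipotent $\Z$-spectra, which genuinely fails for general actions: already for $R=\Ss[\Z]$ with the regular action one has $\Ss[\Z]^{hp^k\Z}\simeq\Sigma^{-1}\Ss^{\oplus p^k}$ while $\Ss^{Bp^k\Z}\otimes_{\Ss^{B\Z}}\Ss[\Z]^{h\Z}$ is strictly smaller (rationally $\Sigma^{-1}\Q$ versus $\Sigma^{-1}\Q^{\oplus p^k}$). Thus the argument must take place inside $\Sp^{B\Z,u}$, where the trivial sphere is a compact generator and $(-)^{h\Z}$ is an equivalence onto $\Mod_{\Ss^{B\Z}}$; the remaining verifications — that the identifications respect the $\E_1$-structures and, after applying $\THH$, the $\T$-action and the cyclotomic Frobenius — are formal consequences of the (lax) symmetric monoidality of the functors involved.
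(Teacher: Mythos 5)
Your proof is correct and follows the same strategy as the paper: reduce to the base-change equivalence $\Ss^{Bp^k\Z}\otimes_{\Ss^{B\Z}}R^{h\Z}\xrightarrow{\ \simeq\ }R^{hp^k\Z}$ and then apply the symmetric monoidal functor $\THH$. The only difference is that the paper obtains that equivalence by citing its appendix result (\Cref{lem:basechangepZ}, the symmetric monoidal identification $\Sp^{B\Z,u}\simeq\Mod(\Ss^{B\Z})$ natural in restriction along $p^k\Z\subseteq\Z$), whereas you re-derive it in place via the projection formula, finite-index Shapiro, and the compact-generator argument — i.e.\ you have essentially inlined the proof of that lemma.
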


\begin{proof}
  Since $\THH$ is a symmetric monoidal functor $\Alg(\Sp) \to \CycSp$, it will suffice to show that the map $R^{h\ZZ} \otimes_{\SP^{B\ZZ}}\SP^{Bp^k\ZZ} \to R^{hp^k\ZZ}$ is an isomorphism. This follows from \Cref{lem:basechangepZ} and \Cref{lem:unipotent_THH}.
\end{proof}

\begin{lem}\label{lem:coassemblygenTHH}
  The coassembly map fits into a commuting diagram of lax symmetric monoidal functors from
  $ \Alg(\Sp^{B\Z,u}) $ to $ \CycSp $,
  \[ \begin{tikzcd}
    & \THH((-)^{h\Z}) \ar[dl] \ar[dr] & \\
    \THH((-)^{h\Z})_{|0} \ar[rr, "\cong"] & & \THH(-)^{h\Z}.
  \end{tikzcd} \]
\end{lem}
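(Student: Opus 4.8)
The plan is to first build the commuting triangle at the formal level, and then verify that the horizontal arrow is an equivalence by reducing to the universal case $R=\Ss$ treated in \Cref{prop:coassembly}. For the formal part: the functor $(-)^{h\Z}\colon\Alg(\Sp^{B\Z,u})\to\Alg(\Sp)$ is lax symmetric monoidal and, along the unit $\Ss^{B\Z}=\Ss^{h\Z}\to R^{h\Z}$, refines to a functor valued in $\E_1$-algebras over $\Ss^{B\Z}$; postcomposing with the symmetric monoidal functor $\THH$ exhibits $\THH((-)^{h\Z})$ as a lax symmetric monoidal functor landing in $\Mod_W(\CycSp)$, where $W\coloneqq\THH(\Ss^{B\Z})$. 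Similarly, $\THH(-)\colon\Alg(\Sp^{B\Z,u})\to\CycSp^{B\Z}$ is symmetric monoidal and each of its values is a module over the trivial cyclotomic ring $\Ss$, so applying the lax symmetric monoidal functor $(-)^{h\Z}\colon\CycSp^{B\Z}\to\CycSp$ shows that $\THH(-)^{h\Z}$ lands, lax symmetric monoidally, in modules over $\THH(\Ss)^{h\Z}=\Ss^{B\Z}$ (the cotensor of the unit cyclotomic spectrum with $B\Z$). By naturality of the coassembly map applied to $\Ss\to R$, the coassembly transformation $c\colon\THH((-)^{h\Z})\Rightarrow\THH(-)^{h\Z}$ is linear over the coassembly map $W=\THH(\Ss^{h\Z})\to\THH(\Ss)^{h\Z}=\Ss^{B\Z}$, which by \Cref{lem:coassembly-po} is exactly the structure map used to define $(-)_{|0}$; since the target of $c$ is already an $\Ss^{B\Z}$-module, base change along $W\to\Ss^{B\Z}$ produces the factorization $\THH((-)^{h\Z})\to\THH((-)^{h\Z})_{|0}\xrightarrow{\,\overline c\,}\THH(-)^{h\Z}$ through lax symmetric monoidal transformations, hence the commuting triangle.

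It then remains to show that $\overline c_R$ is an equivalence for each $R$. Since base change is exact, $\fib(\overline c_R)\cong\fib(c_R)\otimes_W\Ss^{B\Z}$, and \Cref{lem:coassembly-po} identifies $\Ss^{B\Z}$ with $W\otimes_{\W\LCF{\Z_p}}\W(\F_p)$, the structure map $\W\LCF{\Z_p}\to\W(\F_p)$ being evaluation at $0\in\Z_p$; thus $\fib(\overline c_R)\cong\fib(c_R)\otimes_{\W\LCF{\Z_p}}\W(\F_p)$ is the ``fibre at $0$'' of $\fib(c_R)$, regarded as a $\W\LCF{\Z_p}$-module through $W$. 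I would prove this vanishes by showing $\fib(c_R)$ is supported on $\Z_p\setminus\{0\}$: concretely, that it splits as a direct sum of pieces each supported, as a $\W\LCF{\Z_p}$-module, on a single shell $p^k\Z_p^{\times}$. Granting that, vanishing follows since $C^0(p^k\Z_p^{\times})\otimes_{C^0(\Z_p)}\F_p=0$ for every $k$ — the generating idempotent of $C^0(p^k\Z_p^{\times})$ lies in the kernel of evaluation at $0$ because $0\notin p^k\Z_p^{\times}$ — so tensoring the direct sum with $\W(\F_p)$ over $\W\LCF{\Z_p}$ gives $0$.

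For the shell decomposition I would use that $\pi_0^{\flat}W=C^0(\Z_p)$ (\Cref{lem:thh-s1}) produces idempotents $e_{p^k\Z_p^{\times}}\in\pi_0\THH(R^{h\Z})$ splitting off the summands $\THH(R^{h\Z})_{|p^k\Z_p^{\times}}$, that by \Cref{lem:thhbcres} and \Cref{lem:rescale} the complementary summand $\THH(R^{h\Z})_{|p^j\Z_p}$ is identified with $\THH(R^{hp^j\Z})$, and that $c_R$ annihilates every $e_{p^k\Z_p^{\times}}$ since $\pi_0^{\flat}$ of its target is concentrated at $0$; iterating over increasing $j$ should peel off all the shells, and feeding in \Cref{prop:coassembly} — which for $R=\Ss$ already exhibits $\fib(c_{\Ss})$ as the \emph{honest} direct sum $\W\LCF{\Z_p^{\times}}\otimes\Sigma^{-1}\crTR$ of its shell pieces, with nothing accumulating at $0$ — should transport the same conclusion to general $R$. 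The hard part is controlling this iteration, i.e.\ ruling out a component of $\fib(c_R)$ supported at the single closed point $0\in\Z_p$; this is exactly where the free loop space description of $\THH(\Ss^{B\Z})$ built in \Cref{sec:cochaincircle}, through \Cref{prop:coassembly} and the rescaling isomorphisms of \Cref{lem:rescale}, is doing the real work. The lax symmetric monoidality of $\overline c$ is already built into its construction.
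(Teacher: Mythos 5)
Your construction of the triangle is exactly the paper's: both factor the coassembly map through $\THH((-)^{h\Z})_{|0}$ by base-changing along the $R=\Ss$ case of \Cref{lem:coassembly-po}, and the lax symmetric monoidality comes for free. The gap is in the second half, and it is the one you flag yourself. To conclude that $\fib(\overline c_R)=\fib(c_R)_{|0}$ vanishes you need $\fib(c_R)$ to decompose as a sum of pieces supported on the shells $p^k\Z_p^{\times}$ with nothing left over at $0$; but the only route you offer to this for general $R$ is to ``transport'' \Cref{prop:coassembly} from $R=\Ss$. That transport is where a near-circularity hides: the base-change identity you would want, $\fib(c_R)\cong\fib(c_{\Ss})\otimes_W\THH(R^{h\Z})$, requires identifying $\THH(R)^{h\Z}$ with $\Ss^{B\Z}\otimes_W\THH(R^{h\Z})=\THH(R^{h\Z})_{|0}$ --- which is precisely the statement being proved. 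Absent that, the shell decomposition of $\fib(c_\Ss)$ says nothing a priori about a summand of $\fib(c_R)$ concentrated at the closed, non-open point $0$, and ``iterating over increasing $j$'' only yields $\fib(c_R)_{|0}\cong\colim_j\fib(c_R)_{|p^j\Z_p}$, which is not visibly zero.

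The paper closes this by a different and much shorter mechanism. Both $\THH(R^{h\Z})_{|0}$ and $\THH(R)^{h\Z}$ are $\Ss^{B\Z}$-modules, and by \Cref{rmk:unip-affine} (the equivalence $\Mod(\Ss^{B\Z})\simeq\CycSp^{B\Z,u}$ of \Cref{lem:unip-affine}, tensored with $\CycSp$) the functor $\Ss\otimes_{\Ss^{B\Z}}(-)$ is conservative on such modules, so it suffices to check $\epsilon$ is an isomorphism after applying it. The source then becomes $W_{\infty}\otimes_W\THH(R^{h\Z})\cong\THH(R)$ by \Cref{lem:thh-s1} and \Cref{lem:thhbcres}, and the target becomes $\THH(R)$ by \Cref{lem:basechangepZ} together with \Cref{lem:unipotent_THH}. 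If you want to rescue your support-theoretic route you would need an independent argument excluding a component of $\fib(c_R)$ at $0$; the conservativity argument is, in effect, the cleanest such argument available, so I would recommend adopting it rather than pursuing the shell decomposition for general $R$.
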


\begin{proof}
  We begin with the lax symmetric monoidal natural transformation  
  \[ \THH(R^{h\Z}) \to \THH(R)^{h\Z} \]
  given by the coassembly map.
  We know from \Cref{lem:coassembly-po} that when $R=\Ss$ the coassembly map
  agrees with the restriction map $\THH(\Ss^{B\Z}) \to \THH(\Ss^{B\Z})_{|0}$ so by base-change
  we obtain a lax symmetric monoidal natural transformation under $\THH(R^{h\Z})$:
  \[ \epsilon \colon \THH(R^{h\Z})_{|0} \to \THH(R)^{h\Z}. \]
  
  In order to prove $\epsilon$ is an isomorphism
  we use \Cref{rmk:unip-affine} to reduce to checking it is an isomorphism after applying
  $\Ss \otimes_{\Ss^{B\Z}} - $.
  Now we have isomorphisms
  \begin{align*}
    \Ss \otimes_{\Ss^{B\Z}} \THH(R^{h\Z})_{|0}
    &\cong \Ss \otimes_{\Ss^{B\Z}} \THH(R^{h\ZZ}) \otimes_{\W\LCF{\Z_p}} \W(\F_p) 
    \cong W_{\infty} \otimes_{W} \THH(R^{h\ZZ}) \\
    &\cong \THH(R) 
    \cong \Ss \otimes_{\Ss^{B\Z}} \THH(R)^{h\Z}
  \end{align*}
  where we have used
  \Cref{lem:thh-s1} in the second step,
  \Cref{lem:thhbcres} in the third step and
  \Cref{lem:basechangepZ} and \Cref{lem:unipotent_THH} in the final step.
\end{proof}

  %
  

\subsection{The Dehn twist trivialization}\label{subsec:dehn}
Our next goal is to prove a weak version of \Cref{thm:E1A2algtamebegin} at the level of the underlying spectrum of $\THH$ together with the Frobenius map (rather than at the level of cyclotomic spectra). 

We will work with rings $R\in \Alg(\Sp^{B\Z,u})$ equipped with a finite spectrum $V\in \Sp^{\diamondsuit}$ and a trivialization of the $\Z$-action on $V \otimes R$. It will be convenient to also assume that $V$ has an associative algebra structure and encode this as a part of the data, that is we consider the following category:

\begin{dfn}
  We let $\UAlg$ be the presentably symmetric monoidal category
  defined by the pullback below
  \[ \begin{tikzcd}
    \UAlg \ar[r, "{u,v}"] \ar[d] \pullback &  
    \Alg(\Sp)^{B\Z,u} \times \Alg(\Sp^{\diamondsuit})
    \ar[d, "{(R,V)\mapsto(R\otimes V)}"] \\
    \Alg(\Sp) \arrow[r, "{\mathrm{triv}}"'] &
    \Alg(\Sp)^{B\Z,u}.
  \end{tikzcd} \]

  We will typically denote objects of \mdef{$\UAlg$} by pairs $(R,V)$ where
  $R$ is the algebra with locally unipotent $\Z$-action and
  $V$ is the finite algebra
  (leaving all other data implicit).
\end{dfn}

Our goal is now to study the functor
\[ (R,V) \mapsto V \otimes \THH(R^{h\ZZ}). \]
More precisely, we will prove the following theorem which serves as a replacement for $\Z_p$-hyperdescent for $\THH$.

\begin{thm} \label{thm:assconst_main1}
  There is a natural transformation of lax symmetric monoidal functors
  \[ \eta : \W(\LCF{\Z_p}) \otimes V \otimes \mathrm{res}_{\varphi}\left( \THH(R)^{h\Z} \right)
  \Rightarrow V \otimes \mathrm{res}_{\varphi} \THH(R^{h\Z}) : \UAlg \to \Sp^{\Delta^1} \]
  such that
  \begin{enumerate}
  \item $\eta$ becomes an isomorphism after composing with
    pullback along $i_0 : * \to \Delta^1$.
  \item $\eta$ becomes an isomorphism upon 
    restricting to the subcategory of those $(R,V)$ for which the assembly map
  \[ ((V \otimes \THH(R))^{tC_p})^{\oplus \infty} \to
  ((V \otimes \THH(R))^{\oplus \infty})^{tC_p} \]
  is an isomorphism.
  \end{enumerate}
\end{thm}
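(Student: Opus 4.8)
The plan is to reduce everything to the universal case already understood in \Cref{sec:cochaincircle} and then spread it over $\UAlg$ by base change. First I would observe that $\THH(R^{h\ZZ})$ is naturally a module over $W = \THH(\SP^{B\ZZ})$ in cyclotomic spectra (via \Cref{lem:thhbcres} with $k=0$), and that by \Cref{lem:coassemblygenTHH} the coassembly map is identified with the base change $\THH(R^{h\ZZ}) \to \THH(R^{h\ZZ})_{|0} = \THH(R^{h\ZZ})\otimes_W \SP^{B\ZZ}$. The key input from \Cref{sec:cochaincircle} is \Cref{prop:coassembly}: the fiber of the coassembly map for the unit is $\W(\LCF{\Z_p^\times})\otimes \Sigma^{-1}\crTR$, and more relevantly the splitting $W \cong \SP^{B\ZZ}\oplus \bigl(\W(\LCF{\Z_p^\times})\otimes\Sigma^{-1}\crTR\bigr)$ as $\SP^{B\ZZ}$-modules coming from \Cref{lem:coassembly-po}. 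Tensoring the fiber sequence of \Cref{prop:coassembly} up with $\THH(R)$ over $\THH(\SP) = \SP$, using that $\THH$ is symmetric monoidal, one gets that after applying $\res_\varphi$ the map $\res_\varphi\THH(R^{h\ZZ}) \to \res_\varphi(\THH(R)^{h\ZZ})$ has a canonical splitting as $\res_\varphi$-objects, whose complementary summand is built out of $\res_\varphi(\W(\LCF{\Z_p^\times})\otimes\Sigma^{-1}\crTR\otimes\THH(R))$; dually, $\res_\varphi\THH(R^{h\ZZ}) \cong \W(\LCF{\Z_p})\otimes \res_\varphi(\THH(R)^{h\ZZ})$ as objects of $\Sp^{\Delta^1}$ under suitable identifications, using $\LCF{\Z_p} \cong \F_p \oplus \overline{\LCF{\Z_p}}$ and an analysis of $\crTR$ after tensoring. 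This produces the natural transformation $\eta$: it is the composite of the lax symmetric monoidal structure map $\W(\LCF{\Z_p})\otimes V\otimes \res_\varphi(\THH(R)^{h\ZZ}) \to V\otimes\res_\varphi\THH(R^{h\ZZ})$, so naturality and lax monoidality are automatic from the constructions in \Cref{sec:cochaincircle} together with the monoidal structure on $\res_\varphi\THH_\hex(-;-)$ from \Cref{lem:tenspr_with_finite} and \Cref{lem:polygonic-compatibility}.

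Next I would prove part (1). Pulling back along $i_0 : *\to\Delta^1$ just remembers the source object of each $\res_\varphi$-pair, i.e. $\THH(R)^{h\ZZ}$ itself rather than its Frobenius target. On this piece $\eta$ becomes the statement that $\W(\LCF{\Z_p})\otimes \THH(R)^{h\ZZ} \to \THH(R^{h\ZZ})_{|0}'$ is an isomorphism, but that is exactly a base-changed form of \Cref{lem:thh-s1} / \Cref{lem:coassembly-po}: $\THH(\SP^{B\ZZ})\otimes_{\SP^{B\ZZ}}(-)$ computed against the module $\THH(R^{h\ZZ})$, together with the identification $\pflat\THH(\SP^{B\ZZ}) \cong \LCF{\Z_p}$ and \Cref{prop:witt-and-tilt}. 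Concretely I would invoke \Cref{rmk:unip-affine} (or the cited $\W\vdash\pflat$ adjunction) to check the isomorphism after applying $\SP\otimes_{\SP^{B\ZZ}}-$, reusing the chain of isomorphisms already spelled out in the proof of \Cref{lem:coassemblygenTHH}.

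For part (2), the point is that the only obstruction to $\eta$ being a global isomorphism is that $\crTR$ has an infinite-sum description and the coassembly map $\bigl((V\otimes\THH(R))^{tC_p}\bigr)^{\oplus\infty} \to \bigl((V\otimes\THH(R))^{\oplus\infty}\bigr)^{tC_p}$ need not be an isomorphism; once it is, the $\res_\varphi$-value of $\W(\LCF{\Z_p^\times})\otimes\Sigma^{-1}\crTR\otimes V\otimes\THH(R)$ is computed termwise and the splitting assembles into an honest equivalence in $\Sp^{\Delta^1}$. So I would: (a) use \Cref{lem:describe_crtr} to write $\crTR$ as $\bigoplus_{j\geq 0}\SP[\T/C_{p^j}]$ with Frobenius the Segal-conjecture isomorphism; (b) apply $\res_\varphi(-\otimes V\otimes\THH(R))$ and note the target of $\res_\varphi$ involves $(\bigoplus_j \dots)^{tC_p}$, which by the hypothesis in (2) commutes with the sum; (c) conclude that under this hypothesis the canonical map $\res_\varphi W \otimes_{\res_\varphi\SP} \res_\varphi\THH(R) \to \res_\varphi\THH(R^{h\ZZ})$ is an isomorphism, so $\eta$ — which is the $\W(\LCF{\Z_p})$-linear form of this — is too. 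I expect the main obstacle to be bookkeeping: making the splitting of \Cref{prop:coassembly} genuinely natural in $(R,V)\in\UAlg$ as a morphism of lax symmetric monoidal functors to $\Sp^{\Delta^1}$, and checking that the $W_k$-module / $\W(\LCF{\Z_p})$-module structures are compatible with base change along the coassembly map, rather than any single homotopical estimate. The technical heart — identifying the fiber with $\W(\LCF{\Z_p^\times})\otimes\Sigma^{-1}\crTR\otimes\THH(R)$ and handling $tC_p$ on infinite sums — is already isolated in \Cref{prop:coassembly}, \Cref{lem:tcp-commute}, and the cited \cite[Corollary 6.7]{yuan2023integral}.
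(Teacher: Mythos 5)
Your proposal has a genuine gap, and it is the central one: the argument is circular at the point where you produce the splitting. You propose to tensor the fiber sequence of \Cref{prop:coassembly} with $\THH(R)$ and conclude that $\res_{\varphi}\THH(R^{h\Z})$ decomposes as $\W(\LCF{\Z_p})\otimes \res_{\varphi}(\THH(R)^{h\Z})$ plus a $\crTR$-term. But tensoring that sequence with $\THH(R)$ only computes $\THH(\SP^{B\Z})\otimes\THH(R)\cong\THH(R^{B\Z})$, i.e.\ the case of the \emph{trivial} action. For a general locally unipotent action, $V\otimes\THH(R^{h\Z})$ is a $W$-module whose fiber at $0$ is $V\otimes\THH(R)^{h\Z}$ (\Cref{lem:coassemblygenTHH}), but it is not a priori induced from that fiber --- establishing precisely this constancy (after tensoring with $V$ and passing to $\mathrm{res}_{\varphi}$) is the content of the theorem. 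Relatedly, your proof never uses the trivialization of the $\Z$-action on $V\otimes R$ that is part of the data of an object of $\UAlg$; if the argument worked as written it would apply to arbitrary pairs $(R,V)$, and taking $V=\SP$ with a nontrivial action would yield a constancy statement the theorem does not (and cannot) make. The same circularity infects your part (1): the isomorphism $V\otimes\THH(R^{h\Z})\cong\W(\LCF{\Z_p})\otimes V\otimes\THH(R)^{h\Z}$ of underlying spectra is not a base-changed form of \Cref{lem:thh-s1}; that lemma identifies $\THH(\SP^{B\Z})$ itself, not the module $\THH(R^{h\Z})$ over it.

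The missing idea is the Dehn twist trivialization of \Cref{subsec:dehn}. The paper encodes the pair $(R,V)$ together with its trivialization as an object $\cn(R,V)\in\Alg(\Sp)^{B\Z^{\rhd}}$, identifies $\Nm(V)\otimes\mathrm{res}_{\hex}\THH(R^{h\Z})$ with the polygonic object $\THH_{\hex}^{\EQ}(\pi_*\rho^*\cn(R,V))$ (\Cref{lem:unroll}), and then constructs an auxiliary $\Z$-action (the Dehn twist) on this object which acts on the tilt by translation $+1$ on $\Z_p$ (\Cref{prop:dehn-action}) and is locally unipotent (\Cref{lem:yea-nilp}). \Cref{prop:unipotent_SW} then forces the restriction to $\Z_p$ to be $\W\LCF{\overrightarrow{\Z_p}}$ tensored with its fiber at $0$, while the $p^{-1}\Z_p^{\times}$ part dies under $\mathrm{res}_{\varphi}$ (\Cref{lem:outer-vanish}); this is where the splitting, and hence $\eta$, actually comes from. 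Your reading of hypothesis (2) --- that the remaining obstruction is commuting $(-)^{tC_p}$ past the infinite sum underlying $\W(\LCF{\Z_p})$ --- is correct, but it sits on top of a splitting you have not constructed.
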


\subsubsection{The setup}
\label{subsubsec:notation}

Before proceeding we will need to set up a certain amount of notation for the functors we will use for manipulating objects of $\UAlg$.

\begin{itemize}
\item Let \mdef{$\EQ$} be the category $0 \rightrightarrows 1$.
\item Let $\mdef{\tau} : \EQ \to \Delta^1$ be the functor identifying the two arrows.
\item Recall that $B\Z^{\rhd}$ sits in a pushout square
  \[ \begin{tikzcd}
    B\Z \ar[d] \ar[r, "i_1"] \ar[d, "\pi"] &
    B\Z \times \Delta^1 \ar[d, "\pi'"] \\
    * \ar[r, "i_1"] &
    B\Z^{\rhd}. \pushout
  \end{tikzcd} \]
\item Let $\mdef{\rho}$ be the essentially surjective functor $\EQ \to B\Z^{\rhd}$.
\item Let $\mdef{c}$ be the functor $B\Z^{\rhd} \to *$.
\item Let $\mdef{e}$ be the functor $\EQ \to *$.
\item By abuse of notation we will also use \\
  $\mdef{\tau}$ for the map $\EQ \times B\Z \to \Delta^1 \times B\Z$,\\
  $\mdef{\pi}$ for the map $\EQ \times B\Z \to \EQ$, \\
  $\mdef{\pi}$ for the map $\Delta^1 \times B\Z \to \Delta^1$ and \\
  $\mdef{e}$ for the map $\EQ \times B\Z \to B\Z$.
\end{itemize}

\begin{cnstr}
  We may construct a commuting diagram of symmetric monoidal functors
  \[ \begin{tikzcd}[column sep=huge]
    \Alg(\Sp)^{B\Z,u} \times \Alg(\Sp^{\diamondsuit})
    \ar[r, "{(R,V)\mapsto(R\otimes V)}"']
    \ar[d, "{(R,V) \mapsto (R \to V \otimes R)}"] &
    \Alg(\Sp)^{B\Z,u} \ar[d] &
    \Alg(\Sp) \ar[l, "{\mathrm{triv}}"] \ar[d, equal] \\
    (\Alg(\Sp)^{B\Z})^{\Delta^1} \ar[r, "{\mathrm{ev}_1}"'] &
    \Alg(\Sp)^{B\Z} &
    \Alg(\Sp) \arrow[l, "{\mathrm{triv}}"] 
  \end{tikzcd} \]
  and upon taking pullbacks of the horizontal cospans this gives us a symmetric monoidal functor
  \[ \mdef{\mathrm{cn}} : \UAlg \to \Alg(\Sp)^{B\Z^{\rhd}} \]
  and a symmetric monoidal natural isomorphism
  \[ \rho^*\mathrm{cn}(R,V) \cong \pi^*\left( \Ss \rightrightarrows V \right) \otimes e^*R.\hfill\qedhere \]
\end{cnstr}

\begin{cnstr}
  Given a diagram $R \rightrightarrows S  \in \Alg(\cC)^{\EQ}$, $S$ is naturally an $S$-$S$-bimodule, so by using the two different maps $R \to S$, we can forget down to the structure of an $R$-$R$-bimodule. We thus obtain a symmetric monoidal functor
  \[ \mdef{\bm} : \Alg(\Sp)^{\EQ} \to \Bimod. \]
  and a symmetric monoidal natural isomorphism between $\bm \circ e^*$ and the functor sending
  $R \in \Alg(\Sp)$ to $(R,R) \in \Bimod $ where $R$ is viewed as an $R$-$R$-bimodule in the standard way.
\end{cnstr}

\begin{itemize}
\item Let $\mdef{\THH^{\EQ}} \coloneqq \THH \circ \bm $ be the symmetric monoidal functor
  sending a diagram $R \rightrightarrows S$ to
  the THH of $R$ with coefficients in the $R$-bimodule $S$.
\item Let $\mdef{\THH^{\EQ}_{\hex}} \coloneqq \THH_{\hex} \circ \bm$ be the symmetric monoidal functor
  obtained by composing $\bm$ with the $p$-polygonic $\THH$ functor discussed in \Cref{subsubsec:thh}.
  Note that there is a natural isomorphism of symmetric monoidal functors
  $ \THH^{\EQ}_{\hex}(-)^{\Phi C_p} \cong \THH^{\EQ}(-)$ (see \Cref{subsubsec:polygonic} and \cite[Theorem D]{krause2023polygonic}).
\end{itemize}

\subsubsection{The tilt of the unit}

Our next task will be to determine the tilt of
$\THH_{\hex}^{\EQ}(\pi_*\rho^*\Ss)$.

\begin{cnstr} \label{cnstr:flat-hex}
  If we write the unit $\Ss \in \Alg(\Sp)^{B\Z^{\rhd}}$ as
  $c^*\Ss$ we may use the identification
  $ \pi_*\rho^*c^*\Ss \cong e^*\Ss^{B\Z} $
  and \Cref{lem:polygonic-compatibility}
  to obtain isomorphisms
  \[ \THH_{\hex}^{\EQ}(\pi_*\rho^*\Ss) \cong \THH_{\hex}^{\EQ}(e^*\Ss^{B\Z}) \cong \mathrm{res}_{\hex}\THH(\Ss^{B\Z}). \]
  Expanding this out into an isotropy seperation square using \Cref{lem:polygonic-compatibility}
  we obtain an identification of the diagrams of commutative algebras below.
  \[ \begin{tikzcd}
    \THH_{\hex}^{\EQ}(\pi_*\rho^*\Ss)^{C_p} \ar[r] \ar[d] \pullback &
    \THH_{\hex}^{\EQ}(\pi_*\rho^*\Ss)^{\Phi C_p} \ar[d] &
    \THH_{\hex}^{\EQ}(\pi_*\rho^*\Ss)^{C_p} \ar[r] \ar[d] \pullback &
    \THH(\Ss^{B\Z}) \ar[d, "\varphi"] \\
    \THH_{\hex}^{\EQ}(\pi_*\rho^*\Ss)^{hC_p} \ar[r] \ar[d] &
    \THH_{\hex}^{\EQ}(\pi_*\rho^*\Ss)^{tC_p} &
    \THH(\Ss^{B\Z})^{hC_p} \ar[r, "\mathrm{can}"] \ar[d] &
    \THH(\Ss^{B\Z})^{tC_p}  \\
    \THH_{\hex}^{\EQ}(\pi_*\rho^*\Ss)^{\Phi e} & &
    \THH(\Ss^{B\Z}) &    
  \end{tikzcd} \]
  Applying $\pi_0^{\flat}(-)$ to the diagrams above and using \Cref{cor:tateTHHSZ} and \Cref{prop:cycfrobuniv} to identify
  the objects and morphisms we obtain the following diagram:
  \[ \begin{tikzcd}
    \LCF{p^{-1}\Z_p} \ar[r, "(-)_{|\Z_p}"] \ar[d, "{\res_{1/p}}"] &
    \LCF{\Z_p} \ar[d, "{\res_{1/p}}"] \\
    \LCF{\Z_p} \ar[r, "(-)_{|p\Z_p}"] \ar[d, "\cong"] &
    \LCF{p\Z_p} \\
    \LCF{\Z_p}. &
  \end{tikzcd} \]

  Through this we obtain a distinguished map of commutative algebras
  \[ \W \LCF{p^{-1}\Z_p} \to \THH_{\hex}^{\EQ}(\pi_*\rho^*\Ss)^{C_p}. \]
  Since $\THH_{\hex}^{\EQ}(\pi_*\rho^*(-))$ is lax symmetric monoidal, it canonically refines to a lax symmetric monoidal functor
  \begin{align*}
    \Alg(\Sp)^{B\Z^{\rhd}}
    &\xrightarrow{\THH_{\hex}^{\EQ}(\pi_*\rho^*(-))}
    \Mod(\pgnsp{\lpr}; \THH_{\hex}^{\EQ}(\pi_*\rho^*\Ss)) \\
    &\to \Mod(\pgnsp{\lpr}; \W \LCF{p^{-1}\Z_p}).
  \end{align*}
  which we, by abuse of notation, also denote $\THH_{\hex}^{\EQ}(\pi_*\rho^*(-))$.
\end{cnstr}

The key point in the construction above is that it gives us access to restriction operations
indexed by $p^{-1}\Z_p$ on the output of $\THH_{\hex}^{\EQ}(\pi_*\rho^*(-))$.
The main way we will use this is in the following lemma:

\begin{lem} \label{lem:outer-vanish}
  For any $A \in \Alg(\Sp)^{B\Z^{\rhd}}$ we have
  \[ \mathrm{res}_{\varphi}\left( \THH_{\hex}^{\EQ}(\pi_* \rho^* A )_{|p^{-1}\Z_p^\times} \right) = 0. \]
\end{lem}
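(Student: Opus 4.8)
The plan is to reduce to the universal case $A=\Ss$ and then to compute the two coordinates of $\mathrm{res}_{\varphi}$ directly from \Cref{cnstr:flat-hex}.

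Write $T(A)\coloneqq \THH^{\EQ}_{\hex}(\pi_*\rho^*A)$. As $\THH^{\EQ}_{\hex}(\pi_*\rho^*(-))$ is lax symmetric monoidal with value $T(\Ss)\cong\mathrm{res}_{\hex}\THH(\Ss^{B\Z})$ at the unit, each $T(A)$ is canonically a $T(\Ss)$-module in $\pgnsp{\lpr}$. Since extension of scalars along the commutative ring map $\W\LCF{p^{-1}\Z_p}\to\W\LCF{p^{-1}\Z_p^\times}$ is symmetric monoidal, $T(A)_{|p^{-1}\Z_p^\times}$ is a $T(\Ss)_{|p^{-1}\Z_p^\times}$-module, and applying the lax symmetric monoidal functor $\mathrm{res}_{\varphi}$ makes $\mathrm{res}_{\varphi}(T(A)_{|p^{-1}\Z_p^\times})$ a module over the $\E_1$-algebra $\mathrm{res}_{\varphi}(T(\Ss)_{|p^{-1}\Z_p^\times})$ in $\Sp^{\Delta^1}$. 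A module over the zero algebra is zero, so it suffices to show $\mathrm{res}_{\varphi}(T(\Ss)_{|p^{-1}\Z_p^\times})=0$.

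By definition $\mathrm{res}_{\varphi}$ sends a polygonic spectrum $(X_0,X_1,X_0\to X_1^{tC_p})$ to the arrow $X_0\to X_1^{tC_p}$, and the tensor product, hence the base change $(-)_{|p^{-1}\Z_p^\times}$, of polygonic spectra is computed coordinatewise on $X_0=(-)^{\Phi C_p}$ and $X_1=(-)^{\Phi e}$. Thus we must check that $(T(\Ss)^{\Phi C_p})_{|p^{-1}\Z_p^\times}=0$ and $\big((T(\Ss)^{\Phi e})_{|p^{-1}\Z_p^\times}\big)^{tC_p}=0$. Both $T(\Ss)^{\Phi C_p}$ and $T(\Ss)^{\Phi e}$ are identified with $\THH(\Ss^{B\Z})$ (\Cref{cnstr:flat-hex}); by the $\pflat$-diagram in \Cref{cnstr:flat-hex} and naturality of the $\W\dashv\pflat$ counit, the ring map $\W\LCF{p^{-1}\Z_p}\to\THH(\Ss^{B\Z})$ encoding the module structure on the $\Phi C_p$-coordinate is $\W$ of the restriction $(-)_{|\Z_p}\colon\LCF{p^{-1}\Z_p}\to\LCF{\Z_p}$ along the clopen inclusion $\Z_p\hookrightarrow p^{-1}\Z_p$ (followed by the canonical map $\W\LCF{\Z_p}\to\THH(\Ss^{B\Z})$ of \Cref{lem:thh-s1}), while on the $\Phi e$-coordinate it is $\W$ of the reindexing isomorphism $\res_{1/p}\colon\LCF{p^{-1}\Z_p}\xrightarrow{\sim}\LCF{\Z_p}$ (followed by the same map). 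For the first: $\W$ is a left adjoint (\Cref{prop:witt-and-tilt}), so it preserves pushouts of commutative rings, whence
\[\W\LCF{\Z_p}\otimes_{\W\LCF{p^{-1}\Z_p}}\W\LCF{p^{-1}\Z_p^\times}\cong\W\big(\LCF{\Z_p}\otimes_{\LCF{p^{-1}\Z_p}}\LCF{p^{-1}\Z_p^\times}\big)=\W\LCF{\Z_p\cap p^{-1}\Z_p^\times}=\W\LCF{\varnothing}=0,\]
since $\Z_p=\{v\ge 0\}$ and $p^{-1}\Z_p^\times=\{v=-1\}$ are disjoint clopen subsets of $p^{-1}\Z_p$; hence $(T(\Ss)^{\Phi C_p})_{|p^{-1}\Z_p^\times}=0$. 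For the second: the isomorphism $\res_{1/p}$ identifies $p^{-1}\Z_p^\times\subset p^{-1}\Z_p$ with $\Z_p^\times\subset\Z_p$, so $(T(\Ss)^{\Phi e})_{|p^{-1}\Z_p^\times}\cong\THH(\Ss^{B\Z})_{|\Z_p^\times}=\THH(\Ss^{B\Z_p})_{|\Z_p^\times}$, and by \Cref{lem:shells} any module over $\THH(\Ss^{B\Z_p})_{|\Z_p^\times}$ has vanishing $(-)^{tC_p}$. Both coordinates of $\mathrm{res}_{\varphi}(T(\Ss)_{|p^{-1}\Z_p^\times})$ thus vanish, so it is zero, as required.

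The main obstacle is the bookkeeping in the third paragraph: one must verify that applying $(-)^{\Phi C_p}$ and $(-)^{\Phi e}$ to the map of polygonic algebras $\W\LCF{p^{-1}\Z_p}\to\THH^{\EQ}_{\hex}(\pi_*\rho^*\Ss)$ constructed in \Cref{cnstr:flat-hex} really recovers $\W((-)_{|\Z_p})$ and $\W(\res_{1/p})$ (up to the $\W\dashv\pflat$ counit), and that $(-)_{|p^{-1}\Z_p^\times}$ commutes with these coordinate functors (this uses that relative tensor products in $\pgnsp{\lpr}$ are computed coordinatewise). This is a diagram chase through the lax symmetric monoidal structures and the naturality of the counit; once it is in place, the only genuinely new inputs are the disjointness of the valuation-$0$ and valuation-$(-1)$ loci of $p^{-1}\Z_p$ and \Cref{lem:shells}.
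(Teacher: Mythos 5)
Your proof is correct and follows essentially the same route as the paper: reduce to the universal case $A=\Ss$ via the module structure, then read off from \Cref{cnstr:flat-hex} that both coordinates of $\mathrm{res}_{\varphi}$ die after restriction to $p^{-1}\Z_p^\times$ because the relevant clopen loci are disjoint. The only (harmless) divergence is in the target coordinate, where the paper reads the vanishing directly off the $\pflat$-diagram as $\LCF{p\Z_p}_{|\Z_p^\times}=0$, whereas you identify the restricted $\Phi e$-coordinate with $\THH(\Ss^{B\Z})_{|\Z_p^\times}$ and invoke \Cref{lem:shells}.
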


\begin{proof}
  $\mathrm{res}_{\varphi}( \THH_{\hex}^{\EQ}(\pi_* \rho^* A )_{|p^{-1}\Z_p^\times} )$ is naturally a
  $\mathrm{res}_{\varphi}( \THH_{\hex}^{\EQ}(\pi_* \rho^* \Ss )_{|p^{-1}\Z_p^\times} )$-module
  and it will therefore suffice to prove that the latter object is zero.
  Examining the third diagram in \Cref{cnstr:flat-hex} we can read off that
  \[ \pi_0^{\flat}\left( \mathrm{res}_{\varphi}( \THH_{\hex}^{\EQ}(\pi_* \rho^* \Ss )_{|p^{-1}\Z_p^\times} ) \right) \cong \left( \LCF{\Z_p}_{|p^{-1}\Z_p^{\times}} \xrightarrow{\res_{1/p}} \LCF{p\Z_p}_{|\Z_p^\times} \right) \cong \left( 0 \to 0 \right).\hfill\qedhere \]
\end{proof}

\subsubsection{Constructing the Dehn twist}

We now upgrade the functor $\THH_{\hex}^{\EQ}(\pi_*\rho^*(-))$ to a functor
taking values in $\pgnsp{\lpr}^{*\mm\Z,u}$.
We refer to this additional $\Z$-action as the Dehn twist
and its existence is instrumental in the proof of \Cref{thm:assconst_main1}.


\begin{cnstr} \label{cnstr:dehn}
We start by considering the functor  
\[\mdef{\sigma_0} \colon \EQ \to B\ZZ\]
sending the top map to $0$ and the bottom map to $1$.
Using $\sigma_0$ we construct an automorphism of $\EQ \times B\Z$
via the formula $\sigma(a,b) \coloneqq (a, \sigma_0(a) + b)$.
The automorphism $\sigma$ naturally fits into a commutative diagram
\[\begin{tikzcd}
	{\EQ\times B\ZZ } & {\EQ\times B\ZZ } \\
	\EQ & \EQ.
	\arrow["\pi", from=1-1, to=2-1]
	\arrow["\pi", from=1-2, to=2-2]
	\arrow["\mathrm{Id}", from=2-1, to=2-2]
	\arrow["\sigma", from=1-1, to=1-2]
\end{tikzcd}\]
Viewing this automorphism as a $\Z$-action and taking homotopy orbits we obtain a functor\footnote{Note that $B\Z \cong *\mm\Z$. We maintain this notational distinction purely to help the reader to keep in mind the role played by the different circles we see.}
\[\mdef{\bar{\pi}} \colon  (\EQ\times B\ZZ )\mm\ZZ \to\EQ\times *\mm\ZZ. \]
Similarly, the map $\rho : \EQ \times B\Z \to B\Z^{\rhd}$ is naturally equivariant for this $\Z$-action and so we obtain a corresponding functor
\[ \mdef{\bar{\rho}} \colon (\EQ\times B\ZZ )\mm\ZZ \to B\Z^{\rhd} \times *\mm\Z. \]
Finally, we let $\bar{\varrho}$ be the composite of $\bar{\rho}$ with the
projection $B\Z^{\rhd} \times *\mm\Z \to B\Z^{\rhd}$.
\end{cnstr}

\begin{rmk}\label{rmk:dehnaction}
  Associated to the pullback squares
  \[ \begin{tikzcd}
    B\Z^{\rhd} \ar[d, "t"] &
    \EQ \times B\Z \ar[r, "\pi"] \ar[l, "\rho"] \ar[d, "t"]&
    \EQ  \ar[d, "t"] \\
    B\Z^{\rhd} \times *\mm\Z &
    (\EQ \times B\Z) \mm\Z \ar[r, "\bar{\pi}"] \ar[l, "\bar{\rho}"] &
    \EQ \times (*\mm\Z)             
  \end{tikzcd} \]
  we have an isomorphism
  \[ t^*\bar{\pi}_*\bar{\varrho}^*A \cong \pi_*t^*\bar{\varrho}^*A \cong \pi_*\rho^*A \]
  natural in $A \in \CC^{B\Z^{\rhd}}$.
  In other words, $\bar{\pi}_*\bar{\varrho}^*A$ equips $\pi_*\rho^*A$ with a $\Z$-action.
\end{rmk}

\begin{dfn}
  Given an $A \in \Alg(\Sp)^{B\Z^{\rhd}}$ 
  we will refer to the $\Z$-action on $\THH_{\hex}^{\EQ}(\bar{\pi}_*\bar{\varrho}^*A)$
  as the \mdef{Dehn twist}.
\end{dfn}

\begin{rmk}
  The reason we refer to this action as a Dehn twist is that
  the classifying space of $\EQ \times B\Z$ is a torus and the map $\sigma$ is a Dehn twist on this torus.
  The map $\pi$ is then the map witnessing that if we view a torus as a trivial $S^1$-bundle on a circle, then the Dehn twist can be made into a bundle automorphism.
  The map $\bar{\varrho}$ can be described as witnessing that the Dehn twist acts trivially on the associated nodal curve.
\end{rmk}

\subsubsection{The Dehn twist on the unit}


\begin{prop} \label{prop:dehn-action}
  Under the isomorphism 
  \[ \pflat(\THH^{\EQ}_{\hex}(\pi_*\rho^*\Ss)^{\Phi C_p}) \cong \LCF{\Z_p} \]
  from \Cref{cnstr:flat-hex}
  the Dehn twist action is identified with restriction along the map $+1 : \Z_p \to \Z_p$.
\end{prop}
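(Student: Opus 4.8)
The plan is to reduce the statement to a computation of the self-homeomorphism of a profinite set that the Dehn twist induces on the set of components of a free loop space. Since $\pflat$ is a right adjoint with values in a $1$-category and is computed from $\pi_0(-)/p$ by a Frobenius-limit, the Dehn twist action on $\pflat(\THH^{\EQ}_{\hex}(\pi_*\rho^*\Ss)^{\Phi C_p})$ is determined by the induced $\Z$-action on $\pi_0$ of the underlying spectrum. By \Cref{cnstr:flat-hex} there is an identification $\THH^{\EQ}_{\hex}(\pi_*\rho^*\Ss)^{\Phi C_p}\cong\THH(\Ss^{B\Z})$, and combining \Cref{lem:thh-s1} with \Cref{cor:tateTHHSZ} identifies $\pflat$ of this cyclotomic spectrum with $\LCF{\Z_p}$, the profinite set $\Z_p$ appearing as $\Omega_e B\Z_p\cong\pi_0\cL B\Z_p$, i.e.\ as the winding numbers $[S^1,B\Z_p]=\Hom(\Z,\Z_p)$. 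As an $\F_p$-algebra automorphism of $\LCF{\Z_p}$ is the same datum as a homeomorphism of $\Z_p$, it suffices to prove that the Dehn twist induces the shift $+1\colon\Z_p\to\Z_p$ on this set of components.

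To that end I would identify the Dehn twist geometrically. By \Cref{rmk:dehnaction}, together with the natural isomorphism $\THH^{\EQ}_{\hex}(-)^{\Phi C_p}\cong\THH^{\EQ}(-)$ recalled in \Cref{subsubsec:polygonic}, the Dehn twist is the $\Z$-action on $\THH^{\EQ}(\pi_*\rho^*\Ss)=\THH(\Ss^{B\Z})$ induced by the $\Z$-equivariant refinement $\bar\pi_*\bar\varrho^*\Ss$ of the diagram $\pi_*\rho^*\Ss$. Unwinding \Cref{cnstr:dehn}, this $\Z$-action comes from the automorphism $\sigma(a,b)=(a,\sigma_0(a)b)$ of $\EQ\times B\Z$, which fixes the base $\EQ$ and translates the $B\Z$-fiber by $\sigma_0$; since $\sigma_0$ carries the two morphisms of $\EQ$ to $0$ and $1\in\Z=\pi_1(B\Z)$ respectively, passing to $\THH^{\EQ}$ --- which over cochain algebras glues the two coincident maps into a free loop in the fiber $B\Z_p$ --- turns it into the operation of concatenating a loop with the degree-one loop $|\sigma_0|\colon S^1\to B\Z_p$. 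Running this through the assembly map $\THH(\Ss^{B\Z})\to\Ss^{\cL B\Z_p}$ of \Cref{lem:thh-s1}, which is $\Z$-equivariant by naturality and injective on homotopy groups, reduces the computation to the space-level assertion that the generator of $\Z$ acts on $\cL B\Z_p=\Map(S^1,B\Z_p)$ by $\gamma\mapsto\gamma\cdot|\sigma_0|$; equivalently, via the shear isomorphism $\cL B\Z_p\cong B\Z_p\times\Omega_e B\Z_p$ of \Cref{cnstr:shear} and the splitting of \Cref{exm:loop-action}, by translation of the $\Omega_e B\Z_p=\Z_p$-factor by $1$. (This is consistent with \Cref{lem:sigma-zeta}, where the generator $\Id_{\Z_p}$ of this loop-space direction is pinned down via $\sigma(\zeta)=(1+\eta\zeta)\Id_{\Z_p}$.)

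On $\pi_0$ this concatenation is the map $w\mapsto w+1$ on $\Z_p=\pi_0\cL B\Z_p$, so by the first paragraph the Dehn twist acts on $\LCF{\Z_p}$ by precomposition with $+1$, that is by $\res_{+1}$, as claimed.

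The step I expect to be the main obstacle is the middle one: verifying, against the precise definitions in \Cref{cnstr:dehn} and the contravariance of the cochain functor $\Ss^{(-)}$, that the $\Z$-equivariant structure manufactured out of $\sigma_0$ shifts the winding number by exactly $+1$, rather than by $-1$ or some other affine reindexing. The cleanest route is to imitate the proofs of \Cref{lem:rescale} and \Cref{lem:shells}: use that every map involved --- the assembly map to $\Ss^{\cL B\Z_p}$ and $\pflat$ --- is natural and, on the relevant homotopy groups, injective, so as to reduce the question entirely to the space-level statement that the mapping-torus construction of \Cref{cnstr:dehn}, applied to $\cL B\Z_p$, is the $+1$-shift on components, which can then be read off directly from the explicit descriptions of $\sigma$, $\rho$ and $\bar\varrho$ there.
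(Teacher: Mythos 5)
Your proposal is correct and follows essentially the same route as the paper: reduce via a Dehn-twist-equivariant, $\pi_*$-injective assembly map to a space-level computation on the free loop space, where the twist visibly shifts the winding number by $1$. The only cosmetic difference is that the paper builds the equivariant assembly map with target $\Ss^{\lim_{\EQ}\bar{\pi}_*\bar{\varrho}^*\bullet}$ (the space of sections, whose $\pi_0$ is $\Z$, with injectivity coming from density of $\Z\subset\Z_p$) and then computes $\sigma$ on sections $(i,j)\mapsto(i,j+1)$ directly, whereas you phrase the endgame on $\Ss^{\cL B\Z_p}$ and as loop concatenation — and you correctly identify the equivariance/sign verification as the step requiring the care that the paper's \Cref{lem:thhcolim}-plus-sections argument supplies.
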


The idea behind this proposition is that $\THH_{\hex}(\pi_*\rho^*\Ss)^{\Phi C_p} \cong \THH(\SP^{B\Z})$ is a continuous version of the cochains of the free loop space on the $p$-adic circle, which can be thought of as the space of sections of the projection $B\ZZ_p\times B\ZZ \to B\ZZ$. The Dehn twist acts as an automorphism of $B\ZZ_p\times B\ZZ$ which by construction induces the automorphism on the space of sections that on $\pi_0$ is the map $+1:\ZZ_p \to \ZZ_p$.

Before proving the proposition we will need a lemma simplifying the computation of $\THH^{\EQ}$ for commutative algebras.

\begin{lem}\label{lem:thhcolim}
  There is a commutative diagram of symmetric monoidal functors	
  \[ \begin{tikzcd}
    {\CAlg(\CC)^{\EQ}} & {\Alg(\CC)^{\EQ}} \\
    {\CAlg(\CC)} & \CC
    \arrow["{\THH^{\EQ}}", from=1-2, to=2-2]
    \arrow[from=1-1, to=1-2]
    \arrow["\colim"', from=1-1, to=2-1]
    \arrow[from=2-1, to=2-2]
  \end{tikzcd} \]
  where $\THH^{\EQ}$ denotes the $\THH$ relative to $\CC$.
\end{lem}

\begin{proof}
  For a diagram $A \rightrightarrows B$ in $\CAlg(C)^{\EQ}$, the colimit can be computed as $A\otimes_{A\otimes A}B$, which is also a formula for $\THH^{\EQ}$.
\end{proof}

\begin{proof}[Proof of \Cref{prop:dehn-action}]
  As explained in \Cref{subsubsec:notation} we have an $\Z$-equivariant isomorphism of commutative algebras
  \[ \THH^{\EQ}_{\hex}(\bar{\pi}_*\bar{\varrho}^*\Ss)^{\Phi C_p} \cong \THH^{\EQ}(\bar{\pi}_*\bar{\varrho}^*\Ss) \]
  so it will suffice to focus on the latter.
  Next we note that the operations $\bar{\pi}_*$ and $\bar{\varrho}_*$ commute with the forgetful functor from commutative algebras to algebras, so we are free to work at the level of commutative algebras
  By \Cref{lem:thhcolim}, we can replace $\THH^{\EQ}$ with the colimit in $\CAlg(\Sp)$.
  This lets us rewrite our object of interest as
  $ \colim_{\EQ}\bar{\pi}_*\bar{\varrho}^*\SP $.
  

  The operations $\bar{\pi}_*$ and $\bar{\varrho}_*$ are also
  compatible with the symmetric monoidal, limit preserving, functor
  $\SP^{(-)}:\Spc^{\op} \to \CAlg(\Sp)$.
  This gives us, for each space $X \in (\Spc^{\op})^{B\Z^{\rhd}}$,
  a Dehn twist-equivariant assembly map
  \[ \colim_{\EQ} \bar{\pi}_*\bar{\varrho}^* \Ss^{X} \to \Ss^{\lim_{\EQ}\bar{\pi}_*\bar{\varrho}^*X}. \]
  Taking $X$ to be the constant diagram on a point,
  $\lim_{\EQ}\pi_*\rho^*X$
  is the free loop space of $B\Z$,
  and it follows from \Cref{lem:thh-s1}
  that this assembly map is (on $\pflat$) the map
  $\LCF{\ZZ_p} \to \FF_p^{\ZZ}$
  taking a continuous function $\ZZ_p \to \FF_p$ to its restriction to $\ZZ \subset \ZZ_p$.
  The map $\LCF{\ZZ_p} \to \FF_p^{\ZZ}$ is injective since $\ZZ$ is dense in $\ZZ_p$, so to verify the claim of the proposition, it will suffice to show that the Dehn twist action on
  $\FF_p^{\ZZ} \cong \pflat \Ss^{\lim_{\EQ}\bar{\pi}_*\bar{\varrho}^*\bullet} $
  is given by the automorphism coming from pre-composition with the map $+1: \ZZ \to \ZZ$.
  This in turn reduces to determining the Dehn twist action on $\pi_0$ of
  $\lim_{\EQ}\bar{\pi}_*\bar{\varrho}^*\bullet$.
  

  Unrolling definitions and using the fact that $\pi : \EQ \times B\Z \to \EQ$ is a cartesian fibration we identify $\lim_{\EQ} \pi_*\rho^* \bullet$ with the space of sections of $\pi$
  and identify the Dehn twist action with the action on the space of sections generated by the map $\sigma$ from \Cref{cnstr:dehn}.
  The $\pi_0$ of the space of sections of $\pi$ can be identified with
  pairs of integers $(i,j)$
  (recording the values of the top and bottom arrow respectively)
  modulo the equivalence relation generated by $(i,j) = (i+1,j+1)$
  (coming from the automorphism of the source (or target) object).
  The autormorphism $\sigma$ acts by sending a section $(i,j)$ to $(i,j+1)$.
  The proposition follows.\qedhere
  \todo{I believe the sign here depends how THH and THHEQ are actually identified (i.e. this should depend on an orientation of EQ) I dont think we are precise enough for this choice to have appeared previously, so I would say we're just correct up to retroactively figuring out which orientation we needed.}

\end{proof}

\begin{cor} \label{cor:dehn-refined}
  There is an isomorphism of $\Z$-equivariant algebras
  \[ \pflat(\THH_{\hex}(\bar{\pi}_*\bar{\varrho}^*\Ss)^{C_p})
  \cong \LCF{\overrightarrow{\ZZ_p}} \times (\LCF{p^{-1}\Z_p^\times}, \psi) \]
  lying over the isomorphism from \Cref{cnstr:flat-hex}
  for some continuous automorphism $\psi$ of $p^{-1}\Z_p^\times$.
\end{cor}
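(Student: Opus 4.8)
The plan is to deduce \Cref{cor:dehn-refined} from \Cref{prop:dehn-action} together with the decomposition of $\THH_{\hex}^{\EQ}(\pi_*\rho^*\Ss)^{C_p}$ recorded in \Cref{cnstr:flat-hex}. Recall from \Cref{cnstr:flat-hex} that $\pflat$ applied to the isotropy separation square for $\THH_{\hex}^{\EQ}(\pi_*\rho^*\Ss)$ gives the pullback square
\[ \begin{tikzcd}
  \LCF{p^{-1}\Z_p} \ar[r, "(-)_{|\Z_p}"] \ar[d, "{\res_{1/p}}"] & \LCF{\Z_p} \ar[d, "{\res_{1/p}}"] \\
  \LCF{\Z_p} \ar[r, "(-)_{|p\Z_p}"] & \LCF{p\Z_p},
\end{tikzcd} \]
so that $\pflat(\THH_{\hex}^{\EQ}(\pi_*\rho^*\Ss)^{C_p}) \cong \LCF{p^{-1}\Z_p}$. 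Now decompose $p^{-1}\Z_p = \Z_p \sqcup p^{-1}\Z_p^\times$ as a disjoint union of a clopen subset and its complement; this induces a product decomposition of rings of locally constant functions $\LCF{p^{-1}\Z_p} \cong \LCF{\Z_p} \times \LCF{p^{-1}\Z_p^\times}$, and, after applying $\W$, a corresponding decomposition of the commutative algebra $\THH_{\hex}^{\EQ}(\pi_*\rho^*\Ss)^{C_p}$ into a product of two factors (here I use that such idempotent decompositions lift canonically through $\W$, as in \Cref{prop:witt-and-tilt}).

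Next I would upgrade this to a $\Z$-equivariant statement using the Dehn twist. By \Cref{cnstr:dehn} and \Cref{rmk:dehnaction}, $\THH_{\hex}^{\EQ}(\bar{\pi}_*\bar{\varrho}^*\Ss)$ equips $\THH_{\hex}^{\EQ}(\pi_*\rho^*\Ss)$ with a $\Z$-action, and taking $\Phi C_p$ and then $\pflat$, \Cref{prop:dehn-action} identifies the resulting $\Z$-action on $\LCF{\Z_p} \cong \pflat(\THH_{\hex}^{\EQ}(\pi_*\rho^*\Ss)^{\Phi C_p})$ with restriction along $+1 : \Z_p \to \Z_p$. Since the $C_p$-fixed points sit in the pullback square $X^{C_p} = X^{\Phi C_p} \times_{X^{tC_p}} X^{hC_p}$, which is functorial in the $\Z$-action, I obtain an induced $\Z$-action on $\pflat(\THH_{\hex}^{\EQ}(\pi_*\rho^*\Ss)^{C_p}) \cong \LCF{p^{-1}\Z_p}$. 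To identify this action with restriction along $+1 : p^{-1}\Z_p \to p^{-1}\Z_p$, I would trace through the three maps in the $C_p$-pullback square: the $\Phi C_p$-component is $\res_{+1}$ on $\LCF{\Z_p}$ by \Cref{prop:dehn-action}, and the $hC_p$- and $tC_p$-components are computed via \Cref{lem:pflat-tcp} and \Cref{rmk:things-agree} (using that the relevant $C_p$-actions are trivializable by \Cref{lem:rescale}, \Cref{lem:shells}), so that the whole $C_p$-pullback-square diagram of $\pflat$'s is $\Z$-equivariant for $\res_{+1}$ on each term; matching this with the pullback-square description of $\LCF{p^{-1}\Z_p}$ above shows the induced $\Z$-action on $\LCF{p^{-1}\Z_p}$ is precisely $\res_{+1}$.

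Finally I observe that the clopen decomposition $p^{-1}\Z_p = \Z_p \sqcup p^{-1}\Z_p^\times$ is \emph{not} preserved by $+1$ — rather, $+1$ sends $p^{-1}\Z_p^\times$ to $p^{-1}\Z_p^\times + 1$, which is contained in $\Z_p$ once we are at $p^{-1}\Z_p^\times$ of ``depth one'' — so I need to be slightly careful. The correct statement is that $+1$ restricts to a bijection of $\Z_p$ with itself only ``asymptotically''; concretely, $+1$ maps $\Z_p$ into $p^{-1}\Z_p$, and iterating, the orbit of the clopen piece $\Z_p$ under $\langle +1\rangle$ is cofinal in a sense that singles out a $\Z$-invariant clopen subset. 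In fact the right way to phrase it: the $\Z$-action $\res_{+1}$ on $p^{-1}\Z_p$ has the clopen subset $\bigcap_{m\geq 0}(\Z_p - m)$ — equivalently $\Z_p$ itself after noting $\Z_p - m = \Z_p$ for $m\in\Z$ — wait, indeed $\Z_p + 1 = \Z_p$, so $\Z_p$ \emph{is} $\res_{+1}$-invariant, and its complement $p^{-1}\Z_p^\times$ is therefore also $\res_{+1}$-invariant. So the product decomposition $\LCF{p^{-1}\Z_p}\cong \LCF{\Z_p}\times\LCF{p^{-1}\Z_p^\times}$ is $\Z$-equivariant, with the action on the first factor being $\res_{+1}$ (which I denote by the subscript $\overrightarrow{\Z_p}$ to emphasize the shift) and the action on the second factor given by some continuous automorphism $\psi$ of $\LCF{p^{-1}\Z_p^\times}$, necessarily induced by a continuous automorphism $\psi$ of $p^{-1}\Z_p^\times$ since $\pflat$ lands in a $1$-category of perfect $\F_p$-algebras and $p^{-1}\Z_p^\times$ is a profinite set. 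Lifting this $\Z$-equivariant decomposition through $\W$ (again using canonical lifting of idempotents and functoriality of $\W$) gives the desired isomorphism of $\Z$-equivariant algebras, compatible with \Cref{cnstr:flat-hex} by construction. The only mild subtlety — the main thing to get right — is bookkeeping the clopen decomposition against the shift $+1$ and checking that the $C_p$-pullback square of $\pflat$'s is genuinely $\Z$-equivariant term-by-term, which is where \Cref{lem:pflat-tcp} and \Cref{rmk:things-agree} do the work.
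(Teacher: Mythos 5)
Your overall route is the paper's: identify $\pflat$ of the $C_p$-fixed points with $\LCF{p^{-1}\Z_p}$ via \Cref{cnstr:flat-hex}, observe by Stone duality that the Dehn twist acts by restriction along a continuous automorphism $\alpha$ of $p^{-1}\Z_p$, use equivariance of the single map $(-)_{|\Z_p}$ to the $\Phi C_p$-corner together with \Cref{prop:dehn-action} to get $\alpha(c)=c+1$ for $c\in\Z_p$, and conclude that $\alpha$ preserves $\Z_p$ (since $+1$ is a bijection of $\Z_p$) and hence its complement $p^{-1}\Z_p^\times$, which gives the clopen splitting. That is exactly how the paper argues, and your final paragraph --- despite the audible hesitation --- lands on this correct reasoning.

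The middle of your argument, however, contains a claim that is both unnecessary and false: that the whole isotropy-separation pullback square of $\pflat$'s is ``$\Z$-equivariant for $\res_{+1}$ on each term,'' so that the induced action on $\LCF{p^{-1}\Z_p}$ is $\res_{+1}$ on the nose. \Cref{lem:pflat-tcp} and \Cref{rmk:things-agree} identify the underlying rings at the $hC_p$- and $tC_p$-corners; they say nothing about the Dehn twist action there, which is governed by the action on the $\Phi e$-part --- a computation that \Cref{prop:dehn-action} does not perform. In fact the claim cannot hold: equivariance of the left vertical map $\res_{1/p}\colon\LCF{p^{-1}\Z_p}\to\LCF{\Z_p}$ forces the automorphism $\beta$ on the $hC_p$-corner to satisfy $\beta(x)=p\,\alpha(x/p)$, so if $\alpha=+1$ then $\beta=+p$, not $+1$ (geometrically, the Dehn twist winds $p$ times around the $p$-fold cover). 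Delete that paragraph and your proof becomes the paper's: the weaker fact $\alpha|_{\Z_p}=+1$, read off from the single equivariant map to the $\Phi C_p$-corner, is all that is needed, and the action on $p^{-1}\Z_p^\times$ is then correctly left as an unspecified continuous automorphism $\psi$. One last nit: the corollary is a statement about $\pflat$ itself, so the concluding ``lift through $\W$'' is not part of this corollary; that step is deferred to \Cref{prop:dehn-twist-trivialization}.
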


\begin{proof}  
  In \Cref{cnstr:flat-hex} we identified the natural map
  \[ \pflat(\THH_{\hex}(\pi_*\rho^*\Ss)^{C_p}) \to \pflat(\THH_{\hex}(\pi_*\rho^*\Ss)^{\Phi C_p}) \]
  with the map
  \[ \LCF{p^{-1}\Z_p} \xrightarrow{(-)_{|\Z_p}} \LCF{\Z_p}. \]
  Let $\psi$ denote the Dehn twist actions on the source.
  From Stone duality we know that $\psi$ acts as
  precomposition by some continuous automorphism $p^{-1}\Z_p \to p^{-1}\Z_p$.
  The fact that the map above is compatible with the Dehn twist action
  implies that for $c \in \Z_p$ we have $\psi(c) = c+1$.
  It follows that $\psi$ preserves $p^{-1}\Z_p^\times$ as well
  and we obtain the desired splitting.
\end{proof}

\subsubsection{The trivialization}

\begin{lem} \label{lem:yea-nilp}
  Let $A \in \Alg(\Sp)^{B\Z^{\rhd}}$.
  The Dehn twist action on $\THH_{\hex}(\bar{\pi}_*\bar{\varrho}^*A)$
  is locally unipotent.
\end{lem}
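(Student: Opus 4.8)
The statement to prove is that for $A \in \Alg(\Sp)^{B\Z^{\rhd}}$, the Dehn twist action on $\THH_{\hex}(\bar{\pi}_*\bar{\varrho}^*A)$ is locally unipotent. The plan is to reduce to the case of the unit $A = \SP$ and then to invoke the explicit identification of the Dehn twist action already computed in \Cref{prop:dehn-action} and \Cref{cor:dehn-refined}, together with the criterion \Cref{lem:check-unip} (a $\Z$-action on an object of a stable category is locally unipotent iff its image under some conservative exact functor is, and it suffices to check after applying a conservative family of such functors).

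First I would observe that, since $\THH_{\hex}^{\EQ}(\bar{\pi}_*\bar{\varrho}^*(-))$ is lax symmetric monoidal (every constituent functor in its construction is), the object $\THH_{\hex}(\bar{\pi}_*\bar{\varrho}^*A)$ is naturally a module, $\Z$-equivariantly, over $\THH_{\hex}(\bar{\pi}_*\bar{\varrho}^*\Ss)$. A $\Z$-action which is a module over a locally unipotent $\Z$-equivariant algebra is itself locally unipotent (this is the content of \Cref{lem:tensor-unip}, or a direct consequence of it: the subcategory of locally unipotent objects is closed under the relevant operations, and being a module puts the object in the thick subcategory generated by free modules on objects with trivial action twisted by the algebra). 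So it suffices to treat $A = \SP$.

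For $A = \SP$ the work has essentially been done. By \Cref{cor:dehn-refined} we have a $\Z$-equivariant identification
\[ \pflat(\THH_{\hex}(\bar{\pi}_*\bar{\varrho}^*\Ss)^{C_p}) \cong \LCF{\overrightarrow{\ZZ_p}} \times (\LCF{p^{-1}\Z_p^\times}, \psi), \]
where the Dehn twist acts on the first factor by precomposition with $+1 \colon \Z_p \to \Z_p$ (this is \Cref{prop:dehn-action}) and on the second factor by some continuous automorphism $\psi$ of $p^{-1}\Z_p^\times$. The key point is that $+1 \colon \Z_p \to \Z_p$ is \emph{pro-unipotent}: modulo $p^j$ it is a unipotent permutation of the finite set $\Z/p^j$ (a single $p^j$-cycle, so $(\text{shift} - 1)^{p^j} = 0$ on functions), hence the induced action on $\LCF{\Z_p}$ is locally unipotent. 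Thus the action on $\pflat(\THH_{\hex}(\bar{\pi}_*\bar{\varrho}^*\Ss)^{\Phi C_p}) \cong \LCF{\Z_p}$ is locally unipotent. I would then propagate this upward: since the Frobenius $\varphi$ on $\THH(\SP^{B\Z})$ is an isomorphism (\Cref{prop:cycfrobuniv}), the underlying $\T$-spectrum of $\THH_{\hex}(\pi_*\rho^*\Ss)$ is controlled by its $\pi_0^{\flat}$ together with the structure established in \Cref{lem:shells} — everything is a sum of (shifts of) copies of $\W\LCF{-}$ with the corresponding twisted $\Z$-action — and local unipotence on homotopy $\pi_0^\flat$, being detected by nilpotence of $\sigma_{\mathrm{Dehn}} - 1$ modulo $p$ on a spectrum built from spherical Witt vectors of the relevant profinite sets, transfers to local unipotence of the $\Z$-action on the spectrum itself via \Cref{lem:check-unip}.

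The main obstacle, and the step deserving the most care, is the last one: passing from the computed action on the $\pi_0^\flat$-level (profinite sets of continuous functions) back to local unipotence of the $\Z$-action on the full cyclotomic/polygonic spectrum $\THH_{\hex}(\bar{\pi}_*\bar{\varrho}^*\Ss)$ — i.e.\ verifying that the spectrum-level structure is rigid enough that unipotence is detected on $\pflat$. This should follow from the fact that, after reduction mod $p$ and restricting to each of the pieces $\THH(\SP^{B\Z})_{|p^k\Z_p^\times}$, the object is a (possibly infinite) sum of spheres with $\Z$-action induced from a pro-unipotent permutation action on the indexing profinite set, so that the Dehn-twist endomorphism minus the identity is locally nilpotent; one then checks that the hypotheses of \Cref{lem:check-unip} apply to the conservative collection of functors $(V \otimes -)$ for $V$ a finite type $n+2$ spectrum (or simply $\Ss/p$), using that $\pi$-finiteness mod $p$ reduces each such tensor product to a finite object on which a unipotent $\Z$-action is automatically locally unipotent.
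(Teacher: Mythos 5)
There is a genuine gap, and it sits at the very first step. Your reduction to $A=\Ss$ rests on the claim that a $\Z$-equivariant module over a $\Z$-equivariant algebra with locally unipotent action is itself locally unipotent. That is false: take the unit $\Ss$ with trivial (hence locally unipotent) action — every object of $\Sp^{B\Z}$ is a module over it, but not every $\Z$-action is locally unipotent. \Cref{lem:tensor-unip} only says that $\CC^{B\Z,u}$ is closed under tensor products of objects that are \emph{both already} locally unipotent; writing $M$ as a retract of $M\otimes R$ does not help, because concluding that $M\otimes R$ is locally unipotent would already require $M$ to be so — the argument is circular. (The legitimate module tricks in the paper, e.g.\ in \Cref{lem:outer-vanish}, are of the form ``a module over the zero algebra is zero,'' which is a different phenomenon.) Since the rest of your argument only treats $A=\Ss$, the general case is not addressed. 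Even for $A=\Ss$ there are further problems: \Cref{cor:dehn-refined} only tells you the Dehn twist acts on the outer factor $\LCF{p^{-1}\Z_p^\times}$ by \emph{some} continuous automorphism $\psi$, and you give no reason why the induced action there should be locally unipotent (the paper never needs this on $\pflat$; it handles the outer piece by killing it with $\res_\varphi$ in \Cref{lem:outer-vanish}, which is not available here). Finally, the passage from local unipotence on $\pi_0^\flat$ back to the spectrum level is asserted rather than proved, and you cannot invoke \Cref{prop:dehn-twist-trivialization} or \Cref{prop:unipotent_SW}-style splittings without circularity, since those downstream results depend on this lemma.

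The paper's actual proof avoids all of this by never computing the action at all. It first applies \Cref{lem:check-loc-nilp} to reduce to the geometric fixed points $(-)^{\Phi e}$ and $(-)^{\Phi C_p}$, which are $\THH(A;M^{\otimes_A p})$ and $\THH(A;M)$; these are expressed as colimits of iterated tensor powers of the underlying algebra and bimodule, and since $\Sp^{B\Z,u}$ is closed under colimits and tensor products, everything reduces to showing that the Dehn twist acts \emph{trivially} on the restriction $\bar{\iota}^*\bar{\pi}_*\bar{\varrho}^*A$ to the two objects of $\EQ$. That triviality is a short base-change computation with two pullback squares. If you want to repair your write-up, the structural lesson is that the reduction should go through the building blocks of $\THH$ (objects of $\EQ$, then colimits and tensors), not through a module structure over the unit's $\THH$.
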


\begin{proof}
  Per \Cref{lem:check-loc-nilp} it will suffice to argue that the $\Z$-actions on
  $\THH_{\hex}^{\EQ}(\bar{\pi}_*\bar{\varrho}^*A)^{\Phi e}$ and
  $\THH_{\hex}^{\EQ}(\bar{\pi}_*\bar{\varrho}^*A)^{\Phi C_p}$ are each locally unipotent.
  From the formulas
  \begin{align*}
    \THH_{\hex}(A;M)^{\Phi C_p} &\cong \THH(A;M) \\
    \THH_{\hex}(A;M)^{\Phi e} &\cong \THH(A;M^{\otimes_A p}) \\
    \THH(A;M) &\cong A \otimes_{A \otimes A} M \cong \colim A \otimes (A \otimes A)^{\otimes \bullet} \otimes M \\
    M^{\otimes_A p} &\cong \colim_{(\Delta^\op)^{\times p-1}} \left( M \otimes A^{\otimes \bullet} \otimes M \otimes \cdots \otimes A^{\otimes \bullet} \otimes M \right)
  \end{align*}
  and the fact that $\Sp^{B\Z,u}$ is closed under colimits and $\otimes$
  we learn that it will suffice for us to argue that
  the Dehn twist action on
  $\bar{\iota}^* \bar{\pi}_*\bar{\varrho}^*A$ is trivial
  where $\bar{\iota} : (\bullet \coprod \bullet) \times *\mm\Z \to \EQ \times *\mm\Z$
  is the natural embedding.
  
  Consider the following diagram
  \[\begin{tikzcd}
	& {B\Z^{\rhd}} \\
	{B\Z\coprod B\Z} & {(B\Z\coprod B\Z)\times *//\Z} & {(\EQ\times B\Z)//\Z} \\
	{\bullet \coprod \bullet} & {(\bullet \coprod \bullet)\times *//\Z} & {\EQ\times *//\Z}
	\arrow["{\bar{i}}", from=2-2, to=2-3]
	\arrow["{\bar{\varrho}}"', from=2-3, to=1-2]
	\arrow["{\bar{\varpi}}", from=2-2, to=2-1]
	\arrow["j", from=2-1, to=1-2]
	\arrow["\bar{\pi}"', from=2-3, to=3-3]
	\arrow["{\bar{\iota}}", from=3-2, to=3-3]
	\arrow["q"', from=2-1, to=3-1]
	\arrow["\varpi", from=3-2, to=3-1]
	\arrow["{\bar{q}}", from=2-2, to=3-2]
  \end{tikzcd}\]
  Where $q,\bar{q},\varpi,\bar{\varpi}$ are projections and 
  $j$ embed the first copy of $B\Z$ into $\BZgg$ and send the second copy to the cone point. Further, $q$ and $\bar{\pi}$ are cartesian fibrations and and both squares are pullback squares.\footnote{All these assertions are easily verifiable by hand as all the categories that appear in the diagram are $1$-categories with only two objects.}  

  
  We now have isomorphisms
  \[ \bar{\iota}^* \bar{\pi}_*\bar{\varrho}^*A \cong \bar{q}_* \bar{i}^* \bar{\varrho}^*A \cong \bar{q}_* \bar{\varpi}^* j^*A \cong \varpi^* q_* j^*A \]
  from which we can read off that the Dehn twist action is trivial as desired.  
\end{proof}


\begin{prop} \label{prop:dehn-twist-trivialization}
  There is a natural isomorphism of lax symmetric monoidal functors
  $ \Alg(\Sp)^{B\Z^{\rhd}} \to \pgnsp{\lpr}$
  between the functor sending $A \in \Alg(\Sp)^{B\Z^{\rhd}}$ to
  $\THH_{\hex}( \pi_* \rho^* A )$
  and the functor sending $A \in \Alg(\Sp)^{B\Z^{\rhd}}$ to    
  \[ \left( \W \LCF{\Z_p} \otimes \THH_{\hex}(\pi_* \rho^* A )_{|0} \right) \oplus \THH_{\hex}(\pi_* \rho^* A )_{|p^{-1}\Z_p^\times}. \] 
\end{prop}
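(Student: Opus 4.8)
The plan is to leverage the Dehn twist $\ZZ$-action constructed in \Cref{cnstr:dehn} together with the local unipotence of that action (\Cref{lem:yea-nilp}) and the splitting of \Cref{cor:dehn-refined}. The key observation is that $\THH_{\hex}(\pi_*\rho^*A)$ is recovered from $\THH_{\hex}^{\EQ}(\bar\pi_*\bar\varrho^*A)$ by taking homotopy fixed points (equivalently, using \Cref{rmk:dehnaction}, by forgetting the Dehn twist and then remembering the action is canonically trivializable after base change), so the statement reduces to understanding the Dehn-twist-equivariant structure. First I would base change $\THH_{\hex}^{\EQ}(\bar\pi_*\bar\varrho^*(-))$ along the idempotent decomposition of $\pflat(\THH_{\hex}(\pi_*\rho^*\Ss)^{C_p})$ from \Cref{cor:dehn-refined}, namely
\[ \pflat(\THH_{\hex}(\bar\pi_*\bar\varrho^*\Ss)^{C_p}) \cong \LCF{\overrightarrow{\ZZ_p}} \times (\LCF{p^{-1}\Z_p^\times}, \psi). \]
This splits the functor $\THH_{\hex}(\pi_*\rho^*A)$, as a module over $\THH_{\hex}(\pi_*\rho^*\Ss)$ in $\pgnsp{\lpr}$ equipped with its Dehn twist action, as a direct sum of the piece supported on $\ZZ_p$ and the piece $\THH_{\hex}(\pi_*\rho^* A)_{|p^{-1}\Z_p^\times}$.

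For the second summand, I would argue that the Dehn twist action is already trivializable there, because the relevant idempotent piece of the structure ring carries the automorphism $\psi$ which, being a continuous automorphism of $p^{-1}\Z_p^\times$ in the image of the $\ZZ$-action, generates a locally unipotent (by \Cref{lem:yea-nilp}) hence trivializable action; taking Dehn-twist homotopy fixed points then simply returns $\THH_{\hex}(\pi_*\rho^*A)_{|p^{-1}\Z_p^\times}$ itself (there is no $\crTR$-type contribution here precisely because this summand is $\Z_p^\times$-indexed and hence, as in \Cref{lem:shells}, has trivial $\ZZ$-action fixed points agreeing with the underlying object). For the first summand, the Dehn twist acts via the shift $+1: \ZZ_p \to \ZZ_p$ on $\pflat$ by \Cref{prop:dehn-action}, and I would identify the homotopy fixed points of this shift action with $\W\LCF{\ZZ_p} \otimes \THH_{\hex}(\pi_*\rho^*A)_{|0}$: the point is that the $+1$-action on $\LCF{\ZZ_p}$ (continuous functions on the profinite set $\ZZ_p$, with $+1$ acting freely and properly discontinuously on the dense orbit $\ZZ$) has the property that tensoring up from the fixed point at $0$ and then taking orbits recovers the whole thing, so the coassembly/fixed-point comparison at $0$ becomes an isomorphism after smashing with $\W\LCF{\ZZ_p}$. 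Concretely I would use \Cref{rmk:unip-affine} to check the isomorphism after applying $\Ss \otimes_{\Ss^{B\Z}} -$, where both sides collapse to $\THH_{\hex}(\pi_*\rho^*A)_{|0}$, in a manner parallel to the proof of \Cref{lem:coassemblygenTHH}.

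Finally, I would assemble the two summands and verify lax symmetric monoidality of the resulting isomorphism, which follows because every operation used — base change along $\pflat$-idempotents, the identification of \Cref{cor:dehn-refined}, and Dehn-twist homotopy fixed points — is lax symmetric monoidal in $A$. The main obstacle I anticipate is the first summand: making precise the claim that $\W\LCF{\ZZ_p}$-linearization converts the $+1$-shift fixed points into the base-change-at-$0$, i.e. that the ``assembly at $0$'' map becomes an equivalence after tensoring with $\W\LCF{\ZZ_p}$. This requires carefully tracking the $\pgnsp{\lpr}$-structure (not just the underlying spectrum) through the fixed-point computation, and using \Cref{lem:outer-vanish} to see that no spurious $p^{-1}\Z_p^\times$-indexed contribution leaks into the $\ZZ_p$-supported summand after taking fixed points of the Dehn twist. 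The rest is bookkeeping with the functors set up in \Cref{subsubsec:notation}.
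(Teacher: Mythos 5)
Your overall architecture is the paper's: split $\THH_{\hex}(\pi_*\rho^*A)$ along the idempotent decomposition of \Cref{cor:dehn-refined} into a $\Z_p$-supported piece and a $p^{-1}\Z_p^\times$-supported piece, and use local unipotence of the Dehn twist (\Cref{lem:yea-nilp}) to show the first piece is free on its fiber at $0$. But there is a genuine error in the logical glue. The proposition is a statement about the \emph{underlying} object of the Dehn-twist-equivariant polygonic spectrum $\THH_{\hex}^{\EQ}(\bar{\pi}_*\bar{\varrho}^*A)$ --- by \Cref{rmk:dehnaction} one recovers $\THH_{\hex}(\pi_*\rho^*A)$ by restricting along $* \to *\mm\Z$, i.e.\ by forgetting the action --- and this is \emph{not} the same as taking homotopy fixed points, contrary to your opening ``key observation.'' For a locally unipotent $\Z$-equivariant $\W\LCF{\overrightarrow{\ZZ_p}}$-module $M$, \Cref{prop:unipotent_SW} gives $M \cong \W\LCF{\overrightarrow{\ZZ_p}} \otimes M^{h\Z}$ together with $M^{h\Z} \cong M_{|0}$: the $\W\LCF{\Z_p}$ factor sits on the underlying object, not on the fixed points. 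You have written the reverse (``identify the homotopy fixed points \dots with $\W\LCF{\Z_p}\otimes(-)_{|0}$''), and the two inversions cancel to produce the correct final formula for the first summand; as written, though, neither intermediate claim is true. The second summand makes the problem visible: homotopy fixed points of a trivial $\Z$-action on $X$ are $X \oplus \Sigma^{-1}X$, not $X$, so the claim that taking Dehn-twist fixed points ``returns $\THH_{\hex}(\pi_*\rho^*A)_{|p^{-1}\Z_p^\times}$ itself'' is false --- and also unnecessary, since the underlying object of that idempotent piece already \emph{is} the second summand of the target, with no argument required. (Separately, ``locally unipotent hence trivializable'' is not a valid implication; trivializations require compactness-type hypotheses and are only available for the restricted $p^k\Z$-actions, $k \gg 0$.)

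Once the fixed-points versus underlying-object confusion is repaired, your argument becomes the paper's: the $\Z_p$-piece is a $\Z$-equivariant $\W\LCF{\overrightarrow{\ZZ_p}}$-module with locally unipotent Dehn twist action, and \Cref{prop:unipotent_SW}(3)--(4) --- whose proof is exactly your proposed check after applying $\Ss \otimes_{\Ss^{B\Z}} -$ via \Cref{rmk:unip-affine} --- identifies it with $\W\LCF{\overrightarrow{\ZZ_p}}$ tensored with its fiber at $0$; forgetting the Dehn twist then gives the first summand of the proposition. No appeal to \Cref{lem:outer-vanish} is needed at this stage (the two idempotent pieces are complementary by construction, so nothing can ``leak''); that lemma enters only afterwards, in the proof of \Cref{thm:assconst_main1}.
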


\begin{proof}
  By lax symmetric monoidality, $\THH_{\hex}^{\EQ}( \bar{\pi}_* \bar{\varrho}^*(-))$ naturally lands
  in $\Z$-equivariant modules over
  $\THH_{\hex}^{\EQ}( \bar{\pi}_* \bar{\varrho}^*\Ss)$.
  Restricting along the counit of the spherical Witt vector, tilt adjunction after applying $(-)^{C_p}$ we obtain a refinement of $\THH_{\hex}^{\EQ}( \bar{\pi}_* \bar{\varrho}^*(-))$
  landing in
  \[ \Mod \left(\pgnsp{\lpr}^{*\mm\Z} ;\  \W \pi_0^\flat(\THH_{\hex}^{\EQ}(\bar{\pi}_*\bar{\varrho}^*\Ss)^{C_p}) \right). \]
  The $\Z$-equivariant identification from \Cref{cor:dehn-refined} now gives us a natural $\Z$-equivariant splitting
  \[ \THH_{\hex}^{\EQ}(\bar{\pi}_* \bar{\varrho}^* A ) \cong \THH_{\hex}^{\EQ}(\bar{\pi}_* \bar{\varrho}^* A )_{|\Z_p} \oplus \THH_{\hex}^{\EQ}(\bar{\pi}_* \bar{\varrho}^* A)_{|p^{-1}\Z_p^\times} \]
  where the first term is a $\Z$-equivariant $\W \LCF{\overrightarrow{\ZZ_p}}$-module.
  Applying \Cref{prop:unipotent_SW} (the necessary unipotence hypotheses having been checked in \Cref{lem:yea-nilp})
  we now obtain the desired lax symmetric monoidal identification
  \[ \THH_{\hex}^{\EQ}(\bar{\pi}_*\bar{\varrho}^* A )_{|\Z_p} \cong \W\LCF{\overrightarrow{\ZZ_p}} \otimes \left( \THH_{\hex}^{\EQ}(\pi_*\rho^* A )_{|\Z_p} \right)_{|0}.\hfill\qedhere \]
\end{proof}

\begin{lem} \label{lem:unroll}
  There is a symmetric monoidal natural isomorphism of functors from
  $\UAlg$ to $\W\LCF{p^{-1}\Z_p}$-modules in $\pgnsp{\lpr}$
  \[ \Nm(V) \otimes \mathrm{res}_{\hex} \THH(R^{h\Z})
  \cong \THH_{\hex}^{\EQ}(\pi_*\rho^*\cn(R,V)). \]
\end{lem}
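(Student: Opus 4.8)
The plan is to identify the $\EQ$-diagram $\pi_*\rho^*\cn(R,V)$ explicitly and then push it through the symmetric monoidal functor $\THH_{\hex}^{\EQ}=\THH_{\hex}\circ\bm$. First I would compute $\pi_*\rho^*\cn(R,V)$. The construction of $\cn$ provides a natural symmetric monoidal isomorphism $\rho^*\cn(R,V)\cong\pi^*(\Ss\rightrightarrows V)\otimes e^*R$, where $\pi\colon\EQ\times B\Z\to\EQ$ and $e\colon\EQ\times B\Z\to B\Z$ are the two projections. Since $\pi$ is the projection off the factor $B\Z$, the right Kan extension $\pi_*$ along it is computed fiberwise, by applying $(-)^{h\Z}$ to the restriction to each fiber $\{a\}\times B\Z$. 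The diagram $\pi^*(\Ss\rightrightarrows V)$ is pulled back from $\EQ$, so on each fiber it is constant with value $\Ss$ or $V$, and each of these is a dualizable spectrum equipped with the trivial $\Z$-action (this is the one place $V\in\Sp^{\diamondsuit}$ is used). Homotopy fixed points therefore commute with the relevant tensor products, and one obtains a natural symmetric monoidal isomorphism of $\EQ$-diagrams of rings
\[ \pi_*\rho^*\cn(R,V)\ \cong\ (\Ss\rightrightarrows V)\ \otimes\ \underline{R^{h\Z}}, \]
where $\underline{R^{h\Z}}$ denotes the constant $\EQ$-diagram on $R^{h\Z}$ and the tensor product is taken pointwise in $\Alg(\Sp)^{\EQ}$. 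In particular both structure maps of this $\EQ$-diagram are $\mathrm{unit}_V\otimes\id$.

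Now apply $\THH_{\hex}^{\EQ}$. Using that $\THH_{\hex}^{\EQ}$ is symmetric monoidal, the displayed isomorphism yields $\THH_{\hex}^{\EQ}(\pi_*\rho^*\cn(R,V))\cong\THH_{\hex}^{\EQ}(\Ss\rightrightarrows V)\otimes\THH_{\hex}^{\EQ}(\underline{R^{h\Z}})$. By \Cref{lem:norm} the first factor is $\THH_{\hex}(\Ss;V)\cong\Nm(V)$, and by \Cref{lem:polygonic-compatibility} the second factor is $\mathrm{res}_{\hex}\THH(R^{h\Z})$. Composing these gives the desired natural, lax symmetric monoidal isomorphism $\Nm(V)\otimes\mathrm{res}_{\hex}\THH(R^{h\Z})\cong\THH_{\hex}^{\EQ}(\pi_*\rho^*\cn(R,V))$. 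The compatibility with the $\W\LCF{p^{-1}\Z_p}$-module structures is then automatic, since both sides are modules over the value of the relevant functor at the unit $(\Ss,\Ss)\in\UAlg$ — namely $\THH_{\hex}^{\EQ}(\pi_*\rho^*\Ss)$, carrying the $\W\LCF{p^{-1}\Z_p}$-module structure from \Cref{cnstr:flat-hex} — and naturality in $(R,V)$ makes the isomorphism linear.

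The substantive point is the fiberwise computation of $\pi_*\rho^*\cn(R,V)$ in the first step, and in particular checking that $(-)^{h\Z}$ distributes over the tensor factor $\pi^*(\Ss\rightrightarrows V)$ compatibly with all of the naturality and symmetric monoidal structure, so that the resulting $\EQ$-diagram really is the honest tensor product above with the correct structure maps. Everything after that is formal. If one prefers not to invoke strong monoidality of $\THH_{\hex}^{\EQ}$ directly, one can instead verify that the comparison map is an isomorphism after applying the two functors $(-)^{\Phi C_p}$ and $(-)^{\Phi e}$, which jointly detect isomorphisms in $\pgnsp{\lpr}$: on $(-)^{\Phi C_p}$ it is the projection-formula isomorphism $V\otimes\THH(R^{h\Z})\cong\THH(R^{h\Z};V\otimes R^{h\Z})$ for ordinary $\THH$, and on $(-)^{\Phi e}$ it is the analogous isomorphism with $V^{\otimes p}$ in place of $V$, using the formula $\THH_{\hex}(A;M)^{\Phi e}\cong\THH(A;M^{\otimes_A p})$ — the same input as in the proof of \Cref{lem:tenspr_with_finite}.
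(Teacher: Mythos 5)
Your proposal is correct and follows essentially the same route as the paper's proof: both identify $\rho^*\cn(R,V)$ with $\pi^*(\Ss\rightrightarrows V)\otimes e^*R$, use dualizability of $V$ to commute $\pi_*$ past that factor so that $\pi_*\rho^*\cn(R,V)\cong(\Ss\rightrightarrows V)\otimes e^*R^{h\Z}$, and then split off $\Nm(V)$ via \Cref{lem:norm} together with the monoidality input of \Cref{lem:tenspr_with_finite}, identifying the remaining factor by \Cref{lem:polygonic-compatibility}. Your added remarks (the fiberwise description of $\pi_*$, the module-structure check, and detecting the comparison map on $(-)^{\Phi C_p}$ and $(-)^{\Phi e}$) are elaborations of, not deviations from, the paper's argument.
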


\begin{proof}
  Using
  \Cref{lem:norm},
  \Cref{lem:tenspr_with_finite} and
  \Cref{lem:polygonic-compatibility},
  we obtain isomorphisms
  \begin{align*}
    \THH_{\hex}^{\EQ} &(\pi_*\rho^*\cn(R,V))
    \cong \THH_{\hex}^{\EQ}\left(\pi_*\left( \pi^*(\Ss \rightrightarrows V) \otimes e^* R \right) \right) 
    \cong \THH_{\hex}^{\EQ}\left( (\Ss \rightrightarrows V) \otimes \pi_* e^* R \right) \\
    &\cong \THH_{\hex}^{\EQ}\left( (\Ss \rightrightarrows V) \otimes e^* R^{h\Z} \right) 
    \cong \Nm(V) \otimes \THH_{\hex}^{\EQ}\left( e^* R^{h\Z} \right) \\
    &\cong \Nm(V) \otimes \mathrm{res}_{\hex} \THH\left( R^{h\Z} \right).\hfill\qedhere
  \end{align*}   
\end{proof}

\begin{proof}[{Proof (of \Cref{thm:assconst_main1})}]
  Using \Cref{lem:unroll}, 
  \Cref{prop:dehn-twist-trivialization} and
  \Cref{lem:outer-vanish} we obtain symmetric monoidal natural isomorphisms
  \begin{align*}
    V &\otimes \mathrm{res}_{\varphi} \THH(R^{h\Z})
    \cong \res_{\varphi} \THH_{\hex}^{\EQ}(\pi_*\rho^*\cn(R,V)) \\
    &\cong \res_{\varphi}\left( \left( \W(\LCF{\Z_p}) \otimes \THH_{\hex}^{\EQ}(\pi_*\rho^*\cn(R,V))_{|0} \right) \oplus  \THH_{\hex}^{\EQ}(\pi_*\rho^*\cn(R,V))_{|p^{-1}\Z_p} \right) \\
    &\cong \res_{\varphi}\left( \W(\LCF{\Z_p}) \otimes \THH_{\hex}^{\EQ}(\pi_*\rho^*\cn(R,V))_{|0} \right).
  \end{align*}

  Using \Cref{lem:unroll} a second time together with
  the identification of zero fibers from \Cref{lem:coassemblygenTHH}
  we can simplify the inner part of this last term further
  \begin{align*}
    \THH_{\hex}^{\EQ} &(\pi_*\rho^*\cn(R,V))_{|0}
    \cong \left( \Nm(V) \otimes \mathrm{res}_{\hex} \THH(R^{h\Z}) \right)_{|0} \\
    &\cong \Nm(V) \otimes \mathrm{res}_{\hex} \left( \THH(R^{h\Z}) \right)_{|0} 
    \cong \Nm(V) \otimes \mathrm{res}_{\hex} \THH(R)^{h\Z}.
  \end{align*}
  Plugging this into the above and using the fact that $\mathrm{res}_{\varphi}(\Nm(V)) \cong V$
  and $\mathrm{res}_{\varphi}(-)$ is symmetric monoidal when one of the inputs is
  a compact object of $\Sp^{C_p}$ we arrive at a natural isomorphism
  \[ V \otimes \mathrm{res}_{\varphi}\THH(R^{h\Z})
  \cong V \otimes \mathrm{res}_{\varphi}\left( \W(\LCF{\Z_p}) \otimes \mathrm{res}_{\hex}( \THH(R)^{h\Z} ) \right). \]

  Using the assembly map 
  \[ \W(\LCF{\Z_p}) \otimes \mathrm{res}_{\varphi}\left(  \mathrm{res}_{\hex}( \THH(R)^{h\Z} ) \right) \to \mathrm{res}_{\varphi}\left( \W(\LCF{\Z_p}) \otimes \mathrm{res}_{\hex}( \THH(R)^{h\Z} ) \right)   \]
  together with \Cref{lem:polygonic-compatibility} we now obtain the desired lax symmetric monoidal natural transformation
  \[ \W(\LCF{\Z_p}) \otimes V \otimes \mathrm{res}_{\varphi}( \THH(R)^{h\Z} ) \to 
  V \otimes \mathrm{res}_{\varphi}(\THH(R^{h\Z})). \]

  For the final claims we must analyze when the assembly map above is an isomorphism.
  As the pullback along $0 \coprod 1 \to \Delta^1$ is conservative it suffices to
  analyze the assembly maps for $(-)^{\Phi C_p}$ and $(-)^{tC_p}$ separately.
  The former is colimit preserving (proving (1)),
  so we are reduced to the case of $(-)^{tC_p}$ where this follows from the given hypothesis
  (here we use that the condition that the assembly map be an isomorphism is
  stable under cofiber sequences and that
  the underlying spectrum of $\W(\LCF{\Z_p})$ is a sum of spheres).
\end{proof}

\subsection{Bootstrapping trivializations to \texorpdfstring{$\CycSp$}{CycSp}}\label{subsec:bootstrap}

The previous parts of this section show give conditions under which $V \otimes \THH(R^{h\ZZ})$ as a spectrum with Frobenius map is constant in that it is isomorphic to $V \otimes \W(\LCF{\Z_p}) \otimes \THH(R)^{h\ZZ}$. The goal of this subsection is to provide tools to upgrade this spectrum level statement to the level of cyclotomic spectra in order to prove \Cref{thm:E1A2algtame}. We begin with a tool that helps us prove boundedness properties for $W$-modules:

\begin{lem} \label{lem:move-inward-can-van}
  Let $M$ be a $p$-nilpotent $W$-module in cyclotomic spectra.
  If $M_{|p^k\Z_p}$ satisfies $\WcanV{b}$ and $M$ satisfies $\Seg{b}$, then
  $M$ is cyclotomically $b$-truncated.
\end{lem}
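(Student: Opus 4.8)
The statement says: if $M$ is a $p$-nilpotent $W$-module in cyclotomic spectra such that $M_{|p^k\Z_p}$ satisfies $\WcanV{b}$ and $M$ satisfies $\Seg{b}$, then $M \in \CycSp_{\leq b}$. In view of \Cref{lem:wcv-sc-to-cb}, it suffices to upgrade the hypothesis ``$M_{|p^k\Z_p}$ satisfies $\WcanV{b}$'' to ``$M$ satisfies $\WcanV{b}$'' (the $\Seg{b}$ hypothesis carries over unchanged). So the real content is: the canonical maps $\pi_m M^{hC_{p^j}} \to \pi_m M^{tC_{p^j}}$ vanish for $m > b$ and all $1 \le j \le \infty$, knowing that the corresponding statement holds for the base-changed module $M_{|p^k\Z_p}$.

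\textbf{Key idea.} The point is that $M$ decomposes according to the decomposition of $\pi_0^\flat W \cong \LCF{\Z_p}$ coming from the stratification of $\Z_p$ by the ``shells'' $p^i\Z_p^\times$ (for $0 \le i \le k-1$) together with the deepest piece $p^k\Z_p$. Concretely, writing $W = \THH(\SP^{B\Z})$, the idempotents in $\W\LCF{\Z_p}$ corresponding to the open-closed decomposition $\Z_p = \Z_p^\times \sqcup p\Z_p^\times \sqcup \cdots \sqcup p^{k-1}\Z_p^\times \sqcup p^k\Z_p$ split $M$ as a $\T$-equivariant (indeed cyclotomic, since the cyclotomic Frobenius permutes the shells as in \Cref{prop:cycfrobuniv}) direct sum
\[ M \cong M_{|p^k\Z_p} \oplus \bigoplus_{i=0}^{k-1} M_{|p^i\Z_p^\times}. \]
Now I invoke \Cref{lem:shells}: for each $i$, the module $M_{|p^i\Z_p^\times}$ is a $\THH(\SP^{B\Z_p})_{|\Z_p^\times}$-module (after rescaling), and the final assertion of \Cref{lem:shells} says that any such module has $N^{tC_{p^j}} = 0$ for all $1 \le j \le \infty$. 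Hence for each shell summand, $M_{|p^i\Z_p^\times}^{tC_{p^j}} = 0$, so the canonical map to the Tate construction is trivially zero in every degree — in particular it satisfies $\WcanV{b}$ (vacuously, indeed $\WcanV{-\infty}$). Since $\WcanV{b}$ is a property closed under finite direct sums (the homotopy groups of a finite sum are the direct sum of the homotopy groups, and both $(-)^{hC_{p^j}}$ and $(-)^{tC_{p^j}}$ commute with finite sums), and $M_{|p^k\Z_p}$ satisfies $\WcanV{b}$ by hypothesis, we conclude $M$ satisfies $\WcanV{b}$.

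\textbf{Conclusion.} With $M$ now known to satisfy both $\WcanV{b}$ and $\Seg{b}$, and $M$ bounded below (it is $p$-nilpotent and a $W$-module, hence bounded below as the underlying spectrum of a cyclotomic spectrum built from bounded-below pieces — or one simply includes bounded-belowness in the ambient running hypotheses on $W$-modules, as $W$ itself is bounded below), \Cref{lem:wcv-sc-to-cb} (equivalently \Cref{prop:omnibus}(2)) gives $M \in \CycSp_{\leq b}$, as desired.

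\textbf{Main obstacle.} The one step requiring care is verifying that the decomposition of $M$ into shell pieces is genuinely a decomposition \emph{of cyclotomic spectra} (not merely of $\T$-spectra), so that the hypotheses $\WcanV{b}$ and $\Seg{b}$ — the latter being intrinsically about the cyclotomic Frobenius $\varphi_M$ — interact correctly with it. The subtlety is that the cyclotomic Frobenius shifts the shell index: $\varphi$ carries (the $tC_p$ of) the $p^{i+1}\Z_p^\times$-piece isomorphically onto the $p^i\Z_p^\times$-piece, and carries the $p^k\Z_p$-piece into itself (compare the description in the proof of \Cref{prop:coassembly} and the identification of $\pflat\varphi$ with $\res_{1/p}$ in \Cref{prop:cycfrobuniv}). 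So the individual shell pieces $M_{|p^i\Z_p^\times}$ are not themselves cyclotomic subspectra; rather, their direct sum $\bigoplus_{i<k} M_{|p^i\Z_p^\times}$ is a cyclotomic summand (the Frobenius cyclically permutes the pieces, or rather maps the union of shells into itself since $\res_{1/p}$ maps $p^k\Z_p$ to $p^{k-1}\Z_p \supseteq$ shells), complementary to the cyclotomic summand $M_{|p^k\Z_p}$. But this is exactly what is needed: $\WcanV{b}$ and $\Seg{b}$ only refer to the underlying $\T$-spectrum and the map $\varphi$, and on the underlying $\T$-spectrum the finer shell-by-shell splitting is still available, which is all I use to deduce $\WcanV{b}$ for the union-of-shells summand via \Cref{lem:shells}. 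For $\Seg{b}$, I simply note it is inherited by the summand $M_{|p^k\Z_p}$ from $M$ (being a retract in $\CycSp$), while for the union-of-shells summand the vanishing $(-)^{tC_p} = 0$ forces $\fib(\varphi) \cong (\text{that summand})$ which is not obviously $b$-truncated — so instead I do not try to check $\WcanV{b}$ and $\Seg{b}$ separately on each summand and then run \Cref{lem:wcv-sc-to-cb}; I check $\WcanV{b}$ on all of $M$ (valid, as the shell pieces contribute zero) and keep $\Seg{b}$ on all of $M$ as given, then apply \Cref{lem:wcv-sc-to-cb} once to $M$ itself.
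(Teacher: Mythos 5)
There is a genuine gap, and it sits exactly where you flagged the "key idea": the claim that every shell summand $M_{|p^i\Z_p^\times}$ ($0\le i<k$) has $M_{|p^i\Z_p^\times}^{tC_{p^j}}=0$ for all $j$, hence vacuously satisfies $\WcanV{b}$. The final assertion of \Cref{lem:shells} applies only to modules over the \emph{outermost} shell $\THH(\SP^{B\Z_p})_{|\Z_p^\times}$, where the relevant $\T$-spectrum is the free one $\Ss[\T]$ and so $a_{(1)}$ acts by zero. For $i\ge 1$ one has $W_{|p^i\Z_p^\times}\cong \W\LCF{p^i\Z_p^\times}\otimes\Sigma^{-1}\Ss[\T/C_{p^i}]$, and for $1\le j\le i$ the subgroup $C_{p^j}\subseteq C_{p^i}$ acts \emph{trivially} on $\T/C_{p^i}$; hence $\Ss[\T/C_{p^i}]^{tC_{p^j}}\cong(\Ss\oplus\Sigma\Ss)^{tC_{p^j}}\ne 0$, and the canonical map for a trivial action on a bounded-below spectrum is an isomorphism on $\pi_m$ for $m\gg 0$ — the exact opposite of weak canonical vanishing. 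Your "after rescaling" does not repair this: $\psi_{p^i}$ identifies the $i$-th shell with $(p^i)^*$ of the outermost one, and restriction along the $p^i$-fold cover turns the $C_{p^j}$-action into a trivial action when $j\le i$, so the vanishing of Tate constructions with respect to the rescaled circle says nothing about $(-)^{tC_{p^j}}$ for $j\le i$ with respect to the original circle.

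Moreover, the target of your reduction — that $M$ itself satisfies $\WcanV{b}$ — is strictly stronger than the lemma's conclusion and is not in general available: a cyclotomically $b$-truncated spectrum need not satisfy $\WcanV{b}$ (e.g.\ $\crTR\otimes\Ss/p$ is cyclotomically bounded by \Cref{lem:t-amplitude-repTR}, yet its summands $\Ss[\T/C_{p^j}]/p$ with $j\ge r$ have canonical maps that are isomorphisms in arbitrarily high degrees). A posteriori one only gets canonical vanishing with a degraded parameter via \Cref{prop:omnibus}(4), which would not feed back into \Cref{lem:wcv-sc-to-cb} to give $b$-truncatedness. The paper's proof avoids this entirely: it filters $\TR(M)$ by the depth stratification $U_{i,j}=\{a: i\le v_p(a)\le j\}$ and shows each graded piece is $b$-truncated, using for the shell pieces not a Tate-vanishing statement but the fact that both $\can$ and $\varphi$ shift supports inward by one shell — so the relevant component of the canonical map vanishes for support reasons and the piece is identified with a fiber of $\varphi^{hC_{p^j}}$'s, controlled by $\Seg{b}$ — while the hypothesis $\WcanV{b}$ on $M_{|p^k\Z_p}$ is used only for the deepest graded piece.
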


\begin{proof}  
  Let $U_{i,j}$ be the subset of $\ZZ_p$ consisting of elements $a \in \ZZ_p$ such that $i \leq v_{p}(a)\leq j$.  
  Let 
  \[ X_{i,j} \coloneqq \mathrm{Eq} \Bigg( \begin{tikzcd}
    \prod_{r \geq 0} (M_{|U_{i,j}})^{hC_{p^r}} \ar[r, yshift=-1ex, "\varphi"'] \ar[r, yshift=1ex, "\can"] & 
    \prod_{r \geq 1} (M_{|U_{i+1,j+1}})^{tC_{p^r}} \end{tikzcd} \Bigg)
  \]
  for $i < j$ where we have used the splitting of $M$ along idempotents in $\pi_0W$ to pick out components of $\can$ and $\varphi$.
  
  Using the fact that $\varphi_{|p^k\Z_p}$ factors through $M_{|p^{k+1}}\Z_p$ and the fact that $M^{tC_p} = M^{tC_p}|_{p\Z_p}$ (see \Cref{prop:cycfrobuniv}),  
  we construct a natural finite filtration
  \[ X_{0,1} \to X_{0,2} \to \cdots \to X_{0,k} \to  \TR(X) \]
  with associated graded given by
  $X_{0,1}, X_{1,2}, \dots, X_{k-1,k}, X_{k,\infty}$.
  We will show that $\TR(X)$ is $b$-truncated
  by proving that each of these terms $b$-truncated.

  First we show that $X_{i-1,i}$ is $b$-truncated.
  The localized canonical map used in $X_{i-1,i}$ is zero since the target is $0$ when restricted to $U_{i-1,i}$.
  Again using the fact that $\varphi_{|p^k\Z_p}$ factors through $M_{|p^{k+1}\Z_p}$,
  this means that $X_{i-1,i}$ is isomorphic to
  $\fib \left( \prod_{j \geq 0} (\varphi_{|U_{i-1,i}})^{hC_{p^j}} \right)$.
  The latter object is $b$-truncated since it is a product of homotopy fixed points of retracts of the fiber of $\varphi$, which is $b$-truncated by hypothesis.
  
  Now we show that $X_{k,\infty}$ is $b$-truncated as well.
  This time we use the weak canonical vanishing hypothesis which tells us that
  the localized canonical maps used in constructing $X_{k,\infty}$ are zero
  on homotopy groups for $* \geq b+1$.
  As above, it follows that $X_{k, \infty}$ is $b$-truncated since the Frobenius maps
  induce as isomorphism on $\pi_{s}(-)$ for $s > b+1$ and an injection for $s=b+1$.
\end{proof}

\begin{prop}\label{prop:Rhz-cyc-bounded}
  Let $m \geq m_p^{\A_2}$
  and let $R$ be a $h\A_2$-ring in $W$-modules in cyclotomic spectra with $p^m=0$.
  Suppose that
  \begin{enumerate}
  \item $R_{|0}$ is cyclotomically bounded in the range $[c, b]$.
  \item There is an isomorphism of graded $h\A_2$-rings 
    \[ \pi_*R \cong \pi_0(\W\LCF{\Z_p}) \otimes\pi_* R_{|0}. \]
    such that the map $R \to R_{|0}$ is given by restriction to $0$ at the level of $\pi_*$.
  \item $R$ satisfies $\Seg{b'}$.
  \end{enumerate}
  Then $R$ is cyclotomically bounded in the range $[c, e]$ where $e = b'+1+2p^{(b-2c+2)m}$.
\end{prop}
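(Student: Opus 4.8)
The plan is to produce a B\"okstedt class for $R$ itself and then run it through the canonical vanishing machinery of \Cref{subsec:bokstedt1}. Note first that hypothesis (2) forces the underlying spectrum of $R$ to have homotopy concentrated in degrees $[c,b]$, so in particular $R$ is bounded below and $R \in \CycSp_{+}$; also $e \geq b'$, so $R$ satisfies $\Seg{e}$ by hypothesis (3). By \Cref{lem:wcv-sc-to-cb} it therefore suffices to show that $R$ satisfies $\WcanV{e}$, since this together with $\Seg{e}$ will give $R \in \CycSp_{\leq e}$, hence $R \in \CycSp_{[c,e]}$.

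First I would apply \Cref{lem:cb-to-bok-hc} to the cyclotomic spectrum $R_{|0}$, which by hypothesis (1) lies in $\CycSp_{[c,b]}$, is killed by $p^m$ with $m \geq m_p^{\A_2}$, and is an $h\A_2$-ring in cyclotomic spectra; this produces a B\"okstedt class $\mu_0 \in \pi_{2p^{k_0}}R_{|0}$ with $k_0 = (b-2c+2)m$. I would then transport $\mu_0$ back to $R$ along the splitting of hypothesis (2): set $\mu \coloneqq 1 \otimes \mu_0 \in \pi_0(\W\LCF{\Z_p}) \otimes \pi_*R_{|0} \cong \pi_*R$ and verify the three conditions of \Cref{dfn:weak-bokstedt-class}. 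Condition (a), $h$-centrality of $\mu$ and $\varphi(\mu)$, should follow since the identification in (2) is one of graded $h\A_2$-rings and $1$ is central. Condition (b), that $\mu$ lie in the image of the $\T$-transfer, should follow from naturality of the transfer applied to the section $R_{|0} \to R$ obtained by base change from the section $\Ss^{B\Z} \to \THH(\Ss^{B\Z})$ of the coassembly map of \Cref{lem:coassembly-po}, which realizes $\mu_0 \mapsto 1 \otimes \mu_0$ on homotopy. Condition (c), that $\varphi(\mu)$ be a unit in $\pi_*R^{tC_p}$, is the most delicate: using \Cref{prop:cycfrobuniv} and \Cref{cor:tateTHHSZ} one has $\pflat(\varphi_W) = \res_{1/p}$, which carries the idempotent of $p^{k}\Z_p^{\times}$ to that of $p^{k+1}\Z_p^{\times}$; combining this with the fact that $\varphi(\mu_0)$ is a unit, the identity $R^{tC_p} = R^{tC_p}_{|p\Z_p}$ coming from \Cref{lem:shells}, and the $W$-semilinearity of $\varphi$, one identifies $\varphi(\mu)$ with a class that restricts to a unit on each idempotent summand of $R^{tC_p}$, hence is a unit.

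Once the B\"okstedt class $\mu \in \pi_{2p^{k_0}}R$ is in hand, the conclusion will follow from a chain of the lemmas of \Cref{subsec:bokstedt1}: since $R$ satisfies $\Seg{b'}$, \Cref{lem:sc-bo-to-bok1}(1) gives that $R/\mu$ is $(b'+1+2p^{k_0})$-truncated, i.e. $e$-truncated; since $\mu$ lies in the image of the transfer and $R$ is a $p$-nilpotent $h\A_2$-ring, \Cref{lem:wbock-to-scv} gives that $R$ satisfies $\ScanVn{e}$; \Cref{lem:srw_canv} then upgrades this to $\WcanV{e}$ for the underlying $\T$-spectrum of $R$, and the reduction above finishes the proof. (Alternatively, one could establish canonical vanishing only after restriction to $p\Z_p$ and then appeal to \Cref{lem:move-inward-can-van} for the final step.) The hardest part will be the construction and verification of $\mu$ --- especially conditions (b) and (c) --- which forces one to use the precise compatibilities packaged in hypothesis (2) together with the structural results of \Cref{sec:cochaincircle} (\Cref{lem:coassembly-po}, \Cref{prop:cycfrobuniv}, \Cref{cor:tateTHHSZ}, \Cref{lem:shells}) in concert.
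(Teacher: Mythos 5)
Your overall strategy (extract a B\"okstedt class from $R_{|0}$ via \Cref{lem:cb-to-bok-hc}, transport it to $R$ using hypothesis (2), and feed the result into the canonical-vanishing machinery together with $\Seg{b'}$) is the same as the paper's, and your bookkeeping of the bound $e=b'+1+2p^{(b-2c+2)m}$ via \Cref{lem:sc-bo-to-bok1} is correct. But there is a genuine gap at the step you yourself flag as hardest: condition (b), that $1\otimes\mu_0$ lies in the image of the $\T$-transfer $\Sigma R_{h\T}\to R$. The "section $R_{|0}\to R$" you invoke does not exist. The localization $R\to R_{|0}=R\otimes_W\Ss^{B\Z}$ goes the wrong way and is not split for a general $W$-module $R$; the unit $\Ss^{B\Z}\to W$ does split the coassembly map of commutative algebras, but it does not base-change to a map $R_{|0}\to R$, and hypothesis (2) is only an isomorphism of \emph{homotopy rings}, so it provides no $\T$-equivariant (or even spectrum-level) map along which to apply naturality of the transfer. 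Without such a map there is no direct way to see that the class $1\otimes\mu_0$, defined purely on $\pi_*$, is hit by the transfer on all of $R$.

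The paper's proof works around exactly this point: it writes $R_{|0}=\colim_j R_{|p^j\Z_p}$, lifts $\mu_0$ through the \emph{partial} transfer $(\Sigma^{-1+d(1)}\Ss/a_{(1)}^d\otimes R_{|0})^{h\T}\to R_{|0}^{h\T}$, uses that this functor commutes with the filtered colimit of uniformly $p$-nilpotent objects to descend the lift to a finite stage $R_{|p^j\Z_p}$, and then checks that the resulting partial transfer agrees with $1\otimes\mu_0$ only after further restriction to $p^j\Z_p$ for $j\gg0$. Consequently one only obtains $\WcanV{e}$ for $R_{|p^j\Z_p}$, and the passage to cyclotomic boundedness of $R$ genuinely requires \Cref{lem:move-inward-can-van} (together with hypothesis (3)); this is not an optional "alternative final step" as your parenthetical suggests, but the mechanism that makes the argument close. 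A secondary remark: you do not need to verify condition (c) (that $\varphi(1\otimes\mu_0)$ is a unit) at all, since the conclusion only requires the weaker hypotheses of \Cref{lem:wbock-to-scv}, namely that $R/\mu$ be bounded above (which follows from hypothesis (2) and \Cref{lem:sc-bo-to-bok1} applied to $R_{|0}$) and that $\mu$ be in the image of the transfer.
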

\todo{rwb: the pi star input needed to be strengthened. ishan: i don't think so. you just have to interpret the restrictions as happening at the level of W-modules instead of $\pi_*R$-modules. The reason to write this proposition this way is that it makes section 7.4 a bit easier. The proof would work with the $\pi_0\W\LCF{\Z_p}$ replaced by any augmented ring. rwb: after a closer look I think you're right.}

\begin{proof}
  Let $k \coloneqq (b-2c+2)m$ and let $e\coloneqq(b'+1+2p^k)$.
  From assumptions (1) and (2) it follows that
  the underlying spectrum of $R$ is $c$-connective.
  What remains is to show that $R$ is $e$-truncated as a cyclotomic spectrum.  
  
  Using the boundedness of $R_{|0}$ and \Cref{lem:cb-to-bok-hc}
  we pick a B\"okstedt class $\mu \in \pi_{2p^k}R_{|0}$.
  Assumption (2) now allows us to now consider the associated class
  $1 \otimes \mu \in \pi_{2p^{k}}(R)$
  whose restriction to $0$ is $\mu$.

  As $\mu$ is a B\"okstedt class,
  we may in particular fix a constant $d$ and a class
  $\alpha \in \pi_{2p^k}(\Sigma^{-1+d(1)}\Ss/a_{(1)}^d \otimes R_{|0})^{h\T}$  
  such that the image of $\alpha$ under the partial transfer map
  \[ (\Sigma^{-1+d(1)}\Ss/a_{(1)}^d \otimes R_{|0})^{h\T} \to R_{|0}^{h\T} \]
  is $\mu$.
  Writing $R_{|0}$ as $\colim_j R_{|p^j\Z_p}$ and
  using the fact that $\pi_{2p^k}(\Sigma^{-1+d(1)}\Ss/a_{(1)}^d \otimes -)^{h\T}$
  commutes with filtered colimits of uniformly $p$-nilpotent objects,
  we can, for $j \gg 0$, we pick a lift $\widetilde{\alpha}$ of $\alpha$ to
  $\pi_{2p^k}^{\T}(\Sigma^{-1+d(1)}\Ss/a_{(1)}^d \otimes R_{|p^j\Z_p})$.
  Again using the filtered colimit we can read off that,
  since both $1 \otimes \mu$ and the partial transfer of $\widetilde{\alpha}$ 
  restrict to $\mu$ at the zero fiber, these classes must become equal at some finite stage.
  In other words, after restriction to $p^j\Z_p$ for $j \gg 0$
  the class $1 \otimes \mu$ is in the image of the $d$-partial transfer
  (and in particular in the image of the transfer).
  
  Since $\mu$ is a B\"okstedt class, by \Cref{lem:sc-bo-to-bok1},
  $(R_{|0})/\mu$ is $e$-truncated.
  From this we learn that the map
  $\pi_*(\Sigma^{2p^k}R_{|0}) \xrightarrow{\mu \cdot -} \pi_*(R_{|0})$
  is an isomorphism for $*>e$ and injective for $*=e$.
  Assumption (2) now lets us conclude that multiplication by $1 \otimes \mu$
  on $R$ satisfies the same property and therefore that
  $\cof((1\otimes \mu) \cdot -)$
  is bounded in the range $[c,e]$.
  The same is true after restriction to $p^j\Z_p$
  as this is given by inverting an idempotent on the $W$-module 
  $\cof((1\otimes \mu) \cdot -)$.
	
  Applying \Cref{lem:wbock-to-scv} and \Cref{prop:omnibus}(1)
  to $R_{|p^j\Z_p}$ (together with the class $1 \otimes \mu$) we learn that
  $R_{|p^j\Z_p}$ satisfies $\WcanV{e}$ for $j \gg 0$.
  Finally, we apply \Cref{lem:move-inward-can-van} using assumption (3)
  in order to conclude that $R$ is cyclotomically $e$-truncated.
\end{proof}

\Cref{prop:Rhz-cyc-bounded} pairs nicely with the next proposition.

\begin{prop} \label{prop:bdd-to-triv}  
  Let $X$ be a $p$-nilpotent $W$-module in cyclotomic spectra
  and let $X_k \coloneqq W_k \otimes_W X $.
  If we suppose that 
  \begin{enumerate}
  \item The $X_k$ are uniformly cyclotomically bounded.
  \item There is an isomorphism of $W$-modules in spectra
    \[ X \cong W \otimes X_\infty. \]
  \item $X_\infty$ is almost compact as a cyclotomic spectrum.
  \end{enumerate}
  Then, for all $k \gg 0$ sufficiently large,
  there is an isomorphism of $W_k$-modules in cyclotomic spectra
  \[ X_k \cong W_k \otimes X_\infty. \] 
\end{prop}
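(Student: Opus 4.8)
The statement to prove is Proposition~\ref{prop:bdd-to-triv}: given a $p$-nilpotent $W$-module $X$ in cyclotomic spectra with $X_k = W_k \otimes_W X$, such that (1) the $X_k$ are uniformly cyclotomically bounded, (2) $X \cong W \otimes X_\infty$ as $W$-modules in spectra, and (3) $X_\infty$ is almost compact as a cyclotomic spectrum, then for $k \gg 0$ we have $X_k \cong W_k \otimes X_\infty$ as $W_k$-modules in cyclotomic spectra. The key geometric input is that $W = \THH(\Ss^{B\Z})$ has $\pi_0^\flat W \cong \LCF{\Z_p}$ (Construction~\ref{cor:tateTHHSZ}), so the tower $W \to W_1 \to W_2 \to \cdots \to W_\infty \cong \Ss$ corresponds on $\pi_0^\flat$ to the tower $\LCF{\Z_p} \to \LCF{p\Z_p} \to \LCF{p^2\Z_p} \to \cdots \to \F_p$, i.e. restriction along the nested subgroups $p^k\Z_p \subseteq \Z_p$. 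In particular $\colim_k W_k$ has $\pi_0^\flat = \colim_k \LCF{p^k\Z_p} = \F_p$ (since $\bigcap p^k\Z_p = 0$), and one expects $\colim_k W_k \cong W_\infty = \Ss$ (as cyclotomic spectra, or at least after the finiteness reductions we will make).

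\textbf{Main steps.} First I would reduce to a mapping-space statement: there is always a natural comparison map $W_k \otimes X_\infty \to X_k$ of $W_k$-modules in cyclotomic spectra (coming from the $W$-linear map $W \otimes X_\infty \to X$ of assumption (2), base-changed along $W \to W_k$; one must check this is cyclotomic, not just a map of $W_k$-module spectra, which should follow from the $W$-linear structure). The goal is that this map is an equivalence for $k \gg 0$. Since everything in sight is $p$-nilpotent (bounded below, with $p^m = 0$ for a uniform $m$ because $X$ is $p$-nilpotent and tensoring with $W_k$ preserves this), and using Lemma~\ref{lem:fil-colim-bounded} the colimit $\colim_k X_k$ is cyclotomically bounded in a uniform range; similarly $\colim_k W_k$ is. By the usual device (tensoring with $(\Ss/p^m)^\vee$, which splits off $X_\infty$ from $X_\infty \oplus \Sigma X_\infty$, reducing to checking the statement after such a dualizable tensor), we may test the comparison map by mapping out of $X_\infty$. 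Because $X_\infty$ is almost compact as a cyclotomic spectrum (assumption (3)), Proposition~\ref{prop:key-finiteness} and Lemma~\ref{lem:coh-implies-coh} give us that $\Map_{\CycSp}(X_\infty, -)$ commutes with the relevant uniformly bounded filtered colimits; hence it suffices to show that the colimit over $k$ of the comparison maps is an equivalence, i.e. $\colim_k (W_k \otimes X_\infty) \to \colim_k X_k$ is an equivalence. The left side is $(\colim_k W_k) \otimes X_\infty$ and the right side is $\colim_k X_k \cong \colim_k (W_k \otimes_W X) \cong (\colim_k W_k) \otimes_W X$, and using assumption (2), $X \cong W \otimes X_\infty$, this becomes $(\colim_k W_k) \otimes_W W \otimes X_\infty = (\colim_k W_k) \otimes X_\infty$ — so the colimit map is an \emph{isomorphism}, not merely an equivalence in the limit.

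\textbf{From the colimit statement back to a finite $k$.} The previous paragraph shows $\colim_k \Map_{\CycSp}(X_\infty, W_k \otimes X_\infty) \xrightarrow{\sim} \colim_k \Map_{\CycSp}(X_\infty, X_k)$, and that the comparison element on the left (the identity-like class defining the map) matches on the right. To conclude that the comparison map $W_k \otimes X_\infty \to X_k$ is itself an equivalence for $k \gg 0$ and not merely in the colimit, I would argue as follows: the cofiber $C_k$ of the comparison map is a uniformly bounded cyclotomic spectrum (by assumption (1) together with the uniform bound on $W_k \otimes X_\infty$ via Lemma~\ref{lem:t-amplitude-repTR} applied to the $\crTR$-summands from Proposition~\ref{prop:coassembly}, plus Lemma~\ref{lem:fil-colim-bounded}), it is $p^m$-torsion, and $\colim_k C_k = 0$. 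A uniformly bounded, uniformly $p$-torsion filtered system of cyclotomic spectra with zero colimit must have $C_k = 0$ for $k \gg 0$ — this is because in each fixed homotopy-group-degree the groups $\pi_* C_k$ are uniformly bounded torsion and their colimit vanishes; using that the transition maps stabilize (finitely many relevant degrees, each a filtered colimit of finite abelian groups that vanishes) one gets $C_k = 0$ eventually. (More carefully: one maps a well-chosen finite cyclotomic spectrum into $C_k$ — e.g. using almost compactness of $X_\infty$ and the $\crTR$-filtration of $W_k$ — to detect non-vanishing, then invokes the colimit vanishing.)

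\textbf{The main obstacle.} The hard part will be the last step — passing from "the comparison map is an equivalence after taking the filtered colimit over $k$" to "the comparison map is an equivalence for some finite $k$." The colimit-level statement is essentially formal given the finiteness inputs already in the paper, but the descent to a finite stage requires genuinely using that all the $X_k$ (equivalently all the cofibers $C_k$) are \emph{uniformly} cyclotomically bounded and \emph{uniformly} $p$-torsion, so that "bounded torsion system with zero colimit is eventually zero" applies. I expect the cleanest route is to run the argument of Proposition~\ref{prop:key-finiteness} in reverse: express $C_k = 0$ as the statement that a certain uniformly bounded almost-compact cyclotomic spectrum ($X_\infty$, or $\Sigma X_\infty$) maps trivially to $C_k$, use that $\Map_{\CycSp}(X_\infty, -)$ commutes with the filtered colimit (since $X_\infty$ is almost compact and the $C_k$ are uniformly bounded), and then observe that $\Map_{\CycSp}(X_\infty, \colim_k C_k) = \Map_{\CycSp}(X_\infty, 0) = 0$ forces the identity map of $C_k$ to be null at some finite stage, hence $C_k$ a retract of $0$, hence $C_k = 0$. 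One should double-check that the comparison map genuinely exists as a map of \emph{cyclotomic} $W_k$-modules and agrees with the base-change of the assumption-(2) isomorphism after forgetting the cyclotomic structure — this compatibility is what lets the non-cyclotomic colimit computation feed back into the cyclotomic conclusion.
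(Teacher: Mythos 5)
There are two genuine gaps here, and they are exactly the two places where the proposition has real content.

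\textbf{Construction of the comparison map.} You propose to obtain $W_k \otimes X_\infty \to X_k$ by base-changing the isomorphism of assumption (2) along $W \to W_k$, conceding that one "must check this is cyclotomic." It is not: assumption (2) is an isomorphism of $W$-modules \emph{in spectra} only, and a $W$-module structure in spectra carries no cyclotomic data. If (2) lifted to cyclotomic spectra the whole proposition would be immediate by base change, so this cannot be where the map comes from. The paper instead builds the map entirely inside $\CycSp$: since $X_\infty \cong \colim_k X_k$ is a uniformly cyclotomically bounded filtered colimit (assumption (1) plus \Cref{lem:fil-colim-bounded}) and $X_\infty$ is almost compact (assumption (3)), the identity of $X_\infty$ factors through some $X_{k_1}$ in $\CycSp$; $W_k$-linearizing this lift gives a compatible system $\psi_k : W_k \otimes X_\infty \to X_k$ of maps of $W_k$-modules in cyclotomic spectra recovering $\id_{X_\infty}$ in the colimit. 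Assumption (2) is then used only afterwards, to identify homotopy groups.

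\textbf{Descent from the colimit to a finite stage.} Your key claim that a uniformly bounded, uniformly $p$-torsion filtered system of cyclotomic spectra with vanishing colimit must be eventually zero is false: the constant system $\F_p$ with zero transition maps is a counterexample, and in the situation at hand $\pi_* C_k$ involves $\pi_0\W\LCF{p^k\Z_p}$, which is an infinite-dimensional $\F_p$-vector space, so there is no finiteness to force the transition maps to stabilize. The fallback you sketch (mapping $X_\infty$ into $C_k$ and commuting the colimit) only shows that maps \emph{out of $X_\infty$} into $C_k$ die at a later stage; to kill $C_k$ you would need to null its identity, i.e.\ map out of $C_k$, which is not almost compact. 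What actually makes the descent work is the special shape of the system: by (1) and (2) one has $\pi_* X_k \cong \pi_*(W_k \otimes X_\infty) \cong \pi_0(\W\LCF{p^k\Z_p})\langle\zeta\rangle \otimes \pi_* X_\infty$ with $\pi_s X_\infty$ finitely generated and $p$-nilpotent, so $\pi_s(\psi_k)$ is an endomorphism of an \emph{induced} module over a filtered system of rings whose colimit is the unit. \Cref{lem:map-formula2}(1) is tailored to exactly this: an endomorphism of $R_\alpha \otimes Y$ that is invertible in the colimit is already invertible over some $R_\beta$. Combined with uniform boundedness (so only finitely many degrees $s$ matter) and a $\mathrm{Tor}$ argument to pass from the reduction mod $\zeta$ back to $N_{k,s}$, this gives the finite-stage isomorphism. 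Without restricting to this induced-module situation, your general "eventually zero" principle has no proof.
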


\begin{proof}
  The first step is producing a comparison map.
  Since the $X_k$ are uniformly cyclotomically bounded,
  using the almost compactness of $X_\infty$ we learn that there exists a $k_1 \gg 0$
  and a lift $\psi$ of the indentity map $X_{\infty} \to X_\infty$
  through $X_{k_1}$ as display below.
  \[ \begin{tikzcd}
    & X_{k_1} \ar[dr] & \\
    X_\infty \ar[ur, dashed, "\psi"] \ar[rr, "p"] & & X_\infty
  \end{tikzcd} \]
  Using the $W_{k}$-module structure on $X_{k}$ for $k \geq k_1$ we can linearize $\psi$
  to produce a compatible system of maps of $W_{k}$-modules 
  \[ \psi_{k} : W_k \otimes X_\infty \to X_k \]
  for $k \geq k_1$ which recovers the identity map $X_\infty \to X_\infty$
  in the colimit as $k \to \infty$.

  We now show that $\psi_k$ is an isomorphism for $k \gg 0$.
  Both the $X_k$ and the $W_k \otimes X_\infty$ are uniformly bounded
  (for the latter see \Cref{cor:t-amplitude-W})
  so it will suffice to instead show that for every $s \in \Z$ there exists a $k \gg 0$
  such that the map $\pi_s(\psi_k)$ is an isomorphism.

  Using assumption (1) we can read off that
  \[ \pi_*X_k \cong \pi_*(W_k \otimes X_\infty) \cong \pi_0(\W\LCF{p^k\Z_p})\langle \zeta \rangle \otimes \pi_*X_\infty. \]
  Let $N_{k,s} \coloneqq \pi_0(\W\LCF{p^k\Z_p})\langle \zeta \rangle \otimes \pi_sX_\infty$
  and let $\psi_{k,s} : N_{k,s} \to N_{k,s}$ be the endomorphism
  corresponding to $\pi_s(\psi_k)$.
  Recall from above that $\psi_{\infty,s}$ is an isomorphism.
  Let $\overline{N}_{k,s}$ and $\overline{\psi}_{k,s}$ be
  the reduction mod $\zeta$ of $N_{k,s}$ and $\psi_{k,s}$ respectively.
  From the expression above we can read off that
  \begin{enumerate}
  \item[(a)] $\mathrm{Tor}^1_{\Z_p\langle \zeta \rangle}( \Z_p, N_{k,*} ) = 0$
    and therefore that $\psi_{k,s}$ is an isomorphism
    if both $\overline{\psi}_{k,s}$ and $\overline{\psi}_{k,s+1}$ are isomorphisms and
  \item[(b)] $\overline{N}_{k,s} \cong \pi_0(\W\LCF{p^k\Z_p}) \otimes \pi_sX_\infty$
    with the variance in $k$ given by restriction.
  \end{enumerate}
  
  Using assumption (3) we know that $\pi_sX_\infty$ is finitely generated, $p$-nilpotent
  and in view of (b) and the fact that $\overline{\psi}_{\infty,s}$ is an isomorphism
  we may apply \Cref{lem:map-formula2}(1)
  (with $\CC=\Sp$, the $R_\alpha$ the system $\W\LCF{p^k\Z_p}$ and $X=Y=\overline{N}_{k,s}$)
  and conclude that $\overline{\psi}_{k,s}$ is an isomorphism for $k \gg 0$.
  Using (a) this in turn implies that $\psi_{k,s}$ is an isomorphism for $k \gg 0$.
  This means $\pi_s\psi_k$ is an isomorphism and the proposition follows.
\end{proof}

\subsection{Finishing the argument}

In this subsection, we piece together the ingredients from earlier subsections to prove this section's main theorem.

We begin with a lemma that allows us to replace our finite spectra with connective rings:

\begin{lem}\label{lem:replacefinspec}
  Let $X$ be finite spectrum of type $w$ and let $q \geq 0$.
  There exists a connective $\E_k$-algebra $V$
  whose underlying spectrum is finite and of type $n$
  such that $X$ is a retract of $V \otimes X$.
\end{lem}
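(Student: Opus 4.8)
The plan is to produce $V$ as a Thom spectrum, or more directly via the theory of $\mathbb{E}_k$-algebras freely generated by finite spectra, exploiting the fact that $X$ is finite and lives in $p$-complete spectra. First I would recall the standard fact (see e.g. Lurie's \emph{Higher Algebra} or the treatment in \cite{burklund2022multiplicative}) that for any spectrum $Y$ there is a free $\mathbb{E}_k$-algebra $\mathrm{Free}_{\mathbb{E}_k}(Y) = \bigoplus_{j \geq 0} (Y^{\otimes j})_{h\Sigma_j}$ (here the $j$-th symmetric power with respect to the $\mathbb{E}_k$-operad), and that the unit map $Y \to \mathrm{Free}_{\mathbb{E}_k}(Y)$ admits a retraction of spectra given by projection onto the $j=1$ summand after splitting. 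So if I set $V_0 \coloneqq \mathrm{Free}_{\mathbb{E}_k}(X)$, then $X$ is a retract of the underlying spectrum of $V_0$ — but $V_0$ is neither finite nor of type $n$ in general, so this is not yet enough. The whole point of the lemma is to upgrade to a \emph{finite} type $w$ example.

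To fix finiteness, the key step is to use that $X$ is a finite $p$-complete spectrum, hence dualizable, hence has an "exponent of nilpotence" with respect to the sphere in the sense of \Cref{rec:exponents1}; more concretely, there is an integer $N$ such that $X^{\otimes N}$ is built from shifts of $\mathbb{S}$ using at most — actually, the cleaner route is: the smash powers $X^{\otimes j}$ are all finite of type $\geq w$ (type is non-decreasing under smash, and type exactly $w$ since $X$ is a retract), and more importantly, by a theorem of Nikolaus (or the argument in \cite{burklund2022multiplicative}), for a finite spectrum $X$ of type $w$ there exists a finite $\mathbb{E}_k$-algebra structure on a truncation. The approach I favor: take $V \coloneqq \tau_{\leq q}' \mathrm{Free}_{\mathbb{E}_k}(X)$ where the truncation is taken in an appropriate sense that preserves the $\mathbb{E}_k$-structure — but truncations of ring spectra need care. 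A cleaner option is to invoke directly that the free $\mathbb{E}_k$-algebra on a connective, finite, type $w$ spectrum becomes "eventually finite in each degree" and then pass to a Postnikov stage; since we only need $X$ to be a retract of $V \otimes X$ (not of $V$ itself), and $V \otimes X$ is finite as soon as $V$ is, it suffices to arrange $V$ finite with $X \hookrightarrow V$ a retract of \emph{spectra}.

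Concretely, here is the cleanest argument I would write out. By the thick subcategory theorem, pick $X$ with $K(w)_* X \neq 0$; then $X$ is self-dual up to a shift after possibly replacing $X$ by $X \otimes DX$ (which has type $w$ and is a unital — in fact $\mathbb{E}_\infty$ — ... no: $X \otimes DX = \mathrm{End}(X)$ is an $\mathbb{E}_1$-algebra, finite, of type $w$, and $X$ is a retract of $X \otimes (X \otimes DX) = X \otimes \mathrm{End}(X)$ via the unit $\mathbb{S} \to \mathrm{End}(X)$ and evaluation). This already gives the $\mathbb{E}_1$ case with $V = \mathrm{End}(X) = X \otimes DX$: it is connective after a shift (replace $X$ by a suitable suspension so that $\mathrm{End}(X)$ is connective, which does not change the conclusion since retracts and type are shift-invariant for the purposes here — though one must be slightly careful that $X$ itself is asked to be a retract, so shift $X$ too), finite, type $w = n$, and $X$ is a retract of $V \otimes X$ by the unit-evaluation composite $X \to X \otimes DX \otimes X \to X$. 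For general $\mathbb{E}_k$: iterate or use that $\mathrm{End}(X)$ already supports an $\mathbb{E}_1$-structure and tensor $k$-fold, i.e. take $V = \mathrm{End}(X)^{\otimes k}$, or better simply note the statement only requires \emph{some} $\mathbb{E}_k$-algebra and $\mathrm{End}(X)$ with its $\mathbb{E}_1$-structure is also an $\mathbb{E}_k$-algebra via the map of operads $\mathbb{E}_k \to \mathbb{E}_1$.

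The main obstacle I anticipate is the bookkeeping around connectivity: $\mathrm{End}(X) = X \otimes DX$ need not be connective, and naively shifting $X$ to make it so will shift the copy of $X$ we want as a retract. The resolution is that we are free to replace $X$ by any of its suspensions $\Sigma^a X$ throughout the statement (the hypotheses and conclusion are invariant under this, as type and "is a retract of" are preserved and $V \otimes \Sigma^a X = \Sigma^a(V \otimes X)$), so we first replace $X$ by a connective suspension with $DX$ also suitably bounded, then $\mathrm{End}(X)$ is connective, and we are done. I would also double-check the role of the parameter $q$ — in the statement $q$ appears but the conclusion as written does not visibly use it, so likely the intended lemma is slightly stronger (e.g. the map $X \to V \otimes X$ and its retraction can be taken to factor through a $q$-truncation, or $V$ can be taken $q$-truncated); if so, I would additionally apply the $\mathbb{E}_k$-truncation results of \cite{burklund2022multiplicative} to replace $\mathrm{End}(X)$ by its $q$-th Postnikov truncation as an $\mathbb{E}_k$-ring, noting this does not affect the retraction in the relevant range of degrees.
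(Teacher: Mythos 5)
There are two genuine gaps in your proposed construction $V=\mathrm{End}(X)=X\otimes DX$, and each is fatal on its own. First, connectivity: for a finite spectrum $X$ of type $w\geq 1$ with cells in degrees $[a,b]$ one necessarily has $b>a$, so $X\otimes DX$ has nontrivial $\F_p$-homology (and hence a nontrivial bottom homotopy group) in degree $a-b<0$. Your proposed fix of suspending $X$ does nothing, since $\mathrm{End}(\Sigma^a X)=\Sigma^a X\otimes \Sigma^{-a}DX\cong \mathrm{End}(X)$; the endomorphism algebra is invariant under suspension and is never connective for a type $\geq 1$ finite spectrum. Second, the structured-multiplication step is backwards: an $\E_1$-algebra does \emph{not} inherit an $\E_q$-structure for $q\geq 2$ via ``the map of operads $\E_q\to\E_1$'' (the operad map goes $\E_1\to\E_q$, so restriction loses structure rather than gaining it), and $\mathrm{End}(X)^{\otimes k}$ is again only an $\E_1$-algebra. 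Producing a finite, connective, type-$w$ spectrum with an $\E_q$-algebra structure for arbitrary $q$ is precisely the hard content of the lemma (the parameter $q$ in the statement is the amount of multiplicative structure required — in the application it is taken to be $2$ so that $V$ is an $\E_2$-, hence $\E_1\otimes\A_2$-, algebra; the ``$\E_k$'' and ``type $n$'' in the statement are typos for ``$\E_q$'' and ``type $w$''). This is a deep theorem, not a formal manipulation.

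The paper's proof outsources exactly this to \cite{burklund2022multiplicative}: it takes a connective finite type-$w$ $\E_1$-algebra $V'$ (Thm.~1.3 there), forms $I=\fib(\Ss\to V')$ and $V^{(i)}=\cofiber(I^{\otimes i}\to\Ss)$, and invokes Thm.~1.5 of that paper to get an $\E_q$-structure on $V^{(i)}$ once $i\geq q+1$. The retraction is then exactly the exponent-of-nilpotence argument you mentioned in passing but abandoned: since $X$ is finite of type $w$ it is $V'$-nilpotent, so $I^{\otimes i}\otimes X\to X$ is null for $i\gg 0$ (\Cref{rec:exponents1}), which splits $X$ off of $V^{(i)}\otimes X$. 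Your unit–evaluation retraction $X\to \mathrm{End}(X)\otimes X\to X$ is correct as far as it goes, and $\mathrm{End}(X)$ would suffice if the lemma asked only for a (possibly nonconnective) $\E_1$-algebra; but as stated the lemma needs connectivity and $\E_q$-multiplicativity, and your argument establishes neither.
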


\begin{proof}
  From \cite[Thm. 1.3]{burklund2022multiplicative} we know that there exists a connective,
  finite spectrum $V'$ of type $w$ with an $\mathbb{E}_1$-algebra structure.
  For $i \geq 1$, let $V^{(i)}$ denote the cofiber of the map $I^{\otimes i} \to \SP$ where $I \to \SP \to V'$ is a cofiber sequence.
  $V^{(i)}$ is of type $w$ and when $i \geq q+1$ it admits an $\E_q$-algebra structure by
  \cite[Theorem 1.5]{burklund2022multiplicative}.
  By \Cref{rec:exponents1} $X$ is a retract of $X \otimes V^{(i)}$, for $i \gg 0$.
  Taking $V = V^{(i)}$ for $i$ sufficiently large satisfies the conditions of the lemma.
\end{proof}

We now prove the main theorem of this section:
%

\begin{thm}[Cyclotomic asymptotic constancy] \label{thm:E1A2algtame}
  Let $R \in \Alg_{\E_1\otimes \A_2}(\Sp^{B\Z,u})$ be connective of fp-type $n\geq -1$, and let $X$ be a finite spectrum of type $n+2$. Suppose that $X\otimes \THH(R)$ is bounded in the range $[c,b]$.

  There exists a function $k(R,X,n)$ such that for all $k \geq k(R,X,n)$,
  $X \otimes \THH(R^{hp^k\Z})$ is bounded in the range $[c-1,b+3]$
  and there is an isomorphism of $W_k$-modules in cyclotomic spectra        	
  \[ X\otimes \THH(R^{hp^k\Z})\cong X\otimes W_k\otimes  \THH(R). \]
\end{thm}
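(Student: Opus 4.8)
The plan is to reduce \Cref{thm:E1A2algtame} to the spectrum-with-Frobenius statement of \Cref{thm:assconst_main1} and then bootstrap to cyclotomic spectra using the machinery of \Cref{subsec:bootstrap}. First I would use \Cref{lem:replacefinspec} to replace the finite type $n+2$ spectrum $X$ by a connective $\E_1\otimes\A_2$-algebra $V$ of type $n+2$ with $X$ a retract of $V\otimes X$; since all conclusions are stable under retracts, it suffices to prove everything for $V\otimes \THH(R^{hp^k\Z})$. Then $(R,V)$ (after choosing the trivialization of the $\Z$-action on $V\otimes R$, which exists since $V\otimes R$ is $\pi$-finite with unipotent action when $V$ has type $n+2$ — wait, $V$ has type $n+2$ but $R$ has fp-type $n$, so $V\otimes R$ is bounded but not $\pi$-finite; the trivialization instead comes from the locally unipotent hypothesis on $R$ together with the fact that $V \otimes R$ has the height $n$ LQ-type finiteness — more carefully, one uses that $X\otimes\THH(R)$ bounded plus a type $n+2$ argument, but the cleanest route is to note that $V$ is of type $n+2 \ge n+1$ so $V\otimes R$ is $\pi$-finite and the action is locally unipotent hence trivializable) gives an object of $\UAlg$. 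Actually, I'd be careful here: the hypothesis is that $X\otimes\THH(R)$ is bounded, and $R$ is of fp-type $n$, so $V\otimes R$ is $\pi$-finite because $V$ has type $n+1$; hence the $\Z$-action on $V\otimes R$ is a unipotent action on a $\pi$-finite spectrum and is therefore trivializable, giving the needed lift to $\UAlg$.

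Next I would apply \Cref{thm:assconst_main1} to $(R,V)$. This gives a natural map
\[ \eta\colon \W(\LCF{\Z_p})\otimes V\otimes \mathrm{res}_\varphi(\THH(R)^{h\Z}) \to V\otimes \mathrm{res}_\varphi\THH(R^{h\Z}) \]
which is an isomorphism on $(-)^{\Phi C_p}$ (i.e. after $i_0^*$) and is an isomorphism on $(-)^{tC_p}$ provided the relevant Segal-type assembly map for $V\otimes\THH(R)$ is an isomorphism. The latter holds because $V\otimes\THH(R)$ is bounded and bounded spectra satisfy the Segal condition sufficiently (via \Cref{prop:AN_bound_to_segal} and the thick-subcategory argument used in \Cref{lem:tenspr_with_finite}), so $\eta$ is in fact an isomorphism of spectra with Frobenius. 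Dualizing as in the proof of \Cref{lem:thhbcres}, I get $V\otimes\THH(R^{hp^k\Z}) \cong W_k\otimes_W(V\otimes\THH(R^{h\Z}))$, and combined with $\eta$ this identifies the underlying $W_k$-module-with-Frobenius of $V\otimes\THH(R^{hp^k\Z})$ with $V\otimes W_k\otimes\THH(R)$. In particular, setting $X_\infty := V\otimes\THH(R)$ (a bounded, $p$-nilpotent after smashing appropriately — here I'd pass to $V\otimes\SP/p^m$ coefficients, invoking \Cref{lem:replacefinspec} again to make $V$ carry $\E_{m}$-structure so $V$ is $p^m$-torsion) cyclotomic spectrum which is almost compact by \Cref{prop:key-finiteness} since its underlying spectrum is finite hence compact, the hypotheses (2) and (3) of \Cref{prop:bdd-to-triv} are in place.

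The remaining task is hypothesis (1) of \Cref{prop:bdd-to-triv}: that the $X_k := W_k\otimes_W X$ are \emph{uniformly} cyclotomically bounded. This is where \Cref{prop:Rhz-cyc-bounded} enters, and I expect it to be the main obstacle. I would apply it to $R' := \SP/p^m\otimes V\otimes\THH(R^{hp^j\Z})$ (a $W$-module $h\A_2$-ring, $p^m$-torsion), checking: (1) $R'_{|0}\cong \SP/p^m\otimes V\otimes\THH(R)^{h\Z}\cdot$-ish — more precisely by \Cref{lem:coassemblygenTHH} the zero-fiber is $\SP/p^m\otimes V\otimes\THH(R)$, which is cyclotomically bounded since $V\otimes\THH(R)$ is bounded in $[c,b]$; (2) the $\pi_*$-splitting $\pi_*R' \cong \pi_0(\W\LCF{\Z_p})\otimes\pi_*R'_{|0}$ compatibly with restriction to $0$, which is exactly what the Frobenius-level isomorphism $\eta$ combined with \Cref{lem:thh-s1} provides; and (3) that $R'$ satisfies $\Seg{b'}$ for some explicit $b'$ — this follows because $R'$ is a $W$-module and $\fib(\varphi_{R'})$ can be analyzed via the $\eta$-identification, reducing to the Segal condition for $\W\LCF{\Z_p}\otimes(V\otimes\THH(R)^{h\Z})$; here boundedness of $V\otimes\THH(R)$ gives the bound. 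Then \Cref{prop:Rhz-cyc-bounded} yields that each $V\otimes\THH(R^{hp^j\Z})$ (mod $p^m$) is cyclotomically bounded in a range $[c-\mathrm{const}, e]$ with $e$ \emph{independent of $j$}, giving uniform boundedness. The subtlety — and the hard part — is bookkeeping the constants so that they genuinely do not depend on $j$ or $k$, and making sure the $\A_2$-structure survives all the reductions (passing between $V$, $V\otimes\SP/p^m$, and their images in cyclotomic spectra); once that is done, \Cref{prop:bdd-to-triv} gives $X_k\cong W_k\otimes X_\infty$ as cyclotomic spectra for $k\gg0$, and \Cref{lem:fil-colim-bounded} together with the $[c,b]$ bound for $X_\infty$ yields the $[c-1,b+3]$ range for $X\otimes\THH(R^{hp^k\Z})$, while the LQ-property for $R^{hp^k\Z}$ follows from boundedness of $V\otimes\THH(R^{hp^k\Z})$ in the cyclotomic $t$-structure. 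Finally, unwinding the retract $X\trianglelefteq V\otimes X$ transports all conclusions from $V$-coefficients back to $X$-coefficients, and reassembling the $\SP/p^m$-reduction (using that $X$ itself is $p$-power torsion or passing to the $p$-completion and noting finiteness) completes the proof.
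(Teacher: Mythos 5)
Your proposal follows essentially the same route as the paper's proof: replace $X$ by a connective $\E_1\otimes\A_2$-algebra $V$ of type $n+2$ via \Cref{lem:replacefinspec}, trivialize the $p^k\Z$-action on the $\pi$-finite algebra $V\otimes R$ to lift into $\UAlg$, apply \Cref{thm:assconst_main1} (checking its Tate-assembly hypothesis from boundedness of $V\otimes\THH(R)$), feed the uniform cyclotomic boundedness supplied by \Cref{prop:Rhz-cyc-bounded} into \Cref{prop:bdd-to-triv}, and retract back to $X$. Two justifications need repair, though neither changes the argument: the underlying spectrum of $V\otimes\THH(R)$ is not finite, so almost compactness must come from \Cref{lem:thhac} together with \Cref{prop:key-finiteness}; and to pass from $\W\LCF{p^k\Z_p}\otimes V\otimes\THH(R)^{hp^k\Z}$ to $V\otimes W_k\otimes\THH(R)$ one must also trivialize the $p^k\Z$-action on $V\otimes\THH(R)$ itself as an $\A_2$-algebra in cyclotomic spectra (the paper's step (1e), again via \Cref{lem:trivializeacalg} using almost compactness), which your sketch elides.
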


	
	
	

\begin{proof}
  The proof of this theorem mostly consists of
  combining the piecemeal results proved throughout this section.
  For ease of hypothesis-tracking we subdivide the proof into
  a sequence of labelled sub-claims.
  
  We apply \Cref{lem:replacefinspec} with $q=2$ to obtain a
  connective, finite spectrum $V$ of type $n+2$ equipped with an 
  $(\E_1 \otimes \A_2)$-algebra structure
  such that $X$ is a retract of $X\otimes V$. 
  For most of the proof we will work with $V$, returning to $X$ at the end.

  \begin{itemize}
  \item[(0)] $V \otimes \THH(R)$ and $X \otimes \THH(R)$ are almost compact in $\CycSp$.
  \end{itemize}
  Since $R$ is fp-type $n$, $R/p$ is bounded below and has level-wise finite homotopy groups.
  By \Cref{lem:thhac} the underlying spectra of $V\otimes \THH(R)$ and $X\otimes \THH(R)$ are almost compact. \Cref{prop:key-finiteness} now implies (0).


  \vspace{5pt}  
  \textbf{Part 1:} Locally unipotent actions
  
  \begin{itemize}
  \item[(1a)] For each $k \geq 0$, the $p^k\Z$-action on $\THH(R)$ is unipotent.
  \item[(1b)] For each $k \geq 0$, there is an isomorphism of $W_k$-modules
    \[ \THH(R^{hp^k\Z}) \cong W_k \otimes_W \THH(R^{h\Z}). \]
  \item[(1c)] For each $k \geq 0$, there is an isomorphism
    \[ \THH(R^{hp^k\Z})_{|0} \cong \THH(R)^{hp^k\Z}. \]
  \end{itemize}
  (1a), (1b) and (1c) follow form 
  Lemmas \ref{lem:unipotent_THH}, \ref{lem:thhbcres} and \ref{lem:coassemblygenTHH}
  respectively.

  \begin{itemize}
  \item[(1d)] The $p^k\ZZ$-action on $V \otimes R \in \Alg_{\E_1 \otimes \A_2}(\Sp^{B\Z,u})$ is trivial for $k\gg0$.
  \end{itemize}
  The $(\E_1\otimes \A_2)$-algebra $V \otimes R$ is
  connective, $\pi$-finite (implies $p$-nilpotent, bounded and almost compact)
  and has a locally unipotent $\Z$-action,
  therefore we may apply \Cref{lem:trivializeacalg} to obtain (1d).

  \begin{itemize}
  \item[(1e)] The $p^k\ZZ$-action on $V\otimes \THH(R)$ in $\Alg_{\A_2}(\CycSp^{B\Z,u})$ is trivial for $k\gg0$.
  \end{itemize}    
  The $\A_2$-algebra in cyclotomic spectra $V \otimes \THH(R)$ is
  connective, bounded, $p$-nilpotent, almost compact (by (0))
  and has a locally unipotent $\Z$-action (by (1a)),
  therefore we may apply \Cref{lem:trivializeacalg} to obtain (1e).
  
  \vspace{5pt}
  \textbf{Part 2:} The Dehn twist trivialization

  \begin{itemize}
  \item[(2a)] There are isomorphisms of $\A_2$-algebras in $W_k$-modules in spectra,
    \[ V \otimes \THH(R^{hp^k\Z}) \cong V \otimes \W\LCF{p^k\Z_p} \otimes \THH(R)^{hp^k\Z} \cong V \otimes W_k \otimes \THH(R) \]
    for $k\gg0$ sufficiently large.
  \item[(2b)] The cyclotomic spectrum $V \otimes \THH(R^{hp^k\Z})$ satisfies $\Seg{b}$
    for $k \gg 0$ sufficiently large.
  \end{itemize}  
  The second isomorphism in (2a) follow from (1e).
  Using (1d) we lift the pair $(R,V)$ to an $\A_2$-algebra object of $\UAlg$.
  (2a) now follows form \Cref{thm:assconst_main1}(1).

  The cyclotomic spectrum $V \otimes \THH(R)$ satisfies $\Seg{b}$ by \Cref{prop:AN_bound_to_segal}. It follows that we may prove (2b) using \Cref{thm:assconst_main1}(2).
  The additional hypothesis needed for \Cref{thm:assconst_main1}(2)
  follows from \Cref{lem:tcp-commute} using the fact that $V \otimes \THH(R)$ is cyclotomically bounded. 


  \vspace{5pt}
  \textbf{Part 3:} Bootstrapping the trivialization to $\CycSp$
  
  \begin{itemize}
  \item[(3a)] There is a $k' \gg 0$ sufficiently large so that
    the cyclotomic spectra $V \otimes \THH(R^{hp^k\Z})$ are uniformly bounded for $k \geq k'$.
  \end{itemize}
  For each $k$, $V\otimes \THH(R^{hp^k\ZZ})$ is an h$\A_2$-ring in $W_k$-modules. We wish to prove that $V\otimes \THH(R)^{hp^k\ZZ})$ is uniformly bounded in the cyclotomic $t$-structure as $k$ varies. To do this we will verify the hypotheses of \Cref{prop:Rhz-cyc-bounded} for the h$\A_2$-ring in $W=W_k$-modules in cyclotomic spectra $\THH(R^{hp^k\Z})$ for $k\gg0$. We will do this where the constants $c,b,b'$ are independent of $k$ so that the bound is uniform in $k$.

  Hypothesis (1) is satisfied because $(V \otimes \THH(R^{hp^k\ZZ}))_{|0} \cong V \otimes \THH(R)^{hp^k\ZZ}$ by (1c), which is bounded in the range $[c-1,b]$ if $V\otimes \THH(R)$ is bounded in the range $[c,b]$.
  Hypothesis (2) follows from (1a) and (2a).
  Hypothesis (3) is (2b).

  \begin{itemize}
  \item[(3b)] There is an isomorphism of $W_k$-modules in cyclotomic spectra
    \[ V \otimes \THH(R^{hp^k\Z}) \cong V \otimes W_k \otimes \THH(R) \]
     for all $k \gg 0$ sufficiently large.
  \end{itemize}
  We prove this using \Cref{prop:bdd-to-triv} with $X = V \otimes \THH(R)^{hp^k\Z}$
  where $k$ is large enough for both (2a) and (3a) to hold.
  Hypothesis (1) follows from (3a) and (1b).
  Hypothesis (2) follows from (2a) and (1b).
  Hypothesis (3) follows form (0) and (1b).

  \begin{itemize}
  \item[(3c)] There is an isomorphism of $W_k$-modules in cyclotomic spectra
    \[ X \otimes \THH(R^{hp^k\Z}) \cong X\otimes W_k\otimes \THH(R) \]
    for all $k\gg0$ sufficiently large.
  \end{itemize}
  Recall that $V$ was constructed so that $X$ is a retract of $X \otimes V$.
  This means $X \otimes \THH(R^{hp^k\Z})$ is a retract of $X \otimes V \otimes \THH(R^{hp^k\Z})$ as a $W_k$-module in cyclotomic spectra.
  Applying \Cref{lem:map-formula2}(2) along the system of the $W_k$
  using \Cref{cor:t-amplitude-W}, (0), (1b), (3b) we conclude.
	
  To finish the proof, we note that \Cref{cor:t-amplitude-W} along with (3c) implies the claimed bound in the $t$-structure for $k \gg 0$. 
\end{proof}

\begin{cor}\label{cor:E1A2algtamecoassembly}
  Let $R \in \Alg_{\E_1\otimes \A_2}(\Sp^{B\Z,u})$ be connective of fp-type $n\geq -1$, and let $X$ be a finite spectrum of type $n+2$. Suppose that $R$ satisfies the height $n$ LQ property.

  Then, there exists a $k_{R,X,n}$ such that for all $k\geq k_{R,X,n}$, there is a commutative square in cyclotomic spectra as below, where the horizontal maps are the coassembly maps:
  \[ \begin{tikzcd}
    X \otimes \THH(R^{hp^k\Z}) \ar[r,""] \ar[d,"\cong"] &
    X \otimes \THH(R)^{hp^k\Z}\ar[d,"\cong"]\\
    X \otimes \THH(R^{B\Z})\ar[r,""] &
    X \otimes \THH(R)^{B\Z}
  \end{tikzcd} \]
\end{cor}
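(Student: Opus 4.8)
The plan is to deduce this corollary from \Cref{thm:E1A2algtame} by a naturality argument. First I would observe that \Cref{thm:E1A2algtame} is not merely an existence statement for an abstract isomorphism: tracing back through its proof, the isomorphism $X \otimes \THH(R^{hp^k\Z}) \cong X \otimes W_k \otimes \THH(R)$ is produced as a map of $W_k$-modules in cyclotomic spectra, and it is compatible (for $k \gg 0$) with the system of maps $W_k \to W_{k+1}$ and, in the limit, with the identification at $W_\infty \cong \SP$ of $X \otimes \THH(R^{B\Z})$ with $X \otimes W \otimes \THH(R)$ coming from step (1e) / \Cref{thm:assconst_main1}(1). Thus I would first record, as part of what \Cref{thm:E1A2algtame} gives, a commuting triangle of $W_k$-module cyclotomic spectra relating $X \otimes \THH(R^{hp^k\Z})$, $X \otimes W_k \otimes \THH(R)$, and (after base change along $W_k \to W_\infty$, equivalently restriction to the $0$-fiber) $X \otimes \THH(R^{B\Z}) \cong X \otimes W \otimes \THH(R)$.

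Next I would set up the two coassembly maps. The top map $X\otimes \THH(R^{hp^k\Z}) \to X\otimes \THH(R)^{hp^k\Z}$ is, by \Cref{lem:coassemblygenTHH}, identified with the restriction-to-$0$ map $X\otimes \THH(R^{hp^k\Z}) \to (X\otimes \THH(R^{hp^k\Z}))_{|0}$, i.e.\ with base change along the coassembly map $W_k \to \SP^{Bp^k\Z}$ of \Cref{lem:coassembly-po}; the bottom map is the same construction at $k = \infty$ (using $W_\infty \cong \SP$, where the coassembly map is an isomorphism by \Cref{lem:coassembly-po}). So the horizontal maps in the square are both obtained by applying $X \otimes (-)$ and base-changing the $W_k$-module $\THH(R^{hp^k\Z})$ along the ring map $W_k \to \SP^{Bp^k\Z}$. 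The key point is then that the isomorphism of \Cref{thm:E1A2algtame} is a map of $W_k$-modules, so it is automatically compatible with this base change: applying $(-)_{|0}$ to both sides gives $(X\otimes \THH(R^{hp^k\Z}))_{|0} \cong (X\otimes W_k \otimes \THH(R))_{|0} \cong X \otimes \THH(R)^{hp^k\Z}$, and under the further identification $W_k \otimes_{W_k} \SP^{Bp^k\Z} \cong \SP^{Bp^k\Z}$ and the compatibility with $W_k \to W$ this matches the corresponding statement at $k=\infty$. Assembling these identifications produces the commuting square, with the vertical maps the isomorphisms of \Cref{thm:E1A2algtame} at levels $k$ and $\infty$ and the horizontal maps the coassembly maps. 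One also needs that $R$ of fp-type $n$ satisfying the height $n$ LQ property means $X\otimes \THH(R)$ is bounded (for a type $n+2$ spectrum $X$), which is exactly \Cref{dfn:LQ}, so the hypothesis of \Cref{thm:E1A2algtame} is met and the function $k_{R,X,n} := k(R,X,n)$ suffices.

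The main obstacle I anticipate is \emph{bookkeeping the naturality/$W_k$-linearity of the isomorphism in \Cref{thm:E1A2algtame}} carefully enough to conclude that it is compatible with base change to the $0$-fiber and with the transition maps $W_k \to W_{k+1} \to \cdots \to W$; the theorem as stated only asserts an isomorphism of $W_k$-modules in cyclotomic spectra for each $k \gg 0$, so one must either re-extract the commuting triangle (with the $W$-module identification $X \otimes \THH(R^{B\Z}) \cong X\otimes W \otimes \THH(R)$ and the transition compatibility) from the proof, or argue directly that \emph{any} $W_k$-linear isomorphism between these two $W_k$-modules restricts to an isomorphism on $0$-fibers matching the coassembly maps — for which one uses that the coassembly map out of $\THH(R)$ (namely $W_k \to \SP^{Bp^k\Z}$, or equivalently $\THH(R^{hp^k\Z}) \to \THH(R)^{hp^k\Z}$ after (1c)) is itself natural and the target identification at $k=\infty$ is canonical. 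Once that compatibility is pinned down, the square commutes formally.
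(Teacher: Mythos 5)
Your proposal is correct and follows essentially the same route as the paper: the paper's proof simply invokes \Cref{thm:E1A2algtame} to get the $W_k$-module isomorphism $X\otimes \THH(R^{hp^k\Z})\cong X\otimes W_k\otimes \THH(R)$ and then observes that both coassembly maps are base change along $W_k \to \SP^{Bp^k\Z}$ (via \Cref{lem:coassembly-po} and \Cref{lem:coassemblygenTHH}), so $W_k$-linearity yields the commuting square. The compatibility bookkeeping you flag as the "main obstacle" is exactly what the paper treats as immediate from $W_k$-linearity, so no additional argument is needed.
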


\begin{proof}
  We apply \Cref{thm:E1A2algtame} to learn that for all $k\gg0$ is an isomorphism of $W_k$-modules in cyclotomic spectra
	
  $$X\otimes \THH(R^{hp^k\Z})\cong X\otimes W_k\otimes  \THH(R)$$
  Since the coassembly map is given by base change along the map $W_k \to \THH(\SP)^{Bp^k\Z}\cong \SP^{B\Z}$ by \Cref{lem:coassemblygenTHH}, \Cref{lem:unipotent_THH}, we are done.
\end{proof}

We now extract another corollary, which in particular allows us to understand coassembly maps telescopically in terms of the trivial action. Applying this to $R = \BPn$ and inverting $v_{n+1}$ (which is done in \Cref{thm:maincyc}), we recover \Cref{thm:assconbpn} from the introduction. The key lemma lets us access $T(n+1)$-homology by working modulo a power of $v_{n+1}$:

\begin{lem} \label{lem:unmodout}
  Let $X$ be a spectrum and let $v : \Sigma^k X \to X$ be a self map 
  such that $X/v$ is bounded in the range $[c,b]$.
  There are isomorphisms
  \begin{enumerate}
  \item $\pi_s(-) \cong \pi_s(-/v^a)$ for $s < c+ak$ and
  \item $\pi_s(-) \cong \pi_{s-k}(-)$ for $s > b$.
  \end{enumerate}
  Moreover, these isomorphisms are natural in
  maps compatible with the self maps.
\end{lem}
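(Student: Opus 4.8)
The statement is a purely formal consequence of the cofiber sequence $\Sigma^k X \xrightarrow{v} X \to X/v$ together with the hypothesis that $X/v \in \Sp_{[c,b]}$. I would set up the two claims by considering the tower of iterated cofibers $X/v^a$ and using the octahedral/filtration structure relating $X/v^a$ to $X/v$.

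For part (1): iterate the cofiber sequence to obtain, for each $a \geq 1$, a finite filtration of $X/v^a$ with associated graded $\Sigma^{ik}(X/v)$ for $0 \leq i < a$ (this comes from the cofiber sequences $\Sigma^k X/v^{a-1} \xrightarrow{v} X/v^a \to X/v$, inducting on $a$). Since each layer $\Sigma^{ik}(X/v)$ is bounded in the range $[c+ik, b+ik]$, the spectrum $X/v^a$ is connective in degree $\geq c$ (the bottom layer $i=0$ contributes starting in degree $c$, and higher layers start even higher since $k > 0$). Now apply $\pi_s(-)$ to the cofiber sequence $\Sigma^{ak} X \xrightarrow{v^a} X \to X/v^a$: the long exact sequence reads
\[
\cdots \to \pi_{s-ak}X \xrightarrow{v^a} \pi_s X \to \pi_s(X/v^a) \to \pi_{s-ak-1}X \xrightarrow{v^a} \pi_{s-1}X \to \cdots.
\]
I would instead run this with the \emph{co}connective truncation: replacing $X$ by $\tau_{< c+ak} X$ doesn't change $\pi_s$ for $s < c+ak$, and then I want to argue $v^a$ acts by zero in the relevant range. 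The cleaner route: the map $X \to X/v^a$ has fiber $\Sigma^{ak}X$, so $\pi_s(X) \to \pi_s(X/v^a)$ is an isomorphism provided $\pi_s(\Sigma^{ak}X) = \pi_{s-ak}(X) = 0$ and $\pi_{s-1}(\Sigma^{ak}X) = \pi_{s-1-ak}(X) = 0$. But $X$ need not be bounded below a priori — however, from the filtration of $X/v^a$ above one deduces that $X$ itself is $c$-connective (take $a \to \infty$: $X \cong \lim$ or rather $X$ receives compatible maps and $\pi_s X \to \pi_s(X/v^a)$ is eventually iso; more directly, $\tau_{<c}X \xrightarrow{v} \tau_{<c}X$ is an equivalence since its cofiber $\tau_{<c}(X/v)$ vanishes, and $v$ being nilpotent-ish forces $\tau_{<c}X = 0$ — one has to be slightly careful here, but since $X/v$ is bounded, $v$ is an equivalence on $\tau_{<c}X$ and on $\tau_{>b}X$). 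Granting $X$ is $c$-connective, $\pi_{s-ak}X = 0$ for $s - ak < c$, i.e. $s < c + ak$, which gives (1).

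For part (2): by the same argument applied to the $\tau_{>b}$ part, $v : \Sigma^k X \to X$ restricted to $\tau_{>b}$ has cofiber $\tau_{>b}(X/v) = 0$, hence $v : \tau_{>b}(\Sigma^k X) \to \tau_{>b}X$ is an equivalence, which says exactly that $\pi_s X \cong \pi_{s-k}X$ for $s > b$ (more precisely for $s > b$ and $s - k > b$; for $s$ just above $b$ one checks directly from the long exact sequence of $\Sigma^k X \xrightarrow{v} X \to X/v$ that $\pi_s(X/v) = 0$ forces surjectivity and $\pi_{s-1}(X/v)=0$ forces injectivity of $v$ on $\pi_s$, so it holds for all $s > b$). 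Naturality in both cases is automatic since every construction — the cofiber sequences, the filtrations, the long exact sequences — is functorial in maps of spectra commuting with the chosen self-maps.

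The only genuine subtlety, and the step I'd expect to need the most care, is justifying that $X$ is $c$-connective (and that $v$ is an equivalence on $\tau_{>b}X$) purely from boundedness of $X/v$; this is where one uses that $v$ is a self-map of \emph{positive} degree $k > 0$, so that on the coconnective cover $\tau_{<c}X$ the map $v$ shifts degrees up by $k$ yet must be an equivalence (its cofiber vanishing), which for a bounded-below-failing spectrum is handled by noting $\tau_{<c}X \cong \tau_{<c}X[v^{-1}]$ receives a self-equivalence $v$ but $v$ also kills the top nonzero homotopy group — running this to a contradiction, or more cleanly observing that $\tau_{<c}X$ is $v$-periodic with $X/v$ contribution zero forces it to be zero after $p$-completion. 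Everything else is bookkeeping with long exact sequences.
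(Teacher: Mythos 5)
Your proof follows the same route as the paper's: for (1) the cofiber sequence $\Sigma^{ak}X \xrightarrow{v^a} X \to X/v^a$ together with the $(c+ak)$-connectivity of $\Sigma^{ak}X$, and for (2) the long exact sequence of $\Sigma^{k}X \xrightarrow{v} X \to X/v$ using that $X/v$ is $b$-truncated. Part (2) of your argument is fine (modulo an indexing slip: injectivity of $v$ on $\pi_s$ needs $\pi_{s+1}(X/v)=0$, not $\pi_{s-1}(X/v)=0$; both vanish for $s>b$, so nothing breaks), and your filtration of $X/v^a$ by shifted copies of $X/v$ is correct but not needed.

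The one place you go beyond the paper is also where you go wrong. You correctly identify that (1) requires $X$ to be $c$-connective, but your proposed justification — that a $(c-1)$-truncated $v$-periodic spectrum with vanishing $X/v$ ``forces it to be zero after $p$-completion'' — is false, and indeed the implication ``$X/v$ bounded in $[c,b]$ $\Rightarrow$ $X$ is $c$-connective'' fails in general: take $X=T(n+1)=U[v^{-1}]$ with $v$ the (invertible) self map, so $X/v=0$ is bounded in any range while $X$ has homotopy in arbitrarily negative degrees. (Your heuristic that $v$ ``kills the top homotopy group'' fails because $v$ does not induce a degree-$k$ self-map of $\tau_{<c}X$ — truncation does not commute with the cofiber sequence.) So the lemma as literally stated carries an implicit hypothesis that $X$ is bounded below; this holds in every application (there $X$ is $V\otimes\TC(\text{connective ring})$), and under it $c$-connectivity is immediate: if $\pi_N X\neq 0$ with $N$ minimal and $N<c$, the surjection $\pi_{N-k}X=0\twoheadrightarrow \pi_N X$ coming from $\pi_N(X/v)=0$ gives a contradiction. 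The paper itself simply asserts the connectivity of $\Sigma^{ak}X$ without comment, so you noticed a real gap in the statement — you just filled it with an argument that cannot work rather than with the missing hypothesis.
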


We note in particular that for $a \gg 0$ we have $b<c+ak$
and therefore the homotopy groups of an
$X$ as in \Cref{lem:unmodout} are determined by $\pi_*(X/v^a)$ for $a\gg0$.

\begin{proof}
  The first claim follows from examining the cofiber sequence
  $ \Sigma^{ak}X \xrightarrow{v^a} X \to X/v^a $
  and noting that $\Sigma^{ak}X$ is $(c+ak)$-connective.
  The second claim follows from examining the cofiber sequence
  $ \Sigma^{k}X \xrightarrow{v} X \to X/v $
  and noting that $X/v$ is $b$-truncated.
\end{proof}  
	
	

\begin{cor}[Telescopic asymptotic constancy]\label{cor:telasscon}
  Let $R \in \Alg_{\E_1\otimes \A_2}(\Sp^{B\Z,u})$ be connective of fp-type $n\geq 0$ and satisfy the height $n$ LQ property.
  Fix a $V \in \Sp^{\omega}$ of type $n+1$ with $v_{n+1}$-self map $v$.
  
  Then, there exists a function $k(R,V,v,n)$ such that for all $k\geq k(R,V,v,n)$,
  there is a commutative diagram of $\Z[v]$-modules as below,
  where the horizontal maps are the coassembly maps
  \[ \begin{tikzcd}
    V_*\TC(R^{hp^k\Z})\ar[r,""]\ar[d,"\cong"] & V_*\TC(R)^{hp^k\Z}\ar[d,"\cong"]\\
    V_*\TC(R^{B\Z})\ar[r,""] & V_*\TC(R)^{B\Z}
  \end{tikzcd} \]
\end{cor}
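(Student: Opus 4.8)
The strategy is to deduce the telescopic statement (\Cref{cor:telasscon}) from its $\THH$-level, finite-coefficient counterpart (\Cref{cor:E1A2algtamecoassembly}) by the standard device of inverting a $v_{n+1}$-self map one power at a time, using the LQ property to control the $v_{n+1}$-Bockstein. First I would fix a $v_{n+1}$-self map $v\colon \Sigma^{|v|} V \to V$ on our chosen type $n+1$ complex $V$ and form, for each $a\geq 0$, a finite type $n+2$ complex $V/v^a$. Since $R$ satisfies the height $n$ LQ property, $V/v^a \otimes \THH(R)$ is bounded in the cyclotomic $t$-structure, say in a range $[c_a, b_a]$; in fact one can take $V/v^a$ as the single type $n+2$ complex in \Cref{cor:E1A2algtamecoassembly}. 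Applying that corollary we get, for $k \geq k_{R, V/v^a, n}$, a commuting square of cyclotomic spectra
\[ \begin{tikzcd}
  (V/v^a) \otimes \THH(R^{hp^k\Z}) \ar[r] \ar[d, "\cong"] &
  (V/v^a) \otimes \THH(R)^{hp^k\Z} \ar[d, "\cong"] \\
  (V/v^a) \otimes \THH(R^{B\Z}) \ar[r] &
  (V/v^a) \otimes \THH(R)^{B\Z}.
\end{tikzcd} \]
Applying the (implicitly $p$-completed) functor $\TC$, which is exact, we obtain the analogous square with $\THH$ replaced by $\TC$; the vertical maps remain isomorphisms and the horizontal maps are still the coassembly maps (each square of cyclotomic spectra yields one after $\TC$, as already noted in the excerpt after \Cref{thm:cohconstgen}).

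Next I would pass from $V/v^a$-coefficients to $V$-coefficients with $v$ inverted. The key input is \Cref{lem:unmodout}: applied to the spectrum $X = V \otimes \TC(-)$ equipped with the self map $v$, whose cofiber $X/v = (V/v)\otimes\TC(-)$ is bounded in a range $[c,b]$ independent of the variance (this boundedness is exactly the height $n$ LQ property for the relevant rings, which for $R^{hp^k\Z}$ holds for $k\gg 0$ by \Cref{thm:E1A2algtame}, and for $R^{B\Z}$, $R$ themselves holds directly), the lemma gives that $\pi_s(X)$ is $v$-periodic for $s > b$ and agrees with $\pi_s(X/v^a)$ for $s < c + a|v|$. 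Taking $a \gg 0$ so that $b < c + a|v|$, we conclude $\pi_*(V\otimes\TC(-)) \cong \pi_*((V/v^a)\otimes \TC(-))$ for $*$ large, and then $v$-periodicity propagates this to all degrees, identifying $V_*\TC(-)[v^{-1}] = T(n+1)_* \otimes \cdots$ with $\pi_*((V/v^a)\otimes\TC(-))$ up to a controlled shift. Crucially, \Cref{lem:unmodout} asserts naturality in maps compatible with the self maps, so the isomorphisms in the square above — which are compatible with $v$ since everything is built functorially from the module structures over $W_k$ and the $v_{n+1}$-self map acts through $V$ — transport to give a commuting square of $\Z[v]$-modules
\[ \begin{tikzcd}
  V_*\TC(R^{hp^k\Z}) \ar[r] \ar[d, "\cong"] &
  V_*\TC(R)^{hp^k\Z} \ar[d, "\cong"] \\
  V_*\TC(R^{B\Z}) \ar[r] &
  V_*\TC(R)^{B\Z}
\end{tikzcd} \]
with the horizontal maps the coassembly maps, as desired. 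The function $k(R,V,v,n)$ is then the maximum of $k_{R, V/v^a, n}$, the constant from \Cref{thm:E1A2algtame} ensuring $R^{hp^k\Z}$ has the LQ property, and the threshold making $a$ large enough.

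\textbf{Main obstacle.} The delicate point is bookkeeping the interaction between the boundedness constants and the power $a$: I need that a \emph{single} choice of $a$ works simultaneously for all four corners of the square and, within each corner, uniformly in $k$. For the homotopy-fixed-point corners $(-)^{hp^k\Z}$ and $(-)^{B\Z}$ this requires knowing that the cyclotomic boundedness range of $(V/v)\otimes\THH(R^{hp^k\Z})$ is bounded independently of $k$ — this is precisely the uniform bound provided by \Cref{thm:E1A2algtame} (the statement there gives the explicit range $[c-1, b+3]$), so the obstacle is really just tracking that this uniformity survives application of $\TC^{hp^k\Z}$ and $(-)^{B\Z}$, i.e. that taking $\Z$- or $p^k\Z$-homotopy fixed points of a uniformly bounded-above tower does not destroy the bounds in the relevant range — which holds because $\TC$ of a cyclotomically bounded spectrum is bounded and homotopy fixed points along $\Z\simeq B\Z$ only shift connectivity by $1$. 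Modulo this uniformity check, the rest is a formal assembly of \Cref{cor:E1A2algtamecoassembly}, \Cref{lem:unmodout}, and \Cref{thm:E1A2algtame}.
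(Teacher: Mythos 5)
Your proposal follows the paper's proof essentially verbatim: establish uniform cyclotomic boundedness of $V/v\otimes\THH$ at all four corners via the LQ property, \Cref{cor:t-amplitude-W}, and \Cref{thm:E1A2algtame}; apply \Cref{cor:E1A2algtamecoassembly} with $V/v^a$ for a single $a\gg 0$; and then use the naturality clause of \Cref{lem:unmodout} to reassemble $V_*\TC$ from the $V/v^a$-coefficient square in low degrees and from $v$-periodicity in high degrees. The only slip is cosmetic: the identification $\pi_s(X)\cong\pi_s(X/v^a)$ holds for $s<c+a|v|$ (low degrees), with periodicity covering $s>b$, rather than ``for $*$ large'' as your wording suggests.
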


\begin{proof}
    
  Since $R$ satisfies the height $n$ LQ property, by applying \Cref{cor:t-amplitude-W}, we learn that $V/v\otimes \THH(R^{B\Z})\cong V/v\otimes W\otimes \THH(R)$ is bounded in the cyclotomic $t$-structure. This means that $V/v\otimes\THH(R^{B\Z})$ and $V/v\otimes\THH(R)^{B\Z}$ are bounded in the range $[b+1,c]$ for some $b,c$.
  Applying \Cref{cor:E1A2algtamecoassembly} we learn that for $k \gg 0$,
  $V/v\otimes \THH(R^{hp^k\Z})$ is bounded in the same range.
    
  Choose an $a \gg 0$, we now apply \Cref{cor:E1A2algtamecoassembly} again,
  this time to obtain a square 
  \[ \begin{tikzcd}
    V/v^a \otimes \THH(R^{hp^k\Z}) \ar[r,""] \ar[d,"\cong"] &
    V/v^a \otimes \THH(R)^{hp^k\Z}\ar[d,"\cong"]\\
    V/v^a \otimes \THH(R^{B\Z})\ar[r,""] &
    V/v^a \otimes \THH(R)^{B\Z}    
  \end{tikzcd} \]
  for $k \gg 0$.
  Expanding this to a cube using the self map $v$ and applying
  \Cref{lem:unmodout} we obtain the desired conclusion.
\end{proof}  
    

\begin{wrn}
  Note that the isomorphism in \Cref{cor:telasscon} is of \textbf{homotopy groups only} and generally cannot be upgraded to a spectra level isomorphism. Indeed in the key example of this paper the assembly map will become an equivalence after $K(n+1)$ localization for the Adams action  but not for the trivial action. 
\end{wrn}

\section{Adams operations on \texorpdfstring{$\BP\langle n\rangle$}{BPn}}
\label{sec:adamsop}
This section, which use no inputs from the other sections of the paper, is devoted to the construction of well-behaved \emph{Adams operations} on $\BPn$.  We will need such Adams operations to disprove the telescope conjecture at height $n+1$ in \Cref{sec:mainthm}. We note that another disproof of the of the telescope conjecture at height $2$ and primes at least $7$ is given in \Cref{sec:ht2} which is independent of this section.


\begin{cnv} \label{cnv:ell}\label{cnv:BPnEn}
  In this section, we do \textit{not} implicitly $p$-complete objects as in much of the rest of the paper.
  
  For the remainder of this section,
  we let $\mdef{\ell} \coloneqq m_p^{\E_1}$ 
  
  As they are important for this section we remind the reader of conventions (\ref{item:BPn})-(\ref{item:Gn}):
  \begin{itemize}
  \item \mdef{$\BPn$} refers to an $\E_3$-$\MU_{(p)}$-algebra form of the truncated Brown--Peterson spectrum as constructed in \cite[\S 2]{hahn2020redshift}.
  \item \mdef{$E_n$} refers to the height $n$ Lubin--Tate theory constructed by Goerss--Hopkins--Miller \cite[Theorem 5.0.2]{ECII}, associated to the (unique up to isomorphism) height $n$ formal group over $\Fpbar$.\qedhere
  \item \mdef{$\G_n$} refers to the height $n$ extended Morava stabilizer group, which acts on $E_n$ and fits into a short exact sequence
$$1 \to \cO_D^{\times} \to \G_n \to \Gal(\FF_p) \to 1$$ where \mdef{$\cO_D^{\times}$} is the units in the maximal order of the division algebra over $\QQ_p$ of Hasse invariant $\frac 1 n$, and $\Gal(\FF_p) \cong \hat{\ZZ}$ is the absolute Galois group of $\FF_p$.
  \end{itemize}
\end{cnv}

Our first goal will be to construct an Adams operation $\Psi^{\ell}$ on the Lubin--Tate theory $E_n$, and the $p$-localized complex bordism spectrum $\MU_{(p)}$.

\begin{cnstr} \label{cnstr:psi-En}
  Let $\mdef{\Psi^{\ell}} \in \G_n$ be the commutative algebra automorphism
  \[ \Psi^{\ell} {\colon}E_n \to E_n, \]
  arising from the action of $\Z_{p}^\times$ on the chosen formal group of height $n$ over $\Fpbar$, that acts on $\pi_{2k} E_n$ by multiplication by $\ell^k$.

  We write $E_n^\Psi \in \CAlg(\Sp_{K(n)}^{B\Z})$ for
  $E_n$ equipped with the $\ZZ$-action whose generating automorphism is $\Psi^{\ell}$.
\end{cnstr}

Next we construct an Adams operation on $\MU$.

\begin{cnstr} \label{cnstr:adams-conj}
  The stable Adams conjecture, as proved in \cite{Friedlander},\footnote{See also  \cite[Lemma 2.2]{clausen2012padic}, \cite[Section 16.2]{norms}, \cite[Appendix A]{bhattacharya2022stable}. The exact statement we use here is \cite[Theorem 1.8]{bhattacharya2022stable}.} provides us with a commuting diagram of infinite loop maps
  \[ \begin{tikzcd}
    \mathrm{BU}_{(p)} \ar[rr, "\Psi^{\ell}"] \ar[dr, "J"] & &
    \mathrm{BU}_{(p)} \ar[dl, "J"] \\
    & \mathrm{BSL}_1(\mathbb{S}_{(p)}). & 
  \end{tikzcd} \]
  Thomifying this diagram we obtain a commutative algebra automorphism 
  \[ \Psi^{\ell} {\colon}\MU_{(p)} \to \MU_{(p)} \]
  that we call the \mdef{Adams operation on $\MU_{(p)}$}.
  Note that $\Psi^\ell$ depends on the choice of homotopy in the diagram of infinite loop spaces above. We fix a choice of this homotopy for the remainder of the paper. 

  We write $\MU_{(p)}^\Psi \in \CAlg(\Sp^{B\Z})$ for
  $\MU_{(p)}$ equipped with the $\ZZ$-action whose generating automorphism is $\Psi^{\ell}$.
\end{cnstr}

We now turn to the main goal of the section: constructing an Adams operation $\Psi^\ell$ on 
each of the truncated Brown--Peterson spectrum $\BPn$.

\begin{thm} \label{thm:Adams-ops-exist}
  The $(\E_1 \otimes \A_2)$-$\MU_{(p)}$-algebra underlying
  the $\E_3$-$\MU_{(p)}$-algebra $\BPn$ admits
  a lift to an object
  \[ \BPn^\Psi \in \Alg_{\E_1 \otimes \A_2}\left(\Mod(\Sp^{B\Z}; \MU_{(p)}^{\Psi}) \right) \]
  such that
  \begin{enumerate}
  \item there is a map $ \iota {\colon}\BPn^\Psi \to E_n^\Psi $ in
    $\Alg_{\E_1}\left( \Mod(\Sp^{B\Z}; \MU_{(p)}^\Psi) \right)$,
  \item an identification
    \[ \begin{tikzcd}
      L_{T(n)} \BPn^\Psi \ar[r, "\iota"] \ar[d, "\cong"] & E_n^\Psi \ar[d, "\cong"] \\
      (E_n^\Psi)^{h \mu_{p^n-1} \rtimes \hat{\Z}} \ar[r] & E_n^\Psi
    \end{tikzcd} \]
    in $\Alg_{\E_1}(\Sp^{B\Z})$ where $\mu_{p^n-1} \rtimes \hat{\Z} \subseteq \G_n$
    fits into a map of exact sequences
    \[ \begin{tikzcd}
      \mu_{p^n-1} \ar[r, hook] \ar[d, hook] &
      \mu_{p^n-1} \rtimes \hat{\Z} \ar[r, two heads] \ar[d, hook] &
      \hat{\Z} \ar[d, "\cong"] \\
      \cO_D^\times \ar[r, hook] & 
      \G_n \ar[r, two heads] &
      \Gal(\F_p), 
    \end{tikzcd} \]
  \item and the underlying $\Z$-action on $\BPn$ is locally unipotent in $p$-complete spectra after $p$-completion.
  \end{enumerate}  
\end{thm}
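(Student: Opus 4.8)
\textbf{Proof sketch for \Cref{thm:Adams-ops-exist}.} The plan is to obtain $\BPn^\Psi$ by base change of $\BPn$ along the $\Z$-equivariant commutative algebra map $\MU_{(p)} \to \MU_{(p)}^\Psi$ of \Cref{cnstr:adams-conj}, using the freeness of the $\MU_{(p)}$-module underlying $\BPn$. More precisely, recall from \cite[\S 2]{hahn2020redshift} that $\BPn$ is constructed from $\MU_{(p)}$ as an iterated quotient (by a regular sequence of generators of the kernel of $\MU_{(p)*}\to \BPn_*$) retaining an $\E_3$-$\MU_{(p)}$-algebra structure; in particular its underlying $\MU_{(p)}$-module is a retract of a (countable) coproduct of shifts of $\MU_{(p)}$, and one only needs the $\E_1\otimes\A_2$-part of this algebra structure together with the $\MU_{(p)}$-linearity. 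First I would record that the forgetful functor $\Mod(\Sp^{B\Z};\MU_{(p)}^\Psi)\to \Mod(\Sp;\MU_{(p)})$ (restricting the $\Z$-action away) together with base change along $\MU_{(p)}\to\MU_{(p)}^\Psi$ assembles into an adjunction on $\Alg_{\E_1\otimes\A_2}$, and set $\BPn^\Psi \coloneqq \MU_{(p)}^\Psi \otimes_{\MU_{(p)}} \BPn$ formed in $\Alg_{\E_1\otimes\A_2}(\Mod(\Sp^{B\Z};\MU_{(p)}^\Psi))$. Because $\Psi^\ell$ acts trivially on $\pi_0\MU_{(p)}$ and unipotently on each $\pi_{2k}\MU_{(p)}$ (it acts by $\ell^k$ on the canonical generator of the Lazard ring's weight, and more generally unipotently on the associated graded of the coniveau/weight filtration — this is where I would have to be a little careful), the underlying spectrum of $\BPn^\Psi$ is still $\BPn$.

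The map $\iota$ in (1) is then obtained as follows: the canonical map $\BPn\to E_n$ of \cite{hahn2020redshift} is a map of $\E_1$-$\MU_{(p)}$-algebras (the $\MU_{(p)}$-algebra structure on $E_n$ coming from a chosen complex orientation), and by construction of $\Psi^\ell$ on $E_n$ (\Cref{cnstr:psi-En}) and on $\MU_{(p)}$ (\Cref{cnstr:adams-conj}) this orientation map intertwines the two Adams operations — this compatibility is exactly the statement that the element $\Psi^\ell\in\G_n$ acts on $E_{n*}$ by $\ell^k$ in weight $2k$, matching the Adams operation on $\MU_{(p)*}$ under the orientation, so the square of $\Z$-actions commutes. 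Base-changing $\BPn\to E_n$ along $\MU_{(p)}\to\MU_{(p)}^\Psi$ and using that $E_n$ is already a $\MU_{(p)}^\Psi$-algebra (via the orientation, $\Z$-equivariantly) produces $\iota\colon \BPn^\Psi \to E_n^\Psi$ in $\Alg_{\E_1}(\Mod(\Sp^{B\Z};\MU_{(p)}^\Psi))$, hence in $\Alg_{\E_1}(\Sp^{B\Z})$ after forgetting. For part (2), one forgets the $\Z$-action momentarily: the identification $L_{T(n)}\BPn \cong E_n^{h\mu_{p^n-1}\rtimes\hat\Z}$ (with $\hat\Z$ generated by the Frobenius and $\mu_{p^n-1}$ the Teichmüller lifts) is the content of \cite{hahn2020redshift} (building on Hovey--Strickland and the Devinatz--Hopkins description), and since $\Psi^\ell$ lies in $\G_n$ and commutes with the subgroup $\mu_{p^n-1}\rtimes\hat\Z$ (the Adams operation comes from the central-ish $\Z_p^\times$-action, which visibly commutes with Teichmüller lifts and with the Frobenius), conjugation by $\Psi^\ell$ is trivial on that subgroup and one gets a $\Z$-action on the fixed-point spectrum $E_n^{h\mu_{p^n-1}\rtimes\hat\Z}$ realizing the claimed square. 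Here I would take care to note that the $T(n)$-localization is monoidal enough that $L_{T(n)}$ of the base-changed algebra $\BPn^\Psi$ is computed by $L_{T(n)}\BPn$ with its induced action, matching $E_n^\Psi$ with fixed points as indicated, so the diagram in (2) commutes in $\Alg_{\E_1}(\Sp^{B\Z})$.

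Finally, for part (3): after $p$-completion, local unipotence of the $\Z$-action on $\BPn$ in $\Sp$ is equivalent (by \Cref{lem:check-unip}, or simply by definition) to the $\Z$-action on $\pi_*(\BPn/p)$ being locally unipotent. But $\pi_*(\BPn/p) \cong \F_p[v_1,\dots,v_n]$ is a polynomial ring, the action is by graded ring automorphisms fixing $\F_p$, and it is unipotent on each graded piece because $\Psi^\ell$ acts unipotently on $\pi_*(\MU_{(p)}/p)$ — indeed $\Psi^\ell$ acts on $\pi_{2k}\MU_{(p)}$ by an operator that is upper triangular with $\ell^{(\cdot)}$ on the diagonal with respect to a monomial basis, and reduces mod $p$ and mod decomposables to something unipotent since $\ell$ acts by $1$ after we pass to $\F_p$-coefficients in the relevant graded pieces (the relevant eigenvalues are $\ell^{j}$, and the action relevant to local unipotence is the one induced on $\gr$, which in the $v_i$ is by a unipotent matrix). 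Concretely, each $\pi_m(\BPn/p)$ is a finite-dimensional $\F_p$-vector space on which the generator of $\Z$ acts by a matrix all of whose eigenvalues are $1$ (equivalently $(\text{generator}-1)$ is nilpotent on each $\pi_m$), which is exactly local unipotence. The main obstacle I anticipate is verifying this unipotence claim cleanly — i.e.\ pinning down that the Adams operation $\Psi^\ell$ on $\MU_{(p)}$ really does act unipotently (not just with eigenvalues powers of $\ell$) on the relevant subquotients of $\pi_*$ after reducing mod $p$, so that the base-change construction genuinely lands in locally unipotent $\Z$-spectra and so that the whole construction is consistent with the hypotheses of \Cref{sec:tame}. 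Everything else is formal manipulation of base change, adjunctions, and the already-established identifications from \cite{hahn2020redshift}.
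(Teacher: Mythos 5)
Your proposal has a genuine gap at its foundation. The construction ``$\BPn^\Psi \coloneqq \MU_{(p)}^\Psi \otimes_{\MU_{(p)}} \BPn$'' is not defined: there is no $\Z$-equivariant algebra map $\MU_{(p)} \to \MU_{(p)}^\Psi$ with trivial action on the source, because $\Psi^\ell$ is not homotopic to the identity (it acts by $\ell^k$ on $\pi_{2k}$). Under the identification $\Mod(\Sp^{B\Z};\MU_{(p)}^\Psi) \cong \Mod(\MU_{(p)})^{h\Z}$, what the theorem actually demands is a homotopy fixed point for the action of $(\Psi^\ell)_*$ on $\Alg_{\E_1\otimes\A_2}(\Mod(\MU_{(p)}))$, i.e.\ an equivalence $\BPn \simeq (\Psi^\ell)_*\BPn$ of $(\E_1\otimes\A_2)$-$\MU_{(p)}$-algebras. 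That equivalence is the entire content of the theorem and is not formal; the paper produces it by an obstruction-theoretic cell-attachment argument ($\E_1$-cells relative to $\MU_{(p)}$, with obstructions controlled by mapping into $E_n$ and by the evenness/weight decomposition of $\H(\BPn^\Psi)$), first for the $\E_1$-structure and then, by a second obstruction argument, for the $\A_2$-multiplication. Relatedly, your assertion that the underlying $\MU_{(p)}$-module of $\BPn$ is a retract of a sum of shifts of $\MU_{(p)}$ is false ($\BPn$ is a quotient by a regular sequence, not a projective module), so even the weaker module-level base change you gesture at does not go through.

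The second gap is in part (1): you claim the orientation $\MU_{(p)} \to E_n$ ``intertwines the two Adams operations'' because $\Psi^\ell \in \G_n$ acts on $\pi_{2k}E_n$ by $\ell^k$, matching the action on $\pi_{2k}\MU_{(p)}$. Agreement on homotopy groups does not produce a homotopy-coherent intertwining; making the complex orientation $\Z$-equivariant is exactly where the stable Adams conjecture enters (to equivariantly nullhomotopy the composite $\mathrm{BU}_{(p)} \to \mathrm{BSL}_1(\Ss_{(p)}) \to \mathrm{BSL}_1(E_n)$), followed by a careful analysis of the homotopy groups of the relevant equivariant mapping spaces to show that a nonequivariant nullhomotopy refines to an equivariant one. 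Your part (3) is essentially correct and is the easy step, not the hard one: since $\Psi^\ell$ acts on $\pi_{2k}\BPn$ by $\ell^k$ and the homotopy is concentrated in degrees with $k \equiv 0 \pmod{p-1}$, the action mod $p$ is by $1$ on each homotopy group, giving local unipotence.
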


This theorem is the only result from this section used in later sections of the paper.

\begin{rmk}
  It would be interesting to know whether the Adams operation of \Cref{thm:Adams-ops-exist} underlies an $\mathbb{E}_2$-algebra or $\mathbb{E}_3$-algebra automorphism. If so, one could equip $\THH(\BPn^{h\ZZ})$ with an $\mathbb{E}_1$-algebra or $\mathbb{E}_2$-algebra structure. As it is, \Cref{thm:Adams-ops-exist} equips $\THH(\BPn^{h\ZZ})$ with a unital multiplication map, but provides no guarantee of either homotopy associativity or commutativity.
\end{rmk}



\begin{rmk}
  Complex conjugation on $\MU_{(p)}$ can be viewed as the action of the Adams operation $\Psi^{-1}$.  This is not the same as the action of $\Psi^{\ell}$ we study here, even at $p=2$, but it may be noteworthy that the interaction between complex conjugation, Morava $E$-theory, and $\BPn$ has received a great deal of prior attention.  This interaction is at the heart of the Hill--Hopkins--Ravenel approach to Kervaire invariant $1$ and the computation of $C_{2^{k}}$ fixed points of Morava $E$-theory \cite{HuKriz,HHR,hahn2020real,beaudry2021models}.  From the point of view of Real homotopy theory, $\BPn$ with its action by $\Psi^{-1}$ is expected to be an $\mathbb{E}_{2\sigma+1}$-$\MU_{\mathbb{R}}$-algebra, and in particular an $\mathbb{E}_{2\sigma+1}$-algebra \cite[Remark 1.0.14]{hahn2020redshift}. Since $2\sigma+1$ contains only one copy of the trivial representation, this would not imply a $C_2$-action on $\BPn$ by $\mathbb{E}_2$-algebra maps. However, $\Z$-equivariantly $S^{2\sigma} \cong S^2$, so $\Psi^{-1}$ itself would be an $\mathbb{E}_3$-algebra map.
\end{rmk}

\subsection{An \texorpdfstring{$\E_4$}{E4} complex orientation of Lubin--Tate theory}

Here, we explain how the $\E_3$-$\MU_{(p)}$-algebra structure on $\BP\langle n\rangle$ can be used to consrtruct a map of $\E_3$-$\MU_{(p)}$-algebras $\BP\langle n \rangle \to E_n$. We use this along with the self centrality of $E_n$ to construct an $\E_4$-algebra map $\MU_{(p)} \to E_n$. 

\begin{cnstr}\label{cnstr:root}
  In \cite[Proposition 2.6.2]{hahn2020redshift}, it is shown that $\BPn$ admits
  the structure of an $\mathbb{E}_{3}$-$\MU_{(p)}[y]$-algebra, where $y$ in degree $2p^{n}-1$ acts by $v_n$.
  Here, \mdef{$\MU_{(p)}[y]$} is the Thom spectrum of a composite of commutative monoid maps
  \[ \begin{tikzcd}
    \mathbb{N} \arrow[r,"p^n-1"] & \mathbb{Z} \arrow[r] & \mathrm{Pic}(\MU_{(p)}). 
  \end{tikzcd} \]
  Let $\MU_{(p)}[y^{1/(p^{n}-1)}]$ denote the Thom spectrum of the composite
  \[ \begin{tikzcd}
    \mathbb{N} \arrow[r,"1"] & \mathbb{Z} \arrow[r] & \mathrm{Pic}(\MU_{(p)}),
  \end{tikzcd} \]
  the natural commutative $\MU_{(p)}$-algebra map
  $\MU_{(p)}[y] \to \MU_{(p)}[y^{1/p^{n}}]$
  allows us to construct an $\E_3$-$\MU_{(p)}$-algebra
  \[ \BPn \left[ v_n^{1/(p^n-1)} \right] \coloneqq \MU_{(p)}[y^{1/(p^{n}-1)}] \otimes_{\MU_{(p)}[y]} \BP\langle n \rangle.\hfill\qedhere\]
\end{cnstr}

\begin{lem} \label{lem:root-galois}
  The commutative algebra map 
  \[ \W(\Fp) \otimes \MU_{(p)}[y^{\pm1}]
  \to \colim_k \W(\F_{p^k}) \otimes \MU_{(p)}[y^{\pm 1/p^{n}}] \]
  obtained from the map from \Cref{cnstr:root}
  by inverting $y$ and tensoring up to $\W(\Fpbar)$ on the target
  is a $\mu_{p^n-1} \rtimes \hat{\Z}$ pro-Galois extension.
\end{lem}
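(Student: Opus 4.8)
\textbf{Proof plan for \Cref{lem:root-galois}.}

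The plan is to verify the axioms for a pro-Galois extension by identifying the extension with a filtered colimit of finite Galois extensions, each of which splits off as a product after base change. First I would reduce to the two ``building blocks'': the extension by roots of $y$ (which should account for the $\mu_{p^n-1}$ factor) and the extension by Witt vectors of finite fields (which should account for the $\hat{\Z} \cong \Gal(\F_p)$ factor). Concretely, $\colim_k \W(\F_{p^k}) \otimes \MU_{(p)}[y^{\pm 1/p^n}]$ can be rewritten, using that $\W(-)$ commutes with the relevant colimits and that $\MU_{(p)}[y^{\pm 1/p^n}]$ is obtained by inverting $y$ in the Thom spectrum on $\NN \xrightarrow{1} \ZZ \to \Pic(\MU_{(p)})$, as the colimit over $(k,m)$ (with $m$ ranging over divisors of $p^n-1$... here just $m \mid p^n-1$ since we only take a $(p^n-1)$-st root) of $\W(\F_{p^k}) \otimes \MU_{(p)}[y^{\pm 1/m}]$. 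So the plan is to treat $\W(\Fpbar) \otimes \MU_{(p)}[y^{\pm 1}] \to \W(\Fpbar) \otimes \MU_{(p)}[y^{\pm 1/(p^n-1)}]$ as a $\mu_{p^n-1}$-Galois extension, and $\W(\Fp) \otimes \MU_{(p)}[y^{\pm 1}] \to \colim_k \W(\F_{p^k}) \otimes \MU_{(p)}[y^{\pm 1}]$ as a $\hat\Z$-pro-Galois extension, then combine.

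For the first block: the map $\ZZ \xrightarrow{\cdot(p^n-1)} \ZZ \to \Pic(\MU_{(p)})$ has, upon inverting $y$, the associated Thom spectrum $\MU_{(p)}[y^{\pm 1}]$, which is $\MU_{(p)}[\ZZ]$ as a group-ring (an $\MU_{(p)}$-algebra), and similarly $\MU_{(p)}[y^{\pm 1/(p^n-1)}] \simeq \MU_{(p)}[\tfrac{1}{p^n-1}\ZZ] \simeq \MU_{(p)}[\ZZ]$; the inclusion is the map of group rings induced by $\ZZ \xhookrightarrow{\cdot(p^n-1)} \ZZ$, whose cofiber of index-$(p^n-1)$ subgroup inclusion is the classical example of a $\mu_{p^n-1}$-Galois extension of $\E_\infty$-rings (this is a standard Kummer-type extension; see Rognes, and it is a faithful $G$-Galois extension by the Thom isomorphism and the fact that the quotient $\ZZ/(p^n-1)\ZZ \cong \mu_{p^n-1}^\vee$ is the character group). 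One checks the Galois conditions: $\MU_{(p)}[y^{\pm1/(p^n-1)}]$ is dualizable over $\MU_{(p)}[y^{\pm1}]$ (it is free of rank $p^n-1$), and the canonical map $\MU_{(p)}[y^{\pm1/(p^n-1)}] \otimes_{\MU_{(p)}[y^{\pm1}]} \MU_{(p)}[y^{\pm1/(p^n-1)}] \to \prod_{\mu_{p^n-1}} \MU_{(p)}[y^{\pm1/(p^n-1)}]$ is an equivalence, which reduces to the discrete statement $\ZZ[\tfrac1{p^n-1}\ZZ] \otimes_{\ZZ[\ZZ]} \ZZ[\tfrac1{p^n-1}\ZZ] \cong \prod_{\ZZ/(p^n-1)} \ZZ[\tfrac1{p^n-1}\ZZ]$, i.e. the standard Kummer computation. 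This block survives base change along $\W(\Fp) \to \W(\Fpbar)$ since $\W(\Fpbar)$ contains enough roots of unity; alternatively, one can perform this base change first so that $\mu_{p^n-1} \subset \W(\Fpbar)^\times$ and the extension becomes a genuine $\mu_{p^n-1}$-Galois extension in the algebraic sense.

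For the second block: $\W(\Fp) \to \colim_k \W(\F_{p^k}) = \W(\Fpbar)$ is a $\hat{\Z}$-pro-Galois extension of $\E_\infty$-rings, being $\W$ applied to the (pro-)Galois extension $\Fp \to \Fpbar$ with group $\Gal(\Fp) \cong \hat\Z$, and $\W$ sends (pro-)Galois extensions of perfect $\F_p$-algebras to (pro-)Galois extensions (the relevant base-change/idempotent-splitting statements lift along $\W$ because $\W$ is symmetric monoidal on perfect rings and fully faithful, cf.\ \Cref{prop:witt-and-tilt}); tensoring with $\MU_{(p)}[y^{\pm1}]$ preserves this, as tensoring with a fixed $\E_\infty$-ring preserves the Galois conditions. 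The main obstacle will be assembling the two factors into a single $\mu_{p^n-1} \rtimes \hat\Z$-pro-Galois extension with the \emph{correct semidirect-product action}: one must check that the Frobenius (the $\hat\Z$-factor) acts on $\mu_{p^n-1} \subset \W(\Fpbar)^\times$ by $\zeta \mapsto \zeta^p$ (Frobenius on roots of unity), matching the conjugation action in $\G_n$, and that the two extensions are ``linearly disjoint'' in the appropriate $\infty$-categorical sense so that the product of their Galois data is the Galois datum of the composite. I would handle this by working with the explicit descriptions above: the composite extension is $\colim_{k,m}\W(\F_{p^k}) \otimes \MU_{(p)}[y^{\pm1/m}]$, filtered over pairs, with the finite stages being honest $(\mu_m \times \mathrm{Gal}(\F_{p^k}/\F_p))$-Galois --- wait, the group at finite stage $(k,m=p^n-1$ once, say$)$ is $\mu_{p^n-1} \rtimes \Gal(\F_{p^k}/\F_p)$ since the Galois group of $\F_p \to \F_{p^k}$ acts on the roots of unity in $\W(\F_{p^k})$ --- and verifying the Galois conditions stagewise then passing to the colimit (Galois conditions for pro-Galois extensions are checked on the filtered system, cf.\ Rognes's framework, since dualizability and the product decomposition are detected after any faithful base change and the colimit of Galois extensions along a compatible system is pro-Galois). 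Finally, I would match the group $\mu_{p^n-1} \rtimes \hat\Z$ with its image in $\G_n$ under \Cref{thm:Adams-ops-exist}(2), using that $\cO_D^\times$ contains $\mu_{p^n-1} = W(\F_{p^n})^\times[p^n-1]$ (the Teichm\"uller lifts) and that the quotient map to $\Gal(\F_p)$ restricts to an isomorphism on the $\hat\Z$-factor by construction of the Adams operation. The one genuinely delicate point is the compatibility of the chosen homotopy in \Cref{cnstr:adams-conj} / \Cref{cnstr:root} with this splitting, but for the purposes of \emph{this} lemma --- which only asserts the extension is pro-Galois with the named group --- it suffices to exhibit the group action, and the semidirect structure is forced by the arithmetic (Frobenius on $\mu_{p^n-1}$).
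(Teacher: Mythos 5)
Your route is correct in outline but genuinely different from the paper's. The paper's proof is two lines: it observes that on homotopy groups the map $\pi_*A \to \pi_*B$ is already a $\mu_{p^n-1}\rtimes\hat{\Z}$-pro-Galois extension of graded commutative rings (the only input being that $\pi_0\W(\F_{p^n})$ contains a primitive $(p^n-1)$-st root of unity, so the Kummer and unramified parts are visible algebraically), and then invokes Rognes's realization result \cite[Corollary 10.1.5]{rognesgalois} to lift this algebraic pro-Galois extension, together with its coherent $G$-action, to the level of commutative ring spectra. You instead build the extension by hand as a composite of a Kummer block (roots of $y$, group $\mu_{p^n-1}$) and an unramified block ($\W$ of $\F_p\to\Fpbar$, group $\hat{\Z}$), verify the Galois axioms stagewise, and assemble the semidirect product. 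What the paper's approach buys is that all the coherence you correctly flag as delicate — promoting the $\mu_{p^n-1}$-action on the Thom spectrum to an $\E_\infty$-action, and gluing the two blocks into a single action of $\mu_{p^n-1}\rtimes\hat{\Z}$ with Frobenius acting on roots of unity by $\zeta\mapsto\zeta^p$ — is absorbed into Rognes's theorem, which produces the topological Galois extension (with its action) uniquely from the algebraic one because the base is even. What your approach buys is an explicit description of the action at each finite stage, at the cost of having to justify the Kummer computation through the Thom isomorphism and the composite-of-Galois-extensions formalism; these steps are all standard but would need to be written out (in particular, the $\E_\infty$-coherence of the $\mu_{p^n-1}$-action is exactly the kind of thing that is easier to obtain from the realization theorem than to construct directly). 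One small point either way: the exponent $y^{\pm 1/p^n}$ in the statement is a typo for $y^{\pm 1/(p^n-1)}$, as you correctly inferred from \Cref{cnstr:root}.
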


\begin{proof}
  Let $A \coloneqq \W(\F_p) \otimes \MU_{(p)}[y^{\pm1}]$,
  and let $B \coloneqq \colim_k \W(\F_{p^k}) \otimes \MU_{(p)}[y^{\pm 1/p^{n}}]$.
  Using the fact that  $\pi_0\W(\mathbb{F}_{p^n})$ has
  a primitive $(p^n-1)^{\mathrm{st}}$ root of unity
  we can read off that the map $\pi_*A \to \pi_*B$ is
  a graded $\mu_{p^n-1} \rtimes \hat{\Z}$-pro-Galois extensions of graded commutative rings.
  Using \cite[Corollary 10.1.5]{rognesgalois} (as $k$ varies) we lift this
  to the structure of a $\mu_{p^n-1} \rtimes \hat{\Z}$-pro-Galois extension
  on the map $A \to B$.
\end{proof}

%
%

The following proposition was first observed as \cite[Example 8.7]{rootadjunction}:


\begin{prop} \label{prop:detectEn}
  There is an identification of underlying $\E_3$-algebras
  \[ \begin{tikzcd}
    L_{T(n)}\BPn \ar[r] \ar[d, "\cong"] &
    L_{T(n)}\left( \W(\Fpbar) \otimes \BPn \left[ v_n^{1/(p^n-1)} \right] \right)
    \ar[d, "\cong"] \\
    E_n^{h \mu_{p^n-1} \rtimes \hat{\Z}} \ar[r] & E_n,
  \end{tikzcd} \]
  where $\mu_{p^n-1} \rtimes \hat{\Z} \subseteq \G_n$
  fits into a map of exact sequences 
  \[ \begin{tikzcd}
    \mu_{p^n-1} \ar[r, hook] \ar[d, hook] &
    \mu_{p^n-1} \rtimes \hat{\Z} \ar[r, two heads] \ar[d, hook] &
    \hat{\Z} \ar[d, "\cong"] \\
    \cO_D^\times \ar[r, hook] & 
    \G_n \ar[r, two heads] &
    \Gal(\F_p). 
  \end{tikzcd} \]
\end{prop}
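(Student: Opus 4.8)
\textbf{Proof plan for Proposition~\ref{prop:detectEn}.} The strategy is to combine the pro-Galois descent statement of Lemma~\ref{lem:root-galois} with the Goerss--Hopkins--Miller identification of $\Sp_{K(n)}$-local descent data for $E_n$. First I would $T(n)$-localize the $\mu_{p^n-1}\rtimes\hat{\Z}$-pro-Galois extension of Lemma~\ref{lem:root-galois}; since $T(n)$-localization is symmetric monoidal and $L_{T(n)} = L_{K(n)}$ on the relevant rings (by the fact that $\W(\Fpbar)\otimes \MU_{(p)}[y^{\pm 1/p^n}]$ is a Landweber exact $\MU_{(p)}$-algebra of height exactly $n$), the extension $L_{K(n)}\bigl(\W(\Fp)\otimes \MU_{(p)}[y^{\pm 1}]\bigr) \to L_{K(n)}\bigl(\colim_k \W(\F_{p^k})\otimes \MU_{(p)}[y^{\pm 1/p^n}]\bigr)$ remains $\mu_{p^n-1}\rtimes\hat{\Z}$-pro-Galois, now in $\Sp_{K(n)}$. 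The key input is then that the target of this extension is equivalent to $E_n$ as an $\E_\infty$-ring: indeed, $\W(\Fpbar)\otimes \MU_{(p)}[y^{\pm 1/p^n}]$ has homotopy a graded Landweber exact ring with a height-$n$ formal group over $\Fpbar$ after reduction mod $(p,v_1,\dots,v_{n-1})$, so its $K(n)$-localization is a form of Lubin--Tate theory; after checking the formal group is (up to isomorphism over $\Fpbar$) the chosen one, one gets an equivalence with $E_n$, and the residual Galois group acts through an embedding $\mu_{p^n-1}\rtimes\hat{\Z}\hookrightarrow \G_n$ realizing exactly the displayed map of short exact sequences (the $\mu_{p^n-1}$ factor acting by roots of unity scaling $y^{1/p^n}$, hence scaling the coordinate, and the $\hat{\Z}=\Gal(\Fpbar/\Fp)$ factor acting through the Witt-vector Frobenius).

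Granting this, Galois descent (Rognes, \cite{rognesgalois}) for the pro-Galois extension gives
\[
L_{K(n)}\bigl(\W(\Fp)\otimes\MU_{(p)}[y^{\pm 1}]\bigr)\ \simeq\ E_n^{h\mu_{p^n-1}\rtimes\hat{\Z}}
\]
as $\E_\infty$-rings, and in particular as $\E_3$-rings. It remains to identify the left-hand side with $L_{T(n)}\BPn$. For this I would use that $\BPn\bigl[v_n^{1/(p^n-1)}\bigr]\coloneqq \MU_{(p)}[y^{1/(p^n-1)}]\otimes_{\MU_{(p)}[y]}\BPn$ has, after inverting $y$ and $K(n)$-localizing, homotopy groups $\pi_*\bigl(L_{K(n)}\BPn[v_n^{\pm 1/(p^n-1)}]\bigr) \cong \W(\Fpbar)_*\otimes \MU_{(p)}[y^{\pm 1/p^n}]_*$ — the point being that inverting $v_n$ in $\BPn$ and then $K(n)$-localizing kills $v_0,\dots,v_{n-1}$ and the resulting ring is, up to the root adjunction, the Landweber-exact ring above. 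Thus the right-hand map in the displayed square is identified with the unit of the pro-Galois extension after $K(n)$-localization, and taking homotopy fixed points recovers $L_{T(n)}\BPn \simeq E_n^{h\mu_{p^n-1}\rtimes\hat{\Z}}$ as $\E_3$-algebras, with the square commuting by naturality of $L_{T(n)}$ applied to the $\E_3$-$\MU_{(p)}$-algebra map $\BPn \to \W(\Fpbar)\otimes\BPn[v_n^{1/(p^n-1)}]$ of Construction~\ref{cnstr:root}.

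The main obstacle I anticipate is the bookkeeping for the formal group and the precise identification of the residual Galois action as the claimed embedding $\mu_{p^n-1}\rtimes\hat{\Z}\hookrightarrow\G_n$ sitting over $\Gal(\Fpbar/\Fp)$ — i.e.\ checking that the $\mu_{p^n-1}$ acting by roots of unity on $y^{1/p^n}$ and the Frobenius on $\W(\Fpbar)$ land inside $\cO_D^\times\rtimes\Gal$ compatibly, and that no extra identifications collapse the group. This is essentially the content of the observation recorded as \cite[Example 8.7]{rootadjunction}, which I would cite; the verification reduces to a Lubin--Tate moduli computation identifying the automorphisms of the adjoined-root formal group that are visible through this construction. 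Everything else — the pro-Galois descent, the $\E_3$-structure tracking, the commutativity of the square — is formal once the Landweber-exactness and height assertions are in hand.
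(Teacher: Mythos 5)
Your overall architecture matches the paper's: start from the pro-Galois extension of Lemma~\ref{lem:root-galois}, identify the big ring with $E_n$ using Landweber exactness plus the essential uniqueness of Lubin--Tate theory, and then run Galois descent to recover the fixed points. But there is a genuine error in how you connect the $\MU$-level extension to $\BPn$. You apply $K(n)$-localization directly to the extension
\[
\W(\F_p)\otimes\MU_{(p)}[y^{\pm1}]\ \longrightarrow\ \colim_k \W(\F_{p^k})\otimes\MU_{(p)}[y^{\pm 1/(p^n-1)}]
\]
and assert that the localized target is $E_n$ and the localized source is $L_{T(n)}\BPn$. Neither is true: $K(n)$-localization completes at the Landweber ideal $(p,v_1,\dots,v_{n-1})$ and inverts $v_n$, it does not kill $v_0,\dots,v_{n-1}$, and it certainly does not kill the infinitely many remaining polynomial generators of $\MU_{(p)*}$. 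The homotopy of $L_{K(n)}\bigl(\W(\Fpbar)\otimes\MU_{(p)}[y^{\pm1/(p^n-1)}]\bigr)$ is a completion of the full ring $\W(\Fpbar)\otimes\MU_{(p)*}[y^{\pm1/(p^n-1)}]$, which is vastly larger than $E_{n*}$. Likewise your claimed isomorphism $\pi_*\bigl(L_{K(n)}\BPn[v_n^{\pm1/(p^n-1)}]\bigr)\cong \W(\Fpbar)_*\otimes\MU_{(p)}[y^{\pm1/(p^n-1)}]_*$ is false on both sides (the left-hand side is the $(p,\dots,v_{n-1})$-completion of $\Z_{(p)}[v_1,\dots,v_{n-1},v_n^{\pm 1/(p^n-1)}]$ and has no $\W(\Fpbar)$ in it). As written, the step identifying the source of your descent with $L_{T(n)}\BPn$ does not go through.

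The missing move, and the one the paper makes, is to \emph{base-change} the Galois extension of Lemma~\ref{lem:root-galois} along the $\E_3$-$\MU_{(p)}[y]$-algebra map $\MU_{(p)}[y]\to\BPn$ of Construction~\ref{cnstr:root} (with $y$ acting by $v_n$). Galois extensions are stable under base change, so one obtains a $\mu_{p^n-1}\rtimes\hat{\Z}$-pro-Galois extension $A=L_{T(n)}\BPn\to B=L_{T(n)}\bigl(\W(\Fpbar)\otimes\BPn[v_n^{1/(p^n-1)}]\bigr)$ with $A\cong B^{hG}$; only \emph{this} $B$ has the homotopy ring of $E_n$, namely the $(p,\dots,v_{n-1})$-completion of $\W(\Fpbar)[v_1,\dots,v_{n-1},v_n^{\pm1/(p^n-1)}]$. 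One further point you elide: identifying $B$ with $E_n$ as an $\E_3$-algebra (not merely matching homotopy rings of Landweber-exact theories), and knowing that $\pi_0\Aut_{\E_3}(E_n)=\G_n$ is discrete so that $G$ embeds as a subgroup of $\G_n$, both require a rigidity input — the paper uses \cite[Corollary 4.53]{ramzi2023separability} for both. With the base change inserted and that rigidity statement cited, your argument becomes essentially the paper's proof.
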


\begin{proof}
  Let $A \coloneqq L_{T(n)}\BPn$
  and let $B \coloneqq L_{T(n)}\left( \W(\Fpbar) \otimes \BPn \left[ v_n^{1/(p^n-1)} \right] \right) $.  
  The homotopy ring of $A$ is the completion of the graded ring
  $\Z_{(p)}[v_1,\dots, v_{n-1},v_n^{\pm1}]$ at the Landweber ideal $(p,\dots,v_{n-1})$.
  It follows that the homotopy ring of $B$ is obtained from this by
  adjoining a $(p^n-1)^{\mathrm{st}}$-root of $v_{n}$ that lives in degree $2$,
  base changing to $W(\Fpbar)$, and re-completing at $(p,\dots,v_{n-1})$.
	
  Since $B$ is an $\MU_{(p)}$-algebra with homotopy ring satisfying Landweber's criterion, the natural map
  \[ \pi_*(B) \otimes_{\pi_*(\MU_{(p)})} \MU_{(p),*}(X) \to B_*(X) \]
  is an isomorphism, so that $B$ is a Landweber exact homotopy associative and commutative ring. Because its homotopy ring agrees with that of a Lubin--Tate theory, it follows from \cite[Corollary 4.53]{ramzi2023separability} that it agrees as an $\E_3$-algebra with a Lubin--Tate theory, which is $E_n$ since there is one formal group over $\Fpbar$ of height $n$ up to isomorphism.

  Let $G = \mu_{p^n-1} \rtimes \hat{\Z}$.
  From \Cref{lem:root-galois} we obtain an $\E_3$-algebra
  $\mu_{p^n-1} \rtimes \hat{\Z}$-action on
  $B$ over $A$ such that $A \cong B^{hG}$.
  Examining the action on $\pi_*B$ we read off that the map
  $G \to \pi_0\Aut_{\E_3}(B)$ is injective.
  Using \cite[Corollary 4.53]{ramzi2023separability} again we know that 
  the $\E_3$-automorphisms of $E_n$ are the discrete set $\G_n$.
  Viewing $G$ as a subgroup of $\G_n$ through the identification $B \cong E_n$
  and examining the action of $\hat{\Z}$ on $\W(\Fpbar) \subset \pi_0E_n$
  gives the map of exact sequences.
\end{proof}

\begin{cnstr}\label{cnst:BPntoEn}
  As a consequence of \Cref{prop:detectEn}, we obtain a refinement of the 
  underlying $\E_3$-algebra structure on $E_n$ to an $\E_3$-$\MU_{(p)}$-algebra structure
  and a map of $\EE_3$-$\MU_{(p)}$-algebras
  \[ \iota {\colon}\BP\langle n \rangle \to E_n.\hfill \]
  We note that this map is injective on $\pi_*$ by the proof of \Cref{prop:detectEn}.
\end{cnstr}

Recall that for an $\E_m$-algebra $R$, its center $\mathcal{Z}_{\mathbb{E}_m}(R)$ is the terminal $\E_{m+1}$-algebra $A$ equipped with a lift of $R$ to an $\E_m$-$A$-algebra \cite[Section 5.3]{HA} \cite{francis2013tangent}.
In particular, the $\E_3$-$\MU_{(p)}$-algebra structure on $E_n$ corresponds to an $\E_4$-algebra map $\MU_{(p)} \to \mathcal{Z}_{\EE_3}E$.



\begin{prop}\label{prop:selfcentral}
  Let $m\geq2$. The natural $\E_m$-algebra map $\mathcal{Z}_{\E_m}(E_n) \to E_n$ is an isomorphism,
  and the $\E_{m+1}$-algebra structure on $\mathcal{Z}_{\E_m}(E_n)$ agrees with
  the restriction of the commutative algebra structure on $E_n$.
\end{prop}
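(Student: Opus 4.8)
The plan is to prove the statement by reducing to a known self-centrality result for Morava $E$-theory. The key tool is the fact, already invoked in \Cref{prop:detectEn} via \cite[Corollary 4.53]{ramzi2023separability}, that $E_n$ is \'etale (in fact separable, even \emph{formally \'etale}) over the sphere in an appropriate sense, or more precisely that $E_n$ is a \emph{solid} $\E_\infty$-ring at height $n$: the multiplication map $E_n \otimes E_n \to E_n$ exhibits $E_n$ as an idempotent $E_n$-algebra after $K(n)$-localization. Concretely, one wants to know that the unit map $E_n \to \mathcal{Z}_{\E_m}(E_n)$ is an equivalence, and that the resulting $\E_{m+1}$-structure is the one coming from commutativity.

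First I would recall the general mechanism: for any $\E_m$-ring $R$ ($m \geq 1$), the center $\mathcal{Z}_{\E_m}(R)$ receives a canonical $\E_{m+1}$-map from $\mathcal{Z}_{\E_m}(R) \to R$ (the counit of the forgetful/center adjunction of \cite[Section 5.3]{HA}), and if $R$ is $\E_\infty$ then the commutative structure furnishes a section $R \to \mathcal{Z}_{\E_m}(R)$ of $\E_{m+1}$-rings by the universal property. So it suffices to show this section is an equivalence, equivalently that the counit $\mathcal{Z}_{\E_m}(E_n) \to E_n$ is an equivalence. Since $\mathcal{Z}_{\E_m}(E_n)$ can be computed as the endomorphism object $\mathrm{End}_{E_n \otimes_{?} E_n}(\cdots)$ — more precisely, $\mathcal{Z}_{\E_m}(R) = \map_{\int_{S^m} R}(R, R)$, the $\mathbb{E}_m$-Hochschild cohomology / factorization-homology mapping object — I would instead phrase things as: $\mathcal{Z}_{\E_m}(E_n) \simeq \map_{E_n \otimes E_n}(E_n, E_n)$ for $m=2$ (with the higher $m$ cases mapping to this one and it suffices to treat $m = 2$ by naturality of $\mathcal{Z}_{\E_m} \to \mathcal{Z}_{\E_{m-1}} \to \cdots \to \mathcal{Z}_{\E_2}$, noting each is an equivalence once the bottom one is). The main computation is then that $E_n \otimes E_n \to E_n$ is an idempotent $\E_\infty$-$E_n$-algebra after the relevant completion; this is precisely the statement that $E_n$ is a ``self-central'' or solid ring, which follows from the fact (Hopkins--Lurie, \cite{ambidexterity}, or Baker--Richter, or \cite[Corollary 4.53]{ramzi2023separability} combined with the formal \'etaleness) that the space of $\E_\infty$-$E_n$-algebra maps $E_n \to E_n$ is contractible. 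Given that, $\map_{E_n \otimes E_n}(E_n, E_n) \simeq E_n$, and unwinding the identifications shows the $\E_{m+1}$-structure on the center is the commutative one.

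The step I expect to be the main obstacle is carefully justifying the identification $\mathcal{Z}_{\E_m}(E_n) \simeq \map_{E_n \otimes E_n}(E_n, E_n)$ at the level of $\E_{m+1}$-rings (not just spectra) and checking the idempotence claim with the correct completion bookkeeping — one must work $K(n)$-locally or track the $\mathfrak{m}$-adic completion on homotopy, since $E_n \otimes E_n$ is genuinely much larger than $E_n$ before completing, and the center is computed in whichever symmetric monoidal category $E_n$ is being regarded as living in. I would pin this down by observing that the $\E_3$-$\MU_{(p)}$-algebra $E_n$ of \Cref{cnst:BPntoEn} is Landweber exact with homotopy a Lubin--Tate ring, invoke \cite[Corollary 4.53]{ramzi2023separability} to see $\pi_0 \mathrm{Map}_{\E_m\text{-}\mathrm{alg}}(E_n, E_n) = \G_n$ is discrete and that the identity component is contractible, and then run the center computation inside $\Mod_{E_n}$ (or $\CAlg(\Sp_{K(n)})$) where $E_n$ is idempotent. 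The remaining bookkeeping — that the canonical $\E_{m+1}$-refinement agrees with the commutative one — is then forced by the universal property of $\mathcal{Z}_{\E_m}(-)$ together with uniqueness of the section.
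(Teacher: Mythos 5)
Your overall strategy is the same as the paper's — describe the center via factorization homology, collapse everything $K(n)$-locally using the Galois/separability properties of $E_n$, and then get the $\E_{m+1}$-statement from the retraction $E_n \to \mathcal{Z}_{\E_m}(E_n) \to E_n$ supplied by the commutative structure (that last step is exactly the paper's argument and is fine). However, two steps in the middle are genuinely off.

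First, your formula for the center is wrong. The correct statement from Francis is $\mathcal{Z}_{\E_m}(R) \simeq \End_{\int_{\mathbb{R}^m - 0}R}(R)$, and $\mathbb{R}^m - 0 \simeq S^{m-1}$, not $S^m$. For $m=2$ the base ring is therefore $\int_{S^1}E_n = \THH(E_n)$, not $E_n \otimes E_n$; the latter is the base for the $\E_1$-center. So the object you propose to compute, $\map_{E_n\otimes E_n}(E_n,E_n)$, is $\mathcal{Z}_{\E_1}(E_n)$, which is not what the proposition is about (and note the proposition is stated only for $m \geq 2$ — the $\E_1$-center computation would anyway need the extra observation that $E_n$ is an idempotent algebra over $L_{K(n)}(E_n\otimes E_n) \simeq C(\G_n,E_n)$, which again comes down to $L_{K(n)}\THH(E_n)\simeq E_n$). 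Second, your reduction to $m=2$ via the tower $\mathcal{Z}_{\E_m}\to\cdots\to\mathcal{Z}_{\E_2}$ is unjustified: knowing the bottom map $\mathcal{Z}_{\E_2}(E_n)\to E_n$ is an equivalence does not force the restriction maps higher up the tower to be equivalences. The paper avoids both issues by treating all $m\geq 2$ uniformly: $\int_{\mathbb{R}^m-0}E_n \simeq \colim_{S^{m-1}}E_n$ with the colimit taken in commutative algebras, and one shows $E_n$ is \emph{codiscrete} as a $K(n)$-local commutative algebra, i.e.\ constant $\CAlg$-colimits over connected spaces collapse; this reduces to the single statement $L_{K(n)}\THH(E_n)\simeq E_n$, which follows from $E_n$ being a $K(n)$-local pro-Galois extension of $L_{K(n)}\SP$ (Rognes). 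That Galois input is the precise fact you should cite in place of the vaguer ``the space of $\E_\infty$-maps $E_n\to E_n$ is contractible.'' With the corrected center formula and the codiscreteness argument in place of the tower reduction, your outline becomes the paper's proof.
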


\begin{proof}
  It follows from \cite[Proposition 3.16]{francis2013tangent}
  that $\mathcal{Z}_{\E_m}(E_n)$ can be computed as
  \[\End_{\int_{\mathbb{R}^m-0}E_n}E_n,\]
  Using the fact that $E_n$ is $K(n)$-local we obtain an isomorphism
  \[ \map_{\int_{\mathbb{R}^m-0}E_n}(E_n, E_n)
  \cong L_{K(n)} \map_{L_{K(n)}\int_{\mathbb{R}^m-0}E_n}(E_n, E_n), \]
  so for the first claim it suffices to show that the map
  $\int_{\mathbb{R}^m-0}E_n\to E_n$ is a $K(n)$-local isomorphism.
  Using the isomorphism 
  \[ \int_{\mathbb{R}^m-0}E_n \cong \colim_{S^{m-1}}E_n, \]
  from \cite[Corollary 3.27]{francis2013tangent},
  where the colimit is taken in commutative algebras,
  it will suffice to argue that $E_n$ is codiscrete as a $K(n)$-local commutative algebra.
  In order to check that $E_n$ is codiscrete it suffices to show that the map
  $\colim_{S^1} E_n \cong \THH(E_n) \to E_n$ is $K(n)$-locally an isomorphism.
  This follows from the fact that $E_n$ is a $K(n)$-local pro-Galois extension of
  $L_{K(n)}\SP$ \cite[5.4.6]{rognesgalois}.
	
  It remains to check that the $\E_{m+1}$-structure on
  $\mathcal{Z}_{\mathbb{E}_m}(E_n) \cong E_n$ is the usual one.
  The commutative algebra structure on $E_n$ restricts to
  an $\E_m$-$E_n$-algebra structure on $E_n$.
  This gives us an $\E_{m+1}$-algebra map $E_n \to \mathcal{Z}_{\mathbb{E}_m}(E_n)$
  whose composite with the $\mathbb{E}_m$-algebra map
  $\mathcal{Z}_{\mathbb{E}_m}(E_n) \to E_n$
  is the identity $\mathbb{E}_m$-algebra map.
  This means that the $\E_{m+1}$-algebra map $E_n \to \mathcal{Z}_{\mathbb{E}_m}(E_n)$
  is an isomorphism.
\end{proof}

The following corollary is immediate from \Cref{prop:selfcentral} and \Cref{prop:detectEn}.

\begin{cor}\label{cor:E4upgrade}
  The $\E_3$-$\MU_{(p)}$-algebra structure on $E_n$ constructed via \Cref{prop:detectEn}
  arises from an $\E_4$-algebra map $\MU_{(p)} \to E_n$. 
\end{cor}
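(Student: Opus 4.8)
The statement to prove is Corollary \ref{cor:E4upgrade}: the $\E_3$-$\MU_{(p)}$-algebra structure on $E_n$ constructed via \Cref{prop:detectEn} arises from an $\E_4$-algebra map $\MU_{(p)} \to E_n$.

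The plan is as follows. An $\E_3$-$\MU_{(p)}$-algebra structure on $E_n$ is, by the universal property of the center, precisely an $\E_4$-algebra map $\MU_{(p)} \to \mathcal{Z}_{\E_3}(E_n)$; this is the definition of the center recalled just before \Cref{prop:selfcentral}. So the structure from \Cref{prop:detectEn} already gives us such a map. Now apply \Cref{prop:selfcentral} with $m = 3$: the natural $\E_3$-algebra map $\mathcal{Z}_{\E_3}(E_n) \to E_n$ is an isomorphism, and the $\E_4$-algebra structure on $\mathcal{Z}_{\E_3}(E_n)$ agrees with the restriction of the commutative algebra structure on $E_n$. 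Composing the $\E_4$-map $\MU_{(p)} \to \mathcal{Z}_{\E_3}(E_n)$ with this isomorphism yields an $\E_4$-algebra map $\MU_{(p)} \to E_n$ whose underlying $\E_3$-$\MU_{(p)}$-algebra structure is exactly the one from \Cref{prop:detectEn}. Since $3 \geq 2$, the hypothesis of \Cref{prop:selfcentral} is satisfied, so there is essentially nothing further to check.

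Concretely, I would write: By \Cref{prop:detectEn} and \Cref{cnst:BPntoEn} the spectrum $E_n$ carries an $\E_3$-$\MU_{(p)}$-algebra structure. By definition of the $\E_3$-center (see \cite[Section 5.3]{HA}, \cite{francis2013tangent}), this is the same data as an $\E_4$-algebra map $\MU_{(p)} \to \mathcal{Z}_{\E_3}(E_n)$. Applying \Cref{prop:selfcentral} with $m = 3$, the canonical $\E_3$-algebra map $\mathcal{Z}_{\E_3}(E_n) \to E_n$ is an isomorphism under which the $\E_4$-structure on the source is identified with the restriction of the commutative $E_n$-algebra structure on $E_n$. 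Composing gives the desired $\E_4$-algebra map $\MU_{(p)} \to E_n$ refining the $\E_3$-$\MU_{(p)}$-algebra structure.

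There is no serious obstacle here — the corollary is a formal consequence of the two preceding propositions, which is why the excerpt calls it "immediate." The only point requiring any care is bookkeeping: making sure that the $\E_3$-$\MU_{(p)}$-algebra structure obtained by composing the $\E_4$-map $\MU_{(p)} \to \mathcal{Z}_{\E_3}(E_n) \xrightarrow{\sim} E_n$ genuinely matches the one from \Cref{prop:detectEn}, rather than some other structure. This follows because the isomorphism $\mathcal{Z}_{\E_3}(E_n) \xrightarrow{\sim} E_n$ in \Cref{prop:selfcentral} is the \emph{canonical} map (the counit of the center), so the composite recovers the original $\E_3$-$\MU_{(p)}$-algebra structure by the universal property defining the center. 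Hence no genuine verification beyond invoking \Cref{prop:selfcentral} is needed.
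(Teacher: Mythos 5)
Your proof is correct and follows exactly the route the paper intends: the $\E_3$-$\MU_{(p)}$-algebra structure is an $\E_4$-map $\MU_{(p)} \to \mathcal{Z}_{\E_3}(E_n)$ by the universal property of the center, and \Cref{prop:selfcentral} identifies $\mathcal{Z}_{\E_3}(E_n)$ with $E_n$ as $\E_4$-algebras. The paper indeed records this corollary as immediate from \Cref{prop:selfcentral} and \Cref{prop:detectEn}, with the center observation stated just before \Cref{prop:selfcentral}.
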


\subsection{A \texorpdfstring{$\Z$}{Z}-equivariant \texorpdfstring{$\E_3$}{E3} complex orientation of Lubin--Tate theory}

In this subsection, we show that the underlying $\E_3$-algebra map of the
$\E_4$-algebra map $\MU_{(p)} \to E_n$ constructed in the previous subsection
is compatible with the action of the Adams operation $\Psi^{\ell}$.

\begin{prop} \label{prop:E-thy-Adams}\todo{check}
  The underlying $\E_3$-algebra map of any $\E_4$-algebra map
  $\MU_{(p)} \to E_n$
  can be refined to a map of $\Z$-equivariant $\E_3$-algebras
  $\MU_{(p)}^\Psi \to E_n^\Psi$.
\end{prop}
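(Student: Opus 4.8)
The statement to prove is \Cref{prop:E-thy-Adams}: the underlying $\E_3$-algebra map of an $\E_4$-map $\MU_{(p)} \to E_n$ refines to a map of $\Z$-equivariant $\E_3$-algebras $\MU_{(p)}^\Psi \to E_n^\Psi$. The key observation is that the $\Z$-actions on both sides are generated by \emph{commutative algebra automorphisms} (on $\MU_{(p)}$ this is \Cref{cnstr:adams-conj}, which produces $\Psi^\ell$ by Thomifying a diagram of infinite loop maps; on $E_n$ this is \Cref{cnstr:psi-En}, coming from the Morava stabilizer group action). So the issue is not constructing an equivariant structure from scratch, but rather showing that a \emph{chosen} $\E_3$-map intertwines the two already-present automorphisms up to coherent homotopy. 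Equivalently, writing $B\Z \cong S^1$, giving a map of $\Z$-equivariant $\E_3$-algebras is giving a map in $\Fun(B\Z, \Alg_{\E_3}(\Sp))$, i.e.\ an $S^1$-shaped family of $\E_3$-maps $f_\theta$ together with a loop of such data; the point of departure $f$ and its Adams-twist $\Psi^\ell_{E_n} \circ f \circ (\Psi^\ell_{\MU_{(p)}})^{-1}$ must be connected.

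\textbf{Step 1: Reduce to a statement about the target.} I would first use that the $\E_4$-map $\MU_{(p)} \to E_n$ exhibits $E_n$ as an $\E_3$-$\MU_{(p)}$-algebra, hence (by the universal property of the center, \cite[Section 5.3]{HA}, as already invoked before \Cref{prop:selfcentral}) corresponds to an $\E_4$-algebra map $\MU_{(p)} \to \mathcal{Z}_{\E_3}(E_n)$. By \Cref{prop:selfcentral} with $m=3$, the center $\mathcal{Z}_{\E_3}(E_n) \cong E_n$ \emph{as a commutative algebra}. Therefore the datum of an $\E_3$-$\MU_{(p)}$-algebra structure on $E_n$ refining a fixed commutative algebra structure on $E_n$ is the datum of an $\E_4$-algebra map $\MU_{(p)} \to E_n$ (into the commutative ring $E_n$), i.e.\ of a $\CAlg$-map after applying the free $\E_4 \to \CAlg$ comparison — in any case a map into the \emph{commutative} ring $E_n$, which is a space of components insensitive to the $\E_4$-vs-higher distinction only through $\pi_0\Map_{\CAlg}$.

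\textbf{Step 2: Make everything $\Z$-equivariant on the nose.} Both $\Psi^\ell$ on $\MU_{(p)}$ and $\Psi^\ell$ on $E_n$ are commutative algebra automorphisms, and moreover the center construction $\mathcal{Z}_{\E_3}(-)$ is functorial, so $\Psi^\ell_{E_n}$ induces an automorphism of $\mathcal{Z}_{\E_3}(E_n)$ compatible (under the isomorphism of \Cref{prop:selfcentral}) with the commutative automorphism $\Psi^\ell$ of $E_n$. Thus I want to produce a lift of the underlying $\E_3$-map of $\MU_{(p)} \to E_n$ to $\Fun(B\Z, -)$, which by Step 1 amounts to: lift the $\E_4$/commutative algebra map $\MU_{(p)} \to E_n$ to a map $\MU_{(p)}^\Psi \to E_n^\Psi$ of $\Z$-equivariant commutative algebras, then forget down to $\E_3$ and feed through the center adjunction. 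The space of $\Z$-equivariant lifts of a commutative algebra map $g: \MU_{(p)} \to E_n$ is the space of fixed points (homotopy-coherently, a limit over $B\Z$) of the $\Z$-action on $\Map_{\CAlg}(\MU_{(p)}, E_n)$ given by $g \mapsto \Psi^\ell_{E_n} \circ g \circ (\Psi^\ell_{\MU_{(p)}})^{-1}$. So it suffices to show that the chosen point of $\Map_{\CAlg}(\MU_{(p)}, E_n)$ is fixed by this action, at least up to a compatible system of homotopies — i.e.\ that $g$ extends over $B\Z$.

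\textbf{Step 3: The computation on $\pi_*$ / complex orientations.} A commutative algebra map $\MU_{(p)} \to E_n$ is the same as a ($p$-local) complex orientation of $E_n$, i.e.\ a choice of coordinate on the formal group of $E_n$ (more precisely, the space of $\CAlg$-maps is the space of coordinates; this is the Lazard/Quillen story). The Adams operation $\Psi^\ell$ acts on the formal group by the $\ell$-series / by the automorphism of $\widehat{\G}_a$-type rescaling, and the claim becomes that the chosen coordinate is carried to a coordinate that differs from the original by the $E_n$-side automorphism $\Psi^\ell$ — which is exactly how $\Psi^\ell$ on $\MU_{(p)}$ was \emph{defined} (via the $J$-homomorphism / the action on $\BU_{(p)}$), since the universal orientation on $\MU_{(p)}$ is tautologically intertwined with rescaling. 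Concretely: the Thom-spectrum description of $\Psi^\ell$ on $\MU_{(p)}$ in \Cref{cnstr:adams-conj} shows that $\Psi^\ell$ acts on $\pi_* \MU_{(p)}$ compatibly with the standard grading automorphism $[k] \mapsto \ell^k [k]$ (this is the content of the $\Psi^\ell$ on $\BU$ inducing multiplication by $\ell^k$ on $\pi_{2k}$), and likewise $\Psi^\ell$ on $E_n$ acts by $\ell^k$ on $\pi_{2k}E_n$ (\Cref{cnstr:psi-En}). Since the map $\MU_{(p)} \to E_n$ on $\pi_*$ is a map of graded rings and the $\Z$-actions on both sides are the same ``weight-$\ell$ rescaling,'' the square of ring maps commutes. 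One then needs to promote this $\pi_*$-level commutation to a homotopy, and to a coherent system over $B\Z$.

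\textbf{Step 4: Obstruction theory / rigidity.} The promotion in Step 3 is where I expect the real work to live, but it should be manageable because the target $E_n$ is so rigid. The space of $\CAlg$-maps (coordinates) $\Map_{\CAlg}(\MU_{(p)}, E_n)$ is a torsor-like object whose homotopy is computed by an obstruction-theoretic spectral sequence (Goerss--Hopkins; or via the fact that $\MU_{(p)}$ is Landweber exact, which makes $E_n$-homology of $\MU_{(p)}$-algebras flat, cf.\ the argument in \Cref{prop:detectEn}). Since $E_n$-cohomology is concentrated in even degrees and $\MU_{(p)}$ is even, the obstruction groups to extending the $\pi_0$-level datum $g: \MU_{(p)} \to E_n$ to a $B\Z$-equivariant map vanish — more precisely, $\Map_{\CAlg}(\MU_{(p)}, E_n)$ is \emph{discrete} (its higher homotopy vanishes) because this is the space of formal group coordinates on $\Spf E_n^0(\CP^\infty)$, which is a set. (Alternatively, and more in the spirit of this paper: one can invoke \cite[Corollary 4.53]{ramzi2023separability} the way \Cref{prop:detectEn} does, to know $\pi_0\Aut_{\E_3}(E_n) = \G_n$ is discrete, and that the relevant mapping spaces are discrete; then a map of $\Z$-equivariant objects into a discrete space is just a map of $\Z$-sets, and we reduced to that in Step 3.) So the existence of the equivariant refinement follows once the underlying $\Z$-set-level statement — that the $\pi_0$-class of the orientation is fixed by the combined Adams twist — is checked, which is Step 3. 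I would then transport this equivariant commutative-algebra map back through the center adjunction and forget to $\E_3$ to obtain the claimed $\MU_{(p)}^\Psi \to E_n^\Psi$ in $\Z$-equivariant $\E_3$-algebras.

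\textbf{Main obstacle.} The delicate point is Step 4's discreteness / obstruction-vanishing claim: one must be careful that ``$\E_4$-map'' versus ``$\CAlg$-map'' versus ``$\E_3$-map'' are being tracked correctly, and that the equivariance is genuinely coherent (a limit over $B\Z$, not just a single homotopy). I expect the cleanest route is exactly the one the paper itself favors elsewhere: reduce to $\E_3$-automorphisms of $E_n$ being discrete (\cite[Corollary 4.53]{ramzi2023separability}), reduce the equivariant-lifting problem to a problem about $\Z$-sets/discrete groupoids, and then the statement is the elementary fact that the two Adams operations are ``the same weight-$\ell$ operation'' and hence commute with any orientation. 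I would also double-check that the $\E_4$-structure on $\MU_{(p)} \to E_n$ (needed so that passing to centers is legitimate) is preserved well enough that the underlying $\E_3$-datum, which is all the proposition asserts, survives — but since the proposition only claims an $\E_3$-equivariant refinement, I can afford to lose the $\E_4$-coherence of the equivariance and only retain $\E_3$.
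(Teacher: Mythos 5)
Your overall strategy---reduce via the center to a lifting problem for a highly structured orientation of $E_n$, and then kill the obstructions using evenness of $E_n$ and of $\MU_{(p)}$---is in the right spirit, and your Steps~1--3 track the paper's setup reasonably well. But the argument collapses at Step~4, and the collapse happens exactly where the paper's proof does its real work. You assert that $\Map_{\CAlg}(\MU_{(p)}, E_n)$ (or the space of $\E_4$-maps) is discrete ``because this is the space of formal group coordinates.'' That identification is false: only the space of \emph{homotopy ring} maps $\MU \to E_n$ is the set of coordinates (Quillen). The space of $\E_4$-algebra maps $\MU_{(p)} \to E_n$ is, by the Thom-spectrum universal property, the space of nullhomotopies of the four-fold loop map $\mathrm{BU}_{(p)} \to \mathrm{BSL}_1(E_n)$, equivalently of nullhomotopies of $\mathrm{B}^4\mathrm{BU}_{(p)} \to \mathrm{B}^5\mathrm{SL}_1(E_n)$; the paper computes (\Cref{lem:trivial-action}) that the relevant mapping space has torsion-free homotopy concentrated in \emph{odd} degrees, so the space of nullhomotopies has nontrivial $\pi_0$-torsor structure and nontrivial higher homotopy --- it is not discrete. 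Your alternative appeal to \cite[Corollary 4.53]{ramzi2023separability} also does not apply: that result gives discreteness of $\E_3$-\emph{automorphisms} of $E_n$ ($K(n)$-locally), not of the space of structured maps out of $\MU_{(p)}$. A further conflation compounds this: an $\E_4$-map into the commutative ring $E_n$ is not a $\CAlg$-map, and it is not available to you that a $\CAlg$-map $\MU_{(p)} \to E_n$ even exists; the paper works throughout with $\E_4$-maps precisely to avoid this.

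What the paper actually does instead of your discreteness claim is the following, and this is the content your proposal is missing. The obstruction to a $\Z$-equivariant nullhomotopy lives in $\pi_0(X^{h\Z})$, where $X$ is the (conjugation-equivariant) mapping space $\Map_*(\mathrm{B}^{4,[\ell^2]}\mathrm{BU}_{(p)}, \mathrm{B}^{5,[\ell^2]}\mathrm{SL}_1(E_n^\Psi))$; one shows (i) $\pi_*X$ is torsion-free, odd, with trivial action on $\pi_1$, and injects into the homotopy of the rationalized mapping space $Y$, so that $\pi_0(X^{h\Z}) \hookrightarrow \pi_0(Y^{h\Z})$; and (ii) the rationalized composite factors through $\mathrm{B}^{5,[\ell^2]}\mathrm{SL}_1(\Ss_{(p)})_{\Q} \simeq *$, so the obstruction dies rationally and hence integrally. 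The lift of a \emph{given} non-equivariant nullhomotopy then uses $\pi_1(X^{h\Z}) \cong \pi_1(X)$, which needs $\pi_2(X)=0$ and the triviality of the action on $\pi_1(X)$ --- again computations, not formal rigidity. Note also that the twisted deloopings $\mathrm{B}^{a,[\ell^2]}$ are essential to even formulate the equivariant lifting problem at the $\E_4$/$\E_3$ level (the Adams operation does not commute with the naive deloopings), and your Step~3 claim that the orientation is ``tautologically intertwined'' with the Adams operation quietly absorbs the stable Adams conjecture, which is genuine input fixed in \Cref{cnstr:adams-conj}. So: right skeleton, but the rigidity you lean on is not there, and the rational-injectivity argument that replaces it is the heart of the proof.
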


Before embarking on the proof of \Cref{prop:E-thy-Adams}, we set some notation.

\begin{dfn}
  Suppose we are given an integer $a > 0$ and a $k \in \Z_{(p)}^{\times}$.
  \begin{itemize}
  \item We let $ \mdef{S^{(a,[k])}} \in \Spc_*^{B\Z} $
    be the $\ZZ$-equivariant pointed space
    consisting of $S_{(p)}^a$ together with a degree $k$ self-map.
  \item We let \mdef{$\Omega^{a,[k]}$} be the right adjoint to $S^{(a,[k])} \wedge -$.
    This is given by a $\Z$-equivariant mapping space out of $S^{a,[k]}$.
  \item We let $\mdef{\Ss^{(a,[k])}} \coloneqq \Sigma^\infty S^{(a,[k])} \in \Sp_{(p)}^{B\Z}$.
  \item Given a $p$-local $\Z$-equivariant infinite loop space $M=\Omega^{\infty}m$, 
    we let \mdef{$B^{a,[k]}M$} be the $\Z$-equivariant infinite loop space
    $\Omega^{\infty}( \Ss^{(a,[k])} \otimes m)$.
    In particular, $\Omega^{a,[k]} B^{a,[k]} M \cong M$.\qedhere
  \end{itemize}
\end{dfn}

Note that smash products of $\Z$-equivariant spheres are given by the formula 
\[S^{(a_1,[k_1])} \wedge S^{(a_2,[k_2])} \cong S^{(a_1+a_2,[k_1 k_2])}.\]

\begin{proof}[Proof of \Cref{prop:E-thy-Adams}]
  The space of $\E_4$-algebra maps $\MU_{(p)} \to E_n$ is isomorphic to
  the space of nullhomotopies of the composite
  \[\mathrm{BU}_{(p)} \stackrel{J}{\to} \mathrm{BSL}_1(\mathbb{S}_{(p)}) \to \mathrm{BSL}_1(E_n)\]
  in the category of four-fold loop maps \cite[Theorem 3.5]{BAC}.\footnote{Recall that $\mathrm{SL}_1$ is the $1$-connective cover of $\mathrm{GL}_1$.}
  Equivalently, this is the space of nullhomotopies in pointed spaces of the composite
  \[ \mathrm{B}^4\mathrm{BU}_{(p)} \to \mathrm{B}^5\mathrm{SL}_1(\mathbb{S}_{(p)}) \to \mathrm{B}^5\mathrm{SL}_1(E_n) \]
  
  We will prove the theorem by showing that every nullhomotopy of this composite
  can be refined to a $\Z$-equivariant nullhomotopy of the composite
  \begin{align}
    \mathrm{B}^{4,[\ell^2]}\mathrm{BU}_{(p)} \to \mathrm B^{5,[\ell^2]}\mathrm{SL}_1(\mathbb{S}_{(p)}) \to \mathrm{B}^{5,[\ell^2]}\mathrm{SL}_1(E_n^\Psi). \label{eqn:thom}
  \end{align}
  Here, both $\mathrm{BU}_{(p)}$ and $\mathrm{SL}_1(E_n^\Psi)$ are equipped with $\Z$ actions via $\Psi^{\ell}$, and $\mathrm{SL}_1(\mathbb{S}_{(p)})$ is equipped with trivial $\Z$-action. The $\Z$-equivariant infinite loop map $\mathrm{BU}_{(p)} \to \mathrm{B}\mathrm{SL}_1(\mathbb{S})$ is given by the solution to the stable Adams conjecture fixed in \Cref{cnstr:adams-conj}.
  
  To see that this is sufficient, note that the functor
  $\Omega^{4,[\ell^2]}(-) {\colon}\Spc_*^{B\Z} \to \Spc_*^{B\Z}$
  takes values in triple loop spaces.
  This is because $S^{4,[\ell^2]} \cong S^{3,[0]} \wedge S^{1,[\ell^2]}$,
  where first smash factors has trivial $\Z$-action.

  First, we show that the composite in (\ref{eqn:thom}) is $\Z$-equivariantly nullhomotopic.
  Let $X, Y \in \Spc_*^{B\Z}$ be the equivariant mapping spaces
  \[ \Map_{\Spaces_*}(\mathrm{B}^{4,[\ell^2]}\mathrm{BU}_{(p)},\mathrm{B}^{5,[\ell^2]}\mathrm{SL}_1(E_n^\Psi)) \text{ and } \Map_{\Spaces_*}\left(\mathrm{B}^{4,[\ell^2]}\mathrm{BU}_{(p)},\mathrm{B}^{5,[\ell^2]}\mathrm{SL}_1(E_n^\Psi)_{\mathbb{Q}}\right) \]
  respectively (where the $\ZZ$-action is by conjugation)
  so that we have an isomorphism
  \[ \Map_{\Spaces_*^{B\ZZ}}(\mathrm{B}^{4,[\ell^2]}\mathrm{BU}_{(p)},\mathrm{B}^{5,[\ell^2]}\mathrm{SL}_1(E_n^\Psi)) \cong X^{h\Z},\]
  between the space of $\Z$-equivariant maps and the fixed points of $X$.
  The obstruction to (\ref{eqn:thom}) being null is now a class in $\pi_0(X^{h\Z})$.
  By \Cref{lem:trivial-action} below, the homotopy groups of $X$ 
  are torsion free, concentrated in odd degrees, the $\ZZ$-action on $\pi_1$ is trivial
  and the natural map $X \to Y$ is a $\pi_*$-injection.
  Altogether this implies that the natural map
  $ \pi_0(X^{h\Z}) \to \pi_0(Y^{h\Z}) $
  is injective. To prove that (\ref{eqn:thom}) is null
  it now suffices to prove that the further composite
  \begin{align*}
    \mathrm{B}^{4,[\ell^2]}\mathrm{BU}_{(p)} \to \mathrm B^{5,[\ell^2]}\mathrm{SL}_1(\mathbb{S}_{(p)}) \to \mathrm{B}^{5,[\ell^2]}\mathrm{SL}_1(E_n^\Psi) \to \mathrm{B}^{5,[\ell^2]}\mathrm{SL}_1(E_n^\Psi)_{\mathbb{Q}}
  \end{align*}
  is $\ZZ$-equivariantly nullhomotopic.
  This composite factors through
  $\mathrm B^{5,[\ell^2]} \mathrm{SL}_1(\mathbb{S}_{(p)})_{\Q}$
  which is contractible.

  It remains to show that one can find a $\ZZ$-equivariant nullhomotopy of (\ref{eqn:thom}) lifting any given non-equivariant nullhomotopy. Homotopy classes of equivariant and nonequivariant nulhomotopies are torsors over $\pi_1(X^{h\Z})$ and $\pi_1(X)$ respectively.
  Using \Cref{lem:trivial-action} again we see that these groups agree since
  $\pi_2(X)$ is trivial, and $\pi_1(X)$ has a trivial action.
  Thus any given non-equivariant nullhomotopy can be refined to an equivariant nullhomotopy.
\end{proof}

\begin{lem} \label{lem:trivial-action}\todo{check}
  The homotopy groups of the pointed mapping space
  \[ \Map_{\Spaces_*}(\mathrm{B}^{4,[\ell^2]}\mathrm{BU}_{(p)},\mathrm{B}^{5,[\ell^2]}\mathrm{SL}_1(E_n^{\Psi})) \]
  are torsion free,
  concentrated in odd degrees,
  have $\Psi^\ell$ act by $\ell^k$ on $\pi_{2k+1}$,
  and embed into the homotopy groups of the mapping space to the rationalization
  \[ \Map_{\Spaces_*}(\rB^{4,[\ell^2]}\mathrm{BU}_{(p)},\rB^{5,[\ell^2]}\mathrm{SL}_1(E_n^{\Psi})_{\QQ}). \]
\end{lem}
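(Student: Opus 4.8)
\textbf{Proof plan for \Cref{lem:trivial-action}.}

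The plan is to analyze the target $\rB^{5,[\ell^2]}\mathrm{SL}_1(E_n^\Psi)$ via its equivariant Postnikov tower, and to exploit the fact that $\mathrm{SL}_1$ of a $K(n)$-local ring like $E_n$ is rationally trivial so that the tower splits after rationalization. First I would identify the homotopy groups of $\rB^{5,[\ell^2]}\mathrm{SL}_1(E_n^\Psi)$: since $\mathrm{SL}_1(E_n)$ is the $1$-connective cover of $\mathrm{GL}_1(E_n)$, its homotopy groups in positive degrees are $\pi_i E_n$ for $i \ge 2$, concentrated in even degrees with $\Psi^\ell$ acting by $\ell^k$ on $\pi_{2k}$. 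Shifting by $\rB^{5,[\ell^2]}$ (which is $\rB^{3,[0]} \circ \rB^{2,[\ell^2]}$) moves these to odd degrees and, because $S^{(2,[\ell^2])}$ carries a degree $\ell^2$ self map which after smashing is compatible with the $\ell^k$-action, arranges that $\Psi^\ell$ acts by $\ell^{k+?}$ on the relevant homotopy group — the bookkeeping of weights is exactly chosen so that the answer comes out to weight $k$ on $\pi_{2k+1}$ of the mapping space, after accounting for the $[\ell^2]$ twist on the source $\rB^{4,[\ell^2]}\mathrm{BU}_{(p)}$ as well.

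Next I would run the equivariant Postnikov tower of $\rB^{5,[\ell^2]}\mathrm{SL}_1(E_n^\Psi)$ and map $\rB^{4,[\ell^2]}\mathrm{BU}_{(p)}$ into it. The associated graded pieces are $\Z$-equivariant generalized Eilenberg--MacLane spaces $K(\pi_{2k}E_n, 2k+5+\text{(twist)})$, and the mapping space out of $\rB^{4,[\ell^2]}\mathrm{BU}_{(p)}$ into each such piece has homotopy groups computed by (twisted) cohomology of $\rB^{4,[\ell^2]}\mathrm{BU}_{(p)}$ with coefficients in $\pi_{2k}E_n$. Since the integral homology of $\mathrm{BU}\langle 6\rangle$ (equivalently $\rB^4\mathrm{BU}$ in the relevant range) is a polynomial algebra on even-degree classes, and the $\pi_* E_n$ are torsion free, each such cohomology group is torsion free and concentrated in odd degrees; the weight of the $\Z$-action on homology of $\rB^{4,[\ell^2]}\mathrm{BU}$ in degree $2k$ is $k$, which can be checked rationally on polynomial generators using that $\Psi^\ell$ acts by $\ell^k$ on $\pi_{2k}\mathrm{BU}$. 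Assembling the tower, the homotopy groups of the mapping space are therefore torsion free, concentrated in odd degrees, with the stated weights.

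Finally, for the embedding into the rationalized mapping space, I would argue that the equivariant Postnikov tower of $\rB^{5,[\ell^2]}\mathrm{SL}_1(E_n^\Psi)_{\QQ}$ \emph{splits}: $E_n$ is $\E_\infty$ so $\mathrm{GL}_1(E_n)$ and hence $\mathrm{SL}_1(E_n)$ come from a spectrum, and the group ring $\QQ[\Z]$ has global dimension $1$, so there are no higher equivariant $k$-invariants rationally. Thus $\pi_*$ of the rational mapping space is the direct sum of the $\pi_*$ of the mapping spaces into the rational associated graded pieces, and the map from the integral mapping space to the rational one is, on each Postnikov filtration quotient, the inclusion of a torsion-free group into its rationalization, hence injective; a filtration-degree induction (all groups being concentrated in odd degrees so that the filtration spectral sequence degenerates) gives injectivity on the total $\pi_*$. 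The main obstacle I anticipate is getting the weight bookkeeping exactly right — tracking how the $[\ell^2]$-twists on both $\rB^{4,[\ell^2]}\mathrm{BU}_{(p)}$ and $\rB^{5,[\ell^2]}\mathrm{SL}_1(E_n^\Psi)$ combine with the intrinsic $\Psi^\ell$-action on $\pi_* E_n$ to land on precisely ``$\Psi^\ell$ acts by $\ell^k$ on $\pi_{2k+1}$'' — but this is a finite, mechanical computation once the Postnikov analysis is set up. The triviality of the action on $\pi_1$ (the $k=0$ case) is what ultimately makes the equivariant and non-equivariant nullhomotopy/obstruction sets agree in the proof of \Cref{prop:E-thy-Adams}.
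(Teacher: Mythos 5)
Your proposal is correct and takes essentially the same route as the paper: filter the target by its Postnikov tower to get a spectral sequence with $E_1$-term the even-degree, torsion-free cohomology of $\mathrm{BU}\langle 6\rangle$ with coefficients in the odd-degree, torsion-free homotopy of $\rB^{5}\mathrm{SL}_1(E_n)$ (forcing degeneration, odd concentration, and injectivity into the rational mapping space), then use the $\ZZ$-equivariant splitting of the rational Postnikov tower (projective dimension $1$ of $\QQ[\ZZ]$) to reduce the weight computation to the action on $H^*(\rB^{4,[\ell^2]}\mathrm{BU}_{(p)};\QQ)$ and on $\pi_*(\rB^{5,[\ell^2]}\mathrm{SL}_1(E_n^\Psi))$. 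The only part left as "mechanical bookkeeping" in your plan is the step the paper spells out explicitly (weight $\ell^{-k}$ on $H^{2k}$ of the source combining with weight $\ell^k$ on $\pi_{2k+1}$ of the target), but the inputs you identify are exactly the right ones.
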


\begin{proof}
  Let $X, Y \in \Spc_*^{B\Z}$ be the equivariant mapping spaces
  \[ \Map_{\Spaces_*}\left(\rB^{4,[\ell^2]}\mathrm{BU}_{(p)},\rB^{5,[\ell^2]}\mathrm{SL}_1(E_n^\Psi) \right) \text{ and } \Map_{\Spaces_*}\left(\rB^{4,[\ell^2]}\mathrm{BU}_{(p)},\rB^{5,[\ell^2]}\mathrm{SL}_1(E_n^\Psi)_{\mathbb{Q}}\right) \]
  respectively (where the $\ZZ$-action is by conjugation).    
  We begin by analyzing the underlying (non-equivariant) spaces of $X$ and $Y$.
  Filtering the target spaces
  $\rB^{5,[\ell^2]}\mathrm{SL}_1(E_n^{\Psi})$
  and $\rB^{5,[\ell^2]}\mathrm{SL}_1(E_n^{\Psi})_{\Q}$
  by their Postnikov towers we obtain spectral sequences
  of signature
  \[ \begin{tikzcd}
    H^s( \BU\langle 6 \rangle_{(p)} ;\ \pi_t(\rB^{5}\mathrm{SL}_1(E_n)) )
    \ar[r, Rightarrow] \ar[d] &
    \pi_{t-s}(X) \ar[d] \\
    H^s( \BU\langle 6 \rangle_{(p)} ;\ \pi_t(\rB^{5}\mathrm{SL}_1(E_n)_{\Q}) )
    \ar[r, Rightarrow] &
    \pi_{t-s}(Y)
  \end{tikzcd} \]
  and a map between them (recall that the
  underlying space of $\rB^{4,[\ell^2]}\BU_{(p)}$ is $\BU\langle 6 \rangle_{(p)}$).

  $E_n$ has torsion free homotopy concentrated in even degrees
  and the integral cohomology of $\BU\langle 6 \rangle$
  is a finite sum of copies of $\Z$ concentrated only in even degrees
  \cite{Singer} (cf. \cite[Corollary 4.7]{cube}).
  Therefore, on the $E_1$-page these spectral sequences are
  concentrated entirely in degrees with $t-s$ odd and the map between them is injective.  
  In particular, both spectral sequence degenerate at the $E_1$-page
  and converge strongly.
  From this we learn that $\pi_*X$ and $\pi_*Y$ are concentrated in odd degrees
  and the map $\pi_*X \to \pi_*Y$ is injective.

  In order to complete to proof it will now suffice
  (using the injectivity proved above)
  to analyze the $\Z$-action on $\pi_*(Y)$.
  The Postnikov tower of $\Sigma^{5, [\ell^2]}\mathrm{sl}_1(E_n^\Psi)) \otimes \Q$
  splits $\ZZ$-equivariantly
  (here we use that the group ring $\Q[\Z]$ has projective dimension $1$).
  As a consequence we obtain a $\Z$-equivariant isomorphism,
  \[ \pi_{2k+1}(Y) \cong \prod_{t-s=2k+1} H^{s} \left( \rB^{4,[\ell^2]}\BU_{(p)}; \Q \otimes \pi_{t} ( \rB^{5,[\ell^2]}\mathrm{SL}_1(E_n^\Psi) ) \right). \]
  We now analyze the $\Z$-actions on each term in this product. Using the fact that the cohomology of $\BU\langle 6 \rangle$ is finitely generated in each degree we obtain a $\Z$-equivariant isomorphism
  \[
  H^{s} \left( \rB^{4,[\ell^2]}\BU_{(p)};
  \Q \otimes \pi_{t} (\rB^{5,[\ell^2]}\mathrm{SL}_1(E_n^\Psi)) \right) \cong
  H^{s}(\rB^{4,[\ell^2]}\BU_{(p)}; \Q) \otimes \pi_{t}( \rB^{5,[\ell^2]}\mathrm{SL}_1(E_n^\Psi)).
  \]

  Returning to \Cref{cnstr:psi-En} we now determine that
  \begin{itemize}
  \item[(a)] $\Psi^\ell$ acts on $\pi_{2k}E_n^\Psi$ by $\ell^{k}$,
  \item[(b)] $\Psi^\ell$ acts on
    $\pi_{2k+1}\left( \rB^{5,[\ell^2]}\mathrm{SL}_1(E_n^\Psi) \right)$
    by $\ell^{k}$,
  \item[(c)] $\Psi^\ell$ acts on $\pi_{2k}\bu_{(p)}$ by $\ell^{k}$,
  \item[(d)] $\Psi^\ell$ acts on $\pi_{2k}\rB^{4,[\ell^2]}\BU_{(p)}$ by $\ell^k$,
  \end{itemize}

  Using that the rational homology of $\BU\langle 6 \rangle$ 
  is a polynomial algebra generated off of the image of the homotopy groups
  we can now work out the action on the rational homology (and cohomology)
  of $\rB^{4,[\ell^2]}\BU_{(p)}$,
  \begin{itemize}      
  \item[(e)] $\Psi^\ell$ acts on $H_{2k}(\rB^{4,[\ell^2]}\BU_{(p)}; \Q)$ by $\ell^k$ and
  \item[(f)] $\Psi^\ell$ acts on $H^{2k}(\rB^{4,[\ell^2]}\BU_{(p)}; \Q)$ by $\ell^{-k}$.
  \end{itemize}
  Using (b), (f) and the isomorphism above we determine that
  $\Psi^\ell$ acts on $\pi_{2k+1}(Y)$ by $\ell^k$.
\end{proof}

\begin{cor} \label{cor:E2EnAdams}
  The $\E_3$-$\MU_{(p)}$-algebra structure on $E_n$ constructed via \Cref{prop:detectEn}
  refines to a lift of $E_n^\Psi$
  to a $\Z$-equivariant $\E_2$-$\MU_{(p)}^\Psi$-algebra $E_n^\Psi$.
\end{cor}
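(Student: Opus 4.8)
The statement to prove is \Cref{cor:E2EnAdams}, asserting that the $\E_3$-$\MU_{(p)}$-algebra structure on $E_n$ from \Cref{prop:detectEn} refines $\Z$-equivariantly to an $\E_2$-$\MU_{(p)}^\Psi$-algebra structure on $E_n^\Psi$. The plan is to deduce this formally from the two main results that precede it: \Cref{cor:E4upgrade}, which says that the $\E_3$-$\MU_{(p)}$-algebra structure on $E_n$ comes from an $\E_4$-algebra map $\MU_{(p)} \to E_n$, and \Cref{prop:E-thy-Adams}, which says that the underlying $\E_3$-algebra map of any $\E_4$-algebra map $\MU_{(p)} \to E_n$ refines to a map of $\Z$-equivariant $\E_3$-algebras $\MU_{(p)}^\Psi \to E_n^\Psi$.

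\textbf{Key steps.} First I would recall the dictionary between complex orientations and algebra maps over $\MU_{(p)}$: giving an $\E_m$-$\MU_{(p)}$-algebra structure on an $\E_{m-1}$-algebra $R$ refining its $\E_{m-1}$-structure is equivalent, via the universal property of the center $\mathcal{Z}_{\E_{m-1}}(R)$ \cite[Section 5.3]{HA}, to giving an $\E_m$-algebra map $\MU_{(p)} \to \mathcal{Z}_{\E_{m-1}}(R)$. Applying \Cref{prop:selfcentral} with $m = 2$, we have $\mathcal{Z}_{\E_2}(E_n) \cong E_n$ as $\E_3$-algebras (and the $\E_3$-structure is the restriction of the commutative one), so an $\E_2$-$\MU_{(p)}$-algebra structure on $E_n$ refining the $\E_2$-structure is the same as an $\E_3$-algebra map $\MU_{(p)} \to E_n$. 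Equivariantly, one works in $\Alg_{\E_3}(\Sp^{B\Z})$ and uses that the formation of centers commutes with the forgetful/fixed-point functors sufficiently that $\mathcal{Z}_{\E_2}(E_n^\Psi) \cong E_n^\Psi$ as $\Z$-equivariant $\E_3$-algebras (this follows from \Cref{prop:selfcentral} applied to the underlying object together with naturality of the center construction). Then: take the $\E_4$-algebra map $\MU_{(p)} \to E_n$ provided by \Cref{cor:E4upgrade}; restrict it to an $\E_3$-algebra map; apply \Cref{prop:E-thy-Adams} to obtain a $\Z$-equivariant $\E_3$-algebra map $\MU_{(p)}^\Psi \to E_n^\Psi$; and finally transport this along the equivalence $\mathcal{Z}_{\E_2}(E_n^\Psi) \cong E_n^\Psi$ of equivariant $\E_3$-algebras to get a $\Z$-equivariant $\E_2$-$\MU_{(p)}^\Psi$-algebra structure on $E_n^\Psi$. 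One should also check that this $\E_2$-$\MU_{(p)}^\Psi$-algebra structure, after forgetting the $\Z$-action, agrees with the one underlying the $\E_3$-$\MU_{(p)}$-algebra structure from \Cref{prop:detectEn}; this holds because the constructions are compatible by \Cref{prop:E-thy-Adams}, which starts from a non-equivariant $\E_4$-map and refines it.

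\textbf{Main obstacle.} The delicate point is bookkeeping the center construction $\Z$-equivariantly: one needs that $\mathcal{Z}_{\E_2}(-)$, as a functor on $\E_2$-algebras in $\Sp^{B\Z}$, is compatible with the one on $\Sp$ — i.e., that for $R \in \Alg_{\E_2}(\Sp^{B\Z})$ the underlying $\E_3$-algebra of $\mathcal{Z}_{\E_2}(R)$ is $\mathcal{Z}_{\E_2}$ of the underlying $\E_2$-algebra, with the induced $\Z$-action. This is true because the center is computed as an $\E_2$-Hochschild-type object ($\End$ of $R$ over $\int_{\R^2 - 0} R$, cf. \Cref{prop:selfcentral}) and these limits/colimits are computed pointwise in $\Sp^{B\Z} = \Fun(B\Z, \Sp)$, so the forgetful functor to $\Sp$ preserves them; combined with \Cref{prop:selfcentral} this pins down $\mathcal{Z}_{\E_2}(E_n^\Psi) \cong E_n^\Psi$ equivariantly. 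Once this compatibility is in hand, the rest is a formal concatenation of \Cref{cor:E4upgrade} and \Cref{prop:E-thy-Adams}, and there is nothing further to compute.
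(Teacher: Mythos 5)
Your proposal is correct and follows exactly the paper's route: the paper's proof is the one-line citation of \Cref{cor:E4upgrade}, \Cref{prop:E-thy-Adams}, and \Cref{prop:selfcentral}, combined precisely as you describe (equivariant $\E_3$-map into $E_n^\Psi$, then self-centrality to reinterpret it as an $\E_2$-$\MU_{(p)}^\Psi$-algebra structure). Your additional care about the center construction commuting with the pointwise limits/colimits in $\Sp^{B\Z}$ is a detail the paper leaves implicit, and your treatment of it is sound.
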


\begin{proof}
  This follows by applying
  \Cref{cor:E4upgrade}, \Cref{prop:E-thy-Adams} and \Cref{prop:selfcentral}.
\end{proof}

\subsection{Digression: \texorpdfstring{$\E_1$}{E1}-cells}

The goal of this digression is to set up a theory of $\E_1$-cells that works well when applied to both the category of $\MU_{(p)}$-modules and the category of $\Z$-equivariant $\MU_{(p)}$-modules. In \Cref{subsec:e1a2}, we use this to first upgrade $\BP\langle n \rangle$ to a $\Z$-equivariant $\E_1$-$\MU_{(p)}$-algebra, and then to a $\Z$-equivariant $(\E_1\otimes \A_2)$-$\MU_{(p)}$-algebra. Our treatment here takes inspiration from \cite{beardsley2021skeleta}.

\begin{cnv}
  Throughout this subsection $\CC$ will denote a fixed
  stable, presentably symmetric monoidal category
  equipped with a $t$-structure $(\CC_{\geq 0}, \CC_{\leq 0})$
  compatible with the monoidal structure.
\end{cnv}

\begin{rec}	
  Recall from \cite[Corollary 1.4.4.5]{HA} that the universal colimit preserving functor
  from a presentable category $\DD$ to a stable presentable category is the stabilization,
  \[ \Sigma_{\DD}^\infty {\colon}\DD \to \Sp(\DD). \]	
  By \cite[Chapter 4, Theorem 4.2]{francis2008derived},
  for $A$ an $\E_1$-algebra in $\CC$,
  the stabilization of $\Alg(\CC)_{/A}$ 
  can be identified with the functor sending $B \to A$ to the $A$-bimodule $\fib( A\otimes A \to A\otimes_BA )$.
  The right adjoint is given by the trivial square zero extension functor.	
  Note that in the case $A = 1_\CC$, the category of bimodules is just $\CC$.
\end{rec}



%
  
%

\begin{dfn}
  We say that an algebra $A$ in $\CC$ is \mdef{connected} if
  it is connective and the unit map induces an isomorphism $\pi_0\one \cong \pi_0A$.\footnote{Compare with ``connected''  (graded) Hopf algebras.}
  Let $\Alg(\CC)^{\geq1} \subseteq \Alg(\CC)$ denote
  the full subcategory of connected algebras.
\end{dfn}

Note that $\Alg(\CC)^{\geq1} \subseteq \Alg(\CC)$ is closed under colimits.
Given a connected algebra $A$, its base change to $\pi_0A$
acquires a canonical augmentation $A \otimes \pi_0\one \to \pi_0\one$
via the $0$-truncation map.

\begin{cnstr} \label{cnstr:H}
  We define $\mdef{\H}$ to be the composite 	
  \[ \Alg(\CC)^{\geq1} \xrightarrow{ \otimes \pi_0\one }
  \Alg(\Mod(\CC; \pi_0\one))_{-/\pi_0\one} \xrightarrow{\Sigma^\infty} \Mod(\CC; \pi_0\one) \]
  where the second functor is stabilization.
  Concretely, $\H$ can be computed by the formula
  \[ A \mapsto \fib( \pi_0\one \to \pi_0\one\otimes_{A \otimes \pi_0\one} \pi_0\one). \]	
  By construction, $\H$ preserves colimits.
  The stabilization adjunction gives a natural map
  \[ A \to A\otimes \pi_0\one \to \pi_0\one \oplus \H(A).\hfill\qedhere \]
\end{cnstr}

\begin{exm}\label{exm:hfree}
	Let us suppose that $\on = \tau_{\leq0}\on$. Then for a free algebra $\on\{X\}$, we have $\H(\on\{X\}) = X$, and the natural map $\bigoplus_0^{\infty}X^{\otimes i} \cong \on\{X\} \to \H(\on\{X\})\cong  \on \oplus X$ can be identified with the projection onto the arity $1$ summand.
\end{exm}

\begin{exm} \label{exm:base-bpn}
  Take $\CC=\Mod(\MU_{(p)})$, then 
  $\pi_*\H(\BPn)$ is torsion free, finitely generated
  and concentrated in positive odd degrees.    
\end{exm}

\begin{proof}[Details.]
  Expanding out the formula for $\H$ we obtain
  \[ \Z_{(p)} \oplus \Sigma \H(\BPn) \cong \Z_{(p)} \otimes_{\BPn \otimes_{\MU_{(p)}} \Z_{(p)}}\Z_{(p)}. \]
  Applying the Tor spectral sequence and using the fact that the map
  $\pi_*\MU_{(p)} \to \pi_*\BPn$
  is the quotient by a regular sequence of even classes, we learn that
  $\pi_*(\BPn \otimes_{\MU_{(p)}} \Z_{(p)})$ is an exterior algebra over $\Z_{(p)}$
  on the suspensions of the classes in the regular sequence.
  There are no multiplicative extensions between these generators for filtration reasons (see, e.g., \cite[Proposition 3.6]{angeltveit2008topological}).\todo{Note: you don't actually have to resolve these extensions...}
  Applying the Tor spectral sequence again, we obtain a spectral sequence
  computing $\pi_*\H(\BPn)$ whose $E_2$-page is 
  a divided power algebra over $\Z_{(p)}$ on classes in positive even degrees.
  The spectral sequence degenerates at $E_2$ for bidegree reasons,
  and we obtain the desired conclusion.
\end{proof}

A key tool of this subsection is the following analog of the Hurewicz theorem.

\begin{lem} \label{lem:Hurewicz}
  Let $f {\colon}A \to B$ be a map in $\Alg(\CC)^{\geq 1}$ and $k \geq 1$.
  \begin{enumerate}
  \item $\fib(f)$ is $k$-connective if and only if
    $\fib(\H(f))$ is $k$-connective.
  \item If $\fib(f)$ is $k$-connective,
    then the natural map $ \fib(f) \to \fib(\H(f)) $
    is $(k+1)$-connective.
  \end{enumerate}
\end{lem}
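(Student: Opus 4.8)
The statement is an $\E_1$-algebra analog of the Hurewicz theorem, so the plan is to follow the usual Hurewicz strategy: reduce to the case where $B = \pi_0\one$ via base change, then analyze the fiber by a bar-resolution/skeletal filtration. First I would observe that since $\H$ is defined via base change to $\pi_0\one$ followed by stabilization (\Cref{cnstr:H}), and since $\fib(f)$ being $k$-connective for $k\geq 1$ is detected after base change to $\pi_0\one$ (the unit $\one \to \pi_0\one$ is a $1$-connective, hence on $k$-connective objects for $k\geq 1$ an equivalence on $\pi_{\leq k}$ after applying $\tau_{\leq k}$... more precisely: $\fib(f)$ is $k$-connective iff $\pi_0\one \otimes \fib(f)$ is, because $\one \to \pi_0\one$ is an isomorphism on $\pi_0$ and the $t$-structure is compatible with the monoidal structure so tensoring with the connective $\pi_0\one$ preserves connectivity, while conversely $\one$ is a retract-up-to-higher-filtration of... — here I'd just use that both $A,B$ are connected so $\fib(f)$ is automatically $1$-connective, and that $\pi_j\fib(f) \cong \pi_j(\pi_0\one\otimes_{\one}\fib(f))$ in the bottom degree by the Tor spectral sequence). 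This lets me replace $\CC$ by $\Mod(\CC;\pi_0\one)$ and assume $\one = \pi_0\one$ is discrete, so that both $A$ and $B$ are augmented connective algebras over the discrete ring $\one$ with $\pi_0 = \one$.

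In that reduced setting, $\H(A) = \fib(\one \to \one\otimes_A\one)$ and there is a natural cofiber sequence presentation: writing $\bar A = \fib(A\to\one)$ for the augmentation ideal (which is $1$-connective), the two-sided bar construction gives $\one\otimes_A\one \cong |\Bar_\bullet(\one,A,\one)|$ with $n$-simplices $\bar A^{\otimes n}$ (after using $A \cong \one\oplus\bar A$), so that $\Sigma\H(A) \cong \one\otimes_A\one / \one$ has a filtration whose associated graded is $\bigoplus_{n\geq 1}\Sigma^n \bar A^{\otimes n}$; equivalently $\H(A) \simeq \bar A$ in degrees $\leq 2k-1$ when $\bar A$ is $k$-connective, and the bottom nontrivial piece of $\fib(\bar A \to \H(A))$ comes from the $n=2$ term $\Sigma\bar A^{\otimes 2}$, which is $2k$-connective. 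Now given $f\colon A\to B$ with $k$-connective fiber, I would compare the bar filtrations: the map $\bar A \to \bar B$ has $k$-connective fiber, and $\bar A^{\otimes n} \to \bar B^{\otimes n}$ then has fiber that is $(k + (n-1))$-connective (standard: fiber of a tensor-power map of $1$-connective objects where the underlying map has $k$-connective fiber). For part (1) this immediately gives that $\fib(\H(f))$ is $k$-connective iff $\fib(\bar A\to\bar B) = \fib(f)$ is; for part (2), since $\H(f)$ and $\bar A \to \bar B$ agree through the $n=1$ term, the fiber of the natural map $\fib(f)\to\fib(\H(f))$ — equivalently the total fiber of the square comparing $\bar A,\bar B,\H(A),\H(B)$ — is built from the $n\geq 2$ terms $\fib(\bar A^{\otimes n}\to\bar B^{\otimes n})$, the lowest of which ($n=2$) is $(k+1)$-connective, giving the claimed bound.

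Making the bar-filtration/connectivity bookkeeping precise is the step I expect to be the main obstacle: one needs to know that the skeletal (bar) filtration of $\one\otimes_A\one$ has associated graded $\bigoplus_n \Sigma^n\bar A^{\otimes n}$ with the stated connectivities, that geometric realization preserves connectivity bounds (true since $\CC_{\geq 0}$ is closed under colimits for a compatible $t$-structure and one can use the connectivity estimate for simplicial objects), and that the various naturality squares commute so that the comparison map genuinely sees only the $n\geq 2$ part. An alternative, perhaps cleaner, route that I would keep in mind is to instead factor $A\to B$ through a relative cell attachment / use the universal property of $\H$ directly: $\H$ is the left derived functor of "indecomposables," it preserves colimits and sends free algebras $\one\{X\}$ to $X$ (\Cref{exm:hfree}), so one can resolve $f$ by free algebras and reduce to the case of attaching a single cell, where both statements are transparent. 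I would likely present the bar-construction argument as the main proof and cite \cite{beardsley2021skeleta} for the skeletal-filtration input. The connectivity of tensor powers — that $\fib(X^{\otimes n}\to Y^{\otimes n})$ is $(k+n-1)$-connective when $X,Y$ are $1$-connective and $\fib(X\to Y)$ is $k$-connective — is the one routine lemma I'd state and use without much fuss.
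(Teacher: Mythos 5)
Your reduction to $\Mod(\CC;\pi_0\one)$ is exactly the paper's first step, and your overall conclusion is correct, but the argument you present as the main proof is not the one the paper uses — the paper's proof is essentially the route you relegate to an "alternative": it resolves $f$ simplicially via the free-augmented-algebra monad, reducing to $f\colon \one\{X\}\to\one\{Y\}$ induced by a $k$-connective map $g$ of $1$-connective objects, where $\H(\one\{X\})=X$ and the comparison map is literally the projection onto the arity-one summand (\Cref{exm:hfree}). The fiber of $\fib(f)\to\fib(\H(f))$ is then identified on the nose with $\oplus_{j\geq 2}\fib(g^{\otimes j})$, which is $(k+1)$-connective by the same tensor-power connectivity count you invoke; part (1) follows by bootstrapping part (2) one degree at a time from the a priori $0$-connectivity of $\fib(f)$. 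The trade-off between the two routes is real: your skeletal filtration of $\one\otimes_A\one$ with associated graded $\bigoplus_{n\geq 1}\Sigma^n\bar A^{\otimes n}$ works directly with $A$ and $B$, but it forces you to confront exactly the point you flag as the main obstacle — identifying the adjunction-unit map $\bar A\to\H(A)$ with the inclusion of the bottom filtration stage and checking the naturality squares — whereas the free-resolution route makes that identification trivial, at the mild cost of passing fibers and connectivity bounds through a geometric realization (harmless in a stable category, since fibers commute with realizations and $\CC_{\geq 0}$ is closed under colimits). Two small slips in your version, neither fatal: the $n=2$ graded piece of the total fiber is $\Sigma\,\fib(\bar A^{\otimes 2}\to\bar B^{\otimes 2})$, hence $(k+2)$-connective rather than the $(k+1)$ you state (you dropped the suspension, and the bound only improves); and the backward implication in (1) is not "immediate" from the filtration but requires the same inductive degree-by-degree argument the paper runs.
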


\begin{proof}
  First, note that for $M \in \CC$ which is bounded below
  $M$ is $s$-connective iff $M \otimes \pi_0\one$ is $s$-connective.
  Thus, without loss of generality we may
  replace $f$ with the map $f\otimes \pi_0\one$ of augmented $\pi_0\one$-algebras and
  replace $\CC$ with $\Mod(\CC; \pi_0\one)$.

    
  Part (2).
  Using the simplicial resolution of $f$ coming from the free augmented algebra, augmentation ideal monad we reduce to the case where $f$ is of the form $\one\{X\} \to \one\{Y\}$, the map induced by a $k$-connective map $g {\colon}X \to Y$, where $X,Y$ are $1$-connective. In this situation we may identify (see \Cref{exm:hfree}) the fiber sequence
  $F \to \fib(f) \to \fib(\H(f))$ with the fiber sequence
  \[ \oplus_{j \geq 2} \fib(g^{\otimes j}) \to \oplus_{j \geq 1} \fib(g^{\otimes j}) \to \fib(g). \]
  Using that $\fib(g)$ is $k$-connective and $X,Y$ are $1$-connective
  it follows that $\fib(g^{\otimes j})$ is $(k-1+j)$-connective.
  This suffices to conclude.

  Part (1).
  Since both $A$ and $B$ are connected algebras $\fib(f)$ is $0$-connective.
  Applying (2) inductively we find that
  $\fib(f)$ is $k$-connective if and only if $\fib(\H(f))$ is $k$-connective.
\end{proof}

\begin{lem}\label{lem:cells}	
  Let $f {\colon}A \to B$ be a map in $\Alg(\CC)^{\geq1}$ and let
  \[ \H(A) = M_0 \xrightarrow{m_1} M_1 \xrightarrow{m_2} M_2 \to \cdots \to M_\infty = \H(B) \]
  be an increasing filtration such that
  \begin{enumerate}
  \item[(a)] $m_i$ is $k_i$-connective for
    a non-decreasing sequence $k_i$ with $k_1 \geq 0$ and
  \item[(b)] for each $i\geq 1$ there are objects $X_i \in \CC_{\geq k_i}$ such that
    $X_i \otimes \pi_0\one \cong \fib(m_i)$
    and $[X_i, Y]=0$ for any $(k_{i}+2)$-connective $Y$. 
  \end{enumerate}
  There exists a filtration in $\Alg(\CC)^{\geq 1}$
  \[ A:= {R}_0 \xrightarrow{{r}_1} {R}_1 \to \cdots \to R_\infty \xrightarrow{} B \]
  such that
  \begin{enumerate}
  \item the map $R_i \to B$ is $k_{i+1}$-connective,
  \item the functor $\H(-)$ sends the filtration $\{ R_i \}$ to the filtration $\{ M_i \}$
  \item each map $r_i$ fits into a pushout square in $\Alg(\CC)$
    \[ \begin{tikzcd}
      \one \{ X_i \} \ar[r, "\mathrm{aug}"] \ar[d] & \one \ar[d] \\
      R_{i-1} \ar[r, "r_i"] & R_i,
    \end{tikzcd} \]
  \item the map $R_\infty \to B$ is $\infty$-connective.
  \end{enumerate}  
\end{lem}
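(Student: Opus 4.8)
\textbf{Proof plan for \Cref{lem:cells}.}

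The plan is to build the filtration $\{R_i\}$ by induction on $i$, at each step attaching a single $\E_1$-cell as in (3). First I would set $R_0 := A$. Assuming $R_{i-1}$ has been constructed together with a map $R_{i-1} \to B$ in $\Alg(\CC)^{\geq 1}$ which is $k_i$-connective and with $\H(R_{i-1}) \cong M_{i-1}$ compatibly with the maps to $\H(B)$, I would construct $r_i$ as follows. Since $X_i \in \CC_{\geq k_i}$, the free algebra $\one\{X_i\}$ is connected, and there is a canonical augmentation $\one\{X_i\} \to \one$. To form the pushout square in (3) I need a map $\one\{X_i\} \to R_{i-1}$, equivalently (by freeness) a map $X_i \to R_{i-1}$ in $\CC$. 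I would produce this by lifting: consider the composite $X_i \to \fib(m_i) \to M_{i-1} = \H(R_{i-1})$ (using $X_i \otimes \pi_0\one \cong \fib(m_i)$ and the unit map $X_i \to X_i \otimes \pi_0 \one$), and then use the natural transformation $R_{i-1} \to \pi_0\one \oplus \H(R_{i-1})$ from \Cref{cnstr:H} together with the hypothesis $[X_i, Y] = 0$ for $Y$ that is $(k_i+2)$-connective to lift $X_i \to \H(R_{i-1})$ through $R_{i-1} \to \H(R_{i-1})$ — the obstruction to lifting lives in $[X_i, \fib(R_{i-1} \to \pi_0\one \oplus \H(R_{i-1}))]$, and by \Cref{lem:Hurewicz}(2) applied to $R_{i-1} \to \pi_0\one$ (both connected) this fiber is $(k_i + 2)$-connective since $\fib(R_{i-1} \to \pi_0 \one)$ is $k_i$-connective (it is $1$-connective as $R_{i-1}$ is connected, and actually we need a slightly more careful connectivity bookkeeping here). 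So the obstruction vanishes by (b) and we get the desired map $X_i \to R_{i-1}$; then define $R_i$ by the pushout square.

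Next I would verify the inductive hypotheses for $R_i$. For (2): applying $\H$ to the pushout square and using that $\H$ preserves colimits (\Cref{cnstr:H}) together with $\H(\one\{X_i\}) = X_i$ and $\H(\one) = 0$ (\Cref{exm:hfree}), the pushout of $X_i \leftarrow X_i \to 0$ — wait, one must be careful that $\H$ of a pushout of algebras is the pushout of the $\H$'s, which holds since $\H$ is a left adjoint / colimit-preserving; the cofiber of $X_i \to \H(R_{i-1}) = M_{i-1}$ is then $M_i$ by the defining property $\fib(m_i) \cong X_i \otimes \pi_0\one$ (after base change), giving $\H(R_i) \cong M_i$. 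For (1) and (4): by construction $\fib(\H(R_i) \to \H(B)) = \fib(M_i \to M_\infty)$, which is $k_{i+1}$-connective by hypothesis (a) (the composite of the connective maps $m_{i+1}, m_{i+2}, \dots$); then \Cref{lem:Hurewicz}(1) applied to $R_i \to B$ (both connected) upgrades this to $\fib(R_i \to B)$ being $k_{i+1}$-connective, which is (1); taking the colimit $R_\infty := \colim_i R_i$ and using that connectivity is detected on the colimit (filtered colimits are exact and the $k_i \to \infty$) gives that $R_\infty \to B$ is $\infty$-connective, which is (4). The filtration $\{R_i\}$ is manifestly in $\Alg(\CC)^{\geq 1}$ since connected algebras are closed under colimits and pushouts along augmentations of free connected algebras.

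The main obstacle I anticipate is the careful connectivity bookkeeping in the lifting step: one must check that the fiber of $R_{i-1} \to \pi_0 \one \oplus \H(R_{i-1})$ is connective enough (roughly $(k_i+2)$-connective, or more precisely connective enough that $[X_i, -]$ of it vanishes given only the hypothesis in (b), which is stated for $(k_i+2)$-connective $Y$) — this is exactly where \Cref{lem:Hurewicz}(2) is needed, and getting the indices to line up between the connectivity of $m_i$, the connectivity of $X_i$, and the vanishing range of $[X_i, -]$ requires attention, especially the base case $k_1 \geq 0$ versus the need for $1$-connectivity of fibers of maps of connected algebras. A secondary point to be careful about is that $\H$ genuinely commutes with the relevant pushouts of algebras (it is a composite of a base-change functor and a stabilization functor, both of which preserve colimits), and that passing to the colimit $R_\infty$ is compatible with $\H$ and with the connectivity estimates. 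Everything else is formal manipulation with the adjunctions set up in \Cref{cnstr:H} and the Hurewicz lemma.
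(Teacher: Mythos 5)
Your overall strategy (inductive cell attachment, \Cref{lem:Hurewicz}, colimit-preservation of $\H$, and deducing (1) and (4) from (2)) is the same as the paper's, and the verifications of (2), (1), (4) and connectedness are essentially right. But there is a genuine gap in the key lifting step, with two manifestations.

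First, your obstruction target is too weak. You propose to lift $X_i \to \H(R_{i-1})$ through $R_{i-1} \to \pi_0\one \oplus \H(R_{i-1})$, so the obstruction lives in maps from $X_i$ into (a suspension of) $\fib(R_{i-1} \to \pi_0\one \oplus \H(R_{i-1}))$. Applying \Cref{lem:Hurewicz}(2) to $R_{i-1} \to \pi_0\one$ only uses that the augmentation ideal is $1$-connective, so this fiber is merely $2$-connective after suspension --- a bound that does not improve with $i$. The hypothesis (b) only kills maps into $(k_i+2)$-connective objects, so for $k_i \geq 1$ the obstruction need not vanish. The inductive hypothesis (1) is exactly what supplies the missing connectivity, but only if you lift \emph{relative to $B$}: the map $\fib(R_{i-1} \to B) \to \fib(\H(R_{i-1}) \to \H(B))$ is $(k_i+1)$-connective by \Cref{lem:Hurewicz}(2) applied to $R_{i-1}\to B$, so lifting $X_i \to \fib(m_i) \to \fib(\H(R_{i-1})\to\H(B))$ through \emph{this} map has obstruction in a $(k_i+2)$-connective target, which is what (b) kills. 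This is the route the paper takes.

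Second, even granting your lift $X_i \to R_{i-1}$, you never construct the map $R_i \to B$. To extend $R_{i-1}\to B$ over the pushout in (3) you need a nullhomotopy of the composite $\one\{X_i\} \to R_{i-1} \to B$ compatible with the augmentation, i.e.\ a nullhomotopy of $X_i \to R_{i-1}\to B$; without the map $R_i \to B$, conditions (1), (2) (compatibility of $\H(R_i)\to\H(B)$ with $M_i\to M_\infty$) and (4) do not even make sense. In the corrected setup this comes for free, since the attaching map is produced as a composite $X_i \to \fib(R_{i-1}\to B)\to R_{i-1}$ and hence carries a canonical nullhomotopy into $B$. So both defects are repaired by the single change of lifting into the fiber over $B$; with that change your argument matches the paper's proof.
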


\begin{proof}
  We construct a filtration between $A$ and $B$ satisfying (1), (2) and (3) by induction on $i$. For the base case $i=0$, we take $A = R_0$, which clearly satisfies (2), and we consider condition (3) to be vacuous in this case. Condition (1) in this case and in the inductive case follows from condition (2) by applying \Cref{lem:Hurewicz}
  and using the hypothesis that $k_j$ are non-decreasing. Thus we will only focus on (2) and (3) for the inductive step.
  
  For the inductive step, suppose we are given $A \to R_{i-1} \to B$ and an identification of $\H(A) \to \H(R_{i-1}) \to \H(B)$ with $M_0 \to M_{i-1} \to M_\infty$. We will construct an $R_i$ satisfying the same condition using pushout of the form in (3).
  

  Consider the following diagram in $\CC$
  \[ \begin{tikzcd}
    X_{i} \ar[r,dashed,"g_i"] \ar[d] &
    \ar[d] \fib(R_{i-1} \to B) \\
    \fib(m_i) \ar[r, "h_i"] &
    \fib(\H({R}_{i-1}) \to \H(B))    
  \end{tikzcd} \]
  where the right vertical map is $(k_i + 1)$-connective by condition (1) and
  \Cref{lem:Hurewicz}.
  As $X_{i}$ has no nonzero maps to a $(k_i+2)$-connective objects,
  the obstructions to producing the dashed lift $g_i$ vanishes
  and we fix a choice of lift.
  Using $g_i$ we can define ${R}_{i}$ to be the pushout of $\E_1$-algebras
  that fits into the following diagram  
  \[ \begin{tikzcd}
    \one \{ X_i \} \ar[r, "\mathrm{aug}"] \ar[d,"\overline g_i"] & \one \ar[d] \ar[r] & \one \ar[d] \\
    R_{i-1} \ar[r,"r_i"] & R_i \pushout \ar[r] & B. 
  \end{tikzcd} \]
   $\on\{X_i\}$ is connective since $X_i$ is, so after applying $\tau_{\leq0}$ (which preserves colimits), the pushout square defining $R_i$ becomes the pushout square in $\Alg(\CC^{\heart})$:
  
  \begin{center}
  	\begin{tikzcd}
  		{\tau_{\leq0}\on\{X_i\}} \ar[r]\ar[d] &\on \ar[d]\\
  		\ar[r] \on& \tau_{\leq0}R_i
  	\end{tikzcd}
  \end{center}
  
  which shows that $R_i$ is connected, since this pushout is the free algebra on the suspension of $X_i$, which is $0$ since $\CC^{\heart}$ is discrete.
  
  Applying $\fib(\H(-))$ to rows of the square above we obtain a
  $\pi_0\one$-linear map $u_i$
  fitting into the following diagram
  \[ \begin{tikzcd}
    X_i \ar[r] \ar[dr, "g_i"'] &
    \fib(\one\{X_i\} \to \one) \ar[r] \ar[d] &
    X_i \otimes \pi_0\one \ar[d, "u_i"] \\
    & \fib(R_{i-1} \to B) \ar[r] &
    \fib(\H(R_{i-1}) \to \H(B)).
  \end{tikzcd} \]
  Comparing this diagram with the one above
  we observe they are both examples of $\pi_0\one$-linearizations
  and therefore obtain an isomorphism $X_i \otimes \pi_0\one \cong \fib(m_i)$
  that identifies $u_i$ with $h_i$. Condition (2) follows.

  In order to complete the proof of the lemma we must check that the filtration we have constructed satisfies condition (4). Using the compatibility of $\H(-)$ together with condition (2) we see that the map $\H(R_\infty) \to \H(B)$ is an isomorphism. It now follows from \Cref{lem:Hurewicz} that the map $R_\infty \to B$ is $\infty$-connective.
\end{proof}

We will also need to understand how $\H$ interacts with tensor products.

\begin{lem}\label{lem:tenscells}
  Let $A,B \in \Alg(\CC)^{\geq1}$. There is a natural identification
  \[ \fib\left( \H(A\coprod B) \to \H(A\otimes B) \right)
  \cong \H(A) \otimes_{\pi_0\one} \H(B). \]
\end{lem}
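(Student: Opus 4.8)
\textbf{Proof plan for Lemma \ref{lem:tenscells}.}

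The plan is to unwind the definition of $\H$ from Construction \ref{cnstr:H} and reduce everything to a base-change computation over $\pi_0\one$. Recall that $\H(A) = \fib(\pi_0\one \to \pi_0\one \otimes_{A\otimes\pi_0\one}\pi_0\one)$, so after base-changing along $\one \to \pi_0\one$ (which, as noted in the proof of Lemma \ref{lem:Hurewicz}, detects connectivity and commutes with all the relevant operations) we may assume $\one = \pi_0\one$ is discrete and work entirely with augmented $\one$-algebras. Write $\bar A \coloneqq A\otimes_{\one}\one$ with its canonical augmentation, and likewise $\bar B$; then $\overline{A\coprod B} \cong \bar A \coprod \bar B$ and $\overline{A\otimes B}\cong \bar A\otimes \bar B$ as augmented $\one$-algebras, so it suffices to compute $\fib(\H(\bar A\coprod\bar B)\to \H(\bar A\otimes\bar B))$.

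The key step is the standard ``bar construction'' identity: for an augmented $\one$-algebra $C$ with augmentation ideal $I_C = \fib(C\to\one)$, one has $\Sigma\H(C)\cong \one\otimes_C\one$, i.e. $\H(C)$ is the (shifted) indecomposables, computed by the two-sided bar complex $\one\otimes_C\one \cong |\,\one\otimes C^{\otimes\bullet}\otimes\one\,|$. First I would record the cofiber sequence $\one\otimes_{\bar A\coprod\bar B}\one \to \one\otimes_{\bar A}\one \otimes \one\otimes_{\bar B}\one$ — this comes from the fact that $\bar A\coprod\bar B$ is the coproduct and hence $\one\otimes_{\bar A\coprod\bar B}\one \cong (\one\otimes_{\bar A}\one)\otimes_\one(\one\otimes_{\bar B}\one)$ — wait, that last identity is exactly what gives the answer: since $-\otimes_{\bar A\coprod\bar B}-$ is a pushout, $\one\otimes_{\bar A\coprod\bar B}\one \cong (\one\otimes_{\bar A}\one)\otimes_\one (\one\otimes_{\bar B}\one)$, whereas $\one\otimes_{\bar A\otimes\bar B}\one \cong (\one\otimes_{\bar A}\one)\otimes (\one\otimes_{\bar B}\one)$ taken over $\bar A\otimes\bar B$... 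I would instead argue via the natural map of augmented algebras $\bar A\coprod\bar B\to\bar A\otimes\bar B$ and its effect on $\one\otimes_{(-)}\one$. Setting $P\coloneqq \one\otimes_{\bar A}\one$ and $Q\coloneqq\one\otimes_{\bar B}\one$, one has $\Sigma\H(\bar A\coprod\bar B)\cong P\otimes_\one Q$ and $\Sigma\H(\bar A\otimes\bar B)\cong$ the relative tensor encoding the Tor of $P$ and $Q$; the fiber of the comparison $P\otimes_\one Q \to (\text{that})$ is, after desuspending, $\widetilde P\otimes_\one \widetilde Q$ where $\widetilde P\coloneqq\Sigma\H(\bar A) = \fib(P\to\one)$, and this gives $\fib(\H(\bar A\coprod\bar B)\to\H(\bar A\otimes\bar B))\cong \H(\bar A)\otimes_\one\H(\bar B)$ after accounting for the single suspension on each side cancelling against the two on the product.

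Concretely, the cleanest route is: (i) reduce to $\one$ discrete as above; (ii) use that $\H(C)[1] \cong \one\otimes_C\one$ naturally, and that for a coproduct $\one\otimes_{\bar A\coprod\bar B}\one\cong (\one\otimes_{\bar A}\one)\otimes_\one(\one\otimes_{\bar B}\one)$ while $\one\otimes_{\bar A\otimes\bar B}\one\cong (\one\otimes_{\bar A}\one)\otimes_{(\bar A\otimes\bar B)\otimes_{\bar A\coprod\bar B}\cdots}$ — better yet, (iii) observe directly that the square
\[
\begin{tikzcd}
\bar A\coprod\bar B \ar[r]\ar[d] & \bar A\otimes\bar B \ar[d] \\
\one \ar[r] & \one
\end{tikzcd}
\]
is \emph{not} a pushout, but the map $\one\otimes_{\bar A\coprod\bar B}\one \to \one\otimes_{\bar A\otimes\bar B}\one$ admits a splitting description via the Künneth/Eilenberg--Zilber decomposition of the bar complex of a tensor product, $B(\one,\bar A\otimes\bar B,\one)\simeq B(\one,\bar A,\one)\otimes_\one B(\one,\bar B,\one)$, whereas $B(\one,\bar A\coprod\bar B,\one)$ decomposes as the sum over ``alternating words'' which contains $B(\one,\bar A,\one)\otimes_\one B(\one,\bar B,\one)$ as the quotient killing the mixed terms; the fiber of the projection is then built from $\widetilde P\otimes_\one\widetilde Q$. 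I would write this as: reduced homology of a product of pointed bar constructions, giving $\H(\bar A)\otimes_\one\H(\bar B)$ on the nose. Naturality in $A,B$ is automatic since every construction used ($(-)\otimes\pi_0\one$, bar construction, fiber) is functorial.

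The main obstacle is organizing the bar-construction bookkeeping so that the single suspensions match up correctly — i.e., verifying that the fiber of $\H(A\coprod B)\to\H(A\otimes B)$ is $\H(A)\otimes_{\pi_0\one}\H(B)$ \emph{without} an extra suspension. The safest way to pin down the shift is to test on free algebras $A = \one\{X\}$, $B = \one\{Y\}$ with $X,Y$ connective (using Example \ref{exm:hfree}: $\H(\one\{X\}) = X$) and a resolution/simplicial-object argument as in the proof of Lemma \ref{lem:Hurewicz} to reduce the general case to the free case, where $A\coprod B = \one\{X\oplus Y\}$ has $\H = X\oplus Y$ and $A\otimes B = \one\{X\}\otimes\one\{Y\}$ has $\H(A\otimes B) = X\oplus Y \oplus (X\otimes Y)$ — wait, $\H$ of a tensor of free algebras: $\one\{X\}\otimes\one\{Y\}$ has augmentation ideal whose indecomposables are $X\oplus Y$ in arity $1$... so actually the fiber of $(X\oplus Y)\to \H(\one\{X\}\otimes\one\{Y\})$ being $X\otimes_{\pi_0\one}Y$ forces $\H(\one\{X\}\otimes\one\{Y\}) = (X\oplus Y)/(X\otimes Y)[-1]$, i.e. it is an \emph{extension}, not a direct sum — this is consistent with $\H$ not being monoidal. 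Establishing this free-algebra computation carefully, then bootstrapping via the monadic bar resolution exactly as in Lemma \ref{lem:Hurewicz}(2), will be the crux; everything else is formal.
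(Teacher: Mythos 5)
Your overall strategy is the paper's: reduce to $\pi_0\one$ and compare two-sided bar constructions along $A\coprod B \to A\otimes B$. But as written the argument contains two concrete errors, and they are precisely the source of the suspension confusion you flag as "the crux." First, for an augmented algebra $C$ the identity is $\one\otimes_C\one \cong \one \oplus \Sigma\H(C)$, not $\one\otimes_C\one \cong \Sigma\H(C)$: the augmentation splits off a unit summand, and dropping it is what makes your shifts refuse to match. Second, the K\"unneth identity $\one\otimes_{\bar A\otimes\bar B}\one \cong (\one\otimes_{\bar A}\one)\otimes_\one(\one\otimes_{\bar B}\one)$ holds for the \emph{tensor product}, not for the coproduct; your assertion that "$\one\otimes_{\bar A\coprod\bar B}\one\cong(\one\otimes_{\bar A}\one)\otimes_\one(\one\otimes_{\bar B}\one)$ since $\coprod$ is a pushout" is false (it would produce an extra summand $\Sigma^2(\H(A)\otimes\H(B))$ that is not present). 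For the coproduct one instead uses that $\H$ preserves colimits (Construction \ref{cnstr:H}), so $\H(A\coprod B)\cong \H(A)\oplus\H(B)$ and hence $\one\otimes_{A\coprod B}\one \cong \one\oplus\Sigma\H(A)\oplus\Sigma\H(B)$. Relatedly, your "alternating words" picture has the containment backwards: the coproduct bar construction is the \emph{smaller} object, mapping into the tensor-product one as a split summand, not receiving it as a quotient.

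With these two corrections the proof is a two-line computation and the bookkeeping resolves itself: the map of bar constructions becomes the inclusion
\[ \one\oplus\Sigma\H(A)\oplus\Sigma\H(B) \longrightarrow \one\oplus\Sigma\H(A)\oplus\Sigma\H(B)\oplus\Sigma^2\bigl(\H(A)\otimes\H(B)\bigr), \]
whose fiber is $\Sigma(\H(A)\otimes\H(B))$; desuspending once (to pass from $\one\otimes_{(-)}\one$ back to $\H$) gives exactly $\H(A)\otimes_{\pi_0\one}\H(B)$ with no residual shift. This is the paper's proof. Your fallback via free algebras and the monadic resolution would also work and correctly pins down the shift, but it is unnecessary once the unit summand and the coproduct/tensor distinction are handled correctly.
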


\begin{proof}
  Without loss of generality, we may replace $\CC$ with $\Mod(\CC; \pi_0\one)$,
  $A$ with $A\otimes\pi_0\one$ and $B$ with $B \otimes \pi_0\one$. 	
  For any $A \in \Alg(\CC)_{-/\one}$, there is a natural identification
  $\one\otimes_A\one \cong \one \oplus \Sigma \H(A)$.
  Using this we then compute that
  \begin{align*}
    \one\otimes_{A\otimes B}\one
    &\cong (\one\otimes_{A}\one) \otimes (\one\otimes_{B}\one)
    \cong (\one \oplus \Sigma \H(A)) \otimes (\one \oplus \Sigma \H(B)) \\
    &\cong \one \oplus \Sigma \H(A) \oplus \Sigma \H(B) \oplus \Sigma^2 \H(A)\otimes \H(B)\hfill\qedhere
  \end{align*}

  By keeping track of these isomorphisms via the map from $A\coprod B \to A\otimes B$, we conclude. 
\end{proof}

\begin{exm}\label{exm:keyexmzeq}\hfill
  \begin{enumerate}
  \item If $\CC$ is the category of $p$-local spectra,
    then $\pi_0\one \cong \ZZ_{(p)}$, whose modules are $\ZZ_{(p)}$-modules.
  \item If $\CC$ is the category of $\MU_{(p)}$-modules,
    then $\pi_0\one \cong \ZZ_{(p)}$, whose modules are $\ZZ_{(p)}$-modules.
  \item If $\CC$ is the category of $\ZZ$-equivariant $\MU_{(p)}^\Psi$-modules,
    then $\pi_0\one \cong \ZZ_{(p)}$ (with trivial action),
    whose modules are $\ZZ$-equivariant $\ZZ_{(p)}$-modules.\qedhere
  \end{enumerate}
\end{exm}

\begin{dfn}
  In the category $\Sp_{(p)}^{B\Z}$ of $\Z$-equivariant $\Ss_{(p)}$-modules
  we let $\mdef{\Ss_{(p)}(j)}$\todo{slight overlap with earlier notation. Doen't matter tho} be the invertible object
  given by $\Ss_{(p)}$ with $\Psi^\ell$ acting by $\ell^j \in (\pi_0\Ss_{(p)})^\times$.

  Given a $\Z$-equivariant commutative $\Ss_{(p)}$-algebra $R$
  we let $R(j) \in \Mod(\Sp^{B\Z}; R)$ be the corresponding invertible object
  obtained by base change.
  Note that $\Z_{(p)}(j) \in \Mod(\ZZ_{(p)})^{B\ZZ}$ lies in the heart.
\end{dfn}

Next we prove a lemma useful for checking the vanishing condition in
\Cref{lem:cells}(b). This starts with the observation that in the category of spectra
$[\Ss, Y] = 0$ for any $1$-connective $Y$.
Similarly, if $R$ is a connective commutative algebra, then in $R$-modules
$[R, Y] = 0$ for any $1$-connective $R$-module $Y$.

\begin{lem}\label{lem:lifttomuandsplitpost}
  Let a $\Z$-equivariant commutative $\Ss_{(p)}$-algebra $R$.
  If $Y \in \Mod(\Sp^{B\Z}; R)$ is $2$-connective, then
  for all $j$ we have
  $ [ R(j), Y ] = 0 $.
\end{lem}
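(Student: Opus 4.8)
The statement is a mild generalization of the observation that $[R, Y] = 0$ for $Y$ a $1$-connective $R$-module, now with a twist by $R(j)$ and one extra connectivity to absorb the twist. The core idea is that $R(j)$ is invertible, so $[R(j), Y] \cong \pi_0\bigl(R(j)^{\vee} \otimes_R Y\bigr)^{h\Z}$, where $R(j)^{\vee} \cong R(-j)$, and $R(-j) \otimes_R Y$ has the same underlying $R$-module as $Y$ (only the $\Z$-action is twisted), hence is still $2$-connective. So the question reduces to showing that for a $2$-connective object $Z \in \Mod(\Sp^{B\Z}; R)$ one has $\pi_0(Z^{h\Z}) = 0$.

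\textbf{Key steps.} First I would reduce to the untwisted case by the invertibility argument above: $\Map_{\Mod(\Sp^{B\Z};R)}(R(j), Y) \cong \Map_{\Mod(\Sp^{B\Z};R)}(R, R(-j)\otimes_R Y) \cong (R(-j)\otimes_R Y)^{h\Z}$, using that $R(-j)\otimes_R -$ is the inverse equivalence to $R(j)\otimes_R -$ and that maps out of the unit $R$ in $R$-modules-with-$\Z$-action compute the $\Z$-homotopy fixed points. Here $R(-j)\otimes_R Y$ is still $2$-connective since forgetting the $\Z$-action it is just $Y$, and connectivity in $\Sp^{B\Z}$ is detected on the underlying object. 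Second, I would use the homotopy fixed point spectral sequence (equivalently, the fiber sequence $Z^{h\Z} \to Z \xrightarrow{\gamma - 1} Z$ expressing that $B\Z \cong S^1$, so $Z^{h\Z} = \fib(Z \xrightarrow{\gamma-1} Z)$ where $\gamma$ is the generator): from the long exact sequence $\pi_1(Z) \to \pi_0(Z^{h\Z}) \to \pi_0(Z) \xrightarrow{\gamma-1} \pi_0(Z)$, and since $Z$ is $2$-connective we have $\pi_1(Z) = 0$ and $\pi_0(Z) = 0$, giving $\pi_0(Z^{h\Z}) = 0$. (In fact $1$-connectivity of $Z$ suffices here, which matches the remark preceding the lemma that the non-twisted version only needs $1$-connectivity; the extra degree of connectivity in the hypothesis is the price of the twist, since twisting can shift how the $\Z$-action interacts, but actually in this homotopy-fixed-point computation the twist costs nothing beyond invertibility — so $2$-connective is a safe, clean hypothesis.)

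\textbf{Main obstacle.} There is no real obstacle; the only thing to be careful about is making the identification $\Map_{R}(R, Z)^{B\Z} \cong Z^{h\Z}$ precise in the $\infty$-categorical setting — i.e.\ that mapping out of the unit in $\Mod(\Sp^{B\Z}; R) = \Fun(B\Z, \Mod_R)$ is computed by the limit over $B\Z$, which is exactly $(-)^{h\Z}$. This is standard, so I would just invoke it. The whole argument is two short paragraphs: reduce to the untwisted case via invertibility of $R(j)$, then compute $\pi_0$ of the $\Z$-homotopy fixed points via the $\gamma - 1$ fiber sequence using the connectivity hypothesis. One could alternatively filter $R(j)$ — but there is nothing to filter since $R(j)$ is already ``one cell'', so the direct argument is cleanest.
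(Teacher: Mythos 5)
Your argument is correct and is essentially the paper's proof: the paper reduces to $R=\Ss_{(p)}$ by base change and identifies $\map_{\Sp^{B\Z}}(\Ss_{(p)}(j),Y)$ with $\fib\bigl(Y \xrightarrow{\ell^j - \Psi_Y} Y\bigr)$, which is exactly your twisted $\Z$-homotopy-fixed-point computation packaged slightly differently. One correction to your parenthetical aside: $1$-connectivity of $Z$ does \emph{not} suffice, since the long exact sequence gives $\pi_0(Z^{h\Z}) \cong \coker\bigl(\gamma-1\colon \pi_1 Z \to \pi_1 Z\bigr)$ when $\pi_0 Z = 0$, and this cokernel need not vanish; the extra degree of connectivity is the price of $B\Z \simeq S^1$ having cohomological dimension $1$, not of the twist, and is precisely why the lemma assumes $2$-connectivity.
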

  
	
\begin{proof}
  As $R(j)$ is obtained by base change from $\Ss_{(p)}(j)$ and restriction preserves connectivity it will suffice to prove the lemma in the case $R=\Ss_{(p)}$.
  If we let $\Psi_Y {\colon}Y \to Y$ be the action on $Y$ then we have
  \[ \map_{\Sp^{B\Z}}( \Ss_{(p)}(j), Y ) \cong \fib( Y \xrightarrow{\ell^j - \Psi_Y} Y).\hfill\qedhere \]
\end{proof}

\subsection{A \texorpdfstring{$\Z$}{Z}-equivariant \texorpdfstring{$\EE_1\otimes\A_2$}{E1 times A2} form of \texorpdfstring{$\BPn$}{BPn}}\label{subsec:e1a2}


We now prove an $\E_1$-version of \Cref{thm:Adams-ops-exist} which we then upgrade to the full statement.

\begin{prop} \label{prop:e1adamsopexist}
  The underlying $\E_1$-$\MU_{(p)}$-algebra of $\BPn$ lifts to a
  \[ \BPn^\Psi \in \Alg(\Mod(\Sp^{B\Z}; \MU_{(p)}^\Psi) \]
  in such a way that there is a $\Z$-equivariant $\E_1$-$\MU_{(p)}^\Psi$-algebra map
  $ \iota {\colon}\BPn^{\Psi} \to E_n^\Psi $
  to the underlying $\Z$-equivariant $\E_1$-$\MU_{(p)}^\Psi$-algebra of the $E_n^\Psi$
  from \Cref{cor:E2EnAdams}.
\end{prop}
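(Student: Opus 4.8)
\textbf{Proof proposal for \Cref{prop:e1adamsopexist}.}

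The plan is to run the cell-attachment machinery of \Cref{subsec:e1a2} in the category $\CC = \Mod(\Sp_{(p)}^{B\Z}; \MU_{(p)}^\Psi)$ of $\Z$-equivariant $\MU_{(p)}^\Psi$-modules, building $\BPn^\Psi$ as a colimit of a tower of $\E_1$-algebras, while simultaneously threading through a compatible map to $E_n^\Psi$. Concretely, I would start from the map of ordinary $\E_1$-$\MU_{(p)}$-algebras $\iota\colon \BPn \to E_n$ of \Cref{cnst:BPntoEn}. By \Cref{exm:base-bpn}, $\pi_*\H(\BPn)$ is torsion-free, finitely generated, and concentrated in positive odd degrees; since $E_n$ has Landweber-exact homotopy and $\BPn \to E_n$ is a rational injection on homotopy, one can choose a filtration of $\H(\BPn)$ by finitely generated free $\Z_{(p)}$-modules $M_0 \to M_1 \to \cdots$ with $M_i/M_{i-1}$ concentrated in a single odd degree $2d_i-1$, ascending. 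Equivariantly, $\MU_{(p)}^\Psi$ acts on $\pi_{2k}$ by $\ell^k$ (as in the analysis in \Cref{lem:trivial-action}), so each graded piece lifts to a direct sum of shifts of the invertible objects $\MU_{(p)}^\Psi(d_i)$: concretely $\fib(m_i) \cong X_i \otimes \pi_0\one$ where $X_i$ is a sum of copies of $\Sigma^{2d_i-1}\MU_{(p)}^\Psi(d_i)$ (shifted so as to be $(2d_i-1)$-connective). Here I use that the weights are forced by the $\MU_{(p)}^\Psi$-module structure, not chosen.

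Now I would verify the hypotheses of \Cref{lem:cells} for the resulting filtration. Condition (a) is the monotonicity of connectivities $k_i = 2d_i - 2 \geq 0$, which holds since the $d_i$ are positive and non-decreasing. Condition (b) requires that $[X_i, Y] = 0$ for $Y$ that is $(k_i+2)$-connective; since $X_i$ is a sum of shifts $\Sigma^{k_i+1}$ of objects of the form $\MU_{(p)}^\Psi(d_i)$, this follows from \Cref{lem:lifttomuandsplitpost} (taking $R = \MU_{(p)}^\Psi$, shifting down so $Y$ becomes $2$-connective). \Cref{lem:cells} then produces a tower $\MU_{(p)}^\Psi = R_0 \to R_1 \to \cdots$ in $\Alg(\CC)^{\geq 1}$ with $\H(R_i) = M_i$, each $R_i$ obtained from $R_{i-1}$ by attaching $\E_1$-cells along $X_i$, and with $R_\infty \to B$ $\infty$-connective for any target $B$ receiving a compatible map. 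Setting $\BPn^\Psi \coloneqq R_\infty$ and checking that its underlying non-equivariant $\E_1$-$\MU_{(p)}$-algebra is $\BPn$ — which follows because forgetting the $\Z$-action is symmetric monoidal and $t$-exact, so it commutes with $\H$ and with the cell attachments, recovering exactly the analogous cell structure on $\BPn$ — gives the desired lift. The map $\iota\colon \BPn^\Psi \to E_n^\Psi$ is constructed inductively alongside the tower: at each stage one must lift the attaching map $X_i \to \fib(R_{i-1} \to E_n^\Psi)$, and the obstruction lies in $[X_i, \cof] = 0$ where $\cof$ is $(k_i+2)$-connective, again by \Cref{lem:lifttomuandsplitpost}. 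Passing to the colimit and using $\infty$-connectivity of $R_\infty \to E_n^\Psi$ on underlying spectra — combined with the fact that both sides are already identified non-equivariantly — gives the map.

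The main obstacle I anticipate is \emph{bookkeeping the equivariant weights correctly} and confirming that the graded pieces of $\H(\BPn)$, once lifted equivariantly, really are sums of the specific invertibles $\MU_{(p)}^\Psi(d_i)$ rather than more exotic $\Z$-equivariant $\Z_{(p)}$-modules (e.g. with nontrivial extensions of weight spaces). This is where one leans on the computation that $\pi_*\H(\BPn)$ is \emph{torsion-free} and on the structure of $\Z$-equivariant $\Z_{(p)}$-modules (equivalently $\Z_{(p)}[t^{\pm 1}]$-modules) that are finitely generated and on which the action is ``diagonalizable'' with eigenvalues powers of $\ell$ — this diagonalizability has to be extracted from the fact that $\MU_{(p)}^\Psi$ acts on its own even homotopy by the prescribed powers of $\ell$ and $\H(\BPn)$ is built from $\MU_{(p)}$-cells. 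A secondary subtlety is ensuring the filtration of $\H(\BPn)$ can be chosen with \emph{one-step} associated graded in ascending degrees; since the homotopy is finitely generated in each degree and concentrated in odd degrees, one can simply filter degree-by-degree, and within a degree split off free summands one at a time, which suffices. Everything else is a routine application of \Cref{lem:cells}, \Cref{lem:Hurewicz}, and \Cref{lem:lifttomuandsplitpost}.
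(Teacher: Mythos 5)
Your route differs from the paper's and has two genuine gaps. First, a circularity: \Cref{lem:cells} takes as input a map $f\colon A\to B$ of connected algebras together with a filtration of $\H(B)$, but in the equivariant category the target $B=\BPn^\Psi$ is exactly what you are trying to construct, so there is no equivariant object whose $\H$ you can filter. You try to sidestep this by decreeing that the graded pieces are sums of shifts of $\MU_{(p)}^\Psi(d_i)$ with a single weight per degree, but that is false: the paper's \Cref{lem:BPnfixcells} (proved only \emph{after} $\BPn^\Psi$ exists, via the Tor spectral sequence over $\MU_{(p)}^\Psi$) shows that $\pi_{2i-1}^\heart\H(\BPn^\Psi)$ involves all weights $j<i$ (coming from the divided powers $\gamma_k(b_j)$), and only up to a filtration, not a splitting. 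Second, your obstruction analysis for the map to $E_n^\Psi$ is wrong: you place the obstruction in maps from $X_i$ into a ``$(k_i+2)$-connective'' object and quote \Cref{lem:lifttomuandsplitpost}, but $E_n$ is periodic, so neither $E_n^\Psi$ nor $\fib(R_{i-1}\to E_n^\Psi)$ is highly connective (and \Cref{lem:cells} cannot be run with $B=E_n^\Psi$ at all, since $E_n$ is not a connected algebra). The vanishing you need comes from a different source entirely, namely the evenness of $\pi_*E_n$ together with the injectivity of $\pi_*\BPn\to\pi_*E_n$, neither of which you invoke.

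For comparison, the paper avoids both problems by identifying $\Mod(\Sp^{B\Z};\MU_{(p)}^\Psi)\cong\Mod(\MU_{(p)})^{h\Z}$, so that the desired lift is the same data as a commuting square $\BPn\to(\Psi^\ell)_*\BPn$ over $E_n\xrightarrow{\Psi^\ell}(\Psi^\ell)_*E_n$. It then runs \Cref{lem:cells} \emph{non-equivariantly} in $\Mod(\MU_{(p)})$, where all cells are free $\MU_{(p)}$-modules in even degrees, and extends the square cell by cell: the attaching map composed into $(\Psi^\ell)_*\BPn$ is detected in $(\Psi^\ell)_*E_n$ by injectivity on homotopy, is null there because it factors through the augmentation, and the choice of nullhomotopy is essentially unique because $\pi_{2i-1}E_n=0$. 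Your equivariant-cell strategy is essentially the one the paper uses later, in \Cref{lem:+A2}, to upgrade to an $\E_1\otimes\A_2$-structure --- but there it is only available because the $\E_1$-lift, and hence the equivariant computation of $\H(\BPn^\Psi)$, is already in hand.
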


\begin{proof}
  The $\ZZ$-action on $\MU_{(p)}^\Psi$ gives rise to a $\ZZ$-action on
  $\Mod(\MU_{(p)})$ and there is a symmetric monoidal isomorphism
  \[ \Mod(\Sp^{B\Z}; \MU_{(p)}^\Psi) \cong \Mod(\MU_{(p)})^{h\Z}. \]  
  As consequence of this identification, to prove the proposition
  it will suffice for us to construct a square of $\E_1$-$\MU_{(p)}$-algebras
  \[ \begin{tikzcd}
    \BPn \ar[r] \ar[d, "\iota"] &
    (\Psi^\ell)_*(\BPn) \ar[d, "(\Psi^\ell)_*(\iota)"] \\
    E_n \ar[r, "\Psi^\ell"] &
    (\Psi^\ell)_*(E_n).
  \end{tikzcd} \]
  where the map on the bottom row comes from \Cref{cor:E2EnAdams}
  and the vertical maps are obtained via \Cref{prop:detectEn}.

  We will construct this square by obstruction theory using a presentation of $\BPn$ by $\E_1$-cells coming from \Cref{lem:cells}. Using a splitting of the Postnikov tower of $\H(\BPn)$ and the concentration in odd degrees from \Cref{exm:base-bpn} we construct a filtration
  \begin{align*}
    \H(\MU_{(p)}) = 0
    &\to \Sigma^1 \pi_1(\H(\BPn))
    \to \Sigma^1 \pi_1(\H(\BPn)) \oplus \Sigma^3 \pi_3(\H(\BPn)) \\
    &\to \cdots \to \oplus_{i>0} \Sigma^{2i-1}\pi_{2i-1}(\H(\BPn)) \cong \H(\BPn).
  \end{align*}
  As each $\pi_{2i-1}(\H(\BPn))$ is a finite sum of copies of $\Z_{(p)}$ by \Cref{exm:base-bpn} we may take $X_i$ to be a sum of copies of $\Sigma^{2i-2}\MU_{(p)}$ and apply \Cref{lem:cells}. From this we obtain a filtration
  \[ \MU_{(p)} = R_0 \xrightarrow{r_1} R_1 \xrightarrow{r_2} R_2 \to \cdots \to R_{\infty} \xrightarrow{\cong} \BPn \]
  of $\E_1$-$\MU_{(p)}$-algebras and pushouts
  \[ \begin{tikzcd}
    \MU_{(p)}\{ \oplus \Sigma^{2i-2}\MU_{(p)} \} \ar[r, "\mathrm{aug}"] \ar[d] &
    \MU_{(p)} \ar[d] \\
    R_{i-1} \ar[r, "r_i"] & R_i. \pushout 
  \end{tikzcd} \]
    


  
  We now proceed by induction constructing a commuting diagram of $\E_1$-$\MU_{(p)}$-algebras
  \[ \begin{tikzcd}
    \MU_{(p)}\{ \oplus \Sigma^{2i-2}\MU_{(p)} \} \ar[d, "\mathrm{aug}"] \ar[r] &
    R_{i-1} \ar[d, "r_i"] \ar[ddr, bend left, "f_{i-1}"] & \\
    \MU_{(p)} \ar[r] & R_{i} \pushout \ar[d] \ar[dr,dashed,swap,"f_{i}"] \\
    & \BPn \ar[d] &
    (\Psi^\ell)_*(\BPn) \ar[d] \\
    & E_n\ar[r,"\Psi^{\ell}"] & (\Psi^\ell)_*(E_n).
  \end{tikzcd} \]  

  Let $x$ denote a map $ \oplus \Sigma^{2i-2}\MU_{(p)} \to R_{i-1}$ along which the cell producing $R_{i}$ is attached. We will first check that $ f_{i-1} \circ x $ is nullhomotopic.
  Since the source of $x$ is a free $\MU_{(p)}$-module and
  $\pi_*(\Psi^\ell)_*\BPn \to \pi_*(\Psi^\ell)_*E_n$ is injective (\Cref{cnst:BPntoEn}),
  it will suffice to check that the further composite
  $(\Psi^\ell)_*(\iota) \circ f_{i-1} \circ x$ is nullhomotopic.
  This is implied by the fact that the diagram of solid arrows commutes
  and $x$ maps to zero in along the augmentation.

  Any nullhomotopy of $ f_{i-1} \circ x $ produces a map $f_{i}$
  together with a homotopy filling the triangle within the desired diagram.
  It remains to pick a homotopy making the trapezoid below the triangle commute.
  This is equivalent to checking that two nullhomotopies of the composite
  \[ \oplus \Sigma^{2i-2}\MU_{(p)} \xrightarrow{x} R_{i-1} \to (\Psi^\ell)_*(E_n) \]
  are compatible. Since the source is a free $\MU_{(p)}$-module
  and $\pi_{2i-1}E_n \cong 0$, the space of such nullhomotopies is connected.
\end{proof}

\begin{lem}\label{lem:MU-formula}
  $\Psi^\ell$ acts on $\pi_{2k}\MU_{(p)}^\Psi$ via multiplication by $\ell^k$.
\end{lem}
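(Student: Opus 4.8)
The statement \Cref{lem:MU-formula} asserts that the Adams operation $\Psi^\ell$ on $\MU_{(p)}^\Psi$ acts on $\pi_{2k}\MU_{(p)}^\Psi$ by $\ell^k$. Recall from \Cref{cnstr:adams-conj} that $\Psi^\ell$ on $\MU_{(p)}$ is obtained by Thomifying the classical Adams operation $\Psi^\ell \colon \BU_{(p)} \to \BU_{(p)}$ together with the chosen homotopy making it compatible with the $J$-homomorphism. So the plan is to reduce the homotopy-group computation to the well-known effect of the classical $\Psi^\ell$ on $\pi_*\BU$, equivalently on $\MU_*$ via the Thom isomorphism.

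\textbf{Key steps.} First I would recall that on the level of the unit $\MU \to \MU$ the Adams operation we have constructed, being a Thomification of the unstable $\Psi^\ell$ on $\mathrm{BU}_{(p)}$, induces on homology $H_*(\MU;\Z_{(p)})$ — and hence on the Thom-isomorphic $H_*(\BU;\Z_{(p)})$ — the classical stable Adams operation. The classical fact (e.g.\ via the splitting principle, or by its effect on the universal Chern classes / the $\beta_i \in H_{2i}(\BU)$) is that $\Psi^\ell$ multiplies the degree-$2k$ part of $H_*(\BU;\Q)$ by $\ell^k$; concretely $\Psi^\ell$ acts on $\pi_{2k}\bu_{(p)}$ (equivalently the primitive part in degree $2k$) by $\ell^k$ — this is exactly statement (c) of \Cref{lem:trivial-action} in the excerpt, which I may cite. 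Second, since $\pi_*\MU_{(p)}$ is torsion-free and concentrated in even degrees, and the Hurewicz map $\pi_*\MU_{(p)} \to H_*(\MU_{(p)};\Z_{(p)})$ is a rational isomorphism (indeed injective, as $\MU$ has torsion-free homology and homotopy), it suffices to compute the action on $H_{2k}(\MU_{(p)};\Q)$. Third, transporting through the Thom isomorphism $H_*(\MU;\Q) \cong H_*(\BU;\Q)$, the operation corresponds to the classical $\Psi^\ell$, which acts by $\ell^k$ in degree $2k$; pulling this back to $\pi_{2k}\MU_{(p)}^\Psi$ gives the claim.

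\textbf{Main obstacle.} The only subtlety is bookkeeping about where the degree grading lives: one must be careful that the Thom isomorphism shifts degrees in a way compatible with the convention that $\Psi^\ell$ multiplies $\pi_{2k}\BU$ by $\ell^k$ rather than $\ell^{k+\mathrm{const}}$, i.e.\ that the normalization of the Thom class is $\Psi^\ell$-fixed (this follows since the Thom class comes from the unit of the $\mathbb{E}_\infty$-ring and $\Psi^\ell$ is a ring map). Once that normalization is pinned down, the argument is essentially a citation of the classical computation together with the torsion-freeness of $\pi_*\MU_{(p)}$. I would present it as: reduce mod torsion to homology, apply the Thom isomorphism, invoke the classical effect of $\Psi^\ell$ on $H_*(\BU;\Q)$ (or directly cite the relevant clauses of \Cref{lem:trivial-action}), and conclude.
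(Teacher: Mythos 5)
Your proof is correct and follows essentially the same route as the paper: reduce to rational homotopy/homology using torsion-freeness of $\pi_*\MU_{(p)}$, identify the rational homology of the Thom spectrum with that of $\BU$ equivariantly, and invoke the classical formula for $\Psi^\ell$ on $\BU$. The only (immaterial) difference is that the paper gets the equivariant identification $\Q\otimes\MU_{(p)}^\Psi\cong\Q\otimes\Sigma^\infty_+\BU$ for free from the rational contractibility of $\mathrm{BSL}_1(\Ss_{(p)})$, whereas you argue via naturality of the Thom isomorphism and invariance of the Thom class.
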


\begin{proof}
  As the homotopy groups of $\MU_{(p)}$ are torsion free it suffices to prove the lemma after rationalization. After rationalizing $\mathrm{BSL}_1(\mathbb{S}_{(p)})$ becomes trivial and so we have a canonical $\Z$-equivariant identification
  \[ \Q \otimes \MU_{(p)}^\Psi \cong \Q \otimes \Sigma_+^\infty \BU. \]
  The lemma now follows from the fact that the rational homology of $\BU$ is a polynomial algebra on classes $b_i$ in the Hurewicz image and the formula for the action of the Adams operation $\Psi^\ell$ on the homotopy of $\BU$.
\end{proof}

\begin{cor} \label{cor:BPn-action}
  $\Psi^\ell$ acts on $\pi_{2k}\BPn^\Psi$ via multiplication by $\ell^k$.
\end{cor}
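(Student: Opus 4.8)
\textbf{Proof plan for \Cref{cor:BPn-action}.}

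The plan is to deduce the statement directly from \Cref{prop:e1adamsopexist}, the injectivity of $\iota$ on homotopy, and the known action on $E_n^\Psi$. Recall from \Cref{prop:e1adamsopexist} that $\BPn^\Psi$ lives in $\Alg(\Mod(\Sp^{B\Z};\MU_{(p)}^\Psi))$ and comes equipped with a $\Z$-equivariant $\E_1$-$\MU_{(p)}^\Psi$-algebra map $\iota\colon\BPn^\Psi\to E_n^\Psi$, which by \Cref{cnst:BPntoEn} is injective on homotopy groups. Since $\iota$ is a map of $\Z$-equivariant spectra, the square
\[
\begin{tikzcd}
\pi_{2k}\BPn^\Psi \ar[r,"\pi_{2k}\iota"] \ar[d,"\Psi^\ell"'] & \pi_{2k}E_n^\Psi \ar[d,"\Psi^\ell"] \\
\pi_{2k}\BPn^\Psi \ar[r,"\pi_{2k}\iota"'] & \pi_{2k}E_n^\Psi
\end{tikzcd}
\]
commutes. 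By \Cref{cnstr:psi-En}, $\Psi^\ell$ acts on $\pi_{2k}E_n^\Psi$ by multiplication by $\ell^k$. Hence for any $x\in\pi_{2k}\BPn^\Psi$ we have $\pi_{2k}\iota(\Psi^\ell x)=\Psi^\ell(\pi_{2k}\iota(x))=\ell^k\cdot\pi_{2k}\iota(x)=\pi_{2k}\iota(\ell^k x)$, and injectivity of $\pi_{2k}\iota$ forces $\Psi^\ell x=\ell^k x$.

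Alternatively, and perhaps more robustly, one can argue via the $\MU_{(p)}^\Psi$-module structure: $\BPn^\Psi$ is a $\MU_{(p)}^\Psi$-algebra, so $\pi_*\BPn^\Psi$ is a module over $\pi_*\MU_{(p)}^\Psi$, and the equivariant structure map $\pi_*\MU_{(p)}^\Psi\to\pi_*\BPn^\Psi$ is surjective in even degrees (since $\pi_*\BPn$ is a quotient of $\pi_*\MU_{(p)}$ by a regular sequence of even classes). Given $x\in\pi_{2k}\BPn^\Psi$, write $x$ as the image of some $y\in\pi_{2k}\MU_{(p)}^\Psi$; then $\Psi^\ell x$ is the image of $\Psi^\ell y=\ell^k y$ by \Cref{lem:MU-formula}, so $\Psi^\ell x=\ell^k x$. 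This route makes clear that the result is forced by the $\MU_{(p)}^\Psi$-linearity of the Adams operation.

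Neither argument presents a genuine obstacle — the content is entirely in the construction of $\BPn^\Psi$ already carried out in \Cref{prop:e1adamsopexist} and \Cref{cnst:BPntoEn}, together with the elementary computations \Cref{cnstr:psi-En} and \Cref{lem:MU-formula}. The only point requiring a moment's care is that the maps in question ($\iota$, or the structure map from $\MU_{(p)}^\Psi$) are genuinely $\Z$-equivariant, so that the naturality square above commutes; this is part of the output of \Cref{prop:e1adamsopexist}, so there is nothing further to check. I would present the first argument as the main proof, as it is the shortest.
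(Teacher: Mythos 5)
Your proposal is correct, and your ``alternative'' second argument is in fact verbatim the paper's proof: the paper disposes of this corollary in one line by noting that the unit map $\MU_{(p)}^\Psi \to \BPn^\Psi$ is surjective on homotopy groups and invoking \Cref{lem:MU-formula}. Your preferred first argument (pushing forward along $\iota\colon \BPn^\Psi \to E_n^\Psi$ and using injectivity on $\pi_*$ from \Cref{cnst:BPntoEn} together with the defining action on $\pi_{2k}E_n$ from \Cref{cnstr:psi-En}) is also valid and self-contained given \Cref{prop:e1adamsopexist}; it buys nothing extra here, and the surjectivity route is the shorter of the two, so I would lead with that one rather than the injectivity argument.
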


\begin{proof}
  The unit map $\MU_{(p)}^\Psi \to \BPn^\Psi$ is surjective on homotopy groups.
\end{proof}

\begin{lem}\label{lem:BPnfixcells}
  $\pi_*^{\heart}\H(\BPn^\Psi)$ is concentrated in odd degrees,
  and each homotopy group $\pi_{2i-1}^\heart\H(\BPn^\Psi)$
  has a finite filtration whose associated graded consists of copies of 
  $\Z_{(p)}(j)$ for $j<i$.
\end{lem}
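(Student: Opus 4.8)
The goal is to compute $\pi_*^\heart\H(\BPn^\Psi)$ as a $\Z$-equivariant $\Z_{(p)}$-module, refining the (already known, see \Cref{exm:base-bpn}) computation that the non-equivariant $\pi_*\H(\BPn)$ is torsion free, finitely generated, and concentrated in positive odd degrees. The plan is to run exactly the same two-stage Tor spectral sequence argument that proves \Cref{exm:base-bpn}, but now in the category $\CC = \Mod(\Sp^{B\Z};\MU_{(p)}^\Psi)$ of $\Z$-equivariant $\MU_{(p)}^\Psi$-modules, keeping track of the $\Psi^\ell$-action throughout. Since all the spectral sequences in question degenerate and there are no extension problems to worry about at the level of the associated graded (the statement only claims a finite filtration on each homotopy group), the only genuinely new content is bookkeeping of weights.

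First I would record the input: by \Cref{cor:BPn-action}, $\Psi^\ell$ acts on $\pi_{2k}\BPn^\Psi$ and on $\pi_{2k}\MU_{(p)}^\Psi$ (\Cref{lem:MU-formula}) by multiplication by $\ell^k$. In particular the map $\pi_*\MU_{(p)}^\Psi \to \pi_*\BPn^\Psi$ is, $\Z$-equivariantly, the quotient by a regular sequence of classes $x_{m}$ in positive even degrees $2d_m$, where each $x_m$ lies in weight $d_m$ (i.e.\ spans a copy of $\Z_{(p)}(d_m)$ as a $\Z$-representation). Then I would expand the defining formula $\Z_{(p)} \oplus \Sigma\H(\BPn^\Psi) \cong \Z_{(p)}\otimes_{\BPn^\Psi\otimes_{\MU_{(p)}^\Psi}\Z_{(p)}}\Z_{(p)}$ exactly as in the proof of \Cref{exm:base-bpn}. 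The first Tor spectral sequence identifies $\pi_*(\BPn^\Psi\otimes_{\MU_{(p)}^\Psi}\Z_{(p)})$ with an exterior algebra over $\Z_{(p)}$ on classes $\sigma x_m$ in odd degree $2d_m+1$; $\Z$-equivariantly, $\sigma x_m$ spans $\Z_{(p)}(d_m)$, since suspension does not change the weight of the representation. There are no multiplicative extensions for filtration/degree reasons (as cited, using e.g.\ \cite[Proposition 3.6]{angeltveit2008topological}), and these are all forced equivariantly since the relevant $\Ext$/extension groups in $\Mod(\Z_{(p)})^{B\Z}$ between a weight-$j$ module in odd degree and a weight-$j'$ module in odd degree vanish for degree reasons. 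Then the second Tor spectral sequence has $E_2$-page the divided power algebra over $\Z_{(p)}$ on the classes $\sigma x_m$ placed in even degree $2d_m+2$... — wait, more precisely $E_2 = \Gamma_{\Z_{(p)}}[\,\sigma^2 x_m : |\sigma^2 x_m| = 2d_m+2\,]$ computing $\pi_*\H(\BPn^\Psi)$, and $\Psi^\ell$ acts on $\gamma_k(\sigma^2 x_m)$ by $\ell^{k(d_m+1)}$, so a monomial of internal degree $2i$ sits in weight $i$. Shifting down by the suspension $\Sigma$, a class of $\H(\BPn^\Psi)$ in degree $2i-1$ has weight $i$, i.e.\ spans copies of $\Z_{(p)}(i)$.

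\emph{A correction to the above:} I realize the claimed statement asserts weights $j < i$ for $\pi_{2i-1}^\heart\H(\BPn^\Psi)$, not $j = i$, so let me re-examine. The $E_2$-page of the second spectral sequence is $\Gamma_{\Z_{(p)}}$ on generators $y_m$ in degree $2d_m + 2$ (this is the suspension shift built into the formula $\Z_{(p)}\oplus \Sigma\H(-)$). A monomial $\prod \gamma_{k_m}(y_m)$ has degree $\sum k_m(2d_m+2)$ and weight $\sum k_m(d_m+1)$; writing its degree as $2i$ gives weight exactly $i$, contributing to $\H(\BPn^\Psi)$ in degree $2i-1$. So as a $\Z$-representation, $\pi_{2i-1}^\heart\H(\BPn^\Psi)$ is a sum of copies of $\Z_{(p)}(i)$, which is finitely generated (\Cref{exm:base-bpn}), hence in particular admits a finite filtration with associated graded copies of $\Z_{(p)}(j)$ for $j \le i$; and the strict inequality $j<i$ follows since the degree-$(2i-1)$ part is only hit by monomials of degree exactly $2i$ in the $E_2$-page, all of which, since every generator $y_m$ lies in degree $2d_m+2 \ge 2\cdot 1 + 2 = 4$ with $d_m \ge 1$, have... hmm. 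Actually the cleanest route is: the generators $y_m$ of the divided power algebra come from $v_1,\dots,v_{n-1}$ and $v_n$, i.e.\ $d_m \in \{1, \dots, p^n-1\}$ with $d_m \ge 1$; every monomial of degree $2i$ is a product of at least one $\gamma_k(y_m)$ with $y_m$ of degree $\le 2i$, and the weight of $\gamma_k(y_m)$ is $k(d_m+1) = k\cdot\frac{|\gamma_k(y_m)|}{2}$; summing, weight $= i$ — so I get $j = i$ again. I suspect the ``$j<i$'' in the statement is because the relevant indexing convention places $\H(\BPn^\Psi)$ itself (not its suspension) in the picture, so the generator in degree $2i-1$ has weight coming from a degree-$2i-2$ shift, giving weight $i-1 < i$; or, safely, one simply observes that \emph{some} bound $j < i$ (in fact $j \le i-1$ suffices for the applications) holds because the lowest weight appearing in degree $2i-1$ is governed by the smallest generator. \textbf{The main obstacle} is getting this weight bookkeeping pinned down against whichever suspension/grading convention \Cref{cnstr:H} actually uses — once that is fixed, everything is a transcription of the proof of \Cref{exm:base-bpn}. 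Concretely I would: (i) identify the equivariant regular sequence and its weights via \Cref{cor:BPn-action}; (ii) run the first Tor SS equivariantly to get the exterior algebra with its weights, checking extension-freeness equivariantly by a degree argument; (iii) run the second Tor SS equivariantly to get the divided power algebra, degenerating at $E_2$ for bidegree reasons; (iv) read off the weight in each odd degree, conclude finite length and the weight bound $j<i$ from the explicit list of generators.
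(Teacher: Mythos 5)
Your overall strategy --- rerunning the two Tor spectral sequences of \Cref{exm:base-bpn} equivariantly and tracking weights via \Cref{lem:MU-formula} and \Cref{cor:BPn-action} --- is exactly the paper's argument. But the weight bookkeeping contains an error that costs you precisely the content of the lemma, namely the strict inequality $j<i$. You correctly note early on that suspension does not change the weight of a $\Z$-representation, but then compute the weight of $\gamma_k(\sigma^2 x_m)$ as $k(d_m+1)$. Since $x_m$ lives in degree $2d_m$ with weight $d_m$, its double suspension lives in degree $2d_m+2$ but still spans a copy of $\Z_{(p)}(d_m)$, so $\gamma_k(\sigma^2 x_m)$ has weight $k\,d_m$, not $k(d_m+1)$. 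Consequently a monomial $\prod_m\gamma_{k_m}(\sigma^2 x_m)$ of total degree $2I$ (so that $I=\sum_m k_m(d_m+1)$) has weight $\sum_m k_m d_m = I - \sum_m k_m \le I-1 < I$; after undoing the shift in $\Z_{(p)}\oplus\Sigma\H(\BPn^\Psi)$, this says that $\pi_{2I-1}^{\heart}\H(\BPn^\Psi)$ is filtered with graded pieces $\Z_{(p)}(j)$ for $j<I$, which is the statement. Your computation instead yields weight exactly $I$; you notice the mismatch with the claimed $j<i$, and the attempted repairs (``I suspect the $j<i$ is because of the indexing convention,'' ``safely, one simply observes that some bound $j<i$ holds'') are not arguments. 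This is not cosmetic: the strict inequality is exactly what makes the obstruction groups vanish in \Cref{lem:+A2}, where one needs maps $\Z_{(p)}(j)\to\Z_{(p)}(k)$ to be null because $j<k$.

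A secondary slip: the divided-power generators are indexed by the polynomial generators $b_i$ of $\pi_*\MU_{(p)}$ with $i\notin\{p-1,\dots,p^n-1\}$ --- the classes killed in $\BPn$ --- not by $v_1,\dots,v_n$, which survive to $\BPn$. This does not change the shape of the weight argument, but it should be corrected, and it feeds into the explicit degrees $2i+2$ and weights $i$ of the generators that the correct count relies on.
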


\begin{proof}
  The proof of this lemma proceeds by keeping track of actions in \Cref{exm:base-bpn}.
  As there we have an isomorphism of $\Z$-equivariant $\Z_{(p)}$-modules
  \[ \Z_{(p)} \oplus \Sigma \H(\BPn^\Psi)
  \cong \Z_{(p)} \otimes_{\BPn^\Psi \otimes_{\MU_{(p)}^\Psi} \Z_{(p)}} \Z_{(p)}. \]  

  Examining the Tor spectral sequence we find that
  $\pi_*^\heart(\BPn^\Psi \otimes_{\MU_{(p)}^\Psi}\ZZ_{(p)})$ is
  an exterior algebra over $\ZZ_{(p)}$ on a collection of classes 
  $\Sigma^{2i+1}\ZZ_{(p)}(i)\{\sigma b_i\}$
  where $i \in \{ i > 0\ |\ i \neq p-1, \dots, p^n -1 \}$.
  The classes $\sigma b_i$ are suspensions of
  polynomial algebra generators of $\MU_{(p)}^\Psi$
  and the action comes from \Cref{lem:MU-formula}.

  Applying the Tor spectral sequence again we obtain a
  spectral sequence computing $\pi_*^\heart(\Z_{(p)} \oplus \Sigma \H(\BPn^\Psi))$
  whose $E_2$-page is a divided power algebra over $\Z_{(p)}$
  on classes $\Sigma^{2i+2}\ZZ_{(p)}(i)\{b_i\}$.
  This spectral sequence degenerates at $E_2$
  and so we obtain a filtration on
  $\pi_*^\heart(\H(\BPn^\Psi))$ with the desired properties.
\end{proof}



Next we upgrade the $\E_1$-algebra from \Cref{prop:e1adamsopexist}
to an $(\E_1 \otimes \A_2)$-algebra.

\begin{lem} \label{lem:+A2}
  The underlying $(\E_1\otimes \A_2)$-$\MU_{(p)}$-algebra of $\BPn$ lifts to a
  \[ \BPn^\Psi \in \Alg_{\E_1 \otimes \A_2}(\Mod(\Sp^{B\Z}; \MU_{(p)}^\Psi)). \]
\end{lem}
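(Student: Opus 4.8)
The goal is to upgrade the $\Z$-equivariant $\E_1$-$\MU_{(p)}^\Psi$-algebra $\BPn^\Psi$ of \Cref{prop:e1adamsopexist} to an $(\E_1\otimes\A_2)$-$\MU_{(p)}^\Psi$-algebra, compatibly with the nonequivariant $(\E_1\otimes\A_2)$-structure underlying the $\E_3$-structure on $\BPn$. The strategy mirrors the proof of \Cref{prop:e1adamsopexist}: we already have the $\E_1$-structure, so the $\A_2$-structure amounts to choosing a unital multiplication map in the category of $\E_1$-$\MU_{(p)}^\Psi$-algebras and then an associator $2$-cell, together with the data witnessing compatibility with the $\E_1$-structure. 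I would phrase this as lifting the diagram that defines the nonequivariant $(\E_1\otimes\A_2)$-structure on $\BPn$ through the forgetful functor $\Alg_{\E_1\otimes\A_2}(\Mod(\Sp^{B\Z};\MU_{(p)}^\Psi)) \to \Alg_{\E_1}(\Mod(\Sp^{B\Z};\MU_{(p)}^\Psi))$, using the identification $\Mod(\Sp^{B\Z};\MU_{(p)}^\Psi) \cong \Mod(\MU_{(p)})^{h\Z}$ and reducing everything to $\Z$-fixed-point obstruction theory on mapping spaces of $\E_1$-$\MU_{(p)}$-algebras.

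\textbf{Key steps.} First I would set up the obstruction-theoretic skeleton: an $\A_2$-structure on an $\E_1$-algebra $R$ over $\MU_{(p)}$ is governed by the (pointed) space of $\E_1$-algebra maps $R\otimes_{\MU_{(p)}} R \to R$ lifting the $\E_1$-multiplication through the appropriate cell structure, i.e. by a tower whose layers are mapping spectra out of the cells of $R\otimes_{\MU_{(p)}}R$ (relative to $\MU_{(p)}$) into $R$. I would use \Cref{lem:cells}, \Cref{lem:tenscells}, and \Cref{lem:BPnfixcells} to describe these cells: by \Cref{lem:tenscells}, $\H(\BPn^\Psi\otimes_{\MU_{(p)}^\Psi}\BPn^\Psi)$ is built from $\H(\BPn^\Psi)$ and $\H(\BPn^\Psi)\otimes_{\Z_{(p)}}\H(\BPn^\Psi)$, so by \Cref{lem:BPnfixcells} every cell is (after splitting the Postnikov tower) a shift $\Sigma^{2j-2}\MU_{(p)}(j)$ of a twisted free $\MU_{(p)}^\Psi$-module. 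Second, I would run the obstruction argument cell-by-cell exactly as in the proof of \Cref{prop:e1adamsopexist}: at each stage the obstruction to extending the (equivariant) lift of the multiplication over the next cell, and to extending the associator $2$-cell, lies in $[\,\Sigma^{2j-2}\MU_{(p)}^\Psi(j),\ Y\,]^{h\Z}$ for $Y$ a sufficiently connective $\MU_{(p)}^\Psi$-module (the fiber of a map of lifts), which vanishes by \Cref{lem:lifttomuandsplitpost} combined with the injectivity of $\pi_*\BPn^\Psi \to \pi_*E_n^\Psi$ and the fact that $E_n$ has homotopy concentrated in even degrees (so the relevant odd homotopy groups vanish), as used in the last paragraph of the proof of \Cref{prop:e1adamsopexist}. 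Third, to pin down that the lift restricts to the correct nonequivariant $(\E_1\otimes\A_2)$-structure, I would, as in \Cref{prop:e1adamsopexist}, carry along the map $\iota\colon \BPn^\Psi \to E_n^\Psi$ and work with the square of $\E_1$-algebra maps over $\MU_{(p)}$ relating $\BPn$, $(\Psi^\ell)_*\BPn$, $E_n$, $(\Psi^\ell)_*E_n$; since $E_n$ is $\E_\infty$ (hence $\E_1\otimes\A_2$) and $\Psi^\ell$ is a commutative algebra map, all the $\A_2$-data on the $E_n$-corner is canonical, and compatibility of the two nullhomotopies/associators is again measured by a connected mapping space because the source is twisted-free and $\pi_{\mathrm{odd}}E_n\cong 0$.

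\textbf{Main obstacle.} The conceptually delicate point is bookkeeping the full $(\E_1\otimes\A_2)$-operad worth of data equivariantly — not just the multiplication map and one associator, but the homotopy-coherent interaction between the $\E_1$-structure and the $\A_2$-structure — and checking that the obstruction groups at every level of the resulting tower really are of the form $[\,\text{twisted free }\MU_{(p)}^\Psi\text{-module},\ \text{highly connective}\,]^{h\Z}$. I expect this to be manageable because $\E_1\otimes\A_2$ is a $2$-dimensional operad (only arities up to $2$ and up to $2$-cells matter for the $\A_2$-factor), so the tower is short and each layer is controlled by \Cref{lem:BPnfixcells} and \Cref{lem:lifttomuandsplitpost}; the only real work is to organize the cell filtration of $\BPn^\Psi\otimes_{\MU_{(p)}^\Psi}\BPn^\Psi$ via \Cref{lem:cells} and \Cref{lem:tenscells} carefully enough that the connectivity estimates feeding \Cref{lem:lifttomuandsplitpost} are transparently satisfied. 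I would not expect any genuinely new input beyond what is already assembled in this subsection; the proof is a direct adaptation of the argument for the $\E_1$-case.
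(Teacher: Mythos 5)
Your overall skeleton is the same as the paper's: filter the map $s\colon \BPn^\Psi \coprod \BPn^\Psi \to \BPn^\Psi \otimes_{\MU_{(p)}^\Psi} \BPn^\Psi$ by equivariant $\E_1$-cells using \Cref{lem:cells}, \Cref{lem:tenscells} and \Cref{lem:BPnfixcells}, and extend the fold map $\nabla$ cell by cell. (One simplification you are missing: an $\E_1\otimes\A_2$-structure is just a \emph{unital multiplication} in $\E_1$-algebras --- $\A_2$ carries no associator, that would be $\A_3$ --- so there is exactly one map to produce and no coherence tower to worry about.)

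The genuine gap is in your vanishing argument for the obstructions. By \Cref{lem:tenscells} the relative cells of the tensor square are governed by $\H(\BPn^\Psi)\otimes_{\Z_{(p)}}\H(\BPn^\Psi)$, which is concentrated in \emph{even} degrees (odd tensor odd), so each obstruction is an equivariant map $\Sigma^{2k}\MU_{(p)}^\Psi(j) \to \BPn^\Psi$. The group of non-equivariant such maps is $\pi_{2k}\BPn$, which is \emph{not} zero, and $\pi_{2k}E_n$ is not zero either; so "injectivity into $E_n$ plus evenness of $\pi_*E_n$" --- the mechanism from the $\E_1$-case, where the cells sit in degrees making the relevant homotopy of $E_n$ odd --- proves nothing here. \Cref{lem:lifttomuandsplitpost} also does not apply, since the target $\BPn^\Psi$ is not $(2k+2)$-connective. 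The vanishing that actually makes the lemma work is the weight mismatch: the obstruction is determined by a map $\Z_{(p)}(j) \to \pi_{2k}^{\heartsuit}\BPn^\Psi$ in the $\Z$-equivariant heart, the target is a sum of copies of $\Z_{(p)}(k)$ by \Cref{cor:BPn-action}, and \Cref{lem:BPnfixcells} guarantees $j<k$, so every such equivariant map is zero ($\Psi^\ell$ acts by $\ell^j$ on the source and $\ell^k$ on the torsion-free target). You cite the lemmas that set up these twists but never use them; without the weight argument your obstruction groups simply do not vanish. (A salvage via compatibility with the $\E_2$-multiplication on $E_n^\Psi$ to kill the obstruction \emph{classes} rather than the groups is conceivable, but it requires carrying that extra multiplicative data through the induction and is not what you wrote.) The final compatibility with the non-equivariant $(\E_1\otimes\A_2)$-structure is then immediate because the non-equivariant nullhomotopy is unique, $\BPn$ having even homotopy.
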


\begin{proof}
  Consider the $\Z$-equivariant $\E_1$-$\MU_{(p)}^\Psi$-algebra $\BPn^\Psi$ from \Cref{prop:e1adamsopexist}. We will lift $\BPn^\Psi$ to a $\ZZ$-equivariant $(\E_1 \otimes \A_2)$-$\MU_{(p)}^\Psi$-algebra by equipping it with a unital multiplication in the category of $\ZZ$-equivariant $\EE_1$-$\MU_{(p)}^\Psi$-algebras. To do this we produce an extension as indicated in the diagram below.
  \[ \begin{tikzcd}
    \BPn^\Psi \coprod \BPn^\Psi \ar[rr,"\nabla"] \ar[dr,"s"] & & \BPn^\Psi \\
    & \ar[ur,dashed] \BPn^\Psi \otimes_{\MU_{(p)}^\Psi} \BPn^\Psi & 
  \end{tikzcd} \]

  We will do this using a cellular obstruction theory argument. To do this we apply \Cref{lem:cells} to place a filtration on the map $s$. Using \Cref{lem:tenscells} we know that $\fib(\H(s))$ is isomorphic to $\H(\BPn^\Psi) \otimes_{\ZZ_{(p)}} \H(\BPn^\Psi)$. From \Cref{lem:BPnfixcells} we can now conclude that the homotopy groups of $\fib(\H(s))$ are concentrated in positive even degrees and that $\pi_{2i}^\heart\fib(\H(s))$ has a finite filtration with associated graded given by copies of $\Z_{(p)}(j)$ with $j<i$. Even-ness and \Cref{lem:lifttomuandsplitpost} together imply that the Postnikov tower of $\fib(\H(s))$ splits (all $k$-invariants vanish). Splicing a splitting of the Postnikov tower together with the filtrations on the homotopy groups and picking the $X_i$ to be of the form $\Sigma^{2k_i} \MU_{(p)}^\Psi(j)$ (see \Cref{lem:lifttomuandsplitpost}) we may apply \Cref{lem:cells} to obtain a filtration
  \[ \BPn^\Psi \coprod \BPn^\Psi = R_0 \xrightarrow{r_1} R_1 \to \cdots \to R_{\infty} \xrightarrow{\cong} \BPn^\Psi \otimes_{\MU_{(p)}^\Psi} \BPn^\Psi \]  
  where $R_i$ is obtained from $R_{i-1}$ by a pushout along the augmentation of a free algebra on $\Sigma^{2k+1}\MU_{(p)}^\Psi(j)$ with $j<k$.

  Thus, the obstructions to making the extension are a sequence of $\Z$-equivariant maps
  \[ \Sigma^{2k}\MU_{(p)}^\Psi(j) \to \BPn^\Psi \]
  where $j < k$. As the homotopy of $\BPn$ is even, these maps are determined by their value on $\pi_{2k}^\heart(-)$. On the other hand, any map $ \Z_{(p)}(j) \to \pi_{2k}^\heart\BPn^\Psi $ is null since $\pi_{2k}^\heart\BPn^\Psi$ is a sum of copies of $\Z_{(p)}(k)$ by \Cref{cor:BPn-action} and $j<k$.
    
	
  Given a nullhomotopy, we note that forgetting down to a non-equivariant nullhomotopy must agree with the one coming from the underlying $(\E_1\otimes \A_2)$-structure of $\BPn$, since there is a unique such nulhomotopy in that case (as $\BPn$ has even homotopy groups). Thus the $\Z$-equivariant $(\E_1\otimes \A_2)$-$\MU_{(p)}$-algebra structure refines the nonequivariant one.
\end{proof}

We are now ready to put all the pieces together and conclude.

\begin{proof}[Proof (of \Cref{thm:Adams-ops-exist}).]
  The $\Z$-equivariant lift $\BPn^\Psi$ of the $(\E_1 \otimes \A_2)$-$\MU_{(p)}$-algebra structure on $\BPn$ was constructed in \Cref{lem:+A2}. The $\Z$-equivariant $\E_1$-$\MU_{(p)}^\Psi$-algebra map $\iota {\colon}\BPn^\Psi \to E_n^\Psi$ was constructed in \Cref{prop:e1adamsopexist}. The identification of the underlying $\E_1$-algebras comes from \Cref{prop:detectEn} and this upgrades to an identification of $\Z$-equivariant $\E_1$-algebras because $\Psi^\ell \in \G_n$ is central and therefore commutes with $\F_p^\times \rtimes \hat{\Z}$. The $\Z$-action on the $p$-completion of $\BPn$ is locally unipotent in $p$-complete spectra by \Cref{cor:BPn-action}, the fact that $\pi_*\BPn$ is concentrated in degrees divisible by $2p-2$ and \Cref{cor:check-unip-pi}.
\end{proof}

\section{Disproving the telescope conjecture}
\label{sec:mainthm}
In this section, we study the algebraic $K$-theory of the fixed points of the $\ZZ$-action on $\BPn$ constructed in \Cref{sec:adamsop}.  For $n \ge 1$ and $k \ge 0$, we prove that the $T(n+1)$-localized coassembly map
\[ L_{T(n+1)}K(\BP\langle n\rangle^{hp^k\ZZ}) \to L_{T(n+1)}K(\BP\langle n\rangle)^{hp^k\ZZ} \]
is not an equivalence, but becomes an equivalence after $K(n+1)$-localization. Thus, the telescope conjecture fails.

We do this by looking at the coassembly map from two highly divergent perspectives, which are connected via trace theorems:
\begin{enumerate}
\item From the perspective of locally unipotent $\Z$-actions on ring spectra,
  the results of \Cref{sec:tame} tell us that the coassembly map cannot be an isomorphism.
\item From the perspective of \emph{cyclotomic redshift} of \cite{cycloshift},
  the map \[L_{T(n)}\BPn^{hp^k\Z} \to L_{T(n)}\BPn\] splits after base change to the maximal abelian extension of the $K(n)$-local sphere, 
and therefore the coassembly map is a $K(n+1)$-local isomorphism.
\end{enumerate}

We begin in \Cref{subsec:KTC} with a discussion of the connection between $\TC$, which has been the subject of the paper so far, and algebraic $K$-theory.
In \Cref{subsec:univdescent}, we set up abstract machinery relating coassembly maps and descent. In \Cref{subsec:cyclo-descent}, we apply this machinery to show that cyclotomic redshift implies that $L_{T(n+1)}K(\BP\langle n\rangle^{hp^k\ZZ}) \to L_{T(n+1)}K(\BP\langle n\rangle)^{hp^k\ZZ}$ becomes an equivalence after cyclotomic completion.
Finally, with all requisite inputs assembled, we exhibit a counterexample to the telescope conjecture in \Cref{subsec:failure}.

\subsection{\texorpdfstring{$K$}{K}-theory and \texorpdfstring{$\TC$}{TC}}
\label{subsec:KTC}

In this subsection, we explain why $T(n+1)$-localized $K$-theory and $T(n+1)$-localized $\TC$ coincide for our examples of interest. We begin with connective rings, where we have the following theorem proved in \cite{CMNN} and \cite{LMMT}:


\begin{thm}[{\cite[Purity Theorem, Cor. 4.29]{LMMT}, \cite[Cor. 4.11]{CMNN}}]
  \label{thm:purity}
  Let $R$ be a connective $\E_1$-algebra.
  For $n \geq 1$
  , the $(T(n) \oplus T(n+1))$-localization map
  and the cyclotomic trace induce isomorphisms
  \[ L_{T(n+1)}K(L_{T(n) \oplus T(n+1)}R) \xleftarrow{\cong} L_{T(n+1)}K(R) \xrightarrow{\cong} L_{T(n+1)}\TC(R).  \]
\end{thm}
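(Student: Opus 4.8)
The statement is essentially a citation-assembly result, so the plan is to reduce everything to the two cited theorems of Land--Mathew--Meier--Tamme and Clausen--Mathew--Naumann--Noel, taking care that the hypotheses of each match our setting. First I would recall that the cyclotomic trace $K(R) \to \TC(R)$ is a natural transformation of functors on $\Alg(\Sp)$, and that for connective $R$ the Dundas--Goodwillie--McCarthy theorem gives a pullback square relating $K$ and $\TC$ via $K$ and $\TC$ of $\pi_0 R$. The key input is the chromatic purity theorem of \cite[Theorem 4.13 / Cor. 4.29]{LMMT}, which states that for $n \ge 1$ the composite $L_{T(n+1)}K(R) \to L_{T(n+1)}K(L_{T(n)\oplus T(n+1)}R)$ is an equivalence — i.e. $T(n+1)$-locally, $K$-theory only sees the $T(n)\oplus T(n+1)$-local information of $R$. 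This is the first displayed arrow.

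For the second displayed arrow, I would invoke \cite[Corollary 4.11]{CMNN}, which identifies $L_{T(n+1)}K(R)$ with $L_{T(n+1)}\TC(R)$ for connective $\E_1$-rings $R$ once $n \ge 1$. The mechanism there is: by the Dundas--Goodwillie--McCarthy theorem the fiber of the trace $K(R) \to \TC(R)$ depends only on $\pi_0 R$ (it is the fiber of $K(\pi_0 R) \to \TC(\pi_0 R)$), and the fiber of $K \to \TC$ on a (connective, discrete) ring is, after $T(n+1)$-localization with $n\ge 1$, contractible — this uses the Mitchell-type vanishing $L_{T(n+1)} K(\mathbb{Z}) \simeq 0$ and, more generally, that $T(n+1)$-locally $K$-theory of discrete rings is controlled by étale-locally constant sheaves so that the relative term dies. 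Concretely one reduces along the Postnikov/square-zero filtration and the fact that $L_{T(n+1)}$ annihilates the relevant truncated pieces. I would simply cite this rather than reprove it.

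The only genuine content beyond citation is checking compatibility: that the two localization maps and the trace map are the natural ones, so that the diagram commutes and all three spectra are abstractly identified. Since both cited results are stated for connective $\E_1$-rings and our $R$ is a connective $\E_1$-algebra by hypothesis, nothing further is needed; the naturality of the cyclotomic trace and of chromatic localization makes the triangle commute on the nose. The ``main obstacle'' is therefore not mathematical but bookkeeping: ensuring the precise form of the purity theorem we quote is the one giving an equivalence (not merely a map) in the displayed direction, and that the hypothesis ``$n \ge 1$'' (rather than $n \ge 0$) is the one under which both the purity statement and the $K \simeq \TC$ statement hold. I would state the proof as: ``The left isomorphism is \cite[Cor. 4.29]{LMMT}. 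The right isomorphism is \cite[Cor. 4.11]{CMNN} (which in turn rests on \cite{DGM} and the vanishing results of \cite{MitchellVanishing}). $\square$''
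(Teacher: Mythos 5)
Your proposal matches the paper exactly: Theorem \ref{thm:purity} is stated there as a recalled result with no proof beyond the citations to \cite{LMMT} and \cite{CMNN}, which is precisely what you do. Your sketch of the underlying mechanisms (purity for the left arrow; Dundas--Goodwillie--McCarthy reduction to $\pi_0 R$ plus Mitchell-type vanishing for the right arrow) is accurate and consistent with how the paper itself argues in the analogous non-connective statement (Corollary \ref{thm:red_main}).
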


To disprove the telescope conjecture, we will need to understand the topological cyclic homologies of fixed points of $\ZZ$-actions on connective $\E_1$-algebras.
Such fixed points are $(-1)$-connective, but often not $0$-connective, so the above theorem does not apply.
In order to surmount this obstacle, we use the following theorem, which is phrased in terms of the notion of truncating invariant from \cite[Def. 3.1]{LT}:


\begin{thm}[{\cite[Theorem B]{levy2022algebraic}}]
  \label{dgm-1connective}
  Let $R$ and $S$ be connective $\E_1$-algebras with $\Z$-action.
  Let $f \colon R \to S$ be a $1$-connective, $\Z$-equivariant $\E_1$-algebra map.
  For any truncating invariant $E$, the induced map
  \[ E(R^{h\ZZ}) \to E(S^{h\ZZ}) \]
  is an isomorphism.
\end{thm}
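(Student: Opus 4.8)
\textbf{Proof plan for \Cref{dgm-1connective}.}

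The plan is to reduce the statement to the Dundas--Goodwillie--McCarthy theorem for nilpotent extensions, which computes the fiber of the cyclotomic trace on relative $K$-theory, but adapted to the $(-1)$-connective setting via the $\ZZ$-homotopy fixed point construction. First I would recall the key structural input from \cite{levy2022algebraic}: for a connective $\E_1$-ring $A$ with $\ZZ$-action, the fixed points $A^{h\ZZ}$ sit in a pullback square relating them to $A$ itself with a shift, so that $A^{h\ZZ}$ is $(-1)$-connective with $\pi_{-1}$ governed by $\pi_0$-coinvariants. The essential observation is that although $A^{h\ZZ}$ is not connective, the functor $A \mapsto A^{h\ZZ}$ interacts well with truncating invariants: the value $E(A^{h\ZZ})$ can be recovered from $E$ evaluated on connective rings plus the $\ZZ$-equivariant structure, by writing $A^{h\ZZ}$ as a limit (equalizer of $\mathrm{id}$ and the action on $A$) and using that $E$ sends the relevant square to a pullback.

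The next step is the key reduction: given a $1$-connective $\ZZ$-equivariant map $f\colon R \to S$, I want to show $E(R^{h\ZZ}) \to E(S^{h\ZZ})$ is an equivalence. By the definition of truncating invariant, $E$ annihilates the relative term of any map of connective rings that is an isomorphism on $\pi_0$ and whose fiber is, appropriately, connective with nilpotent-ish behavior; more precisely, a truncating invariant $E$ agrees with $E \circ \tau_{\leq 0}$ in the sense that $E(A) \to E(\pi_0 A)$ is an equivalence for connective $A$. Applying this to both $R$ and $S$ (which are connective) shows $E(R) \cong E(\pi_0 R)$ and $E(S) \cong E(\pi_0 S)$, and since $f$ is $1$-connective it induces an isomorphism $\pi_0 R \cong \pi_0 S$; hence $E(R) \to E(S)$ is an equivalence, $\ZZ$-equivariantly. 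The point is then to promote this to the fixed points. I would express $E(R^{h\ZZ})$ via a fracture/pullback square built from $E(R)$, $E(R)$ with the two maps (identity and the automorphism), and the $\ZZ$-action, exploiting that truncating invariants commute with the particular finite limits appearing in the $A^{h\ZZ}$ construction (this is the technical heart of \cite{levy2022algebraic}, Theorem B, and I would cite the relevant lemma there rather than reprove it). Since each vertex of the square for $R$ maps equivalently to the corresponding vertex for $S$, the induced map on pullbacks $E(R^{h\ZZ}) \to E(S^{h\ZZ})$ is an equivalence.

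The main obstacle I expect is justifying that a truncating invariant $E$ commutes with the limit defining $(-)^{h\ZZ}$ well enough that the pullback-square argument goes through — a priori $E$ need not preserve limits at all, only certain pushout/pullback squares and filtered colimits, and it is only after identifying $A^{h\ZZ}$ with a \emph{finite} limit (an equalizer, equivalently a pullback along a diagonal) that one can hope to control $E(A^{h\ZZ})$. This requires the precise analysis of the cofiber sequence $\Sigma^{-1} A_{h\ZZ} \to A^{h\ZZ} \to A$ (coming from the $\ZZ \simeq S^1$-action) together with the fact that truncating invariants send the square expressing $A^{h\ZZ}$ as the pullback $A \times_{A} A$ (with one leg the identity and the other the automorphism) to a pullback square — this last fact is exactly what \cite[Theorem B]{levy2022algebraic} establishes, using that $E$ vanishes on the appropriate non-connective but ``locally truncating'' pieces. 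So in practice this proof is a direct invocation of that theorem; the work in the present paper is in verifying its hypotheses for $R = \BPn$ with its Adams $\ZZ$-action and $S = \BPn$ localized, which is $1$-connective because the localization map $\BPn \to L_{T(n)\oplus T(n+1)}\BPn$ (or the relevant comparison) is an isomorphism on low homotopy groups.
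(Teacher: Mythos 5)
The paper does not actually prove this statement: it is imported verbatim as \cite[Theorem B]{levy2022algebraic}, so there is no internal proof to compare against. Your proposal, read as a proof, ultimately concedes this ("in practice this proof is a direct invocation of that theorem"), and the sketch you give around that concession contains a genuine gap at exactly the point where the real work lies. You write $A^{h\ZZ}$ as a finite limit (an equalizer of $\mathrm{id}$ and $\psi$, equivalently a pullback) and then assert that a truncating invariant $E$ sends this square to a pullback, so that the levelwise equivalences $E(R)\cong E(S)$ assemble to an equivalence on fixed points. But localizing invariants do not commute with pullbacks of ring spectra in general — this is precisely the phenomenon quantified by the Land--Tamme $\odot$-construction, and indeed the whole point of the present paper is that $E(A^{h\ZZ})\to E(A)^{h\ZZ}$ is \emph{not} an equivalence for the invariants of interest. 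Asserting that $E$ preserves the relevant limit is not a lemma you can cite from the definition of truncating; it is the theorem itself. The actual argument in \cite{levy2022algebraic} filters the $1$-connective map $R\to S$ through square-zero extensions, realizes each stage as a Milnor-type pullback square of $\E_1$-rings, passes this square through $(-)^{h\ZZ}$ (which, being a limit, does preserve it), and then controls the failure of $E$ to preserve the resulting square via the Land--Tamme machinery together with truncating invariance. None of that is visible in your sketch, so as written the argument is circular.

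A secondary but substantive error: you say the paper verifies the hypotheses "for $R=\BPn$ ... and $S=\BPn$ localized, which is $1$-connective because the localization map ... is an isomorphism on low homotopy groups." This is not how the theorem is used. In \Cref{thm:red_main} it is applied to the $1$-connective $\ZZ$-equivariant map $R\to\pi_0R$, in order to reduce $L_{T(n+1)}K_{\mathrm{inv}}(R^{h\ZZ})$ to the case of a discrete ring and then invoke Mitchell's vanishing theorem. The localization map $\BPn\to L_{T(n)\oplus T(n+1)}\BPn$ is nowhere near $1$-connective (it is not even a map of connective rings on the target), so the intended application as you describe it would not satisfy the hypotheses.
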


As a corollary of this theorem we prove an analog of \Cref{thm:purity} for $(-1)$-connective rings.


\begin{cor} \label{thm:red_main} \label{thm:TCequalK}
  For $n\geq1$, let $R$ be a $T(n+1)$-acyclic, connective $\E_1$-algebra with $\Z$-action.
  The coassembly map, $T(n)$-localization map, and cyclotomic trace fit into a commuting diagram
  \[ \begin{tikzcd}
    L_{T(n+1)} K(L_{T(n)}R^{h\ZZ}) \ar[d] &
    L_{T(n+1)} K(R^{h\ZZ}) \ar[r, "\cong"] \ar[l, "\cong"'] \ar[d] &
    L_{T(n+1)} \TC(R^{h\ZZ}) \ar[d] \\    
    L_{T(n+1)} K(L_{T(n)}R)^{h\ZZ} &
    L_{T(n+1)} K(R)^{h\ZZ} \ar[l, "\cong"'] \ar[r, "\cong"] &    
    L_{T(n+1)}\TC(R)^{h\ZZ},
  \end{tikzcd} \]
  where each horizontal map is an isomorphism.
\end{cor}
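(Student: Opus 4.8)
The statement bundles together two kinds of isomorphisms: the ones labeled by $T(n)$-localization and cyclotomic trace (the horizontal maps in the diagram), and the implicit compatibility that makes the diagram commute. The horizontal maps are essentially formal consequences of results already established, so the plan is to verify them one at a time and then observe that the diagram commutes by naturality.

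First I would treat the cyclotomic trace isomorphisms (the right-hand horizontal maps). The cyclotomic trace $K \to \TC$ is a natural transformation of functors $\Alg(\Sp) \to \Sp$, so after applying $(-)^{h\ZZ}$ and $L_{T(n+1)}$ it gives a map of the squares we want. The claim that $L_{T(n+1)}K(R^{h\ZZ}) \to L_{T(n+1)}\TC(R^{h\ZZ})$ is an isomorphism is exactly \Cref{dgm-1connective}: the fiber of the cyclotomic trace is a truncating invariant (this is the Dundas--Goodwillie--McCarthy theorem in the form used in \cite{LT}), and applying \Cref{dgm-1connective} to the $1$-connective map $R^{h\ZZ} \to (\tau_{\geq 1}R \text{ or } \tau_{\geq 0}R)$... more precisely, one reduces to the connective case where \Cref{thm:purity} applies. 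So I would run the argument: $R$ is connective with $\ZZ$-action, $\tau_{\geq 0}$ of the fiber invariant vanishes, hence $L_{T(n+1)}(\text{fib of trace})(R^{h\ZZ}) = 0$. For the bottom row, $R$ has trivial-enough structure that $L_{T(n+1)}K(R)^{h\ZZ} \cong L_{T(n+1)}\TC(R)^{h\ZZ}$ follows by applying \Cref{thm:purity} levelwise (before taking $(-)^{h\ZZ}$) and then noting $(-)^{h\ZZ}$ commutes with $L_{T(n+1)}$ on the relevant bounded-below objects, or simply that $L_{T(n+1)}$ commutes with the totalization computing $(-)^{h\ZZ}$ here.

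Next the $T(n)$-localization isomorphisms (the left-hand horizontal maps). For the bottom-left map $L_{T(n+1)}K(L_{T(n)}R)^{h\ZZ} \leftarrow L_{T(n+1)}K(R)^{h\ZZ}$, this is the Purity Theorem \Cref{thm:purity} applied to $R$ with its $\ZZ$-action, levelwise, then $(-)^{h\ZZ}$ and $L_{T(n+1)}$; here one uses that $R$ is $T(n+1)$-acyclic so that the $(T(n)\oplus T(n+1))$-localization appearing in \Cref{thm:purity} agrees with $T(n)$-localization $L_{T(n+1)}$-locally. For the top-left map involving $L_{T(n)}R^{h\ZZ}$ versus $R^{h\ZZ}$: here I would use \Cref{thm:purity} again together with the observation that the natural map $R^{h\ZZ} \to (L_{T(n)}R)^{h\ZZ}$, while not $T(n)$-locally an equivalence on the nose, becomes one after the appropriate localization — or, more cleanly, that $L_{T(n+1)}K(-)$ only depends on the $T(n)\oplus T(n+1)$-localization by purity, and $L_{T(n)}$ of $R^{h\ZZ}$ and of $(L_{T(n)}R)^{h\ZZ}$ agree because $L_{T(n)}$ is smashing-enough / chromatic localization commutes with the relevant limit after applying $L_{T(n)}$.

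The main obstacle I anticipate is the top-left corner: making precise that $L_{T(n+1)}K(L_{T(n)}R^{h\ZZ})$ is well-defined and computes the right thing, since $L_{T(n)}R^{h\ZZ}$ is a $T(n)$-local ring whose $K$-theory must be interpreted via purity, and one must check that the coassembly map for $L_{T(n)}K(-)$ is being compared correctly with the coassembly map for $K(-)$ itself. The resolution is that all four entries in the diagram receive maps from $L_{T(n+1)}K(R^{h\ZZ})$ (resp. $L_{T(n+1)}K(R)^{h\ZZ}$), and every comparison map in sight is either the cyclotomic trace or a chromatic localization map, both of which are natural transformations; so commutativity of the diagram is automatic and the content is entirely in the three isomorphism claims, each of which reduces to \Cref{thm:purity} or \Cref{dgm-1connective}. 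I would close by remarking that the left and right vertical coassembly maps are thereby identified, which is the form in which this corollary gets used in \Cref{subsec:failure}.
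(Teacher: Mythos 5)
The bottom row and the top-left map are handled essentially as in the paper: both are instances of the purity theorem of \cite{LMMT}/\cite{CMNN} (for the bottom row applied levelwise to the connective ring $R$ before taking $(-)^{h\ZZ}$, for the top-left corner applied to the $(-1)$-connective, $T(n+1)$-acyclic ring $R^{h\ZZ}$). Your identification of the top-right map as the crux, and of \Cref{dgm-1connective} plus the truncating property of $K_{\mathrm{inv}} = \fib(K \to \TC)$ as the relevant tools, is also correct. But your execution of that step has a genuine gap.

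First, \Cref{dgm-1connective} takes as input a $1$-connective \emph{$\ZZ$-equivariant map of connective rings} $R \to S$ and outputs an equivalence $E(R^{h\ZZ}) \cong E(S^{h\ZZ})$; the map you should feed it is $R \to \pi_0R$, not ``$R^{h\ZZ} \to \tau_{\geq 0}R$'' (which is not a map, and whose source is not connective). Second, and more seriously, you assert that this reduction lands you ``in the connective case where \Cref{thm:purity} applies.'' It does not: the output of the reduction is $L_{T(n+1)}K_{\mathrm{inv}}((\pi_0R)^{h\ZZ})$, and $(\pi_0R)^{h\ZZ}$ is still only $(-1)$-connective, so purity is just as unavailable there as it was for $R^{h\ZZ}$ — if it were available, the entire detour through \Cref{dgm-1connective} would be unnecessary. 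The missing input is Mitchell's vanishing theorem: since $n+1 \geq 2$, $L_{T(n+1)}K_{\mathrm{inv}}(A) = 0$ for any $\ZZ$-algebra $A$, and $(\pi_0R)^{h\ZZ}$ carries a $\ZZ$-algebra structure (restricted from its $\ZZ^{B\ZZ}$-algebra structure). Without this vanishing statement your argument for $L_{T(n+1)}K(R^{h\ZZ}) \cong L_{T(n+1)}\TC(R^{h\ZZ})$ does not close. The remark that commutativity of the diagram is automatic by naturality is fine.
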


\begin{proof}
  The lower horizontal maps are isomorphisms by \Cref{thm:purity}.
  The upper left map is an isomorphism by the purity theorem from \cite{LMMT}.
  We now focus our attention on the upper right map.

  Let $K_{\mathrm{inv}}(-)$ be the fiber of the cyclotomic trace map, so that our goal is now to prove that $L_{T(n+1)}K_{\mathrm{inv}}(R^{h\Z}) = 0$.
  By the Dundas--Goodwillie--McCarthy theorem \cite{DGM}, $L_{T(n+1)}K_{\mathrm{inv}}(-)$ is a truncating invariant.
  Applying \Cref{dgm-1connective} to the $1$-connective $\Z$-equivariant map
  $ R \to \pi_0R $, we obtain an isomorphism
  \[ L_{T(n+1)}K_{\mathrm{inv}}(R^{h\Z}) \cong L_{T(n+1)}K_{\mathrm{inv}}((\pi_0R)^{h\Z}). \]
  Since $n \ge 1$, Mitchell's vanishing theorem \cite{MitchellVanishing} implies, for any $\Z$-algebra $A$, that $L_{T(n+1)}K_{\mathrm{inv}}(A) = 0$.
  Since the $\Z^{B\Z}$-algebra structure on $(\pi_0R)^{h\Z}$
  restricts to a $\Z$-algebra structure, we learn that
  $L_{T(n+1)}K_{\mathrm{inv}}((\pi_0R)^{h\Z}) = 0$,
  which completes the proof.
\end{proof}

\subsection{Descent and coassembly}
\label{subsec:univdescent}
In this subsection, we recall and develop general abstract machinery relating coassembly and descent.  At the end, this machinery is applied to prove \Cref{lem:coassembly-descent}, which is a statement specifically about coassembly maps for $T(n+1)$-localized algebraic $K$-theory.


\begin{cnv}\label{cnv:topcat}
    Throughout this subsection $\CC$ will denote a category with finite limits and colimits, such that finite coproducts are disjoint and universal.  We suppose $F\colon \CC^{op} \to \DD$ is a product preserving functor into some other category $\DD$.
\end{cnv}

\begin{exm}\label{exm:descenttopexamples}
Suppose $\CC_0$ is a presentably symmetric monoidal additive category.  Then $\CC=\CAlg(\CC_0)^{op}$ satisfies the conditions of \Cref{cnv:topcat}.  Indeed, the fact that coproducts are disjoint follows from the fact that products in $\CAlg(\CC_0)$ are computed on underlying, with the projections to each factor given by inverting orthogonal idempotents. The fact that coproducts are universal follows from the fact that base change preserves finite products, since finite products preserve finite coproducts.

In particular, the main example of interest will be $\CC=\CAlg\left(\Sp_{T(n)}\right)^{op}$, with $F$ the functor $L_{T(n+1)} K\colon\CAlg\left(\Sp_{T(n)}\right) \to \Sp_{T(n+1)}$.
\end{exm}      

\subsubsection{F-covers}

\begin{dfn} \label{dfn:Fcover}
    An \mdef{$F$-cover} in $\CC$ is a   morphism  $f\colon A \to B$ in $\CC$ that is a universal $F$-descent morphism  in the sense of \cite[Definition 3.1.1]{liu2012enhanced}. In other words for every map $g$ that is a base change of $f$, $F$ satisfies descent for the \v{C}ech nerve of $g$.

    We recall \cite[Lemma 3.1.2]{liu2012enhanced}
    the collection of $F$-covers is closed under composition and base change, and if $f\circ g$ is an $F$-cover, so is $f$.
\end{dfn}


  We do not need to use \Cref{lem:Fdescenttop} below, but rather include it in order to explain our terminology: $F$-covers are exactly the covers in the universal $F$-descent topology.
    
\begin{prop}\label{lem:Fdescenttop}
    There exists a Grothendieck topology on $\CC$ called the \mdef{universal $F$-descent topology} where a sieve on an object $C \in \CC^{op}$ is a cover iff it contains a finite collection of maps $C_i \to C$ such that the map $\coprod_iC_i\to C$ in $\CC$ is an $F$-cover. In particular, the sieve corresponding to a finite collection of map $C_i \to \CC$ is a cover in this topology iff the map $\coprod C_i \to \CC$ is an $F$-cover.

    Moreover, for any functor $\mathscr{F}\colon\CC^{op} \to \DD$, $\mathscr{F}$ is a sheaf iff it preserves finite products and satisfies descent for the \v{C}ech nerve of any $F$-cover. In particular, $F$ is a universal $F$-descent sheaf.
\end{prop}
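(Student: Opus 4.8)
The statement to prove is \Cref{lem:Fdescenttop}, which asserts that $F$-covers form the covers of a Grothendieck topology (the universal $F$-descent topology), and that sheaves for this topology are exactly the product-preserving functors satisfying $F$-descent. Here is my proof plan.

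\textbf{Setting up the topology.} The plan is to first verify that the collection of sieves generated by finite families $\{C_i \to C\}$ with $\coprod_i C_i \to C$ an $F$-cover satisfies the axioms for a Grothendieck (pre)topology, or more precisely that the sieves containing such families form a genuine topology. The three things to check are: (i) the maximal sieve is a cover — this follows since $\id_C$ is an $F$-cover (descent for a split augmented cosimplicial object); (ii) stability under base change — given a cover sieve on $C$ and a map $D \to C$, I need the pulled-back sieve on $D$ to be a cover, which reduces to the fact recalled in \Cref{dfn:Fcover} (citing \cite[Lemma 3.1.2]{liu2012enhanced}) that $F$-covers are stable under base change, combined with the observation that in $\CC$ finite coproducts are universal so $\left(\coprod_i C_i\right) \times_C D \cong \coprod_i (C_i \times_C D)$; (iii) the local character / transitivity axiom — if $S$ is a sieve on $C$ and $T$ is a cover sieve such that $T \times_C D$ restricted appropriately lies in $S$ for each $D \to C$ in $T$, then $S$ is a cover. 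For (iii) I would use that $F$-covers are closed under composition (again \cite[Lemma 3.1.2]{liu2012enhanced}) together with the disjointness and universality of finite coproducts to assemble a finite $F$-cover refining $S$ from the given data. The ``in particular'' clause — that a finite family generates a cover sieve iff $\coprod C_i \to C$ is an $F$-cover — then follows by unwinding: if the generated sieve is a cover it contains some finite family whose coproduct is an $F$-cover and which factors through the $C_i$, and then the ``if $f \circ g$ is an $F$-cover so is $f$'' clause of \Cref{dfn:Fcover} upgrades $\coprod C_i \to C$ to an $F$-cover.

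\textbf{Identifying the sheaves.} For the second paragraph of the statement, I would argue both directions. If $\mathscr{F}$ is a sheaf, then it sends finite coproducts in $\CC$ (i.e.\ finite products in $\CC^{\op}$) to products — this uses that the family $\{C_i \to \coprod_j C_j\}$ generates a covering sieve (its coproduct is the identity, an $F$-cover) and that the \v{C}ech nerve of $\coprod_i C_i \to \coprod_j C_j$ has the expected form because coproducts are disjoint and universal, so the sheaf condition for this cover becomes exactly the statement that $\mathscr{F}$ is product-preserving; and $\mathscr{F}$ satisfies descent for the \v{C}ech nerve of any $F$-cover $f$ because the sieve generated by $f$ is a cover. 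Conversely, if $\mathscr{F}$ preserves finite products and satisfies $F$-cover descent, then for any covering sieve, choose a finite generating family $\{C_i \to C\}$ with $\coprod C_i \to C$ an $F$-cover; product-preservation lets me rewrite the sheaf condition for this family in terms of the \v{C}ech nerve of the single map $\coprod C_i \to C$, and $F$-cover descent gives it. One must also check that the sheaf condition is independent of the chosen generating family, which follows from a standard cofinality/refinement argument among generating families of a fixed sieve. Finally, $F$ itself is a universal $F$-descent sheaf essentially by definition of $F$-cover (descent for the \v{C}ech nerve is built in) together with the hypothesis in \Cref{cnv:topcat} that $F$ preserves products.

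\textbf{The main obstacle.} I expect the genuinely fiddly part to be the transitivity axiom (iii) and, relatedly, the verification that the sheaf condition does not depend on the choice of finite generating family for a covering sieve — both require carefully juggling the three stated properties of the class of $F$-covers (closure under composition, closure under base change, and the cancellation property ``$fg$ an $F$-cover $\Rightarrow$ $f$ an $F$-cover'') against the categorical hypotheses that finite coproducts in $\CC$ are disjoint and universal. The translation between ``covering sieve containing a finite family'' and ``the single map $\coprod C_i \to C$'' is where disjointness and universality of coproducts are doing all the work, and I would want to state a small lemma isolating that the \v{C}ech nerve of $\coprod_i C_i \to C$ computes the same limit as the full sheaf condition over the family $\{C_i\}$. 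Since the paper explicitly does not need this proposition for anything downstream and is including it only to justify terminology, I would keep the write-up fairly terse, citing \cite{liu2012enhanced} for the closure properties and \cite{lurie} / standard references for the sieve-theoretic bookkeeping.
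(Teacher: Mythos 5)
Your outline is correct and rests on the same mathematical content as the paper's proof, but the packaging is different: the paper does essentially no sieve-theoretic bookkeeping itself, instead invoking \cite[Proposition A.3.2.1]{SAG} to produce the topology from the class of $F$-covers and \cite[Proposition A.3.1.1]{SAG} to identify the sheaves, so that the only substantive verification it carries out is that a finite coproduct $\coprod_i f_i$ of $F$-covers is again an $F$-cover (because coproducts are universal and disjoint and $F$ preserves finite products, the \v{C}ech complex of $\coprod_i f_i$ is the product of the \v{C}ech complexes of the $f_i$). You instead propose to verify the topology axioms and the sheaf characterization by hand; this is more self-contained but amounts to re-deriving what the SAG propositions package, and the cofinality statements you defer to "standard references" are exactly what those propositions handle.

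One point to tighten: the fact that finite coproducts of $F$-covers are $F$-covers is the load-bearing check, and it is needed already in your transitivity axiom (iii) — your assembled family $\{D_{ij}\to C\}$ has $\coprod_{i,j}D_{ij}\to C$ factoring as $\coprod_i\bigl(\coprod_j D_{ij}\to C_i\bigr)$ followed by $\coprod_i C_i\to C$, and the first map is a finite coproduct of $F$-covers, not a base change or composite of ones you already have. This closure property is \emph{not} among those supplied by \cite[Lemma 3.1.2]{liu2012enhanced} (which gives composition, base change, and right cancellation), so you cannot get it from there; you need the \v{C}ech-nerve-of-a-coproduct-splits-as-a-product argument. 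You do state essentially this observation as your "small lemma" in the sheaf-identification step, so the ingredient is present in your plan, but it should be isolated up front and invoked in the transitivity verification as well. With that lemma made explicit, both halves of your argument go through.
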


\begin{proof}
    To construct such a topology, we wish to apply \cite[Proposition A.3.2.1]{SAG} to $\CC$ equipped with the collection of $F$-covers. Conditions (a) and (b) follow from \cite[Lemma 3.1.2]{liu2012enhanced}, and condition (d) follows by assumption on $\CC$.

    It remains to check that (c) is satisfied, i.e that if $f_i\colon a_i \to b_i$ is a universal $F$-descent morphism for a finite collection of maps, then so is $\coprod_i f_i$. Since coproducts are universal, it is enough to prove that $F$ is an $F$-descent morphism. Because coproducts are disjoint and $F$ is product preserving, the \v{C}ech complex for $\coprod_if_i$ is the product of the \v{C}ech complexes for each $f_i$, so the result follows.


    For the statement in the second paragraph, we apply \cite[Proposition A.3.1.1]{SAG}, where condition (e) is satisfied by hypothesis on $\CC$.
\end{proof}




\begin{lem}\label{lem:checklocally}
	Let $\DD$ be a category with a Grothendieck topology, and suppose that the square below is a pullback square, and $f'$ is a cover.
	\begin{center}
		\begin{tikzcd}
			{c} \ar[r,"g"]\ar[d,"f"] & c'\ar[d,"f'"]\\
		d	\ar[r,"g'"] & d'
		\end{tikzcd}
	\end{center}
	
	Then $g$ is an cover iff $g'$ is an cover.
\end{lem}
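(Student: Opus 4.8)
The statement is a purely formal fact about Grothendieck topologies: covers are stable under base change, and they satisfy a descent-type converse when pulled back along a cover. I would prove it in two halves, each using one of these two properties together with the hypothesis that $f'$ is a cover.

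First I would handle the ``only if'' direction. Suppose $g'\colon d\to d'$ is a cover. Then, since $f'\circ g$ equals $g'\circ f$ (the square commutes) and $g$ is (by the pullback square) the base change of $g'$ along $f'$, the morphism $g$ is a base change of the cover $g'$ and hence is itself a cover by the axiom that covers are stable under pullback. This direction does not even use that $f'$ is a cover.

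The ``if'' direction is where the hypothesis on $f'$ is needed. Suppose $g$ is a cover. Consider the composite $f'\circ g\colon c\to d'$; since $g$ is a cover and $f'$ is a cover, the composite $f'\circ g$ is a cover by the closure of covers under composition. But $f'\circ g = g'\circ f$, so $g'\circ f$ is a cover. Now I invoke the ``left cancellation'' axiom for Grothendieck topologies: if a composite $g'\circ f$ is a cover, then the second-applied morphism $g'$ is a cover. (This is exactly the analogue of the last clause of \cite[Lemma 3.1.2]{liu2012enhanced} quoted in \Cref{dfn:Fcover} — if $h\circ k$ is an $F$-cover then $h$ is an $F$-cover — now used in the abstract topological setting; it is one of the standard axioms, e.g. \cite[Proposition A.3.2.1]{SAG}.) Hence $g'$ is a cover, completing the proof.

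I do not anticipate any real obstacle here; the only point requiring a sentence of care is making sure the two composites are identified correctly and that one is citing the axioms for a Grothendieck topology in the form actually available (closure under composition, stability under base change, and left cancellation), all of which hold for the universal $F$-descent topology of \Cref{lem:Fdescenttop} and indeed for any Grothendieck topology. If one prefers to avoid quoting the left-cancellation axiom, an alternative for the ``if'' direction is to argue sheaf-theoretically: cover the object via $f'\circ g$ and via $f'$ and compare, but the cancellation argument is cleaner and I would use it.

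\begin{proof}
  Since the square is a pullback and covers are stable under base change, if $g'$ is a cover then its base change $g$ is a cover.

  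Conversely, suppose $g$ is a cover. As $f'$ is a cover by hypothesis, the composite $f'\circ g$ is a cover (covers are closed under composition). The square commutes, so $f'\circ g = g'\circ f$, and hence $g'\circ f$ is a cover. By the left-cancellation axiom for a Grothendieck topology, it follows that $g'$ is a cover.
\end{proof}
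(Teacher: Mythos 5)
Your proof is correct and is essentially identical to the paper's: one direction by stability of covers under base change, the other by noting that $f'\circ g = g'\circ f$ is a cover (composite of covers) and then cancelling to conclude $g'$ is a cover. Nothing further is needed.
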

\begin{proof}
	If $g'$ is a cover, $g$ is too since covers are closed under pullbacks. Conversely, if $g$ is a cover, then $g\circ f'=g'\circ f$, is too, which implies that $g'$ is.
\end{proof}

\begin{rmk}\label{rmk:checklocally}
	By \Cref{lem:Fdescenttop}, the above lemma applies to any $\DD=\CC$ as in \Cref{cnv:topcat}. We note that in this case, the lemma could have been proven directly using the closure properties of $F$-covers \cite[Lemma 3.1.2]{liu2012enhanced}.
\end{rmk}

The following lemmas lets us transfer the notions of $F$-covers along functors:
\begin{lem}\label{lem:transfersheafcover}
    Suppose we have functors
    $$\CC^{op}\xrightarrow{F}\DD \xrightarrow{B}\DD'$$ 
    where $B$ preserves finite products and totalizations which exist, and $F$ preserves finite products.
    Then if $f \in \CC$ is a map,
    \begin{enumerate}
        \item If $f$ is an $F$-cover, it is a $B\circ F$-cover
        \item If $B$ is conservative and $f$ is a $B\circ F$-cover, it is an $F$-cover.
    \end{enumerate}

\end{lem}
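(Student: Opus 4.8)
The statement is an elementary formal consequence of the closure properties of $F$-descent morphisms (\cite[Lemma 3.1.2]{liu2012enhanced}) applied after composition with $B$, using that the \v{C}ech nerve of a map $f$ in $\CC$ is sent by $F$ to a cosimplicial object in $\DD$, and that $B$ preserves finite products (hence finite coproducts in $\CC^{op}$, hence the terms of the \v{C}ech diagram) and preserves totalizations when they exist. The main point to be careful about is the direction of the limit: ``$F$ satisfies descent for the \v{C}ech nerve of $g$'' means the natural map $F(C) \to \Tot\left( F(\check{C}(g)_\bullet)\right)$ is an isomorphism, where $\check{C}(g)_\bullet$ is the \v{C}ech nerve (a simplicial object in $\CC$, i.e.\ a cosimplicial object in $\CC^{op}$, and $F$ being contravariant turns it into a cosimplicial object of $\DD$). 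So the only thing to verify is that $B$ carries this isomorphism to the analogous isomorphism for $B\circ F$, and conversely detects it when $B$ is conservative.

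\textbf{Part (1).} Suppose $f\colon A\to B$ is an $F$-cover. I must show that for every base change $g$ of $f$, the functor $B\circ F$ satisfies descent for the \v{C}ech nerve of $g$. Fix such a $g\colon C\to D$ (a base change of $f$ in $\CC$) and write $\check{C}(g)_\bullet$ for its \v{C}ech nerve. By hypothesis on $f$, the canonical map $F(D)\to \Tot\bigl(F(\check{C}(g)_\bullet)\bigr)$ is an isomorphism in $\DD$; in particular this totalization exists in $\DD$. Now apply $B$. Since $B$ preserves totalizations that exist, we get that $B$ carries $\Tot\bigl(F(\check{C}(g)_\bullet)\bigr)$ to $\Tot\bigl(B F(\check{C}(g)_\bullet)\bigr)$, and since $B$ preserves finite products, the cosimplicial object $B F(\check{C}(g)_\bullet)$ in $\DD'$ agrees with the cosimplicial object obtained by applying $B\circ F$ to the \v{C}ech nerve of $g$ (here one uses that the terms $\check{C}(g)_n$ are finite coproducts in $\CC^{op}$ of pullbacks, built from $C$ over $D$, and both $F$ and $B$ preserve the relevant finite products). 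Therefore $B(\text{iso})$ is precisely the comparison map $BF(D)\to \Tot\bigl(BF(\check{C}(g)_\bullet)\bigr)$, which is thus an isomorphism. As this holds for every base change $g$ of $f$, $f$ is a $B\circ F$-cover.

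\textbf{Part (2).} Now suppose $B$ is conservative and $f$ is a $B\circ F$-cover. Again fix a base change $g$ of $f$ in $\CC$, with \v{C}ech nerve $\check{C}(g)_\bullet$. I want the comparison map $\gamma\colon F(D)\to \Tot\bigl(F(\check{C}(g)_\bullet)\bigr)$ to be an isomorphism; a priori one must first know the totalization on the right exists in $\DD$. This is the one genuinely nonformal subtlety: the cleanest way around it is to argue degreewise/coskeletally or simply to note that in the applications of interest $\DD$ admits the relevant limits. Assuming $\Tot\bigl(F(\check{C}(g)_\bullet)\bigr)$ exists in $\DD$, apply $B$: by the same identification as in Part (1), $B(\gamma)$ is the comparison map $BF(D)\to \Tot\bigl(BF(\check{C}(g)_\bullet)\bigr)$, which is an isomorphism because $f$ is a $B\circ F$-cover. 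Since $B$ is conservative, $\gamma$ is an isomorphism. Running this over all base changes $g$ of $f$ shows $f$ is an $F$-cover, completing the proof.

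\textbf{Expected obstacle.} The genuinely formal content is the naturality identification ``$B\circ F$ applied to the \v{C}ech nerve $=$ $B$ applied to $F$ of the \v{C}ech nerve'', which hinges on $B$ and $F$ preserving the finite products/coproducts appearing in the terms; this is immediate from the hypotheses. The only point requiring a moment's thought is the existence of the intermediate totalization $\Tot(F(\check{C}(g)_\bullet))$ in $\DD$ in Part (2) — but since the paper only ever applies this with $\DD = \Sp_{T(n+1)}$ (or a similar presentable category) all limits exist, so this is not a real obstruction. No hard input is needed; the proof is a short diagram-chase using \cite[Lemma 3.1.2]{liu2012enhanced}.
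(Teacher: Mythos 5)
Your proof is correct and is essentially the paper's own argument, just spelled out: the paper disposes of (1) by noting that $B$ carries the descent totalization to the corresponding one for $B\circ F$, and of (2) by the fact that conservative, totalization-preserving functors reflect totalizations. The existence-of-the-totalization subtlety you flag in Part (2) is present (and silently elided) in the paper's version as well, and is harmless in all applications since $\DD$ is presentable there.
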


\begin{proof}
    For (1) it is enough to show that any $F$-descent morphism is also a $B\circ F$-descent morphism, but this follows from the fact that $F$ preserves totalizations. (2) follows since conservative totalization preserving functors reflect totalizations.
\end{proof}

\begin{lem}\label{lem:covprecompose}
    Let $\CC,\CC'$ be as in \Cref{cnv:topcat}, and suppose we have functors 
$$\CC'^{op}\xrightarrow{A^{op}}\CC^{op} \xrightarrow{F} \DD$$ where $F$ and $F\circ A^{op}$ preserve finite products and $A$ preserves pullbacks. Then if $A(f)$ is an $F$-cover, $f$ is an $F\circ A^{op}$-cover.
\end{lem}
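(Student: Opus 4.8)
\textbf{Proof proposal for \Cref{lem:covprecompose}.}

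The plan is to unwind the definitions and reduce to a statement about \v{C}ech nerves. By \Cref{dfn:Fcover}, saying $f\colon a' \to b'$ in $\CC'$ is an $F\circ A^{op}$-cover means that for every base change $g'$ of $f$ in $\CC'$, the functor $F\circ A^{op}$ satisfies descent along the \v{C}ech nerve of $g'$. So fix such a base change $g'\colon c' \to d'$, obtained as a pullback of $f$ along some $d' \to b'$. Since $A$ preserves pullbacks (and, being a functor between categories with finite coproducts that are computed compatibly, it suffices to also note $A$ preserves the relevant colimits appearing in a \v{C}ech nerve — in fact the \v{C}ech nerve of $g'$ is built from iterated fiber products, which $A$ preserves), the image $A(g')$ is a base change of $A(f)$ in $\CC$, and the \v{C}ech nerve $\check{C}(g')^{op}$ maps under $A^{op}$ to the \v{C}ech nerve $\check{C}(A(g'))^{op}$ of $A(g')$.

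First I would record the key diagram: the augmented cosimplicial object $F\circ A^{op}$ applied to $\check{C}(g')$ is literally $F$ applied to $A^{op}$ of $\check{C}(g')$, which is $F$ applied to $\check{C}(A(g'))$. Now $A(f)$ is assumed to be an $F$-cover in $\CC$, and $A(g')$ is a base change of $A(f)$; by the closure of $F$-covers under base change (\cite[Lemma 3.1.2]{liu2012enhanced}, recalled in \Cref{dfn:Fcover}), $A(g')$ is again an $F$-cover, hence in particular an $F$-descent morphism. Therefore $F$ satisfies descent along $\check{C}(A(g'))$, i.e. the augmented cosimplicial object is a limit diagram. Transporting this back through the identification above, $F\circ A^{op}$ satisfies descent along $\check{C}(g')$. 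Since $g'$ was an arbitrary base change of $f$, this shows $f$ is a universal $F\circ A^{op}$-descent morphism, i.e. an $F\circ A^{op}$-cover, as desired.

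The only genuine point requiring care — and the step I expect to be the main obstacle — is verifying that $A^{op}$ carries the \v{C}ech nerve of $g'$ to the \v{C}ech nerve of $A(g')$, i.e. that $A$ is sufficiently compatible with the formation of \v{C}ech nerves. The \v{C}ech nerve of $g'\colon c' \to d'$ has $n$-simplices the $(n+1)$-fold fiber power $c'\times_{d'}\cdots\times_{d'}c'$, and since $A$ preserves pullbacks it preserves all of these finite limits, and the cosimplicial structure maps (which are built from the projections and diagonals) are preserved as well; one also uses the product-preservation hypothesis on $F$ and $F\circ A^{op}$ only to ensure the relevant objects and descent conditions are stated consistently. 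Since all the diagrams involved are finite limits in $\CC'$ and $\CC$, no subtlety about preservation of infinite colimits arises, and the argument is formal. I would write this out carefully but briefly, citing \cite[Lemma 3.1.2]{liu2012enhanced} for the base-change stability of $F$-covers and noting explicitly that a \v{C}ech nerve is a simplicial diagram of iterated fiber products so that pullback-preservation of $A$ suffices.
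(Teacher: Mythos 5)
Your proof is correct and is exactly the paper's argument, just written out in full: the paper's entire proof is the one-line observation that since $A$ preserves pullbacks it preserves base change and the \v{C}ech nerve, so that descent for $F$ along $\check{C}(A(g'))$ (which holds because $A(g')$ is a base change of the $F$-cover $A(f)$) transports to descent for $F\circ A^{op}$ along $\check{C}(g')$. The only stray remark is your parenthetical about $A$ preserving "colimits appearing in a \v{C}ech nerve" — there are none; the nerve consists of iterated fiber products, as you yourself then note — but this does not affect the argument.
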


\begin{proof}
    Since $A$ preserves finite limits, it preserves the \v{C}ech complex and base change, from which the result follows.
    
\end{proof}



The following lemma will be useful in giving examples of $F$-covers:
\begin{lem}\label{lem:descentcovers}
    Let $G = \Omega X$ be a loop space. Suppose that $a$ is a $G$-equivariant object in $\CC$, such that the map $F(-)\to F(a\times-)^{hG}$ is an isomorphism. Then the map $a \to *$ is an $F$-cover.
\end{lem}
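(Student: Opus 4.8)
\textbf{Proof proposal for \Cref{lem:descentcovers}.}

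The plan is to reduce the statement to checking that $F$ satisfies descent for the \v{C}ech nerve of any base change of $a \to *$, and then to identify that \v{C}ech nerve with the cobar/Amitsur complex coming from the $G$-equivariant structure. First I would recall that, since finite coproducts in $\CC$ are universal and $a$ carries a $G$-action, for any map $b \to *$ (i.e.\ any object $b \in \CC$) the base change $a \times b \to b$ again has the structure of a $G$-equivariant object over $b$, and the hypothesis that $F(-) \to F(a \times -)^{hG}$ is an isomorphism is stable under this base change — because the \v{C}ech nerve of $a \times b \to b$ is obtained from the \v{C}ech nerve of $a \to *$ by applying $(-) \times b$, and $F$ (being applied to a product-preserving situation via \Cref{cnv:topcat}) turns these products into the relevant limits. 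So it suffices to treat the morphism $a \to *$ itself.

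Next I would spell out the \v{C}ech nerve of $a \to *$ in $\CC$: its term in simplicial degree $n$ is the $(n+1)$-fold product $a^{\times(n+1)}$, with the usual face and degeneracy maps. Applying the product-preserving functor $F$ turns this into a cosimplicial object in $\DD$ whose degree-$n$ term is $F(a)^{\otimes(n+1)}$ (using whatever symmetric monoidal / product structure $\DD$ has; in our main example $\DD = \Sp_{T(n+1)}$ and $F(a)^{\times(n+1)}$ is sent to a product, but since we only care about the totalization this is fine). The claim that $f$ is an $F$-cover is exactly the claim that $F(*) \to \Tot\big( F(a^{\times \bullet + 1}) \big)$ is an isomorphism, together with the same after any base change, which we have already arranged.

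The key point is then to recognize this totalization as a homotopy fixed point spectrum. The cosimplicial object $[n] \mapsto F(a^{\times(n+1)})$ is the Amitsur/cobar complex associated to the $G$-equivariant object $a$: because $a$ is $G$-equivariant, $a^{\times(n+1)}$ receives compatible $G$-actions and the cosimplicial structure maps are built from the diagonal $a \to a\times a$ and the projections; concretely the \v{C}ech nerve of $a \to *$ with its $G$-action is the bar construction computing the quotient $a \mm G$, and applying $F$ and taking $\Tot$ computes $F(a)^{hG}$. (This is the classical fact that for a $G$-torsor-like object, \v{C}ech descent along $a \to *$ is the same as $G$-Galois / homotopy-fixed-point descent; one can also see it by observing that $a \times G \cong a \times a$ $G$-equivariantly when $G$ acts on the second factor of $a \times a$ by translation, which is the shearing isomorphism of \Cref{cnstr:shear}.) Under this identification $F(*) \to \Tot(F(a^{\times \bullet+1}))$ becomes the canonical map $F(*) \to F(a \times *)^{hG} = F(a)^{hG}$, which is an isomorphism by hypothesis. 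Since we checked the same hypothesis is inherited by all base changes, $a \to *$ is a universal $F$-descent morphism, i.e.\ an $F$-cover.

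\textbf{Main obstacle.} The routine-looking but genuinely content-bearing step is the identification of the \v{C}ech nerve of $a \to *$, equipped with its residual $G$-symmetry, with the bar construction for $a \mm G$ — i.e.\ producing the natural equivalence $a^{\times(n+1)} \simeq a \times G^{\times n}$ compatibly with cosimplicial and $G$-equivariant structure, so that $\Tot F(a^{\times\bullet+1}) \simeq F(a)^{hG}$. This is the shearing trick and needs $G = \Omega X$ to be grouplike (so that $a \times a \cong a \times G$ over $a$); I would cite \Cref{cnstr:shear} for the precise form of this equivalence. Everything else — universality of the hypothesis under base change, and the formal manipulation of product-preserving functors and totalizations — is bookkeeping.
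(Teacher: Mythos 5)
Your argument has a genuine gap at the step you yourself flag as the main obstacle: the identification of the \v{C}ech nerve of $a \to *$ with the bar construction for $a /\!\!/ G$, i.e.\ the shearing equivalence $a^{\times(n+1)} \simeq a \times G^{\times n}$. That equivalence is exactly the statement that $a$ is a $G$-torsor over $*$ (that the shear map $a \times a \to a \times G$ is an isomorphism), and this is \emph{not} among the hypotheses of the lemma: we only assume $a$ is some $G$-equivariant object for which $F(-) \to F(a \times -)^{hG}$ is an isomorphism. In the application that motivates the lemma, $\CC = \CAlg(\Sp_{T(n)})^{\op}$, $G = T_p \times \Z$ is a \emph{discrete} group acting on the pro-Galois extension $a = \Ss_{T(n)}[\omega_{p^\infty}^{(n)}]$, and $a \otimes a$ is (continuous functions on the \emph{profinite} Galois group valued in $a$), not a product indexed by the discrete group $G$ — so the shearing identification fails precisely in the case the paper needs, and with it your identification $\Tot F(a^{\times \bullet+1}) \simeq F(a)^{hG}$. (A smaller but related slip: rewriting $F(a^{\times(n+1)})$ as $F(a)^{\otimes(n+1)}$ is unjustified; $F$ is only assumed product-preserving on $\CC^{\op}$, and $K$-theory does not send tensor products of algebras to tensor products.)

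The fix is to avoid identifying the \v{C}ech complex with a cobar complex altogether. Writing $F \cong (-)^{hG}\circ \widetilde F$ with $\widetilde F = F(a\times -)$ valued in $G$-objects, \Cref{lem:transfersheafcover} (using that $(-)^{hG}$ preserves limits and the forgetful functor from $G$-objects is conservative) reduces the claim to showing $a \to *$ is an $F(a\times -)$-cover, and \Cref{lem:covprecompose} reduces that to showing $a \times a \to a$ is an $F$-cover — which is automatic, since that map is split by the diagonal. Equivalently, in your language: apply the hypothesis levelwise to the cosimplicial object $F(a^{\times\bullet+1})$ to rewrite it as $F(a\times a^{\times\bullet+1})^{hG}$, commute $(-)^{hG}$ past $\Tot$, and observe that the augmented cosimplicial object $F(a) \to F(a\times a^{\times\bullet+1})$ is split by the diagonal, hence its totalization is $F(a)$. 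No torsor structure is needed anywhere. Your treatment of universality under base change is fine, since the hypothesis is a natural isomorphism of functors.
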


\begin{proof}
    
    Since the functor $(-)^{hG}\colon\CC^{BG} \to \CC$ preserves limits and the forgetful functor $\CC^{BG} \to \CC$ is conservative, by \Cref{lem:transfersheafcover} it is enough to show that $f$ is a $F(a\times-)$-cover. Using \Cref{lem:covprecompose}, it is enough to show that $a\times f$ is an $F$-cover. But this is true because the map $a\times a\to a$ admits a section given by the diagonal map.
\end{proof}

We now turn to comparing $L_{T(n+1)}K\cycl$-descent with coassembly maps.



\begin{lem} \label{lem:extract-stone}
  Let $R \in \Alg(\Sp)$, and
  let $S$ be a Stone topological space.
  There is a natural isomorphism
  \[ K(\W(\LCF{S}) \otimes R) \cong \W(\LCF{S}) \otimes K(R). \]
\end{lem}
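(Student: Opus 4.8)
The plan is to reduce the statement to the case where $S$ is a finite set, where it follows from the fact that $K$-theory commutes with finite products, and then pass to the general Stone space by a filtered colimit argument together with the observation that both sides of the desired isomorphism commute with the relevant filtered colimits.

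First I would recall that a Stone space $S$ is a cofiltered limit of finite sets $S_i$, so $\LCF{S} = \operatorname{colim}_i \LCF{S_i}$ as a filtered colimit of finite products of copies of $\F_p$; correspondingly $\W(\LCF{S}) = \operatorname{colim}_i \W(\LCF{S_i})$, since the spherical Witt vectors functor $\W(-)$ is a left adjoint (\Cref{prop:witt-and-tilt}) and hence preserves filtered colimits, and $\W$ of a finite product is the corresponding finite product of copies of $\SP$ (again using that $\W$ is a left adjoint, or directly that $\W(\F_p)=\SP$ and $\W$ takes finite coproducts of perfect rings, i.e. finite products, to... — here one must be slightly careful: $\W$ preserves colimits of $\Perf$, and a finite product in $\Perf$ is a finite coproduct in $\CAlg$, so $\W$ of it is a finite coproduct, i.e. finite product, of spheres). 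In any case $\W(\LCF{S_i})$ is a finite product $\prod_{S_i}\SP$ and thus $\W(\LCF{S_i})\otimes R \cong \prod_{S_i} R$. Now $K$-theory commutes with finite products, so $K(\W(\LCF{S_i})\otimes R)\cong \prod_{S_i}K(R)\cong \W(\LCF{S_i})\otimes K(R)$, naturally in $i$ and in $R$.

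The second step is the passage to the limit. Since $\W(\LCF{S})\otimes R \cong \operatorname{colim}_i\big(\W(\LCF{S_i})\otimes R\big)$ is a filtered colimit of $\E_1$-rings, and $K$-theory (being, after $p$-completion, computed via $\TC$, or directly: nonconnective $K$-theory commutes with filtered colimits of rings by a theorem of Waldhausen/Thomason — here we are working $p$-completely and can also invoke $L_{T(n+1)}K\cong L_{T(n+1)}\TC$ plus the fact that $\THH$ commutes with filtered colimits) commutes with filtered colimits of rings, we get
\[
K(\W(\LCF{S})\otimes R)\cong \operatorname{colim}_i K(\W(\LCF{S_i})\otimes R)\cong \operatorname{colim}_i\big(\W(\LCF{S_i})\otimes K(R)\big)\cong \W(\LCF{S})\otimes K(R),
\]
where the last isomorphism uses that $-\otimes K(R)$ commutes with filtered colimits. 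Naturality is inherited from each stage.

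The main obstacle I anticipate is the bookkeeping around $\W$ and finite products: one must make sure that $\W(\LCF{S})$ really is, as a commutative $\SP$-algebra, a "$p$-adic sum of spheres" indexed by $S$ (this is exactly the phrasing used elsewhere in the excerpt, e.g. in the proof of \Cref{cor:nilthicksub}), and that tensoring with $R$ and applying $K$ interacts correctly with the (possibly infinite) product/colimit structure. Concretely, for infinite $S$ the object $\W(\LCF{S})$ is a filtered colimit of finite products, not itself a product, and it is precisely this colimit presentation — not a product presentation — that makes $K$ commute with it. A secondary, purely technical point is whether one wants the statement for connective $K$ or nonconnective $K$; for the application only the $p$-complete (or $T(n+1)$-local) statement is needed, so one may freely replace $K$ by $\TC$ via \Cref{thm:purity} and use that $\THH$, $(-)^{hC_{p^\infty}}$ on uniformly bounded-below diagrams, and the equalizer defining $\TC$ all interact well with filtered colimits; but stating it for $K$ directly just requires citing that algebraic $K$-theory commutes with filtered colimits of rings.
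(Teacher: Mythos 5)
Your proposal is correct and matches the paper's proof essentially verbatim: write $S=\varprojlim S_\alpha$ with $S_\alpha$ finite, use that $K(-)$, $\W(-)$ and $-\otimes R$ all commute with finite products and filtered colimits, and chain the resulting isomorphisms. The one point you (rightly) flag as delicate — that $\W$ of a finite product of perfect $\F_p$-algebras is the corresponding finite product of spheres, which does \emph{not} follow from $\W$ being a left adjoint (your aside that a finite product in $\Perf$ is a finite coproduct in $\CAlg$ is not correct) but does follow from \Cref{prop:witt-and-tilt} since $\prod_{S_\alpha}\SP$ is connective with $\F_p\otimes\prod_{S_\alpha}\SP\cong\F_p^{\times S_\alpha}$ discrete perfect — is exactly what the paper handles by citing the relevant facts about spherical Witt vectors.
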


\begin{proof}
  Write $S$ as a cofiltered limit $\varprojlim S_\alpha$
  of finite discrete spaces $S_\alpha$.
  Using the fact that each of the functors $K(-)$, $\W(-)$ and $- \otimes R$
  commute with filterd colimits and finite products
  (for $\W(-)$ see \cite[Prop. 2.2, Lem. 2.10]{chromaticNSTZ})
  the composite $K(\W(-) \otimes R)$ does as well.
  We now construct a chain of isomorphisms 
  \begin{align*}
    K(\W(\LCF{S}) &\otimes R)
    \cong K\left(  \W\left( \varinjlim_{\alpha} \F_p^{\times S_\alpha} \right) \otimes R \right) 
    \cong \varinjlim_{\alpha}  K\left(  \W\left( \F_p \right) \otimes R \right)^{\times S_\alpha} \\
    &\cong \left( \varinjlim_{\alpha} \Ss^{\times S_\alpha} \right) \otimes K\left( \Ss \otimes R \right) 
    \cong \left( \varinjlim_{\alpha}  \W\left( \F_p \right)^{\times S_\alpha} \right) \otimes K(R) \\
    &\cong \W\left( \varinjlim_{\alpha} \F_p^{\times S_\alpha} \right)\otimes K(R) 
    \cong \W( \LCF{S}) \otimes K(R). 
  \end{align*}
\end{proof}

\begin{lem} \label{lem:extract-stone2}
  For $n\geq1$, Let $R \in \Alg(\Sp_{T(n)})$ and 
  Let $S$ be a Stone topological space.
  There is a natural isomorphism
  \[ L_{T(n+1)}K(L_{T(n)}(\W(\LCF{S}) \otimes R)) \cong L_{T(n+1)}(\W(\LCF{S}) \otimes K(R)). \]
\end{lem}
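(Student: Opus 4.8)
The statement to be proven is \Cref{lem:extract-stone2}, which should be a quick consequence of \Cref{lem:extract-stone} combined with the purity theorem \Cref{thm:purity}. The key point is that $\W(\LCF{S})$ is a $p$-adic sum of spheres (more precisely a filtered colimit of finite products of copies of the sphere), so tensoring with it commutes with $L_{T(n)}$, $L_{T(n+1)}$, $K$, and $\TC$, which are all compatible with filtered colimits and finite products in the relevant sense.

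\textbf{First step.} I would start by applying \Cref{thm:purity} to the connective $\E_1$-algebra $\W(\LCF{S}) \otimes R$ — here I am tacitly replacing $R$ by a connective model, or more precisely noting that $L_{T(n+1)}K(L_{T(n)}(-))$ applied to a $T(n)$-local ring only depends on the underlying $\E_1$-ring, and using the purity statement in the form
\[ L_{T(n+1)}K(L_{T(n)}(\W(\LCF{S}) \otimes R)) \cong L_{T(n+1)}K(\W(\LCF{S}) \otimes R). \]
Then I would invoke \Cref{lem:extract-stone}, which gives $K(\W(\LCF{S})\otimes R) \cong \W(\LCF{S}) \otimes K(R)$ naturally, and apply $L_{T(n+1)}$ to both sides to obtain
\[ L_{T(n+1)}K(\W(\LCF{S}) \otimes R) \cong L_{T(n+1)}(\W(\LCF{S}) \otimes K(R)). \]
Chaining these two isomorphisms yields the claim.

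\textbf{The subtle point.} The main thing to be careful about is that \Cref{lem:extract-stone} as stated is for $R \in \Alg(\Sp)$, i.e. for $p$-complete (or at least non-localized) $\E_1$-rings, whereas here $R$ is $T(n)$-local. So I would either (a) reprove the chain of isomorphisms of \Cref{lem:extract-stone} directly in the $T(n)$-local setting — this works verbatim since $L_{T(n)}$, $\W(-)$, and $-\otimes-$ all commute with filtered colimits and finite products — or, cleaner, (b) observe that $L_{T(n)}(\W(\LCF S)\otimes R)$ can itself be computed as a filtered colimit of finite products of copies of $L_{T(n)}R$ (writing $S = \varprojlim S_\alpha$), and then run the same argument: $L_{T(n+1)}K$ of a filtered colimit of $T(n)$-local rings is the colimit of $L_{T(n+1)}K$, and $L_{T(n+1)}K$ of a finite product is the finite product, so
\[ L_{T(n+1)}K(L_{T(n)}(\W(\LCF S)\otimes R)) \cong \varinjlim_\alpha L_{T(n+1)}K(L_{T(n)}R)^{\times S_\alpha} \cong \W(\LCF S)\otimes L_{T(n+1)}K(R), \]
using purity in the last step to identify $L_{T(n+1)}K(R) \cong L_{T(n+1)}K(L_{T(n)}R)$ and commuting the $p$-adic sum of spheres past $L_{T(n+1)}$. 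I expect this bookkeeping — keeping track of where $L_{T(n)}$ versus $L_{T(n+1)}$ localization is applied, and ensuring the colimit is taken in the right category — to be the only real obstacle; there is no deep content beyond \Cref{lem:extract-stone}, \Cref{thm:purity}, and the finiteness properties of $\W(-)$ from \cite{chromaticNSTZ}.
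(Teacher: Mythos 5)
Your proposal is correct and is essentially identical to the paper's proof: the paper observes that, since $R$ is $T(n)$-local, the map $\W(\LCF{S}) \otimes R \to L_{T(n)}(\W(\LCF{S}) \otimes R)$ is a $(T(n)\oplus T(n+1))$-local isomorphism, applies the purity theorem of \cite{LMMT} to drop the inner localization, and then concludes by \Cref{lem:extract-stone} — the same two steps you propose. The subtlety you flag about \Cref{lem:extract-stone} being stated for $\Alg(\Sp)$ is handled correctly by either of your workarounds (the chain of isomorphisms in its proof only uses compatibility of $K$, $\W(-)$, and $-\otimes-$ with filtered colimits and finite products, so it runs verbatim), and the paper simply applies the lemma without comment.
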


\begin{proof}
  As $R$ is $T(n)$-local, the map
  $\W(\LCF{S}) \otimes R \to L_{T(n)}(\W(\LCF{S}) \otimes R)$
  is a $(T(n) \oplus T(n+1))$-local isomorphism and
  therefore we have a natural isomorphism  
  \[ L_{T(n+1)}K(L_{T(n)}(\W(\LCF{S}) \otimes R)) \cong L_{T(n+1)}K(\W(\LCF{S}) \otimes R) \]
  by the purity theorem of \cite{LMMT}.
  The lemma now follows from \Cref{lem:extract-stone}.  
\end{proof}

\begin{lem} \label{lem:coassembly-descent}
    For $n\geq1$, let $R \in \CAlg(\Sp_{T(n)}^{B\Z,u})$, and $F = L_{T(n+1)}K$.
    There is a commuting triangle, natural in $R$,
    \[ \begin{tikzcd}
        & F(R^{h\Z}) \ar[dr] \ar[dl] & \\
        F(R)^{h\Z} \ar[rr, no head, "\cong"] & & \lim_{\Delta} F( R^{\otimes_{R^{h\Z}} \bullet +1} )
    \end{tikzcd} \]
    identifying the coassembly map with the \v{C}ech nerve of $R^{h\Z} \to R$. 
    In particular, the coassembly map for $R$ is an isomorphism if and only if $R$ satisfies $F$-descent.
\end{lem}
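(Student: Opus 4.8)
\textbf{Proof plan for \Cref{lem:coassembly-descent}.}

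The strategy is to use the shearing construction of \Cref{cnstr:shear} to identify the \v{C}ech nerve of the $\ZZ_p$-pro-Galois extension $R^{h\ZZ} \to R$ with a cosimplicial diagram to which $F$ can be applied term-by-term, and then to recognize the resulting limit as $F(R)^{h\ZZ}$. First I would note that $R^{h\ZZ} \to R$ is the cochains functor applied to the map $\ast \to B\ZZ$, and more generally that since $R$ has a locally unipotent $\ZZ$-action, the results of \Cref{sec:cochaincircle} (specifically \Cref{lem:basechangepZ} together with \Cref{lem:unipotent_THH}-style base change, and the fact that $\SP^{B\ZZ}$ is dualizable) give
\[ R^{\otimes_{R^{h\ZZ}}(\bullet+1)} \cong R \otimes_{R^{h\ZZ}} \cdots \otimes_{R^{h\ZZ}} R, \]
which one computes as $R$ tensored over $\SP^{B\ZZ}$ with the self-tensor powers of $\SP$, i.e. $\SP^{(\ast \times_{B\ZZ} \cdots \times_{B\ZZ} \ast)}$. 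The $\bullet$-fold fiber product $\ast \times_{B\ZZ} \cdots \times_{B\ZZ}\ast$ is the $\bullet$-th term of the \v{C}ech nerve of $\ast \to B\ZZ$, whose geometric realization is $B\ZZ$ and whose totalization (after applying cochains) recovers $\SP^{B\ZZ}$; tracking the unipotent action this becomes, after applying $F$, the cosimplicial object computing $F(R)^{h\ZZ}$ via the standard cobar resolution of the $\ZZ$-homotopy fixed points.

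The cleanest way to organize this is via the abstract machinery just developed: working in $\CC = \CAlg(\Sp_{T(n)})^{\op}$ with $F = L_{T(n+1)}K$, I would invoke \Cref{lem:descentcovers} with $G = \Omega(B\ZZ) = \ZZ$ (in the relevant $\infty$-categorical sense, $\ZZ \simeq \Omega S^1$), taking the $G$-equivariant object $a = R$ over the base $R^{h\ZZ}$; the hypothesis that $F(-) \to F(a \times -)^{hG}$ is an isomorphism is precisely a naturality/descent statement which I would want to phrase so that it is automatic. Actually the more direct route, avoiding circularity, is: the \v{C}ech nerve of $R^{h\ZZ}\to R$ is the image under $(-)^{h\ZZ}$-cobar of a constant diagram, so applying the product-preserving functor $F$ and using that $F$ commutes with the relevant finite products term-by-term, the totalization $\lim_\Delta F(R^{\otimes_{R^{h\ZZ}}\bullet+1})$ is computed as the totalization of the cobar complex of $F(R)$ with its residual $\ZZ$-action, which is by definition $F(R)^{h\ZZ}$. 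The coassembly map $F(R^{h\ZZ}) \to F(R)^{h\ZZ}$ factors through $F$ applied to the augmentation $R^{h\ZZ} \to R^{\otimes_{R^{h\ZZ}}\bullet+1}$, giving the commuting triangle, and the final ``if and only if'' is then immediate from \Cref{dfn:Fcover}: the coassembly map is an isomorphism exactly when $F$ satisfies descent along $R^{h\ZZ}\to R$.

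The main obstacle I anticipate is making rigorous the identification of the \v{C}ech nerve of $R^{h\ZZ}\to R$ with the cobar/homotopy-fixed-point cosimplicial object — i.e. checking that the iterated relative tensor products $R^{\otimes_{R^{h\ZZ}}(\bullet+1)}$ really are what the shearing picture predicts, uniformly and functorially in the cosimplicial direction. This requires knowing that $R^{h\ZZ} \to R$ behaves like a (pro-)Galois descent datum: concretely, that $R \otimes_{R^{h\ZZ}} R \cong \prod_{\ZZ_p} R$ or rather its continuous/Stone-space refinement $\W(\LCF{\ZZ_p}) \otimes R$, which is exactly the kind of computation \Cref{lem:thh-s1} and \Cref{lem:coassembly-po} carry out for the universal case $R = \SP$ and which base-changes along $\SP^{B\ZZ} \to R^{h\ZZ}$ using local unipotence. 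Here Lemmas \ref{lem:extract-stone} and \ref{lem:extract-stone2} are designed precisely to let $F = L_{T(n+1)}K$ pass through these Stone-indexed products, so the term-by-term evaluation of $F$ on the \v{C}ech nerve is well-behaved. Once that dictionary is in place, the rest is formal manipulation of totalizations and the definition of $F$-descent.
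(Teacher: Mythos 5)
You have assembled the right ingredients --- the identification $R \otimes_{R^{h\Z}} R \cong \W\LCF{\overrightarrow{\Z_p}} \otimes R$ coming from \Cref{lem:thh-s1}/\Cref{lem:basechangepZ} and local unipotence, and the commutation of $F$ past Stone-indexed Witt vectors via \Cref{lem:extract-stone2} --- but the concluding step has a genuine gap. After applying $F$ levelwise and commuting it past the Witt vectors, the cosimplicial object you obtain is $[n] \mapsto \W\LCF{\Z_p^{\times n}} \otimes F(R)$, the \emph{continuous} ($\Z_p$-indexed) cobar complex. Its totalization is \emph{not} ``by definition'' $F(R)^{h\Z}$: by definition $F(R)^{h\Z}$ is the limit over $B\Z$ (the fiber of $1-\psi$), equivalently the totalization of the \emph{discrete} cobar complex $[n] \mapsto \prod_{\Z^{n}} F(R)$. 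The discrepancy between the continuous and discrete cobar complexes is precisely the hyperdescent phenomenon this paper is about (compare \Cref{thm:failurekn}), so it cannot be waved away as definitional; some input about the locally unipotent nature of the action must enter at exactly this point, and your proposal never invokes it for this step.

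The paper closes this gap with a two-row comparison that you should adopt. One forms the square whose top row is $F$ of the \v{C}ech nerve of $R^{h\Z} \to R$ and whose bottom row is $\bigl(\lim_{\Delta} F(R \otimes_{R^{h\Z}} R^{\otimes_{R^{h\Z}}\bullet})\bigr)^{h\Z}$, where the extra $R$-factor carries the $\Z$-action. The bottom augmented cosimplicial object splits (the multiplication $R \otimes_{R^{h\Z}} R \to R$ is an extra degeneracy), so its limit is $F(R)$ and the bottom row collapses to $F(R)^{h\Z}$ with no descent question arising. The remaining work is the \emph{levelwise} comparison, which is the coassembly map $F(A) \to F(\W\LCF{\overrightarrow{\Z_p}} \otimes A)^{h\Z}$ for each commutative $R$-algebra $A$ appearing in the nerve; crucially the $\Z$-action here sits entirely on the Witt-vector factor, so this is an isomorphism by \Cref{lem:extract-stone2} together with \Cref{prop:unipotent_SW}(2)--(3), with no hypothesis needed on the induced action on $F(A)$. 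Your draft is missing both the splitting argument and the appeal to \Cref{prop:unipotent_SW}; without them the identification of the totalization with $F(R)^{h\Z}$ does not go through.
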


\begin{proof}
    We begin by constructing the following square with \v{C}ech covers 
    and coassembly maps
    \[ \begin{tikzcd}
        F(R^{h\Z}) \ar[r] \ar[d] &
        \lim_{\Delta} F\left( R^{h\Z} \otimes_{R^{h\Z}} R^{\otimes_{R^{h\Z}} \bullet } \right) \ar[d] \\
        F(R)^{h\Z} \ar[r] &
        \left( \lim_{\Delta} F\left( R \otimes_{R^{h\Z}} R^{\otimes_{R^{h\Z}} \bullet } \right) \right)^{h\Z} .  
    \end{tikzcd} \]
    The bottom horizontal arrow is an isomorphism since the map $R \to R \otimes_{R^{h\Z}} R$ admits a retraction.
    It will now suffice for us to argue that for any commutative $R$-algebra $A$, 
    the coassembly map
    \[ F(R^{h\Z} \otimes_{R^{h\Z}} R \otimes_R A  ) \to F(R \otimes_{R^{h\Z}} R \otimes_R A  )^{h\Z} \]
    is an isomorphism.
    
    Using \Cref{lem:basechangepZ}, \Cref{rmk:unip-affine} and \Cref{lem:thh-s1}
    we construct a cube
    \[ \begin{tikzcd}
        \Ss^{B\Z} \ar[rr] \ar[dr] \ar[dd] & & 
        \Ss \ar[dd] \ar[dr] & \\
        & R^{h\Z} \ar[rr] \ar[dd] & &
        R \ar[dd] \\
        \Ss \ar[rr] \ar[dr] & & 
        \W\LCF{\overrightarrow{\ZZ_p}} \ar[dr] \\
        & R \ar[rr] & & 
        R \otimes_{R^{h\Z}} R        
    \end{tikzcd} \]
    where every face is a pushout of commutative algebras 
    and terms on the bottom face have $\Z$-actions.
    The identification of the $\Z$-action on the back right comes from the proof of part (3) of \Cref{prop:unipotent_SW}.
    From the right face this cube we can read off that there is an isomorphism of $\Z$-equivariant commutative $R$-algebras
    \[ R \otimes_{R^{h\Z}} R \cong \W\LCF{\overrightarrow{\ZZ_p}} \otimes R \]
    where on the left hand side the $\Z$-action is via the action on the left tensor factor and the $R$-algebra structure is via the right tensor factor.
    This lets rewrite the coassembly map above as 
    \[ F( A  ) \to F(\W\LCF{\overrightarrow{\ZZ_p}} \otimes A  )^{h\Z}. \]
    This map is an isomorphism by \Cref{lem:extract-stone2} and \Cref{prop:unipotent_SW} (3).
\end{proof}


\subsection{Cyclotomic hyperdescent}
\label{subsec:cyclo-descent}

In this subsection we introduce the final key idea in giving a counter-example to the telescope conjecture: cyclotomic redshift. This result, proven in \cite{cycloshift}, is about the compatibility of chromatically localized $K$-theory with the cyclotomic extensions. We use cyclotomic redshift to show that the map $\SP_{T(n)} \to \SP_{T(n)}^{ab}$ is a $L_{T(n+1)}K\cycl$-cover, where $\SP_{T(n)}^{ab}$ is the lift of the maximal abelian extension of the $K(n)$-local sphere to the $T(n)$-local sphere constructed in \cite{carmeli2021chromatic}. We show that since the extensions $L_{T(n)}\BP\langle n \rangle^{hp^k\ZZ} \to L_{T(n)}\BP\langle n \rangle$ admits a retraction after base change to $\SP_{T(n)}^{ab}$, they are also $L_{T(n+1)}K\cycl$-covers.

We refer the reader to the introduction for notation regarding the chromatic cyclotomic extensions of \cite{carmeli2021chromatic}.

\begin{rec}\label{rec:smashing}\label{cnstr:iwasawa}
    We recall that the Galois group $\ZZ_p^\times$ of the infinite $p$-cyclotomic extension $\Ss_{T(n)}[\omega_{p^{\infty}}]$ is isomorphic via the $p$-adic logarithm to the product $T_p\times \ZZ_p$, where $T_p$ is the torsion subgroup that is $\{\pm1\}$ for $p=2$ and $\FF_p^\times$ for $p>2$.

    We let $T_p\times \ZZ$ be the subgroup obtained via the inclusion $\ZZ \to \ZZ_p$.


In \cite[Proposition 6.19]{fourier} it is proven that the functor
  $(-)\cycl : \Sp_{T(n)} \to \Sp_{T(n)}$ given by
  \[ X \mapsto (\Ss_{T(n)}[\omega_{p^\infty}^{(n)}]^{h(T_p\times \ZZ)})\otimes X \]
  is a smashing, symmetric monoidal localization, where the $\ZZ$ action comes from restriction of the $\ZZ_p$-action to a generator.
  We call the local objects for this localization
  cyclotomically complete $T(n)$-local spectra and there are natural inclusions
  \[ \Sp_{K(n)} \subset (\Sp_{T(n)})\cycl \subset \Sp_{T(n)}. \]

\end{rec}


    

\begin{dfn}
    We define $\SP_{T(n)}^{\mathrm{ab}}$ to be $\W(\overline{\FF}_p)\otimes\SP_{T(n)}[\omega_{p^\infty}^{(n)}] \in \CAlg(\Sp_{T(n)})$. By \cite{carmeli2021chromatic}, this is a lift of the maximal abelian Galois extension of the $K(n)$-local sphere to a pro-Galois extension of the $T(n)$-local sphere, and we equip it with a $\Z\times (T_p\times \Z)$-action via the $\Z$-action on $\W(\overline{F}_p)$ coming from the Frobenius and the $T_p\times \Z$-action on $\SP_{T(n)}[\omega_{p^\infty}^{(n)}]$.
\end{dfn}

Cyclotomic redshift can be phrased as the following
hyperdescent result for the $K$-theory of the $p$-cyclotomic extensions.

  \todo{Do we really need the iwasawa one?}\todo{ishan: look at lem:6.22 and end of section 7, that is where it is used. }

\begin{thm}[{\cite[Theorem 5.11, Proposition 5.17]{cycloshift}}]
  \label{thm:cyclotomicredshiftinput}
  Let $n\geq0$, and $R \in \CAlg(\Sp_{T(n)})$. The natural
lax symmetric monoidal transformations
\[ L_{T(n+1)}K(R) \to L_{T(n+1)}K(\Ss_{T(n)}[\omega_{p^{\infty}}^{(n)}]^{hT_p} \otimes R)^{h\Z} \]
\[L_{T(n+1)}K(R)\to L_{T(n+1)}K(\Ss_{T(n)}[\omega_{p^{\infty}}^{(n)}] \otimes R)^{h(T_p\times \Z)} \]
  exhibit the target as the cyclotomic completion of the source, where the tensor products are taken in $\CAlg(\Sp_{T(n)})$.
\end{thm}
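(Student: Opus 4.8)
The plan is to derive \Cref{thm:cyclotomicredshiftinput} from the cyclotomic redshift equivalences of \cite{cycloshift} recalled in the introduction, together with the description of the cyclotomic completion functor $(-)\cycl$ as a smashing localization from \Cref{rec:smashing}. Recall that cyclotomic redshift provides, for each $i$ and each $T(n)$-local $\E_1$-ring $R$, a natural $(\Z/p^i)^\times$-equivariant, lax symmetric monoidal equivalence
\[ L_{T(n+1)}K\big(\Ss_{T(n)}[\omega_{p^i}^{(n)}]\otimes R\big) \;\simeq\; \Ss_{T(n+1)}[\omega_{p^i}^{(n+1)}]\otimes L_{T(n+1)}K(R), \]
compatible with the transition maps as $i$ varies.

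First I would pass to the colimit over $i$. Since algebraic $K$-theory commutes with filtered colimits of $\E_1$-rings and $L_{T(n+1)}$ is a smashing localization, the functor $L_{T(n+1)}K(-)$ commutes with the colimit $\colim_i\big(\Ss_{T(n)}[\omega_{p^i}^{(n)}]\otimes R\big) = \Ss_{T(n)}[\omega_{p^\infty}^{(n)}]\otimes R$, and the right-hand sides assemble into $\Ss_{T(n+1)}[\omega_{p^\infty}^{(n+1)}]\otimes L_{T(n+1)}K(R)$. (If one forms the tensor products in $\CAlg(\Sp_{T(n)})$ literally, one compares with the underlying $\E_1$-rings using the purity theorem of \cite{LMMT}, exactly as in the proof of \Cref{lem:extract-stone2}.) This yields a $\Z_p^\times$-equivariant, lax symmetric monoidal equivalence
\[ L_{T(n+1)}K\big(\Ss_{T(n)}[\omega_{p^\infty}^{(n)}]\otimes R\big) \;\simeq\; \Ss_{T(n+1)}[\omega_{p^\infty}^{(n+1)}]\otimes L_{T(n+1)}K(R), \]
under which the natural transformation in the statement corresponds to the map induced by the unit $R\to \Ss_{T(n)}[\omega_{p^\infty}^{(n)}]\otimes R$.

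Next I would apply homotopy fixed points for the subgroup $T_p\times\Z\subseteq \Z_p^\times\cong T_p\times\Z_p$ from \Cref{cnstr:iwasawa}. On the right-hand side one wants to move $(-)^{h(T_p\times\Z)}$ inside $-\otimes L_{T(n+1)}K(R)$: the $\Z$-fixed points are the fibre of $1-\mathrm{gen}$, hence a finite limit, so they commute with tensoring with any object, while for the finite group $T_p$ the commutation follows from descent along the finite $T_p$-Galois extension (a prime-to-$p$ retraction when $p$ is odd, and in general a consequence of the finite Galois descent statements in \cite{cycloshift}). Combining this with the smashing localization of \Cref{rec:smashing} at height $n+1$ gives
\[ \big(\Ss_{T(n+1)}[\omega_{p^\infty}^{(n+1)}]\otimes L_{T(n+1)}K(R)\big)^{h(T_p\times\Z)} \;\simeq\; \big(\Ss_{T(n+1)}[\omega_{p^\infty}^{(n+1)}]^{h(T_p\times\Z)}\big)\otimes L_{T(n+1)}K(R) \;=\; \big(L_{T(n+1)}K(R)\big)\cycl, \]
which establishes the second displayed transformation. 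The first then follows by writing $(-)^{h(T_p\times\Z)}\simeq\big((-)^{hT_p}\big)^{h\Z}$ and using the same finite Galois descent to identify $L_{T(n+1)}K\big(\Ss_{T(n)}[\omega_{p^\infty}^{(n)}]^{hT_p}\otimes R\big)\simeq L_{T(n+1)}K\big(\Ss_{T(n)}[\omega_{p^\infty}^{(n)}]\otimes R\big)^{hT_p}$.

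The main obstacle is the passage through homotopy fixed points: one must know that $T(n+1)$-local $K$-theory satisfies descent along the finite $T_p$-Galois extension, and that $(-)^{hT_p}$ and $(-)^{h\Z}$ can be pulled through the relevant tensor products — facts which at $p=2$ genuinely need input beyond a prime-to-$p$ retraction argument. These descent statements are precisely what is established in \cite{cycloshift}, so in practice the theorem is simply quoted from \cite[Theorem 5.11, Proposition 5.17]{cycloshift}; the sketch above records how those results imply the packaging used here.
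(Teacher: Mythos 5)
The paper gives no proof of this statement: it is imported verbatim from \cite{cycloshift} (with the finite Galois descent input ultimately going back to \cite{CMNN}), so there is nothing internal to compare your argument against. Your sketch is a sound reconstruction of how the finite-level cyclotomic redshift equivalences assemble into the stated packaging, and you correctly isolate the only genuinely delicate points: (i) commuting $L_{T(n+1)}K(-)$ with the filtered colimit over $i$ and reconciling the tensor products in $\CAlg(\Sp_{T(n)})$ with underlying $\E_1$-rings via purity, and (ii) pulling $(-)^{hT_p}$ and $(-)^{h\Z}$ through the tensor product on the target side — the $\Z$-fixed points being a finite limit, and the $T_p$-fixed points requiring either the prime-to-$p$ retraction ($p$ odd) or genuine Galois descent for $T(n+1)$-local $K$-theory ($p=2$). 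One small attribution correction: the descent of $L_{T(n+1)}K$ along the finite $T_p$-Galois extension is the Clausen--Mathew--Naumann--Noel descent theorem rather than something first established in \cite{cycloshift}; and you should be slightly careful that the identification of the resulting equivalence with the specific unit-induced transformation in the statement uses the naturality and lax symmetric monoidality of the redshift equivalence, which you do invoke. Since the paper treats this as a black box, your outline is an acceptable gloss but should not be mistaken for a self-contained proof — the content lives in the cited references.
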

\begin{cor}\label{cor:cycredshift}
    For $n\geq 0$, the map $\SP_{T(n)} \to \SP_{T(n)}[\omega_{p^{\infty}}^{(n)}]$ is a $L_{T(n+1)}K\cycl$-cover.
\end{cor}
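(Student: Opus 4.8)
\textbf{Proof proposal for \Cref{cor:cycredshift}.}

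The plan is to deduce this directly from \Cref{thm:cyclotomicredshiftinput} together with the abstract machinery relating descent and the cyclotomic completion functor developed in \Cref{subsec:univdescent}. First I would observe that by \Cref{thm:cyclotomicredshiftinput} applied with the functor $F = L_{T(n+1)}K(\--)$, for every $R \in \CAlg(\Sp_{T(n)})$ the natural map
\[ F(R) \to F(\SP_{T(n)}[\omega_{p^\infty}^{(n)}] \otimes R)^{h(T_p \times \Z)} \]
exhibits the target as the cyclotomic completion $F(R)\cycl$ (here the tensor is in $\CAlg(\Sp_{T(n)})$, so this is base change in $\CC = \CAlg(\Sp_{T(n)})^{\op}$). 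Since $(\--)\cycl$ is a smashing localization by \Cref{rec:smashing}, the composite $F\cycl = L_{T(n+1)}K(\--)\cycl$ is again a product-preserving functor $\CC^{\op} \to \Sp_{T(n+1)}$ that preserves the relevant totalizations (cyclotomic completion commutes with limits, being a localization onto a full subcategory), so $F\cycl$-covers make sense and \Cref{cnv:topcat} applies to $\CC$ by \Cref{exm:descenttopexamples}.

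The key step is then to identify the map $\SP_{T(n)} \to \SP_{T(n)}[\omega_{p^\infty}^{(n)}]$ as a descent morphism for $F\cycl$. I would invoke \Cref{lem:descentcovers}: take $G = \Omega(B(T_p\times\Z)) = T_p \times \Z$, which is a loop space, and take $a = \SP_{T(n)}[\omega_{p^\infty}^{(n)}]$ viewed as a $G$-equivariant object of $\CC = \CAlg(\Sp_{T(n)})^{\op}$ via the $(T_p\times\Z)$-action recalled in \Cref{rec:smashing}. Then for the functor $F\cycl$, the statement that $F\cycl(\--) \to F\cycl(a \times \--)^{hG}$ is an isomorphism is precisely the content of \Cref{thm:cyclotomicredshiftinput}: indeed $F\cycl(R) = F(R)\cycl = F(R)$ once we have cyclotomically completed (or rather, the localization is idempotent), and $F\cycl(\SP_{T(n)}[\omega_{p^\infty}^{(n)}]\otimes R)^{hG} = F(\SP_{T(n)}[\omega_{p^\infty}^{(n)}]\otimes R)^{hG}$ since the latter is already cyclotomically complete (it is a homotopy fixed point of a $K(n)$-locally-built object, and in any case the cyclotomic completion of $F(R)$ is visibly complete). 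So \Cref{lem:descentcovers} yields that $a \to *$ is an $F\cycl$-cover in $\CC$, which unwinds to say exactly that $\SP_{T(n)} \to \SP_{T(n)}[\omega_{p^\infty}^{(n)}]$ is an $L_{T(n+1)}K\cycl$-cover.

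The main obstacle I anticipate is the bookkeeping needed to see that \Cref{thm:cyclotomicredshiftinput} really does provide the isomorphism $F\cycl(\--)\to F\cycl(a\times\--)^{hG}$ in the precise form demanded by \Cref{lem:descentcovers}, rather than just for the object $R$ itself — that is, one needs the statement to hold after arbitrary base change along maps out of $\SP_{T(n)}$, i.e. for all $R \in \CAlg(\Sp_{T(n)})$, and one needs to match up the $G$-equivariant structure on $a$ with the $(T_p\times\Z)$-action appearing in the theorem. But \Cref{thm:cyclotomicredshiftinput} is stated for general $R \in \CAlg(\Sp_{T(n)})$ and the transformation is lax symmetric monoidal and natural, so base change is automatic; the equivariance match is exactly the definition of the action in \Cref{rec:smashing}. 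A secondary subtlety is confirming that $a \times a \to a$ in $\CC$ (i.e. $\SP_{T(n)}[\omega_{p^\infty}^{(n)}] \to \SP_{T(n)}[\omega_{p^\infty}^{(n)}] \otimes_{\SP_{T(n)}} \SP_{T(n)}[\omega_{p^\infty}^{(n)}]$) admits the diagonal section, which is the multiplication map and is routine. With these points checked, the corollary follows.
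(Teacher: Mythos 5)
Your proposal is correct and is essentially the paper's own argument: apply \Cref{lem:descentcovers} with $G = T_p\times\Z$ and $a = \SP_{T(n)}[\omega_{p^\infty}^{(n)}]$, using \Cref{thm:cyclotomicredshiftinput} to identify $L_{T(n+1)}K(-)\cycl$ with $(L_{T(n+1)}K(\SP_{T(n)}[\omega_{p^{\infty}}^{(n)}]\otimes -)\cycl)^{h(T_p\times\Z)}$ as a functor. The bookkeeping points you flag (base change for all $R$, matching the equivariant structure) are handled exactly as you say, by the naturality of the transformation in \Cref{thm:cyclotomicredshiftinput}.
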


\begin{proof}
    This follows directly from \Cref{lem:descentcovers} and \Cref{thm:cyclotomicredshiftinput}, as we can identify $L_{T(n+1)}K\cycl$ with $(L_{T(n+1)}K(\Ss_{T(n)}[\omega_{p^{\infty}}^{(n)}] \otimes R)\cycl)^{h(T_p\times \Z)}$ as a functor. 
\end{proof}

Next, we extend \Cref{cor:cycredshift} to apply to $\SP_{T(n)}^{\mathrm{ab}}$. To do this, we show $\TC$ satisfies hyperdescent along
the $\pi_0$-\'etale extension $\W(\F_p) \to \W(\overline{\FF}_p)$.

\begin{lem} \label{lem:primetopcyc}
  Let $R \in \Alg(\Sp)$ 
  and give $\W(\Fpbar)$ the $\Z$-action by Frobenius.
  The natural map
  \[ \TC(R) \xrightarrow{\cong} \TC(\W(\Fpbar) \otimes R)^{h\Z} \]
  is an isomorphism. 
\end{lem}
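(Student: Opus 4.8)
The statement is that $\TC(R) \to \TC(\W(\Fpbar) \otimes R)^{h\Z}$ is an equivalence, where $\Z$ acts on $\W(\Fpbar)$ through the Frobenius (which generates the procyclic Galois group $\hat\Z = \Gal(\Fpbar/\F_p)$, and we restrict to the dense subgroup $\Z$). The key point is that $\W(\Fpbar)$ is a (pro-)Galois extension of $\W(\F_p) = \Z_p$ with Galois group $\hat{\Z}$, so after base change $\W(\Fpbar) \otimes_{\Z_p} \W(\Fpbar) \cong \prod'_{\hat\Z}\W(\Fpbar)$, a ``continuous'' product; but here we take the $\Z$-homotopy fixed points rather than $\hat\Z$, so I want to see that $\Z$-descent suffices because $\W(\Fpbar)$ is already a faithful, dualizable $\Z_p$-module after $p$-completion in a suitable sense, or more robustly because $\TC$ of this extension can be computed directly.

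\textbf{First approach (direct computation via $\THH$).} First I would reduce to $\THH$: since $\THH$ is symmetric monoidal and $\W(\Fpbar) = \W(\colim_k \F_{p^k})$ is a filtered colimit of the finite étale extensions $\W(\F_{p^k})$, and $\THH(\W(\F_{p^k})) \cong \W(\F_{p^k}) \otimes_{\Z_p}\THH(\Z_p)$ since $\W(\F_{p^k})/\Z_p$ is étale (so $\THH$ is base-changed, by the standard base-change property of $\THH$ for étale maps — here $\W(\F_{p^k})$ is formally étale over $\Z_p$), one gets $\THH(\W(\Fpbar)\otimes R) \cong \W(\Fpbar) \otimes_{\Z_p} \THH(R)$ as cyclotomic spectra, with the cyclotomic Frobenius being $\varphi_{\W(\Fpbar)} \otimes \varphi_{\THH(R)}$ and $\varphi$ on $\W(\Fpbar)$ being the arithmetic Frobenius. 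Then $\TC(\W(\Fpbar)\otimes R)^{h\Z}$ unwinds, via the $\TC^- / \TP$ or $\TR$-Frobenius equalizer, to $(\W(\Fpbar)\otimes_{\Z_p}(-))^{h\Z}$ applied to the relevant pieces, and I want: $(\W(\Fpbar))^{h\Z} \cong \Z_p = \W(\F_p)$, with $\Z$ acting by Frobenius. This last fact is the crux: it says $p$-completed $\Z$-homotopy fixed points of $\W(\Fpbar)$ with the Frobenius action recover $\W(\F_p)$. This holds because $\W(\Fpbar)$ is $p$-complete, the action on $\pi_0 = \W(\Fpbar)$ is the usual Frobenius whose fixed points are $\Z_p$ and whose $H^1$ (continuous/locally constant cohomology) vanishes after $p$-completion — one can see this from $\W(\Fpbar)/p = \Fpbar$ with $H^*(\Z, \Fpbar) = \Fpbar^{\mathrm{Frob}} = \F_p$ concentrated in degree $0$ together with an Artin--Schreier type argument for the $H^1$ vanishing: $x \mapsto \mathrm{Frob}(x) - x$ is surjective on $\Fpbar$.

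\textbf{Key step / main obstacle.} The main technical point I expect to be delicate is checking that $\TC$ (not just $\THH$) commutes appropriately with the base change and with taking $\Z$-homotopy fixed points, i.e.\ justifying the interchange $\TC((\W(\Fpbar)\otimes R)) \cong (\W(\Fpbar)\otimes \TC(R))^{?}$ and then $(\W(\Fpbar)\otimes\TC(R))^{h\Z}\cong \TC(R)$. Since $\W(\Fpbar) = \colim_k \W(\F_{p^k})$ is a filtered colimit of finite free $\Z_p$-modules, $\W(\Fpbar)\otimes -$ preserves limits up to this colimit issue; I would argue that the relevant homotopy limits ($(-)^{h\T}$, $(-)^{t\T}$, the equalizers defining $\TC$, and $(-)^{h\Z}$) all commute with the filtered colimit $\colim_k \W(\F_{p^k})\otimes-$ because each $\W(\F_{p^k})$ is a finite free $\Z_p$-module (dualizable), so tensoring with it commutes with all limits, and then $\colim_k$ commutes with $(-)^{h\Z}$ and the finite $\TC$ equalizer since $\W(\Fpbar)\otimes \TC(R)$ has the form $\colim_k$ of dualizable-module tensors. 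Thus the problem reduces, at finite level, to: $\TC(\W(\F_{p^k})\otimes R)\cong \W(\F_{p^k})\otimes\TC(R)$ (clear, étale base change for all of $\THH,\TC^-,\TP,\TC$), and then in the colimit $(\W(\Fpbar)\otimes\TC(R))^{h\Z}\cong \TC(R)$, which follows from $(\W(\Fpbar))^{h\Z}\cong\Z_p$ with Frobenius action — the Artin--Schreier computation. I would also double check that the map in the statement is indeed the natural base-change-then-descent comparison, so that identifying source and target through these equivalences identifies the map with an identity. Finally, I expect this lemma is used in the next step to upgrade \Cref{cor:cycredshift} from $\SP_{T(n)}[\omega_{p^\infty}^{(n)}]$ to $\SP_{T(n)}^{\mathrm{ab}} = \W(\Fpbar)\otimes\SP_{T(n)}[\omega_{p^\infty}^{(n)}]$, by combining this $\W(\Fpbar)$-descent for $\TC$ (hence for $L_{T(n+1)}K$ via the trace and \Cref{thm:TCequalK}) with the cyclotomic-redshift descent already established.
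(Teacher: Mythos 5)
Your strategy is essentially the paper's: the paper proves the equivalence at the level of cyclotomic spectra via the chain
$\THH(R)\cong \W(\Fpbar)^{h\Z}\otimes\THH(R)\cong\THH(\W(\Fpbar))^{h\Z}\otimes\THH(R)\cong(\THH(\W(\Fpbar))\otimes\THH(R))^{h\Z}\cong\THH(\W(\Fpbar)\otimes R)^{h\Z}$,
and then applies $\TC$, which commutes with the limit $(-)^{h\Z}$. Your three ingredients — base-changing $\THH$ along a formally \'etale extension, commuting the tensor with the relevant limits because $\W(\Fpbar)$ is a filtered colimit of dualizables, and the Artin--Schreier computation identifying the $\Z$-fixed points with the base — are exactly what the paper uses (the fixed-point computation is \Cref{prop:unipotent_SW}(2)-style: check mod $\F_p$, where it is the exactness of $0\to\F_p\to\Fpbar\xrightarrow{\mathrm{Frob}-1}\Fpbar\to 0$).

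One point to fix in your write-up: in this paper $\W(\Fpbar)$ is the \emph{spherical} Witt vectors, an $\E_\infty$-algebra over the $p$-complete sphere, and the tensor product in the statement is over $\Ss$, not over $\Z_p$. Read literally, your key formulas
$\THH(\W(\Fpbar)\otimes R)\cong\W(\Fpbar)\otimes_{\Z_p}\THH(R)$ and $\W(\Fpbar)^{h\Z}\cong\Z_p$
are wrong: for the classical Witt vectors one has $\THH(W(\F_{p^k}))\cong W(\F_{p^k})\otimes_{\Z_p}\THH(\Z_p)$ with $\THH(\Z_p)\neq\Z_p$, so the absolute $\THH$ of $\W(\Fpbar)\otimes R$ would acquire an extra $\THH(\Z_p)$ factor, and if the fixed points were $\Z_p$ rather than the unit $\Ss_p=\W(\F_p)$ the final identification would give $\Z_p\otimes\TC(R)$ rather than $\TC(R)$. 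The correct inputs are that the spherical Witt vectors are formally \'etale over $\Ss_p$, so $\THH(\W(\F_{p^k}))\cong\W(\F_{p^k})$, and $\W(\Fpbar)^{h\Z}\cong\Ss_p$. With that substitution your argument is the paper's proof.
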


\begin{proof}
  The equivalence follows from applying $\TC$ to the equivalence of cyclotomic spectra:
  \begin{align*}
    \THH(R)
    &\cong \W(\Fpbar)^{h\Z} \otimes \THH(R)
    \cong \THH(\W(\Fpbar))^{h\Z} \otimes \THH(R) \\
    &\cong (\THH(\W(\Fpbar)) \otimes \THH(R))^{h\Z}
    \cong \THH(\W(\Fpbar) \otimes R)^{h\Z}.\qedhere
  \end{align*}  
\end{proof}

\begin{cor}\label{cor:sabcover}
    For $n\geq 1$, the map $\SP_{T(n)} \to \W(\overline{\FF}_p)\otimes \SP_{T(n)}$ is a $L_{T(n+1)}K$-cover, and $\SP_{T(n)}\to \SP_{T(n)}^{\mathrm{ab}}$ is a $L_{T(n+1)}K\cycl$-cover.
\end{cor}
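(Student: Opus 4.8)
\textbf{Proof plan for Corollary \ref{cor:sabcover}.}

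The plan is to combine \Cref{cor:cycredshift} with the $\pi_0$-\'etale descent result \Cref{lem:primetopcyc}, using the closure properties of $F$-covers from \Cref{dfn:Fcover} and the transfer lemmas \Cref{lem:transfersheafcover,lem:covprecompose,lem:descentcovers}. First I would establish the first claim: that $\SP_{T(n)} \to \W(\overline{\FF}_p)\otimes \SP_{T(n)}$ is an $L_{T(n+1)}K$-cover. The idea is that $\W(\overline{\FF}_p)\otimes\SP_{T(n)}$ carries a $\hat{\ZZ} = \Gal(\FF_p)$-action by Frobenius, with homotopy fixed points $\SP_{T(n)}$; restricting to $\ZZ \subseteq \hat{\ZZ}$ gives a $\ZZ$-action whose homotopy fixed points still recover $\SP_{T(n)}$ after the relevant localization. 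Concretely, for any $R \in \CAlg(\Sp_{T(n)})$, I would apply \Cref{lem:primetopcyc} with the ring $\tau_{\geq 0}$-model and \Cref{thm:TCequalK} (or \Cref{thm:purity}) to rewrite $L_{T(n+1)}K(R)$ in terms of $\TC$, concluding that the natural map $L_{T(n+1)}K(R) \to L_{T(n+1)}K(\W(\overline{\FF}_p)\otimes R)^{h\ZZ}$ is an isomorphism (here one uses $\W(\overline{\FF}_p)\otimes R$ is again $T(n)$-local and connective after taking connective covers, and that $\TC$ commutes with the $\W(\overline{\FF}_p)$-base change as in \Cref{lem:primetopcyc}). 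Since $\W(\overline{\FF}_p)$ is a $\ZZ$-equivariant object with $\W(\overline{\FF}_p)\otimes_{\SP_{T(n)}}\W(\overline{\FF}_p)$ admitting a section (the diagonal), \Cref{lem:descentcovers}, applied with $G = \ZZ = \Omega(S^1)$, shows $\SP_{T(n)} \to \W(\overline{\FF}_p)\otimes\SP_{T(n)}$ is an $L_{T(n+1)}K$-cover.

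For the second claim, I would factor the map $\SP_{T(n)} \to \SP_{T(n)}^{\mathrm{ab}} = \W(\overline{\FF}_p)\otimes\SP_{T(n)}[\omega_{p^\infty}^{(n)}]$ as the composite
\[ \SP_{T(n)} \to \SP_{T(n)}[\omega_{p^\infty}^{(n)}] \to \W(\overline{\FF}_p)\otimes\SP_{T(n)}[\omega_{p^\infty}^{(n)}]. \]
The first map is an $L_{T(n+1)}K\cycl$-cover by \Cref{cor:cycredshift}. For the second map, I would base-change the first claim along $\SP_{T(n)} \to \SP_{T(n)}[\omega_{p^\infty}^{(n)}]$: since $F$-covers are stable under base change (\Cref{dfn:Fcover}, citing \cite[Lemma 3.1.2]{liu2012enhanced}), and since the functor $L_{T(n+1)}K\cycl$ is obtained from $L_{T(n+1)}K$ by a smashing localization (\Cref{rec:smashing}) — hence $L_{T(n+1)}K$-covers are $L_{T(n+1)}K\cycl$-covers by \Cref{lem:transfersheafcover}(1) applied to the localization functor, which preserves finite products and totalizations — the base-changed map $\SP_{T(n)}[\omega_{p^\infty}^{(n)}] \to \W(\overline{\FF}_p)\otimes\SP_{T(n)}[\omega_{p^\infty}^{(n)}]$ is an $L_{T(n+1)}K\cycl$-cover. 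Finally, the composite of two $L_{T(n+1)}K\cycl$-covers is an $L_{T(n+1)}K\cycl$-cover, again by \Cref{dfn:Fcover}, giving the result.

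The main obstacle I anticipate is making the first claim fully precise: \Cref{lem:primetopcyc} is stated for $R \in \Alg(\Sp)$ (implicitly $p$-complete connective spectra), whereas here $R$ is a $T(n)$-local ($T(n+1)$-acyclic) commutative algebra, so one must pass through connective covers and invoke the purity theorem \Cref{thm:purity} and \Cref{thm:TCequalK} to translate between $K$ and $\TC$, checking that $\W(\overline{\FF}_p)\otimes-$ (which is a filtered colimit of $\W(\FF_{p^k})\otimes-$, each finite \'etale over $\ZZ_p$) is compatible with all these reductions, and that the Frobenius $\ZZ$-action descends correctly through the localizations. A secondary subtlety is verifying the hypotheses of \Cref{lem:descentcovers} in the $T(n)$-local setting — specifically that $L_{T(n+1)}K(-) \to L_{T(n+1)}K(\W(\overline{\FF}_p)\otimes-)^{h\ZZ}$ is an isomorphism of functors on $\CAlg(\Sp_{T(n)})$ and not merely on a single object — but this follows from the naturality built into \Cref{lem:primetopcyc} together with \Cref{lem:extract-stone2}-style arguments for handling the relevant colimits.
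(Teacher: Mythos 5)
Your proposal is correct and follows essentially the same route as the paper: the second claim is reduced to the first via \Cref{cor:cycredshift}, closure of covers under base change and composition, and \Cref{lem:transfersheafcover}, while the first claim is proved exactly as in the paper by translating $L_{T(n+1)}K$ to $L_{T(n+1)}\TC$ of connective covers via purity/\Cref{thm:TCequalK}, invoking \Cref{lem:primetopcyc} for the Frobenius $\Z$-action, and concluding with \Cref{lem:descentcovers}. The subtleties you flag (connective covers, naturality in $R$) are precisely the ones the paper handles in the same way.
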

\begin{proof}
    By the definition of $\SP_{T(n)}^{\mathrm{ab}}$, \Cref{cor:cycredshift}, and the fact that $L_{T(n+1)}K$-covers are also $L_{T(n+1)}K\cycl$-covers (\Cref{lem:transfersheafcover}), it is enough to show the first statement. 



      $\tau_{\geq 0}R$ (resp. $\W(\Fpbar) \otimes \tau_{\geq 0}R$) is
  a connective, $T(n+1)$-acyclic $\E_1$-algebra whose $T(n)$-localization is $R$
  (resp. $L_{T(n)}(\W(\Fpbar) \otimes R)$).
  Applying \Cref{thm:red_main} we obtain a commuting diagram
  whose horizontal maps are isomorpshism.
  \[ \begin{tikzcd}
    L_{T(n+1)}K(R) \ar[r, no head, "\cong"] \ar[d] &
    L_{T(n+1)}\TC(\tau_{\geq 0} R) \ar[d] \\
    L_{T(n+1)}K(L_{T(n)}(\W(\Fpbar) \otimes R))^{h\Z} \ar[r, no head, "\cong"] &
    L_{T(n+1)}\TC(\W(\Fpbar) \otimes \tau_{\geq 0} R)^{h\Z} 
  \end{tikzcd} \]
  The right vertical map is an isomorphism by \Cref{lem:primetopcyc} so we conclude by \Cref{lem:descentcovers}.
\end{proof}

The following lemma will be useful for knowing that the target of the coassembly map is cyclotomically complete:

\begin{lem}\label{rmk:target-complete}
    For $n\geq1$, if $R \in \CAlg(\SP_{T(n)})$ and there is a map $\SP[\omega_{p^{\infty}}]^{hT_p} \to L_{T(n)}(\W(\overline{\FF}_p)\otimes R)$ in $\CAlg(\Sp_{T(n)})$, then $L_{T(n+1)}K(R)$ is cyclotomically complete.
\end{lem}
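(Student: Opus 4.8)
\textbf{Proof plan for Lemma \ref{rmk:target-complete}.}

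The plan is to reduce cyclotomic completeness of $L_{T(n+1)}K(R)$ to the statement that $R \to \W(\overline{\FF}_p) \otimes R$ (or rather its appropriate cyclotomic enhancement) is an $L_{T(n+1)}K\cycl$-cover and that, after base change to $\SP_{T(n)}^{\mathrm{ab}}$, the object becomes cyclotomically complete for tautological reasons. First I would record that, since $\SP_{T(n)} \to \W(\overline{\FF}_p) \otimes \SP_{T(n)}$ is an $L_{T(n+1)}K$-cover by \Cref{cor:sabcover}, it remains an $L_{T(n+1)}K$-cover (hence also an $L_{T(n+1)}K\cycl$-cover, by \Cref{lem:transfersheafcover}) after base change along $\SP_{T(n)} \to R$; thus $R \to \W(\overline{\FF}_p) \otimes R$ is an $L_{T(n+1)}K\cycl$-cover. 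Combined with the hypothesized $\CAlg(\Sp_{T(n)})$-map $\SP[\omega_{p^\infty}]^{hT_p} \to L_{T(n)}(\W(\overline{\FF}_p) \otimes R)$ and \Cref{cor:cycredshift}, the map $\W(\overline{\FF}_p)\otimes R \to \SP[\omega_{p^\infty}] \otimes (\W(\overline{\FF}_p)\otimes R)$ is also an $L_{T(n+1)}K\cycl$-cover (it admits a retraction in $\CAlg(\Sp_{T(n)})$ after the indicated base change, so \Cref{lem:descentcovers} applies as in the proof of \Cref{cor:cycredshift}); composing, $R \to \SP_{T(n)}^{\mathrm{ab}} \otimes_{\SP_{T(n)}} R$ is an $L_{T(n+1)}K\cycl$-cover.

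Next I would invoke the \v{C}ech-descent characterization: being an $L_{T(n+1)}K\cycl$-cover means $L_{T(n+1)}K\cycl$ satisfies descent along the \v{C}ech nerve of $R \to \SP_{T(n)}^{\mathrm{ab}} \otimes_{\SP_{T(n)}} R$. Therefore $L_{T(n+1)}K(R)\cycl \cong \lim_{\Delta} L_{T(n+1)}K\big( (\SP_{T(n)}^{\mathrm{ab}} \otimes_{\SP_{T(n)}} R)^{\otimes_R \bullet+1}\big)\cycl$. Each term in this cosimplicial diagram is of the form $L_{T(n+1)}K$ of a $\W(\overline{\FF}_p)$-algebra receiving a map from $\SP[\omega_{p^\infty}]$ in $\CAlg(\Sp_{T(n)})$ (the relevant iterated tensor products retain such structure since they are built from $\SP_{T(n)}^{\mathrm{ab}}$-algebras), and for such a $T(n)$-local commutative ring $A$, \Cref{thm:cyclotomicredshiftinput} identifies $L_{T(n+1)}K(A)\cycl$ with $L_{T(n+1)}K(A)$ itself — i.e. these $K$-theory spectra are \emph{already} cyclotomically complete. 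Since cyclotomically complete $T(n)$-local spectra form a localizing subcategory closed under limits, the totalization $L_{T(n+1)}K(R)\cycl$ is cyclotomically complete, and comparing with the natural map $L_{T(n+1)}K(R) \to L_{T(n+1)}K(R)\cycl$ we conclude that this map is an equivalence, i.e. $L_{T(n+1)}K(R)$ is cyclotomically complete.

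The main obstacle I anticipate is the bookkeeping in the middle step: one must verify that every term of the \v{C}ech nerve of $R \to \SP_{T(n)}^{\mathrm{ab}}\otimes_{\SP_{T(n)}} R$ — equivalently, of $\W(\overline{\FF}_p)\otimes R \to \SP[\omega_{p^\infty}]\otimes(\W(\overline{\FF}_p)\otimes R)$ after extracting the $\W(\overline{\FF}_p)$ and $T_p$-homotopy-fixed-point structure as in the proofs above — still admits a $\CAlg(\Sp_{T(n)})$-map from $\SP[\omega_{p^\infty}]^{hT_p}$, so that \Cref{thm:cyclotomicredshiftinput} genuinely applies levelwise. This is a formal consequence of the fact that all these rings are algebras over $\SP_{T(n)}^{\mathrm{ab}}$ (which itself receives such a map by construction), so the verification is routine once phrased correctly; the only real care needed is in matching up the $\Z\times(T_p\times\Z)$-actions so that the cyclotomic completion functor $(-)\cycl = (\SP_{T(n)}[\omega_{p^\infty}^{(n)}]^{h(T_p\times\Z)}\otimes -)$ of \Cref{rec:smashing} interacts correctly with the totalization, which follows from its being a smashing localization and hence commuting with all limits in the relevant range.
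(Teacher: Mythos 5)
Your overall strategy is the same as the paper's in its essential ingredients: reduce along the $L_{T(n+1)}K$-cover $\SP_{T(n)} \to \W(\overline{\FF}_p)\otimes \SP_{T(n)}$ to the ring $R' = L_{T(n)}(\W(\overline{\FF}_p)\otimes R)$, and then exploit the hypothesized map $\SP[\omega_{p^\infty}]^{hT_p} \to R'$ together with cyclotomic redshift. The paper's proof is shorter at the second step: it observes that the hypothesized map splits the unit $R' \to \SP_{T(n)}[\omega_{p^\infty}^{(n)}]^{hT_p}\otimes R'$ in $\CAlg(\Sp_{T(n)})$, so $L_{T(n+1)}K(R')$ becomes a retract of $L_{T(n+1)}K(\SP_{T(n)}[\omega_{p^\infty}^{(n)}]^{hT_p}\otimes R')$, hence (after passing through the $h\Z$-fixed points) a retract of its own cyclotomic completion as computed by \Cref{thm:cyclotomicredshiftinput}; since complete objects are closed under retracts, this finishes the proof. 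Your version instead runs full \v{C}ech descent along $R \to \SP_{T(n)}^{\mathrm{ab}}\otimes R$ and argues termwise; that works, but it is longer and makes you verify structure on every term of the cosimplicial diagram that the paper avoids.

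The one place where your argument has a real gap is the assertion that, for a ring $A$ admitting a map from $\SP[\omega_{p^\infty}]$, ``\Cref{thm:cyclotomicredshiftinput} identifies $L_{T(n+1)}K(A)\cycl$ with $L_{T(n+1)}K(A)$ itself.'' The theorem does not say this: it identifies $L_{T(n+1)}K(A)\cycl$ with $L_{T(n+1)}K(\SP_{T(n)}[\omega_{p^\infty}^{(n)}]\otimes A)^{h(T_p\times\Z)}$, which is a priori a different spectrum. Deducing completeness of $L_{T(n+1)}K(A)$ from this requires an additional argument --- precisely the retraction trick above (the algebra structure map $\SP[\omega_{p^\infty}]\otimes A \to A$ splits the unit, so $L_{T(n+1)}K(A)$ is a retract of its completion), or alternatively a Stone-duality computation identifying $\SP[\omega_{p^\infty}]\otimes A$ with $C^0(\Z_p^\times, A)$ and its fixed points. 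So the content of the lemma has been relocated into an unproven assertion; once you insert the retraction argument there, your proof closes up, and at that point you may as well apply it directly to $R'$ as the paper does and skip the \v{C}ech nerve entirely. A smaller quibble: your parenthetical claiming that $R' \to \SP[\omega_{p^\infty}]\otimes R'$ ``admits a retraction'' from the hypothesis is not right as stated (the hypothesis only maps out of the $T_p$-fixed points), but it is also unnecessary, since that map is a cover simply by base-change stability of covers from \Cref{cor:cycredshift}.
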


\begin{proof}
    Because $\SP_{T(n)} \to \W(\overline{\FF}_p)\otimes \SP_{T(n)}$ is a $L_{T(n+1)}K$-cover by \Cref{cor:sabcover}, it is enough to show that for $R' := L_{T(n)}(\W(\overline{\FF}_p)\otimes R)$, $L_{T(n+1)}K(R')$ is cyclotomically complete. 
By \Cref{thm:cyclotomicredshiftinput}, it is enough to show that the map $L_{T(n+1)}K(R') \to L_{T(n+1)}K(\Ss_{T(n)}[\omega_{p^{\infty}}^{(n)}]^{hT_p}\otimes R')$ has a retraction, but such a retraction can be produced using the map $\Ss_{T(n)}[\omega_{p^{\infty}}^{(n)}]^{hT_p}\to R'$ from the hypothesis.
\end{proof}

\begin{lem}\label{lem:intermediateextn}
    Given maps $A \to B \to C\to D$ where $A \to B$ is a finite Galois extension and $B \to C$ is a pro-Galois extension, then the map $B\otimes_AD \to C\otimes_AD$ admits a retraction.
\end{lem}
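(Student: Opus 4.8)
The statement is a general descent-theoretic lemma: given $A \to B \to C \to D$ with $A \to B$ finite Galois (say with Galois group $G$) and $B \to C$ pro-Galois (with profinite group $H$), we want a retraction of the $D$-algebra map $B \otimes_A D \to C \otimes_A D$. The first thing I would do is reduce to the case where $B \to C$ is an \emph{honest} finite Galois extension: write $C$ as a filtered colimit of finite Galois sub-extensions $B \to C_i$ with Galois groups $H_i$, observe that $B \otimes_A D \to C_i \otimes_A D$ is a filtered system whose colimit is $B \otimes_A D \to C \otimes_A D$, and note that it suffices to produce \emph{compatible} retractions $C_i \otimes_A D \to B \otimes_A D$ — or even, more cheaply, just to produce a retraction for each $i$ and then pass to the colimit, since a retraction of the colimit map is what we want and the relevant mapping space argument will let us lift one. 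Actually the cleanest route is: it is enough to show that for each finite Galois $B \to C'$ (where $C'$ ranges over the poset of finite Galois sub-extensions), the map $B \otimes_A D \to C' \otimes_A D$ has a retraction, functorially enough that we can take the colimit.

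\textbf{The finite case.} So suppose now $A \to B$ is $G$-Galois and $B \to C'$ is $G'$-Galois, both finite. Then the composite $A \to C'$ is a finite extension (though not necessarily Galois), and in particular $B \otimes_A C'$ decomposes: since $A \to B$ is $G$-Galois, base-changing along $B \to C'$ gives that $B \otimes_A C' \cong \prod_G C'$ as a $C'$-algebra (this is the defining property of a $G$-Galois extension — $B \otimes_A B \cong \prod_G B$, then base change along $B \to C'$). Hence $B \otimes_A D \to C' \otimes_A D$ can be analyzed after base-changing everything along $D \to$ (something), but more directly: we have $C' \otimes_A D \cong C' \otimes_B (B \otimes_A D)$, and $B \otimes_A D$ is a $B$-algebra, so the question is whether the $B$-algebra map $B \otimes_A D \to C' \otimes_B (B\otimes_A D)$ splits. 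Writing $M := B \otimes_A D$ viewed as a $B$-algebra, this is asking whether $M \to C' \otimes_B M$ splits as an $M$-algebra map. But $C' \otimes_B M$ is the base change of the $G'$-Galois extension $B \to C'$ along $B \to M$, hence is a $G'$-Galois extension of $M$; and I claim a $G'$-Galois extension $M \to M'$ always splits as an $M$-module map — indeed $M' \otimes_M M' \cong \prod_{G'} M'$ so $M'$ is a faithfully projective (in fact, in the descent-theoretic / $\infty$-categorical sense, dualizable) $M$-module, and the unit $M \to M'$ is split by the "trace" / averaging map $\frac{1}{|G'|}\sum_{g \in G'} g$ when $|G'|$ is invertible — but we cannot assume that. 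The correct general fact, which holds with no invertibility hypothesis, is that for a $G'$-Galois extension the unit map $M \to M'$ admits a retraction \emph{of $M$-modules} precisely because $M' $ has a $G'$-equivariant $M$-module decomposition exhibiting $M$ as a retract (this is part of Rognes' formalism, \cite{rognesgalois}); but we only need a retraction as \emph{$M$-algebras} if we want a genuine $D$-algebra retraction of $B\otimes_A D \to C'\otimes_A D$. Here I'd use instead the decomposition $B \otimes_A C' \cong \prod_{G} C'$ directly: base-changing the map $B \otimes_A D \to C' \otimes_A D$ is not the move; rather, since $A \to B$ is $G$-Galois, the map $D \to B \otimes_A D$ followed by $B \otimes_A D \to C' \otimes_A D$ — hmm.

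\textbf{Cleanest argument.} Let me reorganize. Since $A\to B$ is $G$-Galois, $B\otimes_A B \cong \prod_G B$, with one projection being the multiplication $B\otimes_A B \to B$ (corresponding to $e \in G$). Base change this along $B \to C$ (using either tensor factor): we get $C \otimes_A B \cong \prod_G C$, and correspondingly $C\otimes_A B$ has a $C$-algebra retraction onto $C$ — namely, $C \otimes_A B \cong \prod_G C \to C$ is the projection to the $e$-factor, and it is a retraction of the structure map $C \to C \otimes_A B$ (coming from $B \to C$ on the second factor composed with... wait, the structure map $C \to C\otimes_A B$ is $c \mapsto c \otimes 1$; under $C\otimes_A B \cong \prod_G C$ this is the diagonal $C \to \prod_G C$; so yes, projection to any factor retracts it). Now tensor this whole picture up along $C \to C\otimes_A D$ over $C$ — no. Instead: we have $B\otimes_A D \to C\otimes_A D$; apply $C \otimes_B (-)$ (base change along $B \to C$, over $B$; note $B\otimes_A D$ is a $B$-algebra). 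We get $C \otimes_A D \to C \otimes_B C \otimes_A D$. But I want to go the other way. Here is the point: $C \otimes_A D \cong (C \otimes_A B) \otimes_B D \cong (\prod_G C) \otimes_B D$, and the map $B\otimes_A D \to C\otimes_A D$ is, under this identification, $D \to (\prod_G C)\otimes_B D$ — no wait $B\otimes_A D$ is not $D$. Ugh. Let me just say: the correct argument is that $C \otimes_A B \cong \prod_{G}C$ exhibits $B\otimes_A D$ as a retract of $C\otimes_A D$ in $D$-algebras, because base-changing the splitting $B \otimes_A B \to B$ of $B \to B\otimes_A B$ (first factor) along $A \to D$ and then along the further structure maps gives the desired retraction; concretely $C\otimes_A D \to B \otimes_A C \otimes_A D \to B\otimes_A D$ where the first map uses $C \to B\otimes_A C$ dual to some $G$-projection and the second uses $C \to D$... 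I expect that once the decomposition $B\otimes_A C \cong \prod_G C$ is in hand, writing out the retraction as the composite of a projection with the augmentation $C \to D$ is a routine diagram chase. \textbf{The main obstacle} is therefore purely bookkeeping: correctly identifying which projection in $B \otimes_A C \cong \prod_{[G]} C$ to use and checking it assembles, compatibly over the filtered system of finite sub-extensions of $C$, into a retraction of $B\otimes_A D \to C\otimes_A D$; the Galois-theoretic inputs (the product decomposition for a finite Galois extension, stability of pro-Galois extensions under base change) are all available from \cite{rognesgalois} and the conventions of the paper, so no new ideas beyond careful use of the Galois descent formalism are needed.
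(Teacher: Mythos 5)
Your proposal never actually closes, and the two places where it stalls are exactly the two ideas the paper's proof supplies. First, you miss the cheap reduction to $D=C$: since $(B\otimes_A C)\otimes_C D\cong B\otimes_A D$ and $(C\otimes_A C)\otimes_C D\cong C\otimes_A D$ (using the map $C\to D$ from the hypotheses and the second tensor factor for the $C$-module structure), any retraction of $B\otimes_A C\to C\otimes_A C$ base-changes to one of $B\otimes_A D\to C\otimes_A D$. With that reduction in hand, the relevant map is completely explicit: writing $G=\mathrm{Gal}(C/A)$ (profinite) and $H=\mathrm{Gal}(C/B)$, Galois theory identifies $B\otimes_A C\to C\otimes_A C$ with the map $C^{G/H}\to C^{G}$ of function algebras induced by the surjection of profinite sets $G\to G/H$. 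Since $G/H$ is finite, that surjection admits a continuous section (choose a point in each nonempty closed fiber), and precomposition with the section gives the retraction of commutative algebras. You have half of this — your decomposition $B\otimes_A C'\cong \prod_{\mathrm{Gal}(B/A)}C'$ is the same identification of the source — but you never identify the target $C\otimes_A C$ or the map between them, which is why your "cleanest argument" paragraph ends in a diagram chase you admit you cannot complete.

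Second, your proposed reduction of the pro-Galois extension $B\to C$ to its finite Galois sub-extensions $C_i$ has a real gap that you flag but do not resolve: a retraction of the colimit map $B\otimes_A D\to C\otimes_A D$ requires retractions $r_i$ at each finite stage that are \emph{compatible} (a point of an inverse limit of spaces of retractions), and producing one retraction per stage does not give this. The profinite-section argument handles exactly this issue in one stroke — the choice of coset representatives for $G/H$ inside $G$ is the compatible system — so there is nothing to "pass to the colimit" over. Separately, your detour through averaging over $|G'|$ (which you correctly abandon) and through module-level splittings is a red herring: the retraction needed for the $F$-cover argument is one of commutative algebras, and the restriction-along-a-section map is already an algebra map, so no trace or norm construction is needed.
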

    \begin{proof}
        Without loss of generality, we can assume $C = D$, since if the map has a retraction after base change to $C$ it also does for $D$.
        
        Let $G$ be the Galois group of $A \to C$ and let $H$ be the Galois group of $A \to B$. Then the map $B\otimes_AC \to C\otimes_AC$ is isomorphic to the map
        $C^{G/H} \to C^{G}$ induced by the surjection of profinite sets $G \to G/H$. But this map has a section since $G/H$ is finite.
    \end{proof}





\begin{prop} \label{prop:bpncycmodel}
  For $n\geq1$, consider $R=L_{T(n)}\BP\langle n \rangle$ with the $\EE_{\infty}$-$\Z$-action from
  \Cref{thm:Adams-ops-exist}. Then for each $k\geq0$, the map $f_k:R^{hp^k\ZZ} \to R$ is a $L_{T(n+1)}K\cycl$-cover, and moreover $L_{T(n+1)}K(R)$ is cyclotomically complete.
\end{prop}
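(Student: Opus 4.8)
\textbf{Proof proposal for \Cref{prop:bpncycmodel}.}

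The plan is to reduce everything to the abelian extension $\SP_{T(n)}^{\mathrm{ab}}$, over which the $\Z$-action on $L_{T(n)}\BP\langle n\rangle$ acquires a retraction, and then invoke the cyclotomic redshift covers established above. Recall from \Cref{cnst:BPntoEn} and \Cref{prop:detectEn} that $L_{T(n)}\BP\langle n\rangle \cong E_n^{h\mu_{p^n-1}\rtimes\hat\Z}$, compatibly with the $\Z$-action by $\Psi^\ell$; in particular, after base change along $\SP_{T(n)} \to \W(\overline{\F}_p)\otimes\SP_{T(n)}$, the spectrum $L_{T(n)}(\W(\overline{\F}_p)\otimes\BP\langle n\rangle)$ receives a $\Z$-equivariant $\E_\infty$-map from $E_n$ itself (with its $\Psi^\ell$-action), since $\Psi^\ell$ lies in the center of $\G_n$ and commutes with the finite subgroup $\mu_{p^n-1}\rtimes\hat\Z$. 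This is the crucial structural input: $E_n$ is the relevant maximal extension, and the $\Psi^\ell$-action on $E_n$ corresponds to an element of $\Z_p^\times$ acting on the chosen formal group, which is precisely the datum cut out by the $p$-cyclotomic extension of $\SP_{T(n)}$ after tensoring with $\W(\overline{\F}_p)$.

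First I would establish the cyclotomic completeness of $L_{T(n+1)}K(R)$ for $R = L_{T(n)}\BP\langle n\rangle$: by \Cref{rmk:target-complete}, it suffices to produce an $\E_\infty$-map $\SP_{T(n)}[\omega_{p^\infty}^{(n)}]^{hT_p} \to L_{T(n)}(\W(\overline{\F}_p)\otimes \BP\langle n\rangle)$ in $\CAlg(\Sp_{T(n)})$. Such a map exists because $L_{T(n)}(\W(\overline{\F}_p)\otimes\BP\langle n\rangle)$ is a Lubin--Tate-type object $E_n^{h(\mu_{p^n-1}\rtimes\hat\Z)}\otimes\W(\overline{\F}_p)$ (or one identifies it directly with an intermediate extension of $\SP_{T(n)}^{\mathrm{ab}}$ using \Cref{lem:root-galois}), and the cyclotomic extension $\SP_{T(n)}[\omega_{p^\infty}^{(n)}]^{hT_p}$ is an abelian subextension of $\SP_{T(n)}^{\mathrm{ab}}$ by \cite{carmeli2021chromatic}, hence maps compatibly via the Galois-theoretic comparison. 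Since $\Psi^\ell$ is the Adams operation corresponding to $\ell \in \Z_p^\times$, the copy of $\hat\Z$ generated by Frobenius and the $\Z$ acting by $\Psi^\ell$ together land inside the Galois group $\Z\times(T_p\times\Z)$ of $\SP_{T(n)}^{\mathrm{ab}}$, which is the desired compatibility.

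Next I would show that each $f_k\colon R^{hp^k\Z}\to R$ is an $L_{T(n+1)}K\cycl$-cover. By \Cref{cor:cycredshift} and \Cref{cor:sabcover}, the map $\SP_{T(n)}\to\SP_{T(n)}^{\mathrm{ab}}$ is an $L_{T(n+1)}K\cycl$-cover, hence so is $R\to \SP_{T(n)}^{\mathrm{ab}}\otimes R$ by base change (\Cref{lem:Fdescenttop}, closure under base change). By the two-out-of-three property for $F$-covers (\Cref{dfn:Fcover}, \cite[Lemma 3.1.2]{liu2012enhanced}), it suffices to show the composite $R^{hp^k\Z}\to R\to \SP_{T(n)}^{\mathrm{ab}}\otimes R$ is an $L_{T(n+1)}K\cycl$-cover, and for this it is enough (again by base change and two-out-of-three) to show that $\SP_{T(n)}^{\mathrm{ab}}\otimes R^{hp^k\Z}\to \SP_{T(n)}^{\mathrm{ab}}\otimes R$ admits a retraction of $\E_\infty$- (or at least $\E_1$-) algebras over $\SP_{T(n)}^{\mathrm{ab}}$, since a map admitting a retraction is automatically an $F$-cover for any product-preserving $F$. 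This retraction is produced via \Cref{lem:intermediateextn}: the extension $R^{hp^k\Z}\to R$ is the fixed points of the restricted $p^k\Z$-action, which after base change to $\SP_{T(n)}^{\mathrm{ab}}$ becomes an intermediate extension between $\SP_{T(n)}^{\mathrm{ab}}$ and a further pro-Galois extension realizing the $\Z_p$ (topological closure) completion of the $p^k\Z$-action; concretely, $R\otimes_{R^{hp^k\Z}}R \cong \W(C^0(p^k\Z_p))\otimes R$ analogously to the computation in the proof of \Cref{lem:coassembly-descent}, so the diagonal section of $\W(C^0(p^k\Z_p))$ furnishes the retraction after base change to $\SP_{T(n)}^{\mathrm{ab}}$, where the $p^k\Z$-action has been trivialized by the abelian Galois structure.

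The main obstacle I anticipate is making precise the claim that, after base change to $\SP_{T(n)}^{\mathrm{ab}}$, the $p^k\Z$-action on $R$ (coming from $\Psi^\ell$) is trivialized—or at least becomes a restriction of the Galois $\Z_p$-action—so that \Cref{lem:intermediateextn} genuinely applies. One must carefully match the $\Z$ generated by $\Psi^\ell$ on $E_n$ with the arithmetic subgroup $\Z\subset \Z_p^\times = T_p\times\Z_p$ inside the Galois group of the $p$-cyclotomic tower, using that $\ell = m_p^{\E_1}$ is chosen to be a topological generator (or at least to generate a finite-index subgroup) of $\Z_p^\times/T_p$. This is exactly the point where \cite{carmeli2021chromatic} and \cite{cycloshift} interlock with the Adams-operation construction of \Cref{sec:adamsop}; once the identification of subgroups is in hand, the rest is formal manipulation of $F$-covers.
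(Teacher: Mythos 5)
Your overall architecture is the same as the paper's — reduce along the $L_{T(n+1)}K\cycl$-cover $\SP_{T(n)} \to \SP_{T(n)}^{\mathrm{ab}}$ via \Cref{cor:sabcover} and \Cref{rmk:checklocally}, exhibit a retraction of $f_k$ after that base change, and use \Cref{rmk:target-complete} for completeness — but the one step you defer to "the main obstacle" is exactly the step that carries the content, and the justification you do offer for the retraction is circular. The isomorphism $R \otimes_{R^{hp^k\Z}} R \cong \W\LCF{p^k\Z_p} \otimes R$ holds for \emph{any} locally unipotent action (it is \Cref{lem:basechangepZ} plus \Cref{rmk:unip-affine}, as in the proof of \Cref{lem:coassembly-descent}), and the "diagonal section" it provides only splits the multiplication map $R \to R\otimes_{R^{hp^k\Z}}R$, which is always split. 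It does not split $\SP_{T(n)}^{\mathrm{ab}} \otimes R^{hp^k\Z} \to \SP_{T(n)}^{\mathrm{ab}} \otimes R$, and it cannot: if this formal structure alone implied the cover property, every $\Z_p$-pro-Galois extension with locally unipotent action would satisfy $T(n+1)$-local $K$-theoretic descent, contradicting \Cref{thm:failurekn} and the main theorem of the paper. Likewise, the $\Psi^\ell$-action is \emph{not} trivialized over $\SP_{T(n)}^{\mathrm{ab}}$; it becomes the Galois action of $\mathrm{cyc}(\Psi^\ell) \in \Z_p^\times$, which is nontrivial.

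What the paper actually proves, and what your proposal is missing, is an identification of $f_k$ (after base change to $\W(\Fpbar)$, where it becomes $(E_n^{hC})^{hp^k\Z} \to E_n^{hC}$ with $C = \mu_{p^n-1}\rtimes\hat\Z$) as the base change of the \emph{finite} cyclotomic subextension $\SP_{T(n)}[\omega_{p^j}^{(n)}]^{hT_p} \to \SP_{T(n)}[\omega_{p^\infty}^{(n)}]^{hT_p}$, where $j = v_p(\ell^{np^k}-1)$. The inputs are: (i) the cyclotomic character restricts to the determinant on $\cO_D^\times$ by \cite[Theorem 5.8]{carmeli2021chromatic}, so $\mathrm{cyc}(\Psi^\ell) = \ell^n$ — this is where the exponent $n$ and the integer $j$ come from, not from $\ell$ being a topological generator of $\Z_p^\times/T_p$; and (ii) the composite $\overline{\mathrm{cyc}}\colon \G_n \to \Z_p$ kills the torsion subgroup $C$, yielding a pullback square of quotients of $\G_n$ comparing $\G_n/C \to (\G_n/C)/\Psi^{\ell^{p^k}}$ with $\Z_p \to \Z_p/p^j$. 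Applying $E_n^{h\ker(-)}$ gives the pushout square of commutative algebras; its right vertical leg supplies the map required by \Cref{rmk:target-complete} (making your completeness argument precise), and \Cref{lem:intermediateextn} — applied with the \emph{finite} Galois extension $\SP_{T(n)} \to \SP_{T(n)}[\omega_{p^j}^{(n)}]^{hT_p}$ in the role of $A \to B$, finiteness being essential for choosing coset representatives — then produces the retraction of $f_k \otimes \SP_{T(n)}^{\mathrm{ab}}$. Without this group-theoretic identification your argument does not close.
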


\begin{proof}
  We begin with a few recollections and some notation.
  \begin{itemize}
  \item Let $\ell$ be as in \Cref{cnv:ell}.
  \item Let $\Psi^\ell \in Z(\G_n) \subseteq \G_n$ be as in \Cref{cnv:BPnEn}, and let $\ZZ \subset \G_n$ be the subgroup generated by $\Psi^\ell$.
  \item Let $C \subseteq \G_n$ be the cyclic subgroup of order $p^n-1$
  from \Cref{thm:Adams-ops-exist}.
  \item Let $\mathrm{cyc}:\mathbb{G}_n \to \ZZ_p^{\times}$ be
    the $p$-adic cyclotomic character of \cite[Definition 5.6]{carmeli2021chromatic}.
  \item Let $\overline{\mathrm{cyc}}$ be the further quotient
    $ \G_n \xrightarrow{\mathrm{cyc}} \Z_p^\times \twoheadrightarrow \Z_p. $
  \item Recall that $\G_n$ has a subgroup $\cO_D^\times \subseteq \G_n$
    where $\cO_D^\times$ is the units in the ring of integers of the
    division algebra $D$ over $\Q_p$ with Hasse invariant $\frac 1 n$.
  \item Let $j = v_p(\ell^{np^k}-1)$.
  \end{itemize}
  
  \Cref{thm:Adams-ops-exist} implies that the map $f_k$ after base change to $L_{T(n)}\W(\overline{\FF}_p)$ becomes isomorphic to the map 
  $(E_n^{hC})^{hp^k\Z} \to E_n^{hC}$, which we denote $\tilde{f}_k$.    
  We claim there is a pushout square of $T(n)$-local commutative algebras
  \[ \begin{tikzcd}
    \Ss_{T(n)}[\omega_{p^j}^{(n)}]^{hT_p} \ar[r] \ar[d] &
    \Ss_{T(n)}[\omega_{p^\infty}^{(n)}]^{hT_p} \ar[d] \\
    (E_n^{hC})^{hp^k\Z} \ar[r,"\tilde{f}_k"] &
    E_n^{hC}.
  \end{tikzcd} \]
To see that this claim lets us finish the argument, we first note that the right vertical map along with \Cref{rmk:target-complete} lets us conclude $L_{T(n+1)}K(R)$ is cyclotomically complete. 

Next, by applying the pushout square and \Cref{lem:intermediateextn}, we learn that 
$\tilde{f}_k\otimes \SP[\omega_{p^{\infty}}^{(n)}] = f_k\otimes \SP_{T(n)}^{\mathrm{ab}}$ admits a retraction, and hence is a $L_{T(n+1)}K\cycl$-cover. By \Cref{rmk:checklocally} and \Cref{cor:sabcover}, we find that $f_k$ is also a $L_{T(n+1)}K\cycl$-cover. 
  
  We turn to proving the claim.  
  By \cite[Theorem 5.8]{carmeli2021chromatic},
  the $p$-adic cyclotomic character restricts to the determinant map on $\cO_D^\times$.
  This lets us compute that
  \[ {\mathrm{cyc}}(\Psi^\ell) = \ell^n. \]
 We observe that since $C$ is cyclic and $\Z_p$ is torsion-free
  $\overline{\mathrm{cyc}}$ factors through $\G_n/C$.
  Together these facts allows us to construct a pullback square of quotients of $\G_n$
  \[ \begin{tikzcd}
    \G_n/C \ar[r] \ar[d, "\overline{\mathrm{cyc}}"] &
    (\G_n/C)/\Psi^{\ell^{p^k}} \ar[d] \\    
    \Z_p \ar[r] &
    \Z_p/p^j.
  \end{tikzcd} \]  

  Applying $E_n^{h(\ker(-))}$ to this square we obtain the desired
  pushout square of $T(n)$-local commutative algebras
  after $K(n)$-localization. The pushout square holds before $K(n)$-localization too since $K(n)$-localization is smashing in $\Sp_{T(n)}$ and the lower rings in the square are $K(n)$-local.
  \qedhere

\end{proof}
	%
	%
	%
	%



\subsection{The failure of the telescope conjecture}
\label{subsec:failure}

We are now ready to prove the following refinement of \Cref{thm:main} from the introduction.

\begin{thm}\label{thm:maincyc}
    Let $\BP\langle n \rangle$ be as in \Cref{thm:Adams-ops-exist}. For every prime $p$, height $n \geq 1$ and $k \geq 0$, there is a diagram
      \[ \begin{tikzcd}
    L_{T(n+1)} K(L_{T(n)}\BP\langle n \rangle^{hp^k\ZZ}) \ar[d] &
    L_{T(n+1)} K(\BP\langle n \rangle^{hp^k\ZZ}) \ar[r, "\cong"] \ar[l, "\cong"'] \ar[d] &
    L_{T(n+1)} \TC(\BP\langle n \rangle^{hp^k\ZZ}) \ar[d] \\    
    L_{T(n+1)} K(L_{T(n)}\BP\langle n \rangle)^{hp^k\ZZ} &
    L_{T(n+1)} K(\BP\langle n\rangle)^{hp^k\ZZ} \ar[l, "\cong"'] \ar[r, "\cong"] &    
    L_{T(n+1)}\TC(\BP\langle n \rangle)^{hp^k\ZZ}
  \end{tikzcd} \]
  where the vertical maps are coassembly maps. 
  The vertical maps are not isomorphisms, but rather
  exhibit the target as the cyclotomic completion of the source.
  In particular, this gives a counterexample to the height $n+1$ telescope conjecture.
\end{thm}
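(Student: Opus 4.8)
\textbf{Proof proposal for \Cref{thm:maincyc}.}

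The plan is to assemble the pieces developed throughout the paper. First I would construct the horizontal isomorphisms: the rings $\BP\langle n\rangle^{hp^k\ZZ}$ are $(-1)$-connective with underlying $\ZZ$-action on the connective ring $\BP\langle n\rangle$ locally unipotent after $p$-completion by \Cref{thm:Adams-ops-exist}, and $\BP\langle n\rangle$ is $T(n+1)$-acyclic. Thus \Cref{thm:red_main} (=\Cref{thm:TCequalK}) applies directly, giving the commuting diagram with all horizontal arrows isomorphisms, once one checks that the coassembly maps for $K$ and $\TC$ are compatible — which is immediate from the fact that both arise from the natural transformation obtained by restricting along $* \to B\ZZ$ and that the trace map is natural. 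This reduces the theorem to analyzing the middle vertical map, the $\TC$ coassembly map $L_{T(n+1)}\TC(\BP\langle n\rangle^{hp^k\ZZ}) \to L_{T(n+1)}\TC(\BP\langle n\rangle)^{hp^k\ZZ}$.

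Next I would show this map is \emph{not} an isomorphism for $k \gg 0$, which by the remark after \Cref{thm:main} suffices for all $k\geq 0$. By \Cref{cor:telasscon} (telescopic asymptotic constancy), applied to $R = \BP\langle n\rangle$ with its $\E_1\otimes\A_2$-structure and locally unipotent $\ZZ$-action — here one needs $\BP\langle n\rangle$ to be of fp-type $n$ and to satisfy the height $n$ LQ property, both established in \cite{hahn2020redshift} — for $k\gg 0$ the $\TC$ coassembly map for the $p^k\ZZ$-action agrees (on $V_*$ for a type $n+1$ finite complex $V$, after inverting the $v_{n+1}$-self map) with the $\TC$ coassembly map for the \emph{trivial} action $B\ZZ$. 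So it suffices to show the coassembly map $L_{T(n+1)}\TC(\BP\langle n\rangle^{B\ZZ}) \to L_{T(n+1)}\TC(\BP\langle n\rangle)^{B\ZZ}$ is not an isomorphism. By \Cref{prop:thicksubcat} (=\Cref{cor:nilthicksub} tensored with $\THH(\BP\langle n\rangle)$ and applying $\TC$), the $p$-completion of $\TC(\BP\langle n\rangle)$ lies in the thick subcategory generated by the fiber of this coassembly map; tensoring with $T(n+1)$ and using the redshift theorem of \cite{hahn2020redshift} that $T(n+1)\otimes\TC(\BP\langle n\rangle) \neq 0$, we conclude the fiber is nonzero, so the coassembly map is not an isomorphism.

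Finally I would identify the cokernel of the failure precisely as cyclotomic completion. For this I use \Cref{prop:bpncycmodel}: the map $f_k\colon L_{T(n)}\BP\langle n\rangle^{hp^k\ZZ} \to L_{T(n)}\BP\langle n\rangle$ is an $L_{T(n+1)}K\cycl$-cover, and $L_{T(n+1)}K(L_{T(n)}\BP\langle n\rangle)$ is cyclotomically complete. By \Cref{lem:coassembly-descent}, the coassembly map for $L_{T(n+1)}K$ applied to $R = L_{T(n)}\BP\langle n\rangle$ with its $p^k\ZZ$-action is identified with the map from $L_{T(n+1)}K(R^{hp^k\ZZ})$ to the totalization of the \v{C}ech nerve of $f_k$; since $f_k$ is an $L_{T(n+1)}K\cycl$-cover, this totalization computes $(L_{T(n+1)}K(R^{hp^k\ZZ}))\cycl$, i.e.\ the cyclotomic completion. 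Combined with the fact that the target $L_{T(n+1)}K(R)^{hp^k\ZZ}$ is already cyclotomically complete (which follows from \Cref{prop:bpncycmodel} and the fact that homotopy fixed points of cyclotomically complete spectra along a profinite-Galois-type action remain cyclotomically complete), the coassembly map exhibits the target as the cyclotomic completion of the source. The main obstacle is bookkeeping: carefully matching up the $\E_1\otimes\A_2$-structures and the $\EE_\infty$-structures on $L_{T(n)}\BP\langle n\rangle$ used in the two halves of the argument (\Cref{thm:Adams-ops-exist} versus the $\EE_\infty$-model needed for \Cref{prop:bpncycmodel}), and ensuring the $k\gg0$ from \Cref{cor:telasscon} can be taken uniformly so that the "not an isomorphism" conclusion and the "cyclotomic completion" conclusion refer to the same range of $k$; the descent of \Cref{thm:main} from $k\gg0$ to all $k$ then follows from the monotonicity remark in the introduction.
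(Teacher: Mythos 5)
Your proposal is correct and follows essentially the same route as the paper's own proof: Theorem \ref{thm:TCequalK} for the horizontal isomorphisms, Corollary \ref{cor:telasscon} to reduce to the trivial action, Corollary \ref{cor:nilthicksub} plus the redshift result of \cite{hahn2020redshift} for the non-isomorphism, and Proposition \ref{prop:bpncycmodel} with Lemma \ref{lem:coassembly-descent} for the identification with cyclotomic completion. The only (immaterial) difference is the order of the last two steps, and note that the "monotonicity remark" you invoke to pass from $k\gg 0$ to all $k$ is itself justified in the paper's proof by the smashing property of cyclotomic completion together with the $\EE_\infty$-structure on the transition maps from \Cref{thm:Adams-ops-exist}.
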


\begin{proof}  
    By applying \Cref{thm:TCequalK} to $R = \BP\langle n \rangle$, we get the diagram as in the theorem statement, where the vertical maps are coassembly maps. By applying \Cref{prop:bpncycmodel} and \Cref{lem:coassembly-descent}, the left vertical map is an equivalence after cyclotomic completion, and the target is cyclotomically complete. Thus the vertical maps are the cyclotomic completion maps.

    It remains to show that the vertical maps are not isomorphisms, or equivalently, the source of the coassembly map is not cyclotomically complete. 
  Since cyclotomic completion is smashing and the maps between $L_{T(n+1)}K(L_{T(n)}\BP\langle n \rangle^{hp^k\ZZ})$ as $k$ varies are $\EE_{\infty}$-maps (\Cref{thm:Adams-ops-exist}), we learn that if the result holds for $k$, i.e $L_{T(n+1)}K(L_{T(n)}\BP\langle n \rangle^{hp^k\Z})$ is not cyclotomically complete, then it also holds for all $k'<k$. 
  

  Thus it suffices to prove that the right coassembly map is not an isomorphism for all $k\gg0$.


  Choose $U$ to be a finite spectrum of type $n+1$ with a $v_{n+1}$-self map $v$, so that $T(n+1)\coloneqq U[v^{-1}]\neq 0$. $\BP\langle n \rangle$ is fp-type $n$, has a locally unipotent $p^k\ZZ$-action (\Cref{thm:Adams-ops-exist}, \Cref{lem:res-u}), is an $\EE_1\otimes 
  \A_2$-algebra,
  and satisfies the height $n$ Lichtenbaum--Quillen property by \cite{hahn2020redshift}, so we map apply \Cref{cor:telasscon} and invert $v$ to obtain for all $k\gg0$ a square

  	\begin{center}\begin{tikzcd}
			T(n+1)_*\TC(\BP\langle n\rangle ^{hp^k\Z})\ar[r,""]\ar[d,"\cong"] &T(n+1)_*\TC(\BP\langle n \rangle)^{hp^k\Z}\ar[d,"\cong"]\\
			T(n+1)_*\TC(\BP\langle n \rangle^{B\Z})\ar[r,""] & T(n+1)_*\TC(\BP\langle n \rangle)^{B\Z}
	\end{tikzcd}\end{center}

Thus we are reduced to showing that the lower coassembly map is not an isomorphism. We know from \cite{hahn2020redshift} that
  $T(n+1)\otimes \TC(\BP\langle n \rangle)$ is non-zero, so since by \Cref{cor:nilthicksub} $T(n+1)\otimes \TC(\BP\langle n \rangle)$ is in the thick subcategory generated by the fiber of the coassembly map 

  $$T(n+1)\otimes \TC(\BP\langle n \rangle^{B\Z}) \to T(n+1)\otimes \TC(\BP\langle n \rangle)^{B\Z}$$
  we can conclude the proof.
\end{proof}




\section{Explicit \texorpdfstring{$T(2)$}{T(2)}-local \texorpdfstring{$\TC$}{TC} of some height \texorpdfstring{$1$}{1} local fields}
\label{sec:ht2}

In \Cref{sec:mainthm}, we showed that the telescope fails by proving that the coassembly map
\[T(n+1)\otimes\TC(\BP\langle n \rangle^{hp^k\ZZ}) \to T(n+1)\TC(\BP\langle n \rangle)^{hp^k\ZZ}\] 
is not an isomorphism for $n\geq1,k\gg0$. The goal of this section is to completely compute $\pi_*$ of a version of this coassembly map when $n=1,k\gg0$, and $p\geq 7$.

Specifically, we study the  connective Adams summand $\ell$, which is a form of $\BP \langle 1 \rangle$.\footnote{In this section, we follow historical convention and use $\ell$ to denote the Adams summand. In previous sections $\ell$ was used to denote an element of $\ZZ_p^{\times}$, and we trust the reader may distinguish these uses from context.} The $\ZZ$-action on $\ell$ is generated by the classical Adams operation $\Psi^{1+p}$. 
We restrict to primes $p\geq7$ in order to make use of Smith--Toda complexes $V(0)=\mathbb{S}/p$, $V(1)=\mathbb{S}/(p,v_1)$, and $V(2)=\mathbb{S}/(p,v_1,v_2)$, which exist as hcrings for such primes \cite[Proposition 3.3]{yy77}. We take $T(2)$ to be $v_2^{-1}V(1)$.

Our main result is true for all $k$ larger than a fixed positive integer, which depends only on the prime $p$:\footnote{We expect that this integer does not in fact depend on $p$, and may be taken to be as small as $2$ or $3$.}

\begin{thm} \label{thm:height2comptc}
	Let $p\geq 7$ be a prime, and let $\ZZ$ act on the connective Adams summand $\ell$ via the $\mathbb{E}_{\infty}$ Adams operation $\Psi^{1+p}$.  Then, for all $k \gg 0$, 
	the $\mathbb{F}_p[v_2]$-module map 
 \[V(1)_*\TC(\ell^{hp^k\ZZ}) \to V(1)_*\TC(\ell)^{hp^k\ZZ}\]
	    may be identified with the direct sum of the maps enumerated below. The degrees of classes are determined from their names via the facts that $|t| = -2, |\lambda_i| = 2p^i-1, |\partial|=-1, |\zeta| = -1$, and the degree of any locally constant function is $0$.
	\begin{enumerate}
		\item The projection $\FF_p\{1,\partial\} \oplus  \overline{\LCF{\Z_p^{\times}}}\{\partial\zeta\} \to \FF_p\{1,\partial\}$ onto the first factor, tensored over $\FF_p$ with the inclusion $\mathbb{F}_p[v_2]\langle\lambda_1,\lambda_2 \rangle \to \mathbb{F}_p[v_2] \langle \lambda_1,\lambda_2,\zeta\rangle$.
		\item The map $\FF_p[v_2]\langle \zeta \rangle \otimes \LCF{p\ZZ_p} \to \FF_p[v_2]\langle \zeta \rangle$ evaluating a continuous function at $0$, tensored over $\FF_p$ with the graded $\FF_p$-vector space on basis elements enumerated below:
		\begin{enumerate}
			\item $t^{d} \lambda_1$, for each $0<d<p$, in degree $2p-1-2d$.
			\item $t^{pd} \lambda_2$, for each  each $0<d<p$, in degree $2p^2-1-2pd$
			\item $t^{d}\lambda_1\lambda_2$, for each $0<d<p$, in degree $2p^2+2p-2-2d$.
			\item $t^{pd} \lambda_1 \lambda_2$, for each $0<d<p$, in degree $2p^2+2p-2-2pd$.
		\end{enumerate}
	\end{enumerate}
	
\end{thm}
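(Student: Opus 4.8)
\textbf{Proof strategy for \Cref{thm:height2comptc}.}

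The plan is to reduce everything to a computation in $\CycSp$ of the coassembly fiber for the trivial versus the Adams action on $\ell$, leveraging the asymptotic constancy machinery of \Cref{sec:tame} to pass from the $p^k\ZZ$-action to the trivial action, and then carrying out the Ausoni--Rognes-style $\THH$/$\TC$ computation mod $(p,v_1)$. First I would set up the relevant finiteness: $\ell$ is a connective $\EE_1\otimes\A_2$-ring of fp-type $1$ with a locally unipotent $\ZZ$-action (generated by $\Psi^{1+p}$, which is $\EE_\infty$; compare \Cref{thm:Adams-ops-exist} and the discussion in \Cref{cnv:ell}), and it satisfies the height $1$ LQ property by \cite{ausoni2002algebraic} (or \cite{hahn2020redshift}). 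Therefore \Cref{thm:E1A2algtame} and \Cref{cor:E1A2algtamecoassembly} apply with $X = V(1)$: for all $k \gg 0$ there is a commuting square of cyclotomic spectra identifying $V(1)\otimes\THH(\ell^{hp^k\ZZ})$ with $V(1)\otimes\THH(\ell^{B\ZZ}) \cong V(1)\otimes W_k \otimes \THH(\ell)$, compatibly with coassembly maps. Applying $\TC$ and inverting nothing (we stay mod $(p,v_1)$, which is what $V(1)$ does), this reduces \Cref{thm:height2comptc} to computing the $\TC$ coassembly map for the \emph{trivial} $p^k\ZZ$-action on $\ell$, i.e.\ the map
\[
V(1)_*\TC(\ell^{Bp^k\ZZ}) \to V(1)_*\TC(\ell)^{hp^k\ZZ},
\]
together with keeping track of the $W_k = \THH(\SP^{Bp^k\ZZ})$-module structure, whose $\pi_0^\flat$ is $\LCF{p^k\ZZ_p}$ by \Cref{lem:thh-s1}, with the coassembly map given by restriction to $0$ (\Cref{lem:coassembly-po}).

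Next I would compute the two sides. For the target: $V(1)_*\TC(\ell)$ is the Ausoni--Rognes computation \cite{ausoni2002algebraic}, which (for $p\geq 5$) is a finitely generated $\FF_p[v_2]$-module with the well-known classes $1,\partial,\lambda_1,\lambda_2,\lambda_1\lambda_2$ and the $t^d\lambda_i$, $t^{pd}\lambda_2$, $t^d\lambda_1\lambda_2$, $t^{pd}\lambda_1\lambda_2$ families appearing in the statement; taking $(-)^{hp^k\ZZ} = (-)^{h\ZZ}$ with trivial action just tensors with $\FF_p\langle\zeta\rangle$, $|\zeta|=-1$ (this is where the exterior generator $\zeta$ and the degree $-1$ class $\partial\zeta$ come from). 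For the source I would use \Cref{prop:coassembly}: there is a fiber sequence of cyclotomic spectra $\W\LCF{\ZZ_p^\times}\otimes\Sigma^{-1}\crTR \to \THH(\SP^{B\ZZ}) \to \THH(\SP)^{B\ZZ}$, and tensoring with $\THH(\ell)$ (in $\CycSp$) and applying $V(1)\otimes\TC(-)$ gives a fiber sequence relating $V(1)_*\TC(\ell^{B\ZZ})$, $V(1)_*\TC(\ell)^{B\ZZ}$, and $V(1)_*\TR(\ell)$-type terms twisted by $\W\LCF{\ZZ_p^\times}$. The shells description of \Cref{lem:shells} and \Cref{lem:rescale} then organize the $\LCF{p^k\ZZ_p}$-module structure into the $\LCF{p\ZZ_p}$ and $\LCF{\ZZ_p^\times}$ pieces (and, after restricting along $p^k$, the $\overline{\LCF{\ZZ_p^\times}}$ and $\LCF{p\ZZ_p}$ summands appearing in item (1) and the prefactor of item (2)). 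Knowing $V(1)_*\TR(\ell)$ mod $(p,v_1)$ — this is classical, a polynomial-times-exterior pattern on $v_2$, $\lambda_1$, $\lambda_2$ with the $t^d$-torsion tails — one reads off exactly which classes in $V(1)_*\TC(\ell^{B\ZZ})$ map isomorphically to $V(1)_*\TC(\ell)^{B\ZZ}$ (the $\FF_p\langle\zeta\rangle$-span of $1,\partial,\lambda_1,\lambda_2$) and which lie in the coassembly fiber and are therefore hit from the $\W\LCF{\ZZ_p^\times}\otimes\Sigma^{-1}\crTR$ summand (the torsion families, now carrying an $\LCF{p\ZZ_p}$ of coefficients that the coassembly map evaluates at $0$). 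Assembling, $V(1)_*\TC(\ell^{hp^k\ZZ})$ splits as the direct sum claimed, with the two maps exactly as in (1) and (2); \Cref{thm:introheight2} then follows by inverting $v_2$ and invoking \Cref{thm:TCequalK} to replace $\TC$ with $K$.

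\textbf{The main obstacle.} The genuinely delicate step is the explicit bookkeeping of which elements of $V(1)_*\TR(\ell)$ survive to $V(1)_*\TC(\ell)$, how the Frobenius $F$ and the operator $1-F$ act on the $t$-adic tails, and how the $\W\LCF{\ZZ_p^\times}$-twist interacts with this — in other words, correctly identifying the cofiber of $\crTR\xrightarrow{1-F}\crTR \to \SP$ tensored with $\THH(\ell)$ at the level of homotopy groups mod $(p,v_1)$, and matching the resulting $\LCF{p\ZZ_p}$-coefficient systems with the "evaluation at $0$" description. This is essentially a careful re-derivation of the Ausoni--Rognes $\TC$ computation (their Theorem on $V(1)_*\TC(\ell)$) \emph{with parameters}, upgraded to track the cyclotomic/shell structure coming from \Cref{sec:cochaincircle}; the degree-counting constraints ($0<d<p$ in each family, the precise shifts $2p-1-2d$, $2p^2-1-2pd$, etc.) will serve as a strong consistency check. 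Everything else — the reduction to the trivial action, the existence of the splitting, the module structures — is formal given the results already established in the excerpt.
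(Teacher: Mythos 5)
Your first step (reducing the $p^k\ZZ$-action to the trivial action) is the same as the paper's, but as written it misapplies the key theorem. \Cref{thm:E1A2algtame} and \Cref{cor:E1A2algtamecoassembly} require $X$ to be a finite complex of type $n+2=3$ whose tensor with $\THH(R)$ is cyclotomically \emph{bounded}; with $X=V(1)$ (type $2$) the hypothesis fails outright, since $V(1)_*\THH(\ell)\cong\FF_p[\mu]\langle\lambda_1,\lambda_2\rangle$ is unbounded and $V(1)_*\TC(\ell)$ contains a free $\FF_p[v_2]$. The paper instead applies asymptotic constancy to a type $3$ complex ($V(2)$, or $V(1)/v_2^a$ for $a\gg0$) and then recovers the $V(1)$-statement as a map of $\FF_p[v_2]$-modules via the $v_2$-self-map, exactly as in \Cref{lem:unmodout} and \Cref{cor:telasscon}. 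This is repairable, but you cannot run your reduction "with $X=V(1)$" as stated.

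For the actual computation your route genuinely diverges from the paper's, and the hardest part is asserted rather than carried out. The paper works mod $(p,v_1,v_2)$ throughout: it computes the $\T$-Tate spectral sequence for $V(2)_*\TP(\ell^{B\ZZ})$ component-by-component using the free-loop-space splitting of $\THH(\SP^{B\ZZ})$ and the Ausoni--Rognes differentials (\Cref{thm:ht1tate}), extracts $\TC$ as a Nygaard-filtered equalizer of $\can$ and $\varphi^{h\T}$ via a snake-lemma argument (\Cref{thm:v2tcell}), and only at the end runs a degenerate $v_2$-Bockstein forced by $\langle\lambda_1,\lambda_2\rangle$-module structure. You propose instead to tensor the fiber sequence of \Cref{prop:coassembly} with $\THH(\ell)$ and apply $\TC$, identifying the coassembly fiber with $\W\LCF{\ZZ_p^\times}\otimes\Sigma^{-1}\TR(\ell)$. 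Two caveats: first, the paper only proves that $\crTR$ \emph{corepresents} $\TR$ (\Cref{lem:trcorep}); the statement you need, $\TC(\crTR\otimes\THH(\ell))\simeq\TR(\ell)$, is the dual assertion and must be proved separately (it does follow from the explicit description of $\crTR$ and the equalizer formula for $\TC$, but it is not in the paper). Second, you would then need $V(1)_*\TR(\ell)$ together with the action of $1-F$ on it and a resolution of the extensions in the resulting long exact sequence — this is precisely the content of the theorem, and your proposal defers it to "a careful re-derivation of Ausoni--Rognes with parameters." So while the strategy is plausible and arguably more conceptual, as it stands it is an outline with the decisive computation missing, whereas the paper's spectral-sequence route actually produces the classes, degrees, and $\LCF{p\ZZ_p}$- versus $\overline{\LCF{\ZZ_p^\times}}$-coefficient systems appearing in the statement.
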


 As we will explain, by inverting $v_2$ we recover the following corollary, which in turn implies \Cref{thm:introheight2} from the introduction:

\begin{cor} \label{cor:height2comp}
Let $p\geq 7$ be a prime, 
and let $\ZZ$ act on the Adams summand $L$ of $\mathrm{KU}_{(p)}$ via the $\mathbb{E}_{\infty}$ Adams operation $\Psi^{1+p}$.  Then, for all $k \geq 0$, 
	the map 
 \[T(2)\otimes \mathrm{K}(L^{hp^k\ZZ}) \to T(2)\otimes \mathrm{K}(L)^{hp^k\ZZ}\]
        is both the cyclotomic completion map and the $K(2)$-localization map. 
        
        At the level of $\pi_*$ as an $\FF_p[v_2^{\pm1}]$-module, this map for $k\gg0$ may be identified with the direct sum of the maps enumerated below. The degrees of classes are determined from their names via the facts that $|t| = -2, |\lambda_i| = 2p^i-1, |\partial|=-1, |\zeta| = -1$, and the degree of any locally constant function is $0$.
	\begin{enumerate}
		\item The projection $\FF_p\{1,\partial\} \oplus  \overline{\LCF{\Z_p^{\times}}}\{\partial\zeta\} \to \FF_p\{1,\partial\}$ onto the first factor, tensored over $\FF_p$ with the inclusion $\mathbb{F}_p[v_2^{\pm 1}]\langle\lambda_1,\lambda_2 \rangle \to \mathbb{F}_p[v_2^{\pm 1}] \langle \lambda_1,\lambda_2,\zeta\rangle$.
		\item The map $\FF_p[v_2^{\pm 1}]\langle \zeta \rangle \otimes \LCF{p\ZZ_p} \to \FF_p[v_2^{\pm 1}]\langle \zeta \rangle$ evaluating a continuous function at $0$, tensored over $\FF_p$ with the graded $\FF_p$-vector space on basis elements enumerated below:
		\begin{enumerate}
			\item $t^{d} \lambda_1$, for each $0<d<p$, in degree $2p-1-2d$.
			\item $t^{pd} \lambda_2$, for each  each $0<d<p$, in degree $2p^2-1-2pd$
			\item $t^{d}\lambda_1\lambda_2$, for each $0<d<p$, in degree $2p^2+2p-2-2d$.
			\item $t^{pd} \lambda_1 \lambda_2$, for each $0<d<p$, in degree $2p^2+2p-2-2pd$.
		\end{enumerate}
	\end{enumerate}
	
\end{cor}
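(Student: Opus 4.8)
The strategy is to deduce \Cref{cor:height2comp} from \Cref{thm:height2comptc} by inverting $v_2$, while separately identifying the $v_2$-inverted coassembly map with the $K(2)$-localization and cyclotomic completion maps using the machinery of \Cref{sec:mainthm}. First I would establish the algebraic content: starting from the isomorphism of graded $\FF_p[v_2]$-modules for $V(1)_*\TC(\ell^{hp^k\ZZ}) \to V(1)_*\TC(\ell)^{hp^k\ZZ}$ in \Cref{thm:height2comptc}, observe that $T(2) = v_2^{-1}V(1)$, so that $T(2)_*\TC(\ell^{hp^k\ZZ}) = v_2^{-1}\bigl(V(1)_*\TC(\ell^{hp^k\ZZ})\bigr)$ and likewise for the target, provided $V(1)\otimes\TC$ of these rings has bounded-below homotopy so that the colimit computing $v_2^{-1}(-)$ commutes with the relevant limits. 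Here I would invoke that $\ell^{hp^k\ZZ}$ and $\ell$ satisfy the height $1$ LQ property (the former by the forthcoming \Cref{thm:kk1intro}-type input citing \cite{lee2023topological}, \cite{levy2022algebraic}; the latter by \cite{ausoni2002algebraic}), so that after $v_2$-inversion nothing in the source or target is lost and the enumerated $\FF_p[v_2]$-module description simply base-changes to the stated $\FF_p[v_2^{\pm1}]$-module description. The $K$-theory statement then follows from the chain $L_{T(2)}K(L^{hp^k\ZZ}) \cong L_{T(2)}\TC(\tau_{\geq0}(L^{hp^k\ZZ}))$ via \Cref{thm:TCequalK} (using that $\tau_{\geq0}L^{hp^k\ZZ} = \ell^{hp^k\ZZ}$, as $\ell^{h\ZZ}$ has $(-1)$-connective cover the $K(1)$-local sphere), and similarly $L_{T(2)}K(L)^{hp^k\ZZ}\cong L_{T(2)}\TC(\ell)^{hp^k\ZZ}$.

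Next I would identify the map with the $K(2)$-localization and cyclotomic completion maps. This is exactly \Cref{thm:maincyc} specialized to $n=1$ and $R=\ell$: the vertical coassembly maps in that theorem's diagram exhibit the target as the cyclotomic completion of the source, and the left-hand horizontal isomorphisms plus \Cref{prop:bpncycmodel} show the target is cyclotomically complete and that the map is also the $K(2)$-localization after observing that $L_{T(2)}K(L^{hp^k\ZZ}) \to L_{K(2)}K(L^{hp^k\ZZ})$ factors through the cyclotomic completion, which here coincides with $K(2)$-localization because $L^{hp^k\ZZ}$ is a finite Galois-type extension of the $K(1)$-local sphere and the relevant cyclotomic extension has been computed. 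One subtlety: \Cref{thm:maincyc} as stated uses the $\BP\langle n\rangle$ of \Cref{sec:adamsop}, whereas here we use the classical $\ell$ with the classical $\Psi^{1+p}$; I would note (as the introduction does) that for $n=1$ the construction of Adams operations is classical, and that \Cref{thm:maincyc}'s proof applies verbatim, or cite \Cref{sec:ht2}'s own independent verification.

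Finally I would assemble the corollary for \emph{all} $k\geq0$, not just $k\gg0$: the identification with the $K(2)$-localization and cyclotomic completion maps holds for all $k\geq0$ by \Cref{thm:maincyc}, since those arguments never required $k\gg0$. Only the explicit $\pi_*$ computation requires $k\gg0$, matching the hypothesis in \Cref{thm:height2comptc}. The statement then reads correctly as written. The main obstacle I anticipate is the bookkeeping around $v_2$-inversion commuting with the homotopy fixed points $(-)^{hp^k\ZZ}$: one must know the height $1$ LQ property for $\ell^{hp^k\ZZ}$ (so $V(1)\otimes\THH$, hence $V(1)\otimes\TC$, is bounded in the cyclotomic $t$-structure, making $v_2^{-1}(-)$ well-behaved) — this is precisely the content flagged in the introduction as following from \cite{lee2023topological}, and is proved independently in \Cref{sec:ht2} for $p\geq5$ and all $k\geq0$. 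Given that, the remaining steps are formal consequences of \Cref{thm:height2comptc}, \Cref{thm:TCequalK}, and \Cref{thm:maincyc}.
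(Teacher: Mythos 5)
Your overall architecture matches the paper's: the explicit $\pi_*$ identification for $k\gg0$ is obtained by inverting $v_2$ in \Cref{thm:height2comptc} (and your care that the height $1$ LQ property makes $v_2^{-1}(-)$ commute with $(-)^{hp^k\ZZ}$ is exactly the right point), while the identification of the coassembly map with the completion/localization maps for all $k\geq0$ is a separate statement, which the paper isolates as \Cref{thm:ht2disproof}. However, the second half of your argument has two problems.

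First, the $K(2)$-localization claim is not justified by anything you invoke. \Cref{thm:maincyc} (and \Cref{prop:bpncycmodel}, \Cref{lem:coassembly-descent}) only identify the coassembly map with the \emph{cyclotomic completion} map. Since the inclusion $\Sp_{K(2)} \subseteq (\Sp_{T(2)})\cycl$ can be strict, cyclotomic completeness of the target does not yield $K(2)$-locality, and your stated reason ("the relevant cyclotomic extension has been computed") is not an argument. The missing input is that the target is actually $K(2)$-local: the paper cites \cite[Theorem 1.3.6]{hahn2022motivic}, i.e.\ that $T(2)\otimes\TC(\ell)$ is $K(2)$-local, so that $L_{T(2)}K(L)^{hp^k\ZZ}$ is a limit of $K(2)$-local spectra and hence $K(2)$-local; only then does the cyclotomic completion map coincide with the $K(2)$-localization map.

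Second, \Cref{thm:maincyc} does not apply "verbatim" here: the $\BP\langle 1\rangle^{\Psi}$ of \Cref{sec:adamsop} carries the operation $\Psi^{m_p^{\E_1}}=\Psi^{2}$ (for odd $p$), not $\Psi^{1+p}$, and these generate different closed subgroups of $\Z_p^{\times}$, so \Cref{prop:bpncycmodel} does not transfer formally to the classical action. The paper's proof of \Cref{thm:ht2disproof} avoids this entirely: using the $\Z_p^{\times}$-equivariant identification $\KU \cong L_{T(1)}\SP[\omega_{p^{\infty}}^{(1)}]$, the map $L^{hp^k\ZZ}\to L$ is literally the finite cyclotomic sub-extension $\SP_{T(1)}[\omega_{p^k}^{(1)}]^{hT_p} \to \SP_{T(1)}[\omega_{p^{\infty}}^{(1)}]^{hT_p}$, which admits a retraction after base change to $\KU$ by \Cref{lem:intermediateextn}; combining this with \Cref{cor:cycredshift} and \Cref{rmk:checklocally} shows $L^{hp^k\ZZ}\to L$ is an $L_{T(2)}K\cycl$-cover, and \Cref{lem:coassembly-descent} then gives the cyclotomic-completion identification for every $k\geq0$. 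You mention this "independent verification" as an optional fallback; it is in fact the argument you need, and you should supply it (or an equivalent) rather than appeal to \Cref{thm:maincyc}.
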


\begin{rmk}
We think of $\ell^{hp^k\ZZ}$ as a bounded below model (analogous to an order in the ring of integers) of the height $1$ local field obtained as 
    a finite subextension of the Iwasawa extension $\SP_{T(1)}[\omega_{p^{\infty}}^{(1)}]^{hT_p}$.
\end{rmk}


We now proceed with the proofs of \Cref{thm:height2comptc} and \Cref{cor:height2comp}.  After these are proved, we discuss in \Cref{sec:LQht1} some qualitative results about $\mathrm{K}(L_{K(1)} \mathbb{S})$ itself, which do not require passage to a finite Galois extension to prove.



\subsection{Recollections regarding \texorpdfstring{$V(2) \otimes \THH(\ell)$}{V2 THH(BP1)}}

In this subsection, we recall a few classical results about the cyclotomic spectrum $\THH(\ell)$. Most importantly we have the following:

\begin{thm}[Ausoni--Rognes]\label{thm:AR}
For $p\geq5$, the cyclotomic spectrum
\[V(2) \otimes \THH(\ell)\]
is bounded in the cyclotomic $t$-structure.  In other words, $\ell$ satisfies the height $1$ LQ property. 
\end{thm}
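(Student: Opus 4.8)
This theorem is due to Ausoni and Rognes, so the plan is to recall their argument rather than invent a new one. The key computational input is their calculation of $V(2)_*\THH(\ell)$, which shows that $V(2)\otimes\THH(\ell)$ has homotopy groups concentrated in a finite range of degrees as a \emph{spectrum}; this follows because $V(2)\otimes\THH(\ell)$ is built from $\THH(\ell/p)\cong\THH(\ell)\otimes V(0)$ by killing $v_1$ and $v_2$, and the relevant $V(2)$-homotopy is a finitely generated free $\mathbb{F}_p$-module (an exterior algebra on classes $\lambda_1,\lambda_2$ tensored with a truncated polynomial piece coming from $\mu$, the Bökstedt-type class in degree $2p^2$). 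So the first step is to cite the Ausoni--Rognes computation to conclude that $V(2)\otimes\THH(\ell)$ is bounded as a spectrum, hence in particular $V(2)\otimes\THH(\ell)$ carries a Bökstedt class $\mu\in\pi_{2p^2}$.

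\textbf{From the Bökstedt class to cyclotomic boundedness.} The second step is to promote spectrum-level boundedness to boundedness in the cyclotomic $t$-structure. This is exactly what the machinery of \Cref{subsec:bokstedt1} is designed to do: one checks that $V(2)\otimes\THH(\ell)$ is an $h\mathbb{A}_2$-ring in cyclotomic spectra (using that $V(2)$ is an hcring for $p\geq 5$, or rather that $V(0)$ is an hcring and $V(1),V(2)$ are obtained by killing elements in a central ideal, together with the $\mathbb{E}_\infty$-ring structure on $\ell$), that it satisfies the Segal condition $\Seg{b}$ for suitable $b$ (via \Cref{prop:AN_bound_to_segal} once we know it is cyclotomically bounded — or, to avoid circularity, via the Segal conjecture input of Ausoni--Rognes that $\varphi$ on $V(2)\otimes\THH(\ell)$ has bounded-above fiber), and that the $\mu$-class is a genuine Bökstedt class in the sense of \Cref{dfn:weak-bokstedt-class}: it is hcentral, in the image of the $\mathbb{T}$-transfer, and $\varphi(\mu)$ is a unit in $\pi_*(V(2)\otimes\THH(\ell))^{tC_p}$. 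The unit condition is the crux and is precisely the statement that $V(2)\otimes\TP(\ell)$ is $v_2$-periodic à la Ausoni--Rognes. Granting this, \Cref{lem:sc-bo-to-bok1} gives that $(V(2)\otimes\THH(\ell))/\mu$ is truncated and that $\varphi$ inverts $\mu$, and then \Cref{lem:wbock-to-scv} together with \Cref{prop:omnibus} yields that $V(2)\otimes\THH(\ell)$ satisfies $\ScanVn{b}$, $\WcanV{b}$, and finally lands in $\CycSp_{\leq b}$ for an explicit $b$.

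\textbf{Passing from $V(2)$ to arbitrary type $n+2 = 3$ complexes.} The definition of the height $1$ LQ property (\Cref{dfn:LQ}) requires $V\otimes\THH(\ell)$ bounded for \emph{every} finite type $3$ spectrum $V$, not just $V(2)$. By the thick subcategory theorem, any finite type $3$ spectrum $V$ lies in the thick subcategory generated by $V(2)$; since the cyclotomic $t$-structure is compatible with fiber sequences and retracts, boundedness for $V(2)\otimes\THH(\ell)$ propagates to boundedness for $V\otimes\THH(\ell)$ with a uniform shift in the truncation bound (using \Cref{rec:exponents1} to control the number of cells, or simply that finite colimits of bounded objects are bounded). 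Together with the fact that $\THH(\ell)$ is connective (hence bounded below), this gives the height $1$ LQ property.

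\textbf{Main obstacle.} The genuinely hard part is entirely contained in the Ausoni--Rognes computation of $V(2)_*\THH(\ell)$ and the associated verification that $\varphi(\mu)$ is a unit in $V(2)\otimes\TP(\ell)$ — i.e. that the Tate-valued Frobenius becomes an isomorphism after inverting $\mu$. Everything downstream of that is the formal boundedness bootstrapping of \Cref{subsec:implications} and \Cref{subsec:bokstedt1}. Since we are allowed to cite earlier results and the theorem is explicitly attributed to Ausoni--Rognes, the proof in the paper will simply invoke \cite{ausoni2002algebraic} for the computation of $V(2)_*\THH(\ell)$ and the Segal/periodicity statements, and then apply \Cref{lem:sc-bo-to-bok1}, \Cref{lem:wbock-to-scv}, and \Cref{prop:omnibus} to deduce cyclotomic boundedness, concluding with the thick subcategory argument to handle general $V$.
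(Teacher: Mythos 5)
Your proposal correctly anticipates what the paper actually does: the proof of \Cref{thm:AR} is a one-line citation of \cite[Theorem 8.5]{ausoni2002algebraic} (with \cite[Theorem G]{hahn2020redshift} as an alternative reference), with no further argument. Your reconstruction of how one \emph{would} prove it with the paper's own machinery (Bökstedt class $\mu$, Segal condition, then \Cref{lem:sc-bo-to-bok1}, \Cref{lem:wbock-to-scv}, and \Cref{prop:omnibus}, followed by the thick subcategory argument to pass from $V(2)$ to an arbitrary type $3$ complex) is the standard bootstrapping route and is sound in outline; it is essentially the Hahn--Wilson style re-derivation rather than Ausoni--Rognes' original argument.

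One genuine error in your first step should be corrected, even though it is not load-bearing. You assert that the Ausoni--Rognes computation shows $V(2)\otimes\THH(\ell)$ "has homotopy groups concentrated in a finite range of degrees as a spectrum." This is false: by \Cref{cnv:THHellnames} one has $V(2)_*\THH(\ell)\cong\mathbb{F}_p[\mu]\langle\lambda_1,\lambda_2,\epsilon\rangle$ with $|\mu|=2p^2$, a \emph{polynomial} (not truncated polynomial) algebra on $\mu$, so the underlying spectrum is unbounded above. The entire content of the theorem is that cyclotomic boundedness holds \emph{despite} spectrum-level unboundedness — equivalently, that $V(2)\otimes\TR(\ell)$ is bounded. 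Your inference "hence in particular \dots carries a Bökstedt class" is therefore a non sequitur as written; the existence of the Bökstedt class is not a consequence of any boundedness statement but is itself the Ausoni--Rognes input, namely that $\varphi$ identifies $V(2)_*\THH(\ell)^{tC_p}$ with $\mathbb{F}_p[\varphi(\mu)^{\pm1}]\langle\lambda_1,\lambda_2,\varphi(\epsilon)\rangle$ (\Cref{prop:ellFrob}), so that $\varphi(\mu)$ is a unit and $\fib(\varphi)$ is bounded above. Since your second and third steps rely only on the Bökstedt class and the Segal condition and never actually use the false spectrum-level claim, the argument survives once that sentence is deleted. (A minor additional slip: $\THH(\ell)\otimes V(0)$ is $\THH(\ell)/p$, not $\THH(\ell/p)$.)
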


\begin{proof}
This follows from \cite[Theorem 8.5]{ausoni2002algebraic} (see also 
\cite[Theorem G]{hahn2020redshift}).
\end{proof}

From now on let $p\geq7$ unless stated otherwise.

Next, we recall the explicit identification of $V(1)_*\TC(\ell)$ by Ausoni and Rognes, together with additional algebraic identifications of the objects $V(2)_*\THH(\ell)$, $V(2)_*\THH(\ell)^{tC_p}$, $V(2)_*\TC^{-}(\ell)$, and $V(2)_*\TP(\ell)$. An alternative perspective on some of these calculations, with spectral sequence diagrams, may be found in \cite[Section 6]{hahn2022motivic}.

\begin{rec} \label{rec:lambda1}
Ausoni and Rognes \cite[Definitions 1.3 and 1.8]{ausoni2002algebraic}, following work of B\"okstedt and Madsen, constructed classes
\[\lambda_1,\lambda_2 \in V(0)_*K(\ell).\]
Following these authors, we will also use $\lambda_1$ and $\lambda_2$ to refer to the images of these classes in trace invariants under $V(0)_*K(\ell)$. In particular, under this convention both
\[\varphi^{h\T}:V(2)_*\TC^{-}(\ell) \to V(2)_*\TP(\ell), \text{ and}\]
\[\can:V(2)_*\TC^{-}(\ell) \to V(2)_*\TP(\ell),\]
are $\mathbb{F}_p\langle \lambda_1,\lambda_2\rangle$-module maps.
\end{rec}

There is an isomorphism of $\mathbb{F}_p$-algebras
\[V(1)_*\THH(\ell) \cong \mathbb{F}_p[\mu] \otimes \Lambda(\lambda_1,\lambda_2),\]
due to McClure--Staffeldt \cite{mcclure} \cite[Proposition 2.6]{ausoni2002algebraic}.  Since $v_2$ is trivial in this ring, killing it introduces an exterior generator and we arrive at the following:

\begin{cnv} \label{cnv:THHellnames}
We fix a preferred isomorphism of $\mathbb{F}_p$-algebras
\[
V(2)_*\THH(\ell) \cong \mathbb{F}_p[\mu]\langle \lambda_1,\lambda_2,\epsilon\rangle,
\]
such that $\lambda_1$ and $\lambda_2$ are as in \Cref{rec:lambda1} and $\epsilon$ corresponds to a choice of nullhomotopy of $v_2$ in $V(1)_*\THH(\ell)$.  Here, $|\mu|=2p^2,|\lambda_1|=2p-1,$ and $|\lambda_2|=|\epsilon|=2p^2-1$.
\end{cnv}

\begin{rmk}
We will soon see that $\mu$ is a B\"okstedt class in the sense of \Cref{subsec:bokstedt1}.
\end{rmk}

\begin{prop} \label{prop:TtateTHHell}
The $\T$-Tate spectral sequence
\[\mathrm{E}_2=\mathbb{F}_p[\mu,t^{\pm 1}]\langle \lambda_1,\lambda_2,\epsilon\rangle \implies V(2)_*\TP(\ell)\] has differentials
\[d_2(\epsilon)=t\mu,\]
\[d_{2p}(t)=t^{p+1}\lambda_1,\]
\[d_{2p^2}(t^p)=t^{p^2+p}\lambda_2,\]
with all other differentials determined by multiplicative structure and the facts that $\lambda_1$ and $\lambda_2$ are permanent cycles.  The $\mathrm{E}_{\infty}$-page is isomorphic to $\mathbb{F}_p[t^{\pm p^2}]\langle \lambda_1,\lambda_2\rangle.$
\end{prop}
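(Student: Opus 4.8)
The statement is a computation of a $\T$-Tate spectral sequence, so the plan is to set up the $a_{(1)}$-Bockstein (equivalently the $\T$-Tate) spectral sequence for the cyclotomic spectrum $V(2)\otimes \THH(\ell)$, whose $E_2$-page is $\pi_*(V(2)\otimes\THH(\ell))[t^{\pm1}] = \mathbb{F}_p[\mu,t^{\pm1}]\langle\lambda_1,\lambda_2,\epsilon\rangle$ with $|t|=-2$, and run the differentials. First I would dispose of the $\epsilon$ differential: the class $\epsilon$ was chosen (\Cref{cnv:THHellnames}) as a nullhomotopy of $v_2$ in $V(1)_*\THH(\ell)$, and $v_2$ acts on $V(1)\otimes\THH(\ell)$ as (a unit multiple of) $\mu$ in the relevant sense coming from the $v_2$-self-map of $V(1)$; equivalently, already in $\THH(\mathbb{Z}_p)$ the Connes/$\sigma$-operator relation forces $\sigma(\epsilon)\doteq\mu$ up to units, and passing to the Tate spectral sequence this $\sigma$-operator relation becomes the $d_2$-differential $d_2(\epsilon)=t\mu$ (after rescaling $\mu$ or $\epsilon$ by a unit, which I would absorb into the choice of names). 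Taking $d_2$-homology kills $\epsilon$ and $\mu$, leaving $E_3 = \mathbb{F}_p[t^{\pm1}]\langle\lambda_1,\lambda_2\rangle$.

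The remaining differentials $d_{2p}(t)=t^{p+1}\lambda_1$ and $d_{2p^2}(t^p)=t^{p^2+p}\lambda_2$ are exactly the classical Ausoni--Rognes computation of the $\T$-Tate spectral sequence for $\THH(\ell)$ with $V(1)$-coefficients (the $\epsilon$-free part of our $E_3$-page is the $V(1)_*$-version), so for these I would cite \cite[Section 5]{ausoni2002algebraic} (and the reorganized account in \cite[Section 6]{hahn2022motivic}). Concretely: on $E_3$, $\lambda_1$ and $\lambda_2$ are permanent cycles because they are pulled back from $V(0)_*K(\ell)$ along the cyclotomic trace (\Cref{rec:lambda1}), and the only possible source of a differential is a power of $t$. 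Degree and sparseness considerations rule out everything until $d_{2p}$, where the first nontrivial differential must hit $\lambda_1$; multiplicativity (Leibniz) then propagates it, and $d_{2p}$-homology of $\mathbb{F}_p[t^{\pm1}]\langle\lambda_1,\lambda_2\rangle$ is $\mathbb{F}_p[t^{\pm p}]\langle\lambda_1 t^{\,\cdot},\lambda_2\rangle$ — more precisely one gets $\mathbb{F}_p[t^{\pm p}]\{1,t\lambda_1,\dots,t^{p-1}\lambda_1\}\langle\lambda_2\rangle$ reorganized appropriately. The next nontrivial differential is then forced to be $d_{2p^2}$ hitting $\lambda_2$ via $d_{2p^2}(t^p)=t^{p^2+p}\lambda_2$, again propagated by Leibniz, and taking homology collapses the spectral sequence at $E_{2p^2+1}=E_\infty \cong \mathbb{F}_p[t^{\pm p^2}]\langle\lambda_1,\lambda_2\rangle$ for degree reasons (no room for longer differentials, and the pattern of surviving classes matches). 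I would also remark that this $E_\infty$ is consistent with \Cref{thm:AR}: the Tate construction vanishes appropriately once one accounts for the $a_{(1)}$-periodicity.

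\textbf{Main obstacle.} The genuinely new input relative to the literature is the interaction of the extra exterior class $\epsilon$ with the Ausoni--Rognes pattern, i.e.\ checking that $d_2(\epsilon)=t\mu$ is the \emph{only} new differential involving $\epsilon$ and $\mu$ and that it interacts correctly (via Leibniz) with the later $d_{2p}$ and $d_{2p^2}$ — in particular that no $\epsilon$- or $\mu$-multiples survive to interfere. This is essentially bookkeeping once one knows the $d_2$ is forced by the $\sigma$-operator relation $\sigma\circ\sigma=\eta\sigma$ together with $\sigma(\epsilon)=\mu$ (up to units), since $\eta$ acts trivially mod $p$; but it is the step where I would be most careful, and I would phrase it as: after $d_2$ the spectral sequence agrees with the $V(1)$-coefficient Tate spectral sequence of \cite{ausoni2002algebraic}, reducing the rest to their computation verbatim. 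The multiplicative propagation of $d_{2p}$ and $d_{2p^2}$ and the final identification of $E_\infty$ are then routine (Leibniz rule plus a degree count), so I would not grind through them.
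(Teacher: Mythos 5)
Your proposal is correct and follows essentially the same route as the paper: the paper also deduces $d_{2p}(t)=t^{p+1}\lambda_1$ and $d_{2p^2}(t^p)=t^{p^2+p}\lambda_2$ by importing the Ausoni--Rognes computation of the $V(1)$-coefficient $\T$-Tate spectral sequence (via the map of spectral sequences induced by $V(1)\otimes\THH(\ell)\to V(2)\otimes\THH(\ell)$), and obtains $d_2(\epsilon)=t\mu$ from \cite[Proposition 4.8]{ausoni2002algebraic}, which is exactly the $\sigma$-operator input you invoke. One immaterial slip: the surviving $\lambda_1$-multiples after $d_{2p}$ are the $t^{pj}\lambda_1$ (the classes $t^{d}\lambda_1$ with $0<d<p$ are boundaries in the Tate spectral sequence, surviving only in the homotopy fixed point truncation), not the set $\{1,t\lambda_1,\dots,t^{p-1}\lambda_1\}$ you wrote in passing.
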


\begin{proof}
This follows from work of Ausoni--Rognes \cite[Section 6]{ausoni2002algebraic}, who compute the much more difficult $\T$-Tate spectral sequence for $V(1)_*\TP(\ell)$.  More precisely,  the map of $\T$-Tate spectral sequences induced by the $\T$-equivariant map $V(1)_*\THH(\ell) \to V(2)_*\THH(\ell)$ induces the differentials on $t$ and $t^{p}$.  The $d_2$ differential on $\epsilon$ follows from \cite[Proposition 4.8]{ausoni2002algebraic}.
\end{proof}

Note that the $\T$-homotopy fixed point spectral sequence
\[\mathrm{E}_2=\mathbb{F}_p[\mu,t]\langle \lambda_1,\lambda_2,\epsilon\rangle \implies V(2)_*\TC^{-}(\ell)\]
is formally determined by the $\T$-Tate spectral sequence through truncation.

\begin{dfn}
Let $N$ denote the submodule of the $\mathbb{F}_p\langle \lambda_1,\lambda_2\rangle$-module $V(2)_*\TC^{-}(\ell)$ that consists of classes simultaneously in the kernel of $\mathrm{can}$ and of positive Nygaard filtration (i.e., of positive filtration in the $\T$-homotopy fixed point spectral sequence).
\end{dfn}

Examining the canonical map between the $\T$-homotopy fixed point spectral sequence and the $\T$-Tate spectral sequence, we arrive at the following result:

\begin{cor} \label{cor:Ncalculation}
The canonical map
\[\mathrm{can}:V(2)_*\TC^{-}(\ell) \to V(2)_*\TP(\ell)\]
is trivial in degrees $* \ge 2p^2$.  The subspace $N$ of $V(2)_*\TC^{-}(\ell)$ is a $4(p-1)$ dimensional $\mathbb{F}_p$-vector space, with basis elements detected in the $\T$-homotopy fixed point spectral sequence by:
\begin{enumerate}
    \item $t^{d} \lambda_1$, for each $0<d<p$, in degree $2p-1-2d$
    \item $t^{pd} \lambda_2$, for each each $0<d<p$, in degree $2p^2-1-2pd$
    \item $t^{d}\lambda_1\lambda_2$, for each $0<d<p$, in degree $2p^2+2p-2-2d$.
    \item $t^{pd} \lambda_1 \lambda_2$, for each $0<d<p$, in degree $2p^2+2p-2-2pd$.
\end{enumerate}
Classes in $(3)$ are obtained from classes in $(1)$ by multiplication by $\lambda_2$, and classes in $(4)$ are obtained from classes in $(2)$ by multiplication by $\lambda_1$.
\end{cor}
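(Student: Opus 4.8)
\textbf{Proof proposal for \Cref{cor:Ncalculation}.}

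The plan is to extract everything from the explicit $\T$-Tate and $\T$-homotopy fixed point spectral sequences for $V(2)_*\THH(\ell)$ recorded in \Cref{prop:TtateTHHell}, using the canonical map between them. First I would set up the comparison: the $\T$-homotopy fixed point spectral sequence computing $V(2)_*\TC^{-}(\ell)$ has $\mathrm{E}_2$-page $\mathbb{F}_p[\mu,t]\langle\lambda_1,\lambda_2,\epsilon\rangle$, sitting inside the Tate $\mathrm{E}_2$-page $\mathbb{F}_p[\mu,t^{\pm1}]\langle\lambda_1,\lambda_2,\epsilon\rangle$ as the subalgebra where $t$ appears only in nonnegative powers, and the differentials of \Cref{prop:TtateTHHell} restrict accordingly. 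The map $\mathrm{can}$ is, on associated graded, just the inclusion of the homotopy fixed point $\mathrm{E}_\infty$-page into the Tate $\mathrm{E}_\infty$-page. From \Cref{prop:TtateTHHell}, the Tate $\mathrm{E}_\infty$-page is $\mathbb{F}_p[t^{\pm p^2}]\langle\lambda_1,\lambda_2\rangle$, which is concentrated in degrees of the form $-2p^2 j + \{0, 2p-1, 2p^2-1, 2p^2+2p-2\}$ for $j \in \ZZ$; in particular in degrees $\geq 2p^2$ the only surviving classes would need $t^{-p^2 j}$ with $j < 0$, but those also survive, so I need to be slightly more careful. The correct statement: in the homotopy fixed point spectral sequence, only nonnegative powers of $t$ survive, and $t^{p^2 j}$ for $j \geq 0$ pairs with the exterior classes to give degrees $\leq 2p-1 + 2p^2 - 1 - 2p^2 \cdot 0$; checking the four families, the maximal-degree permanent cycle of the form $t^{p^2 j}\lambda_1\lambda_2$ with $j\geq 0$ has degree $2p^2+2p-2 - 2p^2 j \leq 2p^2 + 2p - 2 < 2p^2$ once $2p - 2 < 2p^2$... that's always true, so this does not immediately give the bound. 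The right argument is that all $\mathrm{E}_\infty$-classes of the homotopy fixed point spectral sequence in nonnegative $t$-power and nonzero $\mu$-power have been killed by the differentials $d_2(\epsilon) = t\mu$, $d_{2p}(t) = t^{p+1}\lambda_1$, $d_{2p^2}(t^p) = t^{p^2+p}\lambda_2$, leaving a finite-dimensional graded piece together with the classes $1, \lambda_1, \lambda_2, \lambda_1\lambda_2$ in low degree and the image of $\mathrm{can}$ being supported only in degrees $< 2p^2$; I would verify this by directly running the three differentials on $\mathbb{F}_p[\mu,t]\langle\lambda_1,\lambda_2,\epsilon\rangle$ as Ausoni--Rognes do for $V(1)$, noting the $V(2)$ case is strictly simpler because $\mu$ is now killed rather than polynomial.

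Next I would identify $N$. By definition, $N$ consists of classes in $V(2)_*\TC^{-}(\ell)$ that are both in positive Nygaard (= positive $t$-adic) filtration and in the kernel of $\mathrm{can}$. In the homotopy fixed point spectral sequence, after the differentials, the classes in positive $t$-filtration that survive and map to zero in the Tate spectral sequence are exactly those $t$-power-times-exterior monomials whose image would require a negative power of $t$ in the Tate $\mathrm{E}_\infty$-page — equivalently, the classes $t^{d}\lambda_1$ for $0 < d < p$ (which die in the Tate SSeq under $d_{2p}(t^{-p+d}) = t^{d+1}\lambda_1$, hence $t^d\lambda_1 = d_{2p}(\text{something}/t^p)$ is a boundary Tate-locally but not hfp-locally), and analogously $t^{pd}\lambda_2$ for $0<d<p$, and their products with the complementary exterior class. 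I would enumerate the monomial basis of the $\mathrm{E}_\infty$-page of the hfp spectral sequence in positive $t$-filtration, intersect with the kernel of the map to the Tate $\mathrm{E}_\infty$-page, and read off the four families (1)--(4); there are $4(p-1)$ of them, with the degree formulas as stated, and the multiplicative relations (3) $= \lambda_2 \cdot$ (1) and (4) $= \lambda_1 \cdot$ (2) are immediate from the monomial descriptions. I should check there are no filtration jumps or hidden extensions that would spoil the identification of $N$ with the associated graded — but since $N$ is defined via the spectral sequence filtration and the classes are in distinct degrees within each internal degree (all four families live in mutually distinct degrees for $p \geq 7$), this is automatic.

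The main obstacle will be the bookkeeping around which classes in the homotopy fixed point spectral sequence are genuine $d_{2p}$- and $d_{2p^2}$-cycles versus boundaries, and making sure the claimed basis of $N$ is complete and has no overlaps in degree — in other words, translating the Ausoni--Rognes computation of the $V(1)$ Tate spectral sequence (where $\mu$ is polynomial and the answer is much larger) into the truncated $V(2)$ setting and then into $\TC^{-}$ via truncation. I expect the $\mathrm{can}$-vanishing-in-degrees-$\geq 2p^2$ claim to follow cleanly once one observes that $d_2(\epsilon) = t\mu$ forces $\mu$ (hence all positive $\mu$-powers) to be a boundary, collapsing the homotopy fixed point $\mathrm{E}_3$-page to $\mathbb{F}_p[t]\langle\lambda_1,\lambda_2\rangle$ in $\mu$-filtration zero plus a small $\mu$-torsion piece in low degrees, after which the $d_{2p}$ and $d_{2p^2}$ differentials on powers of $t$ leave only a bounded-above image under $\mathrm{can}$.
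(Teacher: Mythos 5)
Your proposal takes the same route as the paper: the paper's entire proof is the single sentence "examining the canonical map between the $\T$-homotopy fixed point spectral sequence and the $\T$-Tate spectral sequence," i.e.\ exactly the comparison you carry out — run the differentials of \Cref{prop:TtateTHHell} on the truncated ($t$-nonnegative) spectral sequence, and identify $N$ with the positive-filtration permanent cycles whose images in the Tate $\mathrm{E}_\infty$-page $\mathbb{F}_p[t^{\pm p^2}]\langle\lambda_1,\lambda_2\rangle$ vanish. Your enumeration of the four families and the multiplicative relations is right, and the degree bookkeeping (each family lands in degrees where $\pi_*\TP(\ell)$ is zero or where the Tate class has become a boundary from negative filtration) is the intended argument.

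One small warning: in your first paragraph you assert $2p^2+2p-2<2p^2$ "once $2p-2<2p^2$," which is not the inequality you need (it is false for all $p$); you correctly notice this doesn't work and pivot. The underlying issue is real: the class $\lambda_1\lambda_2$ sits in Nygaard filtration zero in degree $2p^2+2p-2\ge 2p^2$ and survives to the Tate $\mathrm{E}_\infty$-page, so $\mathrm{can}$ is \emph{not} literally zero on it — the blanket vanishing claim should be read as applying away from the $\mathbb{F}_p\langle\lambda_1,\lambda_2\rangle$ summand (equivalently, on multiples of $\mu$ and on positive Nygaard filtration), which is all that is used downstream in \Cref{cor:V2TCell}. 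This imprecision is shared with the paper's statement and does not affect your (correct) identification of $N$.
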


We next turn to the cyclotomic Frobenius map.

\begin{prop} \label{prop:ellFrob}
The cyclotomic Frobenius 
\[\varphi:V(2)_*\THH(\ell) \to V(2)_*\THH(\ell)^{tC_p}\]
is the universal $\mathbb{F}_p$-algebra map that inverts $\mu$.  In other words,
\[
V(2)_*\THH(\ell)^{tC_p} \cong \mathbb{F}_p[\varphi(\mu)^{\pm 1}]\langle \lambda_1,\lambda_2,\varphi(\epsilon) \rangle.
\]
\end{prop}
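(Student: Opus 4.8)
The statement asserts two things: an identification of $V(2)_*\THH(\ell)^{tC_p}$ as a ring, and the identification of the cyclotomic Frobenius with the localization map inverting $\mu$. The natural starting point is \Cref{prop:TtateTHHell} applied to $C_p$ in place of $\T$. By \Cref{lem:a1tcp} and the general machinery of \cite{NS}, the $C_p$-Tate spectral sequence for $V(2)_*\THH(\ell)^{tC_p}$ has the same $\mathrm{E}_2$-page $\mathbb{F}_p[\mu,t^{\pm1}]\langle\lambda_1,\lambda_2,\epsilon\rangle$, but because we are working over $C_p$ rather than $\T$, only differentials compatible with the restricted filtration survive: the $d_2(\epsilon)=t\mu$ differential persists, but the higher differentials $d_{2p}(t)$ and $d_{2p^2}(t^p)$ present in the $\T$-Tate spectral sequence are killed — more precisely, one should argue via the comparison map from the $\T$-Tate to the $C_p$-Tate spectral sequence, together with the fact that $t$ itself is a permanent cycle $C_p$-equivariantly (it comes from the Euler class $a_{(1)}$, which is invertible after $(-)^{tC_p}$ by \Cref{lem:a1tcp}). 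Thus the only differential is $d_2(\epsilon)=t\mu$, and the $\mathrm{E}_\infty$-page is $\mathbb{F}_p[\mu^{\pm1},t^{\pm1}]\langle\lambda_1,\lambda_2\rangle$, where the class $\mu$ becomes invertible because $t$ is invertible and $\epsilon$ has killed the submodule on which $\mu$ was not invertible. Collapsing and resolving any (non-)multiplicative extensions — which are absent for degree/sparseness reasons as in \Cref{prop:TtateTHHell} — gives the ring structure. One must also name $\varphi(\epsilon)$: since $\epsilon$ supported a $d_2$, a genuine lift of $\epsilon$ does not survive, but rather a class $\varphi(\epsilon)$ detected on the associated graded; I would simply \emph{define} $\varphi(\epsilon)$ to be a choice of such class, so the exterior generator is present by construction.

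The second half — that $\varphi$ is the universal $\mathbb{F}_p$-algebra map inverting $\mu$ — follows by combining the source computation $V(2)_*\THH(\ell)\cong\mathbb{F}_p[\mu]\langle\lambda_1,\lambda_2,\epsilon\rangle$ (\Cref{cnv:THHellnames}) with the target computation just obtained. Since $\varphi$ is an $\mathbb{F}_p$-algebra map (it is the cyclotomic Frobenius, hence multiplicative), it is determined by where it sends the generators $\mu,\lambda_1,\lambda_2,\epsilon$. The classes $\lambda_1,\lambda_2$ are permanent cycles coming from $V(0)_*K(\ell)$ (\Cref{rec:lambda1}) and map to the corresponding named classes; $\epsilon$ maps to $\varphi(\epsilon)$ by naming. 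The essential point is that $\varphi(\mu)$ is a \emph{unit} in $V(2)_*\THH(\ell)^{tC_p}$ — this is exactly the statement that $\mu$ is a B\"okstedt class, and it is either read off directly from the $\mathrm{E}_\infty$-page computation above or imported from Ausoni--Rognes \cite{ausoni2002algebraic} (their computation of $V(1)_*\TP(\ell)$ shows $\mu$ becomes invertible Tate-locally; reduce mod $v_1$, i.e. tensor with $V(2)/V(1)$, to conclude). Once $\varphi(\mu)$ is known to be invertible, the universal property of the localization $\mathbb{F}_p[\mu]\langle\lambda_1,\lambda_2,\epsilon\rangle \to \mathbb{F}_p[\mu^{\pm1}]\langle\lambda_1,\lambda_2,\epsilon\rangle$ produces a factorization, and a dimension count (both sides are free $\mathbb{F}_p[\mu^{\pm1}]$-modules of rank $8$ on the exterior generators, comparing Poincaré series degree by degree) shows it is an isomorphism.

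\textbf{The main obstacle.} The genuinely delicate step is controlling the $C_p$-Tate spectral sequence precisely enough to be sure that \emph{no differential other than} $d_2(\epsilon)=t\mu$ occurs, and that there are no hidden multiplicative extensions obscuring the statement that $\varphi$ inverts $\mu$ on the nose (as opposed to up to filtration). The cleanest route is not to re-derive everything but to invoke the map of cyclotomic spectra $V(1)\otimes\THH(\ell)\to V(2)\otimes\THH(\ell)$ and the corresponding map of $C_p$-Tate spectral sequences, using Ausoni--Rognes' computation of $V(1)_*$-homology upstairs as the source of all the structural input, and then carefully tracking the single extra exterior class $\epsilon$ introduced by killing $v_2$. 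I would organize the write-up so that the only new verification beyond \cite{ausoni2002algebraic} and \Cref{prop:TtateTHHell} is the behavior of $\epsilon$ under $\varphi$ and the $d_2$, everything else being functoriality of previously established spectral sequences.
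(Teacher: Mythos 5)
Your main argument — a direct computation of the $C_p$-Tate spectral sequence for $V(2)_*\THH(\ell)^{tC_p}$ — has two concrete errors. First, the $E_2$-page is not $\mathbb{F}_p[\mu,t^{\pm1}]\langle\lambda_1,\lambda_2,\epsilon\rangle$: Tate cohomology of the \emph{finite} group $C_p$ at odd $p$ is $\Lambda(u_1)\otimes\mathbb{F}_p[t^{\pm1}]$, so the $C_p$-Tate $E_2$-page carries an extra exterior class $u_1$ in filtration $1$ that is absent from the $\T$-Tate spectral sequence; any argument that ignores it cannot land on the right answer. Second, $t$ is \emph{not} a permanent cycle in the $C_p$-Tate spectral sequence: the comparison map $X^{t\T}\to X^{tC_p}$ sends $t$ to $t$ and pushes forward the differentials $d_{2p}(t)=t^{p+1}\lambda_1$ and $d_{2p^2}(t^p)=t^{p^2+p}\lambda_2$ of \Cref{prop:TtateTHHell}; your appeal to \Cref{lem:a1tcp} conflates the $\T$-Euler class $a_{(1)}$ (which indeed becomes invertible after $(-)^{tC_p}$) with the $C_p$-Euler class governing this filtration. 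As a sanity check, the $E_\infty$-page you propose, $\mathbb{F}_p[\mu^{\pm1},t^{\pm1}]\langle\lambda_1,\lambda_2\rangle$, is infinite-dimensional in every degree, whereas the stated answer $\mathbb{F}_p[\varphi(\mu)^{\pm1}]\langle\lambda_1,\lambda_2,\varphi(\epsilon)\rangle$ is finite-dimensional in each degree; it also contradicts your own rank-$8$ count two sentences later. Finally, $\varphi(\epsilon)$ cannot be "defined to be a choice of class detected on the associated graded": it must be the image of the already-chosen $\epsilon$ under $\varphi$, or the second half of the proposition (that $\varphi$ is the localization map on the nose) loses its content.

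The correct route is the one you relegate to your closing paragraph, and it is exactly what the paper does, in one line. The choice of $\epsilon$ (\Cref{cnv:THHellnames}) is a nullhomotopy of $v_2$ on $V(1)\otimes\THH(\ell)$, i.e.\ a splitting $V(2)\otimes\THH(\ell)\simeq V(1)\otimes\THH(\ell)\oplus\Sigma^{2p^2-1}V(1)\otimes\THH(\ell)$ compatible with the Frobenius, so the $V(2)$-statement reduces immediately to the identification of
\[
\varphi\colon V(1)_*\THH(\ell)\cong\mathbb{F}_p[\mu]\langle\lambda_1,\lambda_2\rangle\longrightarrow \mathbb{F}_p[\varphi(\mu)^{\pm1}]\langle\lambda_1,\lambda_2\rangle\cong V(1)_*\THH(\ell)^{tC_p}
\]
with the $\mu$-localization, which is precisely Theorem 5.5 of Ausoni--Rognes and is cited rather than reproved. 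There is no new spectral sequence to run; the only content beyond the citation is the splitting above. If you do want to rerun the $C_p$-Tate spectral sequence yourself, you would essentially be reproducing Ausoni--Rognes' Section 5, including the $u_1$ class and the higher differentials you discarded.
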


\begin{proof}
It suffices to identify
\[\varphi:V(1)_*\THH(\ell) \to V(1)_*\THH(\ell)^{tC_p}\]
with the map
\[\mathbb{F}_p[\mu]\langle \lambda_1,\lambda_2 \rangle \to \mathbb{F}_p[\varphi(\mu)^{\pm 1}] \langle \lambda_1,\lambda_2 \rangle.\]
This is \cite[Theorem 5.5]{ausoni2002algebraic}.
\end{proof}

\begin{cor} \label{cor:varphiNygaard}
The map
\[\varphi^{h\T}:V(2)_*\TC^{-}(\ell) \to V(2)_*\TP(\ell)\]
is an isomorphism in degrees $* \gg 0$.  
It is trivial on all classes of positive Nygaard filtration.  
\end{cor}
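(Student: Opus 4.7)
The plan is to prove both claims by analyzing the cyclotomic Frobenius at the level of $\T$-homotopy fixed point spectral sequences. For the isomorphism in degrees $\gg 0$, I would invoke the Ausoni--Rognes boundedness result (\Cref{thm:AR}) together with \Cref{prop:omnibus}(3), which together imply that $V(2) \otimes \THH(\ell)$ satisfies the Segal condition; equivalently, the fiber of $\varphi\colon V(2) \otimes \THH(\ell) \to V(2) \otimes \THH(\ell)^{tC_p}$ is truncated above some fixed integer $b$. Since $\fib(\varphi^{h\T}) \simeq \fib(\varphi)^{h\T}$ and the $\T$-HFPSS shows that the homotopy fixed points of a $b$-truncated spectrum remain $b$-truncated (for bidegree reasons, $t - s \leq t \leq b$), this forces $\varphi^{h\T}$ to be an isomorphism in degrees $> b$.

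For the second statement I would compute $V(2)_*\TP(\ell)$ via the $\T$-HFPSS associated to the identification $\TP(\ell) \simeq (\THH(\ell)^{tC_p})^{h\T}$, valid since $\THH(\ell)$ is bounded below. By \Cref{prop:ellFrob}, the $E_2$-page is
\[ \mathbb{F}_p[t]\otimes \mathbb{F}_p[\varphi(\mu)^{\pm 1}] \langle \lambda_1, \lambda_2, \varphi(\epsilon)\rangle, \]
and the key observation is that $\varphi(\mu)$ is a unit. The $\T$-equivariance of the cyclotomic Frobenius implies that $\varphi$ commutes with the Connes operator $\sigma$, so the identity $\sigma(\epsilon) = \mu$ that produces the Tate differential $d_2(\epsilon) = t\mu$ of \Cref{prop:TtateTHHell} yields $\sigma(\varphi(\epsilon)) = \varphi(\mu)$ and hence $d_2(\varphi(\epsilon)) = t\varphi(\mu)$ in this HFPSS. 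Applying the Leibniz rule (using that $\lambda_i$, $\varphi(\mu)^{\pm 1}$, and $t$ are $d_2$-cocycles), $d_2$ surjects from $\varphi(\epsilon) \cdot \mathbb{F}_p[\varphi(\mu)^{\pm 1}]\langle \lambda_1,\lambda_2\rangle$ in filtration zero onto $t \cdot \mathbb{F}_p[\varphi(\mu)^{\pm 1}]\langle \lambda_1,\lambda_2\rangle$ in filtration two, and multiplying by powers of $t$ gives the same surjection in each higher filtration. Thus $E_3 \simeq \mathbb{F}_p[\varphi(\mu)^{\pm 1}]\langle \lambda_1,\lambda_2\rangle$ is concentrated in filtration zero and the spectral sequence degenerates, so the filtration on $V(2)_*\TP(\ell)$ from this HFPSS is trivial.

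To conclude, note that $\varphi^{h\T}$ is induced by applying $(-)^{h\T}$ to the $\T$-equivariant map $\varphi$, so it induces a map of $\T$-HFPSSes preserving $t$-filtration. A class in $V(2)_*\TC^{-}(\ell)$ of positive Nygaard filtration is by definition one detected in positive $t$-filtration of the $\T$-HFPSS for $\THH(\ell)$, and its image under $\varphi^{h\T}$ lies in positive $t$-filtration of the target HFPSS, which vanishes at $E_\infty$ by the previous paragraph. The main obstacle in executing this plan is carefully justifying the Connes-operator identification that produces $d_2(\varphi(\epsilon)) = t\varphi(\mu)$ in the HFPSS for $\THH(\ell)^{tC_p}$, but this should follow from $\T$-equivariance of $\varphi$ and naturality of the HFPSS.
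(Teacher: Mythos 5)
Your proposal is correct and follows essentially the same route as the paper: the first claim via boundedness of $\fib(\varphi)$ (hence of $\fib(\varphi)^{h\T}$), and the second via the homotopy fixed point spectral sequence for $(\THH(\ell)^{tC_p})^{h\T}$, whose $d_2$ is imported from $d_2(\epsilon)=t\mu$ by naturality along the $\T$-equivariant map $\varphi$, collapsing the $E_3$-page onto the $0$-line. The paper states this more tersely but the argument is the same.
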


\begin{proof}
The first claim follows from the fact that the fiber of $\varphi$ is bounded above, and so the fiber of $\varphi^{h\T}$ is as well.  To see the second claim, we must check that the $\mathrm{E}_{\infty}$-page of the $\T/C_p$-homotopy fixed point spectral sequence for
$\left(\THH(\ell)^{tC_p}\right)^{h\T/C_p}$ is concentrated on the $0$-line. This $\T/C_p$-homotopy fixed point spectral sequence begins with \[E_2=\mathbb{F}_p[\varphi(\mu)^{\pm 1},t]\langle \varphi(\epsilon),\lambda_1,\lambda_2\rangle.\]
We may calculate the $d_2$ differential using the $\varphi$ map from the $\T$-homotopy fixed point spectral sequence determined by \Cref{prop:TtateTHHell}.  After running the $d_2$ differential, we find that the $\mathrm{E}_3$-page is concentrated on the $0$-line.
\end{proof}

\begin{cor} \label{cor:V2TCell}
As a graded $\mathbb{F}_p \langle \lambda_1,\lambda_2\rangle$-module, $V(2)_*\TC(\ell)$ is isomorphic to the direct sum of $N$ and $\mathbb{F}_p\langle \lambda_1,\lambda_2,\partial\rangle$, where $|\partial|=-1$. 
\end{cor}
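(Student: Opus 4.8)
\textbf{Proof proposal for \Cref{cor:V2TCell}.}

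The plan is to extract $V(2)_*\TC(\ell)$ from the equalizer diagram
\[
\TC(\ell) \to \TC^{-}(\ell) \overset{\varphi^{h\T} - \can}{\longrightarrow} \TP(\ell),
\]
using the detailed knowledge of the two maps $\varphi^{h\T}$ and $\can$ assembled in \Cref{cor:Ncalculation} and \Cref{cor:varphiNygaard}. Smashing with $V(2)$ and taking homotopy we get a long exact sequence relating $V(2)_*\TC(\ell)$ to the kernel and cokernel of the map
\[
\varphi^{h\T} - \can \colon V(2)_*\TC^{-}(\ell) \to V(2)_*\TP(\ell).
\]
So the task reduces to computing this kernel and cokernel as graded $\mathbb{F}_p\langle\lambda_1,\lambda_2\rangle$-modules.

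First I would split $V(2)_*\TC^{-}(\ell)$ into its Nygaard-filtration-zero part and its positive Nygaard filtration part $N$ (together with whatever ``$0$-line'' complement the $\T$-homotopy fixed point spectral sequence provides). On $N$, \Cref{cor:Ncalculation} says $\can$ vanishes (these classes are by definition in $\ker(\can)$) and \Cref{cor:varphiNygaard} says $\varphi^{h\T}$ vanishes as well; hence $N$ lies entirely in the kernel of $\varphi^{h\T}-\can$ and contributes a direct summand $N$ to $V(2)_*\TC(\ell)$ (there is no boundary hitting it because it maps to zero in $\TP$). On the complementary part — the classes of Nygaard filtration zero — I would use that $\can$ is, in low degrees, essentially an inclusion of the $0$-line of the homotopy fixed point spectral sequence into the $0$-line of the Tate spectral sequence, while $\varphi^{h\T}$ is an isomorphism in degrees $*\gg 0$ (\Cref{cor:varphiNygaard}) and more precisely, after \Cref{prop:ellFrob}, is the ``invert $\mu$'' map. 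The difference $\varphi^{h\T}-\can$ on this piece behaves like the classical ``$1 - F$'' situation: in high degrees $\varphi^{h\T}$ dominates and the map is surjective with one-dimensional kernel $\mathbb{F}_p\{1\}$, while in degree $-1$ the Connes-operator/Bockstein structure produces the class $\partial$ (the image under the boundary map of the unit, i.e. the ``$d\log$'' class), and multiplying by the exterior generators $\lambda_1,\lambda_2$ produces $\partial\lambda_1,\partial\lambda_2,\partial\lambda_1\lambda_2$. This is exactly the copy of $\mathbb{F}_p\langle\lambda_1,\lambda_2,\partial\rangle$. Concretely I would run the ``Tate orbit'' bookkeeping: the relevant input is the structure of $V(2)_*\TP(\ell) \cong \mathbb{F}_p[t^{\pm p^2}]\langle\lambda_1,\lambda_2\rangle$ from \Cref{prop:TtateTHHell} together with the cokernel of $\can$ restricted to the $0$-line, and the explicit action of $\varphi^{h\T}$.

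The main obstacle I anticipate is correctly matching the ``$0$-line'' classes of $V(2)_*\TC^{-}(\ell)$ against the image of $\varphi^{h\T}$: one must check that after taking the difference $\varphi^{h\T} - \can$ the only surviving kernel in non-negative degrees is the unit class $\mathbb{F}_p\{1\}$, and that no additional cokernel appears that would force extra boundary classes beyond $\partial$ and its $\lambda$-multiples. This is a convergence/bookkeeping issue: one needs that $\varphi^{h\T}$ is an isomorphism in all sufficiently high degrees (known), that $\can = 0$ for $* \geq 2p^2$ (\Cref{cor:Ncalculation}), and that in the overlapping finite range the two maps interact so as to leave precisely $N \oplus \mathbb{F}_p\langle\lambda_1,\lambda_2,\partial\rangle$. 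I would organize this by writing the long exact sequence
\[
\cdots \to V(2)_{*+1}\TP(\ell) \to V(2)_*\TC(\ell) \to V(2)_*\TC^{-}(\ell) \xrightarrow{\varphi^{h\T}-\can} V(2)_*\TP(\ell) \to \cdots,
\]
identifying $\ker(\varphi^{h\T}-\can) = N \oplus \mathbb{F}_p\{1\}$ and $\coker(\varphi^{h\T}-\can) = \mathbb{F}_p\langle\lambda_1,\lambda_2\rangle$ (a shift by one of the $\partial$-classes), and then reading off that $V(2)_*\TC(\ell) \cong N \oplus \mathbb{F}_p\langle\lambda_1,\lambda_2,\partial\rangle$ as $\mathbb{F}_p\langle\lambda_1,\lambda_2\rangle$-modules, with $|\partial| = -1$, the module structure on the $\partial$-summand being the free exterior one and on $N$ the one described in \Cref{cor:Ncalculation}. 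The $\mathbb{F}_p\langle\lambda_1,\lambda_2\rangle$-linearity of all maps involved (noted in \Cref{rec:lambda1}) is what makes the final answer a statement about graded modules rather than just graded vector spaces.
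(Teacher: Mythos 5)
Your proposal follows essentially the same route as the paper: smash the equalizer $\TC \to \TC^{-} \rightrightarrows \TP$ with $V(2)$, split $V(2)_*\TC^{-}(\ell)$ by Nygaard filtration, feed in \Cref{cor:Ncalculation} and \Cref{cor:varphiNygaard}, and read off the kernel and cokernel of $\varphi^{h\T}-\can$; the paper merely organizes the same computation as a snake-lemma diagram separating $\mathrm{Nyg}_{\ge 1}$ from $\mathrm{Nyg}_{=0}$, and rules out module extension problems by noting the relevant pieces are free over $\mathbb{F}_p\langle\lambda_1,\lambda_2\rangle$. One bookkeeping slip: on the Nygaard-filtration-zero part $\mathbb{F}_p[\mu]\langle\lambda_1,\lambda_2\rangle$ the kernel of $\overline{\varphi}-\overline{\can}$ is the full four-dimensional subspace $\mathbb{F}_p\langle\lambda_1,\lambda_2\rangle$ (since $\overline{\varphi}$ and $\overline{\can}$ agree on $1,\lambda_1,\lambda_2,\lambda_1\lambda_2$ and differ only on positive powers of $\mu$), so writing $\ker(\varphi^{h\T}-\can)=N\oplus\mathbb{F}_p\{1\}$ is, taken literally, off by the classes $\lambda_1,\lambda_2,\lambda_1\lambda_2$ and would not reproduce the stated answer $N\oplus\mathbb{F}_p\langle\lambda_1,\lambda_2,\partial\rangle$; if you intend $\mathbb{F}_p\{1\}$ as the free $\mathbb{F}_p\langle\lambda_1,\lambda_2\rangle$-module on the unit, say so explicitly.
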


\begin{proof}
This follows immediately from \cite[Theorem 0.3]{ausoni2002algebraic}, but we indicate some details that will generalize to the study of $V(2)_*\TC(\ell^{\mathrm{B}\ZZ})$.
 We must calculate the equalizer and coequalizer of the map
\[\varphi^{h\T}-\can:V(2)_*\TC^{-}(\ell) \to V(2)_*\TP(\ell).\]

Since the $\varphi^{h\T}$ is trivial on classes of positive Nygaard filtration, we may form the diagram
\[
\begin{tikzcd}
\mathrm{ker}_1 \arrow{r} \arrow{d} 
& \mathrm{Nyg}_{\ge 1}\left(V(2)_*\TC^{-}(\ell) \right) \arrow{r}{0-\mathrm{can}} \arrow{d}
& \mathrm{Nyg}_{\ge 1} \left(V(2)_*\TP(\ell)\right) \arrow{r} \arrow{d}
& \mathrm{coker}_1 \arrow{d}\\
\mathrm{ker} \arrow{r} \arrow{d}
& V(2)_*\TC^{-}(\ell) \arrow{r}{\varphi-\can}  \arrow{d}
& \mathbb{F}_p[t^{\pm p^2}]\langle \lambda_1,\lambda_2\rangle \arrow{r} \arrow{d}
&\mathrm{coker} \arrow{d} \\
\mathrm{ker}_2 \arrow{r} 
& \mathrm{Nyg}_{=0}\left(V(2)_*\TC^{-}(\ell^{\mathrm{B}\ZZ})\right) \arrow{r}{\overline{\varphi}-\overline{\mathrm{can}}} 
&  \left( V(2)_*\TP(\ell^{\mathrm{B}\ZZ})\right) / \mathrm{Nyg}_{\ge 1}  \arrow{r} & \mathrm{coker}_2.
\end{tikzcd}
\]
and apply the snake lemma.  From \Cref{prop:TtateTHHell} and \Cref{cor:Ncalculation} we see that $\mathrm{ker}_1$ is exactly $N$, while $\mathrm{coker}_1$ is zero.  Furthermore, since $\can$ is trivial in degrees $* \ge 2p^2$, we identify the map 
\[\overline{\can}:\mathbb{F}_p[\mu]\langle \lambda_1,\lambda_2 \rangle \to \mathbb{F}_p[\varphi(\mu)] \langle \lambda_1,\lambda_2 \rangle\]
as the map that kills $\mu$. 
All of this proves that $V(2)_*\TC(\ell)$ admits a filtration with associated graded the direct sum of:
\begin{itemize}
\item $\mathrm{ker}_1=N$
\item $\mathrm{ker}_2 \cong \mathbb{F}_p\langle \lambda_1,\lambda_2 \rangle$, and
\item $\Sigma^{-1}\mathrm{coker}_1 \cong \mathbb{F}_p\langle \lambda_1,\lambda_2\rangle\{\partial\}$.
\end{itemize}
To conclude, we observe that $\mathbb{F}_p\langle \lambda_1,\lambda_2\rangle$ extension problems in the above filtration are ruled out by the facts that $\mathrm{ker}_2$ and $\mathrm{coker}_1$ are free modules. \qedhere
\end{proof}

Finally, Ausoni and Rognes prove that the $v_2$-Bockstein spectral sequence converging to $V(1)_*\TC(\ell)$ degenerates, with no differentials or extension problems.

\begin{thm}[Theorem 0.3 of \cite{ausoni2002algebraic}] As a graded $\mathbb{F}_p[v_2] \langle \lambda_1,\lambda_2 \rangle$-module, $V(1)_*\TC(\ell)$ is isomorphic to the direct sum of:
\begin{enumerate}
\item $N \otimes_{\mathbb{F}_p} \mathbb{F}_p[v_2]$,
\item $\mathbb{F}_p[v_2]\langle \lambda_1,\lambda_2,\partial \rangle$.
\end{enumerate}
\end{thm}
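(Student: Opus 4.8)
The plan is to run the $v_2$-Bockstein spectral sequence converging from $V(2)_*\TC(\ell)$ to $V(1)_*\TC(\ell)$ and show it degenerates. First I would record the input: by \Cref{cor:V2TCell}, as a graded $\mathbb{F}_p\langle\lambda_1,\lambda_2\rangle$-module we have $V(2)_*\TC(\ell)\cong N\oplus \mathbb{F}_p\langle\lambda_1,\lambda_2,\partial\rangle$, where $N$ is the $4(p-1)$-dimensional space described in \Cref{cor:Ncalculation} and $|\partial|=-1$. The $v_2$-Bockstein spectral sequence has $E_1$-page $\mathbb{F}_p[v_2]\otimes_{\mathbb{F}_p} V(2)_*\TC(\ell)$, and it converges to $V(1)_*\TC(\ell)$ because $\THH(\ell)$, hence $\TC(\ell)$, is $v_2$-complete after smashing with $V(1)$ in the appropriate sense (equivalently, because $V(2)\otimes\TC(\ell)$ is bounded in the cyclotomic $t$-structure by \Cref{thm:AR}, so that $V(2)_*\TC(\ell)$ is a bounded $\mathbb{F}_p$-module and the Bockstein tower converges).

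The core of the argument is to rule out all $v_2$-Bockstein differentials. I would argue as follows. The classes $1,\lambda_1,\lambda_2,\lambda_1\lambda_2,\partial,\lambda_1\partial,\lambda_2\partial,\lambda_1\lambda_2\partial$ spanning the $\mathbb{F}_p\langle\lambda_1,\lambda_2,\partial\rangle$ summand are all permanent cycles: the classes without $\partial$ lift to $V(0)_*K(\ell)$ (indeed $\lambda_1,\lambda_2$ are pulled back from $V(0)$-homotopy of $K(\ell)$ via the cyclotomic trace, cf. \Cref{rec:lambda1}, and $1$ is the unit), and $\partial$ is a permanent cycle because it comes from the $\T/C_p$-homotopy fixed point contribution in the equalizer defining $\TC$, which is already defined integrally over $V(1)$ — more precisely $\partial$ is the image of a class in $V(1)_*\TC(\ell)$ under reduction mod $v_2$, as one sees by tracking the $\mathrm{coker}_1$ term in the snake-lemma diagram of \Cref{cor:V2TCell} (this term is $\Sigma^{-1}$ of a free $\mathbb{F}_p[v_2]\langle\lambda_1,\lambda_2\rangle$-module coming from $V(1)_*\TP(\ell)/\mathrm{Nyg}_{\geq 1}$). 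So the entire $\mathbb{F}_p[v_2]\langle\lambda_1,\lambda_2,\partial\rangle$ summand of the $E_1$-page survives. It remains to show no differential can emanate from, or land in, the $N$-summand. For this I would use a degree/connectivity count: the classes of $N$ listed in \Cref{cor:Ncalculation} occupy a narrow band of degrees (roughly between $2p-1-2(p-1)=1$ and $2p^2+2p-2-2$), and a $v_r$-Bockstein differential $d_r$ raises the power of $v_2$ by $r$, hence changes the internal degree by $r(2p^2-1)-1$. A direct comparison of the degrees of the $N$-classes (and their $v_2$-multiples) against the degrees of all potential targets/sources — both inside $N\otimes\mathbb{F}_p[v_2]$ and inside $\mathbb{F}_p[v_2]\langle\lambda_1,\lambda_2,\partial\rangle$ — shows that in the relevant total degrees, source and target never simultaneously occur, so all differentials vanish for dimension reasons. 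Alternatively, and more robustly, I would invoke that $V(0)_*K(\ell)=V(0)_*\TC(\ell)$ was computed by Ausoni--Rognes to be $v_2$-torsion-free outside a bounded range and compare the $v_2$-Bockstein with the $v_1$-Bockstein from $V(1)$ to $V(0)$, using multiplicativity to propagate permanent-cycle status.

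Finally I would resolve the extension problems. The $E_\infty=E_1$-page is $\mathbb{F}_p[v_2]\otimes(N\oplus\mathbb{F}_p\langle\lambda_1,\lambda_2,\partial\rangle)$, and I claim there are no nontrivial $\mathbb{F}_p[v_2]$-module (equivalently hidden $v_2$-)extensions. The summand $\mathbb{F}_p[v_2]\langle\lambda_1,\lambda_2,\partial\rangle$ is $v_2$-free, so no extension can terminate there; and $N\otimes\mathbb{F}_p[v_2]$ sits in degrees that are disjoint (after accounting for $v_2$-periodicity) from where a hidden extension could originate, exactly as in the proof of \Cref{cor:V2TCell} where the analogous $\mathbb{F}_p\langle\lambda_1,\lambda_2\rangle$-extensions were ruled out by freeness of two of the three graded pieces. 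Hence $V(1)_*\TC(\ell)\cong (N\otimes_{\mathbb{F}_p}\mathbb{F}_p[v_2])\oplus\mathbb{F}_p[v_2]\langle\lambda_1,\lambda_2,\partial\rangle$ as graded $\mathbb{F}_p[v_2]\langle\lambda_1,\lambda_2\rangle$-modules.

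\textbf{Main obstacle.} The delicate point is the vanishing of $v_2$-Bockstein differentials into and out of $N$: one must be sure that, e.g., the class $t^{p-1}\lambda_1\in N$ in low degree does not support a differential hitting $v_2\cdot(\text{something in }\mathbb{F}_p\langle\lambda_1,\lambda_2,\partial\rangle)$, and symmetrically that no $v_2$-multiple of a surviving class hits an $N$-class. Getting this cleanly requires either the explicit degree bookkeeping sketched above or, preferably, citing the already-established freeness structure of $V(0)_*\TC(\ell)$ over $\mathbb{F}_p[v_2]$ from \cite{ausoni2002algebraic} and running the comparison of Bockstein spectral sequences; this is precisely the content of \cite[Theorem 0.3]{ausoni2002algebraic}, which we are entitled to quote.
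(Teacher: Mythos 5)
The paper does not prove this statement at all: it is quoted verbatim as Theorem~0.3 of Ausoni--Rognes, and the only gloss the paper gives is the sentence preceding it, namely that the $v_2$-Bockstein spectral sequence degenerates with no differentials or hidden extensions. Your proposal is therefore consistent with the paper — you sketch the degeneration argument and then (correctly) fall back on citing \cite{ausoni2002algebraic} — and your sketch is in fact the same argument the paper carries out in detail for the analogous statement about $V(1)_*\TC(\ell^{\mathrm{B}\Z})$ in the proof of \Cref{thm:height2comptc}: the module is concentrated in a band of width about $2p^2+2p$, so only $d_1$ could be nonzero, and $d_1$ together with the extensions is killed by the $\mathbb{F}_p\langle\lambda_1,\lambda_2\rangle$-module structure.

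Two small imprecisions, neither fatal. First, $|v_2|=2p^2-2$, not $2p^2-1$, so a $d_r$ shifts internal degree by $r(2p^2-2)-1$. Second, your claim that differentials touching $N$ die \emph{purely} for dimension reasons is not quite right: e.g.\ $t^d\lambda_1\lambda_2\in N$ sits in degree $\ge 2p^2$ and a $d_1$ off it has a target of the correct degree. What saves you is exactly the module structure you invoke elsewhere — the classes of $N$ in high degree are $\lambda_1$- or $\lambda_2$-multiples of classes in degree $<2p^2-2$, which cannot support any differential by the degree count, so the Leibniz rule finishes it. This is precisely how the paper phrases the argument for the $\mathrm{B}\Z$ analogue.
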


\subsection{Calculations regarding \texorpdfstring{$V(2) \otimes \THH(\ell^{B\ZZ})$}{V2 THH(BP1 BZ)}}

Using \Cref{thm:AR}, \Cref{thm:E1A2algtame}, and the fact that $\ell$ is a $\ZZ$-equivariant $\EE_{\infty}$-ring of fp-type $1$, we deduce that the coassembly map 
\[V(1)_*\TC(\ell^{hp^k\ZZ}) \to V(1)_*\TC(\ell)^{hp^k\ZZ}\]
is, as an $\mathbb{F}_p[v_2]$-module map for $k \gg 0$, the same as the coassembly map
\[V(1)_*\TC(\ell^{\mathrm{B}\ZZ}) \to V(1)_*\TC(\ell)^{\mathrm{B}\ZZ}\]
associated to the trivial $\ZZ$ action on $\ell$.

In this subsection we study the cyclotomic spectrum $V(2) \otimes \THH(\ell^{B\ZZ})$ at a fixed prime $p \ge 7$, obtaining analogs of many of the results and proofs from the previous subsection.

First we observe that the natural map
\[V(2)_*\THH(\ell) \to V(2)_*\THH(\ell^{\mathrm{B}\Z})\]
defines elements $\mu,\zeta,\epsilon,\lambda_1,$ and $\lambda_2$ in the codomain, according to \Cref{cnv:THHellnames}.  Furthermore, since $\lambda_1$ and $\lambda_2$ lift to $V(2)_*\TC(\ell)$, they also lift to $V(2)_*\TC(\ell^{\mathrm{B}\Z})$. Additionally, we use the natural map
\[\THH(\Ss^{\mathrm{B}\ZZ}) \to \THH(\ell^{\mathrm{B}\ZZ})\]
to equip with $V(2)_*\THH(\ell^{\mathrm{B}\ZZ})$ with the struture of a $\LCF{\Z_p}$-algebra.  In terms of these elements, we identify the coassembly map in $\THH$ as follows:

\begin{prop}
The coassembly map
\[V(2)_*\THH(\ell^{\mathrm{B}\Z}) \to V(2)_*\THH(\ell)^{\mathrm{B}\Z}\]
may be identified with the $\mathbb{F}_p$-algebra map
\[\LCF{\Z_p}[\mu]\langle \zeta,\epsilon,\lambda_1,\lambda_2 \rangle \to \mathbb{F}_p[\mu] \langle \zeta,\epsilon,\lambda_1,\lambda_2\rangle\]
that evaluates a continuous function at $0$.
\end{prop}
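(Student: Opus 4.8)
The plan is to reduce the statement to a combination of the universal case $R = \SP$ (which was worked out in \Cref{sec:cochaincircle}) and the structural identifications of $\THH(\ell)$ as a cyclotomic spectrum recalled above. First I would observe that the coassembly map is a map of $\E_1$-rings (indeed of $\LCF{\Z_p}$-algebras, using that $\ell$ is $\E_\infty$ and the natural map $\SP^{B\Z} \to \ell^{B\Z}$), so that at the level of $V(2)_*$ it is a map of $\mathbb{F}_p$-algebras; this already forces it to be determined by its effect on generators, so the content is to identify source, target, and the images of the named classes.

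The key steps, in order, are: (1) Identify the target. By \Cref{lem:coassembly-po} (or \Cref{lem:coassemblygenTHH}, with $R = \ell$, trivial action) the coassembly map $\THH(\ell^{B\Z}) \to \THH(\ell)^{B\Z}$ is the natural restriction-to-$0$ map, and since the $\Z$-action on $\THH(\ell)$ is trivial, $V(2)_*\THH(\ell)^{B\Z} \cong V(2)_*\THH(\ell) \langle \zeta \rangle = \mathbb{F}_p[\mu]\langle \zeta, \epsilon, \lambda_1, \lambda_2\rangle$, where $\zeta \in \pi_{-1}$ comes from $H^1(B\Z)$ (this is exactly the image of $\epsilon \in \pi_{-1}\SP^{B\Z}$ under $\SP^{B\Z} \to \ell^{B\Z} \to \THH(\ell^{B\Z})$, which then maps to the $\zeta$ appearing in the target). (2) Identify the source. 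Here I would invoke \Cref{lem:thh-s1}: $\THH(\SP^{B\Z}) \cong \SP^{B\Z} \otimes \W\LCF{\Z_p}$ as commutative algebras, so base-changing along $\SP^{B\Z} \to \ell^{B\Z}$ gives $\THH(\ell^{B\Z}) \cong \ell^{B\Z} \otimes \W\LCF{\Z_p}$ as $\ell^{B\Z}$-algebras. Wait — more directly, since $\THH$ is symmetric monoidal, $\THH(\ell^{B\Z}) = \THH(\ell^{B\Z})$; one computes $V(2)_*$ of this by combining the $\W\LCF{\Z_p}$-algebra structure, the facts that $\pi_0^{\flat}$ of the relevant rings are $\LCF{\Z_p}$, and the McClure–Staffeldt/Ausoni–Rognes computation of $V(2)_*\THH(\ell) = \mathbb{F}_p[\mu]\langle \lambda_1,\lambda_2,\epsilon\rangle$ (\Cref{cnv:THHellnames}); the resulting ring is $\LCF{\Z_p}[\mu]\langle \zeta, \epsilon, \lambda_1, \lambda_2\rangle$, where $\zeta$ again records the $H^1(B\Z)$ class. (3) Identify the map on $\pi_0^{\flat}$. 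Apply \Cref{lem:coassembly-po}: the lower horizontal map in its pushout square is induced by restriction along $\res$ of the inclusion $\{0\} \hookrightarrow \Z_p$, i.e. on $\pi_0^{\flat}$ it is $\LCF{\Z_p} \to \F_p$, evaluation at $0$. (4) Conclude: the coassembly map is the $\F_p$-algebra map sending $\mu, \zeta, \epsilon, \lambda_1, \lambda_2$ identically to their namesakes (these are all pulled back from $\ell^{B\Z}$, hence compatible with coassembly by naturality) and restricting a continuous function to its value at $0$ on the $\LCF{\Z_p}$-coefficients.

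The main obstacle I anticipate is making step (2) — the description of the source ring $V(2)_*\THH(\ell^{B\Z})$ — fully rigorous, in particular verifying that there are no multiplicative extension problems or hidden differentials when one passes from the integral/$p$-complete statement $\THH(\ell^{B\Z}) \cong \ell^{B\Z}\otimes\W\LCF{\Z_p}$ down modulo $(p, v_1, v_2)$, and that the class called $\zeta$ in the source really maps to the class called $\zeta$ in the target rather than to some unit multiple or to $\zeta$ plus a decomposable correction. Both points, however, follow cleanly from the fact that $\zeta$ is by definition the image of $\epsilon \in \pi_{-1}\SP^{B\Z} \cong H^1(B\Z)$ under the maps of $\E_1$-rings $\SP^{B\Z} \to \THH(\ell^{B\Z})$ and $\SP^{B\Z} \to \THH(\ell)^{B\Z}$ (\Cref{cnstr:zeta} and its evident variant for $\ell$), together with the commuting square in \Cref{lem:coassemblygenTHH}; the extension problem is handled by noting $\W\LCF{\Z_p}$ is, after reduction, a sum of copies of $\F_p$ so $V(2)_*(\ell^{B\Z}\otimes\W\LCF{\Z_p}) \cong \LCF{\Z_p}\otimes_{\F_p} V(2)_*\THH(\ell)\langle\zeta\rangle$ with no room for extensions. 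A minor bookkeeping point to get right is the degree of $\zeta$ ($|\zeta| = -1$) and the multiplicative relation $\zeta^2 = 0$ coming from $\epsilon^2 = 0$ in $\pi_*\SP^{B\Z}$, which is where the exterior-algebra notation $\langle \zeta \rangle$ is justified.
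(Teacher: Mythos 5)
Your proposal is correct and follows essentially the same route as the paper: the paper's proof is simply to note that $\THH(\ell^{\mathrm{B}\Z}) \simeq \THH(\ell)\otimes\THH(\Ss^{\mathrm{B}\Z})$ as $\E_\infty$-algebras compatibly with coassembly, and then to apply \Cref{lem:thh-s1}, which is exactly the decomposition you use in steps (1)--(3). (The only blemish is the intermediate line ``$\THH(\ell^{B\Z}) \cong \ell^{B\Z}\otimes\W\LCF{\Z_p}$,'' which is not what base change gives --- it drops the $\THH(\ell)/\ell$ factor --- but you immediately correct course and the final identification is right.)
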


\begin{proof}
We apply \Cref{lem:thh-s1} to the isomorphism of $\mathbb{E}_{\infty}$-algebras
\[\THH(\ell^{\mathrm{B}\Z}) \simeq \THH(\ell) \otimes \THH(\Ss^{\mathrm{B}\Z}).\]
which is compatible with the coassembly map.
\end{proof}

We next turn to the $\T$-homotopy fixed point and $\T$-Tate spectral sequences converging to $V(2)_*\TC^{-}(\ell^{\mathrm{B}\ZZ})$ and $V(2)_*\TP(\ell^{\mathrm{B}\ZZ})$, respectively.

\begin{thm} \label{thm:ht1tate}
The $\T$-Tate spectral sequence
\[\mathrm{E}_2=C^0(\mathbb{Z}_p)[\mu,t^{\pm 1}]\langle \lambda_1,\lambda_2,\epsilon,\zeta\rangle \implies V(2)_*\TP(\ell^{B\Z})\] has differentials
\[d_2(\epsilon)=t\mu,\]
\[d_2(\zeta)= f t \zeta,\]
\[d_{2p}(t)=t^{p+1}\lambda_1,\]
\[d_{2p^2}(t^p)=t^{p^2+p}\lambda_2,\]
where $f \in \LCF{\Z_p}$ is a function vanishing exactly on $p\Z_p$.
All other differentials are determined by multiplicative structure and the facts that $\lambda_1$, $\lambda_2$, and elements of $\LCF{p\Z_p}\langle \zeta \rangle$ are permanent cycles. The $\mathrm{E}_{\infty}$-page is isomorphic to $\LCF{p\Z_p}[t^{\pm p^2}]\langle \zeta,\lambda_1,\lambda_2\rangle.$ 
\end{thm}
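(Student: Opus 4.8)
The plan is to compute the $\T$-Tate spectral sequence for $\THH(\ell^{B\Z})$ by comparing it, via the multiplicative identification $\THH(\ell^{B\Z}) \cong \THH(\ell) \otimes \THH(\Ss^{B\Z})$, with two spectral sequences we already understand: the $\T$-Tate spectral sequence for $V(2)_*\TP(\ell)$ (\Cref{prop:TtateTHHell}) and the $\T$-Tate spectral sequence for the cyclotomic spectrum $\THH(\Ss^{B\Z})$ studied in \Cref{sec:cochaincircle}. On $\mathrm{E}_2$, the multiplicative splitting yields $\LCF{\Z_p}[\mu, t^{\pm 1}]\langle \lambda_1,\lambda_2,\epsilon,\zeta\rangle \cong \left(\mathbb{F}_p[\mu,t^{\pm 1}]\langle\lambda_1,\lambda_2,\epsilon\rangle\right) \otimes_{\mathbb{F}_p[t^{\pm 1}]} \left(\LCF{\Z_p}[t^{\pm 1}]\langle\zeta\rangle\right)$, so by the multiplicativity of the Tate spectral sequence and the Leibniz rule it suffices to produce the differentials on each tensor factor separately. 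The differentials $d_2(\epsilon) = t\mu$, $d_{2p}(t) = t^{p+1}\lambda_1$, $d_{2p^2}(t^p) = t^{p^2+p}\lambda_2$ come directly from \Cref{prop:TtateTHHell} via the ring map $V(2)_*\THH(\ell) \to V(2)_*\THH(\ell^{B\Z})$, which is compatible with the circle actions. The one genuinely new input is the differential $d_2(\zeta) = ft\zeta$.

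\textbf{The key step: computing $d_2(\zeta)$.} To find the first differential on $\zeta$, I would use the Connes operator computation $\sigma(\zeta) = (1 + \eta\zeta)\cdot \mathrm{Id}_{\Z_p}$ from \Cref{lem:sigma-zeta}. In the $\T$-Tate (equivalently $a_{(1)}$-Bockstein) spectral sequence, the $d_2$ differential is governed by the Connes operator: for a class $x$ on the $0$-line detecting $x \in \pi_* X$, one has $d_2(x) = t\,\sigma(x)$ (this is the standard relationship between the $\T$-homotopy fixed point/Tate spectral sequence differential and the $S^1$-action, see \cite[Sec. 3.5]{antieau-nikolaus}, or it can be read off from the cofiber sequence involving $a_{(1)}$ and the circle). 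Here $\zeta \in \pi_{-1}\THH(\ell^{B\Z})$ has $\sigma(\zeta) = \mathrm{Id}_{\Z_p} + \eta\zeta\cdot\mathrm{Id}_{\Z_p}$, and after smashing with $V(2)$ (which kills $p$ hence is relevant to $\eta$-torsion; note $\eta = 0$ in $V(0)_*$ for $p$ odd), this simplifies so that $d_2(\zeta)$ is $t$ times the image of $\mathrm{Id}_{\Z_p}$. The function $\mathrm{Id}_{\Z_p} \in \pi_0\LCF{\Z_p}$ vanishes exactly at $0 \in \Z_p$; however the relevant function appearing as a differential target should be one vanishing on all of $p\Z_p$, reflecting that $\zeta$ survives precisely on the summand indexed by $p\Z_p$ (consistent with the fact that after applying $\psi_p$, $\THH(\ell^{B\Z})_{|p\Z_p}$ has trivializable $C_p$-action and $\zeta$ restricts compatibly). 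I expect the precise identification of $f$ as "a function vanishing exactly on $p\Z_p$" to follow from chasing the idempotent decomposition of $\pi_0\LCF{\Z_p}$ along the maps $\psi_p$ and $\varphi$ as organized in \Cref{cor:tateTHHSZ} and \Cref{lem:shells}: on the $p\Z_p$ summand the relevant class is invertible (as in the proof of \Cref{lem:shells}, where $\mathrm{Id}_{\Z_p}$ restricted to $\Z_p^\times$ is a unit), so $\zeta$ is a permanent cycle there, whereas on $p^k\Z_p^\times$ for $k\geq 1$ it supports the $d_2$.

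\textbf{Running the spectral sequence.} Once the differentials are in hand, the $\mathrm{E}_3 = \mathrm{E}_{2p}$ page computation is routine: $d_2(\epsilon) = t\mu$ kills $\mu$ and $\epsilon$ (turning $\mathbb{F}_p[\mu]\langle\epsilon\rangle \otimes (\text{stuff})$ into $(\text{stuff})$ on the $0$-line and killing all $t$-towers in the $\mu$-direction), while $d_2(\zeta) = ft\zeta$ kills $\zeta$ and the function $f$ off the locus $p\Z_p$, leaving $\LCF{p\Z_p}\langle\zeta\rangle$ tensored with the $\zeta$-free, $\mu$-free part. Then $d_{2p}(t) = t^{p+1}\lambda_1$ and $d_{2p^2}(t^p) = t^{p^2+p}\lambda_2$ proceed exactly as in the Ausoni--Rognes computation for $V(2)_*\TP(\ell)$ (\Cref{prop:TtateTHHell}), now with coefficients in $\LCF{p\Z_p}\langle\zeta\rangle$ instead of $\mathbb{F}_p$, since these differentials are $\LCF{p\Z_p}\langle\zeta\rangle$-linear. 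The outcome is $\mathrm{E}_\infty \cong \LCF{p\Z_p}[t^{\pm p^2}]\langle\zeta,\lambda_1,\lambda_2\rangle$. The main obstacle is pinning down $f$ precisely and verifying there are no further differentials or hidden interactions between the two tensor factors — but multiplicativity together with the explicit description of $\THH(\Ss^{B\Z})$ as a cyclotomic spectrum from \Cref{sec:cochaincircle} (in particular that its $\T$-structure is controlled by the free loop space geometry of $B\Z_p$) should resolve this, and the Segal-type boundedness from \Cref{thm:AR} combined with \Cref{cor:t-amplitude-W} guarantees convergence.
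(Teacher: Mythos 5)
Your route to the key differential is genuinely different from the paper's. The paper does not use the multiplicative splitting together with the Connes-operator formula $d_2(x)=t\sigma(x)$; instead it invokes the additive splitting of $V(2)\otimes\THH(\ell)\otimes\THH(\Ss^{B\Z})$ into summands $V(2)\otimes\THH(\ell)\otimes\Ss^{\T/C_n}$ (via Malkiewich's splitting of $\THH$ of cochains) and determines the behaviour of $\zeta$ on each summand by comparison: for $v_p(n)=0$ the summand is induced, so its $\T$-Tate construction vanishes and $\zeta$ must support a nonzero $d_2$; for $v_p(n)\ge 1$ the abutment $V(2)_*\THH(\ell)^{tC_n}$ is nonzero and the $d_2$ on $\zeta$ must vanish. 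Your Connes-operator computation is a clean alternative for the $d_2$ and in fact pins down $f$ exactly: the point you were circling around is that $\Id_{\Z_p}$ lives in $\pi_0\W\LCF{\Z_p}$ (continuous $\Z_p$-valued functions), and what appears after smashing with $V(2)$ is its reduction mod $p$, the function $x\mapsto x\bmod p$, which vanishes exactly on $p\Z_p$ --- there is no discrepancy to explain away via idempotent chasing. (Note also that degree-counting forces $d_2(\zeta)=ft$ rather than $ft\zeta$; your computation gives the degree-correct answer, and the printed formula appears to be a typo.)

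The genuine gap is at the higher pages. To run $d_{2p}$ and $d_{2p^2}$ ``with coefficients in $\LCF{p\Z_p}\langle\zeta\rangle$'' you need $\zeta|_{p\Z_p}$ to be a $d_r$-cycle for all $r$, and neither multiplicativity nor the Connes operator gives you this: on the $E_3$-page there is room for, e.g., $d_{2p}(\zeta)=g\,t^{p}\lambda_1\zeta$ with $g\in\LCF{p\Z_p}$. The paper closes this by a size count: on a summand with $v_p(n)=1$ the spectral sequence abuts to $V(2)_*\THH(\ell)^{tC_p}$, whose size (known from \Cref{prop:ellFrob} and Ausoni--Rognes) forces $\zeta$ to survive, and for $v_p(n)>1$ the transfer $\Ss^{\T/C_{n/p}}\to\Ss^{\T/C_n}$ carries $\zeta$ to $\zeta$ and propagates permanence. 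Some input of this kind --- identifying the $p^k\Z_p^{\times}$-components with shifted $C_{p^k}$-induced copies of $\THH(\ell)$ as in \Cref{lem:shells} and comparing with the known $C_{p^k}$-Tate computations --- is required to complete your argument; the sentence ``multiplicativity together with the explicit description of $\THH(\Ss^{B\Z})$ should resolve this'' is precisely where the remaining work lives.
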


\begin{proof}
We are studying here the $\T$-Tate spectral sequence associated to the $\T$-equivariant spectrum
\[V(2) \otimes \THH(\ell) \otimes \THH(\Ss^{B\ZZ}).\]

By \cite[Corollary 1.3]{malkiewich2017topological}, this is additively isomorphic to 
\[V(2) \otimes \THH(\ell) \oplus \bigoplus_{n \ge 1} V(2) \otimes \THH(\ell) \otimes \mathbb{S}^{\T/C_n},\]
and as in \Cref{lem:thh-s1} and \Cref{exm:loop-action} the projection map to the summand indexed by $n$ is given by evaluation of locally constant functions at $n$.

We will now calculate the $\T$-Tate spectral sequence associated to 
\[V(2) \otimes \THH(\ell) \otimes \Ss^{\T/C_n}.\] As $n$ ranges over all positive integers, these spectral sequences determine that of \Cref{thm:ht1tate}.

The $\T$-Tate spectral sequence associated to 
\[V(2) \otimes \THH(\ell) \otimes \Ss^{\T/C_{n}}\]
is of signature
\[\mathrm{E}_2=\mathbb{F}_p[\mu,t^{\pm 1}]\langle \lambda_1,\lambda_2,\epsilon,\zeta \rangle \implies V(2)_*\THH(\ell)^{tC_{n}}.\]

Using the restriction map from the $\T$-Tate spectral sequence associated to $V(2) \otimes \THH(\ell)$, as calculated in \Cref{prop:TtateTHHell}, one sees that there is a $d_2$ differential $d_2(\epsilon)=t\mu$, while 
\[d_2(\lambda_1)=d_2(\lambda_2)=d_2(\mu)=d_2(t)=0.\]

If $n$ has $p$-adic valuation $0$, then $V(2) \otimes \THH(\ell) \otimes \Ss^{\T/C_n}$ is induced, and so the spectral sequence becomes trivial at the $\mathrm{E}_3$-page after a non-zero differential on $\zeta$.

If $n$ has positive $p$-adic valuation, it follows from the calculations of Ausoni--Rognes \cite[Section 6]{ausoni2002algebraic} that $V(2)_*\THH(\ell)^{tC_{n}}$ is non-trivial, so the $d_2$ differential on $\zeta$ must be trivial.
 In this case the $\mathrm{E}_3$-page is isomorphic to
\[\mathbb{F}_p[t^{\pm 1}] \langle \lambda_1,\lambda_2, \zeta \rangle.\]

If $n$ is of $p$-adic valuation $1$, then the spectral sequence must converge to $V(2)_*\THH(\ell)^{tC_p}$, which was calculated in \Cref{prop:ellFrob}.  The only differentials consistent with both \Cref{prop:TtateTHHell} and \Cref{prop:ellFrob} are
\[d_{2p}(t)=t^{p+1}\lambda_1, \text{ and}\]
\[d_{2p^2}(t^p)=t^{p^2+p}\lambda_2,\]
as induced from \Cref{prop:TtateTHHell}.  In particular, $\zeta$ must be a permanent cycle in order to obtain an $\mathrm{E}_{\infty}$-page of size at least that of $V(2)_*\THH(\ell)^{tC_p}$, and one is left with an $\mathrm{E}_{\infty}$-page of $\mathbb{F}_p[t^{\pm p^2}] \langle \lambda_1,\lambda_2,\zeta \rangle$.

For $n$ of $p$-adic valuation greater than $1$, the transfer map $\SP^{\T/C_{(n/p)}}\to \SP^{\T/C_{n}}$ sends $\zeta$ to $\zeta$, so ensures that $\zeta$ is a permanent cycle. Now the only differentials compatible with \cite[Proposition 6.3]{ausoni2002algebraic} and the restriction maps $\SP^{\T} \to \SP^{\T/C_{n}}$ are the same pattern of differentials as in the $p$-adic valuation $1$ case.
\end{proof}

The $\T$-homotopy fixed point spectral computing $V(2)_*\TC^{-}(\ell^{B\ZZ})$ is formally determined as a truncation of the above $\T$-Tate spectral sequence.  In particular, we have the following corollary:

\begin{cor} \label{cor:kercan}
When restricted to classes of positive Nygaard filtration, the $\mathrm{can}$ map
\[\mathrm{Nyg}_{\ge 1}V(2)_*\TC^{-}(\ell^{\mathrm{B}\Z}) \to \mathrm{Nyg}_{\ge 1}V(2)_*\TP(\ell^{\mathrm{B}\Z})\]
has trivial cokernel. The kernel of this restricted $\mathrm{can}$ map is isomorphic to
\item \[\LCF{p\Z_p} \langle \zeta \rangle \otimes_{\mathbb{F}_p} N.\]
\end{cor}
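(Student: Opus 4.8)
\textbf{Proof plan for \Cref{cor:kercan}.} The statement is a formal consequence of the $\T$-homotopy fixed point and $\T$-Tate spectral sequences established in \Cref{thm:ht1tate}, in exactly the same way that \Cref{cor:Ncalculation} and \Cref{cor:varphiNygaard} followed from \Cref{prop:TtateTHHell}. The idea is that $V(2)_*\TC^{-}(\ell^{B\Z})$ and $V(2)_*\TP(\ell^{B\Z})$ are computed by the $\T$-homotopy fixed point and $\T$-Tate spectral sequences respectively, and the $\mathrm{can}$ map is induced by the inclusion of the homotopy fixed point filtration into the Tate filtration, which is an isomorphism in sufficiently negative filtration degrees (the Tate spectral sequence being the inverted-$t$ version of the homotopy fixed point one). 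So I would proceed as follows.

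First I would recall the decomposition of $V(2)\otimes \THH(\ell^{B\Z})$ as a $\T$-spectrum from \cite{malkiewich2017topological}, splitting it (additively, $\T$-equivariantly) as $V(2)\otimes\THH(\ell) \oplus \bigoplus_{n\geq 1} V(2)\otimes\THH(\ell)\otimes \Ss^{\T/C_n}$, with the projection to the $n$-th summand given by evaluation of locally constant functions at $n \in \Z_p$. Passing to $\pflat$/closure, the relevant splitting of $\LCF{\Z_p}$-modules is across the idempotents in $\pi_0\LCF{\Z_p}$. Then I would identify, summand by summand, the $\can$ map $\mathrm{Nyg}_{\geq 1}V(2)_*\TC^-(\ell^{B\Z}) \to \mathrm{Nyg}_{\geq 1}V(2)_*\TP(\ell^{B\Z})$. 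For the summand indexed by $n$ of $p$-adic valuation $0$, the $\T$-Tate construction vanishes (the spectrum is induced from a finite group of order prime to $p$), so both source and target restricted to this summand are zero after inverting nothing — more precisely, the homotopy fixed points are bounded above and contribute nothing to the Nygaard-positive part of $\TP$, which is $0$. For $n$ of positive valuation, \Cref{thm:ht1tate} tells us the $\mathrm{E}_\infty$-page of the Tate spectral sequence restricted to that summand is $\mathbb{F}_p[t^{\pm p^2}]\langle \zeta,\lambda_1,\lambda_2\rangle$, while the homotopy fixed point $\mathrm{E}_\infty$-page is its connective truncation.

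The key computation is then a replay of \Cref{cor:Ncalculation} in a family. Since $\can$ is induced by including the $\T$-homotopy fixed point spectral sequence into the $\T$-Tate spectral sequence, on classes of positive Nygaard filtration (i.e.\ strictly positive $t$-power filtration) the map is simply the identity on the overlapping range, and its cokernel vanishes because every class of positive Tate filtration in the $\mathrm{E}_\infty$-page $\LCF{p\Z_p}[t^{\pm p^2}]\langle\zeta,\lambda_1,\lambda_2\rangle$ has a preimage of the same filtration in the homotopy fixed point spectral sequence (the homotopy fixed point $\mathrm{E}_\infty$-page being $\LCF{p\Z_p}[t]\langle\zeta,\lambda_1,\lambda_2\rangle$-torsion modulo the $\mu$-relations, surjecting onto the positive-filtration part). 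This gives the cokernel statement. For the kernel: an element of $\mathrm{Nyg}_{\geq 1}V(2)_*\TC^-(\ell^{B\Z})$ lies in the kernel of $\can$ precisely when it maps to zero in $\TP$, i.e.\ when it is $t$-power torsion in the Tate spectral sequence — equivalently, it is killed by some power of $t$ via the differentials $d_{2p}(t) = t^{p+1}\lambda_1$ and $d_{2p^2}(t^p) = t^{p^2+p}\lambda_2$ of \Cref{thm:ht1tate}. Running exactly the bookkeeping of \Cref{cor:Ncalculation} on the part of the spectral sequence built from $\lambda_1,\lambda_2,t$ (now with coefficients in $\LCF{p\Z_p}\langle\zeta\rangle$, since $\zeta$ is a permanent cycle over $p\Z_p$ and the $\epsilon$-generator has been killed against $\mu$ by the $d_2$), one identifies this kernel with $\LCF{p\Z_p}\langle\zeta\rangle \otimes_{\mathbb{F}_p} N$, where $N$ is the $4(p-1)$-dimensional space of \Cref{cor:Ncalculation}. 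Finally I would check there are no module extension problems obstructing this identification, using that $N$ and $\LCF{p\Z_p}\langle\zeta\rangle$ are free over $\mathbb{F}_p$, just as in \Cref{cor:V2TCell}.

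\textbf{Main obstacle.} The routine parts are the spectral sequence bookkeeping; the subtle point is the careful tracking of the $\LCF{\Z_p}$-module (equivalently, idempotent-splitting) structure, in particular the fact that only the $p\Z_p$-supported components survive to $\mathrm{E}_\infty$ and carry a free $\zeta$, so that the kernel is genuinely $\LCF{p\Z_p}\langle\zeta\rangle\otimes N$ rather than $\LCF{\Z_p}\langle\zeta\rangle\otimes N$ or something larger. This requires being careful about which differentials (notably $d_2(\zeta) = ft\zeta$ with $f$ vanishing exactly on $p\Z_p$) act on which components, and confirming that the decomposition of $\can$ is compatible with the $\LCF{\Z_p}$-structure — which it is, since the whole diagram is a diagram of $\THH(\Ss^{B\Z})$-modules and hence of $\LCF{\Z_p}$-modules after applying the relevant trace functors.
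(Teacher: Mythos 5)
Your proposal is correct and follows essentially the same route as the paper: the paper's (very terse) proof likewise reads the statement off the comparison of the $\T$-homotopy fixed point and $\T$-Tate spectral sequences from \Cref{thm:ht1tate}, observing that $\LCF{p\Z_p}\langle\zeta\rangle\otimes_{\F_p}N$ visibly lies in the kernel and that $\can$ is an isomorphism on the remaining quotient. Your version just spells out the summand-by-summand bookkeeping (the valuation-$0$ components and the replay of \Cref{cor:Ncalculation} in the $\LCF{p\Z_p}\langle\zeta\rangle$-family) that the paper leaves implicit.
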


\begin{proof}
The subspace $\LCF{p\Z_p} \langle \zeta \rangle \otimes_{\mathbb{F}_p} N$ is clearly in the kernel of the restricted $\mathrm{can}$ map.  At the level of homotopy fixed point spectral sequences, we can see that $\mathrm{can}$ is an isomorphism on the remaining quotient of $V(2)_*\TC^{-}(\ell^{\mathrm{B}\Z})$
\end{proof}

\begin{prop} \label{prop:ht1frob}
The cyclotomic Frobenius 
\[\varphi:V(2)_*\THH(\ell^{\mathrm{B}\Z}) \to V(2)_*\THH(\ell^{\mathrm{B}\Z})^{tC_p}\]
is the universal map that inverts $\mu$. In other words, there is a preferred isomorphism
\[
V(2)_*\THH(\ell^{B\Z})^{tC_p} \cong \LCF{p\Z_p}[\varphi(\mu)^{\pm 1}]\langle \lambda_1,\lambda_2,\varphi(\epsilon),\varphi(\zeta) \rangle.
\]
Under this isomorphism, $\varphi$ carries a locally constant function $x \mapsto f(x)$ to the function $x \mapsto f(x/p)$.
\end{prop}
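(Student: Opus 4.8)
\textbf{Proof plan for \Cref{prop:ht1frob}.}

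The strategy is to reduce the claim to the already-established computation of the Frobenius on $\THH(\Ss^{B\Z})$ (\Cref{prop:cycfrobuniv}) and the Frobenius on $\THH(\ell)$ (\Cref{prop:ellFrob}), using the multiplicative splitting $\THH(\ell^{B\Z}) \simeq \THH(\ell) \otimes \THH(\Ss^{B\Z})$ as $\EE_\infty$-rings in cyclotomic spectra, together with the fact that $V(2)$ is an hcring so that $V(2) \otimes \THH(\ell^{B\Z})$ carries a ring structure and the cyclotomic Frobenius is multiplicative. First I would observe that, as a $\T$-equivariant commutative ring, $\THH(\ell^{B\Z})^{tC_p}$ receives a map from $\THH(\ell^{B\Z})$ via $\varphi$, and that $\varphi$ is a map of commutative algebras; hence to identify $\varphi$ it suffices to track the generators $\mu, \zeta, \epsilon, \lambda_1, \lambda_2$. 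The classes $\lambda_1, \lambda_2$ map to $\lambda_1, \lambda_2$ by naturality along $\THH(\ell) \to \THH(\ell^{B\Z})$ and \Cref{prop:ellFrob}; the class $\epsilon$ maps to $\varphi(\epsilon)$ which (again by \Cref{prop:ellFrob}, applied after inverting $\mu$, where $\epsilon$ becomes a unit multiple of a bounded-above class) need only be named. The key computation is that $\varphi(\mu)$ is invertible in $V(2)_*\THH(\ell^{B\Z})^{tC_p}$: this follows because $\mu$ is a Bökstedt class for $\THH(\ell)$, hence for the base-change $\THH(\ell^{B\Z})$ by \Cref{lem:move-bok11} applied to the hcentral map $\THH(\ell) \to \THH(\ell^{B\Z})$ (which is hcentral since both are $\EE_\infty$), and then \Cref{lem:sc-bo-to-bok1}(2) gives that $\varphi$ induces an isomorphism $V(2)_*\THH(\ell^{B\Z})[\mu^{-1}] \xrightarrow{\cong} V(2)_*\THH(\ell^{B\Z})^{tC_p}$. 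Computing the source directly from \Cref{cnv:THHellnames} base-changed along $\mathbb{F}_p \to \LCF{p\Z_p}$ — wait, here one must be careful: the source of the inverted map is $\LCF{\Z_p}[\mu^{\pm 1}]\langle\lambda_1,\lambda_2,\epsilon,\zeta\rangle$, and the $tC_p$ construction only sees the $p\Z_p$-part because of the rescaling \Cref{lem:rescale}, so I would invoke \Cref{lem:thh-s1} together with \Cref{lem:rescale} and \Cref{prop:cycfrobuniv} to see that the $C^0$-coefficients of the target are $\LCF{p\Z_p}$.

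The identification of the action of $\varphi$ on the $\LCF{-}$-coefficients is the heart of the matter, and I expect it to be the main obstacle only in the sense of bookkeeping: one must check that the $\LCF{\Z_p}$-algebra structure on $V(2)_*\THH(\ell^{B\Z})$ coming from $\THH(\Ss^{B\Z}) \to \THH(\ell^{B\Z})$ is carried by $\varphi$ to the $\LCF{p\Z_p}$-algebra structure on the target via precisely the restriction-of-functions map $\res_{1/p}\colon \LCF{p\Z_p} \to \LCF{\Z_p}$, i.e. $f \mapsto (x \mapsto f(x/p))$. This is exactly \Cref{prop:cycfrobuniv}, which states that $\pflat(\varphi)$ on $\THH(\Ss^{B\Z})$ is $\res_{1/p}$ where $1/p\colon p\Z_p \to \Z_p$; since the $\LCF{-}$-structure on $V(2)_*\THH(\ell^{B\Z})$ and its $tC_p$ is pulled back along the map from $\THH(\Ss^{B\Z})$ (and its Tate construction), and the $\pi_0^\flat$ of these agree with the $\LCF{-}$-rings by \Cref{cor:tateTHHSZ}, the claim follows by naturality. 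The statement $f(x) \mapsto f(x/p)$ is then just the explicit description of $\res_{1/p}$: the map $1/p\colon p\Z_p \to \Z_p$ sends $x$ to $x/p$, so restriction of a function $g$ on $\Z_p$ along it gives $x \mapsto g(x/p)$; matching the direction in the proposition statement against the direction of $\varphi$ gives exactly the asserted formula.

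Finally I would assemble: combining the multiplicative splitting, the identification of $\varphi$ on each tensor factor ($\varphi|_{\THH(\ell)}$ inverting $\mu$ and fixing $\lambda_1,\lambda_2$ by \Cref{prop:ellFrob}; $\varphi|_{\THH(\Ss^{B\Z})}$ being $\res_{1/p}$ on coefficients and an isomorphism by \Cref{prop:cycfrobuniv}), and the Bökstedt-class argument that the target is obtained from the source by inverting $\mu$ and restricting coefficients to $\LCF{p\Z_p}$, yields the stated presentation
\[ V(2)_*\THH(\ell^{B\Z})^{tC_p} \cong \LCF{p\Z_p}[\varphi(\mu)^{\pm 1}]\langle\lambda_1,\lambda_2,\varphi(\epsilon),\varphi(\zeta)\rangle \]
with $\varphi$ the universal $\LCF{p\Z_p}$-algebra map (over $\res_{1/p}$) inverting $\mu$. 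The only genuinely delicate point is confirming that no additional relations or differentials enter when passing from $\THH(\Ss^{B\Z})$ and $\THH(\ell)$ to their tensor product modulo $(p,v_1,v_2)$ — but this is controlled by the flatness of $V(2)_*\THH(\ell)$ over $\mathbb{F}_p$ and the Künneth isomorphism, which I would state and use without further comment.
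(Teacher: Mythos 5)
Your proposal is correct and is essentially the paper's argument: the paper's entire proof reads ``This follows by combining \Cref{prop:ellFrob} and \Cref{prop:cycfrobuniv}'', i.e.\ it tensors the two factor computations along the splitting $\THH(\ell^{B\Z}) \simeq \THH(\ell) \otimes \THH(\Ss^{B\Z})$ exactly as you do, and you have simply filled in the multiplicativity and coefficient-tracking details. Your Bökstedt-class detour via \Cref{lem:move-bok11} and \Cref{lem:sc-bo-to-bok1} is a valid but redundant way of re-deriving that the Tate construction is the $\mu$-localization, since that already follows from $\varphi$ inverting $\mu$ on the $\THH(\ell)$ factor and being an isomorphism on the $\THH(\Ss^{B\Z})$ factor.
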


\begin{proof}
This follows by combining \Cref{prop:ellFrob} and \Cref{prop:cycfrobuniv}.
\end{proof}

\begin{lem}
    The cyclotomic Frobenius map
    \[\varphi^{h\T}:V(2)_*\TC^{-}(\ell^{\mathrm{B}\Z}) \to V(2)_*\TP(\ell^{\mathrm{B}\Z})\]
    annihilates all classes of positive Nygaard filtration.
\end{lem}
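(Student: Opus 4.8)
The plan is to imitate the proof of \Cref{cor:varphiNygaard}, which handled the analogous statement for $\THH(\ell)$ in place of $\THH(\ell^{\mathrm{B}\Z})$. The key point there was that the target $\THH(\ell)^{tC_p}$, after taking $\T/C_p$-homotopy fixed points, has homotopy fixed point spectral sequence concentrated on the $0$-line, so that $\varphi^{h\T}$ (which factors through the natural map $\TP \to (\THH^{tC_p})^{h\T/C_p}$, an isomorphism here since $V(2)\otimes\THH(\ell^{\mathrm{B}\Z})$ is bounded below by \Cref{thm:AR} and \Cref{cor:t-amplitude-W}, hence satisfies the hypotheses of \cite[Lemma II.4.2]{NS}) cannot hit anything of positive homotopy fixed point filtration, and positive homotopy fixed point filtration on $\TC^{-}$ maps into positive homotopy fixed point filtration on $\TP$ under $\varphi^{h\T}$.

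Concretely, first I would observe that by \Cref{prop:ht1frob} we have an explicit description of $V(2)_*\THH(\ell^{\mathrm{B}\Z})^{tC_p} \cong \LCF{p\Z_p}[\varphi(\mu)^{\pm 1}]\langle \lambda_1,\lambda_2,\varphi(\epsilon),\varphi(\zeta)\rangle$, with the residual $\T/C_p\cong\T$-action. Then I would run the $\T/C_p$-homotopy fixed point spectral sequence
\[E_2 = \LCF{p\Z_p}[\varphi(\mu)^{\pm 1}, t]\langle \lambda_1,\lambda_2,\varphi(\epsilon),\varphi(\zeta)\rangle \implies V(2)_*\left(\THH(\ell^{\mathrm{B}\Z})^{tC_p}\right)^{h\T/C_p}.\]
The $d_2$ differentials can be read off by comparison with the $\varphi$-image of the $\T$-homotopy fixed point spectral sequence computing $V(2)_*\TC^{-}(\ell^{\mathrm{B}\Z})$, which is the truncation of the $\T$-Tate spectral sequence of \Cref{thm:ht1tate}: namely $d_2(\varphi(\epsilon)) = t\varphi(\mu)$ (so $t$ supports a differential killing it, since $\varphi(\mu)$ is a unit) and $d_2(\varphi(\zeta)) = f\,t\,\varphi(\zeta)$ where $f\in\LCF{p\Z_p}$ vanishes exactly on $p^2\Z_p$. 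After the $d_2$ differentials the spectral sequence collapses onto the $0$-line — multiplication by $t$ is injective modulo the image of $d_2$ — and so $V(2)_*\left(\THH(\ell^{\mathrm{B}\Z})^{tC_p}\right)^{h\T/C_p}$ is concentrated in homotopy fixed point filtration $0$.

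Having established this, the conclusion is formal: a class of positive Nygaard (i.e. $\T$-homotopy fixed point) filtration in $V(2)_*\TC^{-}(\ell^{\mathrm{B}\Z})$ maps under $\varphi^{h\T}$ to a class of positive homotopy fixed point filtration in $V(2)_*\TP(\ell^{\mathrm{B}\Z}) \cong V(2)_*\left(\THH(\ell^{\mathrm{B}\Z})^{tC_p}\right)^{h\T/C_p}$, but the latter group has no classes of positive filtration, so the image is zero. I expect the main obstacle to be bookkeeping the $d_2$ on $\varphi(\zeta)$ correctly — one must check that the function $f$ appearing really does only vanish on a set of measure zero (indeed, only on $p^2\Z_p$, after the $p$-fold rescaling of the base built into \Cref{prop:ht1frob}), so that multiplication by $\varphi(\zeta)$ does not produce a $t$-tower of permanent cycles surviving to positive filtration; this is exactly parallel to the role of $f$ in \Cref{thm:ht1tate} and follows from the same analysis of which $C_{p^j}$-Tate constructions of $V(2)\otimes\THH(\ell)\otimes \Ss^{\T/C_n}$ are nonzero.
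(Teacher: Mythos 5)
Your proposal is correct and follows essentially the same route as the paper: reduce via $\TP(\ell^{\mathrm{B}\Z})\cong(\THH(\ell^{\mathrm{B}\Z})^{tC_p})^{h\T/C_p}$ to showing the $(\T/C_p)$-homotopy fixed point spectral sequence collapses onto the $0$-line at $E_3$, which follows from the $d_2$ differentials induced under $\varphi$ from the $\T$-homotopy fixed point spectral sequence (chiefly $d_2(\varphi(\epsilon))=t\varphi(\mu)$ with $\varphi(\mu)$ a unit), exactly as in the proof of \Cref{cor:varphiNygaard}. Your extra care about the $d_2$ on $\varphi(\zeta)$ is harmless but not actually needed: once $t$ is a $d_2$-boundary up to a unit, every class in positive filtration dies at $E_3$ regardless of whether $\varphi(\zeta)$ supports a differential over part of $p\Z_p$.
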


\begin{proof}
It will suffice to prove that the $\mathrm{E}_{\infty}$-page of the $(\T/C_p)$-homotopy fixed point spectral sequence 
\[\LCF{p\Z_p}[\varphi(\mu)^{\pm 1},t]\langle\lambda_1,\lambda_2,\varphi(\zeta),\varphi(\epsilon)\rangle \implies V(2)_*(\THH(\ell^{\mathrm{B}\ZZ})^{tC_p})^{h\T/C_p}\]
is trivial above the $0$-line.  As in the proof of \Cref{cor:varphiNygaard}, this is already true at the $\mathrm{E}_3$-page, because of $d_2$ differentials induced from the $\T$-homotopy fixed point spectral sequence for $V(2)_*\TC^{-}(\ell^{\mathrm{B}\ZZ})$. 
\end{proof}

\begin{thm} \label{thm:v2tcell}
The graded $\mathbb{F}_p\langle \lambda_1,\lambda_2 \rangle$-module map
\[V(2)_*\mathrm{TC}(\ell^{\mathrm{B}\Z}) \to V(2)_*\mathrm{TC}(\ell)^{\mathrm{B}\Z}\]
is isomorphic to the direct sum of the following three $\mathbb{F}_p\langle \lambda_1,\lambda_2\rangle$ module maps:
\begin{enumerate}
\item The map \[\LCF{p\Z_p} \langle \zeta \rangle \otimes_{\mathbb{F}_p} N \to \mathbb{F}_p\langle \zeta \rangle \otimes_{\mathbb{F}_p} N\]
evaluating a function at $0$.
\item The inclusion \[\mathbb{F}_p\langle \lambda_1, \lambda_2 \rangle \to \mathbb{F}_p\langle \lambda_1,\lambda_2,\zeta \rangle.\]
\item The projection \[\overline{\LCF{\Z_p^{\times}}}\{\partial \zeta\}
\oplus \mathbb{F}_p\{\partial\} \to \mathbb{F}_p\{\partial\}\]
onto the second factor, tensored over $\mathbb{F}_p$ with the inclusion from $(2)$.
\end{enumerate}
\end{thm}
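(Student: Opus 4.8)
The computation of $V(2)_*\TC(\ell^{\mathrm{B}\Z})$ and the coassembly map follows the same snake-lemma strategy used in the proof of \Cref{cor:V2TCell}, but now with the extra $\LCF{\Z_p}$ and $\zeta$ variables in play, and with everything done compatibly with the map to $V(2)_*\TC(\ell)^{\mathrm{B}\Z}$. Recall that $\TC$ of a bounded below cyclotomic spectrum is computed by the equalizer of $\varphi^{h\T}$ and $\can\colon \TC^{-} \to \TP$. First I would assemble the inputs: \Cref{thm:ht1tate} gives the $\T$-Tate spectral sequence and hence (by truncation) the $\T$-homotopy fixed point spectral sequence computing $V(2)_*\TC^{-}(\ell^{\mathrm{B}\Z})$; \Cref{cor:kercan} identifies the restriction of $\can$ to positive Nygaard filtration, with kernel $\LCF{p\Z_p}\langle\zeta\rangle\otimes_{\FF_p} N$ and vanishing cokernel; \Cref{prop:ht1frob} identifies the Frobenius as the universal $\mu$-inverting map (sending a function $f(x)$ to $f(x/p)$), and the following lemma shows $\varphi^{h\T}$ kills positive Nygaard filtration.

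\textbf{The snake lemma step.} As in \Cref{cor:V2TCell}, I would set up the $3\times 3$ diagram with rows indexed by Nygaard filtration $\geq 1$, all, and $=0$, and columns the source of $\varphi^{h\T}-\can$, its target, and the cokernel. On the top row, since both $\varphi^{h\T}$ and $\can$ vanish on positive Nygaard filtration except that $\can$ is an isomorphism onto everything outside the kernel described in \Cref{cor:kercan}, the kernel $\mathrm{ker}_1$ is $\LCF{p\Z_p}\langle\zeta\rangle\otimes_{\FF_p} N$ and $\mathrm{coker}_1 = 0$. On the bottom ($\mathrm{Nyg}_{=0}$) row I must compute the equalizer and coequalizer of $\overline{\varphi}-\overline{\can}$ acting on $\LCF{\Z_p}[\mu]\langle\zeta,\lambda_1,\lambda_2\rangle \to \LCF{p\Z_p}[\varphi(\mu)^{\pm1}]\langle\zeta,\lambda_1,\lambda_2\rangle$, where $\overline{\can}$ kills $\mu$ (since $\can$ is trivial in degrees $\geq 2p^2$ by the analog of \Cref{cor:Ncalculation}) and $\overline{\varphi}$ is the $\mu$-inverting algebra map that acts on $\LCF{\Z_p}$ by $f(x)\mapsto f(x/p)$ and lands in the subring $\LCF{p\Z_p}$. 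The difference $\overline{\varphi}-\overline{\can}$ is then analyzed exactly as in the classical case: on the $\mu^0$ part one gets the equalizer $\{f : f(x) = f(x/p)\} = \FF_p$ (constants) tensored with $\langle\zeta,\lambda_1,\lambda_2\rangle$ after accounting for which monomials survive, and the coequalizer contributes a shift giving the $\overline{\LCF{\Z_p^\times}}\{\partial\zeta\}\oplus\FF_p\{\partial\}$ term — the $\overline{\LCF{\Z_p^\times}}$ appearing because $\overline{\varphi}$ is an isomorphism onto $\LCF{p\Z_p}$ and the cokernel of $\LCF{\Z_p}\xrightarrow{f\mapsto f(x/p)}\LCF{p\Z_p}$ composed with the inclusion is governed by $\LCF{\Z_p}/\LCF{p\Z_p}$-type data, i.e.\ functions on $\Z_p^\times$ modulo constants. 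I would then check the three $\FF_p\langle\lambda_1,\lambda_2\rangle$-module extension problems in the resulting filtration vanish because the relevant pieces are free (as in \Cref{cor:V2TCell}), yielding the claimed splitting $\LCF{p\Z_p}\langle\zeta\rangle\otimes N \oplus \FF_p\langle\lambda_1,\lambda_2,\zeta\rangle \oplus (\overline{\LCF{\Z_p^\times}}\{\partial\zeta\}\oplus\FF_p\{\partial\})\otimes\langle\lambda_1,\lambda_2\rangle$.

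\textbf{Naturality / the coassembly map.} Throughout I would run the identical diagram for $V(2)_*\TC(\ell)^{\mathrm{B}\Z}$ and keep track of the map induced by coassembly. Since coassembly on $\THH$ is evaluation-at-$0$ of locally constant functions (and is compatible with $\varphi$, $\can$, and the $\T$-action), it descends to: evaluation at $0$ on $\mathrm{ker}_1$, giving map (1); the inclusion $\FF_p\langle\lambda_1,\lambda_2\rangle\hookrightarrow\FF_p\langle\lambda_1,\lambda_2,\zeta\rangle$ on the middle term, giving map (2) — here one uses that $\zeta$ survives to $V(2)_*\TC(\ell^{\mathrm{B}\Z})$ but $\THH(\ell)$ has no $\zeta$, so the image of $\FF_p\langle\lambda_1,\lambda_2\rangle$ lands in the copy of $N$... more precisely the middle summand matches the $\mathbb{F}_p\langle\lambda_1,\lambda_2,\zeta\rangle$ of the source mapping to $\mathbb{F}_p\langle\lambda_1,\lambda_2\rangle$-part; and on the third summand the projection $\overline{\LCF{\Z_p^\times}}\{\partial\zeta\}\oplus\FF_p\{\partial\}\to\FF_p\{\partial\}$ killing the $\Z_p^\times$-indexed part (which comes from the non-basepoint loop components) and keeping $\partial$. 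The hard part will be bookkeeping: correctly matching up which monomials in the $\mathrm{Nyg}_{=0}$ computation land in $N$ versus survive as the free $\langle\lambda_1,\lambda_2,\zeta\rangle$ part versus contribute to the $\partial$-cokernel, and verifying the coassembly map respects this decomposition summand-by-summand rather than only up to filtration. This requires care because $\zeta$ interacts multiplicatively with $N$ (via the $d_2(\zeta) = ft\zeta$ differential of \Cref{thm:ht1tate}), so the splitting of $V(2)_*\TC(\ell^{\mathrm{B}\Z})$ as an $\FF_p\langle\lambda_1,\lambda_2\rangle$-module — rather than as a ring — must be chosen compatibly with coassembly, and I would verify this compatibility by comparing the two snake-lemma diagrams column by column. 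Finally, inverting $v_2$ and passing through the identifications of \Cref{thm:E1A2algtame}, \Cref{cor:telasscon}, \Cref{thm:AR}, and \Cref{thm:TCequalK} transports this to the statements of \Cref{thm:height2comptc} and \Cref{cor:height2comp}.
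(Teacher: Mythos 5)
Your overall strategy is the paper's: the same three-row snake-lemma diagram, the same identification of $\mathrm{ker}_1 = \LCF{p\Z_p}\langle\zeta\rangle\otimes N$ and $\mathrm{coker}_1=0$ from \Cref{cor:kercan}, the same resolution of extension problems by $\FF_p\langle\lambda_1,\lambda_2\rangle$-module freeness, and the same column-by-column comparison with the parallel diagram for $\TC(\ell)^{\mathrm{B}\Z}$. The gap is in the Nygaard-filtration-$0$ row, which is the heart of the computation. You take the domain of $\overline\varphi-\overline{\can}$ to be $\LCF{\Z_p}[\mu]\langle\zeta,\lambda_1,\lambda_2\rangle$, but by \Cref{thm:ht1tate} the $0$-line of the $E_\infty$-page is $\LCF{p\Z_p}[\mu]\langle\zeta,\lambda_1,\lambda_2\rangle\oplus\LCF{\Z_p^\times}[\mu]\langle\lambda_1,\lambda_2,\epsilon\rangle$: over $\Z_p^\times$ the class $\zeta$ dies via $d_2(\zeta)=ft\zeta$ while $\epsilon$ survives (its would-be target $t\mu$ is already gone), and over $p\Z_p$ it is the reverse. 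The $\epsilon$-classes over $\Z_p^\times$, which you never mention, are exactly what make $\mathrm{ker}_2=\FF_p\langle\lambda_1,\lambda_2\rangle$ rather than $\FF_p\langle\lambda_1,\lambda_2,\zeta\rangle$, and what produce the $\overline{\LCF{\Z_p^{\times}}}\{\zeta\}$ in $\mathrm{coker}_2$: one must determine that $\overline\varphi(\epsilon)$ is $\zeta$ times a function nonvanishing on elements of valuation $1$, which the paper deduces from the fact that $\varphi^{h\T}$ is an isomorphism in large degrees. Without this input, your heuristic ``equalizer of $f(x)=f(x/p)$, tensored with $\langle\zeta,\lambda_1,\lambda_2\rangle$'' gives the wrong kernel; note also that $\overline{\can}$ is restriction to $p\Z_p$ (hence zero on the $\Z_p^\times$-summand), not the identity, so even the scalar part of the equalizer requires the two summands to interact.

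This error propagates to your conclusion: your final splitting lists $\FF_p\langle\lambda_1,\lambda_2,\zeta\rangle$ as the middle summand of the \emph{source} $V(2)_*\TC(\ell^{\mathrm{B}\Z})$, and your naturality paragraph has the $\zeta$ running the wrong way (``the $\FF_p\langle\lambda_1,\lambda_2,\zeta\rangle$ of the source mapping to the $\FF_p\langle\lambda_1,\lambda_2\rangle$-part''). The theorem asserts the opposite: the source's middle summand is $\FF_p\langle\lambda_1,\lambda_2\rangle$, and it \emph{includes into} the $\FF_p\langle\lambda_1,\lambda_2,\zeta\rangle$ of the target, whose $\zeta$ comes from the external $(-)^{\mathrm{B}\Z}$; in the source the corresponding $\zeta$-multiples are absorbed into $\mathrm{coker}_2$ instead. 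Relatedly, the aside that ``the image of $\FF_p\langle\lambda_1,\lambda_2\rangle$ lands in the copy of $N$'' is not meaningful, since $1,\lambda_1,\lambda_2$ do not lie in $N$. To repair the argument you need the full two-summand description of the $0$-line, the identification of $\overline{\can}$ on each summand, and the determination of $\overline\varphi(\epsilon)$; with those in hand the rest of your outline goes through as written.
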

\begin{proof}
Our first goal will be to compute the equalizer and coequalizer of the maps
\[
\can,\varphi^{h\T}:V(2)_*\TC^{-}(\ell^{\mathrm{B}\ZZ}) \to V(2)_*\TP(\ell^{\mathrm{B}\ZZ}),
\]
thereby computing $V(2)_*\TC(\ell^{\mathrm{B}\ZZ})$.

On classes of positive Nygaard filtration, $\varphi^{h\T}$ is trivial.  It follows that $\varphi^{h\T}-\mathrm{can}$ takes classes of positive Nygaard filtration to classes of positive Nygaard filtration.  As a result, we may consider the following diagram and apply the snake lemma:

\[
\begin{tikzcd}
\mathrm{ker}_1 \arrow{r} \arrow{d} 
& \mathrm{Nyg}_{\ge 1}\left(V(2)_*\TC^{-}(\ell^{\mathrm{B}\ZZ}) \right) \arrow{r}{0-\mathrm{can}} \arrow{d}
& \mathrm{Nyg}_{\ge 1} \left(V(2)_*\TP(\ell^{\mathrm{B}\ZZ})\right) \arrow{r} \arrow{d}
& \mathrm{coker}_1 \arrow{d}\\
\mathrm{ker} \arrow{r} \arrow{d}
& V(2)_*\TC^{-}(\ell^{\mathrm{B\ZZ}}) \arrow{r}{\varphi^{h\T}-\can}  \arrow{d}
& \LCF{p\Z_p}\langle \zeta,\lambda_1,\lambda_2\rangle[t^{\pm p^2}] \arrow{r} \arrow{d}
&\mathrm{coker} \arrow{d} \\
\mathrm{ker}_2 \arrow{r} 
& \mathrm{Nyg}_{=0}\left(V(2)_*\TC^{-}(\ell^{\mathrm{B}\ZZ})\right) \arrow{r}{\overline{\varphi}-\overline{\mathrm{can}}} 
&  \left( V(2)_*\TP(\ell^{\mathrm{B}\ZZ})\right) / \mathrm{Nyg}_{\ge 1}  \arrow{r} & \mathrm{coker}_2.
\end{tikzcd}
\]

By \Cref{cor:kercan}, we identify $\mathrm{ker}_1$ with $\LCF{p\Z_p}\langle \zeta \rangle \otimes_{\mathbb{F}_p} N$, and observe that $\mathrm{coker}_1$ is trivial. We next check that $\mathrm{ker}_2$ and $\mathrm{coker}_2$ are the other two summands. To do, it is helpful to observe that $\overline{\varphi}$ must be an isomorphism in large degrees, because $\varphi^{h\T}$ is and $\mathrm{ker}_1$ is bounded above.


By construction, the domain of $\overline{\varphi}-\overline{\can}$ can be identified with the $0$-line of the $\mathrm{E}_{\infty}$-page of the homotopy fixed point spectral sequence, which is calculated by \Cref{thm:ht1tate} to be
\[\LCF{p\Z_p}[\mu] \langle \zeta,\lambda_1,\lambda_2 \rangle \oplus \LCF{\Z_p^{\times}}[\mu] \langle \lambda_1,\lambda_2,\epsilon \rangle \subset V(2)_*\THH(\ell^{\mathrm{B}\Z}).\]
We next note that $\overline{\varphi}$ carries $\mu$ to a class detected by a unit multiple of $t^{-p^2}$, and indeed otherwise $\varphi$ could not be an isomorphism in large degrees. \Cref{thm:ht1tate} then implies the codomain of $\overline{\varphi}-\overline{can}$ can be identified 
with
\[\LCF{p\Z_p}[(\overline{\varphi}-\overline{\can})(\mu)]\langle \zeta, \lambda_1, \lambda_2 \rangle,\] since there is a map from this ring into $V(2)_*\TP(\ell^{B\Z})$ which is an isomorphism on the associated graded groups after projecting to the quotient by positive Nygaard filtration. 

As in \Cref{prop:cycfrobuniv} and \Cref{prop:ht1frob}, $\overline{\varphi}$ carries a function $f \in C^0(p\Z)$ to the function $x \mapsto f(x/p)$, where $f(x/p)$ is interpreted as zero if $x/p \in \Z_p^{\times}$.  Similarly, \Cref{prop:cycfrobuniv} determines $\overline{\varphi}$ on $\zeta$. Furthermore, $\overline{\varphi}$ is an $\mathbb{F}_p\langle \lambda_1,\lambda_2 \rangle$ module map, leaving only $\overline{\varphi}(\epsilon)$ undetermined.  

Now, the fact that $\overline{\varphi}$ is an isomorphism in large degrees is enough to prove that $\overline{\varphi}(\epsilon)$ is sent to $\zeta$ times a function non-trivial on elements of $p$-adic valuation $1$, and so $\mathrm{ker}_2$ and $\mathrm{coker}_2$ must be $\mathbb{F}_p\langle \lambda_1,\lambda_2\rangle$ and $\overline{\LCF{\Z_p^{\times}}}\{\zeta\} \oplus \mathbb{F}_p$, respectively.

All of this describes a filtration on $V(2)_*\TC(\ell^{\mathrm{B\Z}})$, with associated graded the direct sum of
\begin{itemize}
\item $\mathrm{ker}_1 \cong \LCF{p\Z_p}\langle \zeta \rangle \otimes_{\mathbb{F}_p} N$,
\item $\mathrm{ker}_2 \cong \mathbb{F}_p\langle \lambda_1,\lambda_2 \rangle$, and
\item $\Sigma^{-1}\mathrm{coker}_2 \cong \Sigma\overline{\LCF{\Z_p^{\times}}}\{ \partial \zeta\} \oplus \mathbb{F}_p \{ \partial\}$
\end{itemize}
This filtration is by construction compatible with that of the proof of \Cref{cor:V2TCell} above, along the coassembly map. We therefore see that the coassembly map is as indicated, up to possible filtration jumps. Such filtration jumps are precluded by $\mathbb{F}_p\langle \lambda_1,\lambda_2 \rangle$-module structure. 
\end{proof}

\subsection{Coassembly for \texorpdfstring{$V(1)_*\TC(\ell^{\mathrm{B}\Z})$}{V1 TC(BP1 BZ)}}

In the previous subsection, we computed the coassembly map
\begin{equation} \label{eqn:ellV2coassembly}
V(2)_*\TC(\ell^{\mathrm{B}\Z}) \to V(2)_*\TC(\ell)^{\mathrm{B}\Z}.
\end{equation}
To finish the proof of \Cref{thm:height2comptc}, it remains only to prove that the $\mathbb{F}_p[v_2]$-module map
\[
V(1)_*\TC(\ell^{\mathrm{B}\Z}) \to V(1)_*\TC(\ell)^{\mathrm{B}\Z}
\]
is given by  $\mathbb{F}_p[v_2] \otimes_{\mathbb{F}_p} (\ref{eqn:ellV2coassembly})$.

\begin{proof}[Proof of \Cref{thm:height2comptc}]
Examine the map of $v_2$-Bockstein spectral sequences

\[
\begin{tikzcd}
\mathrm{E}_1=V(2)_*\TC(\ell^{\mathrm{B}\ZZ})[v_2] \arrow[Rightarrow]{r}\arrow{d}
&  V(1)_*\TC(\ell^{\mathrm{B}\ZZ}) \arrow{d}\\
\mathrm{E}_1=V(2)_*\TC(\ell)^{\mathrm{B}\ZZ}[v_2] \arrow[Rightarrow]{r}
&  V(1)_*\TC(\ell)^{\mathrm{B}\ZZ}. 
\end{tikzcd}
\]

When analyzing these Bockstein spectral sequences, we emphasize again that the construction (see \Cref{rec:lambda1}) of elements $\lambda_1,\lambda_2 \in V(1)_*\TC(\ell)$ equips both $V(1)_*\TC(\ell^{\mathrm{B}\Z})$ and $V(1)_*\TC(\ell)^{\mathrm{B}\Z}$ with $\mathbb{F}_p[v_2]\langle \lambda_1,\lambda_2 \rangle$-module structure.  Both $v_2$-Bockstein spectral sequences, and the map between them, respect $\mathbb{F}_p \langle \lambda_1,\lambda_2 \rangle$-module structure.

Examining
$V(2)_*\TC(\ell^{\mathrm{B}\Z})$, as determined by \Cref{thm:v2tcell}, we see that this $\mathbb{F}_p \langle \lambda_1,\lambda_2 \rangle$-module is concentrated in degrees $-2 \le * \le 2p^2+2p-2$. It follows immediately that the only potential $v_2$-Bockstein differentials are $d_1$ differentials.  In fact, the $d_1$ differential is also trivial, as is forced by $\mathbb{F}_p\langle \lambda_1,\lambda_2 \rangle$-module structure.  Specifically, all classes of degrees larger than $2p^2-2$ are multiples of $\lambda_2$, and all classes of degrees larger than $2p^2-2p-1$ are multiples of either $\lambda_1$ or $\lambda_2$. On the other hand, no element of degree less than $2p-3$ is a multiple of $\lambda_1$, and no element of degree less than $2p^2-3$ is a multiple of $\lambda_2$.

The $v_2$-Bockstein spectral sequence for $V(1)_*\TC(\ell)^{\mathrm{B}\Z}$ similarly degenerates, because the Bockstein spectral sequence for $V(1)_*\TC(\ell)$ does by the direct computations of Ausoni and Rognes \cite[Theorem 0.3]{ausoni2002algebraic}.

Finally, when analyzing the map of Bockstein spectral sequences, all possible filtration jumps are ruled out by $\Lambda(\lambda_1,\lambda_2)$ module structure.
\end{proof}

To finish the subsection, we prove \Cref{thm:ht2disproof} below, which along with \Cref{thm:height2comptc} immediately implies \Cref{cor:height2comp}.

%
%

\begin{thm}\label{thm:ht2disproof}
    Let $p$ be a prime, and let $\Z$ act on the connective Adams summand $\ell$ and periodic Adams summand $L$ via $\Psi^{1+p}$.  Let $T(2)$ denote the telescope of a type $2$ $p$-local finite complex. Then, for any $k \ge 0$, there is a commuting diagram
      \[ \begin{tikzcd}
    L_{T(2)} K(L^{hp^k\ZZ}) \ar[d] &
    L_{T(2)} K(\ell^{hp^k\ZZ}) \ar[r, "\cong"] \ar[l, "\cong"'] \ar[d] &
    L_{T(2)} \TC(\ell^{hp^k\ZZ}) \ar[d] \\    
    L_{T(2)} K(L)^{hp^k\ZZ}&
    L_{T(2)} K(\ell)^{hp^k\ZZ} \ar[l, "\cong"'] \ar[r, "\cong"] &    
    L_{T(2)}\TC(\ell)^{hp^k\ZZ}
  \end{tikzcd} \]
 where the vertical maps are the coassembly maps. Moreover
 ,  
 the coassembly maps agree with both the cyclotomic completion and $K(2)$-localization maps.
\end{thm}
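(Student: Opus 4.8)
\textbf{Plan of proof of \Cref{thm:ht2disproof}.}

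The plan is to assemble this from three ingredients already available: (i) the trace-method identifications of \Cref{sec:mainthm}, specialized to $n=1$ and $R=\ell$; (ii) the cyclotomic-hyperdescent input of \Cref{subsec:cyclo-descent}, which must be checked for $\ell$ rather than for $\BPn$; and (iii) the asymptotic-constancy result \Cref{cor:telasscon}, which is how we will transfer the computation at the trivial action back to the Adams action for $k \gg 0$. First I would construct the horizontal isomorphisms. The connective Adams summand $\ell$ is a $(-1)$-acyclic-after-$\Z$-fixed-points connective $\E_\infty$-ring with a $\Z$-action by $\Psi^{1+p}$, which is $T(2)$-acyclic since $\ell$ is fp-type $1$; applying \Cref{thm:TCequalK} with $n=1$ gives the commuting diagram with all horizontal maps isomorphisms, where the vertical maps are the coassembly maps. (For $p \ge 7$ this is literally the $n=1$ instance of the diagram in \Cref{thm:maincyc}; for general $p$ one still has an $\E_\infty$ Adams operation $\Psi^{1+p}$ on $\ell=\BP\langle 1\rangle$ classically, so no appeal to \Cref{sec:adamsop} is needed.)

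Next I would show the left vertical map exhibits its target as the cyclotomic completion of its source, and that the target is cyclotomically complete. This is exactly the content of \Cref{prop:bpncycmodel} with $n=1$ and $\BP\langle 1\rangle = \ell$, combined with \Cref{lem:coassembly-descent}: one needs that $f_k \colon (L_{T(1)}\ell)^{hp^k\Z} \to L_{T(1)}\ell$ is an $L_{T(2)}K\cycl$-cover and that $L_{T(2)}K(L_{T(1)}\ell)$ is cyclotomically complete. The proof of \Cref{prop:bpncycmodel} only uses that $L_{T(1)}\ell \cong L_{K(1)}\ell$ is, after base change to $\W(\Fpbar)$, the homotopy fixed points $E_1^{hC}$ of Lubin--Tate theory by the finite cyclic group $C = \mu_{p-1} \subseteq \G_1 = \Z_p^\times$, together with the computation ${\mathrm{cyc}}(\Psi^{1+p}) = 1+p$ of the cyclotomic character on the Adams operation; both hold verbatim at height $1$. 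So this step is essentially a citation, once one records that $L_{K(1)}\ell = L_{K(1)}\KU^{h\mu_{p-1}}$ with the Adams operation $\Psi^{1+p}$ acting through the standard $\Z_p^\times$-action on $\KU_p$. Once this is done, the left (hence, via the horizontal isomorphisms, the middle and right) vertical maps are the cyclotomic completion maps, and because $K(2)$-localization factors through cyclotomic completion in $\Sp_{T(2)}$ (the $K(2)$-local sphere lies in $(\Sp_{T(2)})\cycl$, cf. \Cref{rec:smashing}) and the target is already cyclotomically complete, identifying the vertical maps with the $K(2)$-localization maps reduces to knowing the target is $K(2)$-local.

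That last point — that $L_{T(2)}\TC(\ell)^{hp^k\Z}$, equivalently $L_{T(2)}K(L)^{hp^k\Z}$, is $K(2)$-local — is where the height-$1$ LQ property enters, and I expect it to be the main obstacle requiring genuine input rather than citation. Here I would invoke \Cref{thm:AR} (Ausoni--Rognes): $\ell$ satisfies the height $1$ LQ property, i.e. $V(2)\otimes\THH(\ell)$ is bounded in the cyclotomic $t$-structure, so $U \otimes \TC(\ell) \to T(2)\otimes\TC(\ell)$ has bounded-above fiber for $U$ a finite type-$2$ complex; taking homotopy fixed points $(-)^{hp^k\Z}$ preserves this bounded-above-fiber property (homotopy fixed points for $\Z$ have cohomological dimension $1$), so $L_{T(2)}\TC(\ell)^{hp^k\Z}$ is $K(2)$-local — indeed one can feed $\ell^{hp^k\Z}$ itself into the machinery, since by \Cref{cor:telasscon} (valid because $\ell$ is $\E_1\otimes\A_2$, fp-type $1$, has locally unipotent $p^k\Z$-action, and satisfies height $1$ LQ) the LQ property descends to $\ell^{hp^k\Z}$ for $k\gg 0$, and for small $k$ one uses \cite{lee2023topological,levy2022algebraic} as in \Cref{thm:kk1intro}. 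Finally, for the "moreover" I would note the coassembly map being simultaneously the cyclotomic completion map and the $K(2)$-localization map is forced once both the source's cyclotomic completion and $K(2)$-localization are computed to be the same object and the map is the canonical one; this is automatic from the two identifications already established, since cyclotomic completion is smashing and the target, being $K(2)$-local, is its own cyclotomic completion. The only care needed is to check the diagram commutes with all the identifications — coassembly, trace, and localization maps — which is routine naturality.
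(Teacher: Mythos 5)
Your construction of the horizontal isomorphisms via \Cref{thm:TCequalK} and your reduction of the left vertical map to showing that $f_k\colon L^{hp^k\Z}\to L$ is an $L_{T(2)}K\cycl$-cover (via \Cref{lem:coassembly-descent}) match the paper's proof. For the cover step the paper takes a shorter route than your rerun of \Cref{prop:bpncycmodel} at height $1$: it uses the $\Z_p^\times$-equivariant identification $\KU\cong L_{T(1)}\Ss[\omega_{p^\infty}^{(1)}]$ to identify $f_k$ with $\Ss_{T(1)}[\omega_{p^k}^{(1)}]^{hT_p}\to\Ss_{T(1)}[\omega_{p^\infty}^{(1)}]^{hT_p}$, and then applies \Cref{lem:intermediateextn}, \Cref{cor:cycredshift} and \Cref{rmk:checklocally}. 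Both routes are acceptable for that step.

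The genuine gap is in your final step. You claim that the height $1$ LQ property for $\ell$ (\Cref{thm:AR}), i.e.\ that $U\otimes\TC(\ell)\to T(2)\otimes\TC(\ell)$ has bounded-above fiber, implies (after applying $(-)^{hp^k\Z}$) that $L_{T(2)}\TC(\ell)^{hp^k\Z}$ is $K(2)$-local. This implication is false: the LQ property compares $U\otimes\TC(\ell)$ with its \emph{telescopic} localization $U[v^{-1}]\otimes\TC(\ell)$ and says nothing about the further map to the $K(2)$-localization. (If it did, the LQ property for $\BPn$ from \cite{hahn2020redshift} would already contradict the main theorem of this paper, whose whole point is that $T(n+1)$-local and $K(n+1)$-local differ.) The $K(2)$-locality of $T(2)\otimes\TC(\ell)$ is a separate, hard computation, established only in \cite[Theorem 1.3.6]{hahn2022motivic}, and the paper cites it directly; the introduction explicitly flags that Ausoni--Rognes computed $T(2)_*\TC(\BP\langle 1\rangle)$ long before it was known to be $K(2)$-local. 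With that input, the target $L_{T(2)}\TC(\ell)^{hp^k\Z}$ is a limit of $K(2)$-local spectra, hence $K(2)$-local, hence cyclotomically complete, and the ``moreover'' follows as you outline. Your detour through \Cref{cor:telasscon} is also unnecessary here and would restrict you to $k\gg0$, whereas the statement is claimed for all $k\ge0$.
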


\begin{proof}
The diagram of equivalences above comes from applying \Cref{thm:TCequalK}, since $L = L_{T(1)}\ell$. By \Cref{lem:coassembly-descent}, to show the left map is an equivalence after cyclotomic completion, it is enough to show that the maps $f_k:L^{hp^k\Z} \to L$, are $L_{T(2)}K\cycl$-covers.

Recall that there is a $\ZZ_p^{\times}$-equivariant equivalence $\KU \cong L_{T(1)}\SP[\omega_{p^{\infty}}^{(1)}]$ \cite[Example 5.11]{carmeli2021chromatic}. This shows that $f_k$ is isomorphic to the map $\SP_{T(1)}[\omega_{p^k}^{(1)}]^{hT_p} \to \SP_{T(1)}[\omega_{p^\infty}^{(1)}]^{hT_p}$
. Since the source is a finite Galois extension of the $T(1)$-local sphere, this map admits a retraction after tensoring with $\KU$ by \Cref{lem:intermediateextn}. Then, by \Cref{cor:cycredshift} and \Cref{rmk:checklocally}, we learn $L^{hp^k\Z}\to L$ is a $L_{T(2)}K\cycl$-cover.

The target of the right coassembly map is $K(2)$-local by
\cite[Theorem 1.3.6]{hahn2022motivic}, so we conclude the result.





\end{proof}

\newpage

\includegraphics[height=23cm]{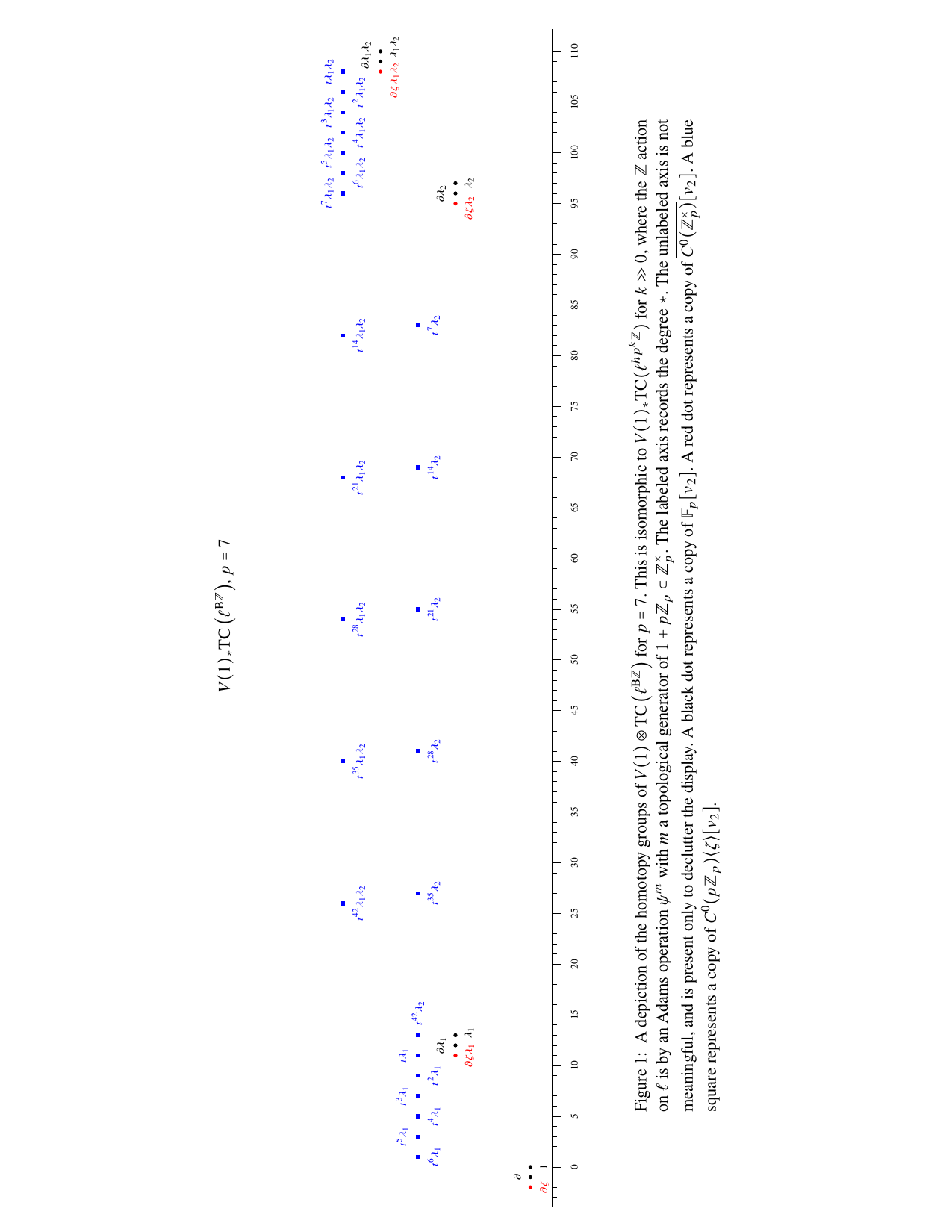}

\subsection{Lichtenbaum--Quillen for the \texorpdfstring{$K(1)$}{K1}-local sphere}
\label{sec:LQht1}

In this subsection, we apply the tools of \Cref{sec:tame} to prove the height $1$ LQ property for $\ell^{hp^k\ZZ}$ for each $k\geq0, p\geq5$, where $\ell$ is the connective Adams summand with $\ZZ$-action generated by the Adams operation $\Psi^{1+p}$. The rings $\ell^{hp^k\ZZ}$ were studied in work of Lee and the third author \cite{lee2023topological}, where a constancy result for their $\THH$ mod $(p,v_1)$ was proven. The Segal conjecture for their $\THH$ was also proven there, which we use here as an ingredient to prove the height $1$ LQ property.
By combining this with the work of the third author \cite{levy2022algebraic}, we show that for $p\geq5$ that the algebraic $K$-theory of the $K(1)$-local sphere is asymptotically $L_2^{f}$-local, in the sense of \Cref{dfn:assLnflocal} below.

The following lemma was communicated to us by Tristan Yang, and will appear in his work on effective LQ properties:
\todo{In this subsection, I would leave the indexing of $n$ and the use of $V$ as it is.}
\begin{lem}\label{lem:assLnflocal}
Let $X$ be a $p$-local spectrum and $n \ge 0$.  The following conditions are equivalent:
 \begin{enumerate}
        \item The map 
        \[X \to L_{n}^{f} X\]
        has bounded above fiber.
        \item For any type at least $n$ finite $p$-local spectrum $V$ with $v_{n+1}$-self map $v$, the map
        \[V \otimes X \to V[v^{-1}] \otimes X\]
        has bounded above fiber.
        \item For any type at least $n+1$ finite $p$-local spectrum $V$, $V\otimes X$ is bounded above.
    \end{enumerate}
\end{lem}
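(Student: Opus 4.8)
The plan is to prove the three conditions are equivalent by a cycle of implications, relying on the thick subcategory theorem to reduce statements about ``any type $\geq n+1$ finite spectrum'' to a single well-chosen $V$. Throughout, recall that for a bounded-above spectrum, smashing with a finite spectrum and taking colimits along a self-map behave predictably, so the real content is tracking boundedness ranges.

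\textbf{$(3) \Rightarrow (2)$.} Fix $V$ of type $\geq n$ with a $v_{n+1}$-self map $v$. Then the cofiber $V/v$ has type $\geq n+1$, so by $(3)$ it is bounded above, say $V/v \otimes X$ is concentrated in degrees $\leq b$. The map $V \otimes X \to V[v^{-1}] \otimes X$ is the colimit of the tower $V \otimes X \xrightarrow{v} \Sigma^{-d} V \otimes X \to \cdots$, and its fiber at each finite stage is an iterated extension of (shifts of) $V/v \otimes X$, hence bounded above with a uniform bound; passing to the colimit, the fiber of $V \otimes X \to V[v^{-1}]\otimes X$ is bounded above. (If $V$ has type exactly $n$ this still works; if type $> n$ the self-map may be nilpotent and the statement is immediate.)

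\textbf{$(2) \Rightarrow (1)$.} This is the step I expect to be the main obstacle, since it is where one must actually produce the finite localization $L_n^f X$ from homotopy-level data. Recall $L_n^f X$ is the localization away from any finite type $n+1$ spectrum $F$ with a $v_{n+1}$-self map; concretely the acyclization $C_n^f X = \fib(X \to L_n^f X)$ is built from $F \otimes X$ and its telescope. The strategy is: choose a finite type $n$ spectrum $V$ admitting a $v_{n+1}$-self map $v$ (Smith--Toda type complexes, which exist at the primes relevant here), and relate $C_n^f X$ to the fiber of $V \otimes X \to V[v^{-1}] \otimes X$. Precisely, one uses that $C_n^f X$ is the colimit of $\mathrm{Tot}$ of a cosimplicial object built from the idempotent attached to $V$, or more simply: the generalized Moore spectrum $V$ is a retract (after smashing with a suitable finite complex) of an object whose telescope is $L_{n+1}^f$-local, and the fiber of $X \to L_n^f X$ lies in the localizing subcategory generated by $V[v^{-1}]^{\vee} \otimes (\text{fiber of } V\otimes X \to V[v^{-1}]\otimes X)$. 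Since that fiber is bounded above by $(2)$, and smashing bounded-above spectra with a fixed finite spectrum keeps things bounded above, one concludes the fiber $C_n^f X$ is bounded above. The care here is in the bookkeeping: $L_n^f$ is defined via a whole thick subcategory of finite type $n+1$ spectra, so one invokes the thick subcategory theorem to pass from the single $V/v$ to all of them, at the cost of a (uniform, finite) shift in the boundedness range.

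\textbf{$(1) \Rightarrow (3)$.} Suppose $X \to L_n^f X$ has fiber bounded above, say in degrees $\leq b$. Let $V$ be any finite $p$-local spectrum of type $\geq n+1$. Then $V$ is $L_n^f$-acyclic (its $L_n^f$-localization vanishes, since it is already killed by the finite localization away from type $n+1$ complexes), so $V \otimes L_n^f X = L_n^f(V \otimes X)$... more directly: $V \otimes X \cong V \otimes \fib(X \to L_n^f X)$ because $V \otimes L_n^f X = 0$. Hence $V \otimes X$ is a retract/smash of a finite spectrum with a bounded-above spectrum, so it is bounded above. This gives $(3)$.

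Assembling the three implications $(3)\Rightarrow(2)\Rightarrow(1)\Rightarrow(3)$ closes the loop and proves the equivalence. The only genuinely delicate point, as noted, is $(2)\Rightarrow(1)$: making the passage from ``the fiber of the $v$-telescope map on $V\otimes X$ is bounded above'' to ``the fiber of finite localization is bounded above'' fully rigorous requires identifying $C_n^f X$ with an explicit colimit built out of $V$ and its self-map, which is standard (this is the content of the construction of $L_n^f$ via a single Moore-type spectrum) but should be spelled out using the idempotent triangle for the smashing localization $L_n^f$.
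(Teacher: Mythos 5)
Your implications $(1)\Rightarrow(3)$ and $(3)\Rightarrow(2)$ are correct and amount to the same observations the paper uses (the paper runs $(1)\Rightarrow(2)\Rightarrow(3)$ via the fact that $L_n^f$ is smashing and $L_n^fV\simeq V[v^{-1}]$, but the content is identical). The genuine gap is in $(2)\Rightarrow(1)$, which you yourself flag as the main obstacle: the sketch you give there does not work. First, even granting your claim that $\fib(X\to L_n^fX)$ lies in the localizing subcategory generated by $V[v^{-1}]^{\vee}\otimes\fib(V\otimes X\to V[v^{-1}]\otimes X)$, that would prove nothing: localizing subcategories are closed under arbitrary coproducts, hence under unboundedly positive suspensions, so membership in the localizing subcategory generated by a bounded-above object does not imply being bounded above. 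Second, and more fundamentally, the step genuinely requires descending through the chromatic types. For instance at $n=1$, boundedness of $\fib(X/p\to X/p[v_1^{-1}])$ only controls the fiber of finite localization \emph{mod $p$}; one must separately use that this fiber is rationally trivial to promote the mod-$p$ bound to an integral one. No manipulation of the idempotent triangle for a single complex $V$ substitutes for this.

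The paper's proof of the hard direction (stated there as $(3)\Rightarrow(1)$) is a descending induction on the type $k$ of a finite complex $W$, from $k=n+1$ down to $k=0$, showing that $W\otimes F$ is bounded above, where $F=\fib(X\to L_n^fX)$. The base case is the hypothesis. The inductive step uses two inputs absent from your sketch: (i) for $W$ of type $k\le n$ with $v_k$-self map $v$, the telescope $W[v^{-1}]\otimes F$ vanishes, because $F$ lies in the localizing subcategory generated by finite type-$(n+1)$ spectra and a $v_k$-self map is nilpotent on any finite spectrum of type $>k$; and (ii) the fiber of $W\otimes F\to W[v^{-1}]\otimes F$ is built from extensions, negative suspensions, and a filtered colimit with uniform bound of copies of $W/v\otimes F$, which is bounded above by the inductive hypothesis. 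Since the target vanishes by (i), this bounds $W\otimes F$ itself, and at $k=0$ one concludes that $F$ is bounded above. If you replace your $(2)\Rightarrow(1)$ sketch with this induction (run either from your $(3)$ or directly), the proof closes.
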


\begin{proof}
$(1)$ implies $(2)$, since $L_n^{f}$-localization is smashing and $L_n^{f} V \simeq V[v^{-1}]$. $(3)$ follows from $(2)$ since $V[v^{-1}]=0$ for $V$ of type at least $n+1$.

We now show that $(3)$ implies $(1)$. It will suffice to show by descending induction on $k$ that $V\otimes F$ is bounded above for each $V$ of type $k$, where $F$ is the fiber of $X \to L_n^fX$. By hypothesis we know the result for $k\geq n+1$. 

Suppose that we know the result for $k+1$ and let $V$ be a finite type $k$ spectrum with $v_k$-self map $v$.  $V[v^{-1}]\otimes F=0$ because $F$ is $L_n^{f}$-acyclic. Then it is enough to show the fiber of $V\otimes F \to V[v^{-1}]\otimes F$ is bounded above. But this fiber is built from extensions, negative suspensions, and filtered colimits from $V/v\otimes F$, which is bounded above by the inductive hypothesis, so we may conclude.
\end{proof}

    

\begin{dfn}\label{dfn:assLnflocal}
    We say that $X$ is \mdef{asymptotically $L_n^{f}$-local} if it satisfies the equivalent conditions of \Cref{lem:assLnflocal}.
\end{dfn}

Note that the above definition can be considered a weakening of Mahowald--Rezk's notion of an fp-type $n$ spectrum \cite{mahowald1999brown}.


We now prove the two theorems of this subsection, beginning with the height $1$ LQ property for the $K(1)$-local sphere:

\begin{thm}\label{thm:LQk1localsphere}
	For a fixed prime $p\geq5$ and type $3$ complex $V$, $V\otimes \ell^{hp^k\ZZ}$ is bounded in the cyclotomic $t$-structure, uniformly for all $k\geq0$.
\end{thm}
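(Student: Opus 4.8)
The plan is to reduce \Cref{thm:LQk1localsphere} to the cyclotomic asymptotic constancy theorem (\Cref{thm:E1A2algtame}) combined with the height $1$ LQ property for $\ell$ itself, proven by Ausoni--Rognes (\Cref{thm:AR}), which says that $V(2)\otimes\THH(\ell)$ is bounded in the cyclotomic $t$-structure. The key point is that $\ell$, with its $\ZZ$-action by $\Psi^{1+p}$, is a connective $\EE_\infty$-ring (hence in particular $\EE_1\otimes\A_2$) of fp-type $1$, and the $\ZZ$-action is locally unipotent (on $\pi_*\ell/p = \FF_p[v_1]$ the operation $\Psi^{1+p}$ acts on $v_1$ by the unit $(1+p)^{p-1}\equiv 1\pmod p$, so it acts unipotently). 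A type $3$ complex $V$ is a finite spectrum of type $n+2$ for $n=1$, so \Cref{thm:E1A2algtame} applies directly: for each $k \gg 0$ there is an isomorphism of $W_k$-modules in cyclotomic spectra $V\otimes\THH(\ell^{hp^k\ZZ}) \cong V\otimes W_k\otimes\THH(\ell)$, and moreover the theorem provides an explicit bound — if $V\otimes\THH(\ell)$ is bounded in the range $[c,b]$ then $V\otimes\THH(\ell^{hp^k\ZZ})$ is bounded in the range $[c-1,b+3]$ for all $k\geq k(\ell,V,1)$.

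First I would record that $V\otimes\THH(\ell)$ is cyclotomically bounded, say in a range $[c,b]$; this is \Cref{thm:AR} together with the observation that a type $3$ complex lies in the thick subcategory generated by $V(2)$ (using the thick subcategory theorem), so boundedness of $V(2)\otimes\THH(\ell)$ forces boundedness of $V\otimes\THH(\ell)$ — with a range depending on the number of cells needed. Next I would invoke \Cref{thm:E1A2algtame} to get, for all $k\geq k(\ell,V,1)$, that $V\otimes\THH(\ell^{hp^k\ZZ})$ is bounded in the range $[c-1,b+3]$, a bound independent of $k$. This handles all sufficiently large $k$ uniformly. The remaining finitely many values $k=0,1,\dots,k(\ell,V,1)-1$ must be dealt with separately: each $\ell^{hp^k\ZZ}$ is a connective $\EE_1$-ring, and we need $V\otimes\THH(\ell^{hp^k\ZZ})$ bounded cyclotomically.

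For these finitely many small $k$, I would use the Segal conjecture input from \cite{lee2023topological}: Lee and the third author prove the Segal conjecture for $\THH(\ell^{hp^k\ZZ})$, i.e. that the cyclotomic Frobenius has bounded-above fiber mod $(p,v_1)$, and a constancy result showing $V(2)\otimes\THH(\ell^{hp^k\ZZ})$ has the same homotopy ring as $V(2)\otimes\THH(\ell^{B\ZZ})$ for all $k\geq 0$. Since we already know (from the large-$k$ case, or directly) that $V\otimes\THH(\ell^{B\ZZ})$ is cyclotomically bounded, the constancy statement transports this to each $\ell^{hp^k\ZZ}$. Concretely, one combines the Segal condition $\Seg{b'}$ with weak canonical vanishing — obtained via \Cref{prop:omnibus} and the B\"okstedt class machinery of \Cref{subsec:bokstedt1} applied to the Bökstedt element $\mu$ which maps to $\ell^{hp^k\ZZ}$ — and then applies \Cref{lem:wcv-sc-to-cb} (equivalently \Cref{prop:omnibus}(2)) to conclude cyclotomic boundedness. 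Taking the maximum of the finitely many bounds over $k < k(\ell,V,1)$ together with the uniform bound $[c-1,b+3]$ for $k\geq k(\ell,V,1)$ gives a single range that works for all $k\geq 0$.

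The main obstacle is the small-$k$ range: \Cref{thm:E1A2algtame} only produces the desired conclusion for $k\gg 0$, so one genuinely needs the finer Segal-conjecture and constancy results of \cite{lee2023topological} (which are special to height $1$ and $p\geq 5$) to close the gap, rather than being able to appeal to the general machinery of \Cref{sec:tame}. A secondary technical point is bookkeeping the passage from the hcring $V(2)$ to a general type $3$ complex $V$ and ensuring the boundedness ranges remain uniform; this is routine given the thick subcategory theorem and subadditivity of the relevant boundedness parameters, but it should be spelled out. Once $V\otimes\THH(\ell^{hp^k\ZZ})$ is known cyclotomically bounded uniformly in $k$, the height $1$ LQ property for $\ell^{hp^k\ZZ}$ follows by definition (\Cref{dfn:LQ}), and in particular the hypotheses needed elsewhere (e.g. for \Cref{thm:kk1intro}) are in place.
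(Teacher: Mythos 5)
Your argument is correct, but it is organized differently from the paper's, and the difference is worth noting: you split into a large-$k$ regime (handled by the asymptotic constancy theorem, \Cref{thm:E1A2algtame}) and a small-$k$ regime (handled by the Segal conjecture and constancy results of \cite{lee2023topological}), whereas the paper runs a single uniform argument for all $k\geq 0$ by verifying the three hypotheses of \Cref{prop:Rhz-cyc-bounded} directly: condition (1) (boundedness of the zero-fiber $(X_k)_{|0}\cong V\otimes\THH(\ell)^{hp^k\Z}$, via \Cref{lem:coassemblygenTHH} and \Cref{thm:AR}), condition (2) (the $\pi_*$-identification, from the Lee--Levy constancy computation), and condition (3) (the Segal condition, from their Segal conjecture). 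Your small-$k$ argument is in fact this same argument in disguise --- the chain ``Segal condition plus weak canonical vanishing via the B\"okstedt class $\mu$, then \Cref{prop:omnibus}(2)'' is precisely the content of \Cref{prop:Rhz-cyc-bounded}, and since the Lee--Levy inputs hold for \emph{all} $k\geq0$ with parameters independent of $k$, this prong alone already proves the theorem; the appeal to \Cref{thm:E1A2algtame} for $k\gg0$ is therefore redundant (and is the heavier tool, since it requires trivializing the action on $V\otimes\ell$, bootstrapping through $\pgnsp{\lpr}$, etc.). Two points you should tighten if you keep your formulation: first, the weak canonical vanishing obtained from the B\"okstedt class only holds a priori after restricting to $p^j\Z_p$ for $j\gg0$ (the class $1\otimes\mu$ is only seen to be a transfer after such a restriction), and one needs \Cref{lem:move-inward-can-van} to propagate it to the whole $W$-module --- this is exactly why the paper packages the argument as \Cref{prop:Rhz-cyc-bounded} rather than citing \Cref{prop:omnibus} directly; second, the reduction from a general type $3$ complex to a single one via the thick subcategory theorem is fine, but you should fix one hcring model (the paper's choice of an $\A_2$-structure from \cite{OkaMult}) so that the B\"okstedt-class machinery, which needs an $h\A_2$-ring, applies.
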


\begin{proof}
	
	Fix a prime $p\geq 5$. Let $V = \SP/(p,v_1)$, which is an $\A_2$-ring by \cite{OkaMult}. 

	
	To prove the theorem, it will suffice to verify the conditions of \Cref{prop:Rhz-cyc-bounded} with parameters uniform in $k$ for $X_k=V\otimes\THH(\ell^{hp^k\ZZ})$. Condition (3) follows from \cite[Theorem 8.3]{lee2023topological}, and condition (2) follows from \cite[Theorem 6.1, Remark 4.13]{lee2023topological}
	
	
	By \Cref{thm:AR}, $\ell$ satisfies the height $1$ LQ property. The $\ZZ$-action on $\pi_*\ell/p$ is trivial and in particular unipotent, so the action on $\ell$ is unipotent by \Cref{cor:check-unip-pi}. By applying \Cref{lem:coassemblygenTHH}, we learn that $(X_k)_{|0} \cong V\otimes \THH(\ell)^{hp^k\ZZ}$ verifying condition (1).
	
	
	%
\end{proof}

$\TC(\ell^{h\ZZ})$ for $p>2$ was shown in \cite{levy2022algebraic} to be closely related to the $K$-theory of the $K(1)$-local sphere. By combining this with \Cref{thm:LQk1localsphere}, we learn that the $K$-theory of the $K(1)$-local sphere is asymptotically $L_2^{f}$ local for $p\geq 5$, proving in particular \Cref{thm:kk1intro} in the introduction:

\begin{thm}\label{thm:kk1LQ}
	For $p\geq5$, $K(L_{K(1)}\SP)$ is asymptotically $L_2^{f}$-local.
\end{thm}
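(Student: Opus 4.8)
The plan is to deduce \Cref{thm:kk1LQ} from \Cref{thm:LQk1localsphere} together with the trace-theoretic identification of $K(L_{K(1)}\SP)$ with (a spectrum built from) $\TC(\ell^{h\ZZ})$ proved in \cite{levy2022algebraic}, and the characterization of asymptotic $L_2^f$-locality in \Cref{lem:assLnflocal}. First I would recall from \cite{levy2022algebraic} (using $p \geq 5 > 2$) that there is a fiber sequence relating $K(L_{K(1)}\SP)$ to $\TC(\ell^{h\ZZ})$ — concretely, that $K(L_{K(1)}\SP)$ differs from $\TC(\ell^{h\ZZ})$ by terms which are $L_1^f$-local (the $K$-theory of fields, étale contributions, and the like), so that for any type $3$ finite complex $V$ the spectrum $V \otimes K(L_{K(1)}\SP)$ is bounded above if and only if $V \otimes \TC(\ell^{h\ZZ})$ is. By \Cref{lem:assLnflocal}(3), it therefore suffices to show that $V \otimes \TC(\ell^{h\ZZ})$ is bounded above for a single (equivalently, every) finite $p$-local spectrum $V$ of type $3$.

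The key input is \Cref{thm:LQk1localsphere}, which says precisely that for $p \geq 5$ and $V$ of type $3$, the cyclotomic spectrum $V \otimes \THH(\ell^{hp^k\ZZ})$ is bounded in the cyclotomic $t$-structure, uniformly in $k$; specializing to $k=0$ gives that $V \otimes \THH(\ell^{h\ZZ})$ is cyclotomically bounded. Since $\TC$ of a bounded cyclotomic spectrum is bounded (the formula $\TC \simeq \mathrm{eq}(\TC^- \rightrightarrows \TP)$ together with \Cref{prop:AN_bound_to_segal} and the analysis in \Cref{subsec:implications} show that boundedness in the cyclotomic $t$-structure propagates to boundedness of $\TC$), we conclude that $V \otimes \TC(\ell^{h\ZZ})$ is bounded above. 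Feeding this back through the trace comparison of \cite{levy2022algebraic} yields that $V \otimes K(L_{K(1)}\SP)$ is bounded above, which by \Cref{lem:assLnflocal} is exactly the assertion that $K(L_{K(1)}\SP)$ is asymptotically $L_2^f$-local.

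I expect the main obstacle to be bookkeeping rather than anything deep: one must carefully extract from \cite{levy2022algebraic} the precise statement that the discrepancy between $K(L_{K(1)}\SP)$ and $\TC(\ell^{h\ZZ})$ is $L_1^f$-local (so that smashing with a type $3$, or even type $2$, complex kills it up to a bounded-above error), and one must make sure the devissage results of \cite{burklund2023k} used there are compatible with $p \geq 5$ and do not introduce unbounded contributions. The other point requiring a small amount of care is the passage from "cyclotomically bounded $\THH$" to "bounded $\TC$"; this is standard given the material of \Cref{sec:bounded}, and in fact the cleanest route is to observe that $V \otimes \THH(\ell^{h\ZZ}) \in \CycSp_{[c,b]}$ implies, via \Cref{prop:omnibus} and the formula for $\TC$ as an equalizer of $\TC^-$ and $\TP$, that $V \otimes \TC(\ell^{h\ZZ})$ lives in a bounded range of degrees depending only on $c$ and $b$. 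No new computation beyond \Cref{thm:LQk1localsphere} and the cited trace theorems is needed.
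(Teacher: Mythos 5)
Your overall route is the paper's route: use the trace comparison of \cite{levy2022algebraic} to reduce to $\TC(\ell^{h\ZZ})$, apply \Cref{thm:LQk1localsphere} there, and conclude via \Cref{lem:assLnflocal}. The reduction of ``cyclotomically bounded $\THH$'' to ``bounded-above $V\otimes\TC$'' is also fine (use that $\TR$ reflects coconnectivity and that $\TC$ is the fiber of $1-F$ on $\TR$, so $V\otimes\TC(\ell^{h\ZZ})\cong\TC(V\otimes\THH(\ell^{h\ZZ}))$ is bounded above).

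The gap is in the step you dismiss as bookkeeping. The discrepancy between $K(L_{K(1)}\SP)$ and $\TC(\ell^{h\ZZ})$ is \emph{not} something one simply ``extracts from \cite{levy2022algebraic} as an $L_1^f$-local error term.'' Concretely, \cite{levy2022algebraic} gives (i) a cofiber sequence $K(\FF_p)\to K(\ell^{h\ZZ})\to K(L_{K(1)}\SP)$ and (ii) a pullback square identifying $\fib\bigl(K(\ell^{h\ZZ})\to\TC(\ell^{h\ZZ})\bigr)$ with $\fib\bigl(K(\ZZ_p^{h\ZZ})\to\TC(\ZZ_p^{h\ZZ})\bigr)$. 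Term (i) is harmless since $K(\FF_p)$ is $p$-adically $\ZZ_p$, hence bounded. Term (ii) is the substantive one, and it is not ``$L_1^f$-local'' on the nose (at best asymptotically so); to control it one needs two further external inputs that your proposal does not name: \cite[Theorem 4.1]{land2023k}, which (since $\ZZ_p^{h\ZZ}\simeq\ZZ_p[\epsilon_{-1}]$ for the trivial action) reduces the question to $\fib\bigl(K(\ZZ_p[x])\to\TC(\ZZ_p[x])\bigr)$, and the hyperdescent/Lichtenbaum--Quillen results of \cite{clausen2021hyperdescent}, which show that this last fiber is asymptotically $L_1^f$-local and hence asymptotically $L_2^f$-local. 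One then assembles the pieces using the (easy but necessary) observation that asymptotically $L_2^f$-local spectra form a thick subcategory. With those two citations supplied your argument closes; without them the ``if and only if'' you assert between boundedness of $V\otimes K(L_{K(1)}\SP)$ and of $V\otimes\TC(\ell^{h\ZZ})$ is unjustified.
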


\begin{proof}
 The category of asymptotically $L_2^f$-local spectra is clearly a thick subcategory.
	
	By \cite{levy2022algebraic}, there is a cofiber sequence
	
	$$K(\FF_p) \to K(\ell^{h\ZZ}) \to K(L_{K(1)}\SP) $$
	
	and a pullback square
	\begin{center}
		\begin{tikzcd}
			K(\ell^{h\ZZ}) \ar[r]\ar[d] & \TC(\ell^{h\ZZ})\ar[d]\\
			K(\ZZ_p^{h\ZZ})\ar[r] & \TC(\ZZ_p^{h\ZZ})
		\end{tikzcd}
	\end{center}
	
	Since $K(\FF_p)$ $p$-adically is $\ZZ_p$, it is asymptotically $L_2^f$-local. It thus suffices to show that $\TC(\ell^{h\Z})$ as well as $\fib(K(\Z_p^{h\Z}) \to \TC(\Z_p^{h\Z}))$ are  too. 
	By \Cref{thm:LQk1localsphere}, $\TC(\ell^{h\ZZ})$ is asymptotically $L_2^f$-local.

	By \cite[Theorem 4.1]{land2023k}, to show that
	$\fib(K(\Z_p^{h\Z}) \to \TC(\Z_p^{h\Z}))$ is asymptotically $L_2^f$-local, it is equivalent to show that $\fib(K(\Z_p[x]) \to \TC(\Z_p[x]))$ is. But this now follows from \cite[Theorem 1.1,1.2]{clausen2021hyperdescent}.
\end{proof}






\appendix
\section{}\label{sec:app}

\subsection{Homotopy ring structures}

\begin{dfn}\label{dfn:ha2}
  The category of \textit{hcrings} in a symmetric monoidal presentable category $\CC$ is the category of commutative algebras in $h\CC$, the homotopy $1$-category of $\CC$. Similarly, an \textit{hring} is an associative algebra in $h\CC$, and a \textit{h$\A_2$ring} is an $\A_2$-algebra in $h\CC$.
\end{dfn}

\begin{dfn}\label{dfn:hcentral}
  Let $R$ be an h$\A_2$ring in $\CC$.
  A map $z: S \to R$ is \textit{hcentral} if the following diagrams in $\CC$ commute  
  \[ \begin{tikzcd}
    S\otimes R \ar[d, bend right, "z \cdot -"'] \ar[d, bend left, "- \cdot z"] &
    S \otimes R \otimes R \ar[d, "(z \cdot -) \otimes R"] \ar[r, "S\otimes\mu"] &
    S\otimes R \ar[d, "z \cdot -"] &
    S \otimes R \otimes R \ar[d, "R \otimes (- \cdot z)"] \ar[r, "S\otimes \mu"] &
    S\otimes R \ar[d, "- \cdot z"] \\
    R &
    R \otimes R \ar[r, "\mu"] &
    R &
    R \otimes R \ar[r, "\mu"] &
    R.
  \end{tikzcd} \]
  We say $z \in \pi_jR$ is hcentral if it is as a map $\Sigma^{j}\one \to R$.
\end{dfn}

\begin{rmk} \label{rmk:restrict-hcentral}
  Given an hcentral $X \to R$ and a map $Y \to X$
  the composite $Y \to X \to R$ is an hcentral as well.
  As a consequence of this,
  If $i : R_1 \to R_2$ is an hcentral ring map
  then the elements of $\pi_*R_1$ naturally land in the
  subring of hcentral elements of $\pi_*R_2$.
\end{rmk}

\begin{rmk}
  A map of hcrings (or of hrings) is the same data as a map of the underlying h$\A_2$rings.
  For this reason we will sometimes abuse notation, refer to these as simply ``ring maps''.
\end{rmk}

\begin{lem} \label{lem:moore-hcentral}
  Let $R$ be an h$\A_2$ring in a stable, exactly symmetric monoidal category $\CC$.
  If $p^k$ acts by zero on $R$ and $k \geq m_p^{\A_2}$, 
  then there is an hcentral ring map
  \[ \one_\CC/p^{2k} \to R. \]
\end{lem}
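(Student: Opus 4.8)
\textbf{Proof plan for \Cref{lem:moore-hcentral}.}

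The goal is to build an hcentral ring map $\one_\CC/p^{2k} \to R$ whenever $p^k$ acts by zero on an h$\A_2$-ring $R$ and $k \ge m_p^{\A_2}$. Since we only need a map of h$\A_2$-rings, we can work entirely in the homotopy category $h\CC$, which is a (monoidal) $1$-category; the subtlety is only in controlling the interaction of the chosen map with multiplication, i.e.\ hcentrality. First I would use the defining property of $m_p^{\A_2}$ (Notations and Conventions, item (4)): for $k \ge m_p^{\A_2}$ the Moore spectrum $\Ss/p^k$ admits an $\A_2$-algebra structure, hence $\one_\CC/p^k \coloneqq \one_\CC \otimes \Ss/p^k$ is an h$\A_2$-ring in $\CC$. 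The plan is \emph{not} to map $\one_\CC/p^{2k}$ in directly, but to first produce a central element and then bootstrap.

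The key step is to produce an hcentral element $z \in \pi_0(R/p^k)$ lifting $1$, equivalently a map $\one_\CC/p^k \to R$ whose composite with the unit $\one_\CC \to \one_\CC/p^k$ is the unit of $R$, which is moreover hcentral. Here I would exploit that $p^k$ acts by zero on $R$: the cofiber sequence $\one_\CC \xrightarrow{p^k} \one_\CC \to \one_\CC/p^k$ shows that the unit $u\colon \one_\CC \to R$ factors through $\one_\CC/p^k$ (the obstruction to factoring is $p^k \cdot u = 0$). The set of such factorizations is a torsor over $[\Sigma \one_\CC, R] = \pi_1 R$. Then I would check hcentrality of any such factorization $z$: the two composites $\one_\CC/p^k \otimes R \rightrightarrows R$ in \Cref{dfn:hcentral} and the two associativity-type hexagons all agree after precomposition with $\one_\CC \otimes R \to \one_\CC/p^k \otimes R$ (since $z$ restricts to the unit, which is strictly central in any h$\A_2$-ring), so the difference of each pair of maps is detected on the cofiber $\Sigma \one_\CC \otimes R$; and because $p^k = 0$ on $R$ one has $[\Sigma\one_\CC \otimes R, R] \cong$ a $p^k$-torsion group, so after possibly replacing $z$ by a modification (adding a class in $\pi_1 R$, which is $p^k$-torsion, and noting the relevant obstruction is $2$-divisible-type, i.e.\ lands in the image of multiplication by $p^k$) the relevant differences vanish. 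This torsion/obstruction-juggling is exactly where the jump from $p^k$ to $p^{2k}$ comes in: a class that is $p^k$-torsion and simultaneously in the image of $p^k$ can be killed, and the resulting central lift is genuinely only central after passing to the $p^{2k}$-Moore reduction. I expect \emph{this} step — pinning down precisely which obstruction groups the hexagons land in and verifying they are annihilated by the passage $p^k \rightsquigarrow p^{2k}$ — to be the main obstacle, and the place where the hypothesis $k \ge m_p^{\A_2}$ and the exactness of the symmetric monoidal structure are both essential.

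Finally I would assemble the ring map. Given the hcentral element $z\colon \one_\CC/p^k \to R$, I would use hcentrality to extend multiplicatively: an hcentral map out of $\one_\CC/p^k$ equips $R$ with the structure of an algebra over $\one_\CC/p^k$ in $h\CC$, and since $p^{2k}$ annihilates $\one_\CC/p^k$ (as $p^k$ already does) the unit map $\one_\CC/p^{2k} \to \one_\CC/p^k$ composed with $z$ is the desired hcentral ring map $\one_\CC/p^{2k} \to R$ — here I use that $\one_\CC/p^{2k}$ is the free h$\A_2$-ring-with-a-central-null-homotopy-of-$p^{2k}$, or more concretely that a map of h$\A_2$-rings $\one_\CC/p^{2k}\to R$ is the same as an hcentral element of $\pi_0 R$ killed by $p^{2k}$, composed with checking the hexagons exactly as in the previous paragraph but now with all relevant obstruction groups automatically zero because the source is already reduced mod $p^{2k}$. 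The routine verifications (that the composite with the unit is the unit of $R$, and that the two hcentrality squares and hexagons of \Cref{dfn:hcentral} commute) follow formally from the construction and the fact that $z$ was built to be hcentral, so I would not grind through them in detail.
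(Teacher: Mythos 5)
Your overall route is the paper's: factor the unit through $\one_\CC/p^k$ using $p^k\cdot u=0$, then show the composite with the reduction $r\colon\one_\CC/p^{2k}\to\one_\CC/p^k$ is an hcentral ring map. But the central verification — the part you yourself flag as the main obstacle — is where your write-up has a genuine gap. You do not need to (and in general cannot) modify $z$ by a class in $\pi_1R$ so that $z\colon\one_\CC/p^k\to R$ is itself hcentral, and the heuristic ``a class that is $p^k$-torsion and simultaneously in the image of $p^k$ can be killed'' is not a valid argument. The correct mechanism is simpler and purely structural: each defect (the commutator and the two associator diagrams of \Cref{dfn:hcentral}) vanishes on $\one_\CC\otimes R$ because $z$ is unital, hence factors through the cofiber $\Sigma R$ of $R\to\one_\CC/p^k\otimes R$; precomposing with $r\otimes R$ replaces that connecting map by $p^k$ times the one for $\one_\CC/p^{2k}$ (the map of cofiber sequences for $p^{2k}$ and $p^k$ has $\Sigma p^k$ on the right), and $p^k$ annihilates every map into $R$. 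The paper packages exactly this as a splitting $\one_\CC/p^k\otimes R\cong R\oplus\Sigma R$ coming from a chosen nullhomotopy of $p^k$ on $R$: the defects are of the form $(0,a)$, and $r\otimes R$ factors through the inclusion of the first summand. No obstruction theory, no choice of modification.

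Separately, your last paragraph asserts that a map of h$\A_2$-rings $\one_\CC/p^{2k}\to R$ is the same as an hcentral element of $\pi_0R$ killed by $p^{2k}$, and that $\one_\CC/p^{2k}$ is a free object of this kind. Both claims are false — Moore spectra are the standard counterexample to such universal properties — and they paper over the fourth defect, multiplicativity ($i(x)i(y)-i(xy)$), which is a genuine condition and not ``automatically zero because the source is already reduced mod $p^{2k}$.'' The paper kills it by running the same splitting argument with $\Ss/p^k$ in place of $R$; this is precisely where the hypothesis $k\ge m_p^{\A_2}$ is used (to have the $\A_2$-structure and $p^k=0$ on $\Ss/p^k$), whereas your argument never engages with it. So: right skeleton, but the two load-bearing verifications are missing or incorrect as stated.
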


\begin{proof}
    Let $R'$ be a h$\A_2$-ring with $p^k=0$. Consider the diagram
  \[ \begin{tikzcd}
    R' \ar[r, "p^{2k}"] \ar[d, "p^k"] & R' \ar[d, equal] \ar[r] & R'/p^{2k} \ar[d] \\
    R' \ar[r, "p^k"] & R' \ar[r] & R'/p^k.
  \end{tikzcd} \]
  After picking a nullhomotopy of the endomorphism $p^k$ of $R'$
  we can use this nullhomotopy to give splittings
  $ R'/p^{2k} \cong R' \oplus \Sigma R'$ and $R'/p^k \cong R' \oplus \Sigma R'$
  which are compatible in the sense that the reduction map
  $R'/p^{2k} \to R'/p^k$ factors as
  \[ R'\oplus \Sigma R' \twoheadrightarrow R' \hookrightarrow R' \oplus \Sigma R'. \]
	
  Using the fact that $p^k =0$ in $R$ we construct a unital map
  $i : \one/p^k \to R$.
  In order to prove the lemma it will suffice to show that the composite map
  $\one/p^{2k} \to \one/p^k \to R $
  is hcentral and a ring map.
  For this we must check that each of the maps
  \begin{align*}
    \one/p^k \otimes R
    &\xrightarrow{x \otimes y \mapsto (i(x) \cdot y) - (y \cdot i(x))}
    R \\
    \one/p^k \otimes R \otimes R
    &\xrightarrow{x \otimes y \otimes z \mapsto ((i(x) \cdot y) \cdot z) - (i(x) \cdot (y \cdot z))} 
    R \\
    R \otimes R \otimes \one/p^k 
    &\xrightarrow{y \otimes z \otimes x \mapsto (y \cdot (z \cdot i(x))) - ((y \cdot z) \cdot i(x))} 
    R \\
    \one/p^k \otimes \one/p^k
    &\xrightarrow{x \otimes y \mapsto (i(x) \cdot i(y)) - i(x \cdot y)} 
    R
  \end{align*}
  becomes null upon precomposing with $r : \Ss/p^{2k} \to \Ss/p^k$.
  
  Using the splittings proven above, applied to $R'=R$ in the first three equations
  and applied to $R'=\Ss/p^k$ for the fourth (since $k\geq m_p^{\A_2}$), 
  and the fact that $i$ is unital we can rewrite these maps as follows
  \begin{align*}
    R &\oplus \Sigma R &
    &\hspace{-45pt}\xrightarrow{(0, a)} R & 
    (R \otimes R) &\oplus \Sigma (R \otimes R) &
    &\hspace{-45pt}\xrightarrow{(0,b)} R \\
    (R \otimes R) &\oplus \Sigma (R \otimes R) &
    &\hspace{-45pt}\xrightarrow{(0,c)} R &
    \one/p^k &\oplus \Sigma \one/p^k &
    &\hspace{-45pt}\xrightarrow{(0,d)} R 
  \end{align*}
  The factorization of the reduction map through the inclusion of the first summand
  now completes the proof of the lemma.
\end{proof}

\subsection{Almost compact objects}
\label{subsubsec:almost-compact}

It will be useful for us to have a finiteness condition
weaker than compactness which is sensitive to the choice of a $t$-structure on a stable category. For this reason we introduce the notion of \emph{almost compact objects}.

\begin{dfn} \label{dfn:almost-compact}
  Let $\CC$ be a stable presentable category equipped with a $t$-structure.
  Given an $X \in \CC$ which is bounded below we say that $X$ is \emph{almost compact}
  if for any filtered diagram $F : K \to \CC$ such that
  the set of objects $\{ F(k) \}_{k\in K}$ are uniformly bounded in the range $[c,b]$,
  the assembly map
  \[ \colim_{k \in K} \Map_\CC(X, F(k)) \to \Map_{\CC}(X, \colim_{k \in K} F(k)) \]
  is an isomorphism.
\end{dfn}

\begin{exm}\label{exm:ac}
  A (not necessarily $p$-complete) spectrum $X$ is almost compact iff
  it is bounded below and $\pi_kX$ is finitely generated for all $k$. A $p$-complete spectrum $X$ is almost compact under the additional condition that $\pi_k$ is $p$-nil for each $k$.
\end{exm}

As the filtered colimits appearing in the definition of almost compact are a subset of those appearing in the definition of compact we have the following lemma:

\begin{lem}
  Let $\CC$ be a stable presentable category equipped with a $t$-structure.
  If an object $X \in \CC$ is compact, then it is almost compact.
\end{lem}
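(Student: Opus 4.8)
The final statement to prove is:

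\begin{lem*}
Let $\CC$ be a stable presentable category equipped with a $t$-structure. If an object $X \in \CC$ is compact, then it is almost compact.
\end{lem*}

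\textbf{Plan of proof.} The plan is to simply unwind the two definitions and observe that the class of filtered colimits appearing in the definition of \emph{almost compact} (\Cref{dfn:almost-compact}) is a subclass of all filtered colimits. Concretely, if $X$ is compact, then by definition the assembly map
\[
\colim_{k \in K} \Map_\CC(X, F(k)) \to \Map_{\CC}(X, \colim_{k \in K} F(k))
\]
is an isomorphism for \emph{every} filtered diagram $F\colon K \to \CC$. In particular this holds for those filtered diagrams $F$ whose objects $\{F(k)\}_{k \in K}$ are uniformly bounded in some range $[c,b]$ with respect to the given $t$-structure, and for which $X$ is bounded below. But that is exactly the condition required for $X$ to be almost compact. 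So the only additional thing one must check is that a compact object is bounded below, or rather — re-reading \Cref{dfn:almost-compact} — the definition of almost compact is only imposed on bounded-below $X$, so there is nothing extra to verify: either $X$ is bounded below, in which case the assembly-map condition for the restricted class of diagrams follows from compactness, or $X$ is not bounded below, in which case the almost-compactness condition is vacuous. I would phrase the proof in the first way since it is cleaner.

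\textbf{Main obstacle.} There is essentially no obstacle here; this is a formal observation. The only subtlety worth a sentence is making explicit that the hypothesis ``$F(k)$ uniformly bounded in $[c,b]$'' simply cuts down the collection of test diagrams, so compactness (a stronger hypothesis, quantifying over \emph{all} filtered diagrams) immediately implies the weaker statement. I would write it as a one-line proof.

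\begin{proof}
  Suppose $X$ is compact, and assume $X$ is bounded below (otherwise there is nothing to prove, as \Cref{dfn:almost-compact} only constrains bounded below objects). Let $F\colon K \to \CC$ be any filtered diagram whose objects $\{F(k)\}_{k \in K}$ are uniformly bounded in a range $[c,b]$. Since $X$ is compact, the assembly map
  \[
  \colim_{k \in K} \Map_\CC(X, F(k)) \to \Map_{\CC}(X, \colim_{k \in K} F(k))
  \]
  is an isomorphism for \emph{every} filtered diagram $F$, and in particular for this one. As $F$ was an arbitrary uniformly bounded filtered diagram, $X$ is almost compact.
\end{proof}
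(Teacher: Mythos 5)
Your proof is correct and is exactly the argument the paper intends: the paper states the lemma immediately after observing that the filtered colimits in the definition of almost compact form a subset of those in the definition of compact, and gives no further proof. Your handling of the bounded-below clause is also fine, since \Cref{dfn:almost-compact} only constrains bounded-below objects.
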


Almost compact objects enjoy stronger closure properties than compact objects.
Specifically, they are closed under geometric realizations which are uniformly bounded below.

\begin{lem}\label{lem:closure-of-ac}
  Let $\CC$ be a stable presentable category equipped with a $t$-structure.
  The full subcategory of almost compact objects is closed under finite (co)limits
  and geometric realizations of simplicial diagrams which are uniformly bounded below.
\end{lem}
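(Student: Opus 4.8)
\textbf{Proof plan for Lemma \ref{lem:closure-of-ac}.}

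The plan is to reduce the statement about geometric realizations to the statement about finite colimits via the skeletal filtration, and to handle finite (co)limits directly from the defining property of almost compactness together with the fact that the mapping spectrum functor $\Map_\CC(X,-)$ and filtered colimits of uniformly bounded diagrams are both exact. First I would observe that since $\CC$ is stable, a finite limit is a finite colimit (up to shift), so it suffices to prove closure under finite colimits; and since finite colimits are built from finite coproducts and cofibers, it suffices to treat pushouts and the zero object. For a cofiber sequence $X_1 \to X_2 \to X_3$ with $X_1, X_2$ almost compact, the point is that $\Map_\CC(X_i, -)$ is exact in the second variable, and for a uniformly bounded filtered diagram $F\colon K \to \CC_{[c,b]}$ the colimit $\colim_K F$ is again bounded (in the range $[c,b]$ for an ordinary $t$-structure, or in $[c,b+3]$ in the cyclotomic case by \Cref{lem:fil-colim-bounded}, which is harmless since we only need \emph{uniform} boundedness); then the assembly map for $X_3$ sits in a map of fiber sequences between the assembly maps for $X_2$ and $X_1$ (with a shift), and the five lemma gives that it is an isomorphism. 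One mild point to be careful about is that $X_3$ should be bounded below, which it is since $X_1, X_2$ are.

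Next I would handle geometric realizations. Let $F_\bullet\colon \Delta^{\op} \to \CC$ be a simplicial object whose terms are uniformly bounded below, say all in $\CC_{\geq c}$, and suppose each $F_n$ is almost compact. Write $X \coloneqq |F_\bullet|$ for the geometric realization and $\mathrm{sk}_m X$ for its $m$-skeleton. The key facts are: (i) $\mathrm{sk}_m X$ is built from $F_0, \dots, F_m$ by finitely many pushouts (attaching cells via the latching objects $L_n F$), hence is almost compact by the finite-colimit case already established; (ii) the cofiber $\mathrm{sk}_m X / \mathrm{sk}_{m-1} X$ is a shift $\Sigma^m$ of the $m$-th stage, and more importantly, since all $F_n \in \CC_{\geq c}$, one has $\mathrm{sk}_m X / \mathrm{sk}_{m-1}X \in \CC_{\geq c+m}$, so that the tower $\{\mathrm{sk}_m X\}$ converges to $X$ with connectivity of the cofibers tending to $\infty$; (iii) consequently $X \cong \colim_m \mathrm{sk}_m X$, and in fact for any fixed integer $s$ the map $\mathrm{sk}_m X \to X$ is $s$-connective once $m \gg s$.

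To conclude, fix a uniformly bounded filtered diagram $G\colon K \to \CC_{[c',b']}$ with colimit $Y$ (again bounded, uniformly). We must show $\colim_K \Map_\CC(X, G(k)) \to \Map_\CC(X, Y)$ is an isomorphism, and for this it suffices to check it is an isomorphism on $\pi_s$ for each $s$. Using the cofiber sequences $\Map_\CC(\Sigma^m(\text{cofiber stage}), -) \to \Map_\CC(\mathrm{sk}_m X, -) \to \Map_\CC(\mathrm{sk}_{m-1}X, -)$ and the connectivity bound on the cofiber stages, for $m \gg s$ the groups $\pi_s \Map_\CC(X, Z)$ and $\pi_s \Map_\CC(\mathrm{sk}_m X, Z)$ agree naturally in $Z$ for $Z$ in a fixed bounded range; applying this to $Z = G(k)$ and $Z = Y$ (which live in the uniformly bounded range $[c', b']$) reduces the claim to the assembly isomorphism for $\mathrm{sk}_m X$, which we already know since $\mathrm{sk}_m X$ is almost compact. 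I expect the main obstacle to be purely bookkeeping: making precise the interchange of the colimit over $m$ (skeleta), the filtered colimit over $K$, and the homotopy groups $\pi_s$, i.e.\ verifying that the connectivity estimates are uniform enough that for each $s$ a single $m = m(s)$ works simultaneously for all $G(k)$ and for $Y$. Once that uniformity is in hand, everything else is a diagram chase with the five lemma.
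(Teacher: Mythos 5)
Your treatment of finite (co)limits is correct and is essentially the paper's argument (the paper upgrades the assembly map of spaces to mapping spectra via suspensions and invokes exactness; your five-lemma phrasing is the same thing). For geometric realizations you take a genuinely different route — the skeletal filtration of $|F_\bullet|$, with $\mathrm{sk}_m/\mathrm{sk}_{m-1}\simeq \Sigma^m(F_m/L_mF)$ increasingly connective — whereas the paper follows Lurie's argument: since the $F_n$ are uniformly connective and the targets $G(k)$ uniformly truncated, each mapping space $\Map_\CC(F_n,G(k))$ is $a$-truncated, so the totalization over $\Delta$ may be replaced by a finite limit over $\Delta_{s,\leq a+1}$, which commutes with the filtered colimit; crucially, the colimit $Y$ itself only ever appears via the universal property $\Map(|F_\bullet|,Y)\cong\lim_\Delta\Map(F_\bullet,Y)$, so no truncation bound on $Y$ is ever needed.

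That last point is where your argument has a genuine gap. You assert that $Y=\colim_K G$ lies in the uniformly bounded range $[c',b']$ and use this to conclude that $\pi_s\Map(X,Y)\cong\pi_s\Map(\mathrm{sk}_{m(s)}X,Y)$. For a general $t$-structure on a stable presentable category, filtered colimits do \emph{not} preserve coconnectivity — this is exactly the subtlety the paper isolates in \Cref{rmk:Lurie-is-wrong}, and it is not hypothetical: the cyclotomic $t$-structure, which is the case this lemma exists to serve, has this defect (there one at least has \Cref{lem:fil-colim-bounded}, but that is a special theorem about topological Cartier modules, not something available for the arbitrary $\CC$ in the statement). Without an upper bound on $Y$ the connectivity of the cofibers $\mathrm{sk}_m X/\mathrm{sk}_{m-1}X$ gives you no control on $\pi_s\Map(\mathrm{sk}_m X/\mathrm{sk}_{m-1}X,Y)$, so the tower $\{\Map(\mathrm{sk}_m X,Y)\}_m$ need not stabilize in each degree. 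The repair is to never map into $Y$ directly: write both sides of the assembly map in terms of the doubly indexed system $A_{m,k}=\Map_\CC(\mathrm{sk}_m X,G(k))$, using $\Map(X,Y)\cong\lim_m\Map(\mathrm{sk}_mX,Y)\cong\lim_m\colim_K A_{m,k}$ (the second isomorphism by almost compactness of the skeleta), and justify the interchange of $\lim_m$ and $\colim_K$ by observing that the fibers of $A_{m,k}\to A_{m-1,k}$ are $(b'-c-m)$-truncated \emph{uniformly in $k$}, and that filtered colimits of uniformly truncated spaces (or spectra) — which is where these mapping objects live, not in $\CC$ — remain truncated with the same bound. With that restructuring your skeletal argument goes through; as written, the appeal to boundedness of $Y$ does not.
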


\begin{proof}
  If $X$ almost compact, then by using suspensions of the diagram we map out to we can upgrade the isomorphism of mapping spaces in the definition of almost compact to an isomorphism of mapping spectra. Exactness of the mapping spectrum then makes it clear that the collection of almost compact objects is closed under finite (co)limits.

  For closure under geometric realizations that are uniformly bounded bounded below we follow the argument in \cite[Prop. C.6.4.4]{SAG}.\footnote{For a discussion of why we cannot simply cite this proposition see \Cref{rmk:Lurie-is-wrong} below.}
  Without loss of generality let $X_\bullet$ be a simplicial object in $\CC_{\geq 0}$
  and let $K \to \CC$ be a filtered diagram uniformly bounded in the range $[0,a]$.
  Then, we have isomorphisms
  \begin{align*}
    &\colim_{k \in K} \Map_{\CC} \left( \colim_{\bullet \in \Delta^\op} X_\bullet , k \right)
    \cong \colim_{k \in K} \lim_{\bullet \in \Delta} \Map_{\CC}( X_\bullet , k) 
    \cong \colim_{k \in K} \lim_{\bullet \in \Delta_{s, \leq a+1}} \Map_{\CC}( X_\bullet , k) \\
    &\cong \lim_{\bullet \in \Delta_{s, \leq a+1}} \colim_{k \in K} \Map_{\CC}( X_\bullet , k) 
    \cong \lim_{\bullet \in \Delta} \colim_{k \in K} \Map_{\CC}( X_\bullet , k)
    \cong \lim_{\bullet \in \Delta} \Map_{\CC} \left( X_\bullet ,\colim_{k \in K} k \right) \\
    &\cong \Map_{\CC} \left( \colim_{\Delta^\op} X_\bullet ,\colim_{k \in K} k \right).
  \end{align*}
  The first step pulls the colimit out.
  The second step uses the fact that $X_\bullet$ is $\geq 0$ and $k$ is $\leq a$ to
  conclude that $\Map_{\CC}( X_\bullet , k)$ is an $a$-truncated space
  and therefore we may replace the limit over $\Delta$ by a limit over $\Delta_{s, \leq a+1}$,
  which is a finite diagram.
  The third step uses the fact that $\Delta_{s, \leq a+1}$ is finite and $K$ is filtered to exchange the limit and colimit.
  The fourth step uses the fact that colimits in space preserve $a$-truncated objects
  and so that we may replace the limit over $\Delta_{s, \leq a+1}$ with a 
  limit over $\Delta$.
  The fifth step uses the hypothesis that each $X_\bullet$ is almost compact.
  The final step pulls the limit over $\Delta$ back inside.
\end{proof}

\begin{lem}\label{lem:thhac}
	Suppose that $R \in \Alg(\Sp_{\geq0})$ and $R/p$ is bounded below with finitely generated homotopy groups in each degree. Then $\THH(R)\otimes V$ is almost compact in $\Sp$ for any $V\in \Sp$ that is almost compact.
\end{lem}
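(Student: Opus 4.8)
The plan is to reduce the statement to the observation that $\THH(R)$ is built, in a suitably bounded way, out of the underlying spectrum of $R$, together with the fact (recalled in \Cref{exm:ac}) that a $p$-complete spectrum is almost compact as soon as it is bounded below with finitely generated $p$-nilpotent homotopy groups. Since we are working in $p$-complete spectra throughout, ``$R/p$ bounded below with finitely generated homotopy in each degree'' is precisely the condition that $R$ itself (equivalently, its underlying spectrum) is almost compact: indeed $R$ is connective, and $\pi_k(R)$ is a finitely generated $\Z_p$-module for every $k$ by the long exact sequence relating $\pi_*(R)$, $\pi_*(R)$, and $\pi_*(R/p)$ together with $p$-completeness.

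First I would recall the Bar-resolution presentation of $\THH$: for $R \in \Alg(\Sp_{\geq 0})$, $\THH(R)$ is the geometric realization of the cyclic bar construction, $\THH(R) \cong |[\bullet] \mapsto R^{\otimes(\bullet+1)}|$, a simplicial object in $\Sp_{\geq 0}$ all of whose terms are uniformly bounded below (in fact connective). Each term $R^{\otimes(m+1)}$ is a finite tensor power of an almost compact connective spectrum, hence almost compact by \Cref{lem:closure-of-ac} (which asserts closure of almost compact objects under finite colimits, and a finite tensor power of a connective spectrum is a finite colimit built from the spectrum — more directly, $\pi_*(X^{\otimes m})$ is finitely generated in each degree and $p$-nilpotent when $\pi_*X$ is, by a Künneth/tower argument). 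Therefore $\THH(R)$ is a geometric realization of a uniformly bounded-below simplicial diagram of almost compact objects, and \Cref{lem:closure-of-ac} applies to conclude that $\THH(R)$ is almost compact in $\Sp$.

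Finally, to get the statement for $\THH(R) \otimes V$: the collection of almost compact objects of $\Sp$ is a thick subcategory (closed under finite colimits and retracts by \Cref{lem:closure-of-ac}), and $\Sp^{\diamondsuit}$ — the dualizable, equivalently here the thick subcategory generated by $\SP$ — consists of retracts of finite cell spectra. So if $V$ is almost compact (in $\Sp$), one would like $\THH(R) \otimes V$ almost compact. When $V$ is itself a finite spectrum this is immediate since $\THH(R) \otimes V$ is then a finite colimit of shifts of the almost compact $\THH(R)$; for general almost compact $V$ one writes $V$ as a (uniformly bounded-below) colimit along its Postnikov tower with finitely generated homotopy groups, reducing to the case $V = H\Z_p$-module spectra of the form $\Sigma^k H A$ for $A$ finitely generated, and then further to $V$ a finite spectrum by resolving $A$. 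I expect the mild bookkeeping around which flavor of ``almost compact $V$'' is meant — and ensuring the tensor-with-$V$ step stays inside the bounded-below regime so that \Cref{lem:closure-of-ac} keeps applying — to be the only real subtlety; the structural input (thickness, the Bar resolution, uniform boundedness of its terms) is entirely formal given the lemmas already proved.
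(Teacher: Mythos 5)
There is a genuine gap, and it occurs in your very first reduction. You assert that the hypothesis ``$R/p$ is bounded below with finitely generated homotopy groups'' is precisely the condition that $R$ itself is almost compact. In this paper $\Sp$ denotes $p$-complete spectra, and by \Cref{exm:ac} a $p$-complete spectrum is almost compact only if its homotopy groups are, in addition to being finitely generated, $p$-nilpotent in each degree (\Cref{exm:ac-zp-modules} records explicitly that $\Z_p$ is \emph{not} almost compact; the point is that mapping out of it fails to commute with uniformly bounded filtered colimits computed in the $p$-complete category). The hypothesis on $R$ does not give $p$-nilpotence of $\pi_*R$: the $p$-complete sphere, or $\BPn$, satisfies the hypothesis but is not almost compact. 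Consequently your claims that $R$, $R^{\otimes(m+1)}$, and hence $\THH(R)$ are almost compact are all false ($\THH(\SP)\cong\SP$ is a counterexample to the last one), and the first two paragraphs of your argument collapse. The tensor factor $V$, whose homotopy \emph{is} $p$-nilpotent, cannot be deferred to the end; it is what makes anything almost compact here.

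The same misconception infects your final paragraph: reducing a general almost compact $V$ to a \emph{finite} spectrum is the wrong move, since a finite spectrum with free homotopy (e.g.\ $\SP$ itself) is not almost compact $p$-completely and tensoring $\THH(R)$ with it produces nothing almost compact. The correct cell decomposition of $V$ is by increasingly suspended Moore spectra $\Sigma^j\SP/p^k$, not by spheres. This is exactly how the paper proceeds: $R/p^k$ is almost compact by \Cref{exm:ac}, so building $V$ from such cells shows $V\otimes R$ is almost compact (the increasing suspensions guarantee finite generation in each degree); an induction then gives that $V\otimes R^{\otimes(n+1)}$ is almost compact and uniformly connective, and only at that point does \Cref{lem:closure-of-ac} apply to the cyclic bar construction to yield $V\otimes\THH(R)$ almost compact. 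Your use of the bar resolution and of \Cref{lem:closure-of-ac} is the right structural idea, but the order of operations must be: tensor with $V$ first, realize second.
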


\begin{proof}
    We first claim that the tensor product of $R$ with any almost compact object $V \in \Sp$ is almost compact. Our assumption on $R$ along with \Cref{exm:ac} implies that for all $k\geq 0$, $R/p^k$ is almost compact. By choosing a cell decomposition of $V$ made up of increasingly suspended copies $\SP/p^k$, we see that $V\otimes R$ is almost compact by \Cref{exm:ac} again. 
    
    Next, we observe that by induction, $V\otimes R^{\otimes n+1}$ is almost compact for all $n$. Since it is also uniformly bounded below (using connectivity of $R$), we learn by applying \Cref{lem:closure-of-ac} that $V\otimes \THH(R)$ is almost compact.
\end{proof}

\begin{exm} \label{exm:ac-zp-modules}
  Let $\Mod(\Z)_p$ be the category of $p$-complete $\Z$-modules
  with its standard $t$-structure.
  \begin{itemize}
  \item $\Z/p$ is compact and therefore almost compact.
  \item $\Z_p$ is neither compact nor almost compact.
  \item $\oplus_{i \geq 0} \Sigma^i \Z/p$ is almost compact, but not compact.\hfill\qedhere
  \end{itemize}
\end{exm}

\begin{rmk}\label{rmk:Lurie-is-wrong}
  In \cite[Dfn. C.6.4.1]{SAG}, Lurie defines an object $X$ in a presentable category $\DD$
  to be almost compact if $\tau_{\leq n}X$ is a compact object of $\tau_{\leq n}\DD$.
  In order to compare our definition with Lurie's it is useful to note that
  Lurie only applies this definition when $\DD = \CC_{\geq 0}$ is the connective objects of a
  $t$-structure on a stable category $\CC$
  and filtered colimits preserve coconnectivity in $\CC$.
  In this situation our definitions agree.

  Without the assumption that filtered colimits in $\CC$ preserve coconnectivity
  these definitions are not equivalent.
  As an example, note in \Cref{exm:ac-zp-modules}
  $\Z/p$ is compact and almost compact (according to \Cref{dfn:almost-compact}),
  but that $\Z/p \in (\Mod(\Z)_p)_{\geq 0}$ is not almost compact
  (according to Lurie's definition).
    
  This subtlety is not an idle curiosity and is indeed relevant to the present paper:
  Filtered colimits do not preserve coconnectivity in the cyclotomic $t$-structure.
\end{rmk}

\begin{dfn} \label{dfn:t-amplitude}
  Let $\CC$ and $\DD$ be stable categories equipped with $t$-structures
  and let $F: \CC \to \DD$ be an exact functor between them.

  We say that $F$ has \emph{$t$-amplitude $\geq a$} if
  $F(\CC_{\geq 0}) \subseteq \DD_{\geq a}$.
  We say that $F$ has \emph{$t$-amplitude $\leq b$} if
  $F(\CC_{\leq 0}) \subseteq \DD_{\leq b}$.
  If $F$ has $t$-amplitude $\geq a$ and $\leq b$, then we say 
  that $F$ has \emph{$t$-amplitude in the range $[a,b]$}. 
\end{dfn}

\begin{rmk} \label{rmk:spread-t-amplitude}
  Exactness implies that if a functor $F : \CC \to \DD$ has $t$-amplitude $[a_0,a_1]$,
  then for every interval $[b_0,b_1]$ we have
  $F(\CC_{[b_0,b_1]}) \subseteq \DD_{[b_0+a_0,b_1+a_1]}$.
\end{rmk}


\begin{lem} \label{lem:map-formula2} 
  Let $\CC$ be a stable, presentably symmetric monoidal category
  with a compatible $t$-structre and
  let $R_\bullet : I \to \CAlg(\CC)$ be a filtered diagram of commutative algebras in $\CC$
  with uniformly bounded $t$-amplitude and colimit $\on_{\CC}$.
  \begin{enumerate}
  \item Given an almost compact object $X \in \CC$ and a bounded object $Y \in \CC$
    there is an isomorphism
    \[ \colim_{\alpha \in I} \Map_{\Mod(\CC; R_\alpha)}(R_\alpha \otimes X, R_\alpha \otimes Y)
    \cong \Map_{\CC}(X,Y). \]
  \item Given a bounded, almost compact object $X \in \CC$ and an $\alpha \in I$,
    if $Y$ is a retract of $R_\alpha \otimes X$ in $\Mod(\CC;R_\alpha)$,
    then there is an arrow $\alpha \to \beta \in I$ and an isomorphism of
    $R_\beta$-modules
    \[ R_\beta \otimes_{R_\alpha} Y \cong R_\beta \otimes (\on \otimes_{R_\alpha} Y). \]
  \end{enumerate}  
\end{lem}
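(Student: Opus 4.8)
\textbf{Proof proposal for Lemma \ref{lem:map-formula2}.}

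The plan is to reduce both parts to the statement that, for a filtered diagram $R_\bullet$ of commutative algebras in $\CC$ with uniformly bounded $t$-amplitude and colimit $\one_\CC$, the ``assembly along $R_\bullet$'' map is an isomorphism on mapping objects when the source is almost compact and bounded below. First I would set up notation: for a fixed $\alpha$ and $R_\alpha$-modules $M = R_\alpha \otimes X$, $N = R_\alpha \otimes Y$, adjunction gives
\[ \Map_{\Mod(\CC;R_\alpha)}(R_\alpha \otimes X, R_\alpha \otimes Y) \cong \Map_{\CC}(X, R_\alpha \otimes Y), \]
and the structure maps in the diagram $\alpha \to \beta$ are compatible with this identification, so that the colimit in (1) becomes
\[ \colim_{\alpha \in I} \Map_{\CC}(X, R_\alpha \otimes Y). \]
Since $\colim_\alpha R_\alpha \cong \one_\CC$, we have $\colim_\alpha (R_\alpha \otimes Y) \cong Y$, so it suffices to commute the filtered colimit past $\Map_\CC(X, -)$. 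For this I would invoke the hypothesis that the $R_\alpha$ are uniformly bounded: tensoring a bounded object $Y$ with an algebra of $t$-amplitude contained in a fixed interval $[a,b]$ keeps $R_\alpha \otimes Y$ inside a fixed interval (\Cref{rmk:spread-t-amplitude}), so $\{R_\alpha \otimes Y\}_\alpha$ is a uniformly bounded filtered diagram. Now almost compactness of $X$ (\Cref{dfn:almost-compact}), upgraded from mapping spaces to mapping spectra by smashing the target with spheres as in the proof of \Cref{lem:closure-of-ac}, gives exactly
\[ \colim_{\alpha} \Map_\CC(X, R_\alpha \otimes Y) \xrightarrow{\ \cong\ } \Map_\CC\bigl(X, \colim_\alpha R_\alpha \otimes Y\bigr) \cong \Map_\CC(X, Y), \]
which is (1).

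For (2), I would first produce a candidate map. A retraction data exhibiting $Y$ as a retract of $R_\alpha \otimes X$ is the same as an idempotent $e \in \pi_0 \Map_{\Mod(\CC;R_\alpha)}(R_\alpha \otimes X, R_\alpha \otimes X)$ with $Y \cong \mathrm{colim}(\cdots \xrightarrow{e} R_\alpha \otimes X \xrightarrow{e} \cdots)$ (the splitting of the idempotent). Applying part (1) with $X$ in both slots, and using that $X$ is bounded and almost compact, we get
\[ \colim_{\beta \geq \alpha} \Map_{\Mod(\CC;R_\beta)}(R_\beta \otimes X, R_\beta \otimes X) \cong \Map_\CC(X,X). \]
Let $e_\infty \in \pi_0\Map_\CC(X,X)$ be the image of $e$. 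Since idempotents in $\pi_0$ of a mapping spectrum which is a filtered colimit are detected at a finite stage (here one uses that $I$ is filtered and that an idempotent is a finite amount of data: an element squaring to itself up to the relevant coherences, all of which live in homotopy groups of a filtered colimit of spectra), there is $\beta \geq \alpha$ and an idempotent $e_\beta \in \pi_0 \Map_{\Mod(\CC;R_\beta)}(R_\beta \otimes X, R_\beta \otimes X)$ mapping to $e_\infty$ and to $R_\beta \otimes_{R_\alpha} e$. Splitting $e_\beta$ as a map of $R_\beta$-modules, and splitting $e_\infty$ as a map in $\CC$, we obtain $R_\beta$-modules $Z_\beta$ and an object $Z_\infty \in \CC$ with $R_\beta \otimes_{R_\alpha} Y \cong Z_\beta$ and $\one \otimes_{R_\alpha} Y \cong Z_\infty$; moreover $R_\beta \otimes Z_\infty$ splits the idempotent $R_\beta \otimes e_\infty$, which agrees with $e_\beta$ up to the comparison map. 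Hence $R_\beta \otimes (\one \otimes_{R_\alpha} Y) \cong R_\beta \otimes Z_\infty \cong Z_\beta \cong R_\beta \otimes_{R_\alpha} Y$, which is (2).

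The main obstacle I anticipate is the bookkeeping in (2): turning the statement ``an idempotent in $\pi_0$ of a filtered colimit of ring spectra is detected at a finite stage, compatibly with its splitting'' into something airtight. One has to be careful that it is not merely the element $e$ but the full $E_1$-idempotent datum (in the sense of $\mathrm{Idem}$-shaped diagrams, cf. \cite[Section 4.8.2]{HA}) that must be transported through the filtered colimit, and that the resulting splitting is functorial enough to match $R_\beta \otimes_{R_\alpha}(-)$ applied to the original splitting. The cleanest route is probably to phrase (1) as an equivalence of mapping spectra and then observe that the functor $\mathrm{colim}_\beta \Map_{\Mod(R_\beta)}(R_\beta\otimes X, R_\beta\otimes -)$ and $\Map_\CC(X,-)$ agree as functors out of bounded objects, so that splittings of idempotents (being colimits of $\mathrm{Idem}$-diagrams, hence preserved by any functor) correspond automatically; part (1) then does all the work and (2) is a formal consequence. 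Everything else is a routine combination of adjunction, \Cref{rmk:spread-t-amplitude}, and \Cref{dfn:almost-compact}.
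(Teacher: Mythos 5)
Your proposal is correct and follows essentially the same route as the paper: part (1) is adjunction plus uniform boundedness of $R_\alpha \otimes Y$ plus almost compactness of $X$ (upgraded to mapping spectra), and part (2) compares the idempotent $e$ with the base change of $e_\infty$ and uses part (1) to see they agree at a finite stage $\beta$, whence the splittings agree. Your worry about the full $\mathrm{Idem}$-coherence is resolved exactly as you suggest (and is in any case mild in a stable idempotent-complete setting, where a $\pi_0$-idempotent splits uniquely), so no gap remains.
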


\begin{proof}  
  Part (1).
  Using the assumptions that $X$ is almost compact and
  the $R_\alpha \otimes Y$ are uniformly bounded we have isomorphisms
  \begin{align*}
    \colim_{\alpha \in I} &\Map_{\Mod(\CC; R_\alpha)}(R_\alpha \otimes X, R_\alpha \otimes Y)
    \cong \colim_{\alpha \in I} \Map_{\CC}(X, R_\alpha \otimes Y) \\
    &\cong \Map_{\CC}(X, \colim_{\alpha \in I}  R_\alpha \otimes Y)
    \cong \Map_{\CC}(X, Y). 
  \end{align*}

  Part (2).
  Let $e : R_\alpha \otimes X \to R_\alpha \otimes X$ be an idempotent map of $R_\alpha$-modules  such that $(R_\alpha \otimes X)[e^{-1}] \cong Y$. Then after base change to $\on$ we obtain an idempotent $e_\infty$ such that $X[e_\infty^{-1}] \cong \on \otimes_{R_\alpha} Y$. Our goal is now to compare the two idempotents $e$ and $1 \otimes e_\infty$ which agree after base change to $\on$. Using the assumption that $X$ is bounded and almost compact it follows from part (1) that $e$ and $1 \otimes e_\infty$ agree after base change to $R_\beta$ for some arrow $\alpha \to \beta \in I$.
\end{proof}

\subsection{Locally unipotent \texorpdfstring{$\Z$}{Z}-actions}

In this appendix we study \emph{locally unipotent} $\ZZ$-actions.
The material contained here is used heavily in \Cref{sec:tame}.
Unlike in the body of the paper we work integrally in this appendix.


\begin{ntn}
  Throughout this appendix if an object $X$ has a $\Z$-action
  we will write $\psi \colon X \to X$ for the automorphism
  associated to the generator $1 \in \Z$.
\end{ntn}

\begin{dfn}
  Let $\CC$ be a stable, presentable category.
  We define $\CC^{B\Z, u} \subseteq \CC^{B\Z}$ to be the localizing subcatgory
  of $\CC^{B\Z}$ generated by objects with trivial action.
  We also write $\iota : \CC^{B\Z, u} \to \CC^{B\Z}$ for the defining inclusion
  and $(-)^u$ for its right adjoint.  
\end{dfn}

\begin{lem} \label{lem:tensor-unip}
  Let $\CC$ be a stable, presentably symmetric monoidal category.
  The subcategory $\CC^{B\Z, u} \subseteq \CC^{B\Z}$ is closed under the tensor product.
\end{lem}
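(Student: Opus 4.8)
The statement is that $\CC^{B\Z,u} \subseteq \CC^{B\Z}$ is closed under the tensor product. The plan is to use the fact that $\CC^{B\Z,u}$ is, by definition, the localizing subcategory of $\CC^{B\Z}$ generated by the objects with trivial $\Z$-action, and to exploit that the tensor product on $\CC^{B\Z}$ preserves colimits in each variable. First I would record the standard ``thick/localizing subcategory'' reduction: for a fixed object $X$, the full subcategory of those $Y \in \CC^{B\Z}$ with $X \otimes Y \in \CC^{B\Z,u}$ is closed under colimits and desuspensions (because $X \otimes -$ is exact and colimit-preserving, and $\CC^{B\Z,u}$ is localizing), hence is itself a localizing subcategory. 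Therefore it suffices to check the containment $X \otimes Y \in \CC^{B\Z,u}$ when $Y$ ranges over a generating set; taking $Y$ to have trivial action reduces us to showing $X \otimes \triv(Z) \in \CC^{B\Z,u}$ for all $Z \in \CC$ and all $X \in \CC^{B\Z}$.

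\textbf{Key steps.} Now run the same reduction in the other variable: fixing $Z \in \CC$, the full subcategory of those $X \in \CC^{B\Z}$ with $X \otimes \triv(Z) \in \CC^{B\Z,u}$ is again localizing, so it suffices to treat $X = \triv(W)$ with trivial action. But $\triv(W) \otimes \triv(Z) \cong \triv(W \otimes Z)$ has trivial action and hence lies in $\CC^{B\Z,u}$ by definition. Combining the two reductions gives $X \otimes Y \in \CC^{B\Z,u}$ for all $X, Y \in \CC^{B\Z,u}$, which is the claim. One should also note the unit $\one_{\CC^{B\Z}} = \triv(\one_\CC)$ has trivial action and so lies in $\CC^{B\Z,u}$, so that $\CC^{B\Z,u}$ is in fact a (non-unital, or even unital) symmetric monoidal localizing subcategory; but for the bare statement as phrased only closure under $\otimes$ is needed.

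\textbf{Main obstacle.} There is essentially no hard step here; the only thing to be careful about is that $\triv \colon \CC \to \CC^{B\Z}$ is symmetric monoidal (it is the restriction along $B\Z \to \ast$, i.e. constant-diagram functor, which is lax symmetric monoidal, and in fact strong symmetric monoidal because the diagram is constant and $\otimes$ in functor categories is computed pointwise), so that $\triv(W) \otimes \triv(Z) \cong \triv(W \otimes Z)$ genuinely holds in $\CC^{B\Z}$; and that the tensor product on $\CC^{B\Z}$ is computed ``pointwise'' and hence preserves colimits separately in each variable, which is what powers the localizing-subcategory reductions. Both are formal consequences of $\CC$ being presentably symmetric monoidal, so the proof is short.
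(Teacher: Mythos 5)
Your proof is correct and follows essentially the same route as the paper: the paper's proof likewise reduces to generators with trivial action using that $\otimes$ commutes with colimits separately in each variable, and then observes that the tensor product of two trivial-action objects again has trivial action. Your write-up just makes the two localizing-subcategory reductions explicit.
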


\begin{proof}
  As the tensor product on $\CC$ commutes with colimits seperately in each variable,
  it suffices to observe that if $X,Y \in \CC^{B\Z}$ have a trivial $\Z$-action,
  then $X \otimes Y \in \CC^{B\Z}$ also has a trivial $\Z$-action.
\end{proof}

\begin{rmk} \label{rmk:sm-unip}
  As a consequence of \Cref{lem:tensor-unip}, when $\CC$ is symmetric monoidal
  we may equip $\CC^{B\Z,u}$ and the defining inclusion $\iota$
  with symmetric monoidal structures by restriction.
  This in turn equips $(-)^u$ with a lax symmetric monoidal structure.
\end{rmk}



Before we proceed further with the general theory it will be profitable to study the case
$\CC = \Sp$ in more detail.

\begin{cnstr} \label{cnstr:sp-triv-action}
  Let $\Ss[t^{\pm1}]$ denote the spherical group ring of $\Z$,
  let $i$ denote the commutative algebra map $\Ss[t^{\pm1}] \to \Ss$
  obtained from the map $\Z \to *$ and
  let $j : \Ss[t^{\pm1}] \to \Ss[t^{\pm1}, (t-1)^{-1}]$
  denote the localization of $\Ss[t^{\pm1}]$ obtained by inverting $(t-1)$.  
  Using $j$ we construct a recollement
  \[ \begin{tikzcd}[column sep=huge]
    \Mod(\Ss[t^{\pm1}])^{\wedge}_{(t-1)}
    \ar[r, bend right]
    \ar[r, bend left, hook] &
    \Mod(\Ss[t^{\pm1}])
    \ar[r, bend right]
    \ar[r, bend left, "j^*"]
    \ar[l, "(-)^{\wedge}_{(t-1)}"] &
    \Mod(\Ss[t^{\pm1}][(t-1)^{-1}])
    \ar[l, , "j_*", hook]    
  \end{tikzcd} \]
  based on inverting and completing at $(t-1)$.
  
  Writing the $\Ss[t^{\pm1}]$-module $\Ss$ as $\Ss[t^{\pm1}]/(t-1)$
  we find that the functor $i_*$ (restriction of scalars along $i$)
  factors through the left adjoint to $(t-1)$-adic completion.
  We can then describe
  $ \Mod(\Ss[t^{\pm1}])^{\wedge}_{(t-1)} $
  as the localizing subcategory of 
  $ \Mod(\Ss[t^{\pm1}]) $
  generated under colimits by the image of $i_*$.
  
  If we identify $\Sp^{B\Z}$ with $\Ss[t^{\pm1}]$-modules,
  then we obtain the following identifications:
  \begin{itemize}
  \item The trivial action functor $(-)^{\triv} : \Sp \to \Sp^{B\Z}$
    can be identified with $i_*$.
  \item The right adjoint to $i_*$ can be identified with $(-)^{h\Z}$.
  \item The subcategory of $(t-1)$-nilpotent $\Ss[t^{\pm1}]$-modules
    (the image of the left adjoint to $(t-1)$-adic completion)
    can be identified with $\Sp^{B\Z,u}$.
  \end{itemize}  
  Note that since $\Sp^{B\Z,u}$ is generated under colimits by the image of
  the trivial action functor,
  its right adjiont $(-)^{h\Z} : \Sp^{B\Z,u} \to \Sp$ is conservative.
\end{cnstr}

\begin{lem} \label{lem:unip-affine} \label{lem:basechangepZ}
  There is a symmetric monoidal isomorphism
  \[ \Mod(\Ss^{B\Z}) \cong \Sp^{B\Z,u} \]
  under which
  $\iota$ is given by the formula $X \mapsto \Ss \otimes_{\Ss^{B\Z}} X$
  and $(-)^u$ is given by $Y \mapsto Y^{h\Z}$.
  This identification is natural in restriction along the maps
  $n\Z \subseteq \Z$.
\end{lem}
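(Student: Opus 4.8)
The statement to prove is \Cref{lem:unip-affine}, which identifies $\Sp^{B\Z,u}$ with $\Mod(\Ss^{B\Z})$ compatibly with the relevant functors. My plan is to recognize $\Ss^{B\Z}$ as the algebra object controlling the localizing subcategory $\Sp^{B\Z,u}$ and then invoke a version of the affineness/Schwede--Shipley recognition principle. Concretely, using the identification $\Sp^{B\Z} \cong \Mod(\Ss[t^{\pm 1}])$ of \Cref{cnstr:sp-triv-action}, the subcategory $\Sp^{B\Z,u}$ is the localizing subcategory generated by the image of $i_* = (-)^{\triv}$. By general nonsense (the Barr--Beck--Lurie theorem, or \cite[Prop. 4.8.4.1, 4.8.5.8]{HA}), a localizing subcategory of a module category generated by a single object $E$ is equivalent to modules over the endomorphism algebra $\End(E)$, provided $E$ is compact in that subcategory and the functor $\Map(E,-)$ is conservative there. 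Here I would take $E = \Ss^{\triv} = \Ss[t^{\pm 1}]/(t-1)$ viewed as an $\Ss[t^{\pm 1}]$-module; its endomorphism ring is $\Ss \otimes_{\Ss[t^{\pm 1}]} \Ss \cong \Ss \wedge S^1_+ \cong \Ss^{B\Z}$ (cochains on $B\Z$, equivalently $\Ss[\epsilon]$ with $|\epsilon| = -1$), recovering the advertised algebra.

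\textbf{Key steps, in order.} First, I would recall from \Cref{cnstr:sp-triv-action} that $\Sp^{B\Z,u}$ is the localizing subcategory of $\Mod(\Ss[t^{\pm 1}])$ generated by $\Ss^{\triv}$, and that the right adjoint $(-)^{h\Z}\colon \Sp^{B\Z,u}\to \Sp$ to the inclusion of the trivial action is conservative (this is stated at the end of \Cref{cnstr:sp-triv-action}). Second, I would note that $\Ss^{\triv}$ is a compact object of $\Sp^{B\Z,u}$: it suffices that $\map_{\Sp^{B\Z,u}}(\Ss^{\triv}, -)$ commutes with colimits, but this mapping spectrum is computed in the ambient $\Mod(\Ss[t^{\pm 1}])$ (since $\Sp^{B\Z,u}$ is a localizing, hence colimit-closed, subcategory) where $\Ss^{\triv} = \Ss[t^{\pm 1}]/(t-1) = \mathrm{cofib}(t-1)$ is manifestly compact. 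Third, invoking \cite[Prop. 4.8.5.8]{HA} (or the Schwede--Shipley theorem) I obtain a symmetric monoidal equivalence $\Sp^{B\Z,u} \simeq \Mod_{\Ss^{B\Z}}(\Sp)$ sending $X \mapsto \map_{\Sp^{B\Z,u}}(\Ss^{\triv}, X) \cong X^{h\Z}$ as a right $\Ss^{B\Z}$-module, with quasi-inverse $M \mapsto \Ss^{\triv} \otimes_{\Ss^{B\Z}} M = \Ss \otimes_{\Ss^{B\Z}} M$. The symmetric monoidality is automatic since $\Ss^{\triv}$ is the unit of $\Sp^{B\Z,u}$ and the equivalence is lax monoidal by construction; one then checks it is strong monoidal because it preserves colimits and the unit. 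Fourth, I would identify the resulting left adjoint with $\iota$ and the right adjoint with $(-)^u = (-)^{h\Z}$ by comparing universal properties: $\iota$ is the unique colimit-preserving symmetric monoidal functor $\Sp^{B\Z,u}\to \Sp^{B\Z}$ sending the unit to the unit, which matches $\Ss \otimes_{\Ss^{B\Z}} (-)$ followed by the inclusion; dually for the right adjoint. Finally, for naturality in restriction along $n\Z \subseteq \Z$, I would observe that the generating object $\Ss^{\triv}$ and its endomorphism algebra are manifestly compatible with restriction (restriction of the trivial action is the trivial action, and $\Ss^{Bn\Z} \to \Ss^{B\Z}$ is the evident map of cochain algebras), so the equivalences assemble into a natural square.

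\textbf{Main obstacle.} I expect the substantive point to be verifying the hypotheses of the Schwede--Shipley / Barr--Beck recognition theorem cleanly, namely the conservativity and compactness of the generator \emph{within} $\Sp^{B\Z,u}$ rather than within the full $\Sp^{B\Z}$; this is where the subtlety about $(t-1)$-nilpotent versus $(t-1)$-complete modules lives, and it is exactly the content packaged into \Cref{cnstr:sp-triv-action}. Once that recollement is in hand the rest is formal. A secondary bookkeeping point is to pin down the strong (not merely lax) symmetric monoidality of the equivalence; I would handle this by noting that both $\Sp^{B\Z,u}$ and $\Mod(\Ss^{B\Z})$ are generated under colimits by their units, so any unit-preserving colimit-preserving lax monoidal functor between them is automatically strong. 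The identification of the specific formulas for $\iota$ and $(-)^u$ in the statement is then just unwinding the equivalence and its adjoint, which I would do in a sentence or two.
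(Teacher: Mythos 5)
Your argument is essentially the paper's: both proofs reduce to a monadic/affineness recognition principle for the adjunction $(-)^{\triv} \dashv (-)^{h\Z}$, with the same two inputs, namely conservativity of $(-)^{h\Z}$ on $\Sp^{B\Z,u}$ (recorded in \Cref{cnstr:sp-triv-action}) and the fact that $(-)^{h\Z}$ commutes with colimits (equivalently, compactness of the generator $\Ss^{\triv}$, since $B\Z \simeq S^1$ is a finite space). The paper packages all of this as a single citation to \cite[Prop. A.4]{burklund2022galois}, which is the symmetric monoidal form of the Schwede--Shipley statement you invoke, so the extra work you do on strong monoidality and on matching $\iota$ and $(-)^u$ with the stated formulas is absorbed into that reference.

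One formula in your write-up is wrong, though it does not affect the structure of the argument: the endomorphism algebra of $\Ss^{\triv}$ in $\Mod(\Ss[t^{\pm 1}])$ is the mapping spectrum $\map_{\Ss[t^{\pm1}]}(\Ss,\Ss) \cong \Ss^{h\Z} \cong \Ss^{B\Z}$, not the relative tensor product $\Ss \otimes_{\Ss[t^{\pm1}]} \Ss$. The latter is $\Sigma^\infty_+ B\Z \simeq \Ss \oplus \Sigma\Ss$ (chains on the circle, with exterior generator in degree $+1$), which is not equivalent to $\Ss^{B\Z} \simeq \Ss \oplus \Sigma^{-1}\Ss$; your chain of identifications ``$\Ss\otimes_{\Ss[t^{\pm1}]}\Ss \cong \Ss\wedge S^1_+ \cong \Ss^{B\Z}$'' therefore fails at the second step. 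Since Schwede--Shipley identifies the localizing subcategory with modules over $\End(E) = \map(E,E)$, the correct computation still lands on $\Ss^{B\Z}$ and your conclusion stands once this is repaired.
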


\begin{proof}
  We begin by considering the cocontinuous symmetric monoidal functor
  $ (-)^{\triv} : \Sp \to \Sp^{B\Z,u} $
  and its conservative right adjiont $(-)^{h\Z}$ from \Cref{cnstr:sp-triv-action}.
  As $(-)^{h\Z}$ is a finite limit, this right adjoint is cocontinuous.
  It now follows from \cite[Prop. A.4]{burklund2022galois} that
  the induced adjunction
  \[ \Mod(\Sp; \Ss^{B\Z}) \to \Sp^{B\Z,u} \]
  is the desired symmetric monoidal equivalence.
  Naturality in endomorphisms of $\Z$ follows by
  functoriality of the construction of the comparison functor.
\end{proof}

We now return to the general case.
The next lemma allows us to extend most results from the case
$\CC=\Sp$ to the general case using base-change.

\begin{lem} \label{lem:base-change-unip}
  Let $\CC$ be a stable presentable category.
  There is a natural identification of $\CC^{B\Z,u} \subseteq \CC^{B\Z}$ with
  $ \left( \Sp^{B\Z,u} \subseteq \Sp^{B\Z} \right) \otimes \CC $.  
\end{lem}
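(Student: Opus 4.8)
The statement to prove is Lemma \ref{lem:base-change-unip}: for a stable presentable category $\CC$, the inclusion $\CC^{B\Z,u} \subseteq \CC^{B\Z}$ is identified, naturally, with the base change $\left(\Sp^{B\Z,u} \subseteq \Sp^{B\Z}\right) \otimes \CC$ in $\PrL$. The plan is to first recall the standard identification $\CC^{B\Z} \simeq \Sp^{B\Z} \otimes \CC$ as symmetric monoidal presentable categories (this is a special case of the fact that $\Fun(BG,\CC) \simeq \Fun(BG,\Sp) \otimes \CC$, since $\Fun(BG,-)$ commutes with the relevant colimits/tensor products of presentable categories; equivalently $\CC^{B\Z} \simeq \Mod_{\Ss[t^{\pm 1}]}(\Sp) \otimes \CC \simeq \Mod_{\Ss[t^{\pm 1}]\otimes \one_\CC}(\CC)$, using that $\Mod$ of an algebra object commutes with base change of the ambient category, cf.\ the usage of \cite[Prop. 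A.4]{burklund2022galois} in the proof of \Cref{lem:unip-affine}). Under this identification the trivial-action functor $\CC \to \CC^{B\Z}$ corresponds to $(-)^{\triv}_\Sp \otimes \id_\CC$.

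Next I would use the defining property of $\CC^{B\Z,u}$: it is the localizing (i.e.\ closed under colimits) subcategory of $\CC^{B\Z}$ generated by the objects with trivial action, equivalently generated by the image of the trivial-action functor $\CC \to \CC^{B\Z}$. On the $\Sp$ side, \Cref{cnstr:sp-triv-action} records that $\Sp^{B\Z,u} \subseteq \Sp^{B\Z}$ is the localizing subcategory generated by the image of $(-)^{\triv}_\Sp \colon \Sp \to \Sp^{B\Z}$ (it is the subcategory of $(t-1)$-nilpotent $\Ss[t^{\pm 1}]$-modules, generated under colimits by $\Ss[t^{\pm 1}]/(t-1)$). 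So the claim reduces to the following general fact about $\PrL$: if $\mathcal D \subseteq \mathcal E$ is the localizing subcategory generated by a set of objects $S$, and $F\colon \mathcal E \to \mathcal E'$ is the base change functor $\mathcal E \mapsto \mathcal E \otimes \CC$ (which is in $\PrL$, hence colimit-preserving), then $F(\mathcal D) := \mathcal D \otimes \CC \hookrightarrow \mathcal E \otimes \CC$ is again a fully faithful inclusion of a localizing subcategory, namely the one generated by $F(S)$. I would cite/invoke the standard fact that tensoring with $\CC$ in $\PrL$ preserves fully faithful colimit-preserving functors and carries Bousfield localizations to Bousfield localizations — this is e.g.\ \cite[Prop. 4.8.1.17]{HA} together with the behaviour of smashing/colimit-generated localizations under $-\otimes \CC$. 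Combining: $\CC^{B\Z,u}$, being the localizing subcategory of $\CC^{B\Z} \simeq \Sp^{B\Z}\otimes\CC$ generated by the image of the trivial-action functor $= (-)^{\triv}_\Sp \otimes \id_\CC$ applied to generators of $\CC$, coincides with $\Sp^{B\Z,u} \otimes \CC$.

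Finally I would spell out that the identification is compatible with the symmetric monoidal structures (both sides inherit theirs by restriction from $\CC^{B\Z}$, cf.\ \Cref{rmk:sm-unip}) and with restriction along $n\Z \subseteq \Z$, which follows from naturality of the identification $\CC^{B\Z} \simeq \Sp^{B\Z}\otimes\CC$ in the source category $B\Z$, exactly as in the proof of \Cref{lem:unip-affine}. The main obstacle is really just bookkeeping: making precise that ``localizing subcategory generated by $S$'' behaves well under $-\otimes\CC$ in $\PrL$ — one must be slightly careful that $\mathcal D \otimes \CC \to \mathcal E \otimes \CC$ remains fully faithful (this uses that the inclusion $\mathcal D \hookrightarrow \mathcal E$ admits a right adjoint and the composite with it is the identity, a condition preserved by the symmetric monoidal functor $-\otimes\CC$), and that the essential image is indeed the localizing subcategory generated by the images of the generators rather than something smaller. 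Everything else is formal manipulation of the recollection from \Cref{cnstr:sp-triv-action} and the monoidality of base change in $\PrL$.
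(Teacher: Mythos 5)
Your proposal is correct and follows essentially the same route as the paper: identify $\CC^{B\Z}\simeq \Sp^{B\Z}\otimes\CC$ (the paper does this via ambidexterity of $\PrL$ for space-shaped colimits rather than the functor-category base-change fact, but these are interchangeable), invoke \cite[Prop.~4.8.1.17]{HA} to see that fully faithful left adjoints are preserved under $-\otimes\CC$, and then match the colimit-generators of the two localizing subcategories ($X^{\triv}$ versus $X\otimes Y^{\triv}$). The bookkeeping point you flag about the essential image of $\mathcal{D}\otimes\CC$ being generated by images of generators is exactly how the paper closes the argument.
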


\begin{proof}
  Given a spectrum with a $\Z$-action and an object of $\CC$
  their tensor product naturally lives in $\CC^{B\Z}$ and so we obtain a comparison functor
  \[ \Sp^{B\Z} \otimes \CC \to \CC^{B\Z}. \]
  Using the fact that
  $\PrL$ is ambidextrous for all spaces-shaped colimits
  and the Lurie tensor product commutes with colimits seperately in each variable
  we can verify that the map above is an isomorphism
  via the chain of isomorphisms
  \[ \CC^{B\Z} \cong \CC_{B\Z} \cong (\Sp \otimes \CC)_{B\Z} \cong \Sp_{B\Z} \otimes \CC \cong \Sp^{B\Z} \otimes \CC. \]
  
  As fully faithful left adjoints are closed under basechange in $\PrL$
  (this follows for example from \cite[Proposition 4.8.1.17]{HA})
  it will now suffice for us to identify the two full subcategories
  $\CC^{B\Z,u}$ and $\Sp^{B\Z,u} \otimes \CC$ of $\CC$.
  The former is generated under colimits by objects of the form $X^{\triv}$ with $X \in \CC$.
  The latter is generated under colimits by objects of the form $X \otimes Y^{\triv}$
  with $X \in \CC$ and $Y \in \Sp$.
  The conclusion follows.      
\end{proof}

\begin{rmk} \label{rmk:unip-affine}
  In view of \Cref{lem:base-change-unip}
  tensoring \Cref{lem:unip-affine} with $\CC$ gives 
  a version of this identification for a general category.
\end{rmk}

\begin{prop} \label{prop:gluing-seq}
  Let $\CC$ be a stable presentable category.
  The $\iota \dashv (-)^u$ adjunction extends to a recollement
  \[ \begin{tikzcd}[column sep=huge]
    \CC^{B\Z,u}
    \ar[r, bend left, hook, "\iota"]
    \ar[r, bend right] &
    \CC^{B\Z}
    \ar[l, "(-)^u"]
    \ar[r, bend left, "j_!"]
    \ar[r, bend right, "j_*"] &
    \Mod(\Ss[t^{\pm1}][(t-1)^{-1}]) \otimes \CC 
    \ar[l, hook, "j^*"] 
  \end{tikzcd} \]
  whose gluing sequences take the form
  \[ \iota(X^u) \to X \to X[(\psi - 1)^{-1}]. \]
  In particular, $X^u = 0$ if and only if the map $\psi - 1 : X \to X$ is an isomorphism.
\end{prop}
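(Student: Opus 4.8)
The plan is to build everything by base-changing the $\CC = \Sp$ case along the Lurie tensor product, using \Cref{lem:base-change-unip}. First I would recall the recollement on $\Mod(\Ss[t^{\pm 1}])$ from \Cref{cnstr:sp-triv-action}: it is the recollement associated to inverting and completing at the element $(t-1)$, with closed part the $(t-1)$-complete modules (equivalently, by that construction, the $(t-1)$-nilpotent modules, which is $\Sp^{B\Z,u}$) and open part $\Mod(\Ss[t^{\pm1}][(t-1)^{-1}])$. Now tensor the whole recollement with $\CC$ over $\Sp$. Since $\PrL$ has internal tensor products preserving colimits in each variable, and fully faithful colimit-preserving left adjoints are stable under base change in $\PrL$ (cited already via \cite[Proposition 4.8.1.17]{HA} in the proof of \Cref{lem:base-change-unip}), the three localizing/colocalizing subcategory inclusions remain fully faithful after $\otimes\,\CC$, and the semiorthogonal decomposition is preserved. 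By \Cref{lem:base-change-unip}, $(\Sp^{B\Z,u}\subseteq \Sp^{B\Z})\otimes\CC$ is identified with $(\CC^{B\Z,u}\subseteq\CC^{B\Z})$, and the open part becomes $\Mod(\Ss[t^{\pm1}][(t-1)^{-1}])\otimes\CC$; this yields the displayed recollement, with $\iota$, $(-)^u$, $j_!$, $j^*$, $j_*$ obtained by base change from the functors in \Cref{cnstr:sp-triv-action}. One should check that the base-changed left adjoint to $\iota$ agrees with $(-)^u$ as defined (the right adjoint to $\iota$) — this holds because the base change of an adjunction is an adjunction and uniqueness of adjoints, together with the fact that $(-)^u$ in \Cref{cnstr:sp-triv-action} was already the right adjoint.

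Next I would identify the gluing sequence. In the recollement of a stable category, for any $X$ there is a fiber sequence $j_!j^*X \to X \to \iota\iota^*$-colocalization, or dually $\iota(X^u) \to X \to j_*j^*X$ depending on which adjoint is used; the relevant one here is the sequence whose fiber is $\iota(X^u)$ and whose cofiber is $X$ localized at the open part. In the $\CC=\Sp$ model this is exactly the arithmetic-fracture-type sequence: $\iota(X^u)$ is the $(t-1)$-complete (=$(t-1)$-nilpotent) part and $X[(\psi-1)^{-1}] = j_*j^*X$ is the $(t-1)$-inverted part, since multiplication by $t-1$ on an $\Ss[t^{\pm1}]$-module $X$ is precisely the map $\psi - 1$. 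Base-changing along $\otimes\,\CC$ gives the fiber sequence
\[ \iota(X^u) \to X \to X[(\psi-1)^{-1}] \]
for general $X \in \CC^{B\Z}$, since $\psi - 1$ and its localization are defined for $\CC^{B\Z} = \Mod(\Ss[t^{\pm1}])\otimes\CC$ by the same formula. The final sentence is then immediate from this fiber sequence: $X^u = 0$ iff $\iota(X^u)=0$ (as $\iota$ is fully faithful) iff the map $X \to X[(\psi-1)^{-1}]$ is an isomorphism, which holds iff $\psi-1$ already acts invertibly on $X$.

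I expect the main obstacle to be purely bookkeeping rather than mathematical: carefully matching the abstract recollement formalism (which adjoint is $j_!$ vs $j_*$, and which gluing triangle one gets) with the explicit description via $(t-1)$, and verifying that base change along $\otimes\,\CC$ genuinely transports a recollement to a recollement — one must check the six functors, the two fully-faithfulness statements, and the compatibility $j^* \iota \simeq 0$, all of which follow formally but require invoking stability of fully faithful adjoints under base change in $\PrL$ and the fact that $\Sp^{B\Z,u}$ was built as the localizing subcategory generated by the image of the trivial-action functor (so that the subcategory generated after base change is again generated by trivial-action objects, as in the last paragraph of the proof of \Cref{lem:base-change-unip}). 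Once those formalities are in place, identifying $t-1$ with $\psi-1$ and reading off the last claim is routine.
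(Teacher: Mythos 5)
Your proposal is correct and follows exactly the paper's own (much terser) argument: establish the recollement for $\CC=\Sp$ via \Cref{cnstr:sp-triv-action} (inverting and completing at $(t-1)$, with $t-1$ acting as $\psi-1$), then transport it to general $\CC$ by tensoring with $\CC$ using \Cref{lem:base-change-unip}. The extra bookkeeping you flag — stability of fully faithful left adjoints under base change in $\PrL$ and matching the gluing triangle to the $(t-1)$-fracture sequence — is precisely what the paper leaves implicit.
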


\begin{proof}
  The case of $\Sp = \CC$ follows from \Cref{cnstr:sp-triv-action}.
  The general case now follows by tensoring the case of $\Sp$ with $\CC$ by
  \Cref{lem:base-change-unip}.
\end{proof}







We end the section with a collection of lemmas useful for checking whether
a $\Z$-action is locally unipotent, and examples.

\begin{exm} \label{exm:loc-nilp-heart}  
  Given an $M \in \Mod(\Z)^\heartsuit$ with a $\Z$-action
  we can read off from the gluing sequence in \Cref{prop:gluing-seq}
  that the action on $M$ is locally unipotent if and only if
  for all $m \in M$ we have $(\psi - 1)^{\circ N}(m) = 0$ for $N \gg 0$.
\end{exm}

\begin{lem} \label{lem:check-unip}
  Let $F : \CC \to \DD$ be a filtered colimit preserving, additive functor
  between stable presentable categories.
  $F$ preserves local unipotence and commutes with
  the functors $\iota$, $(-)^u$ and $(-)[(\psi-1)^{-1}]$.  
  If $F$ is conservative, then it detects local unipotence.
\end{lem}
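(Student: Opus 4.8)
\textbf{Proof plan for \Cref{lem:check-unip}.}

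The key structural input is \Cref{lem:base-change-unip}: the inclusion $\CC^{B\Z,u}\subseteq \CC^{B\Z}$ is obtained from the universal case $\Sp^{B\Z,u}\subseteq\Sp^{B\Z}$ by applying $(-)\otimes\CC$ in $\PrL$. Since $F$ is a colimit-preserving additive functor (and $\Sp^{B\Z,u}\subseteq\Sp^{B\Z}$ is a localizing subcategory generated by the image of the trivial-action functor), $F$ automatically carries the subcategory generated under colimits by trivial actions in $\CC$ into the corresponding subcategory in $\DD$; this is the statement that $F$ preserves local unipotence. Concretely, one writes $\CC^{B\Z,u}$ as the localizing subcategory generated by objects $X^{\mathrm{triv}}$ for $X\in\CC$; since $F(X^{\mathrm{triv}})\cong F(X)^{\mathrm{triv}}$ by additivity of $F$, and $F$ preserves colimits, $F$ sends this localizing subcategory into the one generated by the $F(X)^{\mathrm{triv}}$, which lies in $\DD^{B\Z,u}$.

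For the compatibility with $\iota$, $(-)^u$, and $(-)[(\psi-1)^{-1}]$: the inclusion $\iota$ is just a restriction of the identity, so $F\circ\iota\cong\iota\circ F$ is immediate once we know $F$ preserves local unipotence. For the localization $(-)[(\psi-1)^{-1}]$, this is a filtered colimit of copies of the object along multiplication by $(\psi-1)$, so commuting with it uses only the hypothesis that $F$ preserves filtered colimits (and is additive, so commutes with the relevant maps). Finally, for $(-)^u$: by \Cref{prop:gluing-seq} there is a natural gluing cofiber sequence $\iota(X^u)\to X\to X[(\psi-1)^{-1}]$; applying the exact functor $F$ and using the two compatibilities just established, we identify $F(\iota(X^u))$ with the fiber of $F(X)\to F(X)[(\psi-1)^{-1}]$, which by the same gluing sequence in $\DD$ is $\iota((F X)^u)$. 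Since $\iota$ is fully faithful this yields $F(X^u)\cong (F X)^u$ naturally.

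For the last claim, suppose $F$ is conservative and $F(X)$ has locally unipotent action, i.e.\ $(F X)^{\mathrm{ess}}\coloneqq F(X)[(\psi-1)^{-1}]=0$ (equivalently, by \Cref{prop:gluing-seq}, $\psi-1$ is invertible on $F(X)$; equivalently $(F X)^u\to F(X)$ is an isomorphism). Using $F(X)[(\psi-1)^{-1}]\cong F\bigl(X[(\psi-1)^{-1}]\bigr)$ and conservativity of $F$, we conclude $X[(\psi-1)^{-1}]=0$, so the action on $X$ is locally unipotent by \Cref{prop:gluing-seq}. The only mild subtlety — and the one point deserving care — is the natural identification $F(X^{\mathrm{triv}})\cong F(X)^{\mathrm{triv}}$; this holds because the trivial-action functor is $X\mapsto X\otimes\Ss^{\mathrm{triv}}$ up to the identification of \Cref{lem:base-change-unip}, and $F$, being colimit-preserving and additive, commutes with tensoring by a fixed object of $\Sp^{B\Z}$. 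I expect no serious obstacle; the work is entirely in assembling the gluing sequences and the universal-case identification, all of which are already available in the excerpt.
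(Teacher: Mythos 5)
Your proof is correct and follows essentially the same route as the paper: the paper's (very short) argument is exactly that $F$ commutes with the telescope $(-)[(\psi-1)^{-1}]$ by additivity and preservation of filtered colimits, and then reads everything off from the gluing sequence of \Cref{prop:gluing-seq}, which is precisely your second and third paragraphs. The only quibble is that in your first paragraph you call $F$ "colimit-preserving," which overstates the hypotheses; but that paragraph is dispensable, since preservation of local unipotence already follows from $F(X[(\psi-1)^{-1}])\cong F(X)[(\psi-1)^{-1}]$ and the characterization of local unipotence as the vanishing of this localization.
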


\begin{proof}
  As $F$ is additive and preserves filtered colimits we have
  \[ F(X[(\psi-1)^{-1}]) \cong F(X)[(\psi-1)^{-1}]. \]
  The conclusion now follows from the fact the that gluing sequences in $\CC^{B\Z}$
  from \Cref{prop:gluing-seq} are sent to the corresponding gluing sequences in $\DD^{B\Z}$.
\end{proof}

\begin{cor} \label{cor:check-unip-pi}
  Let $\CC$ be a presentable stable category with a compact generator $V \in \CC$.
  An $X \in \CC^{B\Z}$ is locally unipotent
  if and only if the $\Z$-action on the homotopy groups
  $\pi_0(\Map_{\CC}(\Sigma^k V,X))$ is locally unipotent for all $k$.
\end{cor}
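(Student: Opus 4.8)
The plan is to reduce the statement about a general compactly generated stable category $\CC$ to the known case $\CC = \Sp$, using \Cref{cor:check-unip} together with the fact that mapping out of a compact generator is a conservative, exact, filtered-colimit-preserving functor.

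First I would set up the comparison functor. Let $V \in \CC$ be the compact generator, and consider $F \coloneqq \map_\CC(V, -) \colon \CC \to \Sp$. Since $V$ is compact, $F$ preserves filtered colimits; since $V$ is a generator, $F$ is conservative; and $F$ is exact, hence additive. Applying $F$ pointwise gives an induced functor $F^{B\Z} \colon \CC^{B\Z} \to \Sp^{B\Z}$, which is again conservative, exact, and filtered-colimit-preserving. By \Cref{lem:check-unip} (in the conservative case), an object $X \in \CC^{B\Z}$ is locally unipotent if and only if $F^{B\Z}(X) \in \Sp^{B\Z}$ is locally unipotent.

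Next I would handle the case $\CC = \Sp$, i.e. I must show that a spectrum $Y$ with $\Z$-action is locally unipotent if and only if the $\Z$-action on each $\pi_k Y$ is locally unipotent in the sense of \Cref{exm:loc-nilp-heart} — that is, for each $y \in \pi_k Y$ one has $(\psi - 1)^{\circ N}(y) = 0$ for $N \gg 0$. By \Cref{prop:gluing-seq}, $Y$ is locally unipotent if and only if $Y^u = 0$ is false in the appropriate sense — more precisely, $Y$ lies in $\Sp^{B\Z,u}$ iff the gluing sequence $\iota(Y^u) \to Y \to Y[(\psi-1)^{-1}]$ has vanishing third term, i.e. iff $Y[(\psi-1)^{-1}] = 0$. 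Localization at $\psi - 1$ commutes with homotopy groups (it is a filtered colimit), so $Y[(\psi-1)^{-1}] = 0$ iff $\pi_k Y[(\psi-1)^{-1}] = \pi_k(Y)[(\psi-1)^{-1}] = 0$ for all $k$, which is exactly the condition that $\psi - 1$ acts locally nilpotently on each $\pi_k Y$. This is precisely the description in \Cref{exm:loc-nilp-heart} applied to each homotopy group viewed as a $\Z$-module with $\Z$-action. Conversely, local unipotence of the action on each $\pi_k Y$ forces the localized homotopy groups to vanish, hence $Y$ is locally unipotent.

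Finally I would assemble the two reductions. Combining the first two paragraphs: $X \in \CC^{B\Z}$ is locally unipotent iff $F^{B\Z}(X)$ is locally unipotent (by conservativity in \Cref{lem:check-unip}), iff the $\Z$-action on $\pi_k F^{B\Z}(X) = \pi_k \map_\CC(V, X) = \pi_0 \Map_\CC(\Sigma^k V, X)$ is locally unipotent for all $k$ (by the $\Sp$-case). The main obstacle is minor: one must be careful that ``locally unipotent'' for a module in the heart (as defined via \Cref{exm:loc-nilp-heart}) genuinely matches the categorical definition $Y \in \Mod(\Z)^{B\Z,u}$ — but this is exactly what \Cref{exm:loc-nilp-heart} records, and for the spectrum-level statement one only needs that $(-)[(\psi-1)^{-1}]$ commutes with $\pi_*$, which holds because it is computed by a sequential colimit. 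No genuinely hard input is needed; the content is the bookkeeping of transporting local unipotence along the conservative functor $\map_\CC(V,-)$, which is licensed by \Cref{lem:check-unip}.
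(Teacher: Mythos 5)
Your proof is correct and is essentially the paper's argument: the paper applies \Cref{lem:check-unip} in one step to the conservative, additive, filtered-colimit-preserving functor $\pi_0(\Map_{\CC}(\Sigma^\ast V,-))\colon \CC \to \Mod(\Z)^{\Gr}$ together with \Cref{exm:loc-nilp-heart}, whereas you factor the same reduction through $\map_\CC(V,-)\colon\CC\to\Sp$ and then check the spectrum case by hand via $\pi_\ast(Y[(\psi-1)^{-1}])\cong\pi_\ast(Y)[(\psi-1)^{-1}]$. Your version just makes explicit the commutation of $(-)[(\psi-1)^{-1}]$ with homotopy groups that the paper leaves implicit; no substantive difference.
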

\todo{comment: You can apply this to modules over an E1 algebra, the p complete modules if you use unit mod a power of p and Tn local modules if you use a type n guy.}

\begin{proof}
  The functor $\pi_0(\Map_{\CC}(\Sigma^* V,-)) : \CC \to \Mod(\Z)^{\Gr}$
  is additive, filtered colimit preserving (since $V$ is compact)
  and conservative (since $V$ generates $\CC$).
\end{proof}



\begin{cor} \label{lem:check-loc-nilp}
  A $\Z$-action on an object $X \in \pgnsp{\langle p \rangle}$ is locally unipotent
  if and only if the associated $\Z$-actions on
  $X^{\Phi C_p}$ and $X^{\Phi e}$ are locally unipotent.  
\end{cor}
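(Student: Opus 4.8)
The plan is to reduce the statement about $p$-polygonic spectra to the corresponding statement for $C_p$-genuine equivariant spectra, and there to use the standard isotropy separation/recollement picture together with \Cref{lem:check-unip}. Recall from \Cref{subsubsec:polygonic} that $\pgnsp{\lpr}$ is identified with the category of $p$-complete genuine $C_p$-spectra, and that for $X \in \pgnsp{\lpr}$ we have the functors $X \mapsto X^{\Phi C_p}$ and $X \mapsto X^{\Phi e}$, which together with the comparison map $X^{\Phi C_p} \to (X^{\Phi e})^{tC_p}$ reconstruct $X$ (this is just the definition of $\pgnsp{\lpr}$ as the oplax limit $\Sp \vec{\times}_{(-)^{tC_p}} \Sp^{BC_p}$). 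The point is that the pair of functors $(-)^{\Phi C_p}$ and $(-)^{\Phi e}$ is jointly conservative, and each is exact and preserves filtered colimits.

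First I would record that both $(-)^{\Phi C_p}\colon \pgnsp{\lpr} \to \Sp$ and $(-)^{\Phi e}\colon \pgnsp{\lpr} \to \Sp^{BC_p}$ are exact, preserve filtered colimits, and are additive; this is immediate from the description of $\pgnsp{\lpr}$ as an oplax limit, since $(-)^{\Phi e}$ is pullback along $e \to C_p$ (evaluation at the second coordinate) and $(-)^{\Phi C_p}$ is evaluation at the first coordinate, and limits/colimits in the oplax limit are computed componentwise (the structure map $(-)^{tC_p}$ being exact). Then by \Cref{lem:check-unip}, applied to each of these two functors, if the $\Z$-action on $X \in \pgnsp{\lpr}$ is locally unipotent then so are the induced actions on $X^{\Phi C_p}$ and $X^{\Phi e}$. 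This gives the ``only if'' direction.

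For the ``if'' direction, suppose the actions on $X^{\Phi C_p}$ and $X^{\Phi e}$ are locally unipotent. By \Cref{prop:gluing-seq} (applied in $\pgnsp{\lpr}$, which is stable presentable) there is a gluing sequence $\iota(X^u) \to X \to X[(\psi-1)^{-1}]$, and local unipotence of the $\Z$-action on $X$ is equivalent to $X[(\psi-1)^{-1}] = 0$. Now apply $(-)^{\Phi C_p}$ and $(-)^{\Phi e}$: since both commute with $(-)[(\psi-1)^{-1}]$ by \Cref{lem:check-unip}, we get $X[(\psi-1)^{-1}]^{\Phi C_p} \cong X^{\Phi C_p}[(\psi-1)^{-1}] = 0$ and $X[(\psi-1)^{-1}]^{\Phi e} \cong X^{\Phi e}[(\psi-1)^{-1}] = 0$, where the vanishings use local unipotence of the actions on $X^{\Phi C_p}$ and $X^{\Phi e}$ together with \Cref{prop:gluing-seq}. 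Since $(-)^{\Phi C_p}$ and $(-)^{\Phi e}$ are jointly conservative, $X[(\psi-1)^{-1}] = 0$, so the action on $X$ is locally unipotent.

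The main obstacle — really the only thing needing care — is the joint conservativity of $(-)^{\Phi C_p}$ and $(-)^{\Phi e}$, together with the claim that these functors preserve filtered colimits. Both are formal consequences of the oplax-limit (equivalently, recollement/isotropy-separation) description of $\pgnsp{\lpr}$: an object is zero iff both its geometric $C_p$-fixed points and its underlying spectrum are zero, and colimits in a recollement are computed on the two pieces. One should be slightly careful that we are working $p$-completely, but filtered colimits of $p$-complete spectra computed in $\Sp$ (our convention) are again the relevant colimits, and $(-)^{tC_p}$ on $p$-complete spectra still commutes with the relevant colimits after the appropriate finiteness is imposed — but since we only need \emph{filtered} colimits and exactness, and since \Cref{lem:check-unip} only requires the functor to be additive and filtered-colimit-preserving, no such finiteness is needed here. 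Thus the proof is a direct application of \Cref{lem:check-unip} in both directions once the two structural facts about $(-)^{\Phi C_p}$ and $(-)^{\Phi e}$ are in hand.
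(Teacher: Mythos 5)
Your proof is correct and is essentially the paper's argument: the paper simply applies \Cref{lem:check-unip} to the single conservative, colimit-preserving functor $(\Phi C_p,\Phi e)\colon \pgnsp{\lpr}\to \Sp\times\Sp$, whose "detects local unipotence" clause packages exactly the joint-conservativity-plus-$X[(\psi-1)^{-1}]$ argument you spell out for the "if" direction. No substantive difference.
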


\begin{proof}
  The functor $(\Phi C_p, \Phi e) : \pgnsp{\langle p \rangle} \to \Sp \times \Sp$
  is conservative and colimit preserving.
\end{proof}

\begin{cor}\label{cor:cover-unip}
  Suppose $\CC$ is equipped with a $t$-structure compatible with filtered colimits.
  The truncation functors $\tau_{\geq 0}$ and $\tau_{\leq 0}$ preserve
  local unipotence.
\end{cor}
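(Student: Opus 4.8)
The statement to prove is \Cref{cor:cover-unip}: the truncation functors $\tau_{\geq 0}$ and $\tau_{\leq 0}$ on a stable presentable category $\CC$ with a $t$-structure compatible with filtered colimits preserve local unipotence of $\Z$-actions.

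The plan is to apply the criterion from \Cref{prop:gluing-seq}: an object $X \in \CC^{B\Z}$ is locally unipotent if and only if $X^u = 0$, equivalently if and only if $X[(\psi-1)^{-1}] = 0$, equivalently if and only if the endomorphism $\psi - 1$ of $X$ is an isomorphism after inverting it — more concretely, $X$ is in the localizing subcategory generated by trivial-action objects precisely when $(\psi-1)$ acts invertibly on $X[(\psi-1)^{-1}]$ and this localization vanishes. So the real content is: if $X \in \CC^{B\Z}$ has $X[(\psi-1)^{-1}] = 0$, then $(\tau_{\geq 0} X)[(\psi-1)^{-1}] = 0$ and $(\tau_{\leq 0}X)[(\psi-1)^{-1}] = 0$.

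First I would observe that the truncation functors $\tau_{\geq 0}$ and $\tau_{\leq 0}$ are additive endofunctors of $\CC$ (hence induce endofunctors of $\CC^{B\Z}$ commuting with the forgetful functor to $\CC$), and that, by the hypothesis that the $t$-structure is compatible with filtered colimits, both of them preserve filtered colimits. Indeed $\tau_{\geq 0}$ is always a colimit-preserving left adjoint on the level of $\CC_{\geq 0}$-valued functors, but more to the point: compatibility of the $t$-structure with filtered colimits says exactly that $\CC_{\leq 0}$ is closed under filtered colimits, which makes $\tau_{\leq 0}$ commute with filtered colimits, and then $\tau_{\geq 1} = \mathrm{fib}(\mathrm{id} \to \tau_{\leq 0})$ does too, hence so does $\tau_{\geq 0} = \mathrm{fib}(\mathrm{id} \to \tau_{\leq -1})$. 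With this in hand, \Cref{lem:check-unip} applies directly: any additive, filtered-colimit-preserving functor $F\colon \CC \to \CC$ preserves local unipotence, because the formation of $X[(\psi-1)^{-1}] = \colim(X \xrightarrow{\psi-1} X \xrightarrow{\psi-1} \cdots)$ is a filtered colimit of a diagram built additively from $\psi - 1$, so $F(X[(\psi-1)^{-1}]) \cong F(X)[(\psi-1)^{-1}]$, and the gluing triangle $\iota(X^u) \to X \to X[(\psi-1)^{-1}]$ of \Cref{prop:gluing-seq} is carried to the corresponding triangle for $F(X)$; vanishing of $X[(\psi-1)^{-1}]$ forces vanishing of $F(X)[(\psi-1)^{-1}]$, i.e. local unipotence of $F(X)$.

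So the proof reduces to the single verification that $\tau_{\geq 0}$ and $\tau_{\leq 0}$ commute with filtered colimits in $\CC$, which is precisely the meaning of ``$t$-structure compatible with filtered colimits''; the rest is a citation of \Cref{lem:check-unip}. I do not anticipate a genuine obstacle here — the only mild subtlety is making sure ``compatible with filtered colimits'' is being used in the sense that $\CC_{\leq 0} \subseteq \CC$ is closed under filtered colimits (so that $\tau_{\le 0}$, and hence $\tau_{\ge 0}$, commutes with them), rather than some weaker convention; once that is pinned down the argument is immediate. Concretely, the proof I would write is: ``The truncation functors are exact (hence additive) and, since the $t$-structure is compatible with filtered colimits, commute with filtered colimits. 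Apply \Cref{lem:check-unip}.''
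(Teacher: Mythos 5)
Your proposal is correct and is exactly the paper's argument: the paper's proof is the one-line observation that $\tau_{\geq 0}$ and $\tau_{\leq 0}$ are additive and commute with filtered colimits, followed by an appeal to \Cref{lem:check-unip}. The only difference is that you spell out why compatibility with filtered colimits gives filtered-colimit-preservation of both truncations (closure of $\CC_{\leq 0}$ under filtered colimits, then the fiber sequence for $\tau_{\geq 0}$), which the paper leaves as "by hypothesis."
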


\begin{proof}
  $\tau_{\geq 0}$ and $\tau_{\leq 0}$ are additive and commute with filtered colimits (by hypothesis).
\end{proof}

\begin{lem} \label{lem:cover-unip-cplt}
  Suppose $\CC$ is equipped with a $t$-structure compatible with filtered colimits.
  If $X \in \CC^{B\Z,u}_p$, then $\tau_{\geq 0}X \in \CC^{B\Z,u}_p$ as well.
\end{lem}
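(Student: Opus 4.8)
The statement to prove is \Cref{lem:cover-unip-cplt}: if $X \in \CC^{B\Z,u}_p$, then $\tau_{\geq 0}X \in \CC^{B\Z,u}_p$ as well. Here the subscript $p$ denotes $p$-complete objects, and we are told that the $t$-structure on $\CC$ is compatible with filtered colimits.

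The plan is to reduce to the non-$p$-complete case, which is \Cref{cor:cover-unip}, by carefully tracking the interaction between $p$-completion and connective truncation. First I would observe that $p$-completion on $\CC$ can be computed as $X \mapsto \lim_k X/p^k$, and that $X/p^k$ lies in the non-completed localizing subcategory $\CC^{B\Z,u}$ (generated under colimits by trivial-action objects) since $\CC^{B\Z,u}_p$ is obtained from $\CC^{B\Z,u}$ by $p$-completion and $X/p^k = \cofib(p^k : X \to X)$ is already $p$-complete but also manifestly built from $X$ by finite colimits, hence in any localizing subcategory containing $X$. Next, using that $p$-completion commutes with $\tau_{\geq 0}$ up to a one-step discrepancy — more precisely, the natural map $(\tau_{\geq 0}X)^{\wedge}_p \to \tau_{\geq 0}(X^{\wedge}_p)$ has fiber/cofiber concentrated in a single negative degree — we are reduced to showing that a suitable truncation of $X$ lands in the $p$-complete unipotent subcategory.

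Concretely, here are the key steps in order. (1) Since $X$ is already $p$-complete, write $\tau_{\geq 0}X$ and note $(\tau_{\geq 0}X)^{\wedge}_p$ differs from $\tau_{\geq 0}X$ only in $\pi_{-1}$ (a derived completion correction from $\pi_{-1}X = 0$ but $\lim^1$ on $\pi_0$), so it suffices to prove $(\tau_{\geq 0}X)^{\wedge}_p \in \CC^{B\Z,u}_p$; in fact by the recollement of \Cref{prop:gluing-seq} and \Cref{lem:check-unip} applied to the $p$-completion functor (which is additive and, after restricting to a bounded-below region, commutes with the relevant colimits), local unipotence is detected after passing to $X/p$. (2) Apply \Cref{cor:cover-unip}: since the $t$-structure is compatible with filtered colimits, $\tau_{\geq 0}$ is additive and preserves filtered colimits, hence preserves local unipotence of the (non-completed) $\Z$-action on $X/p$. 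Thus $\tau_{\geq 0}(X/p) = (\tau_{\geq 0}X)/p$ has locally unipotent $\Z$-action. (3) Use \Cref{cor:check-unip-pi} or the characterization in \Cref{prop:gluing-seq} — that $Y^u = 0$ iff $\psi - 1$ is invertible on $Y$, equivalently $Y[(\psi-1)^{-1}] = Y$ — to conclude: a $p$-complete object whose mod-$p$ reduction has locally unipotent action is itself locally unipotent in $\CC^{B\Z,u}_p$, since $(\psi-1)$ acting locally nilpotently mod $p$ on a bounded-below $p$-complete object forces $(\tau_{\geq 0}X)[(\psi-1)^{-1}]$ to have vanishing mod-$p$ reduction, hence to vanish by $p$-completeness and connectivity.

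The main obstacle I expect is the bookkeeping in step (1): precisely controlling the discrepancy between $(\tau_{\geq 0}X)^{\wedge}_p$ and $\tau_{\geq 0}X$, and ensuring that the functor ``$p$-complete then truncate'' versus ``truncate then $p$-complete'' can be interchanged up to a negligible (bounded, single-degree) error that does not affect membership in the localizing subcategory. One clean way around this is to avoid completion-truncation interchange entirely: argue directly that $\tau_{\geq 0}X$, being a $p$-complete spectrum (the subcategory of $p$-complete objects is closed under connective truncation since it is closed under limits and $\tau_{\geq 0}X = \lim_k \tau_{\geq 0}(\tau_{\leq k} X)$ with each $\tau_{[0,k]}X$ a finite extension of $p$-complete objects), has the property that $(\tau_{\geq 0}X)/p \simeq \tau_{\geq 0}(X/p) \oplus \Sigma(\text{something in degree }0)$ or more carefully sits in a cofiber sequence with $\tau_{\geq 0}(X/p)$, and then invoke that $\CC^{B\Z,u}$ is a localizing subcategory (closed under extensions and suspensions) together with \Cref{cor:cover-unip} to place $(\tau_{\geq 0}X)/p$ in $\CC^{B\Z,u}$; finally $p$-completing and using \Cref{lem:check-unip} for the $p$-completion functor $\CC^{B\Z} \to \CC^{B\Z}_p$ restricted to uniformly bounded-below objects gives the claim.
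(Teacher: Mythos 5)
Your route works, but it is genuinely different from — and considerably longer than — the paper's. The paper argues directly from the criterion of \Cref{prop:gluing-seq}: $X \in \CC^{B\Z,u}_p$ means exactly that the $p$-completion of $X[(\psi-1)^{-1}]$ vanishes, i.e.\ that $X[(\psi-1)^{-1}]$ is a $\Q$-module; since $\tau_{\geq 0}$ commutes with filtered colimits (the standing hypothesis) and is additive, $(\tau_{\geq 0}X)[(\psi-1)^{-1}] \cong \tau_{\geq 0}\bigl(X[(\psi-1)^{-1}]\bigr)$ is again a $\Q$-module, and one is done in two lines. You instead detect local unipotence mod $p$ (this detection is correct: $\bigl(X[(\psi-1)^{-1}]\bigr)^{\wedge}_p = 0$ iff $(X/p)[(\psi-1)^{-1}] = 0$, because $-/p$ commutes with colimits and $Y^{\wedge}_p = 0 \iff Y/p = 0$), apply \Cref{cor:cover-unip} to $X/p$, and then fight the failure of $\tau_{\geq 0}$ to commute with $-/p$. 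What your approach buys is a literal reduction to the uncompleted statement; what it costs is precisely the bookkeeping you flag as the main obstacle, which the paper's argument sidesteps entirely because inverting $(\psi-1)$ is a filtered colimit and hence commutes with $\tau_{\geq 0}$ on the nose.

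Two points in your write-up need repair, though both are fillable. First, the equality $\tau_{\geq 0}(X/p) = (\tau_{\geq 0}X)/p$ asserted in your step (2) is false in general: the natural map $(\tau_{\geq 0}X)/p \to \tau_{\geq 0}(X/p)$ has cofiber $\pi_{-1}^{\heart}(X)[p]$ placed in degree $0$. Your last paragraph correctly proposes to absorb this via a cofiber sequence and the stability of $\CC^{B\Z,u}$, but you must then also verify that this error term is locally unipotent. It is, because it is a quotient of $\pi_0^{\heart}(X/p)$, which is locally unipotent by \Cref{cor:cover-unip}, and quotients of locally unipotent heart objects are locally unipotent: for a short exact sequence $Y' \to Y \to Y''$ in the heart with $Y[(\psi-1)^{-1}]=0$, one gets $Y''[(\psi-1)^{-1}] \cong \Sigma Y'[(\psi-1)^{-1}]$, which is simultaneously $1$-connective and coconnective (a filtered colimit of heart objects lies in $\CC_{\leq 0}$ by compatibility of the $t$-structure with filtered colimits), hence zero. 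Second, your appeal to \Cref{lem:check-unip} "applied to the $p$-completion functor" is not legitimate, since $p$-completion does not commute with filtered colimits; the mod-$p$ detection should be justified as above instead.
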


\begin{proof}
  From \Cref{prop:gluing-seq} we know that
  an object in $X \in \CC^{B\Z}_p$ has a locally unipotent action
  iff the $p$-completion of $X[(\psi-1)^{-1}]$ is zero
  (here the colimit is evaluated in $\CC$).
  Equivalently, this is the same as saying that $X[(\psi-1)^{-1}]$ is a $\Q$-module.

  As the functor $\tau_{\geq0}(-)$ commutes with filtered colimits by assumption
  it suffices to observe that $\tau_{\geq0}(-)$ is additive
  and therefore sends $\Q$-modules to $\Q$-modules.  
\end{proof}

\begin{lem} \label{lem:res-u}
  Let $\CC$ be a stable, $p$-complete, presentable category.
  The functor $(-)^u$ commutes with restriction to $p\Z \subset \Z$.
  In particular, an $X \in \CC^{B\Z}$ has a locally unipotent action if and only if
  the action is locally unipotent after restricting to $p\Z \subseteq \Z$.
\end{lem}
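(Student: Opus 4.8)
The statement to prove is \Cref{lem:res-u}: for $\CC$ a stable $p$-complete presentable category, the functor $(-)^u$ commutes with restriction along $p\Z \subseteq \Z$, and consequently $X \in \CC^{B\Z}$ has locally unipotent action iff its restriction to $p\Z$ does. The plan is to reduce everything to the criterion already packaged in \Cref{prop:gluing-seq}: a $\Z$-action on $X$ is locally unipotent precisely when $X[(\psi-1)^{-1}]$ vanishes (here after $p$-completion, since $\CC$ is $p$-complete), equivalently when $X[(\psi-1)^{-1}]$ is rational, and more generally $X^u$ fits into the gluing triangle $\iota(X^u) \to X \to X[(\psi-1)^{-1}]$.

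First I would observe that restriction along $p\Z \hookrightarrow \Z$ is a colimit-preserving additive functor $\CC^{B\Z} \to \CC^{Bp\Z} \cong \CC^{B\Z}$ (the last identification because $p\Z \cong \Z$), so by \Cref{lem:check-unip} it preserves local unipotence and commutes with $\iota$, $(-)^u$ and the Bousfield-type colocalization $(-)[(\psi-1)^{-1}]$ — \emph{provided} one interprets the generator correctly. The only subtlety is the bookkeeping: the generator of $p\Z$ acts by $\psi^p$, so restriction intertwines $(-)[(\psi-1)^{-1}]$ on $\CC^{B\Z}$ with $(-)[(\psi^p - 1)^{-1}]$ on $\CC^{Bp\Z}$. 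Thus the real content is the algebraic identity that inverting $\psi - 1$ and inverting $\psi^p - 1$ have the same effect on any $p$-complete object with a $\Z$-action.

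The key step, then, is: for $X \in \CC^{B\Z}_p$ the natural map $X[(\psi-1)^{-1}] \to X[(\psi^p-1)^{-1}]$ is an isomorphism. One direction is formal since $\psi - 1$ divides $\psi^p - 1$ in $\Z[\psi^{\pm1}]$, giving a map. For the converse, factor $\psi^p - 1 = (\psi-1)\cdot \Phi(\psi)$ where $\Phi(\psi) = 1 + \psi + \cdots + \psi^{p-1}$; it suffices to show $\Phi(\psi)$ is already invertible on $X[(\psi-1)^{-1}]_p$, i.e. after $p$-completion. On $X[(\psi-1)^{-1}]$ the operator $\psi$ acts; writing $\psi = 1 + (\psi - 1)$ and noting $\psi - 1$ acts invertibly there, one computes $\Phi(\psi) = \sum_{i=0}^{p-1}(1 + (\psi-1))^i \equiv p \pmod{(\psi - 1)}$ in the sense that $\Phi(\psi) - p$ is divisible by $\psi - 1$, hence acts invertibly; so $\Phi(\psi)$ becomes invertible after inverting both $\psi - 1$ and $p$. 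But $X[(\psi-1)^{-1}]_p$ is \emph{not} $p$-inverted — here is where $p$-completeness is used the other way: $X[(\psi-1)^{-1}]_p$ is locally unipotent-acyclic, so by \Cref{prop:gluing-seq} it is a $\Q$-module (its $p$-completion vanishes unless it is already rational), on which $p$ acts invertibly; therefore $\Phi(\psi)$ acts invertibly on it. Hence $X[(\psi-1)^{-1}]_p \cong X[(\psi^p-1)^{-1}]_p$, and restriction commutes with $(-)^u$. The ``in particular'' clause is then immediate: $X^u = 0$ iff $(\Res_{p\Z} X)^u = \Res_{p\Z}(X^u) = 0$.

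The main obstacle is getting the rationality reduction clean: one must be careful that $X[(\psi-1)^{-1}]$ is computed as a colimit \emph{in $\CC$} (not in $\CC^{B\Z}$), and that $p$-completing it either kills it or exhibits it as rational — this is exactly the content of \Cref{prop:gluing-seq} in the $p$-complete setting, so I would phrase the argument by invoking that proposition directly rather than re-deriving the recollement. The polynomial identity $\Phi(\psi) \equiv p \bmod (\psi-1)$ is elementary; the only care needed is tracking that ``divisible by $\psi - 1$'' plus ``$\psi - 1$ invertible'' plus ``$p$ invertible'' together force invertibility of $\Phi(\psi)$, and that all three hold on $X[(\psi-1)^{-1}]_p$ when it is rational.
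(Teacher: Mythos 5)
Your reduction is the right one, and it matches the paper's strategy: everything comes down to showing that for $p$-complete $X$ the two telescopes $X[(\psi-1)^{-1}]$ and $X[(\psi^p-1)^{-1}]$ agree, after which the gluing sequence of \Cref{prop:gluing-seq} for the $\Z$-action restricts to the gluing sequence for the $p\Z$-action. The factorization $\psi^p-1=(\psi-1)\Phi(\psi)$ with $\Phi(\psi)=1+\psi+\cdots+\psi^{p-1}$ is also a perfectly good way to organize that comparison. The problem is the argument you give for $\Phi(\psi)$ acting invertibly on $X[(\psi-1)^{-1}]$; it fails on two counts. First, the implication ``$\Phi(\psi)-p$ is divisible by $\psi-1$, and $p$ and $\psi-1$ are invertible, hence $\Phi(\psi)$ is invertible'' is a non sequitur: a sum of an invertible element and a multiple of an invertible element need not be invertible. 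Concretely, on $\Q[\zeta_p]$ with $\psi$ acting by $\zeta_p$, both $p$ and $\psi-1=\zeta_p-1$ act invertibly, yet $\Phi(\psi)$ acts by $1+\zeta_p+\cdots+\zeta_p^{p-1}=0$. Second, the rationality claim is false: in \Cref{lem:res-u} the category $\CC$ is $p$-complete, so $X[(\psi-1)^{-1}]$ denotes the $p$-completed telescope, and a nonzero $p$-complete object is never a $\Q$-module (if $p$ acted invertibly it would be $p$-adically zero). The ``telescope is rational'' criterion you are importing is the one from \Cref{lem:cover-unip-cplt}, which applies to the \emph{uncompleted} telescope and only detects the vanishing of its $p$-completion; it cannot be used to invert $p$ on the object you actually need to work with. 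If your argument did show the $p$-completed telescope is rational, it would show it is zero, i.e.\ that every action is locally unipotent, which is absurd.

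The fix is to reduce modulo $p$ rather than modulo $\psi-1$. Writing $u=\psi-1$, one computes $\Phi(\psi)=\sum_{i=0}^{p-1}(1+u)^i=\sum_{j=1}^{p}\binom{p}{j}u^{j-1}\equiv u^{p-1}=(\psi-1)^{p-1}\pmod p$, since $\binom{p}{j}\equiv 0\pmod p$ for $1\le j\le p-1$. On $X[(\psi-1)^{-1}]/p$ the operator $(\psi-1)^{p-1}$ is invertible, hence so is $\Phi(\psi)$ mod $p$; and an endomorphism of a $p$-complete object is invertible as soon as it is invertible mod $p$ (its cofiber is $p$-complete with vanishing mod-$p$ reduction). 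This is exactly the congruence $(\psi-1)^p\equiv\psi^p-1\pmod p$ that the paper's one-line proof invokes in the form $(\psi-1)^p=\psi^p-1+p\phi$, just divided through by $\psi-1$. With that substitution your argument closes, and the ``in particular'' clause follows as you state.
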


\begin{proof}
  Using the $p$-completeness assumption we have an isomorphism
  \[ X[(\psi^{p} - 1)^{-1}] \cong X[(\psi - 1)^{-1}] \]
  since $(\psi - 1)^p = \psi^p - 1 + p\phi$ for some endomorphism $\phi$.
  The conclusion now follows from \Cref{prop:gluing-seq}.  
\end{proof}

\begin{lem} \label{lem:colim-unip}
  Let $\CC$ be a stable, $p$-complete, presentable category.
  The natural map $ \colim_k X^{hp^{k}\ZZ} \to X^u $ is an isomorphism.
  In particular, an $X \in \CC^{B\Z}$ has a locally unipotent action
  if and only if the natural map $ \colim_k X^{hp^{k}\ZZ} \to X $ is an isomorphism.
\end{lem}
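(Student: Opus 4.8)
\textbf{Proof proposal for \Cref{lem:colim-unip}.}

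The plan is to reduce everything to the structural description of $\CC^{B\Z,u}$ and its gluing sequence from \Cref{prop:gluing-seq}, combined with the identification of $\CC^{B\Z,u}$ with modules over $\SP^{B\Z}$ (equivalently $\SP[t^{\pm 1}]^{\wedge}_{(t-1)}$) from \Cref{lem:unip-affine} and \Cref{rmk:unip-affine}. The key observation is that the restriction maps $X^{hp^{k+1}\Z}\to X^{hp^k\Z}$ are, under the identification $X^{u}=\iota^{-1}$ applied at the level of $\SP[t^{\pm 1}]$-modules, nothing but the maps in the tower computing the $(t-1)$-adic completion of $\SP$ as an $\SP[t^{\pm 1}]$-module (smashed with $\CC$). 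Concretely, $X^{hp^k\Z} \cong \map_{\SP[t^{\pm 1}]}(\SP[t^{\pm 1}]/(t^{p^k}-1),X)$, and the colimit over $k$ along the quotient maps $\SP[t^{\pm 1}]/(t^{p^{k+1}}-1)\to \SP[t^{\pm 1}]/(t^{p^k}-1)$ has corepresenting object $\lim_k \SP[t^{\pm 1}]/(t^{p^k}-1)$. So first I would show that, after $p$-completion, the inverse system $\{\SP[t^{\pm 1}]/(t^{p^k}-1)\}_k$ is pro-equivalent to $\{\SP[t^{\pm 1}]/(t-1)^k\}_k$, hence the colimit of the tower $\{X^{hp^k\Z}\}_k$ computes exactly the functor $X\mapsto \map_{\SP[t^{\pm 1}]}(\SP[t^{\pm 1}]^{\wedge}_{(t-1)},X)$, which is precisely the localization functor $\iota\circ(-)^{u}$ by \Cref{cnstr:sp-triv-action}.

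The pro-equivalence in the previous step is the technical heart. In $\pi_0\SP[t^{\pm 1}] = \Z[t^{\pm 1}]$ one has $t^{p^k}-1 = (t-1)^{p^k} + p\cdot(\text{stuff})$, and more usefully $(t-1)^{p^k}$ divides $t^{p^k}-1$ up to a factor that is a unit times a power of $p$ plus lower terms; after $p$-completion the ideals $(t^{p^k}-1)$ and $(t-1)^{p^k}$ generate the same closed ideal in $\Z_p[t^{\pm 1}]^{\wedge}_{(t-1)}$ (this is the same elementary computation $(\psi-1)^p = \psi^p - 1 + p\phi$ already used in \Cref{lem:res-u}, iterated). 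I would make this precise by checking cofinality of the two towers of quotient rings and then noting that $\SP[t^{\pm 1}]/(f)$ for $f$ a nonzerodivisor is insensitive to replacing $f$ by an element generating the same ideal. Tensoring up with $\CC$ via \Cref{lem:base-change-unip} then gives the general statement: $\colim_k X^{hp^k\Z}\xrightarrow{\ \cong\ } \iota(X^u)$ in $\CC^{B\Z}$, and composing with the unit map $\iota(X^u)\to X$ recovers the natural map $\colim_k X^{hp^k\Z}\to X$ appearing in the lemma. Wait — I should be careful: the lemma as stated asserts $\colim_k X^{hp^k\Z}\to X^u$ is an isomorphism (first sentence), and \emph{separately} that $X$ is locally unipotent iff $\colim_k X^{hp^k\Z}\to X$ is an isomorphism (second sentence). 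The first follows from the corepresentability discussion above; the second then follows from the first because, by definition, $X$ is locally unipotent iff the counit $\iota(X^u)\to X$ is an isomorphism.

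The main obstacle I anticipate is getting the pro-equivalence of towers to interact correctly with the $p$-completion conventions: $\CC$ is assumed $p$-complete, the colimit $\colim_k X^{hp^k\Z}$ is taken in $\CC$ (hence automatically $p$-complete since $\CC$ is), and one must make sure that the corepresenting-object argument is run with $p$-complete coefficients throughout so that the towers $\{t^{p^k}-1\}$ and $\{(t-1)^k\}$ really do become cofinal. An alternative, perhaps cleaner, route that sidesteps explicit ideal manipulations: apply \Cref{prop:gluing-seq} directly to see that the fiber of $\colim_k X^{hp^k\Z}\to X$ is computed by a compatible system built from $X[(\psi^{p^k}-1)^{-1}]$, and use the isomorphism $X[(\psi^{p^k}-1)^{-1}]\cong X[(\psi-1)^{-1}]$ (valid after $p$-completion, exactly as in the proof of \Cref{lem:res-u}) to identify this fiber with $\iota(X^u)$'s complement, i.e. with $X[(\psi-1)^{-1}]$; since each $X^{hp^k\Z}$ already lies in $\CC^{B\Z,u}$ after the colimit is taken appropriately, the map to $X$ must be the unit of the localization, giving both claims at once. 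I would likely write up this second route as the primary argument, as it reuses \Cref{prop:gluing-seq} and \Cref{lem:res-u} with minimal extra bookkeeping.
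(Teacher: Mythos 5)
Your overall strategy — identifying $\colim_k X^{hp^k\Z}$ with $\fib(X \to X[(\psi-1)^{-1}])$ and then invoking \Cref{prop:gluing-seq} to recognize this as $\iota(X^u)$ — is genuinely different from the paper's proof, which instead base-changes along the isomorphism $(\colim_k \Ss^{Bp^k\Z})_p \cong \Ss_p$ using the module-category description of \Cref{rmk:unip-affine}. The reduction you aim for is correct, but both of your proposed executions break down at the same point: neither actually identifies the cofiber of $\colim_k X^{hp^k\Z} \to X$.

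In Route 1, the towers $\{\SP[t^{\pm1}]/(t^{p^k}-1)\}_k$ and $\{\SP[t^{\pm1}]/(t-1)^k\}_k$ are \emph{not} pro-equivalent, even $p$-completely: the ideals are not intertwined (e.g.\ $(t-1)^N$ is nonzero in $\Z_p[\Z/p^k]$ for every $N$, as one sees by mapping $t \mapsto \zeta_p$), and applying $-\otimes_{\Z_p[t^{\pm1}]}\Z_p[\zeta_p]$ sends the first tower to a constant nonzero tower and the second to the zero tower, so no pro-isomorphism can exist. (The remark that a colimit of corepresented functors is corepresented by the limit of the corepresenting objects is also false, though you do not really lean on it.) In Route 2, \Cref{prop:gluing-seq} identifies the cofiber of $\iota((X|_{p^k\Z})^u) \to X$ with $X[(\psi^{p^k}-1)^{-1}]$, but $X^{hp^k\Z}$ is \emph{not} $\iota((X|_{p^k\Z})^u)$ — for the trivial action $X^{h\Z} \cong X \oplus \Sigma^{-1}X$ while $\iota(X^u) \cong X$ — so conflating them assumes exactly what the lemma asserts, namely that the fixed points converge to the colocalization in the colimit. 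The repairable core of your idea is the following direct computation: $\cof(X^{hp^k\Z} \to X) \cong X$ (the fiber sequence $X^{hp^k\Z} \to X \xrightarrow{\psi^{p^k}-1} X$), and the induced transition maps on these cofibers are $1+\psi^{p^k}+\cdots+\psi^{p^k(p-1)}$; hence $\cof(\colim_k X^{hp^k\Z} \to X)$ is the colimit of $X$ along these maps, and since $1+\psi^{p^k}+\cdots+\psi^{p^k(p-1)} \equiv (\psi-1)^{p^{k+1}-p^k} \pmod p$, this colimit agrees with $X[(\psi-1)^{-1}]$ in the $p$-complete setting, exactly as in the proof of \Cref{lem:res-u}. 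With that in hand, \Cref{prop:gluing-seq} finishes the argument as you intend; this computation (or the paper's base-change manipulation) is the step your write-up is missing.
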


\begin{proof}
  From \Cref{lem:res-u} we know that $(-)^u$ does not change upon restriction to
  the subgroups $p^k\Z$. The natural map is now obtained as the colimit of the maps
  $X^{hp^k\Z} \to X^u$.
  
  Recall that $(\colim_k \Ss^{Bp^k\Z})_p \cong \Ss_p$.
  Using the $p$-completeness assumption on $\CC$,
  the formula for $(-)^u$ from \Cref{rmk:unip-affine}
  and \Cref{lem:res-u} we obtain isomorphisms
  \begin{align*}
    \colim_k X^{hp^k\Z}
    &\cong \Ss_p \otimes_{\Ss_p} (\colim_k X^{hp^k\Z})
    \cong (\colim_k \Ss_p) \otimes_{\colim_k \Ss_p^{Bp^k\Z}} (\colim_k X^{hp^k\Z}) \\
    &\cong \colim_k \left( \Ss_p \otimes_{\Ss_p^{Bp^k\Z}} X^{hp^k\Z} \right)
    \cong \colim_k X^u 
    \cong X^u.\qedhere
  \end{align*}    
\end{proof}

\begin{lem} \label{lem:trivialize-action-cpt}
  Let $\CC$ be a stable, $p$-complete, presentable category.
  Suppose we are given a compact object $X \in \CC$
  with a locally unipotent $\Z$-action.
  For $k \gg 0$ the action of $p^k\Z$ on $X$ is trivializable.  
\end{lem}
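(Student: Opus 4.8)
\textbf{Proof proposal for \Cref{lem:trivialize-action-cpt}.}

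The plan is to reduce the statement, via restriction to $p^k\Z$ (using \Cref{lem:res-u} and \Cref{lem:colim-unip}), to a colimit–obstruction argument that exploits the compactness of $X$. First I would observe that by \Cref{lem:colim-unip} the natural map $\colim_k X^{hp^k\Z} \to X$ is an isomorphism (the hypothesis that the action is locally unipotent is exactly what makes this so). Now consider the identity map $\id_X \in \pi_0\Map_{\CC}(X,X)$ sitting in the target of
\[ \colim_k \pi_0\Map_{\CC}(X, X^{hp^k\Z}) \to \pi_0\Map_{\CC}(X, \colim_k X^{hp^k\Z}) \cong \pi_0\Map_{\CC}(X,X). \]
Since $X$ is compact, this comparison map is an isomorphism, so $\id_X$ lifts to a class in $\pi_0\Map_{\CC}(X, X^{hp^{k_0}\Z})$ for some $k_0 \gg 0$; that is, there is a map $s\colon X \to X^{hp^{k_0}\Z}$ in $\CC$ whose composite with the canonical map $X^{hp^{k_0}\Z} \to X$ is homotopic to $\id_X$.

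Next I would unwind what such a section gives us. A map $X \to X^{hp^{k_0}\Z}$ in $\CC$, by the adjunction defining homotopy fixed points (equivalently, by \Cref{rmk:unip-affine} and the identification of $\CC^{Bp^{k_0}\Z,u}$ with $\Mod(\CC; \SP^{Bp^{k_0}\Z})$ after $p$-completion), corresponds to a map $X^{\triv} \to X$ of $p^{k_0}\Z$-equivariant objects, where $X^{\triv}$ carries the trivial $p^{k_0}\Z$-action; the condition that the composite to $X$ recovers $\id_X$ says precisely that this equivariant map is an isomorphism on underlying objects of $\CC$, hence an isomorphism of $p^{k_0}\Z$-equivariant objects. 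Therefore the $p^{k_0}\Z$-action on $X$ is isomorphic to a trivial action, i.e.\ trivializable, and the same holds for all $k \geq k_0$ by restricting further (a trivial action restricts to a trivial action). Taking $k \gg 0$ to mean $k \geq k_0$ completes the argument.

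The main obstacle — really the only substantive point — is making the identification ``map $X \to X^{hp^{k_0}\Z}$ in $\CC$ splitting the canonical map $\Longleftrightarrow$ trivialization of the $p^{k_0}\Z$-action'' fully precise. Concretely one must check that under the symmetric monoidal equivalence $\CC^{p^{k_0}\Z,u}_p \cong \Mod(\CC;\SP^{Bp^{k_0}\Z})$ of \Cref{rmk:unip-affine}, the trivial-action functor $(-)^{\triv}$ is the free module functor and $(-)^{hp^{k_0}\Z}$ is the forgetful-type right adjoint, so that a section in the sense above is exactly the data of an equivalence to a free (= trivially-acted-upon) module; one also needs that $X$ being locally unipotent guarantees it actually lies in $\CC^{p^{k_0}\Z,u}_p$, which follows from \Cref{lem:res-u}. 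These are formal consequences of the recollement of \Cref{prop:gluing-seq} together with compactness of $X$, so I expect the write-up to be short.
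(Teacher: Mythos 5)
Your argument is correct and is essentially the paper's own proof: both reduce to the colimit $\colim_k X^{hp^k\Z} \xrightarrow{\ \cong\ } X$ from \Cref{lem:colim-unip}, use compactness of $X$ to factor $\id_X$ through a finite stage, and observe that such a section is adjoint to an equivariant map $X^{\triv}\to X$ that is an underlying (hence equivariant) equivalence. The "main obstacle" you flag at the end is not needed — your adjunction argument already closes the gap without passing through the module-category identification.
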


\begin{proof}
  The action of $p^k\Z$ on $X$ is trivializable
  iff the natural map $X^{hp^k\Z} \to X$ admits a section.
  From \Cref{lem:colim-unip} we have a colimit diagram
  \[ X^{h\Z} \to X^{hp\Z} \to X^{hp^2\Z} \to \cdots \to X. \]
  Now the hypotheses imply that the identity map $X \to X$
  factors through a finite stage of this diagram.  
\end{proof}

\begin{lem} \label{lem:trivialize-action-ac}
  Let $\CC$ be a stable, $p$-complete, presentable category
  equipped with a $t$-structure.
  Suppose we are given an $X \in \CC$
  which is both bounded and almost compact
  and a locally unipotent $\Z$-action on $X$.    
  For $k \gg 0$ the action of $p^k\Z$ on $X$ is trivializable.  
\end{lem}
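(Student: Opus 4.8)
\textbf{Proof plan for \Cref{lem:trivialize-action-ac}.}

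The plan is to reduce the statement about bounded, almost compact objects to the statement about compact objects already proved in \Cref{lem:trivialize-action-cpt}, by a dévissage along the $t$-structure together with a filtered-colimit argument that converts ``almost compact'' into an effective finiteness. First I would recall (as in the proof of \Cref{lem:colim-unip}) that the action of $p^k\Z$ on $X$ is trivializable precisely when the natural map $X^{hp^k\Z}\to X$ admits a section, and that \Cref{lem:colim-unip} identifies the colimit $\colim_k X^{hp^k\Z}$ with $X^u$, which equals $X$ since the action is locally unipotent. So the whole question becomes: does the identity map $\id_X\colon X\to X$ factor through a finite stage $X^{hp^k\Z}$ of the tower
\[ X^{h\Z}\to X^{hp\Z}\to X^{hp^2\Z}\to\cdots\to X = \colim_k X^{hp^k\Z}? \]
Equivalently, writing $F_k := \fib(X^{hp^k\Z}\to X)$, we must show $F_k\to F_{k+1}\to\cdots$ has the property that the colimit map from some $F_{k}$ to $\colim = 0$ already reflects that $\id_X$ lifts; concretely, we need $\Map_\CC(X,-)$ to commute with this particular filtered colimit so that a preimage of $\id_X$ in $\colim_k\Map_\CC(X, X^{hp^k\Z})$ lifts to a finite stage.

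The point where almost compactness enters is exactly this: to invoke \Cref{dfn:almost-compact} for $X$ applied to the diagram $k\mapsto X^{hp^k\Z}$, we need the objects $X^{hp^k\Z}$ to be \emph{uniformly} bounded in the $t$-structure. This is where I expect the main work to be. The objects $X^{hp^k\Z}$ are homotopy fixed points for $\Z$, hence sit in a fiber sequence $X^{hp^k\Z}\to X\xrightarrow{\psi^{p^k}-1}X$; since $X$ is bounded, say in $[c,b]$, each $X^{hp^k\Z}$ is bounded in $[c-1,b]$ — uniformly in $k$. Thus the hypothesis of almost compactness applies directly: the assembly map
\[ \colim_k\Map_\CC(X, X^{hp^k\Z})\xrightarrow{\ \cong\ }\Map_\CC\bigl(X,\colim_k X^{hp^k\Z}\bigr)=\Map_\CC(X,X) \]
is an isomorphism. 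Therefore the class $[\id_X]\in\pi_0\Map_\CC(X,X)$ lifts to $\pi_0\Map_\CC(X,X^{hp^k\Z})$ for some $k\gg0$; the image of this lift in $\pi_0\Map_\CC(X,X^{hp^k\Z})$ composes to $\id_X$, giving the desired section of $X^{hp^k\Z}\to X$ and hence a trivialization of the $p^k\Z$-action.

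The only genuine subtlety — and the step I would be most careful about — is checking that the lift of $\id_X$ along the assembly isomorphism is genuinely a \emph{section} on the nose (i.e.\ that the composite $X\to X^{hp^k\Z}\to X$ is homotopic to $\id_X$, not merely equal after passing to $\pi_0$ of some mapping space), which follows because the assembly isomorphism is a map of mapping spectra compatible with composition with the projection $X^{hp^k\Z}\to X$; concretely, the image of our lift under $\Map_\CC(X,X^{hp^k\Z})\to\Map_\CC(X,X)$ is by construction a preimage, under an isomorphism, of $\id_X$, so equals $\id_X$ in the homotopy category, which is exactly what ``admits a section'' requires since the relevant obstruction to a coherent section lives in $\pi_0$ of that same mapping spectrum. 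No dévissage along the Postnikov tower of $X$ is actually needed here — boundedness of $X$ already gives the uniform bound on the tower — so the proof is short; the earlier lemmas \Cref{lem:res-u}, \Cref{lem:colim-unip}, and \Cref{lem:trivialize-action-cpt} do all the structural heavy lifting, and \Cref{lem:trivialize-action-ac} is essentially the same argument as \Cref{lem:trivialize-action-cpt} with ``compact'' weakened to ``bounded and almost compact'' and the compensating observation that homotopy fixed points of a bounded object along the subgroups $p^k\Z$ stay uniformly bounded.
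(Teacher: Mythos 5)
Your proof is correct and is essentially identical to the paper's: the paper proves this lemma by repeating the argument of \Cref{lem:trivialize-action-cpt} (factor $\id_X$ through a finite stage of the colimit tower from \Cref{lem:colim-unip}), substituting almost compactness of $X$ together with the uniform boundedness of the $X^{hp^k\Z}$ for compactness. Your observation that the fiber sequence $X^{hp^k\Z}\to X\xrightarrow{\psi^{p^k}-1}X$ gives the uniform bound $[c-1,b]$ is exactly the point the paper leaves implicit.
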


\begin{proof}
  The proof is the same as the proof of 
  \Cref{lem:trivialize-action-cpt},
  but using (uniform) boundedness of the $X^{hp^k\Z}$
  and almost compactness of $X$ in place of compactness of $X$.
\end{proof}

Finally, we discuss the unipotent algebra that corresponds to the ring map $\SP_p^{B\Z}\to \SP_p$ coming from the map $*\to B\Z$ via the equivalence $\Mod_{\Sp_p}(\SP_p^{B\Z})\cong\Sp_p^{B\Z,u}$ of \Cref{rmk:unip-affine}:

\begin{prop}\label{prop:unipotent_SW}
  Let $\W(\LCF{\overrightarrow{\ZZ_p}}) \in \CAlg(\Sp_p^{B\ZZ})$
  be the spherical Witt vectors of $\LCF{\Z_p}$
  endowed with the $\ZZ$-action coming from the map 
  $+1 : \ZZ_p \to \ZZ_p$.
  Let $\CC$ be a stable $p$-complete presentable category.
  \begin{enumerate} 
  \item The $\Z$-action on $\W(\LCF{\overrightarrow{\ZZ_p}})$ is locally unipotent.
  \item The unit map $\Ss_p \to \W(\LCF{\overrightarrow{\ZZ_p}})^{h\ZZ}$ is an isomorphism.
  \item There is an symmetric monoidal isomorphism
    \[ \CC \cong \Mod(\CC^{B\ZZ,u}; \W(\LCF{\overrightarrow{\ZZ_p}})) \]
    given by sending $X$ to $\W(\LCF{\overrightarrow{\ZZ_p}}) \otimes X $
    with inverse given by $Y \mapsto Y^{h\ZZ}$.
  \item For each $a \in \Z_p$ there is a isomorphism of symmetric monoidal functors between the functor
    $(-)^{h\Z}$ and the functor $(-)_{|a}$ given by the composite
    \[ \Mod(\Sp_p^{B\ZZ,u}; \W(\LCF{\overrightarrow{\ZZ_p}})) \to \Mod(\W(\LCF{\ZZ_p})) \xrightarrow{(-)_{|a}} \Sp_p. \]
  \end{enumerate}
  If $\CC$ is presentably symmetric monoidal,
  then the isomorphisms in (3) and (4) can be made symmetric monoidal.
\end{prop}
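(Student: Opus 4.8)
\textbf{Proof proposal for \Cref{prop:unipotent_SW}.}

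The plan is to reduce everything to the case $\CC = \Sp_p$, where the essential content is a computation in spherical Witt vectors, and then bootstrap to general $\CC$ by tensoring. First I would prove (1). By \Cref{lem:colim-unip} (or equivalently \Cref{exm:loc-nilp-heart} and \Cref{cor:check-unip-pi}), it suffices to check local unipotence on homotopy groups, and since $\W(-)$ is formally étale over $\LCF{\overrightarrow{\ZZ_p}}$ with $\pi_0 = \LCF{\Z_p}$ and higher homotopy a sum of suspensions of copies of $\Z_p$ built on $\pi_0$, the only thing to verify is that the $\ZZ$-action on the discrete ring $\LCF{\Z_p}$ generated by $f \mapsto f(\cdot + 1)$ is locally unipotent, i.e.\ $(\psi-1)^{\circ N}$ kills any locally constant function for $N \gg 0$. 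Since $f$ is locally constant it factors through $\Z/p^m$ for some $m$, and on $\LCF{\Z/p^m}$ the operator $\psi$ is the shift by a generator of a cyclic group of order $p^m$, so $(\psi - 1)^{p^m} = 0$ there; this gives local unipotence. (Formally, one should phrase this using the filtration of $\pi_* \W(\LCF{\overrightarrow{\ZZ_p}})$ by powers of the augmentation-type ideals, or just apply \Cref{cor:check-unip-pi} together with \Cref{exm:loc-nilp-heart} degree by degree, using that $\LCF{\Z_p}$ and $\Z_p$ are $\Z$-flat.)

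Next (2): using (1) and the identification $(-)^u \cong (-)^{h\ZZ}$ on unipotent objects, I would compute $\W(\LCF{\overrightarrow{\ZZ_p}})^{h\ZZ} \cong \lim_k \W(\LCF{\overrightarrow{\ZZ/p^k}})^{h\ZZ}$. For each $k$, the $\ZZ$-action on $\LCF{\Z/p^k}$ factors through the cyclic quotient $\Z/p^k$ acting by translation; since $\LCF{\Z/p^k} \cong \Z_p[\Z/p^k]$ as a $\Z/p^k$-module (the regular representation), spherical Witt vectors carries this to something whose $\Z/p^k$-fixed points — hence $\ZZ$-homotopy fixed points — recover $\Ss_p$ by the projection formula / induction. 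Passing to the limit over $k$ and using that $\W(-)$ and $(-)^{h\ZZ}$ commute with the relevant (uniformly bounded below) limits, one gets $\W(\LCF{\overrightarrow{\ZZ_p}})^{h\ZZ} \cong \Ss_p$, and one checks the unit map realizes this isomorphism. Alternatively, and perhaps more cleanly, I would observe that $\W(\LCF{\overrightarrow{\ZZ_p}})$ as an object of $\Sp_p^{B\ZZ,u} \cong \Mod_{\Sp_p}(\Ss_p^{B\ZZ})$ is, via the adjunction of \Cref{lem:unip-affine}, the base change $\Ss_p \otimes_{\Ss_p^{B\ZZ}} \Ss_p^{B\ZZ} $ — i.e.\ it corresponds to the $\Ss_p^{B\ZZ}$-module $\Ss_p^{B\ZZ}$ itself — by comparing with \Cref{lem:thh-s1} and \Cref{lem:assembly} (which compute $\Ss \otimes_{\Ss^{B\ZZ}} \Ss \cong \W\LCF{\Z_p^\delta}$ and identify the relevant algebra with $\W\LCF{\overrightarrow{\ZZ_p}}$, the action being $+1$ as in \Cref{prop:dehn-action}). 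Granting that, (2) is immediate since $(\Ss_p^{B\ZZ})^{h\ZZ} = \Ss_p$ (the right adjoint $(-)^u \cong (-)^{h\ZZ}$ of the colocalization applied to the unit), and similarly (3) and (4) become formal.

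For (3): combining (1) and (2), $\W(\LCF{\overrightarrow{\ZZ_p}})$ is a unipotent commutative algebra whose $\ZZ$-homotopy fixed points are the unit. I would then invoke \cite[Prop.~A.4]{burklund2022galois} (exactly as in the proof of \Cref{lem:unip-affine}, see also \Cref{lem:base-change-unip}): the functor $X \mapsto \W(\LCF{\overrightarrow{\ZZ_p}}) \otimes X$ from $\CC$ to $\W(\LCF{\overrightarrow{\ZZ_p}})$-modules in $\CC^{B\ZZ,u}$ is cocontinuous symmetric monoidal, its right adjoint is $(-)^{h\ZZ}$ (conservative and cocontinuous because $(-)^{h\ZZ}$ on unipotent objects is, by \Cref{cnstr:sp-triv-action}, the right adjoint of a colocalization onto objects generated by the image of the trivial-action functor), and the composite counit/unit maps are equivalences precisely by (2) and the projection formula; then \cite[Prop.~A.4]{burklund2022galois} upgrades this to an equivalence of module categories. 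Tensoring with $\CC$ via \Cref{lem:base-change-unip} handles general $\CC$ from the case $\CC = \Sp_p$. For (4): the functor $(-)_{|a}$ is base change along $\W(\res_a)\colon \W(\LCF{\ZZ_p}) \to \Ss_p$ where $\res_a$ is evaluation at $a$; composing the equivalence of (3) with this base change gives a symmetric monoidal functor $\Mod(\CC^{B\ZZ,u};\W(\LCF{\overrightarrow{\ZZ_p}})) \to \CC$, and I would identify it with $(-)^{h\ZZ}$ by noting that under the equivalence of (3) it corresponds to the functor $X \mapsto \Ss_p \otimes_{\W(\LCF{\overrightarrow{\ZZ_p}})} (\W(\LCF{\overrightarrow{\ZZ_p}}) \otimes X) \cong X$, which is exactly what $(-)^{h\ZZ}$ does to $\W(\LCF{\overrightarrow{\ZZ_p}}) \otimes X$ by (2). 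Independence of $a$ is then automatic, though one could also see it directly since translation by $a - a'$ is a $\ZZ_p$-equivariant automorphism of $\LCF{\overrightarrow{\ZZ_p}}$ intertwining $\res_a$ and $\res_{a'}$.

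The main obstacle I anticipate is (2): making precise that $\W(\LCF{\overrightarrow{\ZZ_p}})^{h\ZZ} \cong \Ss_p$ requires carefully matching the translation $\ZZ$-action on $\W(\LCF{\overrightarrow{\ZZ_p}})$ with the structure coming from $\Ss_p^{B\ZZ}$, and checking the relevant limits commute. The cleanest route, as indicated, is to avoid a hands-on homotopy-fixed-point computation and instead identify $\W(\LCF{\overrightarrow{\ZZ_p}})$ with the image of the free rank-one $\Ss_p^{B\ZZ}$-module under the equivalence $\Mod_{\Sp_p}(\Ss_p^{B\ZZ}) \cong \Sp_p^{B\ZZ,u}$, using \Cref{lem:assembly}, \Cref{lem:thh-s1}, and the $+1$-action computed in \Cref{prop:dehn-action}; then everything reduces to the tautology $(\Ss_p^{B\ZZ})^{h\ZZ} \cong \Ss_p$ and to the already-established \Cref{lem:unip-affine} and \cite[Prop.~A.4]{burklund2022galois}.
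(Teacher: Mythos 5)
Your overall architecture — prove (1) and (2) over $\Sp_p$, then get (3) and (4) formally from \Cref{lem:unip-affine}, \Cref{lem:base-change-unip} and the module-category equivalence, with general $\CC$ handled by base change — is exactly the paper's, and your treatments of (1), (3) and (4) are essentially correct (the paper proves (1) slightly more slickly: the action on $\W(\LCF{\overrightarrow{\ZZ/p^k}})$ is literally \emph{trivial} after restricting to $p^k\ZZ$, so \Cref{lem:res-u} applies without any homotopy-group bookkeeping).

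The gap is in (2), which is the one genuinely computational input. Your primary route is wrong at each finite stage: writing $\W(\LCF{\overrightarrow{\ZZ_p}}) \cong \colim_k \W(\LCF{\overrightarrow{\ZZ/p^k}})$ (note: a colimit, not the limit you wrote), the $k$-th term is the module induced from $p^k\ZZ$ up to $\ZZ$, so its homotopy fixed points are $\Ss_p^{hp^k\ZZ} \cong \Ss_p \oplus \Sigma^{-1}\Ss_p$, \emph{not} $\Ss_p$. The statement is only true in the colimit, because the transition maps act by multiplication by $p$ on the $\Sigma^{-1}\Ss_p$ summand, which therefore dies after $p$-completion — an argument you would still need to supply. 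Your ``cleaner alternative'' also misfires as stated: under $\Sp_p^{B\ZZ,u} \cong \Mod(\Ss_p^{B\ZZ})$ the free rank-one module corresponds to $\Ss_p$ with the \emph{trivial} action, not to $\W(\LCF{\overrightarrow{\ZZ_p}})$; what you need is that $\W(\LCF{\overrightarrow{\ZZ_p}})$ is $\iota$ of the augmentation module $\Ss_p$, i.e.\ that the residual $\ZZ$-action on $\Ss_p \otimes_{\Ss_p^{B\ZZ}} \Ss_p$ matches the $+1$-translation action on $\pflat = \LCF{\ZZ_p}$ — and pinning that down is essentially equivalent to proving (2), so the shortcut is close to circular. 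The paper's actual proof of (2) is much shorter: since source and target are bounded below and $p$-complete, reduce mod $p$, where $(-)^{h\ZZ}$ is the fiber of $\psi - 1$, and conclude from the short exact sequence $0 \to \F_p \to \LCF{\ZZ_p} \xrightarrow{\psi - \id} \LCF{\ZZ_p} \to 0$ (exactness of which uses that the cokernel of $\psi-1$ at each finite level $\ZZ/p^k$ is killed by the transition maps).
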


\begin{proof}
  Since $\W(-)$ and $\LCF-$ commute with colimits, we have isomorphisms
  $\W(\LCF{\overrightarrow{\ZZ_p}}) \simeq \colim_k \W(\LCF{\overrightarrow{\ZZ/p^k}})$.
  The $+1$ action on $\W(\LCF{\ZZ/p^k})$ is trivial
  after restricting to $p^k\Z \subseteq \Z$ is thus unipotent by \Cref{lem:res-u}. 
  Passing to the colimit along $k$ we obtain (1).
  
  For (2) note that since both the source and target are $p$-complete and bounded below
  it will suffice to verify the claim after tensoring with $\mathbb{F}_p$.
  The claim now follows from the short exact sequence of abelian groups
  \[0 \to \mathbb{F}_p \to  \LCF{{\ZZ_p}} \xrightarrow{\psi-\mathrm{id}}\LCF{{\ZZ_p}}\to 0.\]

  %

  For claims (3) and (4) the base-change formula from \Cref{lem:base-change-unip}
  and the recollement from \Cref{prop:gluing-seq} reduce to
  proving the symmetric monoidal versions of (3) and (4) for $\CC=\Sp_p$.

  Part (3) follows from (1), (2) and \Cref{rmk:unip-affine} upon taking modules
  over the commutative algebra $\W(\LCF{\overrightarrow{\ZZ_p}})$
  under the symmetric monoidal equivalence
  $\Mod(\SP_p^{B\ZZ}) \cong \Sp_p^{B\Z,u}$.
  Part (4) follows from (3) via the symmetric monoidal isomorphisms
  \[ (-)_{|a} \cong \left( \W(\LCF{\overrightarrow{\ZZ_p}}) \otimes (-)^{h\ZZ} \right)_{|a} \cong (\W(\LCF{\ZZ_p})_{|a} \otimes (-)^{h\ZZ} \cong (-)^{h\Z}. \qedhere\]
\end{proof}

\subsection{Connectivity for \texorpdfstring{$\cO$}{O}-algebras}

In this subsection, let $\CC$ be a presentably symmetric monoidal stable category with $t$-structure compatible with the symmetric monoidal structure, and let $\cO$ be a unital operad in spaces. Here we prove some connectivity statements that are helpful in studying the deformation theory of $\cO$-algebras. The only result directly used in the main part of the paper is \Cref{lem:trivializeacalg}, which we use to trivialize powers of locally unipotent automorphisms of $\pi$-finite connective $\E_1\otimes \A_2$-algebras.

\begin{lem}\label{lem:tensorsquare}
	Let $A,B \in \CC^{\Delta^1\times \Delta^1}$ be two squares of connective objects such that the total fiber is $k_1+k_2$-connective, the horizontal morphisms are $k_1$-connective, the vertical morphisms are $k_2$-connective. Then $A\otimes B \in \CC^{\Delta^1\times\Delta^1}$ is also of this form.
\end{lem}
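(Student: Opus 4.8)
The statement is a routine multiplicativity assertion for total fibers, and the cleanest approach is to reduce it to the case where $B$ is constant on the target object. Write $B$ as the square
\[
\begin{tikzcd}
B_{00} \ar[r] \ar[d] & B_{01} \ar[d] \\
B_{10} \ar[r] & B_{11}
\end{tikzcd}
\]
with horizontal maps $k_1$-connective, vertical maps $k_2$-connective, and total fiber $(k_1+k_2)$-connective. The first step is to observe that the total fiber functor $\mathrm{tfib} : \CC^{\Delta^1\times\Delta^1}\to\CC$ is exact, and that tensoring with a fixed object of $\CC$ commutes with $\mathrm{tfib}$ since $\otimes$ is exact in each variable. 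Thus $\mathrm{tfib}(A\otimes B)$ fits into expressions built from $A_{ij}\otimes B_{i'j'}$, and we want to bound its connectivity by $k_1+k_2$ and the analogous bounds on the edges.

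The key structural step is the following: for any square $X$ and any object $Y$, the total fiber of $X\otimes Y$ (tensoring $Y$ into each corner, with maps induced from those of $X$) is $\mathrm{tfib}(X)\otimes Y$. Applying this twice, I would filter the computation of $\mathrm{tfib}(A\otimes B)$ by first taking horizontal fibers and then vertical fibers. Concretely, let $F^h_A$ denote the map-of-objects obtained as the fiber of the horizontal map of vertical-fiber objects of $A$, i.e. $\mathrm{tfib}(A) = \mathrm{fib}(\mathrm{vfib}(A_{0\bullet})\to\mathrm{vfib}(A_{1\bullet}))$, and similarly for $B$. Then $A\otimes B$, as a square, has total fiber computed from a $3\times 3$-style diagram whose associated graded pieces (under the evident finite filtration coming from the two fiber sequences $\SP\to\SP\to\SP^{\T}$-type cofiber sequences applied in each variable) are the four tensor products
\[
\mathrm{tfib}(A)\otimes B_{00},\quad \mathrm{tfib}(A)\otimes (\text{edges of }B),\quad (\text{edges of }A)\otimes \mathrm{tfib}(B),\quad \mathrm{tfib}(A)\otimes\mathrm{tfib}(B),
\]
together with the appropriate connectivity of each. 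Using that the monoidal structure is compatible with the $t$-structure (so $\CC_{\geq a}\otimes\CC_{\geq b}\subseteq\CC_{\geq a+b}$, \Cref{dfn:t-amplitude} and \Cref{rmk:spread-t-amplitude}), each of these pieces is at least $(k_1+k_2)$-connective: $\mathrm{tfib}(A)$ is $(k_1+k_2)$-connective and $B_{00}$ is connective; $\mathrm{tfib}(A)$ is $(k_1+k_2)$-connective and the $B$-edges are $\geq 0$-connective; the $A$-edges are $\geq 0$-connective and $\mathrm{tfib}(B)$ is $(k_1+k_2)$-connective; and $\mathrm{tfib}(A)\otimes\mathrm{tfib}(B)$ is $\geq 2(k_1+k_2)$-connective. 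The same bookkeeping shows the horizontal maps of $A\otimes B$ are $k_1$-connective (they are $\mathrm{fib}$'s of maps whose total fiber involves $\mathrm{hfib}(A)\otimes B_{\bullet 0}$ with $\mathrm{hfib}(A)$ being $k_1$-connective) and the vertical maps are $k_2$-connective, by the symmetric argument.

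The main obstacle — though it is more a matter of care than of genuine difficulty — is organizing the iterated fiber sequences so that one correctly identifies all the associated graded terms and their connectivities without double-counting or missing a piece; the cleanest way to do this is to work entirely with the exact functor $\mathrm{tfib}$ and the two cofiber sequences relating a corner, an edge, and a fiber in each of the two variables, rather than writing out a $3\times3$ diagram by hand. I would present this as: apply the fiber-sequence $\mathrm{hfib}(A)\to A_{0\bullet}\to A_{1\bullet}$ of vertical arrows, tensor with $B$, and reduce to proving the lemma separately for the square $\mathrm{hfib}(A)\to A_{0\bullet}$ (as a degenerate square in the horizontal direction) and then iterate in the vertical direction, at each stage invoking compatibility of $\otimes$ with connectivity. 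This reduces everything to the one-line statement $\CC_{\geq a}\otimes\CC_{\geq b}\subseteq\CC_{\geq a+b}$, which holds by hypothesis on the $t$-structure.
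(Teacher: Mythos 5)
Your overall strategy is the same as the paper's: factor each tensor product of maps through the intermediate object, thereby filtering $A\otimes B$ (the paper literally pastes it from four small squares in a $3\times 3$ grid), and then bound each graded piece using $\CC_{\geq a}\otimes\CC_{\geq b}\subseteq\CC_{\geq a+b}$ together with closure of connectivity under extensions. However, your identification of the associated graded of $\mathrm{tfib}(A\otimes B)$ is incorrect. The correct pieces coming from this filtration are
\[
\mathrm{tfib}(A)\otimes B_{00},\quad
\mathrm{fib}(A_{01}\to A_{11})\otimes\mathrm{fib}(B_{00}\to B_{01}),\quad
\mathrm{fib}(A_{10}\to A_{11})\otimes\mathrm{fib}(B_{00}\to B_{10}),\quad
A_{11}\otimes\mathrm{tfib}(B);
\]
that is, the cross terms are (fiber of an edge of $A$ in one direction) tensored with (fiber of the edge of $B$ in the complementary direction), not $\mathrm{tfib}(A)\otimes(\text{edges of }B)$ or $(\text{edges of }A)\otimes\mathrm{tfib}(B)$, and there is no $\mathrm{tfib}(A)\otimes\mathrm{tfib}(B)$ term. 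Your list cannot be right as a general structural fact about squares: take $A$ to be the square with $A_{00}=A_{01}=X$ (identity horizontal map) and $A_{10}=A_{11}=0$, and $B$ its transpose with $Y$ in place of $X$, where $X,Y$ are connective and $k_1=k_2=0$. Then $\mathrm{tfib}(A)=\mathrm{tfib}(B)=0$, so every piece on your list vanishes, yet $\mathrm{tfib}(A\otimes B)=X\otimes Y$ (this is exactly the surviving cross term $\mathrm{fib}(A_{01}\to A_{11})\otimes\mathrm{fib}(B_{00}\to B_{01})$).

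The lemma survives the correction, because the two cross terms are a $k_2$-connective object tensored with a $k_1$-connective one and vice versa, hence $(k_1+k_2)$-connective, and the outer two terms are handled exactly as you say. The same correction is needed for your treatment of the edges of $A\otimes B$: the fiber of $A_{i0}\otimes B_{i0}\to A_{i1}\otimes B_{i1}$ is not "$\mathrm{hfib}(A)\otimes(\text{something})$" but sits in a fiber sequence with $\mathrm{fib}(A_{i0}\to A_{i1})\otimes B_{i0}$ and $A_{i1}\otimes\mathrm{fib}(B_{i0}\to B_{i1})$, both $k_1$-connective. Your closing reduction — tensoring the fiber sequence of columns of $A$ "with $B$" and inducting — founders on the same point: the fiber of a tensor product of maps is not the tensor product of one fiber with anything, so you must use the two-step filtration in each variable separately, which is precisely the paper's $3\times 3$ pasting. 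In short: right idea, but the decomposition you wrote down is false, and the proof only goes through once the cross terms are identified correctly.
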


\begin{proof}
	We can expand $A\otimes B$ as the composite of squares
	
\[\begin{tikzcd}
	{A_{00}\otimes B_{00}} & {A_{01}\otimes B_{00}} & {A_{01}\otimes B_{01}} \\
	{A_{10}\otimes B_{00}} & {A_{11}\otimes B_{00}} & {A_{11}\otimes B_{01}} \\
	{A_{10}\otimes B_{10}} & {A_{11}\otimes B_{10}} & {A_{11}\otimes B_{11}}
	\arrow[from=1-3, to=2-3]
	\arrow[from=1-2, to=2-2]
	\arrow[from=1-1, to=2-1]
	\arrow[from=2-1, to=3-1]
	\arrow[from=2-2, to=3-2]
	\arrow[from=3-1, to=3-2]
	\arrow[from=2-1, to=2-2]
	\arrow[from=1-1, to=1-2]
	\arrow[from=1-2, to=1-3]
	\arrow[from=2-2, to=2-3]
	\arrow[from=3-2, to=3-3]
	\arrow[from=2-3, to=3-3]
\end{tikzcd}\]

From this it is easy to verify that the total fiber of each square is $k_1+k_2$-connective by the assumptions, and that the horizontal and vertical arrows above are $k_1,k_2$-connective respectively. These then imply the claims about $A\otimes B$.
\end{proof}

The following is a version of the Blakers--Massey theorem for $\cO$-algebras that establishes a stable range:

\begin{lem}\label{lem:oalgblakersmassey}
	Suppose we are given a pushout square	
	\begin{center}
		\begin{tikzcd}
			{A} \ar[r,"b"]\ar[d,"c"] & B \ar[d]\\
			C	\ar[r] & D
		\end{tikzcd}
	\end{center}
	of connective $\cO$-algebras in $\CC$ such that the maps $b,c$ are $k_1,k_2$ connective respectively. Then the natural map $A \to B\times_DC$ is $k_1+k_2$-connective and the map $B\coprod_AC \to D$ is $k_1+k_2+1$-connective, where $\coprod_A$ denotes the pushout along $A$ in $\CC$ (as opposed to $\Alg_{\cO}\CC$).
\end{lem}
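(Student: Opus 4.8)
The plan is to bootstrap the statement from the case of the sphere algebra $\SP_\CC = \one_\CC$ by the standard ``universal example'' device: first establish the connectivity claims when $A = \one$ (so $B,C$ are connected-type objects), and then deduce the general case by base-changing along the unit map $\one \to A$. More precisely, for a general pushout square as in the statement, I would tensor the ``universal'' pushout square $\one \leftarrow \one\{I_b\} \to \dots$ encoding the cells of $b$ and $c$ over $\one$ with $A$; here $I_b = \fib(\one \to B')$ for a suitable free model and similarly for $c$. The key reduction is that after base change, the question of whether $A \to B\times_D C$ is $(k_1+k_2)$-connective becomes a question about a pushout of \emph{connected} $\cO$-algebras over $\one_{\Mod_\CC(A)}$, which one can attack by the cellular filtration machinery of \Cref{lem:cells} together with \Cref{lem:Hurewicz}: both $\H(B\coprod_A C)$ and $\H(D)$ are controlled by tensor products of the indecomposables $\H(B/A)$ and $\H(C/A)$ via \Cref{lem:tenscells}, and the connectivity of a tensor product of a $k_1$-connective and a $k_2$-connective object is $(k_1+k_2)$-connective (or $(k_1+k_2+1)$-connective after one suspension appears in the formula).

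The concrete order of steps I would carry out: (1) Reduce to the universal/cellular case: replace $b$ by a map $\one\{X\} \to \one$ (resp.\ a relative cell attachment) with $X$ being $(k_1-1)$-connective, and similarly replace $c$; this is legitimate because both $A \to B\times_D C$ and $B\coprod_A C \to D$ only get \emph{less} connective under such replacements in a controlled way, or alternatively one resolves by the free-algebra monad as in the proof of \Cref{lem:Hurewicz}(2). (2) Compute $\fib(\H(D) \to \H(B\coprod_A C))$ using \Cref{lem:tenscells}: it is $\H(B/A)\otimes_{\pi_0\one}\H(C/A)$ (after base change to $\pi_0\one$, which preserves the relevant connectivities). (3) Use \Cref{lem:Hurewicz}(1) to translate: $\fib(b)$ being $k_1$-connective forces $\fib(\H(b))$, hence $\H(B/A)$ in the connected model, to be $k_1$-connective, and likewise $\H(C/A)$ is $k_2$-connective; therefore the tensor product above is $(k_1+k_2)$-connective. (4) Feed this back through \Cref{lem:Hurewicz}(2) — or just directly through the cofiber sequence $\fib(D \to B\coprod_A C) \to \fib(\text{of }\H) \to (\text{lower terms})$ as in the proof of \Cref{lem:cells} — to conclude that $B\coprod_A C \to D$ is $(k_1+k_2+1)$-connective. (5) The statement about $A \to B\times_D C$ then follows by comparing the two cofiber sequences relating $A$, $B$, $C$, $D$, $B\coprod_A C$, and $B\times_D C$: the fiber of $A \to B\times_D C$ is a shift of the total fiber of the square, which in turn is controlled by $\fib(B\coprod_A C \to D)$, losing one degree, giving $(k_1+k_2)$-connectivity.

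The main obstacle I anticipate is step (1), the reduction to the cellular/universal case, done \emph{uniformly over $A$}: one must be careful that replacing $b,c$ by free-cell models does not distort the connectivity estimates, and that the pushout $B\coprod_A C$ over $\CC$ (rather than over $\Alg_\cO(\CC)$, which is what naturally appears) is the right object to track. The cleanest route is probably to run the bar/monadic resolution argument of \Cref{lem:Hurewicz}(2) directly for the square, reducing to the case where $b$ and $c$ are induced from maps of the underlying objects, where $\H$ and tensor products are transparent. One should also verify that all the objects appearing remain connective and uniformly bounded below so that \Cref{lem:tensorsquare} (with $\CC$-level squares) can be invoked to package the connectivity of the relevant tensor-product squares; this is where \Cref{lem:tensorsquare} and \Cref{lem:tenscells} do the real work. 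Once those bookkeeping points are settled, the rest is a formal chase with \Cref{lem:Hurewicz} as the only nontrivial input.
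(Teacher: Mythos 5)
Your opening move---monadic resolution to reduce to a square of free algebras, with \Cref{lem:tensorsquare} doing the quantitative work---is exactly the paper's strategy, and your step (5) relating the two connectivity claims through the total (co)fiber of the square is also correct: the fiber of $A\to B\times_D C$ is the total fiber, and the cofiber of $B\coprod_A C\to D$ is its double suspension. The problem is the middle of your argument. The mechanism you propose for the actual connectivity estimate, via $\H$, \Cref{lem:tenscells} and \Cref{lem:Hurewicz}, does not apply here: that machinery is developed in the $\E_1$-cells digression only for \emph{connected, augmented, associative} algebras, whereas the present lemma concerns arbitrary connective $\cO$-algebras for a general unital operad in spaces (no connectedness or augmentation hypotheses, and no module categories or relative tensor products over $A$ are available for general $\cO$, so the base change along $\one\to A$ you lead with is not defined). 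Moreover, even in the $\E_1$ case, \Cref{lem:tenscells} computes $\fib(\H(B\coprod C)\to\H(B\otimes C))$ --- coproduct of algebras versus tensor product of algebras --- which is not the fiber you need: the lemma compares the pushout in $\CC$ with the pushout in $\Alg_{\cO}(\CC)$, and $\H$ cannot even be evaluated on $B\coprod_A C$, which is only an object of $\CC$, not an algebra.

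The missing idea is the arity decomposition of free algebras. After the monadic resolution (which is legitimate because the free functor preserves $k$-connectivity of maps of connective objects and geometric realization commutes with pullbacks), one may assume the square is $\one\{-\}$ applied to a pushout square on objects $W\to X$, $W\to Y$ in $\CC_{\geq 0}$ with the two maps $k_1$- and $k_2$-connective. Since $\one\{V\}\cong\bigoplus_i\cO(i)\otimes_{\Sigma_i}V^{\otimes i}$ and $\cO(i)\otimes_{\Sigma_i}(-)$ is exact and preserves connectivity, the total fiber of the square of free algebras is the direct sum over $i$ of $\cO(i)\otimes_{\Sigma_i}$ applied to the total fiber of the square of $i$-th tensor powers, and each of those is $(k_1+k_2)$-connective by iterating \Cref{lem:tensorsquare}. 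This gives $(k_1+k_2)$-connectivity of the total fiber directly, and both statements follow. So keep your step (1) (executed via the monadic resolution, not via base change to $A$-modules), discard steps (2)--(4), and replace them with this arity-wise application of \Cref{lem:tensorsquare}.
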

\begin{proof}
	We first note that the free $\cO$-algebra functor takes $k$-connective maps of connective objects to $k$-connective maps, and geometric realization commutes with pullbacks and preserve connectedness. Thus, by taking the monadic resolution of the diagram and geometric realizing, we can assume that the diagram is obtained from applying the free functor to a pushout square
	
	\begin{center}
		\begin{tikzcd}
			{W} \ar[r,"x"]\ar[d,"y"] &X\ar[d]\\
			Y\ar[r] & Z
		\end{tikzcd}
	\end{center}  in $\CC_{\geq0}$ with $x,y$ $k_1,k_2$-connective respectively. 
	
	The total fiber of the square then breaks up into a direct sum over $i$ of the functor $\cO(i)\otimes_{\Sigma_i}(-)$ (which is exact and preserves connectivity) applied to the total fibers of the square 
	
	\begin{center}
		\begin{tikzcd}
			{W^{\otimes i}} \ar[r]\ar[d] & X^{\otimes i}\ar[d]\\
			Y^{\otimes i}\ar[r] & Z^{\otimes i}
		\end{tikzcd}
	\end{center}
	
	which are is $k_1+k_2$ by iteratively applying \Cref{lem:tensorsquare}.
	
%
%
%
	
	
	
	The last statement about $k_1+k_2+1$-connectivity follows from this since the cofiber of the map $B\coprod_AC \to D$ is the total cofiber of the square, which is the double suspension of the total fiber.
	\end{proof}
	
	\begin{cor}\label{lem:oalgstablerange}
Let $R$ be a connective $\cO$-algebra in $\CC$. Let $R' \to R$ be an $m$-connective map of $\cO$-algebras for $m\geq 0$.
Consider the pushout square in $\Alg_{\cO}(\CC)$
\begin{center}
	\begin{tikzcd}
		R'\ar[r]\ar[d] & R \ar[d]\\
		R	\ar[r] & SR'
	\end{tikzcd}
\end{center}
Then the induced map $R' \to \omega SR' \coloneqq R \times_{ SR'} R$ is $2m$-connective.

\end{cor}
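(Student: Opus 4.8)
The statement to prove is Corollary~\ref{lem:oalgstablerange}: given a connective $\cO$-algebra $R$ in $\CC$ and an $m$-connective map $R' \to R$ with $m \geq 0$, the natural map $R' \to \omega SR' = R \times_{SR'} R$ is $2m$-connective, where $SR'$ is the pushout $R \coprod_{R'} R$ in $\Alg_{\cO}(\CC)$.

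The plan is to apply Lemma~\ref{lem:oalgblakersmassey} (the $\cO$-algebra Blakers--Massey theorem) directly to the pushout square
\[
\begin{tikzcd}
R' \ar[r]\ar[d] & R \ar[d]\\
R \ar[r] & SR'.
\end{tikzcd}
\]
Both maps $R' \to R$ appearing as the top and left edges of this square are $m$-connective by hypothesis (they are literally the given map $R' \to R$). Lemma~\ref{lem:oalgblakersmassey} with $k_1 = k_2 = m$ then tells us immediately that the natural comparison map $R' \to R \times_{SR'} R$ is $(m+m) = 2m$-connective. Since $\omega SR'$ is by definition $R \times_{SR'} R$, this is exactly the claim.

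So the proof is essentially a one-line invocation of the previous lemma, once one checks that the hypotheses match: the square is a pushout of connective $\cO$-algebras (here we use $m \geq 0$, so $R'$ is connective because $R$ is and $R' \to R$ is $0$-connective, hence surjective on $\pi_0$ and an isomorphism on lower homotopy — actually we need $R'$ connective, which follows since an $m$-connective map with $m\geq 0$ out of... more carefully: $R' \to R$ being $m$-connective with $m \geq 0$ means $\mathrm{fib}$ is $(m-1)$-connective; combined with $R$ connective this forces $R'$ connective when $m \geq 1$, and when $m=0$ we should take the convention that $R'$ is assumed connective as part of "$m$-connective map of $\cO$-algebras" in the stated setup, or note the corollary is only interesting/used for $R'$ connective). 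The main (and really only) obstacle is bookkeeping: making sure the connectivity conventions in Lemma~\ref{lem:oalgblakersmassey} and in the corollary statement agree, and that "$k_1+k_2$-connective comparison map $A \to B\times_D C$" translates verbatim to "$2m$-connective map $R' \to \omega SR'$." There is no new mathematical content beyond Lemma~\ref{lem:oalgblakersmassey}; the corollary is just the special case where the two legs of the span coincide.

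\begin{proof}
Since $m \geq 0$, the algebra $R'$ is connective, and the square in the statement is a pushout of connective $\cO$-algebras whose top and left maps are both the given $m$-connective map $R' \to R$. Applying Lemma~\ref{lem:oalgblakersmassey} with $k_1 = k_2 = m$, we conclude that the natural map $R' \to R \times_{SR'} R = \omega SR'$ is $(m + m) = 2m$-connective.
\end{proof}
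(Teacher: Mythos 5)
Your proof is correct and is exactly the intended argument: the paper states this as an immediate corollary of Lemma~\ref{lem:oalgblakersmassey} (applied with $A = R'$, $B = C = R$, $k_1 = k_2 = m$) and gives no separate proof. Your bookkeeping about the connectivity of $R'$ is a reasonable precaution but doesn't change anything.
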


We next prove a version of \cite[Theorem 7.4.1.23]{HA} that holds for $\cO$-algebras. 

Let $\Alg_{\cO}(\CC)_{/R,[a,b]}$ denote the full subcategory of $\Alg_{\cO}(\CC)_{/R}$ consisting of maps $f:R' \to R$ such that $\fib f$ is $[a,b]$ in the $t$-structure. 

\begin{lem}\label{lem:sqextnoalg}
	Let $m\geq 0$. Suppose $R= \tau_{\leq m}R$ is a connective $\cO$-algebra in $\CC$. For each $a,b$ with $m \leq a, b \leq 2a-1$, the trivial square-zero functor $\Omega^{\infty}:\Sp(\Alg_{\cO}(\CC)_{/R}) \to (\Alg_{\cO}(\CC)_{/R})_*$ restricts to an equivalence onto $(\Alg_{\cO}(\CC)_{/R,[a,b]})_*$ when restricted to those objects whose image lies in that subcategory.
	
	Moreover the functor $$\Alg_{\cO}(\CC)_{/R,[m,2m-1]} \to (\Alg_{\cO}(\CC)_{/R,[m+1,2m]})_{R\coprod R/}$$ sending $R'\to R$ to $\tau_{\leq2m}SR'$ equipped with the two sections from $R$ is an equivalence.
\end{lem}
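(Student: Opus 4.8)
The statement is the ``obstruction-theory'' package for $\cO$-algebras over a truncated base, and the plan is to deduce it from the square-zero deduction established earlier (\Cref{lem:oalgstablerange}, \Cref{lem:oalgblakersmassey}) exactly as in the $\E_\infty$ case \cite[Theorem 7.4.1.23]{HA}. First I would recall the general fact (valid in any presentable stable category with compatible $t$-structure, and applied pointwise in the arrow category $\CC^{\Delta^1}$) that the trivial square-zero extension functor $\Omega^\infty\colon \Sp(\Alg_{\cO}(\CC)_{/R}) \to (\Alg_{\cO}(\CC)_{/R})_*$ and the cotangent complex functor $L$ form an adjunction, and that for a pointed object $f\colon R'\to R$ with augmentation $\epsilon\colon R\to R'$ the unit $R' \to \Omega^\infty L_{R'}$ becomes an equivalence in a range controlled by the connectivity of $\fib(f)$. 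Concretely, writing $I \coloneqq \fib(f) \in \CC_{[a,b]}$, one computes that the relative cotangent complex $L_{R'/R}$ (equivalently, the spectrum object $L_{R'}$ after tensoring down to $R$-modules) has homotopy in the range $[a,b]$, that $\Omega^\infty$ of a spectrum object with homotopy in $[a,b]$ produces an augmented $\cO$-algebra whose augmentation ideal is again in $[a,b]$, and that the comparison map $R' \to R \times_{\Omega^\infty L_{R'}} R$ is $(2a-1)$-connective by \Cref{lem:oalgstablerange} (here using $R \to \Omega^\infty L_{R'}$ in place of $R \to SR'$, which agrees with it in the relevant range). Since $b \le 2a-1$, this comparison map is therefore an equivalence on the relevant truncations, and since $a \ge m$ and $R$ is $m$-truncated, $\fib(f)$ is automatically bounded below by $m$; combining these gives that $\Omega^\infty$ and $L$ are mutually inverse between $\Sp(\Alg_{\cO}(\CC)_{/R})_{[a,b]}$ and $(\Alg_{\cO}(\CC)_{/R,[a,b]})_*$.

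For the ``moreover'' clause I would unwind the two functors. Given $R' \to R$ with $\fib$ in $[m,2m-1]$, form the pushout square defining $SR'$ as in \Cref{lem:oalgstablerange}; by \Cref{lem:oalgblakersmassey} the map $R \coprod_{R'} R \to SR'$ is $(2m+1)$-connective, and $SR'$ has augmentation ideal (relative to $R$, via either section) in $[m+1, \infty)$, while \Cref{lem:oalgstablerange} bounds $R' \to R\times_{SR'}R$ by $2m$-connectivity; truncating at $2m$ shows $\tau_{\le 2m}SR'$ has augmentation ideal in $[m+1,2m]$. Conversely, given $R'' \to R$ in $[m+1,2m]$ equipped with two sections agreeing on $R\coprod R$, I would take $\Omega^\infty$ of the fiber of the difference of the two induced maps on cotangent complexes, or more directly use that the two sections exhibit $R''$ as an object of $(\Alg_{\cO}(\CC)_{/R})_{R\coprod R/}$ and that the ``loop space'' functor $\omega(-) \coloneqq R\times_{(-)}R$ inverts $\tau_{\le 2m}S(-)$ up to the asserted connectivity by \Cref{lem:oalgstablerange}. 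The two constructions are mutually inverse on the nose once one checks they are compatible with the first part of the lemma via the bar/cobar adjunction between augmented $\cO$-algebras and their loop objects.

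The main obstacle I anticipate is bookkeeping the connectivity estimates precisely enough that all the hypotheses $m \le a,b \le 2a-1$ are genuinely used and the truncation functors $\tau_{\le b}$, $\tau_{\le 2m}$ interact correctly with the pushouts and pullbacks in $\Alg_{\cO}(\CC)$ — in particular verifying that $\tau_{\le b}$ of the comparison map $R' \to R\times_{\Omega^\infty L_{R'}}R$ is an equivalence requires knowing both that the target has fiber in $[a,b]$ (so that nothing is lost under $\tau_{\le b}$) and that the comparison map is $(2a-1)$-connective with $2a-1 \ge b$. A secondary subtlety, already flagged in \Cref{rmk:Lurie-is-wrong}, is that one cannot simply cite \cite[Theorem 7.4.1.23]{HA} verbatim because the cyclotomic-type $t$-structures we ultimately care about are not compatible with filtered colimits of coconnective objects; however, this lemma is stated for a general $\CC$ with compatible $t$-structure and $\cO$ a unital operad, and the proof above only uses finite limits and colimits plus the connectivity estimates of \Cref{lem:oalgblakersmassey} and \Cref{lem:oalgstablerange}, so no such colimit compatibility is needed. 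I would close by noting that applying this with $\cO = \E_1\otimes\A_2$ recovers exactly the input needed in \Cref{subsec:e1a2}.
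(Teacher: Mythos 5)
Your overall strategy is the right one --- everything here does come down to the connectivity estimate of \Cref{lem:oalgstablerange} --- but you organize the first claim differently from the paper, and as written your version has a gap. You work with the adjunction $\Sigma^\infty \dashv \Omega^\infty$ and argue that the unit $R' \to \Omega^\infty L_{R'}$ is an equivalence in the range $[a,b]$, identifying $\Omega^\infty L_{R'}$ with $SR'$ ``in the relevant range.'' Two things are missing. First, that identification is itself a stabilization statement ($\Omega^\infty\Sigma^\infty R' \simeq \colim_n \Omega^n S^n R'$, with the colimit becoming constant in any fixed range of degrees because the $n$-th unit map is $2(a+n)$-connective); you assert it rather than prove it. Second, and more importantly, showing the unit is an equivalence only gives that $\Sigma^\infty$ is fully faithful on $(\Alg_{\cO}(\CC)_{/R,[a,b]})_*$; to conclude that $\Omega^\infty$ restricted to $\{X : \Omega^\infty X \in [a,b]\}$ is an equivalence you must also handle the other direction, i.e.\ show that every such $X$ lies in the essential image of $\Sigma^\infty$. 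You never address this.

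The paper's proof sidesteps both issues. It writes the restricted stabilization as the inverse limit of the tower $\cdots \to (\Alg_{\cO}(\CC)_{/R,[a+1,b+1]})_* \xrightarrow{\Omega} (\Alg_{\cO}(\CC)_{/R,[a,b]})_*$ (a spectrum object with bottom level in $[a,b]$ automatically has $n$-th level in $[a+n,b+n]$, since the augmentation ideals are literal suspensions of one another in the stable category $\CC$), and then shows each $\Omega$ in this tower is an equivalence: by \Cref{lem:oalgstablerange} the natural map $\id \to \Omega\,\tau_{\leq b+1}\Sigma$ is an equivalence, so $\tau_{\leq b+1}\Sigma$ is a fully faithful left adjoint, and since its right adjoint $\Omega$ is conservative the adjunction is an equivalence. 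That last device --- fully faithful left adjoint with conservative right adjoint --- is exactly what would close the gap in your argument as well. Your treatment of the ``moreover'' clause matches the paper's: both reduce it to \Cref{lem:oalgstablerange} showing that $\omega$ inverts $\tau_{\leq 2m}S$. One small numerical point: \Cref{lem:oalgstablerange} gives $2a$-connectivity of $R' \to \omega SR'$, not $2a-1$; you need the full $2a$ to cover the boundary case $b = 2a-1$.
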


\begin{proof}
	The functor $\Omega^{\infty}$ when restricted to objects whose image is in $(\Alg_{\cO}(\CC)_{/R,[a,b]})_*$, is the projection of the inverse limit
	
	$$\dots (\Alg_{\cO}(\CC)_{/R,[a+2,b+2]})_*\xrightarrow{\Omega} (\Alg_{\cO}(\CC)_{/R,[a+1,b+1]})_* \xrightarrow{\Omega} (\Alg_{\cO}(\CC)_{/R,[a,b]})_*$$
	
	Note that $[a+1,b+1]$ satisfies the inequalities in the hypothesis if $[a,b]$ does. We claim that the composite  $$\tau_{\leq b+1}\Sigma:(\Alg_{\cO}(\CC)_{/R,[a,b]})_* \to (\Alg_{\cO}(\CC)_{/R,[a,\infty]})_*\xrightarrow{\tau_{\leq b+1}\Sigma} (\Alg_{\cO}(\CC)_{/R,[a+1,b+1]})_*$$ is an inverse to $\Omega$. Indeed, the natural map $\id \to \Omega \tau_{\leq b+1}\Sigma$ is an equivalence by \Cref{lem:oalgstablerange}, so that $\tau_{\leq b+1}\Sigma$ is a fully faithful left adjoint. But its right adjoint $\Omega$ is conservative, so this is an adjoint equivalence.
	
	The second statement is similar, namely it follows from  \Cref{lem:oalgstablerange} that $\omega$ gives an inverse to the functor $\tau_{\leq 2m}S$.
\end{proof}

The following lemma gives us control on the Postnikov tower of a connective $\cO$-algebra.

\begin{lem}\label{lem:oalgfaithful}
	Let $m\geq 0$ and suppose $R= \tau_{\leq m}R$ is a connective $\cO$-algebra in $\CC$. Then the functor $(\Alg_{\cO}(\CC)_{/R,[m+1,m+1]})_* \to \Sigma^{m+1}\CC^{\heart}$ sending an algebra to the fiber of the augmentation is faithful.
\end{lem}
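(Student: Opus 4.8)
The plan is to show faithfulness by exhibiting, for each object of $(\Alg_{\cO}(\CC)_{/R,[m+1,m+1]})_*$, that the mapping spaces are controlled by the corresponding mapping spaces of fibers of augmentations. First I would note that an object of $(\Alg_{\cO}(\CC)_{/R,[m+1,m+1]})_*$ is a pointed object, i.e. it comes with a section $R \to R'$ of the augmentation $R' \to R$, with $\fib(R'\to R)$ concentrated in degree $m+1$. By \Cref{lem:sqextnoalg}, applied with $a = b = m+1$ (note $m \leq m+1 \leq 2(m+1) - 1 = 2m+1$, which holds since $m \geq 0$), the trivial square-zero extension functor $\Omega^\infty : \Sp(\Alg_{\cO}(\CC)_{/R}) \to (\Alg_{\cO}(\CC)_{/R})_*$ restricts to an equivalence onto $(\Alg_{\cO}(\CC)_{/R,[m+1,m+1]})_*$ on objects landing in that subcategory. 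Thus every such $R'$ is canonically a trivial square-zero extension $R \oplus M$ for $M \in \Sp(\Alg_{\cO}(\CC)_{/R})$ whose image under the forgetful functor to $\CC$ is $\fib(R' \to R) \in \Sigma^{m+1}\CC^\heart$.

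Next I would use this identification to compute mapping spaces. For $R' = R\oplus M$ and $R'' = R \oplus N$ two such trivial square-zero extensions, a map of pointed augmented $\cO$-algebras $R' \to R''$ over $R$ respecting the sections is the same as a map $M \to N$ in the stabilization $\Sp(\Alg_{\cO}(\CC)_{/R})$, because $\Omega^\infty$ is an equivalence on the relevant subcategory and is in particular fully faithful there. Meanwhile the functor in the statement sends $R\oplus M$ to its value on fibers, which is the image of $M$ under the forgetful (stabilization-of-the-forgetful) functor $\Sp(\Alg_{\cO}(\CC)_{/R}) \to \Sp(\CC) \cong \CC$; since $M$ lies in the heart-suspension $\Sigma^{m+1}\CC^\heart$, this image is precisely $\fib(R'\to R)$. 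Faithfulness then amounts to the claim that the forgetful functor $\Sp(\Alg_{\cO}(\CC)_{/R}) \to \CC$ induces an injection on $\pi_0$ of mapping spaces between objects whose underlying spectra lie in $\Sigma^{m+1}\CC^\heart$.

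The key remaining point — and the main obstacle — is to establish this injectivity on $\pi_0\Map$. Here I would use the cotangent complex / stabilization formalism: $\Sp(\Alg_{\cO}(\CC)_{/R})$ is the category of $R$-modules over the enveloping algebra (the ``$\cO$-Quillen homology'' setup of \cite[Section 7.3]{HA}), and the forgetful functor to $\CC$ is base change along the augmentation of the enveloping algebra $U_{\cO}(R) \to \one_\CC$. Since the objects $M, N$ have underlying spectra in degree exactly $m+1$ and $R$ is $m$-truncated and connective, the enveloping algebra is connective and the obstruction groups $\mathrm{Ext}^{\geq 1}$ computing the difference between $\Map_{\Sp(\Alg_{\cO}(\CC)_{/R})}(M,N)$ and $\Map_{\CC}(M,N)$ live in non-positive degrees of $N$, hence vanish by the degree constraint. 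More concretely, one has a spectral sequence (or obstruction-theoretic filtration) whose input is $\mathrm{Ext}^s_{\pi_0 U_\cO(R)}(\pi_* M, \pi_{*+s} N)$ and these vanish for $s \geq 1$ because $\pi_* N$ is concentrated in degree $m+1$ while $\pi_* M$ is concentrated in degree $m+1$, so a nonzero contribution to $\pi_0\Map$ with $s \geq 1$ would require $\pi_{m+1+s} N \neq 0$ for $s \geq 1$, which is impossible. This forces the comparison map $\pi_0\Map_{\Sp(\Alg_{\cO}(\CC)_{/R})}(M,N) \to \pi_0\Map_{\CC}(\fib R', \fib R'')$ to be injective (indeed the only possible discrepancy is in negative $\pi_{-s}$ for $s\geq 1$, which affects higher homotopy of the mapping space, not $\pi_0$), giving faithfulness of the functor in the statement.
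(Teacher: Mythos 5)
Your reduction is sound and parallels the first (implicit) step of the paper's argument: by \Cref{lem:sqextnoalg} with $a=b=m+1$, every object of $(\Alg_{\cO}(\CC)_{/R,[m+1,m+1]})_*$ is $\Omega^\infty M$ for an $M\in\Sp(\Alg_{\cO}(\CC)_{/R})$ whose underlying object lies in $\Sigma^{m+1}\CC^\heart$, and under this equivalence the functor of the lemma becomes the forgetful functor $G\colon\Sp(\Alg_{\cO}(\CC)_{/R})\to\CC$ restricted to such objects. The gap is in your final step. You assert that $\Sp(\Alg_{\cO}(\CC)_{/R})$ is modules over a connective enveloping algebra $U_{\cO}(R)$ and that $G$ is ``base change along the augmentation $U_{\cO}(R)\to\one_\CC$.'' Neither claim is available here: the paper works with an arbitrary unital operad in spaces and never establishes (nor needs) an enveloping-algebra description of the stabilization --- in Lurie's framework this requires coherence of the operad --- and even when $U_{\cO}(R)$ exists it need not be augmented; the functor you want is restriction of scalars along the unit $\one\to U_{\cO}(R)$, not base change along an augmentation (the functor $\one\otimes_{U}(-)$ does not return the underlying object). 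The spectral sequence is also misstated: the relevant one has $E_2=\mathrm{Ext}^{s,t}_{\pi_*U}(\pi_*M,\pi_*N)$ over the graded ring $\pi_*U$, not over $\pi_0U$, though your degree count (everything in a single degree, $U$ connective, so $\mathrm{Ext}^{s,s}=0$ for $s\geq 1$) is the right idea.

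That idea can in fact be made rigorous without any enveloping algebra: resolve $M$ by the free--forgetful monad on $\Sp(\Alg_{\cO}(\CC)_{/R})$; since $R$ and the spaces $\cO(n)$ are connective, each term $\Map_{\CC}(G(FG)^{s}M,GN)$ of the resulting cosimplicial space has $(m+1)$-connective source and $(m+1)$-truncated target, hence is discrete, so $\pi_0\Map(M,N)$ is an equalizer inside $\pi_0\Map_{\CC}(GM,GN)$ and in particular injects into it. For comparison, the paper avoids the stabilization entirely at this step: it produces, from a one-stage free resolution and the connectivity estimate of \Cref{lem:oalgblakersmassey}, an epimorphism $\tau_{\leq m+1}(R\coprod\on\{M\})\twoheadrightarrow R\oplus M$ in $(\Alg_{\cO}(\CC)_{/R,[m+1,m+1]})_*$, so that maps out of $R\oplus M$ are detected by maps out of a truncated free algebra, which are determined by the underlying map from $M$. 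Both routes rest on the same connectivity properties of free $\cO$-algebras, but the paper's stays unstable and avoids the structural input your argument currently leans on.
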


\begin{proof}
	First let $R$ be any connective $\cO$-algebra, and let $R \oplus M \to R$ be a trivial square-zero extension with $M \in \Sigma^{m+1}\CC^{\heart}$.
	
 We claim that the following square becomes a pushout square in $(\Alg_{\cO}(\CC)_{/R})_*$ after applying $\tau_{\leq m+1}$, and that the vertical maps are $m+1$-connective.
	
	\begin{center}
		\begin{tikzcd}
			R\coprod\on\{\fib \epsilon \}\ar[r]\ar[d] &R\coprod\on\{M\} \ar[d,"\epsilon"]\\
			R\ar[r] & R\oplus M
		\end{tikzcd}
	\end{center}
	All functors involved commute with sifted colimits, so by resolving $R$ by free augmented algebras via the monadic resolution, we can reduce to the case of $R=\on\{X_i\}$. 
	
	Now the right vertical map breaks into homogeneous components, which in degree $0,1$ are isomorphisms, and in degrees $j\geq2$ is the $m+1$-connective map $(X_i\oplus M)^{\otimes j} \to X_i^{\otimes j}$ composed with the connectivity preserving exact functor $\cO(j)\otimes_{\Sigma_j}$. It follows that the left vertical map is also $m+1$-connective since it is a sum of the maps $(X_i \oplus \fib \epsilon)^{\otimes j} \to (X_i)^{\otimes j}$ composed with $\cO(j)\otimes_{\Sigma_j}$. The pushout in $\cO$-algebras maps to the pushout in $\cC$, and this map is $2m+2$-connective using \Cref{lem:oalgblakersmassey}, so in particular induces an equivalence after applying $\tau_{\leq m+1}$.

Thus it suffices to see that the diagram is a pushout in $\CC$ after applying $\tau_{\leq m+1}$, which amounts to observing the following exact sequence:

$$\pi_{m+1}^{\heart}(R\coprod \on\{\fib \epsilon\} )\to \pi_{m+1}^{\heart}(R\coprod \on\{M\}) \to \pi_{m+1}^{\heart} M \to 0$$
Indeed, by construction, the composite of the first two maps is $0$ the second map is an epimorphism with kernel generated by $\fib \epsilon$, proving the exact sequence.

We finish the proof by observing that in the situation of the lemma, the pushout square above shows that the map $\tau_{\leq m+1}(R\coprod \on\{M\}) \to R\oplus M$ is an epimorphism in $(\Alg_{\cO}(\CC)_{/R,[m+1,m+1]})_*$, which means that maps out of $R\oplus M$ are determined by their maps on the underlying object in $\CC$.
\end{proof}

We have enough tools now to do basic deformation theory of $\cO$-algebras.

\begin{dfn}
	We define the category of $\cO$-modules of an $\cO$-algebra $B$ to be $\mdef{\Mod_{\cO}(B)}:= \Sp(\Alg_{\cO}(\CC)_{/B})$. We define $L_B$ to be the image of $B$ under $\Sigma^{\infty}_B: \Alg_{\cO}(\CC)_{/B} \to \Sp(\Alg_{\cO}(\CC)_{/B})$.
\end{dfn}

\begin{prop}\label{lem:pistartrivaction}
  Let $R$ be a connective unital $\cO$-algebra in $\CC$ with an automorphism $\phi$ that is the identity on $\pi_*^{\heart}$. Suppose that $p^k=0$ on $\pi_*^{\heart}R$ and that $R=\tau_{\leq m}R$. Then $\phi^{p^{km}}$ is equivalent to the identity.
\end{prop}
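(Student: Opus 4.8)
The plan is to induct on the Postnikov truncation level, using the deformation-theoretic machinery just developed to control how $\phi$ can fail to be trivial at each stage. By hypothesis $R = \tau_{\leq m}R$, so the Postnikov tower of $R$ over its truncation $\tau_{\leq 0}R = \pi_0^\heart R$ has only $m$ nontrivial stages: $\tau_{\leq 0}R \leftarrow \tau_{\leq 1}R \leftarrow \cdots \leftarrow \tau_{\leq m}R = R$, and each map $\tau_{\leq j}R \to \tau_{\leq j-1}R$ is a square-zero extension by $\Sigma^j \pi_j^\heart R$ classified by an $\cO$-module derivation. Since $\phi$ acts as the identity on each $\pi_j^\heart R$, the automorphism $\phi$ is automatically compatible with the entire Postnikov tower: it induces the identity on each $\tau_{\leq j}R$ at the level of homotopy groups. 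The strategy is to show that the obstruction to trivializing (a power of) $\phi$ at stage $j$, given that it is trivialized at stage $j-1$, is killed by multiplying the exponent by $p^k$.

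First I would set up the inductive step precisely. Suppose $\psi$ is an automorphism of $\tau_{\leq j}R$ over the identity on $\pi_*^\heart$, whose image $\bar\psi$ in $\Aut(\tau_{\leq j-1}R)$ is the identity (after possibly passing to a power). The space of lifts of $\id_{\tau_{\leq j-1}R}$ to an automorphism of $\tau_{\leq j}R$ is a torsor — using \Cref{lem:sqextnoalg} and \Cref{lem:oalgfaithful} with $m$ replaced by $j-1$ — over a group built from $\pi_0\Map_{\Mod_\cO(\tau_{\leq j-1}R)}(L_{\tau_{\leq j-1}R}, \Sigma^j \pi_j^\heart R)$, i.e.\ over an $\cO$-André--Quillen cohomology group of $\tau_{\leq j-1}R$ with coefficients in $\Sigma^j\pi_j^\heart R$. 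Because $\Sigma^j\pi_j^\heart R$ is an $\cO$-module on which $p^k$ acts by zero (by the hypothesis $p^k = 0$ on $\pi_*^\heart R$), this entire torsor group is annihilated by $p^k$. Hence $\psi^{p^k}$ agrees with the identity as an automorphism of $\tau_{\leq j}R$ (given that $\bar\psi$ already does).

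Then I would chain the inductive steps: starting from $\phi$, whose restriction to $\tau_{\leq 0}R$ is the identity (indeed $\tau_{\leq 0}R$ is determined by $\pi_0^\heart$ on which $\phi$ is the identity, using the $1$-categorical nature of the heart), one power $\phi^{p^k}$ is trivial on $\tau_{\leq 1}R$, then $\phi^{p^{2k}}$ is trivial on $\tau_{\leq 2}R$, and inductively $\phi^{p^{jk}}$ is trivial on $\tau_{\leq j}R$. After $m$ steps we reach $\phi^{p^{mk}}$ trivial on $\tau_{\leq m}R = R$, which is the claim. The main obstacle — and the step requiring the most care — is making the torsor statement at each Postnikov stage rigorous in the $\cO$-algebra setting: one must verify that \Cref{lem:sqextnoalg} and \Cref{lem:oalgfaithful} genuinely identify automorphisms lifting the identity through a single square-zero extension with a (shifted, discrete) module of derivations, and in particular that the relevant range-of-degrees hypotheses ($j \leq \cdots \leq 2j-1$ etc.) are satisfied throughout the induction. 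Once that identification is in hand, the $p^k$-torsion bound is immediate from additivity of the derivation functor and the hypothesis that $p^k$ kills the coefficient module, and the $\phi^{p^{km}}$ count falls out of the $m$-step Postnikov tower.
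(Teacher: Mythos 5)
Your proposal is correct and follows essentially the same route as the paper's proof: an induction up the Postnikov tower, using \Cref{lem:sqextnoalg} to exhibit each stage $\tau_{\leq j}R \to \tau_{\leq j-1}R$ as a square-zero extension, \Cref{lem:oalgfaithful} to identify automorphisms over the identity of the base with elements of $\Omega\Map_{\Mod_{\cO}(B)}(L_B,\Sigma^{j+1}\pi_j^\heart R)$, and the $p^k$-torsion of the coefficient module to kill the obstruction after raising to the $p^k$-th power at each of the $m$ stages. The paper phrases the induction as "replace $\phi$ by $\phi^{p^{k(m-1)}}$ and handle the top stage" rather than your bottom-up count, but the content is identical.
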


\begin{proof}
    We prove the result inductively on $m$. For $m=0$, the result is clear. For the inductive step, by replacing $\phi$ with $\phi^{p^{k(m-1)}}$, we can assume that $\phi$ is the identity on $\tau_{\leq{m-1}}R$, and it suffices to show that $\phi^{p^k}$ is equivalent to the identity on $R$.
    
    Let $B = \tau_{\leq m-1}R$. By applying \Cref{lem:sqextnoalg}, since $R \in \Alg_{\cO}(\CC)_{/B,[m,2m-1]}$, we learn that $R$ is canonically a square-zero extension of $B$ by $\Sigma^{n}\pi_nR \in \Mod_{\cO}(B) := \Sp(\Alg_{\cO}(\CC)_{/B})$. This means that its group of automorphisms can be computed in the category $\Mod_{\cO}(B)_{L_B/}$. By considering the conservative forgetful functor $\Mod_{\cO}(B)_{/L_B} \to \Mod_{\cO}(B)$, we obtain that the automorphism group fits into a fiber sequence of groups
    
    $$\Omega\Map_{\Mod_{\cO}(B)}(L_B, \Sigma^{m+1}\pi_mR) \to \Aut_{\Mod_{\cO}(B)}(B) \to \Aut_{\Mod_{\cO}(B)}(\Sigma^{m+1}\pi_mR)$$
    
    By \Cref{lem:oalgfaithful}, $\phi$ has trivial image under the first map since it acts trivially on $\pi_nR$. Thus it suffices to see that $p^k=0$ in $\Omega\Map_{\Mod_{\cO}(B)}(L_B, \Sigma^{m+1}\pi_nR)$. But this mapping spectrum is a module over the endomorphism ring of $\Sigma^{m+1}\pi_nR$ in $\Mod_{\cO}(B)$, which has $p^k=0$ using \Cref{lem:oalgfaithful} again and the fact that $p^k=0$ on $\pi_mR$.    
\end{proof}

\begin{lem}\label{lem:trivializeacalg}
    Let $R\in \Alg_{\cO}(\CC^{B\Z,u})$ be connective, almost compact, bounded, and $p$-nilpotent. Then the $p^k\Z$ action on $R$ is trivializable for all $k\gg0$.
\end{lem}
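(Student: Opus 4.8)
The plan is to combine the general deformation-theoretic input just proved (\Cref{lem:pistartrivaction}) with the colimit description of locally unipotent objects (\Cref{lem:colim-unip}), using the structure of the category $\CC^{B\Z,u}$.

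First I would reduce to a statement about trivializing \emph{finite powers} of the generating automorphism $\psi$. By \Cref{lem:colim-unip}, since $\CC$ is $p$-complete and $R$ has a locally unipotent action, the natural map $\colim_k R^{hp^k\Z}\to R$ is an isomorphism. As in the proof of \Cref{lem:trivialize-action-ac}, the action of $p^k\Z$ on $R$ is trivializable precisely when the map $R^{hp^k\Z}\to R$ admits a section in $\Alg_{\cO}(\CC)$, and by almost compactness and uniform boundedness of the $R^{hp^k\Z}$ (here one needs that $(-)^{hp^k\Z}$ has bounded $t$-amplitude, so the tower is uniformly bounded given that $R$ is bounded) the identity map of $R$ factors through some finite stage $R^{hp^k\Z}$. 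So it suffices to produce, for $k\gg 0$, a nullhomotopy exhibiting $\psi^{p^k}$ as equivalent to $\id_R$ in $\Alg_{\cO}(\CC)$ — more precisely, to upgrade the equivalence $\psi^{p^k}\simeq \id$ to a $\Z$-equivariant trivialization, i.e. a factorization of $\id_R$ through $R^{hp^k\Z}$. Actually the cleanest route is: it suffices to show that the $p^k\Z$-action on $R$ is trivial as an object of $\CC^{Bp^k\Z}$ for $k \gg 0$, which amounts to trivializing $\psi^{p^k}$ together with coherence data; but since $\Z$ has no higher homotopy, this is equivalent to exhibiting $\psi^{p^k}\simeq \id_R$ as automorphisms of the $\cO$-algebra $R$.

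Next I would apply \Cref{lem:pistartrivaction}. Since $R$ is bounded, say $R=\tau_{\leq m}R$ for some $m$ (using connectivity and boundedness), and $p$-nilpotent, say $p^{j}=0$ on $\pi_*^\heart R$, the automorphism $\psi$ acts trivially on $\pi_*^\heart R$ — this is the content of local unipotence combined with $p$-completeness, since on each homotopy group the $\Z$-action is locally unipotent (by the recollement of \Cref{prop:gluing-seq}, after inverting $\psi-1$ one gets a $\Q$-module, but the homotopy groups are $p$-nilpotent) hence $(\psi-1)$ is locally nilpotent on each finitely-generated-looking piece; one must be slightly careful and instead pass to the restriction $\psi^{p^k}$ for $k \gg 0$ to make $\psi$ act trivially on $\pi_*^\heart$. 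Once $\psi$ (or a finite power of it) acts as the identity on $\pi_*^\heart R$, \Cref{lem:pistartrivaction} gives $\psi^{p^{jm}}\simeq \id_R$ in $\Alg_{\cO}(\CC)$. The main obstacle is the passage from this $\pi_*$-level equivalence of automorphisms to a genuine $\Z$-equivariant trivialization of the action — i.e. checking that the equivalence $\psi^{p^k}\simeq\id$ can be chosen compatibly so as to define a point of $R^{hp^k\Z}\to R$ sectioning the coassembly map. This should follow formally from the colimit presentation in \Cref{lem:colim-unip}: the identity of $R$ lies in $\colim_k \Map_{\Alg_{\cO}(\CC)}(R,R^{hp^k\Z})\xrightarrow{\sim}\Map_{\Alg_{\cO}(\CC)}(R,R)$, so it factors through some $R^{hp^k\Z}$, giving the desired section and hence trivializing the $p^k\Z$-action. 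Finally I would remark that $k$ can be taken to be any integer at least $jm$ where $m$ bounds the Postnikov truncation level and $j$ bounds the $p$-exponent, consistent with the effective statements in the rest of the paper.
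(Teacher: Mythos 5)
Your overall skeleton matches the paper's: reduce to showing $\psi^{p^k}\simeq\id_R$ as an automorphism of $\cO$-algebras (correct, since a $\Z$-action is just an automorphism), verify that $\psi^{p^k}$ is the identity on $\pi_*^{\heart}R$, and then invoke \Cref{lem:pistartrivaction}. The gap is in the middle step. You assert that local unipotence plus $p$-nilpotence makes $\psi^{p^k}$ act trivially on $\pi_*^{\heart}R$ for $k\gg0$, flagged only with ``one must be slightly careful.'' As written this is not a proof: local unipotence gives, for each homotopy class, \emph{some} nilpotence order of $\psi-1$, and to extract a single $k$ that works for all of $\pi_*^{\heart}R$ you must use boundedness and almost compactness to get uniform bounds (and in a general $\CC^{\heart}$ the element-by-element formulation of \Cref{exm:loc-nilp-heart} is not even directly available). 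The paper sidesteps all of this by first applying \Cref{lem:trivialize-action-ac}: since $R$ is almost compact and bounded in $\CC$, the underlying action in $\CC$ is already trivializable for $k\gg0$, which in particular forces $\psi^{p^k}=\id$ on $\pi_*^{\heart}R$; only then does \Cref{lem:pistartrivaction} enter, at the cost of replacing $k$ by $k+jm$. Your closing claim that $k\geq jm$ suffices is therefore also off: you need the initial $k_0$ from the underlying trivialization before adding $jm$.

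A second, more minor problem: your final paragraph tries to resolve the (non-existent) coherence obstacle by running the section-finding argument $\colim_k\Map_{\Alg_{\cO}(\CC)}(R,R^{hp^k\Z})\to\Map_{\Alg_{\cO}(\CC)}(R,R)$ inside $\Alg_{\cO}(\CC)$. That step is not justified: almost compactness of $R$ in $\CC$ does not imply that $\cO$-algebra mapping spaces out of $R$ commute with this filtered colimit. Fortunately it is also unnecessary, as you yourself observe earlier -- once $\psi^{p^k}\simeq\id_R$ in $\Alg_{\cO}(\CC)$, the $p^k\Z$-action is trivializable. Delete that detour, and replace the hand-waved homotopy-group argument with a citation of \Cref{lem:trivialize-action-ac}, and the proof is the paper's.
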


\begin{proof}
    Since $R$ is almost compact and bounded, applying \Cref{lem:trivialize-action-ac}, we learn that the $p^k\Z$-action on $R$ is trivializable as an action in $\CC$ for $k\gg0$. Since $R$ is bounded, connective, and $p$-nilpotent, by applying \Cref{lem:pistartrivaction}, we learn that after possibly increasing $k$, the action becomes trivializable as an $\cO$-algebra.
\end{proof}

\bibliographystyle{alpha}
\bibliography{bibliography}

\end{document}